\pdfoutput=1
\documentclass{ucthesis}
\usepackage{amsmath}
\usepackage{amsthm}
\usepackage{graphicx}
\usepackage{bbm,amssymb}
\usepackage{bbold}
 \usepackage{makeidx}
 \usepackage{index}
 \usepackage{hyperref}
 \newindex{default}{idx}{ind}{Index of Notation}


\newcommand{\floor}[1]{\lfloor {#1} \rfloor}
\newcommand{\ceil}[1]{\lceil {#1} \rceil}

\hyphenation{qua-si-ran-dom}

\newtheorem{theorem}{Theorem}[section]
\newtheorem{prop}[theorem]{Proposition}
\newtheorem{lemma}[theorem]{Lemma}
\newtheorem{corollary}[theorem]{Corollary}

\theoremstyle{remark}
\newtheorem*{remark}{Remark}

\theoremstyle{definition}
\newtheorem*{definition}{Definition}
\newtheorem*{question}{Question}
\newtheorem*{conjecture}{Conjecture}

\def\liminf{\mathop{\rm lim\,inf}\limits}
\def\Leb{\mathcal{L}}
\def\normal{{\bf n}}
\def\Points{{::}}
\def\div{{\text{div}\hspace{0.5ex}}}

\title{Limit Theorems for Internal Aggregation Models}
\author{Lionel Timothy Levine}
\degreeyear{2007}
\degreesemester{Fall}
\degree{Doctor of Philosophy}
\chair{~\\ Professor Yuval Peres}
\othermembers{Professor Lawrence C. Evans\\ Professor Elchanan Mossel}
\prevdegrees{A.B. Harvard University, 2002}
\field{Mathematics}
\campus{Berkeley}

\DeclareSymbolFont{AMSb}{U}{msb}{m}{n}
\DeclareMathSymbol{\C}{\mathbin}{AMSb}{"43} 
\DeclareMathSymbol{\EE}{\mathbin}{AMSb}{"45} 
\DeclareMathSymbol{\N}{\mathbin}{AMSb}{"4E} 
\DeclareMathSymbol{\PP}{\mathbin}{AMSb}{"50} 
\DeclareMathSymbol{\Q}{\mathbin}{AMSb}{"51} 
\DeclareMathSymbol{\R}{\mathbin}{AMSb}{"52} 
\DeclareMathSymbol{\Z}{\mathbin}{AMSb}{"5A}

\begin{document}

\maketitle
\approvalpage
\copyrightpage

\begin{abstract}
We study the scaling limits of three different aggregation models on $\Z^d$: internal DLA, in which particles perform random walks until reaching an unoccupied site; the rotor-router model, in which particles perform deterministic analogues of random walks; and the divisible sandpile, in which each site distributes its excess mass equally among its neighbors.  As the lattice spacing tends to zero, all three models are found to have the same scaling limit, which we describe as the solution to a certain PDE free boundary problem in $\R^d$.  In particular, internal DLA has a deterministic scaling limit.  We find that the scaling limits are quadrature domains, which have arisen independently in many fields such as potential theory and fluid dynamics.  Our results apply both to the case of multiple point sources and to the Diaconis-Fulton smash sum of domains.

In the special case when all particles start at a single site, we show that the scaling limit is a Euclidean ball in $\R^d$ and give quantitative bounds on the rate of convergence to a ball.  For the divisible sandpile, the error in the radius is bounded by a constant independent of the total starting mass.  For the rotor-router model in $\Z^d$, the inner error grows at most logarithmically in the radius $r$, while the outer error is at most order $r^{1-1/d} \log r$.  We also improve on the previously best known bounds of Le Borgne and Rossin in $\Z^2$ and Fey and Redig in higher dimensions for the shape of the classical abelian sandpile model. 

Lastly, we study the sandpile group of a regular tree whose leaves are collapsed to a single sink vertex, and determine the decomposition of the full sandpile group as a product of cyclic groups.  For the regular ternary tree of height $n$, for example, the sandpile group is isomorphic to $(\Z_3)^{2^{n-3}} \oplus (\Z_7)^{2^{n-4}} \oplus \ldots \oplus \Z_{2^{n-1}-1} \oplus \Z_{2^n-1}$.  We use this result to prove that rotor-router aggregation on the regular tree yields a perfect ball.
\end{abstract}

\begin{frontmatter}
\begin{dedication}
~ \\ ~ \\ ~ \\ ~ \\ ~ \\
\begin{center}
To my parents, Lance and Terri.
\end{center}
\end{dedication}

\begin{acknowledgements}
This work would not have been possible without the incredible support and insight of my advisor, Yuval Peres.  Yuval taught me not only a lot of mathematics, but also the tools, techniques, instincts and heuristics essential to the working mathematician.  Somehow, he even managed to teach me a little bit about life as well.  My entire philosophy and approach, and my sense of what is important in mathematics, are colored by his ideas.

I would like to thank Jim Propp for introducing me to this beautiful area of mathematics, and for many inspiring conversations over the years.  Thanks also to Scott Armstrong, Henry Cohn, Darren Crowdy, Craig Evans, Anne Fey, Chris Hillar, Wilfried Huss, Itamar Landau, Karola M\'{e}sz\'{a}ros, Chandra Nair, David Pokorny, Oded Schramm, Scott Sheffield, Misha Sodin, Kate Stange, Richard Stanley, Parran Vanniasegaram, Grace Wang, and David Wilson for many helpful conversations.
Richard Liang taught me how to write image files using C, so that I could write programs to generate many of the figures.  In addition, Itamar Landau and Yelena Shvets helped create several of the figures.

I also thank the NSF for supporting me with a Graduate Research Fellowship during much of the time period when this work was carried out.

Finally, I would like to thank my family and friends for supporting me and believing in me through the best and worst of times.  Your love and support mean more to me than I can possibly express in words.
\end{acknowledgements}

\tableofcontents
\printindex
\end{frontmatter}

\chapter{Introduction}

\section{Three Models with the Same Scaling Limit}
\label{scalinglimitintro}

Given finite sets $A, B \subset \Z^d$, Diaconis and Fulton \cite{DF} defined the \emph{smash sum} $A \oplus B$ as a certain random set whose cardinality is the sum of the cardinalities of $A$ and $B$.  Write $A \cap B = \{x_1, \ldots, x_k\}$.  To construct the smash sum, begin with the union $C_0 = A \cup B$ and for each $j=1,\ldots,k$ let
	\[ C_j = C_{j-1} \cup \{y_j\} \]
where $y_j$ is the endpoint of a simple random walk started at $x_j$ and stopped on exiting~$C_{j-1}$.  Then define $A\oplus B = C_k$.  The key observation of \cite{DF} is that the law of $A\oplus B$ does not depend on the ordering of the points $x_j$.  The sum of two squares in $\Z^2$ overlapping in a smaller square is pictured in Figure~\ref{smashsumfigure}.

\begin{figure}
\centering
\includegraphics[scale=1.15]{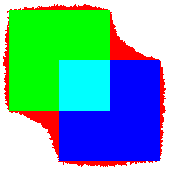}
\includegraphics[scale=1.15]{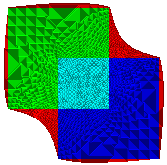}
\includegraphics[scale=1.15]{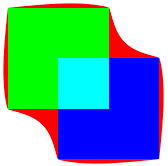}
\caption{Smash sum of two squares in $\Z^2$ overlapping in a smaller square, for internal DLA (top left), the rotor-router model (top right), and the divisible sandpile.
}
\label{smashsumfigure}
\end{figure}

In Theorem~\ref{DFsum}, below, we prove that as the lattice spacing goes to zero, the smash sum $A \oplus B$ has a deterministic scaling limit in $\R^d$.  Before stating our main results, we describe some related models and describe our technique for identifying their common scaling limit, which comes from the theory of free boundary problems in PDE.

The Diaconis-Fulton smash sum generalizes the model of \emph{internal diffusion-limited aggregation} (``internal DLA'') studied in \cite{LBG}, and in fact was part of the original motivation for that paper.  In classical internal DLA, we start with $n$ particles at the origin $o \in \Z^d$
\index{$o$, origin in $\Z^d$}
and let each perform a simple random walk until it reaches an unoccupied site.  The resulting random set of $n$ occupied sites in $\Z^d$ can be described as the $n$-fold smash sum of $\{o\}$ with itself.  We will use the term internal DLA to refer to particles which perform simple random walks in $\Z^d$ until reaching an unoccupied site, starting from an arbitrary initial configuration.  In this broader sense of the term, both the Diaconis-Fulton sum and the model studied in \cite{LBG} are particular cases of internal DLA. 

In defining the smash sum $A\oplus B$, various alternatives to random walk are possible.  \emph{Rotor-router walk} is a deterministic analogue of random walk, first studied by Priezzhev et al.\ \cite{PDDK} under the name ``Eulerian walkers.''  At each site in $\Z^2$ is a {\it rotor} pointing north, south, east or west.  A particle performs a nearest-neighbor walk on the lattice according to the following rule: during each time step, the rotor at the particle's current location is rotated clockwise by $90$ degrees, and the particle takes a step in the direction of the newly rotated rotor.  In higher dimensions, the model can be defined analogously by repeatedly cycling the rotors through an ordering of the $2d$ cardinal directions in $\Z^d$.  The sum of two squares in $\Z^2$ using rotor-router walk is pictured in Figure~\ref{smashsumfigure}; all rotors began pointing west.  The shading in the figure indicates the final rotor directions, with four different shades corresponding to the four possible directions.

The {\it divisible sandpile} model uses continuous amounts of mass in place of discrete particles.  A lattice site is {\it full} if it has mass at least $1$.  Any full site can {\it topple} by keeping mass $1$ for itself and distributing the excess mass equally among its neighbors.  At each time step, we choose a full site and topple it.  As time goes to infinity, provided each full site is eventually toppled, the mass approaches a limiting distribution in which each site has mass $\leq 1$.
Note that individual topplings do not commute.  However, the divisible sandpile is ``abelian'' in the sense that any sequence of topplings produces the same limiting mass distribution; this is proved in Lemma~\ref{abelianproperty}.  Figure~\ref{smashsumfigure} shows the limiting domain of occupied sites resulting from starting mass $1$ on each of two squares in $\Z^2$, and mass $2$ on the smaller square where they intersect.

Figure~\ref{smashsumfigure} raises a few natural questions: as the underlying lattice spacing becomes finer and finer, will the smash sum $A\oplus B$ tend to some limiting shape in $\R^d$, and if so, what is this shape?  Will it be the same limiting shape for all three models?  To see how we might identify the limiting shape, consider the divisible sandpile {\it odometer function}
	\begin{equation} \label{odomdefintro} u(x) = \text{total mass emitted from } x. \end{equation}
Since each neighbor $y\sim x$ emits an equal amount of mass to each of its $2d$ neighbors, the total  mass received by $x$ from its neighbors is $\frac{1}{2d} \sum_{y \sim x} u(y)$, hence
	\begin{equation} \label{netchangeinmass} \Delta u(x) = \nu(x) - \sigma(x) \end{equation}
where $\sigma(x)$ and $\nu(x)$ are the initial and final amounts of mass at $x$, respectively.  Here $\Delta$ is the discrete Laplacian in $\Z^d$, defined by 
	\begin{equation} \label{discretelaplaciandef} \Delta u(x) = \frac{1}{2d} \sum_{y \sim x} u(y) - u(x).
\index{$\Delta$, discrete Laplacian}
\end{equation}

Equation (\ref{netchangeinmass}) suggests the following approach to finding the limiting shape.  We first construct a function on $\Z^d$ whose Laplacian is $\sigma-1$; an example is the function
	\begin{equation} \label{theobstacleintro} \gamma(x) = - |x|^2 - \sum_{y \in \Z^d} g_1(x,y) \sigma(y) \end{equation}
where in dimension $d\geq 3$ the Green's function $g_1(x,y)$ is the expected number of times a simple random walk started at $x$ visits $y$ (in dimension $d=2$ we use the recurrent potential kernel in place of the Green's function).  Here $|x|$ denotes the Euclidean norm $(x_1^2+\ldots+x_d^2)^{1/2}$.
By (\ref{netchangeinmass}), since $\nu \leq 1$ the sum $u+\gamma$ is a superharmonic function on $\Z^d$; that is, $\Delta(u+\gamma) \leq 0$.  Moreover if $f \geq \gamma$ is any superharmonic function lying above $\gamma$, then $f - \gamma - u$ is superharmonic on the domain $D = \{x\in \Z^d | \nu(x)=1 \}$ of fully occupied sites, and nonnegative outside $D$, hence nonnegative everywhere.  Thus we have proved the following lemma.

\begin{lemma}
\label{discretemajorantintro}
Let $\sigma$ be a nonnegative function on $\Z^d$ with finite support.  Then the odometer function (\ref{odomdefintro}) for the divisible sandpile started with mass $\sigma(x)$ at each site $x$ is given by
	\[ u = s - \gamma \]
where $\gamma$ is given by (\ref{theobstacleintro}), and
	\[ s(x) = \inf \{f(x) | f \text{ is superharmonic on $\Z^d$ and } f\geq \gamma \} \]
is the {\it least superharmonic majorant} of $\gamma$.
\end{lemma}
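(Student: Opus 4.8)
The plan is a two-sided comparison showing that $u+\gamma$ \emph{is} the least superharmonic majorant of $\gamma$. Throughout I will use that $\sigma$ has finite support, so the total mass $M=\sum_x\sigma(x)$ is finite and, by the abelian property (Lemma~\ref{abelianproperty}), the toppling procedure converges: the odometer $u$ of \eqref{odomdefintro} is a finite nonnegative function and the final mass $\nu=\sigma+\Delta u$ satisfies $0\le\nu\le 1$ everywhere. Since mass is conserved, $\sum_x\nu(x)=M$, so the set $D=\{x:\nu(x)=1\}$ has $|D|\le M<\infty$.

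\textbf{Laplacian identities.} Equation \eqref{netchangeinmass}, $\Delta u=\nu-\sigma$, is just conservation of mass at $x$: the site $x$ receives $\frac1{2d}\sum_{y\sim x}u(y)$ from its neighbors and emits $u(x)$. For $\gamma$ of \eqref{theobstacleintro} one computes $\Delta|x|^2\equiv 1$ and $\Delta_x g_1(x,y)=-\delta_y(x)$ (the $d=2$ potential kernel is normalized in \eqref{theobstacleintro} to the same effect), hence $\Delta\gamma=\sigma-1$. Therefore $\Delta(u+\gamma)=(\nu-\sigma)+(\sigma-1)=\nu-1\le0$, so $u+\gamma$ is superharmonic on $\Z^d$; and $u\ge0$ gives $u+\gamma\ge\gamma$. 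Thus $u+\gamma$ lies in the family defining $s$, which is therefore nonempty (alternatively $\gamma$ is bounded above, since $-|x|^2\to-\infty$, so a large constant works too), and a one-line argument shows the pointwise infimum of superharmonic functions bounded below is superharmonic, so $s$ is a well-defined superharmonic function with $s\ge\gamma$.

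\textbf{The comparison.} One inequality is free: $u+\gamma$ is a competitor in the infimum, so $s\le u+\gamma$, i.e.\ $s-\gamma\le u$. For the reverse, fix any superharmonic $f\ge\gamma$ and set $h=f-\gamma-u$; I claim $h\ge0$ everywhere. The crucial structural fact is that $u$ is supported in $D$: mass leaves a site only when that site topples, and each toppling resets its mass to exactly $1$, so after the last toppling of $x$ its mass is $1$ and can only increase afterward — hence $u(x)>0$ forces $\nu(x)\ge1$, i.e.\ $x\in D$. Consequently $h=f-\gamma\ge0$ off $D$. On $D$ we have $\Delta(u+\gamma)=\nu-1=0$, so $u+\gamma$ is harmonic there and $h$ is superharmonic on $D$; since $D$ is finite and $h\ge0$ on its (exterior) boundary $\partial D\subset\Z^d\setminus D$, the minimum principle for superharmonic functions on a finite set gives $h\ge\min_{\partial D}h\ge0$ on $D$ as well. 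Taking the infimum over all such $f$ yields $s\ge u+\gamma$, and combining the two inequalities gives $u=s-\gamma$.

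\textbf{Where the difficulty lies.} The Laplacian bookkeeping is routine. The two points that actually carry the argument are: the convergence of the toppling procedure to a finite odometer with $\nu\le1$, which I am importing from Lemma~\ref{abelianproperty}; and the observation that $u$ vanishes outside $D$, which is precisely what supplies the boundary control needed to invoke the minimum principle — without it the superharmonicity of $h$ on $D$ would be useless. Everything else is the standard obstacle-problem dictionary.
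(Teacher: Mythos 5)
Your proof is correct and takes essentially the same route as the paper's: the paper likewise observes that $\Delta(u+\gamma)=\nu-1\leq 0$, so $u+\gamma$ is a superharmonic majorant of $\gamma$, and for the reverse inequality applies the minimum principle to $f-\gamma-u$, which is superharmonic on $D=\{\nu=1\}$ and nonnegative outside $D$ (this is the argument sketched just before the lemma and repeated in Lemma~\ref{discretemajorant}). Your explicit verification that $u$ vanishes off $D$ and that $D$ is finite simply fills in details the paper leaves implicit.
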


\begin{figure}
\centering
\includegraphics[scale=.4]{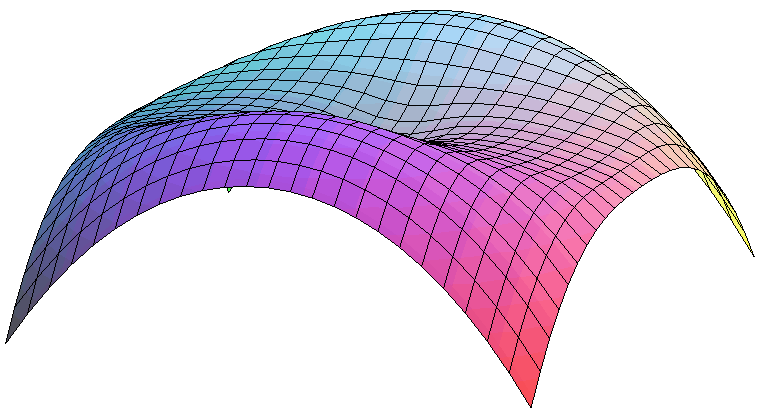} \\
\includegraphics[scale=.4]{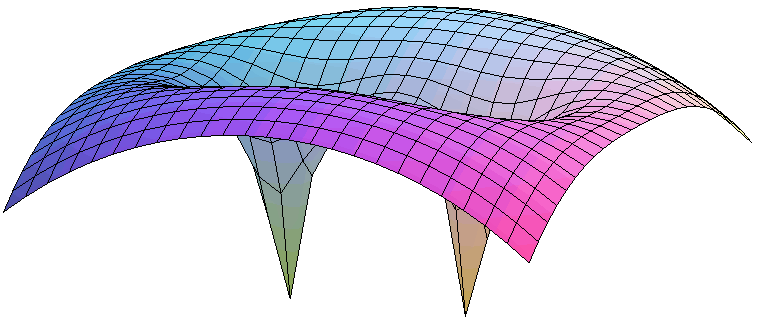}
\caption{The obstacles $\gamma$ corresponding to starting mass $1$ on each of two overlapping disks (top) and mass $100$ on each of two nonoverlapping disks.}
\label{twosourceobstacle}
\end{figure}

Lemma~\ref{discretemajorantintro} allows us to formulate the problem in a way which translates naturally to the continuum.  Given a function $\sigma$ on $\R^d$ representing the initial mass density, by analogy with (\ref{theobstacleintro}) we define the {\it obstacle}
	\[ \gamma(x) = -|x|^2 - \int_{\R^d} g(x,y) \sigma(y) dy \]
where $g(x,y)$ is the Green's function on $\R^d$ proportional to $|x-y|^{2-d}$ in dimensions $d \geq 3$ and to $-\log |x-y|$ in dimension two.  We then let
	\[ s(x) = \inf \{f(x) | f \text{ is continuous, superharmonic and }  f \geq \gamma \}. \]
The {\it odometer function} for $\sigma$ is then given by $u = s - \gamma$, and the final domain of occupied sites is given by
	\begin{equation} \label{thenoncoincidencesetintro} D = \{x \in \R^d| s(x)>\gamma(x) \}. \end{equation}
This domain $D$ is called the {\it noncoincidence set for the obstacle problem} with obstacle $\gamma$; 
for an in-depth discussion of the obstacle problem, see \cite{Friedman}.

If $A,B$ are bounded open sets in $\R^d$, we define the \emph{smash sum} of $A$ and $B$ as
\index{$A \oplus B$, smash sum}
	\begin{equation} \label{smashsumdef} A\oplus B = A\cup B \cup D \end{equation}
where $D$ is given by (\ref{thenoncoincidencesetintro}) with $\sigma = 1_A + 1_B$.  In the two-dimensional setting, an alternative definition of the smash sum in terms of quadrature identities is mentioned in \cite{Gustafsson88}.

In this thesis we prove, among other things, that if any of our three aggregation models is run on finer and finer lattices with initial mass densities converging in an appropriate sense to $\sigma$, the resulting domains of occupied sites will converge in an appropriate sense to the domain $D$ given by (\ref{thenoncoincidencesetintro}).  We will always work in dimension $d \geq 2$; for a discussion of the rotor-router model in one dimension, see \cite{undergradthesis}.  

Let us define the appropriate notion of convergence of domains, which amounts essentially to convergence in the Hausdorff metric.
Fix a sequence $\delta_n \downarrow 0$ representing the lattice spacing. \index{$\delta_n$, lattice spacing} Given domains $A_n \subset \delta_n \Z^d$ and $D \subset \R^d$, write $A_n \to D$ if for any $\epsilon>0$
	\begin{equation} \label{convergenceinthehausdorffmetric} D_\epsilon \cap \delta_n \Z^d \subset 	A_n \subset D^{\epsilon} 
	\end{equation}
for all sufficiently large $n$.  Here 
	\begin{equation} \label{innerepsilonneighborhood} D_\epsilon = \{x \in D \,|\, B(x,\epsilon) \subset D \} \index{$D_\epsilon$, inner $\epsilon$-neighborhood} \end{equation}
and
	\[ D^\epsilon = \{x \in \R^d \,|\, B(x,\epsilon) \cap D \neq \emptyset \} \index{$D^\epsilon$, outer $\epsilon$-neighborhood}	\]
are the inner and outer $\epsilon$-neighborhoods of $D$.  For $x \in \delta_n \Z^d$ we write $x^\Box = \left[x+\frac{\delta_n}{2}, x-\frac{\delta_n}{2}\right]^d$.  For $t \in \R$ write $\lfloor t \rceil$ for the closest integer to $t$.  

Throughout this thesis, to avoid trivialities we work in dimension $d \geq 2$.  Our first main result is the following.

\begin{theorem}
\label{intromain}
Let $\Omega \subset \R^d$ be a bounded open set, and let $\sigma : \R^d \rightarrow \Z_{\geq 0}$ be a bounded function which is continuous almost everywhere, satisfying $\{\sigma \geq 1\} = \bar{\Omega}$.  Let $D_n, R_n, I_n$ be the domains of occupied sites formed from the divisible sandpile, rotor-router model, and internal DLA, respectively, in the lattice $\delta_n \Z^d$ started from source density 
	\[ \sigma_n(x) = \left\lfloor \delta_n^{-d} \int_{x^\Box} \sigma(y) dy \right\rceil. \]
Then as $n \to \infty$
	\[ D_n, R_n \to D \cup \Omega; \]
and if $\delta_n \leq 1 / \log n$, then with probability one
	\[ I_n \to D \cup \Omega \]
where $D$ is given by (\ref{thenoncoincidencesetintro}), and the convergence is in the sense of~(\ref{convergenceinthehausdorffmetric}).
\end{theorem}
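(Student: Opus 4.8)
The plan is to reduce the theorem to three separate convergence statements --- one for each model --- and to route all three through the single analytic object $D$ defined via the obstacle problem in \eqref{thenoncoincidencesetintro}. The unifying idea is that each model has an \emph{odometer function} (the total mass emitted from each site, or the number of times each rotor fires, or the number of visits to each site in internal DLA), and in each case a suitable rescaling of this odometer should converge to the continuum odometer $u = s - \gamma$. Since $D$ is exactly the set where $u > 0$ (equivalently $s > \gamma$), convergence of the occupied domains in the Hausdorff sense \eqref{convergenceinthehausdorffmetric} will follow from locally uniform convergence of the rescaled odometers together with control on the free boundary $\partial D$.

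First I would handle the divisible sandpile, which is the cleanest case because Lemma~\ref{discretemajorantintro} gives an \emph{exact} identity: the discrete odometer is $u_n = s_n - \gamma_n$, where $\gamma_n$ is the discrete obstacle built from $\sigma_n$ and $s_n$ is its least superharmonic majorant on $\delta_n\Z^d$. The key step here is to show that $\delta_n^2 \gamma_n$ (appropriately normalized, since $\gamma$ carries a $|x|^2$ term and a Green's-function convolution) converges locally uniformly to the continuum obstacle $\gamma$; this rests on standard estimates for the discrete Green's function / potential kernel approximating its continuum counterpart, plus the hypothesis that $\sigma$ is bounded and continuous a.e.\ so that the Riemann-sum source densities $\sigma_n$ converge to $\sigma$ in $L^1$. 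Least superharmonic majorants are stable under locally uniform convergence of the obstacle (a maximum-principle / envelope argument), so $\delta_n^2 s_n \to s$ and hence the rescaled odometers converge. One then shows that $\{u_n > 0\}$ converges to $\{u > 0\} = D$: the inner inclusion $D_\epsilon \cap \delta_n\Z^d \subset D_n$ uses that $u$ is bounded below by a positive constant on $D_\epsilon$ together with uniform convergence; the outer inclusion $D_n \subset D^\epsilon$ uses that $u_n$ is small (comparable to $\delta_n^2$, not just positive) outside $D^\epsilon$, which requires a quantitative lower bound on how fast $s$ separates from $\gamma$ near $\partial D$ --- this is where nondegeneracy of the obstacle problem enters.

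Next, the rotor-router model. The strategy is to sandwich the rotor-router odometer between two divisible-sandpile-type quantities. Rotor-router walk satisfies an \emph{abelian property} and an \emph{odometer inequality}: the rotor-router odometer differs from a corresponding divisible sandpile odometer by $O(1)$ per site in a suitable discrete sense (because each site ``fires'' in near-equal batches to its neighbors, up to an error of one particle per direction). Concretely I would show that if $v_n$ is the rotor-router odometer then $|\Delta v_n - (\nu_n - \sigma_n)|$ is bounded, so $v_n$ is within a harmonic-type error of $u_n$; summing/integrating this error and using Green's-function bounds gives that $\delta_n^2 v_n$ and $\delta_n^2 u_n$ have the same locally uniform limit. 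Then the same inner/outer inclusion arguments as for the sandpile give $R_n \to D\cup\Omega$. Internal DLA is the probabilistic case: here the odometer is the number of visits, and the standard approach (following Lawler--Bramson--Griffeath, in the multi-source formulation) is to compare with the divisible sandpile via martingale estimates --- one shows that with the lattice spacing going to zero slowly enough ($\delta_n \le 1/\log n$), the fluctuations of the random odometer around its ``expected'' value (essentially the sandpile odometer) are, after rescaling, negligible with probability one, by a Borel--Cantelli argument over the countably many scales. The inner estimate controls the probability that a site inside $D_\epsilon$ is unoccupied (a particle escapes too far), and the outer estimate controls the probability that a site outside $D^\epsilon$ gets occupied.

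The main obstacle, I expect, is the \textbf{outer} estimate, uniformly across the three models: proving positivity of the limiting odometer $u$ inside $D$ is soft, but showing that the \emph{discrete} occupied sets do not spill out past $D^\epsilon$ requires a genuinely quantitative statement --- that the discrete odometer is not merely zero but \emph{provably tiny} (of order $\delta_n^2$, or with the right probabilistic tail for IDLA) at points a fixed distance outside $D$. This forces one to use regularity/nondegeneracy properties of the obstacle-problem solution near its free boundary $\partial D$, and to propagate those through the discrete-to-continuum comparison with explicit error bounds; the IDLA version additionally needs the large-deviation input that makes the a.s.\ statement work once $\delta_n$ decays at least like $1/\log n$. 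The inner estimates, the convergence $\delta_n^2\gamma_n \to \gamma$, and the stability of least superharmonic majorants I expect to be comparatively routine.
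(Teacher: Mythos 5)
Your overall architecture --- route all three models through the continuum obstacle problem, prove locally uniform convergence of rescaled odometers, then convert to Hausdorff convergence of domains via an inner bound (positivity of $u$ on $D_\epsilon$) and a quantitative outer bound (quadratic growth of the odometer near points of $D_n$ outside $D^\epsilon$) --- is exactly the paper's. The divisible sandpile part and the high-level description of the internal DLA part (comparison of the LBG hitting-number functional with the sandpile odometer, large deviations, Borel--Cantelli over scales with $\delta_n\le 1/\log n$) are essentially the arguments in sections~\ref{divsandscalinglimit} and~\ref{IDLAscalinglimit}.

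The rotor-router step, however, has a genuine gap. You write that the rotor-router odometer $v_n$ satisfies $|\Delta v_n - (\nu_n-\sigma_n)|$ bounded, and that ``summing/integrating this error using Green's-function bounds'' shows $v_n$ and the sandpile odometer have the same limit. That inference fails: by Lemma~\ref{scaledodomflow} the error term in the Laplacian is $\delta_n\,\div\rho$, which is of order $1$ pointwise (not $o(1)$), and integrating an $O(1)$ Laplacian discrepancy against the Green's function of a domain of fixed diameter produces an $O(1)$ discrepancy in the (rescaled) odometer --- the same order as the odometer itself, since $\sum_y G(x,y)\,\delta_n^2 = \delta_n^2\,\EE_x T = O(1)$. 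The error only becomes negligible after exploiting a cancellation: the edge function $\rho$ is \emph{antisymmetric}, so after summation by parts it pairs with \emph{differences} $G(x,y)-G(x,z)$ of Green's function values, whose total is controlled by gradient estimates (this is how the single-source Theorem~\ref{rotorcircintro} is proved, via Lemma~\ref{greengradbound}); for general sources the paper instead smooths the odometer by $\alpha(n)$ steps of lazy random walk, which by a coupling argument reduces the Laplacian error to $O(\alpha(n)^{-1/2})$ (Lemma~\ref{laplacianofsmoothing}) while perturbing the odometer by only $O(\alpha(n)^2\delta_n^2)$ (Lemma~\ref{smoothingdist}). Your proposal invokes neither the antisymmetry nor any smoothing, so as written the rotor-router comparison does not close. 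The same smoothing device is also what lets the paper transfer the needed Laplacian identity to internal DLA (equation~(\ref{smoothedlaplacianoff})), so you would need to supply it there as well.
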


\begin{remark}
When forming the rotor-router domains $R_n$, the initial rotors in each lattice $\delta_n \Z^d$ may be chosen arbitrarily.
\end{remark}

We prove a somewhat more general form of Theorem~\ref{intromain} which allows for some flexibility in how the discrete density $\sigma_n$ is constructed from $\sigma$.  In particular, taking $\sigma = 1_{\bar{A}} + 1_{\bar{B}}$ we obtain the following theorem, which explains the similarity of the three smash sums pictured in Figure~\ref{smashsumfigure}.

\begin{theorem}
\label{DFsum}
Let $A,B \subset \R^d$ be bounded open sets whose boundaries have measure zero.  Let $D_n, R_n, I_n$ be the smash sum of $A \cap \delta_n \Z^d$ and $B \cap \delta_n \Z^d$, formed using divisible sandpile, rotor-router and internal DLA dynamics, respectively.  Then as $n \to \infty$
	\[ D_n, R_n \to A \oplus B; \]
and if $\delta_n \leq 1 / \log n$, then with probability one
	\[ I_n \to A \oplus B \]
where $A \oplus B$ is given by (\ref{smashsumdef}), and the convergence is in the sense of~(\ref{convergenceinthehausdorffmetric}).
\end{theorem}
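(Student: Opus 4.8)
The plan is to derive Theorem~\ref{DFsum} as a corollary of (the general version of) Theorem~\ref{intromain}. First I would take $\sigma = 1_{\bar A} + 1_{\bar B}$ and $\Omega = A \cup B$, and check the hypotheses of Theorem~\ref{intromain}: $\sigma$ takes values in $\Z_{\geq 0}$, it is bounded, and since $\partial A$ and $\partial B$ have Lebesgue measure zero, $\sigma$ is continuous almost everywhere. The one subtlety is the requirement $\{\sigma \geq 1\} = \bar\Omega$: we have $\{\sigma \geq 1\} = \bar A \cup \bar B$, whereas $\bar\Omega = \overline{A \cup B}$. These two sets can differ (e.g.\ if $A$ and $B$ are tangent), so I would either invoke the more general version of Theorem~\ref{intromain} alluded to in the text, which should not require this precise identity, or observe that the boundary discrepancy $\overline{A\cup B} \setminus (\bar A \cup \bar B)$ is a measure-zero set on which the obstacle problem is unaffected, so the noncoincidence set $D$ is unchanged.

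Next I would reconcile the two ways the discrete source is specified. In Theorem~\ref{intromain} the source density is $\sigma_n(x) = \lfloor \delta_n^{-d}\int_{x^\Box}\sigma(y)\,dy\rceil$, while the smash sum of $A\cap\delta_n\Z^d$ and $B\cap\delta_n\Z^d$ corresponds to the source density that assigns mass $1$ to each point of $A\cap\delta_n\Z^d$ and to each point of $B\cap\delta_n\Z^d$, i.e.\ $\tilde\sigma_n(x) = 1_A(x) + 1_B(x)$ for $x \in \delta_n\Z^d$. I would show these two densities agree except on a set of points near $\partial A \cup \partial B$: for a cube $x^\Box$ entirely inside $A$ (resp.\ inside $B$) we have $\delta_n^{-d}\int_{x^\Box} 1_{\bar A} = 1$, and similarly the double-covered cubes give $2$; only cubes straddling a boundary can disagree, and by the measure-zero boundary hypothesis the number of such $x$ is $o(\delta_n^{-d})$. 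This is exactly the kind of perturbation the general version of Theorem~\ref{intromain} is designed to tolerate, so the conclusion $D_n, R_n \to D\cup\Omega$ and (for $\delta_n \leq 1/\log n$) $I_n \to D\cup\Omega$ almost surely carries over verbatim.

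Finally I would identify the limit set. By definition $D\cup\Omega = D \cup A \cup B = A\oplus B$, where $D$ is the noncoincidence set (\ref{thenoncoincidencesetintro}) for the obstacle built from $\sigma = 1_A + 1_B$ --- which is precisely (\ref{smashsumdef}). I expect the main obstacle to be the bookkeeping in the previous paragraph: carefully quantifying, in the sense of the Hausdorff-type convergence (\ref{convergenceinthehausdorffmetric}), that replacing $\sigma_n$ by $\tilde\sigma_n$ (and dealing with the $\bar A \cup \bar B$ versus $\overline{A\cup B}$ boundary mismatch) perturbs the occupied domains by only $o(1)$ in lattice units near the boundary, so that both inner and outer $\epsilon$-neighborhood inclusions are preserved for large $n$. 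Everything else is a direct specialization of the already-established general theorem.
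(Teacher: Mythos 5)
Your proposal follows essentially the same route as the paper: Theorem~\ref{DFsum} is obtained there by specializing the general convergence results (Theorems~\ref{domainconvergence}, \ref{rotordomainconvergence} and~\ref{IDLAconvergence}) to $\sigma = 1_{\bar{A}} + 1_{\bar{B}}$ and $\sigma_n = 1_{A \cap \delta_n \Z^d} + 1_{B \cap \delta_n \Z^d}$, checking the hypotheses on $\sigma,\sigma_n$ and identifying $\widetilde{D} = D \cup \{\sigma \geq 1\}^o$ with $A \oplus B$ via Proposition~\ref{boundaryregularity}. One small correction: the discrepancy you worry about is vacuous, since $\overline{A \cup B} = \bar{A} \cup \bar{B}$ for any two sets, so the identity $\{\sigma \geq 1\} = \bar{\Omega}$ (and likewise condition (\ref{closureofinterior})) holds automatically.
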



For the divisible sandpile, Theorem~\ref{intromain} can be generalized by dropping the requirement that $\sigma$ be integer valued; see Theorem~\ref{domainconvergence} for the precise statement.  
Taking $\sigma$ real-valued is more problematic in the case of the rotor-router model and internal DLA, since these models work with discrete particles.  Still, one might wonder if, for example, given a domain $A \subset \R^d$, starting each even site in $A \cap \delta_n \Z^d$ with one particle and each odd site with two particles, the resulting domains $R_n, I_n$ would converge to the noncoincidence set $D$ for density $\sigma = \frac32 1_A$.  This is in fact the case: if $\sigma_n$ is a density on $\delta_n \Z^d$, as long as a certain ``smoothing'' of $\sigma_n$ converges to $\sigma$, the rotor-router and internal DLA domains started from source density $\sigma_n$ will converge to $D$.  See Theorems~\ref{rotordomainconvergence} and~\ref{IDLAconvergence} for the precise statements.

\section{Single Point Sources}

One interesting case not covered by Theorems~\ref{intromain} and~\ref{DFsum} is the case of point sources.  Lawler, Bramson and Griffeath \cite{LBG} showed that the scaling limit of internal DLA in $\Z^d$ with a single point source of particles is a Euclidean ball.  In chapter~\ref{pointsource}, we prove analogous results for rotor-router aggregation and the divisible sandpile, and give quantitative bounds on the rate of convergence to a ball.  Let $A_n$ be the domain of $n$ sites in $\Z^d$ formed from rotor-router aggregation starting from a point source of $n$ particles at the origin.  Thus $A_n$ is defined inductively by the rule
	\[ A_n = A_{n-1} \cup \{x_n\} \]
where $x_n$ is the endpoint of a rotor-router walk started at the origin in $\Z^d$ and stopped on first exiting $A_{n-1}$.  
For example, in $\Z^2$, if all rotors initially point north, the sequence will begin $A_1 = \{(0,0)\}$, $A_2 = \{(0,0),(1,0)\}$, $A_3 = \{(0,0),(1,0),(0,-1)\}$.  The region $A_{1,000,000}$ is pictured in Figure~\ref{rotor1m}.

\begin{figure}
\centering
\includegraphics[scale=.28]{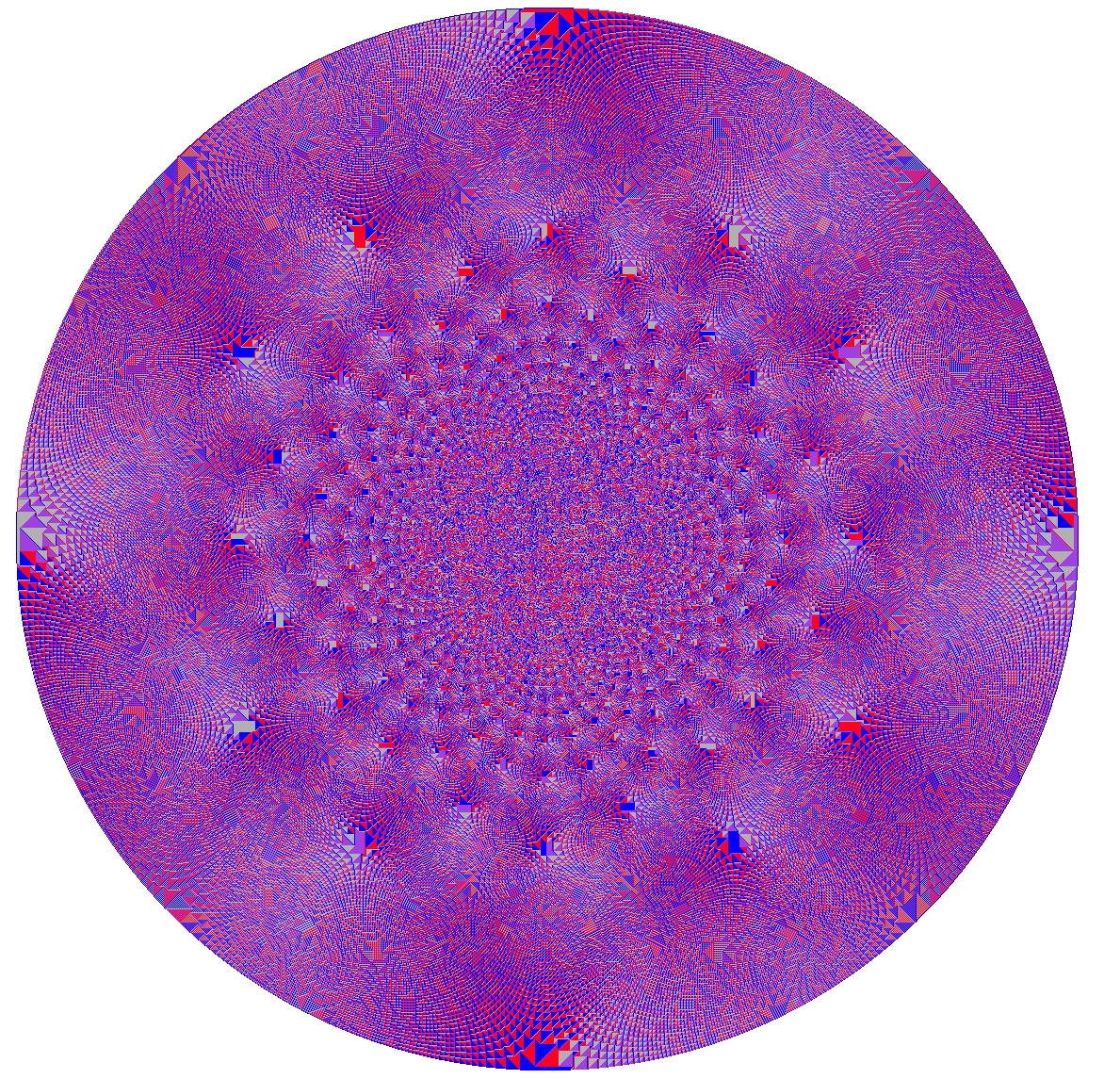}
\caption{Rotor-router aggregate of one million particles in $\Z^2$.  Each site is colored according to the direction of its rotor.}
\label{rotor1m}
\end{figure}

Jim Propp observed from simulations in two dimensions that the regions $A_n$ are extraordinarily close to circular, and asked why this was so \cite{Kleber,Propp}.  Despite the impressive empirical evidence for circularity, the best result known until now \cite{LP} says only that if $A_n$ is rescaled to have unit volume, the volume of the symmetric difference of $A_n$ with a ball of unit volume tends to zero as a power of $n$, as $n \uparrow \infty$.  The main outline of the argument is summarized in \cite{intelligencer}.  Fey and Redig  \cite{FR} also show that $A_n$ contains a diamond.  In particular, these results do not rule out the possibility of ``holes'' in $A_n$ far from the boundary or of long tendrils extending far beyond the boundary of the ball, provided the volume of these features is negligible compared to $n$.

Our main result on the shape of rotor-router aggregation with a single point source is the following, which rules out the possibility of holes far from the boundary or of long tendrils.  For $r\geq 0$ let
	\[ B_r = \{ x \in \Z^d ~:~ |x| < r \}. \]
\index{$B_r$, discrete ball}

\begin{theorem}
\label{rotorcircintro}
Let $A_n$ be the region formed by rotor-router aggregation in $\Z^d$ starting from $n$ particles at the origin and any initial rotor state.  There exist constants $c,c'$ depending only on $d$, such that
	\[ B_{r-c\log r} \subset A_n \subset B_{r (1+c'r^{-1/d}\log r)} \]
where $r=(n/\omega_d)^{1/d}$, and $\omega_d$ is the volume of the unit ball in $\R^d$.
\end{theorem}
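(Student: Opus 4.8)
The plan is to establish the two inclusions separately, using the abelian property of rotor-router walk together with the divisible-sandpile comparison and the discrete obstacle-problem picture developed above. The key principle, already implicit in Lemma~\ref{discretemajorantintro}, is that the rotor-router odometer is sandwiched near the divisible-sandpile odometer, whose limiting domain for a point source is exactly a ball; so the task reduces to controlling the discrepancy between the two odometers and then translating an $L^\infty$ bound on odometers into a bound on how far the occupied region can stray from a perfect ball.

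\textbf{Inner bound.} First I would set up the \emph{abelian property} for rotor-router aggregation: the final occupied region $A_n$ does not depend on the order in which particles are routed, and moreover one may compare $A_n$ with the region produced by any ``intermediate'' routing that stops particles early. The idea for $B_{r-c\log r}\subset A_n$ is to show that if some site $z$ with $|z|$ moderately smaller than $r$ were \emph{unoccupied}, then not enough total mass could have been absorbed: a rotor-router walk from the origin, run for a long time, visits each site close to the number of times a random walk would, up to an error controlled by the rotor discrepancy. Quantitatively, one uses the key rotor-router estimate that for any fixed site $x$, the number of particles sent from $x$ to a neighbor differs from $\frac{1}{2d}$ times the number of arrivals at $x$ by at most $1$ (this is just ``a rotor cycles through its $2d$ positions''); summing this telescoping error along spheres yields that the rotor-router odometer $u^R$ satisfies $|\Delta u^R(x)-(\nu(x)-\sigma(x))|$ bounded pointwise, and then a discrete maximum-principle / Green's-function argument against the obstacle $\gamma$ from (\ref{theobstacleintro}) shows $u^R$ cannot vanish at $z$ unless $z$ is within $O(\log r)$ of the sphere of radius $r$. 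Here the logarithm comes from the fact that the relevant harmonic comparison function (a discrete analogue of $\log|x|$ in $d=2$, or a bounded potential in $d\ge 3$) changes by $O(\log r)$ across an annulus of width $1$ near radius $r$; this is where the constant $c$ is produced.

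\textbf{Outer bound.} For $A_n\subset B_{r(1+c'r^{-1/d}\log r)}$ I would argue that a site $z$ far outside $B_r$ can only be occupied if a long ``tongue'' of occupied sites connects it back toward the origin, and bound the total number of particles such a tongue can consume. Concretely: fix $\rho$ slightly larger than $r$; if $A_n\not\subset B_\rho$, pick an occupied $z$ with $|z|\ge \rho$ of maximal norm; by the abelian property the last particle to reach $z$ performed a rotor-router walk that had to cross every sphere between radius $r$ and $|z|$, and on each such sphere the number of occupied sites outside radius $r$ is small (because the total number of particles is only $n=\omega_d r^d$, so at most $O(r^{d-1})$ of them can lie on or beyond the $r$-sphere, and these must be spread over $\sim (|z|-r)/1$ spheres, giving $\sim r^{d-1}/(|z|-r)$ sites per sphere on average). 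A rotor-router walk confined to pass through thin occupied sets on many consecutive spheres must, by the rotor mechanism, ``reflect'' frequently, and a counting argument (essentially: each sphere of occupied sites can be traversed outward only after being filled, and filling a sphere of $k$ sites requires $\Omega(k)$ particle-visits) forces $(|z|-r)\cdot(\text{sites per sphere})\lesssim$ (available surplus particles), i.e. $(|z|-r)^2 \lesssim r^{d-1}$, which is weaker than claimed; the sharper $r^{1-1/d}\log r$ bound comes from also invoking the divisible-sandpile outer estimate (the sandpile region is contained in $B_{r+O(1)}$) plus the fact that rotor-router mass dominates sandpile mass up to the accumulated rotor discrepancy, which over a region of volume $V$ is $O(V^{1-1/d})$ — hence $O(r^{d-1})$ total, spread out to give radial error $O(r^{d-1}/r^{d-1}\cdot\log r)\cdot r^{?}$; the exact bookkeeping of how a volume-$O(r^{d-1})$ excess distributes into a radial overshoot is what yields the $r^{1-1/d}\log r$.

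\textbf{Main obstacle.} The hard part will be the outer bound, and specifically getting the exponent $1-1/d$ rather than something like $1/2$. The inner bound is essentially a clean maximum-principle argument once the pointwise $|\Delta u^R - (\nu-\sigma)|\le \mathrm{const}$ estimate is in hand. But for the outer bound one must rule out \emph{thin tendrils}, and a naive ``a walk crossing many thin spheres must backtrack'' argument only gives a square-root-type bound; extracting $r^{1-1/d}\log r$ requires a more careful global accounting of the rotor discrepancy (total variation of rotor configuration is at most proportional to the number of edges in the occupied region, which is $O(r^d)$, but the \emph{net} flux discrepancy across the sphere of radius $r$ telescopes down to the surface area $O(r^{d-1})$) combined with a Green's-function estimate for how a surface-order perturbation of the source propagates to a radial perturbation of the free boundary. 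I expect the cleanest route is to prove an $L^\infty$ bound of the form $u^R \le u^{\mathrm{sand}} + O(r^{2-d}\cdot r^{d-1}) = u^{\mathrm{sand}} + O(r)$ is too weak, so instead one works locally near the boundary sphere, bounding the odometer on each outer shell by solving the obstacle problem shell-by-shell, and the $\log r$ is the price of iterating this $O(\log r)$ times or of a dyadic decomposition of the annulus $B_{2r}\setminus B_r$.
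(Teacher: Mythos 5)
There are genuine gaps in both halves of your argument. For the inner bound, the step ``$|\Delta u^R(x)-(\nu(x)-\sigma(x))|$ bounded pointwise, then a maximum-principle/Green's-function argument'' does not suffice: if all you know is that $\Delta(u-\gamma)=\div R$ with $\div R$ pointwise bounded, the Green's representation only gives $|u(x)-f(x)|\leq C\sum_y G_{B_r}(x,y)\approx C\,\EE_x T\approx C(r^2-|x|^2)$, which completely swamps the quadratic lower bound $(r-|x|)^2$ coming from the obstacle near $\partial B_r$. The essential point in the paper's proof (Lemma~\ref{odomflow} and the computation following (\ref{triplesum})) is that the error term $R$ is \emph{antisymmetric}, so one can sum by parts and couple the error to \emph{differences} $G(x,y)-G(x,z)$ over neighboring pairs; the hard estimate is then Lemma~\ref{greengradbound}, $\sum_{y\sim z}|G(x,y)-G(x,z)|\leq C\rho\log(r/\rho)$ with $\rho=r-|x|$, proved by a dyadic-shell decomposition. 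The logarithm in $B_{r-c\log r}$ arises from comparing $(r-|x|)^2$ against $(r-|x|)\log\frac{r}{r-|x|}$ — not, as you suggest, from a harmonic comparison function varying by $O(\log r)$ across a unit annulus (in $d\geq 3$ that variation is $O(r^{1-d})$, and even in $d=2$ it is $O(1/r)$).

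For the outer bound, you correctly identify that your tongue-counting argument only yields $(|z|-r)^2\lesssim r^{d-1}$ and you do not supply the missing bookkeeping, so this half is not proved. The paper's mechanism is different: Proposition~\ref{HP} (Holroyd--Propp) bounds the discrepancy between rotor-router and random-walk hitting measures of a distant shell by $\sum_{u\sim v}|H(u)-H(v)|$, \emph{independently of the number of particles}; combined with the harmonic-measure estimates of Lemmas~\ref{inthezone} and~\ref{harmonicmeasuregrad}, one gets that if $N_\rho$ particles reach shell $\mathcal{S}_\rho$, then at most $CN_\rho/h^{d-1}$ can stop at any single site of $\mathcal{S}_{\rho+h}$, forcing at least $h^{d-1}/2C$ occupied sites on that shell. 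Summing over $h$ shows each ``epoch'' advances the boundary by at most $N_\rho^{1/d}$ while halving $N_\rho$, so there are $O(\log r)$ epochs; the inner estimate caps the total volume outside $B_r$ at $O(r^{d-1}\log r)$, and Jensen's inequality converts this into a radial overshoot of $O(r^{1-1/d}\log r)$. Neither the population-halving iteration nor the use of the inner estimate to bound the excess volume appears in your sketch, and the alternative you float (rotor discrepancy of order $V^{1-1/d}$ plus the sandpile outer bound) is not an argument the paper makes nor one that obviously closes.
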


We remark that the same result holds when the rotors evolve according to stacks of bounded discrepancy; see the remark following Lemma~\ref{odomflow}.

By way of comparison with Theorem~\ref{rotorcircintro}, if $I_n$ is the internal DLA region formed from $n$ particles started at the origin, the best known bounds \cite{Lawler95} are (up to logarithmic factors)
	\[ B_{r-r^{1/3}} \subset I_n \subset B_{r+r^{1/3}} \]
for all sufficiently large $n$, with probability one.

Our next result treats the divisible sandpile with all mass initially concentrated at a point source.  The resulting domain of fully occupied sites is extremely close to a ball; in fact, the error in the radius is bounded independent of the total mass.

\begin{theorem}
\label{divsandcircintro}
For $m \geq 0$ let $D_m \subset \Z^d$ be the domain of fully occupied sites for the divisible sandpile formed from a pile of mass $m$ at the origin.  There exist constants $c,c'$ depending only on $d$, such that
	\[ B_{r-c} \subset D_m \subset B_{r+c'}, \]
where $r = (m/\omega_d)^{1/d}$ and $\omega_d$ is the volume of the unit ball in $\R^d$.
\end{theorem}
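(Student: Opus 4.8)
The plan is to translate the statement into an obstacle problem via Lemma~\ref{discretemajorantintro} and then to trap the odometer between explicit barriers built from the continuum odometer of a ball, keeping every error $O(1)$.

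Specialize Lemma~\ref{discretemajorantintro} to the source with $\sigma(o)=m$ and $\sigma\equiv0$ elsewhere: the obstacle is $\gamma(x)=-|x|^2-m\,g_1(x,o)$ (with the recurrent potential kernel in place of $g_1$ when $d=2$), and $u=s-\gamma$. Since $\Delta|x|^2\equiv1$ and $\Delta g_1(\cdot,o)=-\delta_o$ for the Laplacian~(\ref{discretelaplaciandef}), we get $\Delta\gamma=m\delta_o-1$, so $\Delta u=1$ on $D_m\setminus\{o\}$ and $u\geq0$ everywhere; moreover $\{u>0\}$ and the fully occupied set $D_m=\{\nu=1\}$ agree up to a shell of bounded width (if $u(x)=0$ with $x\in D_m$ then $\Delta u(x)=1$ forces a neighbour to emit mass $\geq1$, and the sites that emit mass without filling up form a bounded shell). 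So it suffices to prove $B_{r-c}\subseteq\{u>0\}\subseteq B_{r+c'}$, and I would use the classical expansions $g_1(x,o)=\kappa_d|x|^{2-d}+O(|x|^{-d})$ (and $a(x,o)=\tfrac2\pi\log|x|+\kappa+O(|x|^{-2})$ for $d=2$). I also need the continuum odometer: let $e$ be the radial function vanishing outside $\overline{B}_r$ and equal on $B_r\setminus\{o\}$ to $|x|^2-r^2+C(|x|^{2-d}-r^{2-d})$ (a logarithm replacing $|x|^{2-d}$ when $d=2$), with $C$ chosen so that $e\in C^1$ across $\partial B_r$; mass conservation $\omega_dr^d=m$ is exactly the relation $r=(m/\omega_d)^{1/d}$. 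One checks $e>0$ and radially decreasing on $B_r\setminus\{o\}$ with $e(x)\asymp(r-|x|)^2$ near $\partial B_r$, and that replacing $C|x|^{2-d}$ by $m\,g_1(x,o)$ yields a lattice function $\tilde e$ with $\Delta\tilde e=1-m\delta_o$ exactly and $\tilde e=e+O(1)$ on $\{|x|\geq\varepsilon r\}$; this $\tilde e$ is the expected form of $u$.

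For the \emph{outer bound} I would exhibit a superharmonic majorant $\overline w$ of $\gamma$ equal to $\gamma$ outside $B_{r+c'}$: take $\overline w=\gamma+\tilde e_{r+c'}$, the analogous lattice odometer for radius $R=r+c'$ (its natural mass $\omega_dR^d$ exceeds $m$, which makes $\Delta\overline w\leq0$ at $o$ as well), clipped to vanish outside $B_R$ and corrected by an $O(1)$ term so that $\Delta\overline w\leq0$ everywhere and $\overline w\geq\gamma$. Since $s$ is the least superharmonic majorant of $\gamma$, $s\leq\overline w$, hence $\{u>0\}=\{s>\gamma\}\subseteq\{\overline w>\gamma\}\subseteq B_{r+c'}$. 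For the \emph{inner bound} I would build a barrier from below on the slightly smaller ball $B_\rho$, $\rho=r-c/2$: let $\Phi_\rho(x)=m\,g_{B_\rho}(x,o)-\mathbf E_x[\tau_{B_\rho}]$, the odometer obtained if the domain is forced to be exactly $B_\rho$ (equivalently $\Delta\Phi_\rho=1-m\delta_o$ on $B_\rho$ and $\Phi_\rho\equiv0$ off $B_\rho$), with $g_{B_\rho}$ the Green's function and $\tau_{B_\rho}$ the exit time of $B_\rho$; put $\beta=\gamma+\Phi_\rho$. Then $\beta$ is harmonic on $B_\rho$ and equals $\gamma$ on $\partial B_\rho$, where $u\geq0$, so $s-\beta$ is superharmonic on $B_\rho$ and $\geq0$ on $\partial B_\rho$, whence $s\geq\beta$ there by the minimum principle. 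Comparing $\Phi_\rho$ with $e$ via the Green's-function and exit-time asymptotics shows $\Phi_\rho\geq c''c^2$ on $B_{r-c}\setminus\{o\}$ once $c$ is a large enough dimensional constant, so $s>\gamma$ on $B_{r-c}\setminus\{o\}$; and $o\in D_m$ directly, since $\Delta u(o)=1-m$ forces $u(o)\geq m-1>0$.

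The main obstacle is keeping every error $O(1)$ in the radius rather than $O(r)$. In the estimate $\Phi_\rho=e+O(1)$ near $\partial B_\rho$, the two pieces $m\,g_{B_\rho}(x,o)$ and $\mathbf E_x[\tau_{B_\rho}]$ individually differ from their continuum counterparts by $\Theta(r)$, since the walk overshoots $\partial B_\rho$ by $\Theta(1)$ on exit; these $\Theta(r)$ terms are governed by the same exit distribution and must be shown to cancel in the difference. Likewise, in the outer construction the clipping of $\tilde e_{r+c'}$ near $\partial B_R$ creates an $O(1)$-per-site defect over $\Theta(r^{d-1})$ sites, and one must arrange $\overline w$ and its correction to stay superharmonic there without the correction growing like $r$; in practice this is done by a self-improving bound on the width of the symmetric difference $D_m\triangle B_r$. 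Finally, the case $d=2$ needs the extra care that the potential kernel grows rather than decays, so that the constants $C$, $\kappa$ and the remainder $O(|x|^{-2})$ enter the matching.
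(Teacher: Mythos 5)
Your inner estimate is sound and is essentially the paper's argument in different clothing: your barrier $\beta=\gamma+\Phi_\rho$ with $\Phi_\rho(x)=m\,g_{B_\rho}(x,o)-\EE_x\tau_{B_\rho}$ satisfies $\Phi_\rho(x)=\widetilde{\gamma}_d(x)-\EE_x\widetilde{\gamma}_d(X_{\tau_{B_\rho}})$ with $\widetilde{\gamma}_d(x)=|x|^2+mg_1(o,x)$, so the positivity you need on $B_{r-c}$ is exactly the content of the paper's Lemmas~\ref{gammalowerbound}--\ref{gammanearorigin} (Taylor expansion of the radial profile with vanishing linear term at $|x|=r$, plus a separate superharmonicity argument near the origin), and the $\Theta(r)$ overshoot terms do cancel for the reason you give. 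The paper dispenses with the auxiliary Green's function altogether by observing that $u-\gamma_d$ is itself superharmonic, hence minimized on $\partial B_r$ where $u\geq 0$ and $\gamma_d=O(1)$; but your route is a legitimate variant.

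The outer estimate, however, has a genuine gap. Your majorant $\overline{w}=\gamma+v$, with $v$ the radius-$R$ odometer ($R=r+c'$) clipped to vanish outside $B_R$, is not superharmonic near $\partial B_R$, and the defect is not an ``$O(1)$-per-site'' nuisance that a bounded correction can absorb. Because the mass is $m=\omega_d r^d<\omega_d R^d$, the linear term in the Taylor expansion of $\widetilde{\gamma}_d$ about $|x|=R$ does \emph{not} vanish: the radial derivative there is $2R(1-(r/R)^d)\approx 2dc'$, so $v$ is of size $\Theta(c')$ on the layer just inside $\partial B_R$, and the clipped function has discrete Laplacian $+\Theta(c')$ at each of the $\Theta(r^{d-1})$ sites just outside. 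The natural superharmonic correction (a potential of this surface defect) has size $\Theta(c'r)$, destroying the $O(1)$ radius bound; and the alternative of truncating $v$ at its zero set fails because $\max(v,0)$ is subharmonic, not superharmonic, across the free boundary. Indeed, producing a nonnegative $v$ supported in $B_{r+c'}$ with $\Delta v\leq 1-m\delta_o$ everywhere is essentially equivalent to the theorem itself, which is why the appeal to ``a self-improving bound on the width of $D_m\triangle B_r$'' is not yet an argument. The paper's mechanism is different and you should adopt it (or something equally concrete): $u-\gamma_d$ is \emph{harmonic} on $D_m$ and vanishes minus $\gamma_d$ on $\partial D_m$, so the global bound $\gamma_d\geq -a$ (Lemma~\ref{gammanonnegativeeverywhere}) gives $u\leq\gamma_d+a$ on all of $D_m$, hence $u=O(1)$ on the shell $\{r-1<|x|\leq r\}$ by Lemma~\ref{gammaupperbound}; then Lemma~\ref{boundarypath} (from every $x\in D_m$ there is a path to $o$ inside $D_m$ along which $u$ increases by at least $1$ per step, a consequence of $\Delta u=1$ on $D_m\setminus\{o\}$) forces any occupied site at distance $r+k$ to produce a point of that shell with $u\geq k$, whence $k=O(1)$.
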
 

The divisible sandpile is similar to the ``oil game'' studied by Van den Heuvel \cite{VdH}.  In the terminology of \cite{FR}, it also corresponds to the $h \rightarrow -\infty$ limit of the classical abelian sandpile (defined below), that is, the abelian sandpile started from the initial condition in which every site has a very deep ``hole.''

\begin{figure}
\centering
\includegraphics[scale=.45]{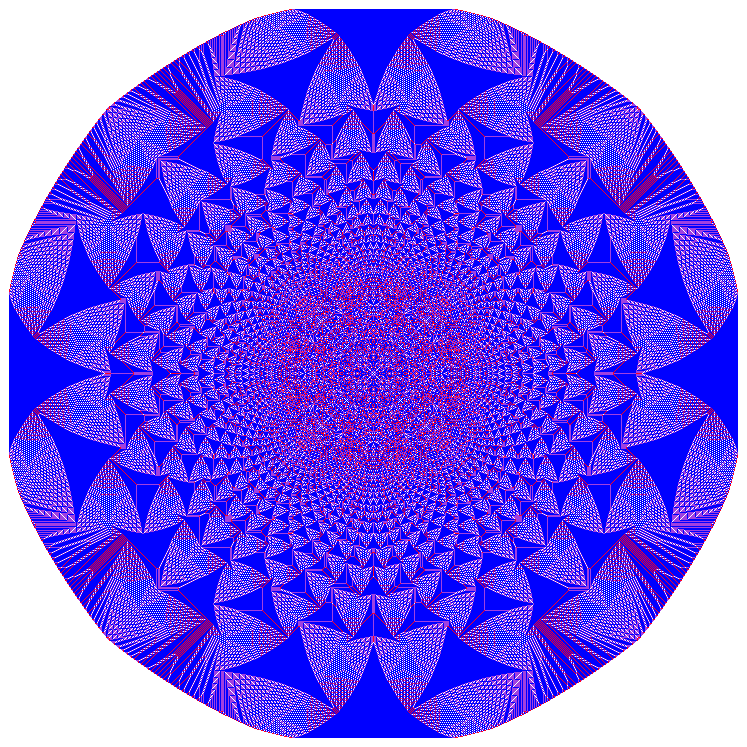}
\caption{Classical abelian sandpile aggregate of one million particles in $\Z^2$.  Colors represent the number of grains at each site.}
\label{sandpile1m}
\end{figure}

In the classical \emph{abelian sandpile model} \cite{BTW}, each site in $\Z^d$ has an integer number of grains of sand; if a site has at least $2d$ grains, it {\it topples}, sending one grain to each neighbor.  If $n$ grains of sand are started at the origin in $\Z^d$, write $S_n$ for the set of sites that are visited during the toppling process; in particular, although a site may be empty in the final state, we include it in $S_n$ if it was occupied at any time during the evolution to the final state.

\begin{figure}
\centering
\includegraphics[scale=.3]{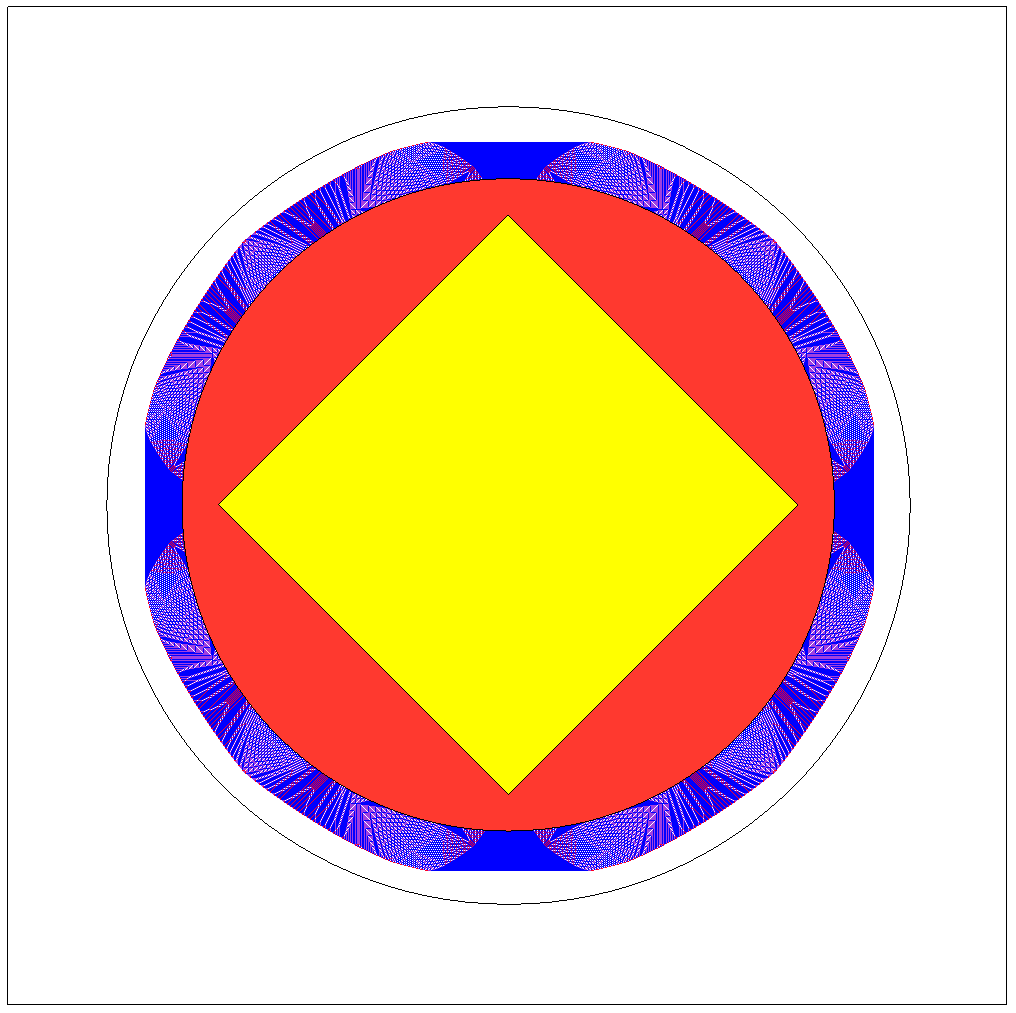}
\caption{Known bounds on the shape of the classical abelian sandpile in $\Z^2$.  The inner diamond and outer square are due to Le Borgne and Rossin \cite{LBR}; the inner and outer circles are those in Theorem~\ref{sandpilecircintro}.}
\label{sandpilebounds}
\end{figure}

Until now the best known constraints on the shape of $S_n$ in two dimensions were due to Le Borgne and Rossin \cite{LBR}, who proved that
	\[ \{x \in \Z^2 \,|\, x_1+x_2 \leq \sqrt{n/12}-1 \} \subset S_n \subset \{x \in \Z^2 \,|\, x_1,x_2 \leq \sqrt{n}/2 \}. \]
Fey and Redig~\cite{FR} proved analogous bounds in higher dimensions, and extended these bounds to arbitrary values of the height parameter $h$.  This parameter is discussed in section~\ref{sandpilepointsource}.

The methods used to prove the near-perfect circularity of the divisible sandpile shape in Theorem~\ref{divsandcircintro} can be used to give constraints on the shape of the classical abelian sandpile, improving on the bounds of~\cite{FR} and~\cite{LBR}.

\begin{theorem}
\label{sandpilecircintro}
Let $S_n$ be the set of sites that are visited by the classical abelian sandpile model in $\Z^d$, starting from $n$ particles at the origin.  Write $n = \omega_d r^d$.  Then for any $\epsilon>0$ we have
	\[ B_{c_1r - c_2} \subset S_n \subset B_{c'_1r + c'_2} \]
where
	\[ c_1 = (2d-1)^{-1/d}, \qquad c'_1 = (d-\epsilon)^{-1/d}. \]
The constant $c_2$ depends only on $d$, while $c'_2$ depends only on $d$ and $\epsilon$.
\end{theorem}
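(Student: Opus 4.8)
The plan is to transfer the obstacle-problem machinery used for the divisible sandpile (Lemma~\ref{discretemajorantintro} and Theorem~\ref{divsandcircintro}) to the classical abelian sandpile by exploiting a comparison between the two models' odometer functions. Let $u_n(x)$ denote the odometer of the abelian sandpile started from $n$ grains at the origin: the total number of grains emitted from $x$ during stabilization. As in \eqref{netchangeinmass}, counting grains received from neighbors gives $\Delta u_n(x) = \rho_n(x) - n\,\mathbbm{1}_{x=o}$ on $S_n$, where $\rho_n$ is the final configuration and $0 \le \rho_n \le 2d-1$ at every site (the stable sites). Since $S_n$ is exactly the set where $u_n>0$ together with the origin, the two-sided bound on $\rho_n$ is what produces the two-sided bound on the shape.

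For the \emph{inner} bound I would compare $u_n$ to the divisible sandpile odometer for a suitable mass. Because every visited site retains at most $2d-1$ grains in the abelian model, running a divisible sandpile of mass $n/(2d-1)$ — or more precisely, comparing with the obstacle $\gamma$ built from $\sigma = \frac{n}{2d-1}\delta_o$ rescaled — forces $u_n$ to be at least as large as the divisible sandpile odometer scaled appropriately; this is where the factor $c_1=(2d-1)^{-1/d}$ enters, since a ball of the right radius must be fully visited. Concretely, I would show $u_n$ dominates (up to a bounded additive error) the least-superharmonic-majorant solution $s-\gamma$ on the discrete ball $B_{c_1 r - c_2}$, using the maximum principle for the discrete Laplacian exactly as in the proof of Theorem~\ref{divsandcircintro}, and conclude that $B_{c_1 r - c_2}\subset S_n$.

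For the \emph{outer} bound the relevant inequality runs the other way: since $\rho_n \ge 0$, on $S_n\setminus\{o\}$ we have $\Delta u_n \ge -$ (something nonnegative away from $o$), so $u_n + \gamma'$ is subharmonic off the origin for the obstacle $\gamma'$ associated with a point mass. One then needs a lower bound on $u_n$ forcing it to vanish outside $B_{c_1' r + c_2'}$ with $c_1' = (d-\epsilon)^{-1/d}$. The appearance of $d$ (rather than $2d-1$) and the loss of $\epsilon$ come from the fact that the final abelian configuration has \emph{average} density close to $d$ in the bulk — proven by noting $\rho_n$ cannot be too small on a large set — so the mass that must be pushed out is governed by $d-\epsilon$ rather than the worst-case $2d-1$. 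I would make this rigorous by a volume/mass-balance argument: $\sum_x \rho_n(x) = n$, and if $S_n$ reached beyond radius $c_1' r + c_2'$ one could find a large region of low density contradicting the stabilization, combined with the superharmonicity estimate to control the odometer near the putative boundary.

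The main obstacle is the outer bound, specifically establishing the near-$d$ lower bound on the bulk density of the stabilized sandpile with the correct dependence on $\epsilon$ and a clean additive constant $c_2'$; the inner bound is a fairly direct adaptation of the divisible-sandpile comparison, but controlling the outer shape requires genuinely understanding that abelian-sandpile topplings do not leave large sparse pockets, which is a global rather than local property and does not follow purely from the pointwise bound $\rho_n \le 2d-1$. I expect to handle this via a combination of the abelian property (to choose a convenient toppling order), a discrete obstacle inequality, and an averaging estimate over annuli.
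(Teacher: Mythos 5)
Your inner bound is essentially the paper's argument: from the pointwise bounds $0 \le \Delta u + n\delta_{ox} \le 2d-1$ on the toppled set one gets that $u - \xi$ is superharmonic for $\xi(x) = (2d-1)|x|^2 + n g_1(o,x)$ (suitably normalized), and the minimum principle together with the Green's function estimates of section~\ref{basicestimate} yields $u>0$ on $B_{c_1 r - c_2}$ with $c_1 = (2d-1)^{-1/d}$; your framing via a divisible sandpile of mass $n/(2d-1)$ amounts to the same computation.

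The gap is in the outer bound. You correctly observe that the pointwise lower bound $\rho_n \ge 0$ is useless and that one needs the stabilized configuration to have density close to $d$ in the bulk, but you do not supply the mechanism, and the substitutes you sketch do not suffice. A global mass balance $\sum_x \rho_n(x) = n$ can at best control the \emph{volume} of $S_n$ (and even that presupposes the density lower bound, which is the very thing at issue); volume control cannot rule out long tendrils, so it cannot give $S_n \subset B_{c_1'r + c_2'}$. The missing idea is local and combinatorial: for any set $T$ of sites all of which topple, each edge internal to $T$ accounts for at least one grain remaining in $T$ in the final state, because the endpoint that topples \emph{last} sends the other endpoint a grain that never moves again (the Le Borgne--Rossin observation). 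Applied to a box $Q(x,k)$ of side $2k+1$ contained in the toppled set $T_n$, which has $2dk(2k+1)^{d-1}$ internal edges, this gives the averaged bound $\Delta u^{(k)}(x) \ge d\cdot\frac{2k}{2k+1} \ge d-\epsilon$ for $k$ large, where $u^{(k)}$ is the box-average of the odometer. With that in hand, $u^{(k)} - \psi$ is subharmonic on the set of $x$ with $Q(x,k)\subset T_n$, where $\psi$ is the box-average of $(d-\epsilon)|x|^2 + n g_1(o,x)$; the maximum principle bounds $u^{(k)}$, hence $u$, by a constant near radius $c_1'r$, and an increasing-path argument (Lemma~\ref{boundarypathology}) then shows no site beyond $B_{c_1'r+c_2'}$ can topple. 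Without the internal-edge counting, your assertion that ``$\rho_n$ cannot be too small on a large set'' is exactly the unproven step, and the outer bound does not go through.
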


Note that Theorem~\ref{sandpilecircintro} does not settle the question of the asymptotic shape of $S_n$, and Figure~\ref{sandpile1m} indicates that the limiting shape in two dimensions may be a polygon rather than a disk.  Even the existence of an asymptotic shape is not known, however.

\section{Multiple Point Sources}

\begin{figure}
\centering
\includegraphics[scale=.198]{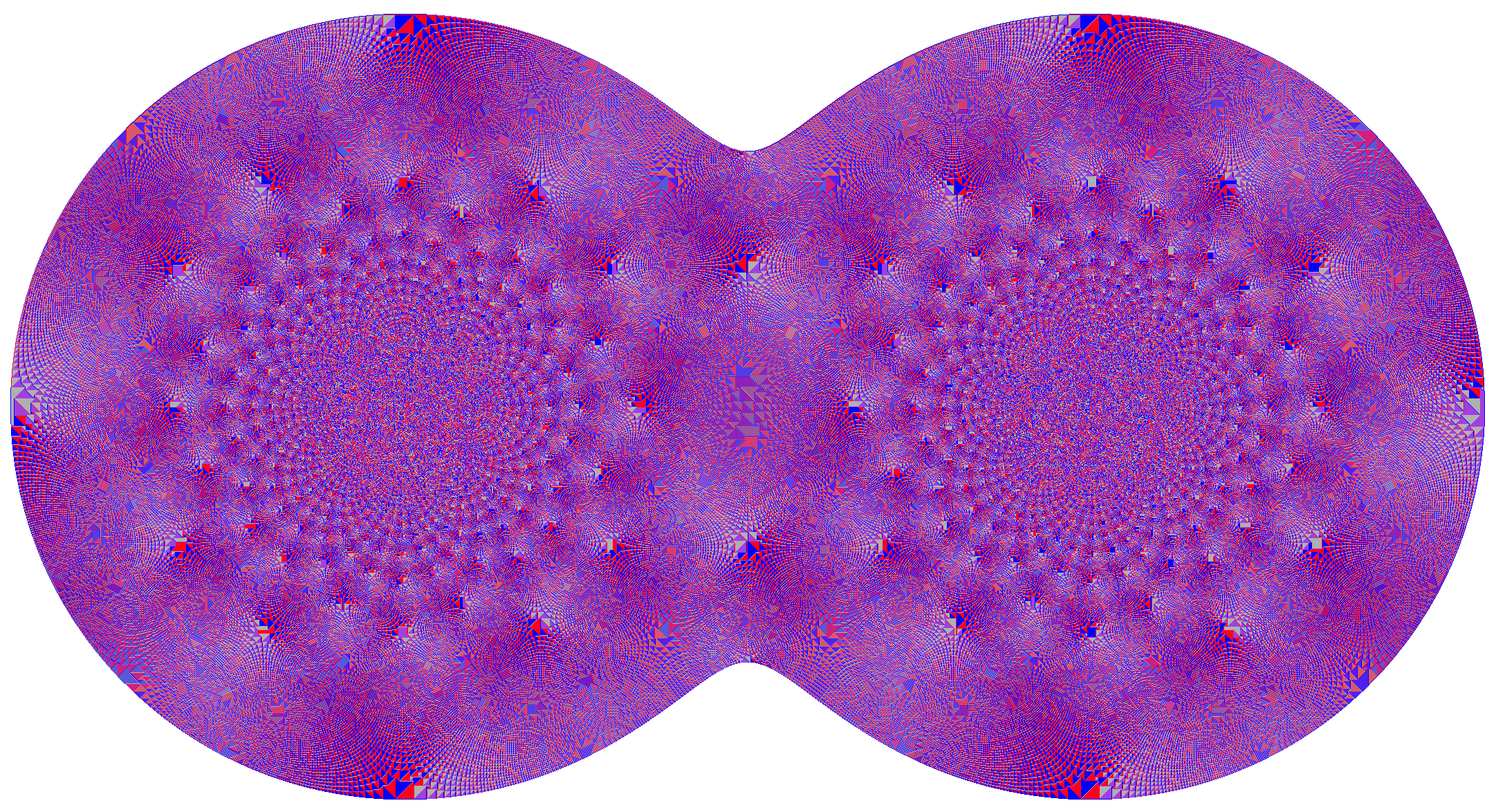}
\caption{The rotor-router model in $\Z^2$ started from two point sources on the $x$-axis.  The boundary of the limiting shape is an algebraic curve of degree~$4$; see equation (\ref{twosourcequartic}).}
\label{rotortwosource}
\end{figure}

Using our results for single point sources together with the construction of the smash sum (\ref{smashsumdef}), we can understand the limiting shape of our aggregation models started from multiple point sources.  The answer turns out to be a smash sum of balls centered at the sources.   For $x \in \R^d$ write $x^\Points$ for the closest lattice point in $\delta_n \Z^d$, breaking ties to the right.  Our shape theorem for multiple point sources, which is deduced from Theorems~\ref{DFsum}, ~\ref{rotorcircintro} and~\ref{divsandcircintro} along with the main result of \cite{LBG}, is the following.

\begin{theorem}
\label{multiplepointsources}  
Fix $x_1, \ldots, x_k \in \R^d$ and $\lambda_1, \ldots, \lambda_k > 0$.  Let $B_i$ be the ball of volume $\lambda_i$ centered at $x_i$.  Fix a sequence $\delta_n \downarrow 0$, and for $x \in \delta_n \Z^d$ let
	\[ \sigma_n(x) = \left\lfloor \delta_n^{-d} \sum_{i=1}^k \lambda_i 1_{\{x = x_i^\Points\}} \right\rfloor. \]
Let $D_n,R_n,I_n$ be the domains of occupied sites in $\delta_n \Z^d$ formed from the divisible sandpile, rotor-router model, and internal DLA, respectively, started from source density $\sigma_n$.  Then as $n \to \infty$
	\begin{equation} \label{smashsumofballs} D_n, R_n \to B_1 \oplus \ldots \oplus B_k; \end{equation}
and if $\delta_n \leq 1 / n$, then with probability one
	\[ I_n \to B_1 \oplus \ldots \oplus B_k \]
where $\oplus$ denotes the smash sum (\ref{smashsumdef}), and the convergence is in the sense of~(\ref{convergenceinthehausdorffmetric}).
\end{theorem}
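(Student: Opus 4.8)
The plan is to reduce the multiple-point-source problem to a combination of two ingredients already available: first, the single-point-source shape theorems (Theorems~\ref{rotorcircintro} and~\ref{divsandcircintro}, and the main result of~\cite{LBG} for internal DLA), which say that a point source of mass $\lambda_i$ at $x_i$ produces a domain that, after rescaling, is within an $o(1)$ Hausdorff distance of the ball $B_i$ of volume $\lambda_i$; and second, Theorem~\ref{DFsum} together with the abelian property, which tells us that when several sources are run together the resulting domain is the smash sum of the individual domains. The first step is to record an abelian/associativity statement: running all $k$ point sources of total discrete mass $\sigma_n$ is, by the abelian property of each of the three models (Lemma~\ref{abelianproperty} for the divisible sandpile, and the analogous statements for rotor-router walk and internal DLA), the same as forming the iterated smash sum of the $k$ individual single-source aggregates $D_n^{(i)}$ (resp.\ $R_n^{(i)}$, $I_n^{(i)}$), each started from $\lfloor \delta_n^{-d}\lambda_i\rfloor$ particles at $x_i^\Points$. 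Crucially, the law (or, for the deterministic models, the exact set) of the iterated smash sum does not depend on the order in which the sources are processed, which is what makes ``$B_1\oplus\cdots\oplus B_k$'' well-defined; for the continuum smash sum this is the associativity of $\oplus$, which follows from its characterization via the obstacle problem in~(\ref{thenoncoincidencesetintro}) (the least superharmonic majorant construction is manifestly symmetric in the sources).

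The second step is the convergence of the iterated discrete smash sum to the iterated continuum smash sum. Here I would argue by induction on $k$. For $k=1$ this is exactly Theorems~\ref{rotorcircintro} and~\ref{divsandcircintro} (rescaled), which give $D_n^{(1)}, R_n^{(1)} \to B_1$ and, via~\cite{LBG}, $I_n^{(1)}\to B_1$ a.s. For the inductive step, suppose the aggregate of the first $k-1$ sources converges to $E:=B_1\oplus\cdots\oplus B_{k-1}$. We want the smash sum of this aggregate with the $k$-th single-source aggregate to converge to $E\oplus B_k$. This is essentially Theorem~\ref{DFsum}, but stated there for $A\cap\delta_n\Z^d$ with $A$ a fixed bounded open set; the issue is that our ``$A$'' here is itself a varying discrete set $D_n^{(1\ldots k-1)}$ converging to $E$, not the discretization of a fixed domain. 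So I would need the slightly more robust statement — which the thesis provides in the form of Theorems~\ref{rotordomainconvergence} and~\ref{IDLAconvergence} and~\ref{domainconvergence}, where only a suitable ``smoothing'' of the source density $\sigma_n$ must converge — applied to the source density $1_{D_n^{(1\ldots k-1)}} + 1_{R_n^{(k)}}$ (with multiplicity $2$ on the overlap). Since $D_n^{(1\ldots k-1)}\to E$ and $R_n^{(k)}\to B_k$ in the sense of~(\ref{convergenceinthehausdorffmetric}), and both $E$ and $B_k$ have boundary of measure zero (a ball does; and $E$ is a finite smash sum of balls, whose boundary has measure zero by regularity theory for the obstacle problem), the smoothed densities converge to $1_E + 1_{B_k}$ a.e., which is exactly the hypothesis of those theorems. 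Continuity of $D$ as a functional of $\sigma$ in the relevant topology (proved en route to Theorem~\ref{intromain}) then yields convergence of the discrete domain to the noncoincidence set for $1_E + 1_{B_k}$, i.e.\ to $E\cup B_k\cup D = E\oplus B_k$.

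The third step is bookkeeping for internal DLA: the hypothesis $\delta_n\le 1/n$ (rather than $1/\log n$) is needed so that, across the at most $k$ nested smash-sum constructions, the total number of random walks (of order $\delta_n^{-d}\sum\lambda_i$) has its error probabilities summable, letting us invoke Borel–Cantelli to get an almost-sure statement simultaneously for all $n$; one takes a union bound over the $k$ stages and over the polynomially many lattice points involved. I expect the main obstacle to be precisely the inductive step — making rigorous that Theorem~\ref{DFsum} (or its flexible-density refinement) can be applied with one of the two summands being a random, $n$-dependent set rather than a fixed discretized domain. The key point that makes this go through is that the convergence $R_n^{(1\ldots k-1)}\to E$ is in the Hausdorff-type sense~(\ref{convergenceinthehausdorffmetric}), which is exactly strong enough to control the smoothing of the indicator function, and that the obstacle-problem solution depends continuously (indeed monotonically) on the source density; once these are in hand the argument is a clean induction. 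A minor additional point to verify is that $x_i^\Points \to x_i$ so that the centers of the discrete point sources converge to the prescribed centers $x_i$, ensuring the limiting balls are centered correctly.
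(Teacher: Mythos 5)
Your overall architecture matches the paper's: use the abelian property to write $D_n,R_n,I_n$ as iterated smash sums of the $k$ single-source aggregates, invoke the single-source shape theorems, and then pass to the continuum smash sum. But at the decisive step you and the paper diverge, and the divergence matters. You propose an induction on $k$ in which the aggregate of the first $k-1$ sources --- a varying, $n$-dependent (and for internal DLA, random) discrete set --- is fed back into Theorems~\ref{domainconvergence}, \ref{rotordomainconvergence} and \ref{IDLAconvergence} as part of a source density $1_{D_n^{(1\ldots k-1)}}+1_{R_n^{(k)}}$. You correctly flag this as the main obstacle, but you do not resolve it, and it is genuinely delicate: those theorems are stated for a fixed limiting density $\sigma$ with a deterministic sequence $\sigma_n$, and for internal DLA one would have to condition on the first-stage cluster and verify that the Borel--Cantelli estimates are uniform over its possible realizations. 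The paper avoids this entirely by a sandwiching argument: for any $\eta>0$ the single-source results give $(B_i)_\eta^\Points\subset D_n^i,R_n^i,I_n^i\subset(B_i)^{\eta\Points}$, and the \emph{monotonicity} of the smash sum in each summand (Lemma~\ref{monotonicity}) then traps the full aggregate between the discrete smash sums of the \emph{fixed} continuum sets $(B_i)_\eta$ and $(B_i)^\eta$. Theorem~\ref{DFsum} is only ever applied to these fixed open sets with measure-zero boundary, and the proof closes with the Hausdorff continuity of $\oplus$ (Lemma~\ref{sumofneighborhoods}), which lets $\eta\to0$ recover $B_1\oplus\cdots\oplus B_k$ up to $\epsilon$. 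Associativity (Lemma~\ref{associativity}) is what makes the iterated continuum sum well-defined, as you note. If you want to salvage your induction, the cleanest fix is exactly this: replace the varying set $D_n^{(1\ldots k-1)}$ by the fixed sets $E_\eta$ and $E^\eta$ between which it is sandwiched, and quote monotonicity rather than a continuity-in-$\sigma$ statement for random densities.

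One smaller correction: the hypothesis $\delta_n\le 1/n$ is not imposed to make a union bound over the $k$ stages work; it is inherited directly from the single-point-source internal DLA result (Theorem~\ref{singlepointsource}, the restatement of \cite{LBG}), which already requires $\delta_n\le 1/n$ for the almost-sure convergence $I_n^i\to B_i$. Once each $I_n^i$ is sandwiched, the rest of the argument for internal DLA is deterministic given those inclusions.
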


Implicit in equation (\ref{smashsumofballs}) is the associativity of the smash sum operation, which is not readily apparent from the definition (\ref{smashsumdef}).  For a proof of associativity, see Lemma~\ref{associativity}.  For related results in dimension two, see \cite[Prop.~3.10]{Sakai82} and \cite[section\ 2.4]{VE}.

We remark that a similar model of internal DLA with multiple point sources was studied by Gravner and Quastel \cite{GQ}, who also obtained a variational solution.  In their model, instead of starting with a fixed number of particles, each source $x_i$ emits particles according to a Poisson process.  The shape theorems of \cite{GQ} concern convergence in the sense of volume, which is a weaker form of convergence than (\ref{convergenceinthehausdorffmetric}).

\section{Quadrature Domains}

By analogy with the discrete case, we would expect that volumes add under the smash sum operation; that is,
	\[ \Leb(A \oplus B) = \Leb(A) + \Leb(B) \]
where $\Leb$ denotes Lebesgue measure in $\R^d$.  
\index{$\Leb$, Lebesgue measure on $\R^d$}
Although this additivity is not immediately apparent from the definition (\ref{smashsumdef}), it holds for all bounded open $A,B \subset \R^d$ provided their boundaries have measure zero; see Corollary~\ref{volumesadd}.

We can derive a more general class of identities known as \emph{quadrature identities} involving integrals of harmonic functions over $A \oplus B$.  Let us first consider the discrete case.
If $h$ is a superharmonic function on $\Z^d$, and $\sigma$ is a mass configuration for the divisible sandpile (so each site $x \in \Z^d$ has mass $\sigma(x)$), the sum $\sum_{x \in \Z^d} h(x) \sigma(x)$ can only decrease when we perform a toppling.  Thus
	\begin{equation} \label{discretequadrature} \sum_{x \in \Z^d} h(x) \nu(x) \leq \sum_{x \in \Z^d} h(x) \sigma(x), \end{equation}
where $\nu$ is the final mass configuration.
We therefore expect the domain $D$ given by (\ref{thenoncoincidencesetintro}) to satisfy the \emph{quadrature inequality}
	\begin{equation} \label{quadratureineq} \int_D h(x) dx \leq \int_D h(x) \sigma(x) dx \end{equation}
for all integrable superharmonic functions $h$ on $D$.  For a proof under suitable smoothness assumptions on $\sigma$ and $h$, see Proposition~\ref{boundaryregularitysmooth}; see also \cite{Sakai}. 

A domain $D \subset \R^d$ satisfying an inequality of the form (\ref{quadratureineq}) is called a \emph{quadrature domain} for $\sigma$.  Such domains are widely studied in potential theory and have a variety of applications in fluid dynamics \cite{Crowdy,Richardson}.  For more on quadrature domains and their connection with the obstacle problem, see \cite{AS76,CKS,GS,KS,Sakai,Shahgholian}.  Equation (\ref{discretequadrature}) can be regarded as a discrete analogue of a quadrature inequality; in this sense our aggregation models, in particular the divisible sandpile, produce discrete analogues of quadrature domains.

In Proposition~\ref{ballquadrature}, we show that the smash sum of balls $B_1 \oplus \ldots \oplus B_k$ arising in Theorem~\ref{multiplepointsources} obeys the classical quadrature identity
	\begin{equation} \label{classicalquadrature} \int_{B_1 \oplus \ldots \oplus B_k} h(x) dx = \sum_{i=1}^k \lambda_i h(x_i) \end{equation}
for all harmonic functions $h$ on $B_1 \oplus \ldots \oplus B_k$.  This can be regarded as a generalization of the classical mean value property of harmonic functions, which corresponds to the case $k=1$.  Using results of Gustafsson \cite{Gustafsson83} and Sakai \cite{Sakai} on quadrature domains in the plane, we can deduce the following theorem, which is proved in section~\ref{multiplesources}.

\begin{theorem}
\label{algebraicboundary}
Let $B_1, \ldots, B_k$ be disks in $\R^2$ with distinct centers.  The boundary of the smash sum $B_1 \oplus \ldots \oplus B_k$ lies on an algebraic curve of degree $2k$.  More precisely, there is a polynomial $P \in \R[x_1,x_2]$ of the form
	\[ P(x_1, x_2) = \left(x_1^2 + x_2^2\right)^k + \mbox{\em lower order terms} \]
and there is a finite set of points $E \subset \R^2$, possibly empty, such that
	\[ \partial (B_1 \oplus \ldots \oplus B_k) = \{(x_1,x_2) \in \R^2 | P(x_1,x_2)=0\} - E. \]
\end{theorem}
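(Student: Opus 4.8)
The plan is to combine the quadrature identity \eqref{classicalquadrature} for $D := B_1 \oplus \ldots \oplus B_k$ with the structure theory of quadrature domains in the plane due to Gustafsson and Sakai. First I would note that, by Proposition~\ref{ballquadrature}, $D$ is a bounded open set satisfying $\int_D h\,dx = \sum_{i=1}^k \lambda_i h(x_i)$ for every harmonic $h$ on $D$; that is, $D$ is a \emph{quadrature domain} for the measure $\mu = \sum \lambda_i \delta_{x_i}$, whose support consists of $k$ distinct points. I would also record that by Corollary~\ref{volumesadd} the area of $D$ is $\sum \lambda_i > 0$, so $D$ is nonempty, and that $D$ is bounded (it is contained in some large ball, since the obstacle $\gamma$ is harmonic and strictly subharmonic-corrected outside a compact set). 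A connectedness remark may be needed: if $D$ has a component on which $\mu$ puts no mass, then testing against $h\equiv 1$ there forces that component to have zero area, a contradiction; so every component of $D$ contains at least one $x_i$, and there are at most $k$ components.

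The heart of the argument is to invoke the classical result (Gustafsson \cite{Gustafsson83}, building on Aharonov--Shapiro and Sakai \cite{Sakai}) that a bounded quadrature domain $D\subset\R^2\cong\C$ for a measure $\mu$ with finite support of cardinality $k$ (counted with the orders of the point masses, here all simple) has a boundary contained in the zero set of a polynomial. Concretely: on such a domain the Schwarz function $S(z)$, characterized by $S(z) = \bar z$ on $\partial D$, extends meromorphically to all of $D$ with poles only at the nodes $x_i$, each of order $1$ (matching the simple point masses $\lambda_i\delta_{x_i}$). One then forms $P(z,\bar z)$ by clearing denominators: writing $S(z) = Q(z)/\prod_{i}(z-x_i)$ for a polynomial $Q$, the boundary relation $\bar z \prod_i(z-x_i) = Q(z)$ on $\partial D$ exhibits $\partial D$ inside the real-algebraic curve $\{(x_1,x_2): \operatorname{Re}\text{ and }\operatorname{Im}\text{ parts vanish}\}$; more efficiently one uses that $z\mapsto \bar z\prod(z-x_i) - Q(z)$, together with its conjugate, cuts out a real polynomial $P(x_1,x_2)$ that vanishes on $\partial D$. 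I would then compute the degree and leading term: $S$ has a pole of total order $k$ "at infinity in $D$-coordinates," equivalently $Q$ has degree $k$; so $\bar z \prod(z-x_i)$ has bidegree $(k,1)$ in $(z,\bar z)$, and after symmetrizing with the complex conjugate the resulting real polynomial $P$ has degree $2k$ with top-degree part $(z\bar z)^k = (x_1^2+x_2^2)^k$, exactly as claimed.

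The remaining point is the exceptional set $E$: the inclusion $\partial D \subset \{P=0\}$ can be strict because the algebraic curve $\{P=0\}$ may contain isolated points or arcs not on $\partial D$ (for instance where two disks are tangent, or spurious real points of a complexified curve). Here I would cite Sakai's regularity theory \cite{Sakai}: the boundary of a quadrature domain in the plane is real-analytic except at finitely many points (cusps and double points), and in particular $\partial D$ is a finite union of real-analytic arcs; since $\{P = 0\}$ is a real algebraic curve of degree $2k$, the set-theoretic difference $\{P=0\}\setminus\partial D$ is a subvariety of dimension $0$, i.e.\ a finite set $E$, giving $\partial D = \{P = 0\}\setminus E$.

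The main obstacle I anticipate is not the degree bookkeeping but making rigorous the passage from "$D$ is a quadrature domain for a $k$-point measure" to "the Schwarz function is rational with the prescribed pole structure and $\partial D$ lies on a degree-$2k$ curve" — this is precisely the content one must import wholesale from \cite{Gustafsson83} and \cite{Sakai}, and the work is in checking that the hypotheses of those theorems (boundedness, the signed-measure normalization, simplicity of the nodes, and the fact that the quadrature identity holds for \emph{all} harmonic functions, not merely analytic ones) are met by our $D$. Once those citations are in place, the degree and leading-coefficient claims, and the identification of $E$ as finite via Sakai's boundary regularity, are routine.
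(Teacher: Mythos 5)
Your proposal is correct and follows essentially the same route as the paper: establish via Proposition~\ref{ballquadrature} that $B_1 \oplus \ldots \oplus B_k$ is a quadrature domain for the $k$-point measure $\sum \lambda_i \delta_{x_i}$, and then invoke Gustafsson's structure theory for planar quadrature domains (\cite[sec.~6]{Gustafsson83}; see also \cite[Lemma 1.1(a)]{Gustafsson88}) to obtain the degree-$2k$ algebraic curve with leading term $(x_1^2+x_2^2)^k$ and the finite exceptional set $E$. The extra detail you supply about the Schwarz function and the degree bookkeeping is a faithful unpacking of what is imported from Gustafsson, not a different argument.
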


For example, if $B_1$ and $B_2$ are disks of equal radius $r>1$ centered at $(1,0)$ and $(-1,0)$, then $\partial (B_1 \oplus B_2)$ is given by the quartic curve \cite{Shapiro}
	\begin{equation} \label{twosourcequartic} \left(x_1^2+x_2^2\right)^2 - 2r^2 \left(x_1^2 + x_2^2\right) - 2(x_1^2 - x_2^2) = 0. \end{equation}
This curve describes the shape of the rotor-router model with two point sources pictured in Figure~\ref{rotortwosource}.

\section{Aggregation on Trees}

Let $T$ be the infinite $d$-regular tree.  To define rotor-router walk on a tree, for each vertex of $T$ choose a cyclic ordering of its $d$ neighbors.  Each vertex is assigned a rotor which points to one of the neighboring vertices, and a particle walks by first rotating the rotor at each site it comes to, then stepping in the direction of the newly rotated rotor.  Fix a vertex $o \in T$ called the origin.  Beginning with $A_1 = \{o\}$, define the rotor-router aggregation cluster $A_n$ inductively by
	\[ A_n = A_{n-1} \cup \{x_n\}, \qquad n >1 \]
where $x_n \in T$ is the endpoint of a rotor-router walk started at $o$ and stopped on first exiting $A_{n-1}$.  We do not change the positions of the rotors when adding a new chip.  Thus the sequence $(A_n)_{n \geq 1}$ depends only on the choice of the initial rotor configuration.  

Our next result is the analogue of Theorem~\ref{rotorcircintro} on regular trees.
Call a configuration of rotors \emph{acyclic} if there are no directed cycles of rotors.  On a tree, this is equivalent to forbidding directed cycles of length $2$: for any pair of neighboring vertices $v\sim w$, if the rotor at $v$ points to $w$, then the rotor at $w$ does not point to $v$.  As the following result shows, provided we start with an acyclic configuration of rotors, the occupied cluster $A_n$ is a perfect ball for suitable values of $n$.

\begin{theorem}
\label{aggregintro}
Let $T$ be the infinite $d$-regular tree, and let
	\[ B_r = \{x \in T \,:\, |x| \leq r\} \]
be the ball of radius $r$ centered at the origin $o \in T$, where $|x|$ is the length of the shortest path from $o$ to $x$.  Write
	\[ b_r = \# B_r = 1 + d \frac{(d-1)^r-1}{d-2}. \]
Let $A_n$ be the region formed by rotor-router aggregation on the infinite $d$-regular tree, starting from $n$ chips at $o$.  If the initial rotor configuration is acyclic, then 
	 \[ A_{b_r} = B_r. \]
\end{theorem}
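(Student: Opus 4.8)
\emph{Proof proposal.} The plan is to induct on $r$. The base case is trivial: $A_{b_0}=A_1=\{o\}=B_0$. For the inductive step, assume $A_{b_{r-1}}=B_{r-1}$ and route the remaining $b_r-b_{r-1}=d(d-1)^{r-1}$ chips one at a time, each stopped the first time it leaves the current cluster; I want to show that these chips come to rest exactly on the sphere $\{x:|x|=r\}$, one per vertex, with no chip ever reaching distance $r+1$. The cleanest packaging is to compute the rotor-router odometer $u(x)=\#\{\text{chips emitted from }x\}$ for the whole process that builds $A_{b_r}$ from $n=b_r$ chips at $o$, and to show it is \emph{radial}: writing $u_k$ for the value of $u$ on $\{|x|=k\}$, one should have $u_k=0$ for $k\ge r$, the conservation relation
	\[ \frac{1}{d}\,u_{k-1} + \frac{d-1}{d}\,u_{k+1} \;=\; u_k + 1 \qquad (1\le k\le r-1), \]
and $u_0=b_r-1+u_1$ at the origin. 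This two-sided recursion has a unique nonnegative solution, and one checks that it has $u_{r-1}=d$; hence each vertex at distance $r-1$ fires exactly $d$ times, completing exactly one rotor cycle, so it emits exactly one chip to each of its $d-1$ children on the sphere and none to distance $r+1$. Granting that $u$ is this radial function, the occupied cluster is read off as $B_r$, completing the induction.

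To prove the odometer is radial I would combine the abelian/least-action description of rotor-router aggregation (the odometer is pointwise-minimal among functions compatible with the rotor mechanism that stabilize $n$ chips at $o$; cf.\ Lemma~\ref{odomflow} and the surrounding discussion) with the self-similar structure of $T$: deleting $o$ leaves $d$ isomorphic rooted subtrees $T_1,\dots,T_d$ of branching number $d-1$, in each of which $B_r$ restricts to a radius-$(r-1)$ ball, and there is the matching identity $b_r=1+d\cdot\#(B_r\cap T_i)$, so that once chips are known to be delivered downward in a balanced way the statement reduces to the same statement one level smaller. The place where the \emph{acyclic} hypothesis enters -- and this is the crux of the argument -- is in showing that the radial odometer above is the one actually realized, i.e.\ that the process does not run ``degenerately.'' A degenerate run, in which some vertex $v$ at distance $\le r-1$ fires fewer than one full cycle and pours all of its emissions into children without ever returning a chip toward the root, can be traced down toward $o$ and is seen to force a vertex $w$ whose rotor points to its parent while that parent's rotor points back to $w$ -- a directed $2$-cycle of rotors, which is exactly what acyclicity forbids. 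Thus acyclicity rules out precisely the configurations that would leave a sphere vertex unoccupied or send a chip outward off an already-occupied sphere vertex (the overshoot to distance $r+1$), and forces the balanced, radial behaviour; making this precise -- identifying the exact invariant on the rotor configuration that is preserved from one completed ball to the next -- is where the real work lies.

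The remaining ingredient is routine if somewhat delicate: the rotor bookkeeping at a single vertex. At a vertex $v$ with $1\le|v|\le r-1$ the rotor cycles through $v$'s parent and its $d-1$ children, so in any $d$ consecutive emissions from $v$ exactly one goes toward the root and one to each child, while at $o$ the rotor cycles through the $d$ neighbors; tracking these cycles together with the chips that bounce back up toward $o$, via the conservation relation, yields uniqueness of the radial odometer and the absence of overshoot. An alternative route -- and the one taken in this thesis -- is to pass to the finite truncated tree, namely the radius-$r$ ball with all distance-$r$ vertices collapsed to a single sink, where rotor-routing of chips is governed by the sandpile group of that graph; the explicit decomposition of this sandpile group (obtained elsewhere in the thesis), together with the identification of recurrent rotor configurations as a torsor over the sandpile group, forces each of the $\#\{x:|x|=r\}$ boundary vertices to absorb exactly one of the $b_r$ chips, which is the statement $A_{b_r}=B_r$.
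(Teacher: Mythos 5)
Your main line of attack (a radial odometer determined by a two-sided recursion, verified by induction on $r$) has a gap exactly at its crux, and you concede as much. The conservation relation $\frac{1}{d}u_{k-1}+\frac{d-1}{d}u_{k+1}=u_k+1$ already presupposes that every vertex distributes its emissions \emph{equally} among its $d$ neighbors, i.e.\ that every rotor completes an integer number of full turns over the course of the process. That balanced-emission property is essentially the content of the theorem, so deducing the radial odometer from it is circular until you prove it independently. Your proposed mechanism for where acyclicity enters --- tracing a ``degenerate run'' down to a directed $2$-cycle of rotors --- is not the right one: acyclicity is a hypothesis on the \emph{initial} configuration only, the rotors change as chips move, and no local obstruction of this kind is identified in the paper or, as far as I can see, available. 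The actual mechanism is global and group-theoretic: on the wired ball $T_n$, acyclic rotor configurations are exactly the recurrent states $Rec(T_n)$ (oriented spanning trees rooted at the sink), the rotor-router group acts simply transitively on them (Theorem~\ref{groupisom}, Lemma~\ref{transitivity}), and the order of the root element $e_r$ equals $\#R(T_n)=\frac{(d-1)^n-1}{d-2}$ by Proposition~\ref{rootsubgroup} --- which is precisely the number of chips sent into each branch when one shell is added. This is what forces the rotor configuration to return to its initial acyclic state after each completed ball, and that returning-to-initial-state invariant is also what your induction must carry from $B_{r-1}$ to $B_r$; knowing only the shape $A_{b_{r-1}}=B_{r-1}$ is not enough to control the next batch of chips.

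Your closing paragraph does point at the paper's actual route (sandpile group of the truncated tree, torsor structure), but as a pointer rather than a proof, and it omits a necessary step: knowing that $e_r$ has order $\frac{(d-1)^n-1}{d-2}$, hence that the rotors return to their initial state, does \emph{not} by itself determine where the chips stop. One still needs Lemma~\ref{HPinvariant} --- conservation of $\sum_x H(x)u(x)$ for harmonic $H$ when every rotor makes an integer number of full turns --- applied to the hitting probability $H(x)=\PP_x(X_\tau=z)$ of a fixed leaf $z$, together with the computation $H(r)=\frac{d-2}{(d-1)^n-1}$ in (\ref{hittingprobofleaf}), to conclude that exactly one chip stops at each leaf (Lemma~\ref{exitmeasure}). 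Finally, the theorem's dynamics stop a chip when it \emph{exits the cluster}, not when it returns to $o$; the paper handles this by introducing the modified, time-changed process $A'_n$ in which chips also stop on returning to $o$, and your induction would need the same device (or an explicit appeal to the abelian property) to match Lemma~\ref{exitmeasure} to the aggregation process as stated.
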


The proof of Theorem~\ref{aggregintro} uses the {\it sandpile group} of a finite regular tree with the leaves collapsed to a single vertex.  This is an abelian group defined for any graph $G$ whose order is the number of spanning trees of $G$.  In section~\ref{sandpiletree} we recall the definition of the sandpile group and prove the following decomposition theorem expressing the sandpile group of a finite regular tree as a product of cyclic groups.

\begin{theorem}
\label{sandpilegroupdecomp}
Let $T_n$ be the regular tree of degree $d=a+1$ and height $n$, with leaves
\index{$T_n$, regular tree with wired boundary}
collapsed to a single sink vertex and an edge joining the root to the sink.  Then writing $\Z_p^q$ for the group $(\Z/p\Z) \oplus \ldots \oplus (\Z/p\Z)$ with $q$ summands, the sandpile group of $T_n$ is given by
	\[ SP(T_n) \simeq \Z_{1+a}^{a^{n-3}(a-1)} \oplus \Z_{1+a+a^2}^{a^{n-4}(a-1)} \oplus \ldots \oplus \Z_{1+a+\ldots+a^{n-2}}^{a-1} \oplus \Z_{1+a+\ldots+a^{n-1}}. \]
\end{theorem}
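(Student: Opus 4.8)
The plan is to compute the Smith normal form of the reduced Laplacian $\Delta'$ of $T_n$, exploiting the high degree of symmetry of the regular tree. The sandpile group is $\Z^{N}/\Delta' \Z^{N}$ where $N$ is the number of non-sink vertices, so the cyclic factors are exactly the nontrivial invariant factors $d_1 \mid d_2 \mid \ldots$ of $\Delta'$. The key structural observation is that $\Delta'$ commutes with the automorphism group of $T_n$ fixing the root, so eigenvectors and invariant subspaces organize according to which ``level'' of the tree they are supported on: a vector that is constant on each sphere $\{|x| = j\}$, together with vectors that sum to zero over the children of a fixed vertex at level $k$ and vanish elsewhere except on the subtree below. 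I would set up coordinates accordingly and observe that $\Delta'$ acts as a direct sum — up to a base change over $\Z$, which is the delicate point — of one ``radial'' tridiagonal block of size $n$ and, for each $k = 1, \ldots, n-1$, a family of identical smaller tridiagonal blocks indexed by the vertices at level $k$, each such block of size $n-k$ and each appearing with multiplicity $a^{k-1}(a-1)$ (the number of ``difference'' vectors at that level).

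Next I would diagonalize each tridiagonal block. Each block is the reduced Laplacian of a path-like chain with edge weights reflecting the branching number $a$, and its determinant telescopes: the determinant of the block starting at level $k$ should come out to $1 + a + \ldots + a^{n-k}$, which matches the claimed cyclic orders (the radial block, starting at level $0$, giving $1 + a + \ldots + a^{n-1}$). To get the full Smith normal form rather than just the determinant, I would argue that within each fixed-level block the gcd of the $(m-1)\times(m-1)$ minors is $1$ — i.e.\ every block has all invariant factors equal to $1$ except the last, which equals its determinant — so that the block contributes a single cyclic group $\Z_{1+a+\ldots+a^{n-k}}$. This should follow because each tridiagonal matrix has a unit entry (or a $2\times 2$ minor equal to a unit) after row/column operations, letting one reduce its size by one while keeping the determinant; iterating collapses the block to a $1\times 1$ matrix with entry equal to the determinant. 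Multiplying by the multiplicities gives exactly the stated product $\Z_{1+a}^{a^{n-3}(a-1)} \oplus \cdots \oplus \Z_{1+a+\ldots+a^{n-1}}$.

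The main obstacle is making the block-diagonalization rigorous \emph{over $\Z$}, not merely over $\Q$: decomposing $\Z^N$ as an \emph{integral} direct sum of $\Delta'$-invariant sublattices on which $\Delta'$ restricts to the advertised blocks. The eigenspace decomposition is automatic over $\Q$, but the Smith normal form is sensitive to whether the change-of-basis matrix is unimodular. I would handle this by writing down an explicit integral basis: the indicator-type vectors ``$\mathbf{1}$ on the subtree below $v$'' for $v$ ranging over a carefully chosen set, together with genuine difference vectors, and check directly that the transition matrix to the standard basis is triangular with $\pm 1$ on the diagonal. An alternative, possibly cleaner, route is to avoid global diagonalization entirely and instead perform integer row and column operations on $\Delta'$ directly, processing vertices from the leaves inward: the leaf rows let one clear columns and peel off the innermost tridiagonal blocks one shell at a time, leaving a smaller reduced Laplacian of the same type on $T_{n-1}$, so the result follows by induction on $n$. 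This inductive elimination is likely the most robust way to organize the argument, with the base cases $n = 1, 2$ checked by hand and the inductive step reduced to the determinant telescoping described above.
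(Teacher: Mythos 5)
Your block numerology is exactly right -- the radial block has determinant $q_n=1+a+\dots+a^{n-1}$ and the difference blocks attached to level-$k$ vertices have determinants $q_{n-k}$ with the stated multiplicities -- but the step you yourself flag as delicate is where the argument actually breaks. There is in general \emph{no} $\Delta'$-invariant splitting of $\Z^N$ into radial and difference sublattices, so no integral basis, triangular-unimodular or otherwise, can put the reduced Laplacian in block \emph{diagonal} form. Already for $d=3$, $n=3$ the reduced Laplacian on the three non-sink vertices (root, two children) is
\begin{equation*}
\Delta' \;=\; \begin{pmatrix} 3 & -1 & -1 \\ -1 & 3 & 0 \\ -1 & 0 & 3 \end{pmatrix},
\end{equation*}
whose only rational invariant subspaces are the difference line spanned by $(0,1,-1)$ and the radial plane spanned by $(1,0,0),(0,1,1)$ (the radial $2\times 2$ block has irreducible characteristic polynomial $\lambda^2-6\lambda+7$). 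The corresponding saturated sublattices of $\Z^3$ sum to a sublattice of index $2$, and any internal direct summand of $\Z^3$ is saturated, so an invariant integral splitting cannot exist. The most an integral change of basis can achieve is block \emph{triangular} form, and the cokernel of a block triangular matrix is only an extension of the block cokernels; that this extension splits is precisely the nontrivial content of the theorem, and it genuinely uses regularity -- for the non-regular tree of Figure~\ref{counterexample} the root subgroup $\Z_{10}$ is \emph{not} a summand of $SP(T)\simeq\Z_{40}$, so no argument blind to the symmetry can establish the splitting.

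The fallback elimination is also not correct as stated: peeling the deepest shell and "leaving a smaller reduced Laplacian of the same type on $T_{n-1}$" would give $SP(T_n)\simeq \Z_{1+a}^{a^{n-3}(a-1)}\oplus SP(T_{n-1})$, which is false -- even determinants fail, e.g.\ for $d=3,n=3$ one has $t_3=21\neq 3\cdot t_2=9$; eliminating the bottom shell via the unit entries produces modified (weighted) diagonal entries, not the Laplacian of $T_{n-1}$. The correct recursion is $SP(T_n)\simeq \Z_{q_n}\oplus SP(T_{n-1})^{\oplus (d-1)}/(\hat{r}_1,\dots,\hat{r}_{d-1})$, which the paper obtains by entirely different means: the branch quotient isomorphism of Theorem~\ref{quotientisom} on recurrent configurations, the computation that the root subgroup is cyclic of order $q_n$ (Proposition~\ref{rootsubgroup}), and an averaging projection $u\mapsto (d-1)^2\sum_\alpha \sigma_\alpha u$ built from the tree automorphisms, which exhibits the root subgroup as a direct summand because $(d-1)^n\equiv 1 \pmod{q_n}$ (Proposition~\ref{regulardirectsum}). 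To salvage your Smith-normal-form approach you would have to supply exactly this missing splitting argument (necessarily exploiting the automorphisms, e.g.\ by the same averaging trick applied to the block-triangular form) and track the true recursion above; as written, both of your routes have a gap at the decisive step.
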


For example, taking $d=3$ we obtain that the sandpile group of the regular ternary tree of height $n$ has the decomposition
	\[ SP(T_n) \simeq (\Z_3)^{2^{n-3}} \oplus (\Z_7)^{2^{n-4}} \oplus \ldots \oplus \Z_{2^{n-1}-1} \oplus \Z_{2^n-1}. \]

Toumpakari \cite{Toumpakari} studied the sandpile group of the ball $B_n$ inside the infinite $d$-regular tree.  Her setup differs slightly from ours in that there is no edge connecting the root to the sink.  She found the rank, exponent, and order of $SP(B_n)$ and conjectured a formula for the ranks of its Sylow $p$-subgroups.  We use Theorem~\ref{sandpilegroupdecomp} to give a proof of her conjecture in section~\ref{toumpakari}.

Chen and Schedler \cite{ChSch} study the sandpile group of thick trees (i.e.\ trees with multiple edges) without collapsing the leaves to the sink.  They obtain quite a different product formula in this setting.

In section~\ref{rotortree} we define the {\it rotor-router group} of a graph and show that it is isomorphic to the sandpile group.  We then use this isomorphism to prove Theorem~\ref{aggregintro}.

Much previous work on the rotor-router model has taken the form of comparing the behavior of rotor-router walk with the expected behavior of random walk.  For example, Cooper and Spencer \cite{CS} show that for any configuration of chips on even lattice sites in $\Z^d$, letting each chip perform rotor-router walk for $n$ steps results in a configuration that differs by only constant error from the expected configuration had the chips performed independent random walks.  We continue in this vein by investigating the recurrence and transience of rotor-router walk on trees.  A walk which never returns to the origin visits each vertex only finitely many times, so the positions of the rotors after a walk has escaped to infinity are well-defined.  We construct two ``extremal'' rotor configurations on the infinite ternary tree, one for which walks exactly alternate returning to the origin with escaping to infinity, and one for which every walk returns to the origin.  The latter behavior is something of a surprise: to our knowledge it represents the first example of rotor-router walk behaving fundamentally differently from the expected behavior of random walk.

In between these two extreme cases, a variety of intermediate behaviors are possible.  We say that a binary word $a_1 \ldots a_n$ is an {\it escape sequence} for the infinite ternary tree if there exists an initial rotor configuration on the tree so that the $k$-th chip escapes to infinity if and only if $a_k=1$.  The following result characterizes all possible escape sequences on the ternary tree.

\begin{theorem}
\label{escapeseqs}
Let $a = a_1 \ldots a_n$ be a binary word. For $j \in \{1,2,3\}$ write $a^{(j)} = a_{j} a_{j+3} a_{j+6} \ldots$. Then $a$ is an escape sequence for some rotor configuration on the infinite ternary tree if and only if for each $j$ and $k \geq 2$, every subword of $a^{(j)}$ of length $2^k-1$ contains at most $2^{k-1}$ ones.
\end{theorem}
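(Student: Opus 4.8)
The plan is to peel off the branching at the origin, reducing to rotor--router walk on a rooted binary tree, and then to analyze that walk recursively, matching the dyadic structure of the walk to the dyadic structure of the condition on $a$.

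\emph{Reduction to the binary tree.} The rotor at the origin $o$ is advanced exactly once per chip: each chip leaves $o$ precisely once, and a chip that comes back to $o$ is stopped there. Hence the $k$-th chip enters the subtree hanging from the $k$-th child of $o$ in cyclic order, one of three rooted binary trees $\mathcal{B}_1,\mathcal{B}_2,\mathcal{B}_3$ (here ``rooted binary tree'' means the rooted tree in which the root has an external parent vertex $o$ and every vertex has two children). A chip entering $\mathcal{B}_i$ either escapes through $\mathcal{B}_i$ or returns to $o$, so it never touches the other two subtrees; the three subtrees therefore evolve independently, and the subsequence of outcomes of chips entering a fixed $\mathcal{B}_j$ is exactly the escape sequence of rotor--router walk on a rooted binary tree for some initial rotor configuration, and conversely any three such sequences are simultaneously realizable. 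Since the three residue classes mod $3$ are exactly the chips entering the three subtrees, this identifies the realizable $a$ on the ternary tree with the triples $(a^{(1)},a^{(2)},a^{(3)})$ of escape sequences realizable on the rooted binary tree. So it suffices to prove: the escape sequences realizable on the infinite rooted binary tree are exactly the words $w$ in which every subword of length $2^k-1$ has at most $2^{k-1}$ ones, for all $k\ge 2$; call this set $\mathcal{W}$.

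\emph{The recursive decomposition.} Let $\mathcal{B}$ have root $\rho$ and subtrees $\mathcal{B}_0,\mathcal{B}_1$, and let $b^{(0)},b^{(1)}$ be their escape sequences for the induced configurations. The rotor at $\rho$ cycles through the three directions (parent, child $0$, child $1$), so between two consecutive exits of a chip from $\rho$ to its parent the chip makes exactly one excursion into $\mathcal{B}_0$ and one into $\mathcal{B}_1$; an excursion into $\mathcal{B}_i$ consumes the next symbol of $b^{(i)}$, and the chip's journey continues if that symbol is $0$ and ends in escape if it is $1$. Writing $w$ as maximal blocks of $1$'s each followed by a single $0$ (plus a possibly incomplete trailing block), and letting $c_i$ be the number of ones in the $i$-th block, this gives $c_i = b^{(0)}_i + b^{(1)}_i$ up to an index shift; in particular $c_i\le 2$, i.e.\ there are never three consecutive escapes, because any three consecutive turns of the rotor at $\rho$ include an exit to the parent. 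Thus the set of realizable sequences on $\mathcal{B}$ equals the set of $w$ whose block--count sequence $(c_i)$ can be written $b^{(0)}_i+b^{(1)}_i$ with $b^{(0)},b^{(1)}$ realizable. The whole theorem now rests on the combinatorial identity: $\mathcal{W}$ is exactly the set of words whose block--count sequence splits as a sum of two $\mathcal{W}$--sequences (with induction on word length closing the argument, the base cases being short words realized trivially — all rotors toward the parent for the all--zeros word, and rotors arranged down a single ray for words calling for an escape).

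\emph{The two halves of the identity.} For ``$\supseteq$'' (if $(c_i)$ splits into two $\mathcal{W}$--sequences then $w\in\mathcal{W}$) I would argue by induction on $k$ that a realizable sequence has at most $2^{k-1}$ ones in every window of length $\le 2^k-1$: the case $k=2$ is $c_i\le2$, and for the step a length--$(2^{k+1}-1)$ window of $w$ meeting $B$ blocks contains at least $B-1$ zeros, so if $B\ge 2^k$ it has at most $2^k$ ones, while if $B\le 2^k-1$ its ones are at most $\sum c_i=\sum b^{(0)}_i+\sum b^{(1)}_i\le 2^{k-1}+2^{k-1}$ by the inductive hypothesis applied to the realizable sequences $b^{(0)},b^{(1)}$ over $B$ consecutive indices (with a subadditivity argument covering the case where these are short). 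The harder direction ``$\subseteq$'' asks, given $w\in\mathcal{W}$, to exhibit a splitting $c_i=b^{(0)}_i+b^{(1)}_i$ with both $b^{(j)}\in\mathcal{W}$ — i.e.\ to two--color the ones in the $c_i=1$ blocks so that every dyadic window constraint survives in both halves simultaneously; a plain ``balance the running totals'' assignment is not obviously enough when $w$ is locally one--heavy, and one must use that $\mathcal{W}$ itself forbids long runs of $2$--blocks.

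\emph{Main obstacle.} The delicate point throughout is that the bounds must come out as exactly $2^{k-1}$ rather than $2^{k-1}+O(1)$: this forces careful treatment of the block/window boundary conventions, of the index shift and incomplete trailing block in $c_i=b^{(0)}_i+b^{(1)}_i$, and of the freedom in the cyclic orders; and on the sufficiency side the real work is producing an explicit valid split of $(c_i)$ and verifying that the resulting (infinite, but finitely supported modulo the escape rays) rotor configuration realizes precisely the target escape sequence. I expect the split construction in the ``$\subseteq$'' direction to be the crux of the whole proof.
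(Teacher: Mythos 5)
Your outline follows the same route as the paper (peel off the three principal branches so that $a^{(j)}$ is the escape sequence of a single branch; decompose a branch's escape sequence into blocks from $\{0,10,110\}$ whose one-counts are the sums of the two sub-branches' escape sequences), but it stops exactly at the point where the real work lies, and so has a genuine gap: you never produce the splitting of a word satisfying the $(2^k-1,2^{k-1})$ window conditions into two sub-branch sequences satisfying the same conditions, and you even express doubt that the natural ``balance the running totals'' assignment works. That assignment is precisely what the paper uses and proves correct: blocks $110$ contribute a one to \emph{both} sub-sequences, blocks $0$ to neither, and the single ones in the $10$ blocks are assigned \emph{alternately} to the two sub-branches. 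The verification that each half then satisfies the window condition at the next scale down is a short parity argument: if a window of one half of length $2^{k-1}-1$ contains $m$ ones, the corresponding window of the other half contains at least $m-1$ ones (because the $10$-contributions alternate), while the corresponding window of the original word ends in a zero, contains exactly $2^{k-1}-1$ zeros, and hence (by the hypothesis at scale $k$, after truncating that final zero) at most $2^{k-1}$ ones; so $2m-1\le 2^{k-1}$, forcing $m\le 2^{k-2}$. This is where the exact constant $2^{k-1}$ (rather than $2^{k-1}+O(1)$) comes from, and without it your induction on word length cannot close.

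Two further pieces of bookkeeping that your sketch waves at but does not supply, and which the paper handles explicitly: (i) the recursion must also apply when the root rotor of a branch does not initially point toward the parent, which is done by prepending a $0$ to the sub-branch escape sequences (``extended escape sequences''); and (ii) the induction on word length in the sufficiency direction has a degenerate case in which the split sequences are not shorter, namely the all-zeros word, which must be realized directly by exhibiting a recurrent rotor configuration (all rotors set so that the next rotation points each vertex to its parent) — note that with the paper's rotate-then-move convention this is \emph{not} the configuration with all rotors already pointing to the parent, as your base-case remark suggests. Your necessity sketch via block counting is plausible and close to the paper's, but as written the proposal reduces the theorem to exactly the combinatorial statement whose proof it omits, so it is not yet a proof.
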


Theorem~\ref{escapeseqs} is proved in section~\ref{recurrenceandtransience} by expressing the escape sequence corresponding to a rotor configuration on the full tree in terms of the escape sequences of the configurations on each of the principal subtrees. 

\chapter{Spherical Asymptotics for Point Sources}
\label{pointsource}

This chapter is devoted to the proofs of Theorems~\ref{rotorcircintro}, \ref{divsandcircintro} and~\ref{sandpilecircintro}.  In section~\ref{basicestimate}, we derive the basic Green's function estimates that are used in the proofs.  In section~\ref{divsandpointsource} we prove the abelian property and Theorem~\ref{divsandcircintro} for the divisible sandpile.  In section~\ref{sandpilepointsource} we adapt the methods used for the divisible sandpile to prove Theorem~\ref{sandpilecircintro} for the classical abelian sandpile model.  Section~\ref{rotorpointsource} is devoted to the proof of Theorem~\ref{rotorcircintro} for the rotor-router model.

\section{Basic Estimate}
\label{basicestimate}

Write $(X_k)_{k\geq 0}$ for simple random walk in $\Z^d$, 
\index{$X_k$, simple random walk}
and for $d\geq 3$ denote by 
	\begin{equation} \label{greensfunctiondef} g_1(x,y) = \EE_x \# \{ k|X_k=y \} 	\index{$g_1(x,y)$, Green's function on $\Z^d$} \end{equation}
the expected number of visits to~$y$ by simple random walk started at~$x$.  This is the \emph{discrete harmonic Green's function} in~$\Z^d$.  For fixed~$x$, the function~$g_1(x,\cdot)$ is harmonic except at $x$, where its discrete Laplacian is~$-1$.  Our notation~$g_1$ is chosen to distinguish between the discrete Green's function in~$\Z^d$ and its continuous counterpart~$g$ in~$\R^d$.  For the definition of $g$,  
see section~\ref{superharmonicpotentials}.  Estimates relating the discrete and continuous Green's functions are discussed in section~\ref{discretepotentialtheory}.  

In dimension $d=2$, simple random walk is recurrent, so the expectation on the right side of (\ref{greensfunctiondef}) infinite.  Here we define the {\it potential kernel}
	\begin{equation} \label{potentialkerneldef} g_1(x,y) = \lim_{n \to \infty} \left( g_1^n(x,y)-g_1^n(x,x) \right) \end{equation}
where
	\[ g_1^n(x,y) = \EE_x \# \{k \leq n|X_k=y\}. \]
The limit defining $g_1$ in (\ref{potentialkerneldef}) is finite \cite{Lawler,Spitzer}, and $g_1(x,\cdot)$ is harmonic except at $x$, where its discrete Laplacian is $-1$.  Note that (\ref{potentialkerneldef}) is the negative of the usual definition of the potential kernel; we have chosen this sign convention so that $g_1$ has the same Laplacian in dimension two as in higher dimensions.

Fix a real number $m>0$ and consider the function on $\Z^d$
	\begin{equation} \label{gammadef} \widetilde{\gamma}_d(x) = |x|^2 + mg_1(o,x). \end{equation}
Let $r$ be such that $m = \omega_d r^d$, and let 
	\begin{equation} \gamma_d(x) = \widetilde{\gamma}_d(x)-\widetilde{\gamma}_d(\floor{r}e_1) \label{subtractoffconstant} \index{$\gamma_d$, obstacle for a single point source} \end{equation}
where $e_1$ is the first standard basis vector in $\Z^d$.  The function $\gamma_d$ plays a central role in our analysis.  To see where it comes from, recall the divisible sandpile \emph{odometer function}
	\[ u(x) = \text{total mass emitted from $x$}. \]
Let $D_m \subset \Z^d$ be the domain of fully occupied sites for the divisible sandpile formed from a pile of mass $m$ at the origin.  For $x \in D_m$, since each neighbor $y$ of $x$ emits an equal amount of mass to each of its $2d$ neighbors, we have
	\begin{align*} \Delta u(x) &= \frac{1}{2d} \sum_{y \sim x} u(y) - u(x) \\
					        &= \text{mass received by $x$} - \text{mass emitted by $x$} \\
					        &= 1 - m\delta_{ox}. \end{align*}
Moreover, $u=0$ on $\partial D_m$, where for $A \subset \Z^d$ we write
	\[ \partial A = \{x \in A^c \,|\, x \sim y \mbox{ for some } y \in A\} \]
for the boundary of $A$.  
\index{$\partial A$, boundary of $A \subset \Z^d$}
By construction, the function $\gamma_d$ obeys the same Laplacian condition: $\Delta \gamma_d = 1 - m\delta_o$; and as we will see shortly, $\gamma_d \approx 0$ on $\partial B_r$.  Since we expect the domain $D_m$ to be close to the ball $B_r$, we should expect that $u \approx \gamma_d$.  In fact, we will first show that $u$ is close to $\gamma_d$, and then use this to conclude that $D_m$ is close to $B_r$.

We will use the following estimates for the Green's function \cite{FU,Uchiyama}; see also \cite[Theorems 1.5.4 and 1.6.2]{Lawler}.

 \begin{equation}  \label{standardgreensestimate}
  g_1(x,y) = \begin{cases}  -\frac{2}{\pi} \log |x-y| + \kappa + O(|x-y|^{-2}), & d=2 \\
			 a_d |x-y|^{2-d} + O(|x-y|^{-d}), & d\geq 3. \end{cases} \end{equation}
Here $a_d = \frac{2}{(d-2)\omega_d}$, where $\omega_d$ is the volume of the unit ball in $\R^d$, and $\kappa$ is a constant whose value we will not need to know.  
Here and throughout this thesis, constants in error terms denoted $O(\cdot)$ depend only on $d$.

We will need an estimate for the function $\gamma_d$ near the boundary of the ball $B_r$.  We first consider dimension $d=2$.  From (\ref{standardgreensestimate}) we have
	\begin{equation} \label{gammadefn} \widetilde{\gamma}_2(x) = \phi(x) - \kappa m + O(m|x|^{-2}), \end{equation}
where
	\[ \phi(x) = |x|^2 - \frac{2m}{\pi} \log |x|. \]
In the Taylor expansion of $\phi$ about $|x|=r$ 
	\begin{equation} \label{secondordertaylor} \phi(x) = \phi(r) - \phi'(r)(r-|x|) + \frac12 \phi''(t)(r-|x|)^2 \end{equation}
the linear term vanishes, leaving
	\begin{equation} \label{taylordim2} 
	\gamma_2(x) =  \left(1+\frac{m}{\pi t^2} \right)(r-|x|)^2 + O(m|x|^{-2})
	\end{equation}
for some $t$ between $|x|$ and $r$.

In dimensions $d \geq 3$, from (\ref{standardgreensestimate}) we have
	\[ \widetilde{\gamma}_d(x) = |x|^2 + a_d m |x|^{2-d} + O(m|x|^{-d}). \]
Setting $\phi(x) = |x|^2 + a_d m |x|^{2-d}$, the linear term in the Taylor expansion (\ref{secondordertaylor}) of $\phi$ about $|x|=r$ again vanishes, yielding
	\[ \gamma_d(x) = \left( 1 + (d-1)(r/t)^d \right) (r-|x|)^2 + O(m|x|^{-d}) \]
for $t$ between $|x|$ and $r$.  Together with (\ref{taylordim2}), this yields the following estimates in all dimensions $d\geq 2$.

\begin{lemma} \label{gammalowerbound}
Let $\gamma_d$ be given by (\ref{subtractoffconstant}).
For all $x \in \Z^d$ we have
	\begin{equation} \label{quadraticgrowtheq} \gamma_d(x) \geq (r-|x|)^2 + O\left(\frac{r^d}{|x|^d}\right). \end{equation}
\end{lemma}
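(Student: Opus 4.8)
The statement to prove is Lemma~\ref{gammalowerbound}: for all $x \in \Z^d$, $\gamma_d(x) \geq (r-|x|)^2 + O(r^d/|x|^d)$. The plan is to obtain this uniform lower bound from the Taylor expansions already derived in the preceding paragraphs, being careful about the sign and magnitude of the quadratic-in-$(r-|x|)$ coefficient and about absorbing the various error terms into a single $O(r^d/|x|^d)$.

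First I would record the two expansions established just above the lemma. In dimension $d=2$, equation~(\ref{taylordim2}) gives $\gamma_2(x) = (1 + m/(\pi t^2))(r-|x|)^2 + O(m|x|^{-2})$ for some $t$ between $|x|$ and $r$; in dimensions $d \geq 3$, the displayed expansion gives $\gamma_d(x) = (1 + (d-1)(r/t)^d)(r-|x|)^2 + O(m|x|^{-d})$ for such a $t$. The key point in both cases is that the coefficient of $(r-|x|)^2$ is at least $1$: in $d=2$ this is because $m/(\pi t^2) \geq 0$, and in $d \geq 3$ because $(d-1)(r/t)^d \geq 0$. Hence in every dimension $d \geq 2$ we get $\gamma_d(x) \geq (r-|x|)^2 + O(m|x|^{-d})$ (for $d=2$, reading $|x|^{-d} = |x|^{-2}$). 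Since $m = \omega_d r^d$ and $\omega_d$ depends only on $d$, the error term $O(m|x|^{-d})$ is exactly $O(r^d/|x|^d)$, which is the claimed form.

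Two technical points need attention. The expansions above are valid when the second-order Taylor remainder is the dominant contribution and the $O(|x-y|^{-2})$ or $O(|x-y|^{-d})$ Green's-function error (evaluated at $y = o$, so at argument $|x|$) has been folded in correctly; I would note that the Green's-function error term $O(m|x|^{-d})$ already matches $O(r^d/|x|^d)$ and causes no trouble. The second point is the range of $x$: the Taylor expansion about $|x| = r$ is a genuine expansion only when $x$ is not too far from the sphere of radius $r$, and also one should be slightly careful near $x = o$ where $|x|^{-d}$ blows up. For $|x|$ bounded away from $0$ and within a constant factor of $r$, everything goes through as above. For $x$ far from $B_r$ (either $|x| \gg r$ or $x$ deep inside), one checks the bound directly: when $|x| \geq 2r$ say, $(r-|x|)^2 \leq |x|^2$ and the error term $O(r^d/|x|^d)$ is $O(1)$, while $\gamma_d(x) = \widetilde\gamma_d(x) - \widetilde\gamma_d(\lfloor r\rfloor e_1)$ with $\widetilde\gamma_d(x) = |x|^2 + m g_1(o,x) \geq |x|^2 - O(m \log|x|)$ (in $d=2$) or $\geq |x|^2$ (in $d\geq 3$), which comfortably dominates $(r-|x|)^2 + O(1)$ since $|x|^2 \geq (r-|x|)^2$ there, up to an $O(r^d/|x|^d)$-sized slack — one small subtlety is that $(r-|x|)^2$ is not bounded by $|x|^2$ for $|x|$ near $r$, but that regime is covered by the Taylor argument. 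Similarly, near the origin $\gamma_d(x) \to +\infty$ in $d \geq 3$ and grows like $-\frac{2m}{\pi}\log|x|$ in $d=2$, so the lower bound is automatic once we allow the $O(r^d/|x|^d)$ error to absorb the $O(r^2)$ coming from $(r-|x|)^2$ — here one must observe that for small $|x|$, $r^d/|x|^d \gtrsim r^2$, so $(r-|x|)^2 + O(r^d/|x|^d)$ is itself $O(r^d/|x|^d)$ and is dominated by the blowing-up or log-growing $\gamma_d$.

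The main obstacle I anticipate is purely bookkeeping rather than conceptual: making the single error term $O(r^d/|x|^d)$ genuinely uniform over all $x \in \Z^d$, stitching together the ``$x$ near $\partial B_r$'' regime (handled by Taylor expansion) with the ``$x$ far from $\partial B_r$'' regime (handled by crude monotonicity bounds on $\widetilde\gamma_d$), and checking that the constant in $O(\cdot)$ can be chosen depending only on $d$. None of the individual estimates is hard; the care lies in verifying that in the intermediate and extreme regimes the quantity $(r-|x|)^2$ is always dominated, after adding $O(r^d/|x|^d)$, by the actual value of $\gamma_d(x)$. I expect the cleanest writeup to dispatch the near-sphere case by quoting~(\ref{taylordim2}) and its $d\geq 3$ analogue verbatim, and then spend a sentence or two on the far cases.
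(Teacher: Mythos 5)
Your proposal is correct and follows essentially the same route as the paper: the lemma is an immediate consequence of the Taylor expansions (\ref{taylordim2}) and its $d\geq 3$ analogue, since the coefficient of $(r-|x|)^2$ is at least $1$ and $m=\omega_d r^d$ turns $O(m|x|^{-d})$ into $O(r^d/|x|^d)$. Your extra care about the regimes far from the sphere is harmless but not needed, because (\ref{secondordertaylor}) is the exact Lagrange form of Taylor's theorem and so holds for all $x\neq o$, not just near $|x|=r$ (and the genuine blow-up of the error near the origin is exactly what Lemma~\ref{gammanearorigin} is there to handle).
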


\begin{lemma} \label{gammaupperbound}
Let $\gamma_d$ be given by (\ref{subtractoffconstant}).  Then uniformly in $r$,
	\[ \gamma_d(x) = O(1), \qquad x \in B_{r+1}-B_{r-1}. \]	
\end{lemma}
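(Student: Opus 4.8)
The plan is to revisit the Taylor expansion computations that were just carried out, but now track the size of the quadratic term rather than merely its sign, evaluated at points $x$ in the thin annular shell $B_{r+1} - B_{r-1}$. Recall that in dimension two we wrote $\widetilde\gamma_2(x) = \phi(x) - \kappa m + O(m|x|^{-2})$ with $\phi(x) = |x|^2 - \frac{2m}{\pi}\log|x|$, and that the second-order Taylor expansion of $\phi$ about $|x|=r$ has vanishing linear term because $\phi'(r) = 2r - \frac{2m}{\pi r} = 2r - \frac{2\omega_2 r^2}{\pi r} = 0$ (using $m = \omega_2 r^2 = \pi r^2$). So the obstacle reduces to $\gamma_2(x) = \bigl(1 + \frac{m}{\pi t^2}\bigr)(r-|x|)^2 + O(m|x|^{-2})$ for some $t$ between $|x|$ and $r$. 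The point is now simply that on the shell $B_{r+1}-B_{r-1}$ we have $|r - |x|| \le 1$, and $t$ lies between $|x|$ and $r$, so $t \ge r-1$ and hence $\frac{m}{\pi t^2} = \frac{r^2}{t^2} \le \frac{r^2}{(r-1)^2} = O(1)$ uniformly in $r$ (for $r$ bounded away from, say, $2$; small $r$ is a finite case handled separately). Likewise $m|x|^{-2} = \pi r^2 |x|^{-2} = O(1)$ on the shell. Therefore $\gamma_2(x) = O(1)$ there.

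For $d \ge 3$ the argument is identical in structure: with $\phi(x) = |x|^2 + a_d m |x|^{2-d}$ one checks $\phi'(r) = 0$ using $m = \omega_d r^d$ and $a_d = \frac{2}{(d-2)\omega_d}$, and the excerpt already records the outcome $\gamma_d(x) = \bigl(1 + (d-1)(r/t)^d\bigr)(r-|x|)^2 + O(m|x|^{-d})$ for $t$ between $|x|$ and $r$. On the shell $B_{r+1}-B_{r-1}$ we again have $(r-|x|)^2 \le 1$, $(r/t)^d \le (r/(r-1))^d = O(1)$, and $m|x|^{-d} = \omega_d r^d |x|^{-d} = O(1)$, so $\gamma_d(x) = O(1)$ uniformly in $r$. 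Combining the two cases gives the claim.

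I do not expect a genuine obstacle here: the lemma is essentially a bookkeeping corollary of the expansions already displayed before Lemma~\ref{gammalowerbound}, and the only mild care needed is (i) noting that $t$ cannot be too small relative to $r$ because it is trapped between $|x|$ and $r$ with $|x| \ge r-1$, so the factors $m/t^2$ and $(r/t)^d$ stay bounded, and (ii) dispatching the regime of small $r$ (where $B_{r+1}-B_{r-1}$ contains only $O(1)$ lattice points near the origin and $\gamma_d$ is trivially bounded on any fixed finite set, the bound depending only on $d$) so that the estimate is truly uniform in $r$. If one wants to avoid the small-$r$ casework entirely, an alternative is to bound $\gamma_d$ directly from its definition \eqref{gammadef}--\eqref{subtractoffconstant} using the Green's function estimate \eqref{standardgreensestimate}, since $|x|^2 - (\lfloor r\rfloor)^2 = O(r)$ on the shell and $m(g_1(o,x) - g_1(o,\lfloor r\rfloor e_1)) = O(1)$ there by the logarithmic (resp.\ power-law) form of $g_1$; but the Taylor-expansion route above is the cleanest given what has already been established.
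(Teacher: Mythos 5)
Your proposal is correct and follows exactly the paper's (implicit) argument: the lemma is presented as an immediate consequence of the Taylor expansions displayed just before it, and you read off that on the shell $B_{r+1}-B_{r-1}$ the factor $(r-|x|)^2$ is at most $1$, the coefficient involving $t$ is bounded because $t \geq r-1$, and the error term $O(m|x|^{-d})$ is $O(1)$ there. Your remarks on the small-$r$ regime and the alternative direct bound are sound but not needed beyond what the paper intends.
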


The following lemma is useful for $x$ near the origin, where the error term in (\ref{quadraticgrowtheq}) blows up.

\begin{lemma} \label{gammanearorigin}
Let $\gamma_d$ be given by (\ref{subtractoffconstant}).  Then for sufficiently large $r$, we have
	\[ \gamma_d(x) \geq \frac{r^2}{4}, \qquad \forall x \in B_{r/3}. \]
\end{lemma}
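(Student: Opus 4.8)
The plan is to exploit the explicit formula for $\gamma_d$ together with the known behavior of the Green's function, separating the analysis at scales where the Green's function is large (near the origin) from scales where it is comparable to $|x|^{2-d}$ (respectively $-\log|x|$). Recall $\widetilde\gamma_d(x) = |x|^2 + m\,g_1(o,x)$ and $\gamma_d(x) = \widetilde\gamma_d(x) - \widetilde\gamma_d(\lfloor r\rfloor e_1)$. The first step is to estimate the subtracted constant: since $\lfloor r\rfloor e_1 \in B_{r+1}-B_{r-1}$, we may apply the Taylor-expansion computation already carried out before Lemma~\ref{gammalowerbound} (or Lemma~\ref{gammaupperbound}) to get $\widetilde\gamma_d(\lfloor r\rfloor e_1) = \phi(r) + O(1)$, where $\phi(r) = r^2 + a_d m r^{2-d}$ in dimensions $d\ge 3$ and $\phi(r) = r^2 - \frac{2m}{\pi}\log r$ in dimension two. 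Using $m = \omega_d r^d$ and $a_d\omega_d = \frac{2}{d-2}$, this gives $\widetilde\gamma_d(\lfloor r\rfloor e_1) = \frac{d}{d-2}r^2 + O(1)$ for $d\ge 3$; in dimension two it is $r^2 - \frac{2\omega_2}{\pi}r^2\log r + O(1)$, which is $O(r^2\log r)$ but — crucially — bounded above by, say, $\tfrac34 r^2$ for large $r$ since the $-\log r$ term makes it negative eventually. Wait: one must be careful — in $d=2$ the subtracted constant actually goes to $-\infty$, so $-\widetilde\gamma_2(\lfloor r\rfloor e_1)$ is a large positive quantity, which only helps. In $d\ge 3$ it is positive and of order $r^2$, so we genuinely need the lower bound on $\widetilde\gamma_d(x)$ for $x\in B_{r/3}$ to dominate it.

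The second and main step is therefore to lower-bound $\widetilde\gamma_d(x) = |x|^2 + m\,g_1(o,x)$ for $x \in B_{r/3}$ in dimensions $d\ge 3$. The point is that $g_1(o,x) \ge 0$ for all $x$ (it is a nonnegative quantity by (\ref{greensfunctiondef})), and more precisely, for $|x|$ bounded we have $g_1(o,x) \ge c_d > 0$, while for $|x|$ large the estimate (\ref{standardgreensestimate}) gives $g_1(o,x) = a_d|x|^{2-d} + O(|x|^{-d}) \ge \tfrac12 a_d|x|^{2-d}$ for $|x|$ large enough. In either regime, for $x \in B_{r/3}$ we get $m\,g_1(o,x) \ge m\cdot c'_d|x|^{2-d}$ uniformly (taking $c'_d$ small enough to handle both the bounded and unbounded regimes, using that on $B_{r/3}$ with $|x|\le r/3$ the function $|x|^{2-d}$ is bounded below by $(r/3)^{2-d}$). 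Hence $\widetilde\gamma_d(x) \ge m\,c'_d|x|^{2-d} \ge c'_d\,\omega_d r^d (r/3)^{2-d} = c''_d\, r^2$ with $c''_d = c'_d\omega_d 3^{d-2}$, which is a constant times $r^2$; by choosing the constants carefully (and this is where the factor $\tfrac14$ in the statement presumably comes from, with room to spare) one gets $\widetilde\gamma_d(x) \ge (\tfrac{d}{d-2} + \tfrac14)r^2$ for $r$ large, say — actually one must check the constants close out, but the scaling $r^d \cdot r^{2-d} = r^2$ matches the $r^2$ in $\widetilde\gamma_d(\lfloor r\rfloor e_1)$, so it is a matter of verifying the constant in front of $m g_1$ near the origin beats $\frac{d}{d-2} + \frac14$. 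In dimension two the argument is easier since $g_1(o,x) = -\tfrac{2}{\pi}\log|x| + \kappa + O(|x|^{-2})$ and for $|x|\le r/3$ this is $\ge -\tfrac{2}{\pi}\log(r/3) + \kappa - O(1)$; then $m g_1(o,x) \ge \omega_2 r^2(-\tfrac{2}{\pi}\log(r/3) + O(1))$ — hmm, this is negative and large, so in $d=2$ one instead pairs it with the subtracted constant directly: $\gamma_2(x) = |x|^2 - \omega_2 r^2 \cdot \tfrac2\pi\log|x| - r^2 + \omega_2 r^2\cdot\tfrac2\pi\log r + O(1) = |x|^2 + \tfrac{2\omega_2}{\pi}r^2\log(r/|x|) - r^2 + O(1)$, and for $|x|\le r/3$ the $\log(r/|x|)\ge\log 3$ term contributes $\tfrac{2\omega_2\log 3}{\pi}r^2$, which exceeds $r^2$ for... no wait, $\tfrac{2\omega_2\log 3}{\pi} = \tfrac{2\pi\log 3}{\pi} = 2\log 3 \approx 2.2 > 1$, so indeed $\gamma_2(x) \ge (2\log 3 - 1)r^2 + O(1) \ge \tfrac{r^2}{4}$ for large $r$.

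So the structure is: (i) evaluate the subtracted constant $\widetilde\gamma_d(\lfloor r\rfloor e_1)$ via the Taylor expansion already in the text; (ii) lower-bound $\widetilde\gamma_d(x)$ on $B_{r/3}$ using nonnegativity and the explicit form of $g_1$, getting a term of order $r^2$ with an explicit constant; (iii) subtract and verify the resulting constant exceeds $\tfrac14$ for $r$ large, treating $d=2$ and $d\ge 3$ separately. I expect the main obstacle to be purely bookkeeping: tracking the dimension-dependent constants $a_d$, $\omega_d$, $\kappa$, and in particular confirming in dimension $d\ge 3$ that the coefficient of $r^2$ coming from $m\,g_1$ near the origin genuinely beats $\tfrac{d}{d-2}$ by at least $\tfrac14$ — this requires a quantitative lower bound on $g_1(o,x)$ for small $|x|$ (e.g. $g_1(o,x)\ge a_d|x|^{2-d}$ might fail with the error term, so one uses $g_1(o,x)\ge a_d|x|^{2-d} - C|x|^{-d} \ge \tfrac12 a_d|x|^{2-d}$ only once $|x|\ge C_0$, and separately handles $|x| < C_0$ where $g_1(o,x)$ is bounded below by a positive constant and $m(r/3)^{2-d}$-type bounds still give order $r^d\cdot(\text{const})$... actually for $|x|<C_0$ we get $m g_1(o,x) \ge m\cdot g_1(o, \text{farthest such point}) \ge c\,r^d$, which is much bigger than $r^2$). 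Once the regimes are correctly split and the worst case identified, the inequality closes with room to spare for $r$ sufficiently large, which is exactly what the lemma claims.
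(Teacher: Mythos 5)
Your overall strategy is sound and genuinely different from the paper's. The paper disposes of this lemma in two lines: since $\gamma_d(x)-|x|^2 = m\,g_1(o,x) - \widetilde{\gamma}_d(\floor{r}e_1)$ is superharmonic, it attains its minimum over $B_{r/3}$ at a boundary point $z$ with $|z|\approx r/3$, so $\gamma_d(x) \geq |x|^2 + \gamma_d(z) - |z|^2 \geq (2r/3)^2 - (r/3)^2 + O(1) = \tfrac{r^2}{3}+O(1)$, where the bound on $\gamma_d(z)$ comes from Lemma~\ref{gammalowerbound}, whose error term $O(r^d/|z|^d)$ is $O(1)$ on the boundary. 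This sidesteps entirely the regime near the origin where the Green's function asymptotics degenerate, which is exactly the regime your approach must handle by hand. What your route buys is explicitness (and it makes transparent \emph{why} the bound holds: near the origin $m\,g_1(o,x)$ is of order $r^d \gg r^2$ in $d\ge 3$, and in $d=2$ the subtracted constant contributes $2r^2\log r$); what it costs is the bookkeeping you anticipated.

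One concrete warning about that bookkeeping: the uniform bound $g_1(o,x)\geq \tfrac12 a_d|x|^{2-d}$ for $|x|\geq C_0$, applied all the way out to $|x|=r/3$, does \emph{not} close in dimension $d=3$. There $a_3 m\,|x|^{-1}$ at $|x|=r/3$ equals $6r^2$, half of which is $3r^2$ --- exactly the subtracted constant $\tfrac{d}{d-2}r^2=3r^2$ --- leaving only $|x|^2 = r^2/9 < r^2/4$. The fix is to use the factor $\tfrac12$ only in a middle range (say $C_0\le |x|\le r^{1-1/d}$, where $m|x|^{2-d}\gg r^2$ anyway), and for $r^{1-1/d}\le|x|\le r/3$ to use the full asymptotic $g_1(o,x)=a_d|x|^{2-d}(1+O(|x|^{-2}))$, or equivalently to invoke Lemma~\ref{gammalowerbound} directly on that range, where its error term $O(r^d/|x|^d)=O(r)$ is $o(r^2)$ and $(r-|x|)^2\geq \tfrac49 r^2$. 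With that three-way split (and your observation that $|x|\le C_0$ gives $m\,g_1(o,x)\geq c\,r^d$, while $d=2$ is rescued by the $2r^2\log r$ in the subtracted constant uniformly over all of $B_{r/3}$), your argument is complete.
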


\begin{proof}
Since $\gamma_d(x)-|x|^2$ is superharmonic, it attains its minimum in $B_{r/3}$ at a point $z$ on the boundary.  Thus for any $x \in B_{r/3}$
	\[ \gamma_d(x)-|x|^2 \geq \gamma_d(z) - |z|^2, \]
hence by Lemma~\ref{gammalowerbound}
	\[ \gamma_d(x) \geq (2r/3)^2 - (r/3)^2 + O(1) > \frac{r^2}{4}. \qed \]
\renewcommand{\qedsymbol}{}
\end{proof}

Lemmas~\ref{gammalowerbound} and~\ref{gammanearorigin} together imply the following.

\begin{lemma} \label{gammanonnegativeeverywhere}
Let $\gamma_d$ be given by (\ref{subtractoffconstant}).  There is a constant $a$ depending only on $d$, such that $\gamma_d \geq -a$ everywhere.
\end{lemma}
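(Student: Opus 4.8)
The plan is to bound $\gamma_d$ from below separately on the region far from the origin, $\{x\in\Z^d : |x|\ge r/3\}$, and on the small ball $B_{r/3}$, and then take the worse of the two bounds. On $\{|x|\ge r/3\}$ I would simply invoke Lemma~\ref{gammalowerbound}: there
$\gamma_d(x)\ge (r-|x|)^2+O(r^d/|x|^d)\ge -O(3^d)$,
since the quadratic term is nonnegative and $r^d/|x|^d\le 3^d$ throughout this region. As the implied constant depends only on $d$, this gives $\gamma_d(x)\ge -a_1$ for some $a_1=a_1(d)$, valid for every $r>0$.

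On $B_{r/3}$ the argument splits according to the size of $r$. If $r$ is at least the threshold $r_0=r_0(d)$ from Lemma~\ref{gammanearorigin}, that lemma already supplies the much stronger bound $\gamma_d(x)\ge r^2/4\ge 0$ there. If instead $r\le r_0$, then $B_{r/3}\subseteq B_{r_0/3}$ is a fixed finite set of lattice sites and $m=\omega_d r^d\le\omega_d r_0^d$ is bounded; writing $\gamma_d(x)=|x|^2+m\,g_1(o,x)-\widetilde{\gamma}_d(\lfloor r\rfloor e_1)$ and using that $|x|$, $\lfloor r\rfloor$, and the finitely many relevant values of the fixed function $g_1(o,\cdot)$ all stay bounded, one gets $|\gamma_d(x)|\le a_2$ on $B_{r_0/3}$ for a constant $a_2=a_2(d)$. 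Taking $a=\max(a_1,a_2)$ then yields $\gamma_d\ge -a$ everywhere.

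There is no genuinely hard step: the content lies entirely in Lemmas~\ref{gammalowerbound} and~\ref{gammanearorigin}, and the only point requiring a moment's care is the small-$r$ regime, which falls outside the hypothesis of Lemma~\ref{gammanearorigin} and is disposed of by a compactness remark rather than an estimate. If one prefers to avoid the case distinction in $r$ altogether, one can instead rerun the superharmonicity argument from the proof of Lemma~\ref{gammanearorigin} in full generality: for $x\in B_{r/3}$ one has $\gamma_d(x)-|x|^2\ge\gamma_d(z)-|z|^2$, where $z\in\partial B_{r/3}$ minimizes $\gamma_d-|\cdot|^2$, and since $r/3\le|z|\le r/3+1$, Lemma~\ref{gammalowerbound} gives $\gamma_d(z)-|z|^2\ge (r-|z|)^2-|z|^2-O(1)=r(r-2|z|)-O(1)\ge r(r/3-2)-O(1)$, which is bounded below by $-3-O(1)$ uniformly in $r>0$.
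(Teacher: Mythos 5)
Your proof is correct and follows the same route as the paper, which simply asserts that Lemmas~\ref{gammalowerbound} and~\ref{gammanearorigin} together give the result: Lemma~\ref{gammalowerbound} handles $|x|\geq r/3$ (where the error term is $O(3^d)$ and the square is nonnegative) and Lemma~\ref{gammanearorigin} handles $B_{r/3}$. Your extra care with the small-$r$ regime, where Lemma~\ref{gammanearorigin} does not apply, fills in a detail the paper leaves implicit, and either of your two fixes (compactness, or rerunning the superharmonicity argument uniformly in $r$) is valid.
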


\section{Divisible Sandpile}
\label{divsandpointsource}

\subsection{Abelian Property}

In this section we prove the abelian property of the divisible sandpile mentioned in the introduction.  We work in continuous time.  Fix $\tau>0$, and let $T : [0,\tau] \rightarrow \Z^d$ be a function having only finitely many discontinuities in the interval $[0,t]$ for every $t<\tau$.  The value $T(t)$ represents the site being toppled at time $t$.  The odometer function at time $t$ is given by
	\[ u_t(x) = \Leb \left(T^{-1}(x) \cap [0,t]\right), \]
where $\Leb$ denotes Lebesgue measure.  We will say that $T$ is a {\it legal toppling function} for an initial configuration $\nu_0$ if for every $0 \leq t \leq \tau$
	\[ \nu_t(T(t)) \geq 1 \]
where
	\begin{equation} \label{laplacianofu_t} \nu_t(x) = \nu_0(x) + \Delta u_t(x) \end{equation}
is the amount of mass present at $x$ at time $t$.  If in addition $\nu_\tau \leq 1$, we say that $T$ is {\it complete}.  The abelian property can now be stated as follows.

\begin{lemma}
\label{abelianproperty}
If $T_1 : [0,\tau_1] \rightarrow \Z^d$ and  $T_2 : [0,\tau_2] \rightarrow \Z^d$ are complete legal toppling functions for an initial configuration $\nu_0$, then for any $x \in \Z^d$
	\[ \Leb \left(T_1^{-1}(x)\right) = \Leb\left(T_2^{-1}(x)\right). \]
In particular, $\tau_1 = \tau_2$ and the final configurations $\nu_{\tau_1}$, $\nu_{\tau_2}$ are identical.
\end{lemma}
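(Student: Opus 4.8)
The plan is to reduce the whole lemma to one comparison principle for odometers, then prove that principle via the discrete maximum principle fed by legality. Write $u_i(x) = \Leb(T_i^{-1}(x))$ for the total odometer of $T_i$; since the sets $T_i^{-1}(x)$, $x \in \Z^d$, partition $[0,\tau_i]$, we have $\sum_{x \in \Z^d} u_i(x) = \tau_i < \infty$, and $u_i$ is the $t=\tau_i$ case of $u_t$ in~(\ref{laplacianofu_t}). The assertion I would isolate is:
\emph{if $T_1$ is legal and $T_2$ is complete legal for $\nu_0$, then $u_1 \le u_2$ pointwise.}
Granting this and applying it also with $T_1,T_2$ interchanged (legitimate since both are complete and legal) gives $u_1 = u_2$, the first conclusion of the lemma; then $\tau_1 = \sum_x u_1(x) = \sum_x u_2(x) = \tau_2$, and by~(\ref{laplacianofu_t}) the final configurations are $\nu_0 + \Delta u_1 = \nu_0 + \Delta u_2$, equal.

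To prove the comparison principle, suppose it fails and set $w = u_1 - u_2$. Since $u_1,u_2 \ge 0$ are summable, $w \to 0$ at infinity, so $w$ attains a strictly positive maximum $M = w(x_0)$. At a maximum, $\Delta w(x_0) \le 0$, with equality precisely when $w \equiv M$ on all neighbors of $x_0$; the crux is the reverse inequality $\Delta w(x_0) \ge 0$. As $w(x_0) = M > 0$ and $u_2(x_0) \ge 0$, we have $u_1(x_0) > 0$, so $T_1$ does topple $x_0$. Put $s = \sup\{t : T_1(t) = x_0\}$ and write $u_1^t(x) = \Leb(T_1^{-1}(x) \cap [0,t])$, with associated mass profile $\nu_t = \nu_0 + \Delta u_1^t$ as in~(\ref{laplacianofu_t}). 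Then $T_1^{-1}(x_0) \subset [0,s]$, so $u_1^s(x_0) = u_1(x_0)$ while $u_1^s \le u_1$ everywhere, whence $\Delta u_1^s(x_0) \le \Delta u_1(x_0)$. Moreover $t \mapsto \nu_t(x_0)$ is continuous (an affine combination of the functions $t \mapsto \Leb(T_1^{-1}(z)\cap[0,t])$) and equals at least $1$ whenever $T_1(t) = x_0$; since such $t$ accumulate at $s$, we get $\nu_s(x_0) \ge 1$, i.e.\ $\nu_0(x_0) + \Delta u_1^s(x_0) \ge 1$, hence $\nu_0(x_0) + \Delta u_1(x_0) \ge 1$. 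Completeness of $T_2$ is exactly $\nu_0 + \Delta u_2 \le 1$, so $\nu_0(x_0) + \Delta u_2(x_0) \le 1$. Subtracting, $\Delta w(x_0) = \Delta u_1(x_0) - \Delta u_2(x_0) \ge 0$. Thus $\Delta w(x_0) = 0$, so $w \equiv M$ on every neighbor of $x_0$; each neighbor $y$ again has $u_1(y) > u_2(y) \ge 0$, so the argument repeats at $y$, and connectedness of $\Z^d$ forces $w \equiv M > 0$ everywhere --- contradicting $w \to 0$. Hence $u_1 \le u_2$.

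The step I expect to need the most care is the continuous-time bookkeeping at the ``last toppling time'' $s$: the set $\{t : T_1(t) = x_0\}$ need not be closed, so one cannot invoke legality ``at $s$'' directly and must instead pass $\nu_{t_n}(x_0) \ge 1$ to the limit along toppling instants $t_n \to s$, using continuity of $t \mapsto \nu_t(x_0)$. One must also check the two elementary facts that $u_1^s$ coincides with $u_1$ at $x_0$ (they differ only on the null set $\{s\}$) and is $\le u_1$ at the $2d$ neighbors (monotonicity in $t$). The maximum-principle propagation to all of $\Z^d$ and the decay of $w$ at infinity are then routine.
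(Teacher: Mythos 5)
Your proof is correct, and it takes a genuinely different route from the paper's. The paper argues by induction on the discontinuities of $T_1$ taken in temporal order: it shows $u^2(x_k) \geq u^1_{t(k)}(x_k)$ at the $k$-th toppling site, with legality of $T_1$ and completeness of $T_2$ entering through $\nu^2(x_k) \leq 1 \leq \nu^1_{t(k)}(x_k)$, and then passes to the limit $t(k) \uparrow \tau_1$. You instead work only with the final odometers and run a maximum-principle argument: the last-toppling-time device converts legality of $T_1$ into the statement that every site toppled by $T_1$ finishes with mass $\geq 1$ (it had mass $\geq 1$ at its last toppling and only receives afterwards), completeness of $T_2$ gives mass $\leq 1$ everywhere, so $w = u_1 - u_2$ satisfies $\Delta w \geq 0$ at a positive maximum; equality in the mean-value inequality then propagates the maximum, contradicting the decay forced by $\sum_x u_i(x) = \tau_i < \infty$. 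The bookkeeping you flag is handled correctly: $T_1^{-1}(x_0) \subset [0,s]$ gives $u_1^s(x_0) = u_1(x_0)$ and $u_1^s \leq u_1$ at the neighbors, and the Lipschitz continuity of $t \mapsto \nu_t(x_0)$ lets you pass $\nu_{t_n}(x_0) \geq 1$ to the limit even when $\{t : T_1(t) = x_0\}$ is not closed. What your approach buys is that the temporal structure of $T_1$ is used exactly once (at the last toppling of the maximizing site) rather than threaded through an induction over possibly accumulating toppling times; what it costs is the need for the decay-at-infinity step, since the set of toppled sites need not be finite a priori, and you supply that correctly from summability of the odometers. Both arguments use the two hypotheses in the same one-sided way (legality of one function, completeness of the other), so the symmetric application to conclude equality is legitimate in both.
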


\begin{proof}
For $i=1,2$ write
	\begin{align*} 
	&u^i_t(x) = \Leb \left( T_i^{-1}(x) \cap [0,t] \right); \\
	&\nu^i_t(x) = \nu_0(x) + \Delta u^i_t(x).
	\end{align*}
Write $u^i = u^i_{\tau_i}$ and $\nu^i = \nu^i_{\tau_i}$.
Let $t(0)=0$ and let $t(1) < t(2) < \ldots$ be the points of discontinuity for $T_1$.  Let $x_k$ be the value of $T_1$ on the interval $(t(k-1),t(k))$.  We will show by induction on $k$ that 
	\begin{equation} \label{inductivehypothesis} u^2(x_k) \geq u^1_{t(k)}(x_k). \end{equation}
Note that for any $x \neq x_k$, if $u^1_{t(k)}(x)>0$, then letting $j<k$ be maximal such that $x_j=x$, since $T_1 \neq x$ on the interval $(t(j),t(k))$, it follows from the inductive hypothesis (\ref{inductivehypothesis}) that
	\begin{equation} \label{strongerformofindhyp} u^2(x) = u^2(x_j) \geq u^1_{t(j)}(x_j) = u^1_{t(k)}(x). \end{equation}
Since $T_1$ is legal and $T_2$ is complete, we have
	\[ \nu^2(x_k) \leq 1 \leq \nu^1_{t(k)}(x_k) \]
hence
	\[ \Delta u^2(x_k) \leq \Delta u^1_{t(k)}(x_k). \]
Since $T_1$ is constant on the interval $(t(k-1),t(k))$ we obtain
	\[ u^2(x_k) \geq u^1_{t(k)}(x_k) + \frac{1}{2d} \sum_{x \sim x_k} (u^2(x)-u^1_{t(k-1)}(x)). \]
By (\ref{strongerformofindhyp}), each term in the sum on the right side is nonnegative, completing the inductive step.

Since $t(k) \uparrow \tau_1$ as $k \uparrow \infty$, the right side of (\ref{strongerformofindhyp}) converges to $u^1(x)$ as $k \uparrow \infty$, hence $u^2 \geq u^1$.  After interchanging the roles of $T_1$ and $T_2$, the result follows.  
\end{proof}

\subsection{Proof of Theorem~\ref{divsandcircintro}}

Recall that a function $s$ on $\Z^d$ is {\it superharmonic} if $\Delta s \leq 0$.  Given a function $\gamma$ on $\Z^d$ the {\it least superharmonic majorant} of $\gamma$ is the function
	\[ s(x) = \text{inf} \{ f(x)~|~ f \text{ is superharmonic and } f\geq \gamma \}. \]
Note that if $f$ is superharmonic and $f \geq \gamma$ then
	\[ f(x) \geq \frac{1}{2d} \sum_{y \sim x} f(y) \geq \frac{1}{2d} \sum_{y \sim x} s(y). \]
Taking the infimum on the left side we obtain that $s$ is superharmonic.

\begin{lemma}
\label{discretemajorant}
Let $T : [0,\tau] \to \Z^d$ be a complete legal toppling function for the initial configuration $\nu_0$, and let
	\[ u(x) = \Leb(T^{-1}(x)) \]
be the corresponding odometer function for the divisible sandpile.  Then $u = s + \gamma$, where
	\[ \gamma(x) = |x|^2 + \sum_{y \in \Z^d} g_1(x,y) \nu_0(y) \]
and $s$ is the least superharmonic majorant of $-\gamma$.  
\end{lemma}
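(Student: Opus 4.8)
The plan is to mimic the heuristic argument given in the introduction (leading to Lemma~\ref{discretemajorantintro}), but now working from a genuine complete legal toppling function rather than from the informal ``odometer.'' First I would record the basic Laplacian identity for $u$. Since $T$ is legal and complete, the final configuration satisfies $\nu_\tau = \nu_0 + \Delta u \leq 1$ everywhere, and by the abelian property (Lemma~\ref{abelianproperty}) $u$ does not depend on the choice of $T$, so we may speak of \emph{the} odometer function. Because $g_1(x,\cdot)$ has discrete Laplacian $-\delta_{x\cdot}$ and $|x|^2$ has discrete Laplacian $1$, the function $\gamma(x) = |x|^2 + \sum_y g_1(x,y)\nu_0(y)$ satisfies $\Delta \gamma = 1 - \nu_0$ (the sum over $y$ is finite since $\nu_0$ has finite support, so there is no convergence issue in $d\geq 3$, and in $d=2$ one uses that the potential kernel was normalized to have Laplacian $-1$ as stated after~(\ref{potentialkerneldef})). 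Hence
\[ \Delta(u + \gamma) = (\nu_\tau - \nu_0) + (1 - \nu_0) \cdot(-1)^{?} \]
— more carefully, $\Delta(u - \gamma) = (\nu_\tau - \nu_0) - (1 - \nu_0) = \nu_\tau - 1 \leq 0$, so $u - \gamma$ is superharmonic on all of $\Z^d$, and since $u \geq 0$ we get $u - \gamma \geq -\gamma$, i.e. $u - \gamma$ is a superharmonic majorant of $-\gamma$. Therefore $u - \gamma \geq s$.

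For the reverse inequality I would show $u - \gamma \leq s$ by showing that $u-\gamma$ equals $s$ on the ``noncoincidence set'' and agrees with the obstacle off it. Let $D = \{x : \nu_\tau(x) = 1\}$ be the domain of fully occupied sites; outside $D$ no mass is emitted in the final state, and in fact one checks $u = 0$ precisely where the site was never toppled, so on $D^c$ we have $\nu_\tau < 1$, forcing (since a site with positive odometer must have been toppled, hence momentarily had mass $\geq 1$, and mass can only have left) $u = 0$ there, whence $u - \gamma = -\gamma$ on $D^c$. On $D$, $\Delta(u-\gamma) = \nu_\tau - 1 = 0$, so $u - \gamma$ is harmonic on $D$. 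Now take any superharmonic $f \geq -\gamma$; then $f - (u-\gamma)$ is superharmonic on $D$ (difference of superharmonic and harmonic) and $\geq 0$ on $D^c$ (since there $u - \gamma = -\gamma \leq f$). Because $f - (u - \gamma)$ is superharmonic on $D$, bounded below appropriately, and nonnegative on the boundary $\partial D$ and on $D^c$, the minimum principle gives $f - (u-\gamma) \geq 0$ everywhere; taking the infimum over such $f$ yields $s \geq u - \gamma$. Combining, $u = s + \gamma$.

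The main obstacle, and the step I would be most careful about, is the minimum-principle argument on the (possibly unbounded-looking, but actually finite) domain $D$: one must verify $D$ is finite — which follows because $\nu_0$ has finite total mass and mass is conserved, so only finitely many sites can end up with mass exactly $1$ — and then invoke the discrete minimum principle for superharmonic functions on a finite set with the stated boundary behavior. A secondary technical point is justifying $u = 0$ on $D^c$ rigorously from the definition of a legal toppling function: a site with $u(x) > 0$ was toppled at some time, at which instant it had mass $\geq 1$; since $u - \gamma$ superharmonic already gives $\nu_\tau \leq 1$, I need the complementary fact that $x \notin D$ (i.e. $\nu_\tau(x) < 1$) is incompatible with $u(x) > 0$, which follows from monotonicity/conservation considerations for the divisible sandpile. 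Everything else is routine bookkeeping with the discrete Laplacian and the Green's function normalization.
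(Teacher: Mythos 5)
Your proposal is correct and follows essentially the same route as the paper: show $\Delta(u-\gamma)=\nu_\tau-1\leq 0$ so that $u-\gamma$ is a superharmonic majorant of $-\gamma$, then run the minimum principle for $f-(u-\gamma)$ on the fully occupied set $D$ to get the reverse inequality. The extra details you supply (that $u=0$ off $D$, and that $D$ is finite so the minimum principle applies) are correct and merely make explicit what the paper's one-line reverse-inequality argument leaves implicit.
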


\begin{proof}
From (\ref{laplacianofu_t}) we have
	\[ \Delta u = \nu_\tau - \nu_0 \leq 1-\nu_0. \]
Since $\Delta \gamma = 1-\nu_0$, the difference $u-\gamma$ is superharmonic.  As $u$ is nonnegative, it follows that $u-\gamma \geq s$.  For the reverse inequality, note that $s+\gamma-u$ is superharmonic on the domain $D = \{x~|~\nu_\tau(x)=1\}$ of fully occupied sites and is nonnegative outside $D$, hence nonnegative inside $D$ as well. 
\end{proof}

We now turn to the case of a point source mass $m$ started at the origin: $\nu_0=m\delta_{o}$.  More general starting densities are treated in Chapter~\ref{scalinglimit}.  In the case of a point source of mass $m$, the natural question is to identify the shape of the resulting domain $D_m$ of fully occupied sites, i.e.\ sites $x$ for which $\mu(x)=1$.  According to Theorem~\ref{divsandcirc}, $D_m$ is extremely close to a ball of volume $m$; in fact, the error in the radius is a constant independent of $m$.
As before, for $r\geq 0$ we write
	\[ B_r = \{ x \in \Z^d ~:~ |x| < r \} \]
for the lattice ball of radius $r$ centered at the origin.
\begin{theorem}
\label{divsandcirc}
For $m \geq 0$ let $D_m \subset \Z^d$ be the domain of fully occupied sites for the divisible sandpile formed from a pile of size $m$ at the origin.  There exist constants $c,c'$ depending only on $d$, such that
	\[ B_{r-c} \subset D_m \subset B_{r+c'}, \]
where $r = (m/\omega_d)^{1/d}$ and $\omega_d$ is the volume of the unit ball in $\R^d$.
\end{theorem}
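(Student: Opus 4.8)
The plan is to use Lemma~\ref{discretemajorant}, which identifies the odometer $u$ for the point-source mass $m$ as $u = s + \gamma$ where $\gamma(x) = |x|^2 + m g_1(o,x)$ and $s$ is the least superharmonic majorant of $-\gamma$. After subtracting off the constant $\widetilde\gamma_d(\lfloor r\rceil e_1)$ we may work with $\gamma_d$ from (\ref{subtractoffconstant}), which has the same Laplacian, $\Delta\gamma_d = 1 - m\delta_o$, and which we have already estimated near and far from $\partial B_r$ in Lemmas~\ref{gammalowerbound}--\ref{gammanonnegativeeverywhere}. The domain $D_m$ is exactly the noncoincidence set $\{u > 0\}$, equivalently $\{s > -\gamma_d\}$. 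So the strategy is: (i) pin $u$ pointwise against $\gamma_d$ with small error in an annulus around $\partial B_r$; (ii) deduce the two containments.

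For the \textbf{inner bound} $B_{r-c}\subset D_m$, I would use superharmonic comparison. On any finite set $\Omega\subset\Z^d$ containing $o$, the function $u - \gamma_d$ is superharmonic (since $\Delta u = \nu_\tau - \nu_0 \le 1 - m\delta_o = \Delta\gamma_d$ away from $o$, with the equality/inequality pattern working out at $o$ too because $u\ge 0$), so it attains its minimum on $\partial\Omega$. Taking $\Omega$ to be a ball of radius slightly larger than $r$ and using that $u$ vanishes off $D_m$ while $u\ge 0$ everywhere, together with the upper estimate $\gamma_d = O(1)$ on $B_{r+1}\setminus B_{r-1}$ from Lemma~\ref{gammaupperbound}, one gets a lower bound on $u$ in the bulk; combined with the quadratic lower bound $\gamma_d(x)\ge (r-|x|)^2 + O(r^d/|x|^d)$ and the near-origin bound $\gamma_d\ge r^2/4$ on $B_{r/3}$, this forces $u>0$ on $B_{r-c}$ for a suitable constant $c$. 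The alternative (and perhaps cleaner) route is to show directly that $u \ge \gamma_d - C$ on $\Z^d$ for a constant $C$: since $u - \gamma_d + C$ restricted to a large ball has boundary values controlled by $|u|$ near radius $\approx r$ (where $u$ is small, being $0$ outside $D_m$ and Lipschitz-ish), one concludes $u\ge \gamma_d - C$, and then $\gamma_d(x) > C$ whenever $|x| \le r - c$ gives $u(x)>0$.

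For the \textbf{outer bound} $D_m\subset B_{r+c'}$, I would bound $u$ from above by $\gamma_d + O(1)$: the least superharmonic majorant $s$ of $-\gamma_d$ satisfies $s \le$ (any superharmonic function dominating $-\gamma_d$), and a harmonic-outside-$B_{r'}$ comparison function built from $g_1$ does the job, giving $u = s+\gamma_d \le O(1)$ outside a ball of radius $r + c'$ — but $u$ takes only the values $0$ or $\ge$ (something bounded below) is not quite the right dichotomy; rather, one uses that the total emitted mass $\sum_x u(x)\cdot(\text{stuff})$ is constrained, or more simply that on $\partial D_m$ we have $u>0$ while $u = \gamma_d + s$ and $\gamma_d$ grows quadratically away from $B_r$, so $D_m$ cannot extend past radius $r + c'$. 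Concretely: if $x\in D_m$ with $|x| = r + t$, then $u(x)>0$, and comparing $u$ with the explicit superharmonic function $\gamma_d$ plus a multiple of $g_1(\cdot,o)$ tuned so that it dominates $u$ outside $B_r$ shows $t = O(1)$. A volume count ($\#D_m = m = \omega_d r^d$ since the sandpile is mass-conserving and $D_m$ is fully occupied) combined with the inner bound $B_{r-c}\subset D_m$ also immediately caps how far $D_m$ can stick out in an $L^1$ sense, and upgrading this to the stated $L^\infty$ outer bound is where the real work lies.

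I expect the \textbf{main obstacle} to be the outer bound, specifically controlling the worst-case protrusion of $D_m$ rather than merely its volume. The inner bound is essentially a one-shot superharmonicity argument fed by the $\gamma_d$ estimates already proved; the outer bound requires ruling out thin ``spikes'' of fully occupied sites, which is exactly the phenomenon that makes the analogous rotor-router and internal DLA statements harder. For the divisible sandpile the key simplification is that $u$ itself is the solution to an obstacle problem, so one has the clean identity $D_m = \{s > -\gamma_d\}$ and the monotonicity $u = s + \gamma_d \ge \gamma_d$ wherever $s \ge -\gamma_d$ is an equality vs.\ strict — exploiting that $u$ and $\nabla u$ (discrete gradient) are small near $\partial D_m$, plus the quadratic growth of $\gamma_d$, should close the gap with a constant $c'$ independent of $m$.
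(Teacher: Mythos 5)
Your inner bound is exactly the paper's argument: $u-\gamma_d$ is superharmonic, so it attains its minimum in $B_r$ on $\partial B_r$, where $u\geq 0$ and $\gamma_d=O(1)$ by Lemma~\ref{gammaupperbound}; Lemma~\ref{gammalowerbound} then gives $u(x)\geq (r-|x|)^2+O(r^d/|x|^d)$ on $B_r$, and Lemma~\ref{gammanearorigin} handles $B_{r/3}$. No issues there.

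The outer bound, however, has a genuine gap, and you have correctly located it yourself. After establishing $u\leq\gamma_d+a$ on $D_m$ (which follows, as you indicate, because $u-\gamma_d$ is harmonic on $D_m$, $u$ vanishes on $\partial D_m$, and $\gamma_d\geq -a$ everywhere by Lemma~\ref{gammanonnegativeeverywhere}), you only obtain $u\leq c'$ on the annulus $\{r-1<|x|\leq r\}$; this says nothing directly about points far outside $B_r$, where $\gamma_d$ grows quadratically and the bound $u\leq\gamma_d+a$ is useless. Your proposed fixes do not close this: the comparison with ``$\gamma_d$ plus a multiple of $g_1(\cdot,o)$'' is not spelled out and there is no evident superharmonic barrier of that form vanishing where you need it to, while the volume count only controls $\Leb$-measure rather than protrusion, as you concede. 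The missing ingredient is elementary but essential: since $\Delta u=1$ on $D_m\setminus\{o\}$, every such $x$ has a neighbor $y$ with $u(y)\geq\frac{1}{2d}\sum_{z\sim x}u(z)=u(x)+1$, and $y\in D_m$ because $u(y)>0$; iterating gives a lattice path inside $D_m$ from any $x\in D_m$ to the origin along which $u$ increases by at least $1$ per step (Lemma~\ref{boundarypath} in the paper). If some $x\in D_m$ had $|x|>r+c'+1$, this path would cross the annulus $\{r-1<|y|\leq r\}$ only after more than $c'$ steps, forcing $u>c'$ there and contradicting the annulus bound. That single observation is what rules out thin spikes and upgrades your estimate to $D_m\subset B_{r+c'+1}$.
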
 


The idea of the proof is to use Lemma~\ref{discretemajorant} along with the basic estimates on $\gamma$, Lemmas~\ref{gammalowerbound} and~\ref{gammaupperbound}, to obtain estimates on the odometer function
	\[ u(x) = \text{total mass emitted from $x$}. \]
We will need the following simple observation.
	
\begin{lemma}
\label{boundarypath}
For every point $x\in D_m - \{o\}$ there is a path $x=x_0 \sim x_1 \sim \ldots \sim x_k=o$ in $D_m$ with $u(x_{i+1}) \geq u(x_i)+1$.
\end{lemma}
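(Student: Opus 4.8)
The plan is to build the desired path greedily, starting at $x$ and always stepping to a neighbor where the odometer is strictly larger — in fact larger by at least $1$ — and to argue that such a neighbor always exists as long as we have not yet reached the origin, and that the process must terminate at $o$. The key identity is the Laplacian condition established in Lemma~\ref{discretemajorant}: since $\nu_0 = m\delta_o$, we have $\Delta u(y) = \nu_\tau(y) - \nu_0(y)$, which equals $1 - m\delta_{oy}$ at every site $y$ that emitted mass, in particular at every $y \in D_m$. (Here I am using that $D_m \subset \{u > 0\}$ and that $\nu_\tau = 1$ on $D_m$.)

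First I would fix $x_0 = x \in D_m - \{o\}$ and suppose inductively that we have produced $x_0 \sim x_1 \sim \ldots \sim x_i$ in $D_m$ with $u(x_{j+1}) \ge u(x_j) + 1$ for $j < i$, and with $x_i \ne o$. Since $x_i \in D_m - \{o\}$, the Laplacian condition gives
\[ \frac{1}{2d}\sum_{y \sim x_i} u(y) - u(x_i) = \Delta u(x_i) = 1, \]
so $\frac{1}{2d}\sum_{y\sim x_i} u(y) = u(x_i) + 1$, which forces at least one neighbor $y$ of $x_i$ to satisfy $u(y) \ge u(x_i) + 1$ (an average exceeding $u(x_i)+1$ — actually equal to it — cannot have every term $\le u(x_i)+1$ strictly; more carefully, if every neighbor had $u(y) < u(x_i)+1$ then the average would be $< u(x_i)+1$, a contradiction, so some neighbor has $u(y) \ge u(x_i)+1$). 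Pick such a $y$ and set $x_{i+1} = y$. Since $u(x_{i+1}) \ge u(x_i) + 1 \ge 1 > 0$, the site $x_{i+1}$ emitted positive mass, hence lies in $D_m$ (mass is only emitted from fully occupied sites, or one can note $\{u>0\} \subset \bar{D_m}$ and handle the boundary case: if $x_{i+1} \notin D_m$ it would be a boundary site with $u = 0$, contradicting $u(x_{i+1}) > 0$).

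It remains to see the path terminates at $o$. Along the path the values $u(x_i)$ are strictly increasing, hence the $x_i$ are distinct; since $D_m$ is finite, the construction cannot continue forever, and the only way it can stop is if at some stage $x_k = o$ (that is the sole exception in the inductive step above). This produces the required path $x = x_0 \sim x_1 \sim \ldots \sim x_k = o$ in $D_m$ with $u(x_{i+1}) \ge u(x_i) + 1$ for all $i$. I do not anticipate a serious obstacle here; the only point requiring care is the bookkeeping that every $x_i$ genuinely lies in $D_m$ (not merely in the closure), which follows from $u(x_i) \ge u(x_0) \ge 1 > 0$ together with the fact that boundary sites have zero odometer.
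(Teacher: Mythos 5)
Your proposal is correct and follows essentially the same route as the paper: the paper likewise picks the neighbor of $x_i$ maximizing $u$, notes that this value is at least the average $\frac{1}{2d}\sum_{y\sim x_i}u(y) = u(x_i)+\Delta u(x_i) = u(x_i)+1$ using $\Delta u = 1$ on $D_m-\{o\}$, and concludes $x_{i+1}\in D_m$. Your additional remarks on termination and on why positive odometer forces membership in $D_m$ are correct details the paper leaves implicit.
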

\begin{proof}
If $x_i \in D_m - \{o\}$, let $x_{i+1}$ be a neighbor of $x_i$ maximizing $u(x_{i+1})$.  Then $x_{i+1} \in D_m$ and
	\begin{align*} u(x_{i+1}) &\geq \frac{1}{2d} \sum_{y \sim x_i} u(y) \\
		&= u(x_i) + \Delta u(x_i) \\
		&= u(x_i)+1,  \end{align*}
where in the last step we have used the fact that $x_i \in D_m$.
\end{proof}
   
\begin{proof}[Proof of Theorem~\ref{divsandcirc}]
We first treat the inner estimate.  Let $\gamma_d$ be given by (\ref{subtractoffconstant}).
By Lemma~\ref{discretemajorant} the function $u-\gamma_d$ is superharmonic,  so its minimum in the ball $B_r$ is attained on the boundary.  Since $u\geq 0$, we have by  Lemma~\ref{gammaupperbound}
	\[ u(x)-\gamma_d(x) \geq O(1), \qquad x \in \partial B_r. \]
Hence by Lemma~\ref{gammalowerbound},
	\begin{equation} \label{odomlowerbound} u(x) \geq  (r-|x|)^2 + O(r^d/|x|^d), \qquad x \in B_r. \end{equation}
It follows that there is a constant $c$, depending only on $d$, such that $u(x)>0$ whenever $r/3\leq |x|<r-c$.  Thus $B_{r-c}-B_{r/3} \subset D_m$.  For $x \in B_{r/3}$, by Lemma~\ref{gammanearorigin} we have $u(x)>r^2/4+O(1)>0$, hence $B_{r/3} \subset D_m$.

For the outer estimate, note that $u-\gamma_d$ is harmonic on $D_m$.  By Lemma~\ref{gammanonnegativeeverywhere} we have $\gamma_d \geq -a$ everywhere, where $a$ depends only on $d$.  Since $u$ vanishes on $\partial D_m$ it follows that $u-\gamma_d \leq a$ on $D_m$.  
Now for any $x \in D_m$ with $r-1<|x|\leq r$, we have by Lemma~\ref{gammaupperbound}
	\[ u(x) \leq \gamma_d(x) + a \leq c' \]
for a constant $c'$ depending only on $d$.  Lemma~\ref{boundarypath} now implies that $D_m \subset B_{r+c'+1}$.
\end{proof}

\section{Classical Sandpile}
\label{sandpilepointsource}

We consider a generalization of the classical abelian sandpile, proposed by Fey and Redig \cite{FR}.  Each site in $\Z^d$ begins with a ``hole'' of depth $H$.  Thus, each site absorbs the first $H$ grains it receives, and thereafter functions normally, toppling once for each additional $2d$ grains it receives.
\index{$H$, hole depth}
If $H$ is negative, we can interpret this as saying that every site starts with $h=-H$ grains of sand already present.  Aggregation is only well-defined in the regime $h\leq 2d-2$, since for $h=2d-1$ the addition of a single grain already causes every site in $\Z^d$ to topple infinitely often.

Let $S_{n,H}$ be the set of sites that are visited if $n$ particles start at the origin in $\Z^d$.
\index{$S_{n,H}$, sandpile with hole depth $H$} 
Fey and Redig \cite[Theorem~4.7]{FR} prove that
	\[ \lim_{H \to \infty} \limsup_{n \to \infty}  \frac{H}{n} \# \left( S_{n,H} \bigtriangleup B_{H^{-1/d}r} \right) = 0, \]
where $n = \omega_d r^d$, and $\bigtriangleup$ denotes symmetric difference.  The following theorem strengthens this result.

\begin{theorem}
\label{sandpilecirc}
Fix an integer $H \geq 2-2d$.  Let $S_n = S_{n,H}$ be the set of sites that are visited by the classical abelian sandpile model in $\Z^d$, starting from $n$ particles at the origin, if every lattice site begins with a hole of depth $H$.  Write $n = \omega_d r^d$.  Then
	\[ B_{c_1r - c_2} \subset S_{n,H} \]
where
	\[ c_1 = (2d-1+H)^{-1/d} \]
and $c_2$ is a constant depending only on $d$.  Moreover if $H \geq 1-d$, then for any $\epsilon>0$ we have
	 \[ S_{n,H} \subset B_{c'_1r + c'_2} \]
where
 	\[ c'_1 = (d-\epsilon+H)^{-1/d} \]
and $c'_2$ is independent of $n$ but may depend on $d$, $H$ and $\epsilon$.
\end{theorem}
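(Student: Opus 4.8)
The plan is to mimic the obstacle-problem argument used for the divisible sandpile (Theorem~\ref{divsandcirc}), replacing the continuous odometer by the integer-valued odometer $u(x) = $ (number of times $x$ topples) of the classical sandpile with hole depth $H$. The crucial identity is still $\Delta u(x) = \nu(x) - \nu_0(x)$, where now $\nu_0 = H + n\delta_o$ is the initial configuration (counting the hole as depth $H$), and $\nu(x) \le 2d - 1 + H$ on the set of visited sites, since a site that has received enough grains to topple at least once and is done toppling has between $H$ and $2d-1+H$ grains. For the outer estimate one instead uses that on the set $S_n$ of visited sites the final configuration satisfies $\nu \ge H$ off the origin and the odometer vanishes on $\partial S_n$. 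So I would introduce, for a radius $s$ to be chosen, the obstacle-type function $\gamma(x) = |x|^2(1+H) \cdot(\text{suitable normalization}) + n\, g_1(o,x)$, or more precisely work with $\widetilde\gamma(x) = (2d-1+H)|x|^2 + n g_1(o,x)$ for the inner bound and $\widehat\gamma(x) = (d - \epsilon + H)|x|^2 + n g_1(o,x)$ for the outer bound, subtract off the appropriate constant as in (\ref{subtractoffconstant}), and redo the Taylor expansion of Section~\ref{basicestimate} with these rescaled leading coefficients. This is why the constants $c_1 = (2d-1+H)^{-1/d}$ and $c_1' = (d-\epsilon+H)^{-1/d}$ appear: the ball $B_s$ with $s = c_1 r$ has $\omega_d s^d = n/(2d-1+H)$, which is exactly the "capacity" the pile can fill when each site absorbs up to $2d-1+H$ grains.

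For the \emph{inner} bound I would argue: since $\Delta u = \nu - \nu_0$ and $\nu \le 2d-1+H$, the function $u - \widetilde\gamma$ (with $\widetilde\gamma$ normalized so $\Delta\widetilde\gamma = 2d-1+H - n\delta_o$) is superharmonic on $B_s$; since $u \ge 0$ and $\widetilde\gamma = O(1)$ near $|x| = s$ by the analogue of Lemma~\ref{gammaupperbound}, the minimum principle gives $u(x) \ge (s - |x|)^2 + O(s^d/|x|^d)$ on $B_s$, exactly as in (\ref{odomlowerbound}), hence $u > 0$ on an annulus $s/3 \le |x| < s - c_2$; the region inside radius $s/3$ is handled by the analogue of Lemma~\ref{gammanearorigin}. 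Thus $B_{c_1 r - c_2} \subset S_n$. For the \emph{outer} bound I would note $u - \widehat\gamma$ is harmonic on $S_n \setminus \{o\}$, use $\widehat\gamma \ge -a$ everywhere (analogue of Lemma~\ref{gammanonnegativeeverywhere}) and $u = 0$ on $\partial S_n$ to conclude $u \le \widehat\gamma + a = O(1)$ on the annulus $|x| \in (s'-1, s']$ with $s' = c_1' r$, and then invoke the analogue of Lemma~\ref{boundarypath} — a path of sites along which $u$ increases by at least $1$ at each step leading back to the origin — which holds because $\Delta u(x) = \nu(x) - H \ge 0$ at any visited site $x \ne o$ with $\nu(x) \ge H$; this forces $S_n \subset B_{s' + c_2'}$.

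The main obstacle I expect is the outer estimate, specifically two points: (i) justifying that $\nu(x) \ge H$ at every visited site other than the origin — i.e., that a site which has toppled does not end up with fewer than $H$ grains — which needs a small monotonicity/abelian argument about the classical sandpile dynamics (a site only topples after accumulating $\ge 2d + H$ grains, and each toppling removes exactly $2d$, leaving $\ge H$); and (ii) the restriction $H \ge 1-d$, needed so that the leading coefficient $d - \epsilon + H$ of $\widehat\gamma$ is positive for small $\epsilon$ and so that the Green's-function Taylor expansion of Section~\ref{basicestimate} still produces a quadratic lower bound $(s'-|x|)^2$ with the right sign — if $H$ is too negative this coefficient degenerates. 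The analogue of Lemma~\ref{boundarypath} also requires a little care since, unlike the divisible sandpile where $u$ is a genuine real-valued odometer, here one must check the inequality $u(x_{i+1}) \ge u(x_i) + 1$ survives rounding, but since $u$ and $\Delta u$ are integer-valued this is automatic. The rest is a routine re-run of the Taylor expansions in Section~\ref{basicestimate} with the modified leading constants, tracking how $\epsilon$ enters $c_2'$.
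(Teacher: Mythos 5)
Your inner estimate is essentially the paper's argument: the upper bound $\Delta u(x) + n\delta_{ox} \leq 2d-1+H$ on toppled sites makes $u - \xi_d$ superharmonic for the obstacle $\xi_d$ with leading coefficient $2d-1+H$, and the rest is the Taylor expansion of Section~\ref{basicestimate} with rescaled constants. That half is fine.

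The outer estimate, however, has a genuine gap. You assert that $u - \widehat\gamma$ is harmonic (or at least subharmonic) on $S_n \setminus \{o\}$, where $\widehat\gamma$ has leading coefficient $d-\epsilon+H$. But the only pointwise information available is (\ref{dumblaplbounds}): at a toppled site the final number of grains lies anywhere between $H$ and $2d-1+H$, so $\Delta u$ is \emph{not} constant on $T_n$ and the best pointwise lower bound is $\Delta u \geq H$. That would force the leading coefficient of your comparison function down to $H$, giving at best $c_1' = H^{-1/d}$ — vacuous for $H \leq 0$ and much weaker than $(d-\epsilon+H)^{-1/d}$ in general. Nothing in your argument explains where the extra $d-\epsilon$ comes from; this is exactly the point at which the classical sandpile differs from the divisible sandpile, where $u-\gamma$ really is harmonic on the occupied set because every full site ends with mass exactly $1$. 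The missing ingredient is the paper's averaging trick: one replaces $u$ by its average $u^{(k)}$ over a box $Q(x,k)$ and uses the Le Borgne--Rossin observation that a region all of whose sites topple retains at least one grain per internal edge (for each internal edge, the endpoint that topples last sends the other a grain that never moves again). Since $Q(x,k)$ has $2dk(2k+1)^{d-1}$ internal edges, this yields $\Delta u^{(k)}(x) \geq (d+H)\frac{2k}{2k+1}$ on the set $T_n^{(k)}$ of sites whose box lies in $T_n$, which for $k$ large exceeds $d-\epsilon+H$; only then is $u^{(k)} - \psi_d$ subharmonic (with $\psi_d$ the correspondingly averaged obstacle), and one must additionally control $u^{(k)}$ on $\partial T_n^{(k)}$ and pay a factor $(2k+1)^d$ to pass from a bound on $u^{(k)}$ back to a bound on $u$ before invoking the increasing-path lemma. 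Your two flagged "obstacles" (that $\nu \geq H$ at toppled sites, and that $d-\epsilon+H>0$) are real but minor; the averaging step is the actual crux, and without it the outer bound with constant $(d-\epsilon+H)^{-1/d}$ does not follow.
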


Note that the ratio $c_1/c'_1 \uparrow 1$ as $H \uparrow \infty$.  Thus, the classical abelian sandpile run from an initial state in which each lattice site starts with a deep hole yields a shape very close to a ball.  Intuitively, one can think of the classical sandpile with deep holes as approximating the divisible sandpile, whose limiting shape is a ball by Theorem~\ref{divsandcirc}.  Following this intuition, we can adapt the proof of Theorem~\ref{divsandcirc} to prove Theorem~\ref{sandpilecirc}; just one additional averaging trick is needed, which we explain below.

Consider the odometer function for the abelian sandpile
	\[  u(x) = \text{total number of grains emitted from }x. \]
Let $T_n = \{x|u(x)>0\}$ be the set of sites which topple at least once.  Then 
	\[ T_n \subset S_n \subset  T_n \cup \partial T_n. \]
In the final state, each site which has toppled retains between $0$ and $2d-1$ grains, in addition to the $H$ that it absorbed.  Hence
	\begin{equation} \label{dumblaplbounds} H \leq \Delta u(x) + n\delta_{ox} \leq 2d-1+H, \qquad x \in T_n. \ 
	\end{equation}
We can improve the lower bound by averaging over a small box.  For $x \in \Z^d$ let
	\[ Q(x,k) = \{ y\in \Z^d \,:\, ||x-y||_{\infty} \leq k \} \index{$Q(x,k)$, discrete cube} \]
be the box of side length $2k+1$ centered at $x$, and let
	\[ u^{(k)}(x) = (2k+1)^{-d} \sum_{y \in Q(x,k)} u(y). \]
Write
	\[ T_n^{(k)} = \{ x \,|\, Q(x,k) \subset T_n \}. \]
Le Borgne and Rossin \cite{LBR} observe that if $T$ is a set of sites all of which topple, the number of grains remaining in $T$ is at least the number of edges internal to $T$: indeed, for each internal edge, the endpoint that topples last sends the other a grain which never moves again.  Since the box $Q(x,k)$ has $2dk(2k+1)^{d-1}$ internal edges, we have
	\begin{equation} \label{averagedinabox} \Delta u^{(k)}(x) \geq (d+H) \frac{2k}{2k+1}, \qquad x \in T_n^{(k)}. \end{equation}
The following lemma is analogous to Lemma~\ref{boundarypath}.
 
\begin{lemma}
\label{boundarypathology}
For every point $x\in T_n$ adjacent to $\partial T_n$ there is a path $x=x_0 \sim x_1 \sim \ldots \sim x_m = o$ in $T_n$ with $u(x_{i+1}) \geq u(x_i)+1$.
\end{lemma}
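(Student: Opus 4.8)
The plan is to mimic the proof of Lemma~\ref{boundarypath}: starting at $x$, walk toward $o$, at each step passing to a neighbouring site where the odometer $u$ is strictly larger. For the divisible sandpile this worked because $\Delta u\equiv 1$ on $D_m\setminus\{o\}$; here (\ref{dumblaplbounds}) gives only $\Delta u(y)\ge H$ on $T_n\setminus\{o\}$. When $H\ge 1$ this is already enough — every $y\in T_n\setminus\{o\}$ then has $\frac1{2d}\sum_{z\sim y}u(z)\ge u(y)+1$, hence a neighbour with strictly larger $u$, and one ascends greedily from anywhere. The point for the main case $H=0$ (the classical abelian sandpile) is that a \emph{strict} increase is still available at $y$ as soon as $u$ is known not to be locally constant at $y$.

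\emph{Key step.} If $y\in T_n\setminus\{o\}$ has a neighbour $z$ with $u(z)\neq u(y)$, then $y$ has a neighbour $w$ with $u(w)>u(y)$. Indeed, by (\ref{dumblaplbounds}) (with $\delta_{ox}=0$) and $H\ge 0$ we have $\Delta u(y)\ge 0$, i.e. $\frac1{2d}\sum_{z'\sim y}u(z')\ge u(y)$; if every one of the $2d$ neighbours $z'$ of $y$ satisfied $u(z')\le u(y)$, this would force $u(z')=u(y)$ for all $z'\sim y$, contradicting the hypothesis.

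\emph{Building the path.} We may assume $x\neq o$. Set $x_0=x$; since $x_0\in T_n$ is adjacent to some $w_0\in\partial T_n$, and $u(w_0)=0$ while $u(x_0)\ge 2d$ (at least one toppling), the Key Step applied to $y=x_0$, $z=w_0$ produces a neighbour $x_1$ of $x_0$ with $u(x_1)>u(x_0)$, and $u(x_1)>0$ so $x_1\in T_n$. Inductively, given $x_0,\dots,x_i\in T_n$ with $u(x_0)<\dots<u(x_i)$: if $x_i=o$ we stop; otherwise $x_i\in T_n\setminus\{o\}$ has a neighbour of strictly smaller odometer (namely $x_{i-1}$, or $w_0$ when $i=0$), so the Key Step yields $x_{i+1}\sim x_i$ with $u(x_{i+1})>u(x_i)$ and $x_{i+1}\in T_n$. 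The values $u(x_0)<u(x_1)<\cdots$ are strictly increasing nonnegative multiples of $2d$ bounded above by $\max_{T_n}u<\infty$, so the recursion terminates, and by the dichotomy just used it can terminate only at $o$. This produces a path $x=x_0\sim\cdots\sim x_m=o$ in $T_n$ with $u(x_{i+1})\ge u(x_i)+2d\ge u(x_i)+1$, proving Lemma~\ref{boundarypathology} when $H\ge 0$.

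\emph{The remaining range $1-d\le H<0$, and the main obstacle.} Here $\Delta u(y)\ge H$ does not by itself give the ``locally constant or strictly increasing'' dichotomy: the Key Step can fail only when $\Delta u(y)$ equals one of the finitely many negative integers in $[H,-1]$, and since $u$ takes values in $2d\,\Z_{\ge 0}$ this pins the local picture of $u$ near $y$ down almost completely — one neighbour lower by a bounded multiple of $2d$, the remaining $2d-1$ at level $u(y)$. To exclude this one invokes the Le~Borgne--Rossin grain count~\cite{LBR} on the edge joining $y$ to that lower neighbour — the same combinatorial input behind (\ref{averagedinabox}): the endpoint of that edge which topples last sends across a grain that never moves again, forcing an extra unit of final mass onto $y$ or its neighbour beyond what (\ref{dumblaplbounds}) alone records, a surplus incompatible with $\Delta u(y)$ taking that borderline value. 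I expect this last piece of bookkeeping, rather than the ascent argument itself, to be the only genuinely delicate point.
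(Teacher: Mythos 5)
Your main argument is, for $H\ge 0$, exactly the paper's proof of Lemma~\ref{boundarypathology}: from (\ref{dumblaplbounds}) one gets $\frac{1}{2d}\sum_{y\sim x_i}u(y)\ge u(x_i)$ for $x_i\in T_n\setminus\{o\}$, and since the previously visited neighbour (initially a boundary neighbour, with $u=0$) has strictly smaller odometer, some neighbour must have strictly larger odometer; your initialization at a vertex adjacent to $\partial T_n$ and your termination argument just spell out the same greedy ascent. You are also right that this step uses $\Delta u\ge 0$ off the origin, hence $H\ge 0$: the paper's two-line proof tacitly makes the same assumption, since (\ref{dumblaplbounds}) only gives $\Delta u\ge H$, even though the lemma is invoked in the outer estimate for all $H\ge 1-d$. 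So your flagging of the negative range is a sharp observation about the lemma as used, not something the paper's own argument handles differently.

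The gap is that your treatment of $1-d\le H\le -1$ is not a proof, and the mechanism you sketch does not close it. In the problematic configuration --- $y\in T_n\setminus\{o\}$ with every neighbour's odometer $\le u(y)$ and total deficit $m=\frac{1}{2d}\sum_{z\sim y}\big(u(y)-u(z)\big)$ satisfying $1\le m\le -H$ --- the final mass at $y$ is $-H-m\ge 0$, which violates nothing in (\ref{dumblaplbounds}). The Le Borgne--Rossin count on an edge $\{y,w\}$ with both endpoints toppled only says that the endpoint whose last toppling is later donates a grain that stays with the other; if $y$ is the last to topple in its neighbourhood, all such grains sit on the neighbours, the final mass at $y$ can legitimately be $0$ (i.e.\ $m=-H$), and the constraint you do extract --- final mass at least $1$ at each toppled neighbour $w$, i.e.\ $\Delta u(w)\ge 1+H$ --- bears on $\Delta u(w)$, not on $\Delta u(y)$, so no contradiction with the ``borderline'' value of $\Delta u(y)$ follows. (A smaller imprecision: the deficit need not sit on a single neighbour; up to $-H\le d-1$ neighbours can be strictly lower.) Some genuinely non-local input would be needed to run the ascent when $H<0$; as written, neither your sketch nor, for that matter, the paper's stated proof supplies it.
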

 
\begin{proof}
By (\ref{dumblaplbounds}) we have
	\[ \frac{1}{2d} \sum_{y \sim x_i} u(y) \geq u(x_i). \]
Since $u(x_{i-1})<u(x_i)$, some term $u(y)$ in the sum above must exceed $u(x_i)$.  Let $x_{i+1}=y$.
\end{proof} 
 
\begin{proof}[Proof of Theorem~\ref{sandpilecirc}]
Let
	\[ \widetilde{\xi}_d(x) = (2d-1+H)|x|^2 + ng_1(o,x), \]
and let 
	\[ \xi_d(x) = \widetilde{\xi}_d(x) - \widetilde{\xi}_d(\floor{c_1r}e_1). \]
Taking $m = n/(2d-1+H)$ in Lemma~\ref{gammaupperbound}, we have
	\begin{equation} \label{minattainedonboundary} u(x)-\xi_d(x) \geq -\xi_d(x) = O(1), \qquad x\in\partial B_{c_1r}. \end{equation}
By (\ref{dumblaplbounds}), $u-\xi_d$ is superharmonic, so (\ref{minattainedonboundary}) holds in all of $B_{c_1r}$.  Hence by Lemma~\ref{gammalowerbound}
	\begin{equation} \label{odomlowerboundagain} u(x) \geq \frac{(r-|x|)^2}{2d-1+H} + O(r^d/|x|^d), \qquad x \in B_{c_1 r}. \end{equation}
It follows that $u$ is positive on $B_{c_1r - c_2}-B_{c_1 r/3}$ for a suitable constant $c_2$.  For $x \in B_{c_1 r/3}$, by Lemma~\ref{gammanearorigin} we have $u(x) > c_1^2 r^2 / 4 + O(1)>0$.  Thus $B_{c_1 r - c_2} \subset T_n \subset S_n$.  

For the outer estimate, let
	\[ \hat{\psi}_d(x) = (d-\epsilon+H)|x|^2 + ng_1(o,x). \]
Choose $k$ large enough so that $(d+H) \frac{2k}{2k+1} \geq d-\epsilon+H$ and define
	\[ \widetilde{\psi}_d(x) = (2k+1)^{-d} \sum_{y \in Q(x,k)} \hat{\psi}_d(y). \]
Finally, let
	\[ \psi_d(x) = \widetilde{\psi}_d(x) - \widetilde{\psi}_d(\floor{c'_1r}e_1). \]
By (\ref{averagedinabox}), $u^{(k)}-\psi_d$ is subharmonic on $T_n^{(k)}$.  Taking $m=n/(d-\epsilon+H)$ in Lemma~\ref{gammanonnegativeeverywhere}, there is a constant $a$ such that $\psi_d \geq -a$ everywhere.  Since $u^{(k)} \leq (2d+H)^{(d+1)k}$ on $\partial T_n^{(k)}$ it follows that $u^{(k)}-\psi_d \leq a+(2d+H)^{(d+1)k}$ on $T_n^{(k)}$.  Now for any $x \in S_n$ with $c'_1r-1 < |x| \leq c'_1 r$ we have by Lemma~\ref{gammaupperbound}
	\[ u^{(k)}(x) \leq \psi_d(x) + a + (2d+H)^{(d+1)k} \leq \tilde{c}_2 \]
for a constant $\tilde{c}_2$ depending only on $d$, $H$ and $\epsilon$.  Then $u(x) \leq c'_2 := (2k+1)^d \tilde{c}_2$.  Lemma~\ref{boundarypathology} now implies that $T_n \subset B_{c'_1 r + c'_2}$, and hence 
	\[ S_n \subset T_n \cup \partial T_n \subset B_{c'_1 r + c'_2+1}. \qed \]
\renewcommand{\qedsymbol}{}
\end{proof}

We remark that the crude bound of $(2d+H)^{(d+1)k}$ used in the proof of the outer estimate can be improved to a bound of order $k^2H$, and the final factor of $(2k+1)^d$ can be replaced by a constant factor independent of $k$ and $H$, using the fact that a nonnegative function on $\Z^d$ with bounded Laplacian cannot grow faster than quadratically; see Lemma~\ref{atmostquadratic}.

\section{Rotor-Router Model}
\label{rotorpointsource}

Given a function $f$ on $\Z^d$, for a directed edge $(x,y)$ write
	\[ \nabla f (x,y) = f(y)-f(x). \index{$\nabla$, discrete gradient} \]
Given a function $s$ on directed edges in $\Z^d$, write
	\[ \div s (x) = \frac{1}{2d} \sum_{y \sim x} s(x,y). \index{$\div$, discrete divergence} \]
The discrete Laplacian of $f$ is then given by
	\[ \Delta f(x) = \div \nabla f = \frac{1}{2d} \sum_{y \sim x} f(y)-f(x). \]

\subsection{Inner Estimate}

Fixing $n \geq 1$, consider the odometer function for rotor-router aggregation
	\[ u(x) = \text{total number exits from $x$ by the first $n$ particles}. \]
We learned the idea of using the odometer function to study the rotor-router shape from Matt Cook \cite{Cook}.

\begin{lemma}
\label{odomflow}
For a directed edge $(x,y)$ in $\Z^d$, denote by $\kappa(x,y)$ the net number of crossings 
from $x$ to $y$ performed by the first $n$ particles in rotor-router aggregation.  
Then
	\begin{equation} \label{gradodom} \nabla u(x,y) = -2d\kappa(x,y) + R(x,y) 
\end{equation}
for some edge function $R$ which satisfies
	\[ |R(x,y)| \leq 4d-2 \]
for all edges $(x,y)$.
\end{lemma}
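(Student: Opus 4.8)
The plan is to compute, at each site $x$, how the $u(x)$ exits from $x$ split among its $2d$ neighbors, and then to combine the two endpoints of an edge. Write $N(x,y)$ for the number of exits from $x$ into the neighbor $y$, so that $\kappa(x,y) = N(x,y) - N(y,x)$ and $\sum_{y \sim x} N(x,y) = u(x)$. The defining feature of rotor-router walk is that each exit from $x$ first advances the rotor at $x$ by one notch through a fixed cyclic order of the $2d$ cardinal directions and then sends the particle in that direction; since the $n$ particles are launched and walked sequentially, the exits from $x$ --- pooled across all particles and all their successive visits to $x$ --- form a single linearly ordered list, and the $i$-th exit uses the direction obtained from the initial rotor at $x$ by $i$ advances. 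Consequently the first $u(x)$ exits distribute the $2d$ directions as evenly as possible, so each of the $2d$ numbers $N(x,z)$, $z \sim x$, equals either $\floor{u(x)/2d}$ or $\ceil{u(x)/2d}$, no matter how the initial rotor at $x$ is chosen.

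From this I get the pointwise bound $|2d\,N(x,y) - u(x)| \le 2d - 1$ for every directed edge: the left-hand side is $0$ when $2d \mid u(x)$, and writing $u(x) = 2dq + s$ with $1 \le s \le 2d-1$ otherwise, the quantity $2d\,N(x,y) - u(x)$ is either $-s$ or $2d - s$, in either case of modulus at most $2d-1$. The lemma then follows from the identity
\[ \nabla u(x,y) = u(y) - u(x) = -2d\bigl(N(x,y)-N(y,x)\bigr) + R(x,y) = -2d\,\kappa(x,y) + R(x,y), \]
where $R(x,y) := \bigl(2d\,N(x,y) - u(x)\bigr) - \bigl(2d\,N(y,x) - u(y)\bigr)$ satisfies $|R(x,y)| \le (2d-1)+(2d-1) = 4d-2$ by the bound just established. (As a consistency check, $R(x,y) = -R(y,x)$, matching the antisymmetry of both $\nabla u$ and $\kappa$.)

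I expect the only delicate point to be the bookkeeping in the first paragraph: one must verify that ``exits from $x$'', which is what the odometer $u$ counts, coincide exactly with ``advances of the rotor at $x$'' --- a particle that arrives at an \emph{unoccupied} site settles there without turning a rotor, while a particle sitting at an already occupied $x$ turns the rotor once and then leaves --- and that the exit events at a fixed site admit the global linear order used above. Both facts are immediate from the one-particle-at-a-time definition of rotor-router aggregation, so I anticipate no genuine difficulty beyond making these observations precise.
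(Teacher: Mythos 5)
Your proof is correct and follows essentially the same route as the paper's: both reduce the lemma to the pointwise bound $|2d\,N(x,y)-u(x)|\leq 2d-1$ (which the paper states as $\frac{u(x)-2d+1}{2d}\leq N(x,y)\leq \frac{u(x)+2d-1}{2d}$) and then combine the two endpoints of the edge. Your first paragraph merely spells out in more detail why the rotor mechanism forces the exits from $x$ to split as evenly as possible among the $2d$ directions, which the paper takes as immediate.
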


\begin{remark}
In the more general setting of rotor stacks of bounded discrepancy, the $4d-2$ will be replaced by a different constant here.
\end{remark}

\begin{proof}
Writing $N(x,y)$ for the number of particles routed from $x$ to $y$, we have
	\[ \frac{u(x)-2d+1}{2d} \leq N(x,y) \leq \frac{u(x)+2d-1}{2d} \]
hence
	\begin{align*}  | \nabla u(x,y) + 2d \kappa(x,y) | &= | u(y) - u(x) + 2dN(x,y) - 2dN(y,x) | \\
		&\leq 4d - 2. \qed \end{align*} 
\renewcommand{\qedsymbol}{} 
\end{proof}

For the remainder of this section $C_0, C_1, \ldots$ denote constants depending only on~$d$.

\begin{lemma}
\label{diameterbound}
Let $\Omega \subset \Z^d-\{o\}$ be a finite set.  Then
	\[ \sum_{y \in \Omega} |y|^{1-d} \leq C_0 \,\text{\em Diam}(\Omega). \]
\end{lemma}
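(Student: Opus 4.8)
The inequality says that, however the finite set $\Omega$ is spread out, the sum $\sum_{y\in\Omega}|y|^{1-d}$ is no larger than what one gets by packing $\#\Omega$ points as close to the origin as possible, and that quantity is $O(\#\Omega^{1/d})=O(\mathrm{Diam}(\Omega))$. So the whole argument reduces to elementary lattice–point counting: first bound the sum by $\#\Omega^{1/d}$, then bound $\#\Omega^{1/d}$ by $\mathrm{Diam}(\Omega)$.

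\textbf{Step 1: order by norm.} Enumerate $\Omega=\{y_1,\dots,y_k\}$ with $|y_1|\le|y_2|\le\cdots\le|y_k|$, where $k=\#\Omega$. Since the unit cubes centered at distinct lattice points in the ball $B_t$ are disjoint and contained in $B_{t+\sqrt d}$, that ball contains at most $C_d(t+1)^d$ lattice points. Applying this with $t=|y_j|$ (the ball $B_{|y_j|}$ contains $y_1,\dots,y_j$, all nonzero) gives $j\le C_d(|y_j|+1)^d$, hence $|y_j|\ge c_d\,j^{1/d}$ once $j$ exceeds a constant depending only on $d$; together with the trivial bound $|y_j|\ge 1$ for the bounded initial range of $j$, this yields $|y_j|^{1-d}\le C'_d\,j^{(1-d)/d}$ for all $j\ge 1$.

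\textbf{Step 2: sum and compare to the diameter.} Because $-1<(1-d)/d<0$ for $d\ge 2$, we have $\sum_{j=1}^{k} j^{(1-d)/d}\le 1+\int_1^{k}t^{1/d-1}\,dt\le d\,k^{1/d}$, so $\sum_{y\in\Omega}|y|^{1-d}\le C''_d\,k^{1/d}$. Now fix any $y_0\in\Omega$ (which exists and is nonzero since $o\notin\Omega$); then $\Omega\subset B(y_0,R)$ with $R=\mathrm{Diam}(\Omega)$, so $k\le C_d(R+1)^d$. When $\#\Omega\ge 2$ distinct lattice points force $R\ge 1$, hence $k\le 2^dC_d R^d$ and $k^{1/d}\le 2C_d^{1/d}R$, giving $\sum_{y\in\Omega}|y|^{1-d}\le C_0\,\mathrm{Diam}(\Omega)$ with $C_0=2C''_dC_d^{1/d}$.

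\textbf{Main obstacle.} There is nothing deep here; the only point needing a little care is the behavior of the estimate $|y_j|\gtrsim j^{1/d}$ for small $j$, where $j^{1/d}$ can be below $1$, handled by peeling off a bounded range of indices and absorbing it into the constant. (The sole genuinely degenerate case, $\#\Omega\le 1$, has $\mathrm{Diam}(\Omega)=0$ while the sum is at most $1$; this is harmless in every application, where $\Omega$ is a large set, and one may equally read $\mathrm{Diam}$ as the diameter of $\Omega\cup\{o\}$.)
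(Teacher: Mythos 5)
Your proof is correct, but it takes a different route from the paper's. The paper argues in one step with spherical shells: writing $\mathcal{S}_k = \{y \in \Z^d : k \le |y| < k+1\}$, each shell has $O(k^{d-1})$ lattice points, each contributing at most $k^{1-d}$, so each shell contributes $O(1)$ to the sum; since $\Omega$ meets at most $\mathrm{Diam}(\Omega)+1$ shells, the bound follows. You instead route through the cardinality of $\Omega$: ordering points by norm gives $|y_j| \ge c_d\, j^{1/d}$ (up to the bounded initial range you handle), hence $\sum_{y\in\Omega}|y|^{1-d} \le C(\#\Omega)^{1/d}$, and then a volume count converts $\#\Omega$ into $\mathrm{Diam}(\Omega)^d$. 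Both are elementary lattice-point counts; the paper's is shorter, while yours produces the extra intermediate estimate $\sum_{y\in\Omega}|y|^{1-d}\le C(\#\Omega)^{1/d}$, which is sometimes sharper (e.g.\ for $\Omega$ spread along a radial segment) and sometimes weaker (e.g.\ for $\Omega$ filling a single distant shell, where the paper's shell count gives $O(1)$); either suffices for the stated inequality. The degenerate issue you flag for $\#\Omega\le 1$ is shared by the paper: its proof really yields $C_0(\mathrm{Diam}(\Omega)+1)$, and as you say this is harmless in the applications, where one absorbs the $+1$ using $\mathrm{Diam}(\Omega)\ge 1$.
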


\begin{proof}
Let
	\[ \mathcal{S}_k = \{y \in \Z^d \,:\, k\leq |y| < k+1 \}. \]
Then 
	\[ \sum_{y \in S_k} |y|^{1-d} \leq k^{1-d} \# \mathcal{S}_k \leq C_0. \]
Since $\Omega$ can intersect at most Diam$(\Omega)+1$ distinct sets $\mathcal{S}_k$, the proof is complete.
\end{proof}

\begin{lemma}
\label{letsgetthispartystarted}
Let $G = G_{B_r}$ be the Green's function for simple random walk in $\Z^d$ stopped on exiting $B_r$. 
Let $x \in B_r$ with $|x| > r - \rho$.  Then
	\begin{equation} \label{theparty}
	\sum_{\substack{y \in B_r \\ |x-y| < 3\rho}} \sum_{z \sim  y} |G(x,y)-G(x,z)| \leq C_1\rho.
	\end{equation}
\end{lemma}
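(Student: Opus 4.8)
The plan is to bound the double sum by a constant multiple of $\sum_{0<|x-y|<3\rho}(1\vee|x-y|)^{1-d}$, which Lemma~\ref{diameterbound} controls by $O(\rho)$. We may assume $\rho\ge 1$, since otherwise the sum contains at most the term $y=x$, which is $O(1)$. Write $v=G(x,\cdot)$, so that $v\ge 0$, $v$ vanishes off $B_r$, and $v$ is discrete-harmonic on $B_r\setminus\{x\}$. For $y\in B_r$ set $k(y)=\max\bigl(1,\min(|x-y|,\,\text{dist}(y,\partial B_r))\bigr)$; then $v$ is nonnegative and harmonic on a ball of radius $\asymp k(y)$ about $y$, so the difference estimate for nonnegative harmonic functions (the harmonic difference estimate together with the Harnack inequality, as in \cite[Ch.~1]{Lawler}) gives $\sum_{z\sim y}|\nabla v(y,z)|\le C\,v(y)/k(y)$ for $y\ne x$. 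The term $y=x$ contributes $\sum_{z\sim x}|\nabla v(x,z)|\le 4d\,G(x,x)$, and $|x|>r-\rho$ forces $G(x,x)=O(\log\rho)=O(\rho)$ (indeed $O(1)$ when $d\ge 3$). Hence it remains to prove that $v(y)/k(y)\le C(1\vee|x-y|)^{1-d}$ for $0<|x-y|<3\rho$, up to a harmless factor $\log\tfrac{2r}{|x-y|}$ when $d=2$.

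For this pointwise bound I would split on whether $\text{dist}(y,\partial B_r)$ is at least or less than $|x-y|$. In the first case $k(y)\asymp|x-y|$, and the standard Green's function estimate~(\ref{standardgreensestimate}) gives $v(y)\le g_1(x,y)+O(1)\le C(1\vee|x-y|)^{2-d}$ when $d\ge 3$; when $d=2$, writing $v=g_1(x,\cdot)-h$ with $h(w)=\EE_w\,g_1(x,X_\tau)$ (here $\tau$ is the exit time of $B_r$, and the values $g_1(x,X_\tau)$ lie between the values of $g_1(x,\cdot)$ at distances $1$ and $2r$), one gets $v(y)\le\tfrac2\pi\log\tfrac{2r}{1\vee|x-y|}+O(1)$. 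Dividing by $k(y)$ gives the claim. In the second case $k(y)\asymp\text{dist}(y,\partial B_r)$, and the key point is that $v$ vanishes on a piece of $\partial B_r$ lying within distance $\lesssim|x-y|$ of $y$: by the boundary Harnack principle at scale $\asymp|x-y|$ (legitimate because $x$ lies well outside $B(y,c|x-y|)$ — concretely one may compare with the half-space tangent to $B_r$ at $ry/|y|$ and use the reflection principle), one has $v(y)\le C\,\tfrac{\text{dist}(y,\partial B_r)}{|x-y|}\,M$, where $M$ is the supremum of $v$ over interior points $w$ with $|w-y|\asymp|x-y|$, and $M\le C(1\vee|x-y|)^{2-d}$ as above — in dimension two, using $|x|>r-\rho$, one invokes the sharp estimate $G_{B_r}(x,w)\le C\log\bigl(1+\tfrac{(r-|x|)(r-|w|)}{|x-w|^2}\bigr)\le C\log(1+\rho/|x-y|)$. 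Dividing by $k(y)$ again gives the claim.

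Finally, summing: with $\Omega=\{y-x:\ y\in B_r,\ 0<|x-y|<3\rho\}$, which has diameter $<6\rho$, Lemma~\ref{diameterbound} yields $\sum_{y}C(1\vee|x-y|)^{1-d}\le 6CC_0\rho$ when $d\ge 3$, so the left-hand side is $\le C_1\rho$. When $d=2$ the bound carries the extra logarithmic factor and one estimates instead $\sum_{k=1}^{3\rho}C\log(1+\rho/k)$; this is still $O(\rho)$, because $\sum_{k=1}^{\rho}\log(\rho/k)=\rho\log\rho-\log(\rho!)=O(\rho)$ by Stirling's formula, so once more the total is $\le C_1\rho$. I expect the main obstacle to be the near-boundary case: one needs a Green's function estimate that simultaneously captures the linear vanishing of $v$ as $y\to\partial B_r$ (this is where $|x|>r-\rho$ is used) and loses at most a logarithm from the long-range excursions of the walk, and in dimension two it is essential to use the sharp two-sided estimate for $G_{B_r}$ rather than a crude $\log(2r/|x-y|)$ bound so that the logarithmic losses telescope via Stirling.
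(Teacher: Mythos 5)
Your route is genuinely different from the paper's, and considerably heavier. The paper never estimates $G(x,\cdot)$ pointwise at all: it writes $G(x,y)=g_1(x,y)-\EE_x g_1(X_T,y)$, applies the gradient bound $|g_1(w,y)-g_1(w,z)|\le C|w-y|^{1-d}$ from (\ref{standardgreensestimate}) to \emph{both} terms (with $w=x$ for the first and $w=X_T$ for the second), and then sums over $y$ using Lemma~\ref{diameterbound} twice --- once for $\sum_y|x-y|^{1-d}$, and once, for each fixed boundary point $w$, for $\sum_{y\in D}|w-y|^{1-d}$, which is again $O(\rho)$ because $D$ has diameter $6\rho$ no matter where $w$ sits; averaging over $w$ against the harmonic measure $H_x(w)$ then costs nothing. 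This sidesteps every boundary issue, works identically in all dimensions, and in fact never uses the hypothesis $|x|>r-\rho$. What your approach would buy is a pointwise bound $\sum_{z\sim y}|G(x,y)-G(x,z)|\lesssim (1\vee|x-y|)^{1-d}$ (times a logarithm when $d=2$), which is stronger than what is needed; the cost is that you must control $G(x,\cdot)$ itself near $\partial B_r$.

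That cost is where the one genuine gap sits. In $d\ge 3$ your outline goes through: the interior gradient estimate plus Harnack gives $\sum_{z\sim y}|\nabla v(y,z)|\le Cv(y)/k(y)$, the monotonicity $G_{B_r}\le g_1$ handles your first case, and the linear boundary decay in your second case can be extracted from the paper's own Lemma~\ref{hitthehyperplane} by comparison with a tangent half-space. But in $d=2$ the whole argument hinges on the asserted bound $G_{B_r}(x,w)\le C\log\bigl(1+\tfrac{(r-|x|)(r-|w|)}{|x-w|^2}\bigr)$ for the \emph{discrete} ball, and you are right that the crude $\log(2r/|x-y|)$ bound is not harmless: it produces $\rho\log(r/\rho)$, which violates the claimed bound when $\rho\ll r$. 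The sharp product-form estimate is classical for the continuum disk, but for the lattice ball it is not available in the references the paper draws on and is itself harder to establish than the lemma being proved --- one must either dominate by the Green's function of a tangent, generally non-axis-parallel, discrete half-space and control the reflection error, or prove sharp discrete harmonic-measure bounds of the form $\EE_x\log|X_T-w|\le\log|x-w|+C\log(1+\cdots)$. As written this step is invoked rather than proved, so the $d=2$ case of your proof is incomplete even though the overall architecture is sound.
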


\begin{proof}
Let $(X_t)_{t\geq 0}$ denote simple random walk in $\Z^d$,  and let $T$ be the first exit time from $B_r$.  
For fixed $y$, the function 
	\begin{equation} \label{stoppedgreen} A(x) = g_1(x,y) - \EE_x g_1(X_T,y) \end{equation}
has Laplacian $\Delta A(x) = - \delta_{xy}$ in $B_r$ and vanishes on $\partial B_r$, hence $A(x)=G(x,y)$.  
Let $x,y \in B_r$ and $z\sim y$.  From (\ref{standardgreensestimate}) we have
	\[ |g_1(x,y)-g_1(x,z)| \leq\frac{C_2}{|x-y|^{d-1}},  \qquad  y,z \neq x. \]  
Using the triangle inequality together with (\ref{stoppedgreen}), we obtain
	\begin{align*} |G(x,y)-G(x,z)| &\leq |g_1(x,y)-g_1(x,z)| + \EE_x | g_1(X_T,y)-g_1(X_T,z)| \\	
		&\leq  \frac{C_2}{|x-y|^{d-1}} + \sum_{w\in \partial B_r} H_x(w) \frac{C_2}{|w-y|^{d-1}}, \end{align*}
where $H_x(w) = \PP_x(X_T = w)$.

Write $D = \{y \in B_r \,:\, |x-y|<3\rho\}$.  Then
	\begin{equation} \label{twointegrals}  \sum_{\substack{y \in D \\ y \neq x}} \sum_{\substack{z \sim  y \\ z\neq x}} |G(x,y)-G(x,z)| \leq C_3 \rho + C_2 \sum_{w \in \partial B_r} H_x(w) \sum_{y \in D} |w-y|^{1-d}.
	\end{equation}
By Lemma~\ref{diameterbound}, the inner sum on the right is at most $C_0\,$Diam$(D) = 6C_0\rho$, so the right side of (\ref{twointegrals}) is bounded above by $C_1 \rho$ for a suitable $C_1$.

Finally, the terms in which $y$ or $z$ coincides with $x$ make a negligible contribution to the sum in (\ref{theparty}), since for $y\sim x \in \Z^d$
	\[ |G(x,x)-G(x,y)| \leq |g_1(x,x)-g_1(x,y)| + \EE_x |g_1(X_T,x)-g_1(X_T,y)| \leq C_4. \qed \]
\renewcommand{\qedsymbol}{}
\end{proof}

\begin{figure}
\centering
\resizebox{4in}{!}{\input{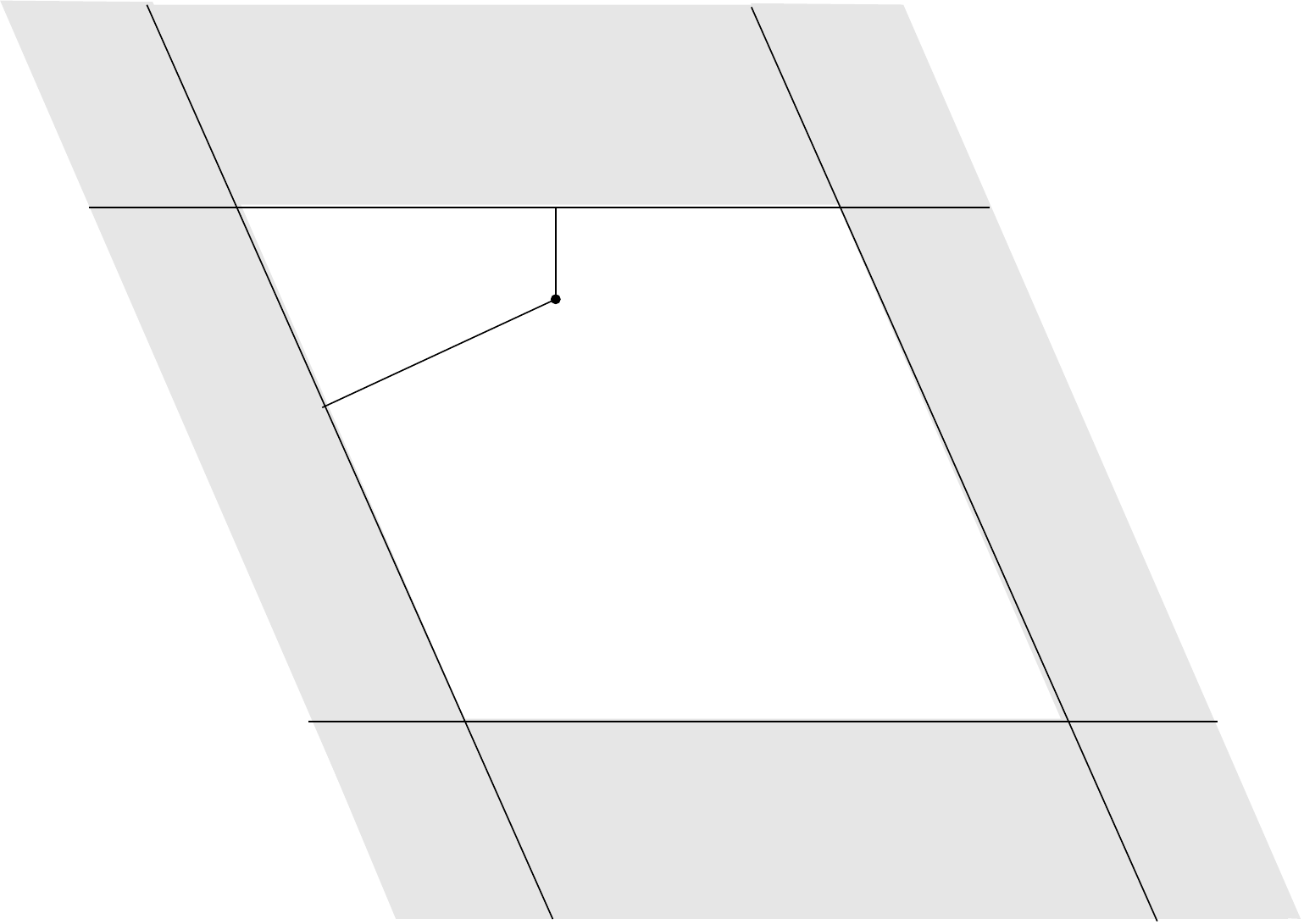_t}}
\caption{Diagram for the Proof of Lemma~\ref{hitthehyperplane}.}
\label{hitthehyperplanefigure}
\end{figure}


\begin{lemma}
\label{hitthehyperplane}
Let $H_1, H_2$ be linear half-spaces in $\Z^d$, not necessarily parallel to the coordinate axes.  Let $T_i$ be the first hitting time of $H_i$.  If $x \notin H_1 \cup H_2$, then
	\[ \PP_x(T_1>T_2) \leq \frac52 \frac{h_1+1}{h_2} \left(1+\frac{1}{2h_2}\right)^2 \]
where $h_i$ is the distance from $x$ to $H_i$.
\end{lemma}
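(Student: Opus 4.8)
The plan is to recast the statement as one about two coupled bounded-increment martingales and then to prove it by exhibiting an explicit superharmonic majorant on the wedge cut out by $H_1$ and $H_2$. Let $n_1,n_2$ be the outward unit normals and $d_i(y)=\langle y,n_i\rangle-t_i$ the corresponding signed distances, so that $H_i=\{d_i\le 0\}$, $d_i(x)=h_i>0$, and the complement $W=\{d_1>0,\,d_2>0\}$ of $H_1\cup H_2$ is a convex wedge containing $x$. For simple random walk $(X_k)$, each $d_i(X_k)$ is a martingale with increments of absolute value at most $1$, and the overshoot at the hitting time satisfies $d_i(X_{T_i})\in(-1,0]$; since a nonconstant mean-zero martingale with bounded increments oscillates, $T_1,T_2<\infty$ a.s., hence $S:=T_1\wedge T_2<\infty$ a.s. Everything below depends on the walk only through the pair $(d_1(X_k),d_2(X_k))$, which is why the bound carries no dimensional constant. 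If $n_1\parallel n_2$ the problem is degenerate: either $\PP_x(T_1>T_2)=0$, or it is a one-dimensional gambler's ruin giving $\PP_x(T_1>T_2)\le h_1/(h_1+h_2)$, or $h_1\ge h_2$ and the asserted bound already exceeds $1$. So assume $\partial H_1$ and $\partial H_2$ meet; then $W$ has some opening angle $\gamma\in(0,\pi)$, and writing $\theta\in(0,\gamma)$ for the angular coordinate measured from the face on $\partial H_2$ and $\rho$ for the distance to the edge $\partial H_1\cap\partial H_2$, one has $h_2=\rho\sin\theta$ and $h_1=\rho\sin(\gamma-\theta)$.

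It suffices to construct a function $w\ge 0$, defined on $\overline W$ and on the two overshoot layers $\{-1<d_i\le 0\}$ adjacent to it, such that (i) $\Delta w\le 0$ at every site of $W$; (ii) $w\ge 1$ at every site $z$ with $d_2(z)\in(-1,0]$ and $d_1(z)>0$; and (iii) $w(x)\le \tfrac52\,\tfrac{h_1+1}{h_2}\bigl(1+\tfrac1{2h_2}\bigr)^2$. Indeed, by (i) the process $w(X_{k\wedge S})$ is a nonnegative supermartingale, so Fatou's lemma and $S<\infty$ a.s.\ give $\EE_x w(X_S)\le w(x)$; and on $\{T_1>T_2\}$ we have $X_S=X_{T_2}$, which by the overshoot bound and $T_2<T_1$ satisfies the hypotheses of (ii), so $w(X_S)\ge 1$ there. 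Hence $\PP_x(T_1>T_2)\le \EE_x w(X_S)\le w(x)$.

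The barrier is modelled on the exact solution of the continuous problem: on the infinite wedge $W$ the bounded harmonic function equal to $1$ on the $\partial H_2$-face and $0$ on the $\partial H_1$-face is $W_0(\rho,\theta)=1-\theta/\gamma$, and $\PP_x^{\mathrm{BM}}(T_1>T_2)=W_0(x)=1-\theta_x/\gamma$. Using $h_1/h_2=\sin(\gamma-\theta_x)/\sin\theta_x$, the clean version of the desired inequality reduces to the elementary estimate $\eta/\gamma\le\tfrac52\,\sin\eta/\sin(\gamma-\eta)$ for $0<\eta<\gamma\le\pi$, which follows from $\sin\eta\ge\tfrac2\pi\eta$ on $[0,\pi/2]$ together with a separate bound near $\eta=\gamma$. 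To obtain $w$ on the lattice I would first enlarge $H_1$ to the half-space $\{d_1\le -1\}$ and take $w=W_0'$, the angular function of the enlarged wedge $W'=\{d_1>-1,\,d_2>0\}\supset W$ (continued by its own formula across $\partial H_2$ so as to be $\ge 1$ on the overshoot layer of (ii)); this is harmonic in the continuous sense on $W'$, so superharmonic on the lattice up to the discrete Laplacian's truncation error, and it satisfies $w(x)\le\tfrac52\,(h_1+1)/h_2$ up to a factor $1+O(h_2^{-1})$ by the trigonometric estimate applied in $W'$. Since $W_0'$ is homogeneous of degree $0$, its $k$-th derivatives at $x$ have size $h_2^{-k}$, so the lattice defect $\Delta w$ is of order $h_2^{-4}$; one removes it and arrives at the exact constant $(1+\tfrac1{2h_2})^2$ by composing with a slowly varying radial factor that decreases by $O(h_2^{-1})$ over the relevant scale.

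The heart of the proof --- and the step I expect to be delicate --- is this last construction: producing a genuinely lattice-superharmonic barrier on a wedge whose faces need not be coordinate hyperplanes, with all error terms uniform in the opening angle $\gamma$, in $\rho$, and in the dimension, and bookkept to the stated constants $\tfrac52$ and $(1+\tfrac1{2h_2})^2$. It is worth noting that a soft two-term split $\PP_x(T_1>T_2)\le\PP_x(T_2\le m)+\PP_x(T_1>m)$, bounding the first term by Doob's inequality ($\le m/h_2^2$) and the second by the reflection-principle estimate $\PP_x(T_1>m)\lesssim (h_1+1)m^{-1/2}$ and optimizing over $m$, yields only $\PP_x(T_1>T_2)\lesssim((h_1+1)/h_2)^{2/3}$ --- so the superharmonic barrier is genuinely needed to get the sharp exponent.
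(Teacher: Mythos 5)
Your reduction to a nonnegative discrete superharmonic barrier $w$ on the wedge (supermartingale plus optional stopping/Fatou) is sound in principle, and the continuum wedge function $1-\theta/\gamma$ is the right heuristic; but the step you yourself flag as the heart of the argument --- producing a lattice-superharmonic $w$ with the stated constants --- is not carried out, and the sketch of it does not survive scrutiny. First, the discrete-Laplacian defect of the angular function is of order $\rho^{-4}$ where $\rho$ is the distance to the edge $\partial H_1\cap \partial H_2$, not $h_2^{-4}$: the walk visits sites with $\rho=O(1)$ (and, for small opening angle $\gamma$, the derivatives of $\theta$ blow up further), so there is no smallness to exploit exactly where superharmonicity is hardest. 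Second, the proposed ``slowly varying radial factor'' must compensate a defect of size $\rho^{-4}$ uniformly over an \emph{unbounded} wedge while changing $w(x)$ by only $O(h_2^{-1})$, keeping $w\geq 0$ everywhere the stopped walk can sit, and keeping $w\geq 1$ on the $H_2$-overshoot layer, which extends all the way down to the edge; a correction with Laplacian $\lesssim-\rho^{-4}$ is necessarily of size $\Omega(1)$ at $\rho=O(1)$, where it destroys exactly those two boundary requirements. Third, nothing in this route produces the specific constants $\tfrac52$ and $\bigl(1+\tfrac{1}{2h_2}\bigr)^2$; asserting that the correction ``arrives at the exact constant'' is not an argument. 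As written, then, the proposal is a plausible program with its central lemma missing, not a proof.

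For comparison, the actual proof avoids wedge geometry and barriers entirely and is much softer: one introduces the half-space $\widetilde{H}_1$ obtained by shifting $H_1$ parallel to itself to distance $2h_2$ from $x$, and stops at $\widetilde{T}_1$, the hitting time of $H_1\cup\widetilde{H}_1$. Optional stopping for the signed-distance martingale $M_t$ (with unit overshoot) gives $\PP_x(X_{\widetilde{T}_1}\in\widetilde{H}_1)\leq\frac{h_1+1}{2h_2}$; the martingale $dM_t^2-t$ then bounds $\EE_x\widetilde{T}_1$; and a second quadratic martingale built from the distance $D_t$ to $\partial H_2$, namely $\frac d2\bigl(D_t^2+(2h_2-D_t)^2\bigr)-t$, converts that time bound into a bound on the probability that the walk moves $2h_2$ in the $H_2$-direction (in either sense) before time $\widetilde{T}_1$. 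Adding the two contributions yields precisely $\frac52\frac{h_1+1}{h_2}\bigl(1+\frac{1}{2h_2}\bigr)^2$, with the dimension entering only through the quadratic-variation factor $d$, which cancels. If you want to salvage your approach, you would need a genuinely new construction of the barrier near the edge of the wedge; the two-line martingale argument shows this machinery is not needed for the stated bound.
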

 
\begin{proof}
If one of $H_1, H_2$ contains the other, the result is vacuous.  Otherwise, let $\widetilde{H}_i$ be the half-space shifted parallel to $H_i^c$ by distance $2h_2$ in the direction of $x$, and let $\widetilde{T}_i$ be the first hitting time of $H_i \cup \widetilde{H}_i$.  Let $(X_t)_{t\geq 0}$ denote simple random walk in $\Z^d$, and write $M_t$ for the (signed) distance from $X_t$ to the hyperplane defining the boundary of $H_1$, with $M_0=h_1$.   Then $M_t$ is a martingale with bounded increments.
Since $\EE_x \widetilde{T}_1 < \infty$, we obtain from optional stopping
	\[ h_1 = \EE_x M_{\widetilde{T}_1} \geq 2h_2 \PP_x(X_{\widetilde{T}_1} \in \widetilde{H}_1) - \PP_x(X_{\widetilde{T}_1} \in H_1), \]
hence
	\begin{equation} \label{justgamblersruin} \PP_x(X_{\widetilde{T}_1} \in \widetilde{H}_1) \leq \frac{h_1+1}{2h_2}. \end{equation}
Likewise, $dM_t^2 - t$ is a martingale with bounded increments, giving
	\begin{align} \label{timeupperbound} \EE_x \widetilde{T}_1 &\leq d\EE_x M_{\widetilde{T}_1}^2 \nonumber \\
		&\leq d (2h_2+1)^2 \PP_x(X_{\widetilde{T}_1} \in \widetilde{H}_1) \nonumber \\
		&\leq d (h_1+1) (2h_2+1)\left(1+\frac{1}{2h_2}\right). \end{align}
	
Let $T = \text{min}(\widetilde{T}_1, \widetilde{T}_2)$.  Denoting by $D_t$ the distance from $X_t$ to the hyperplane defining the boundary of $H_2$, the quantity 
	\[ N_t = \frac{d}{2} \big( D_t^2 + (2h_2 - D_t)^2 \big) - t \]
is a martingale.  Writing $p = \PP_x(T=\widetilde{T}_2)$ we have
	\begin{align*} dh_2^2 = \EE N_0 = \EE N_T &\geq p \frac{d}{2}(2h_2)^2  + (1-p)dh_2^2 - \EE_x T \\
			&\geq (1+p)dh_2^2 - \EE_x T
	\end{align*}  
hence by (\ref{timeupperbound})
	\[ p \leq \frac{\EE_x T}{dh_2^2} \leq 2 \frac{h_1+1}{h_2} \left(1+\frac{1}{2h_2}\right)^2. \]
Finally by (\ref{justgamblersruin})
	\[ \PP(T_1>T_2) \leq p + \PP(X_{\widetilde{T}_1} \in \widetilde{H}_1) \leq \frac52 \frac{h_1+1}{h_2} \left(1+\frac{1}{2h_2}\right)^2. \qed \]
\renewcommand{\qedsymbol}{}
\end{proof} 


\begin{figure}
\centering
\resizebox{4in}{!}{\input{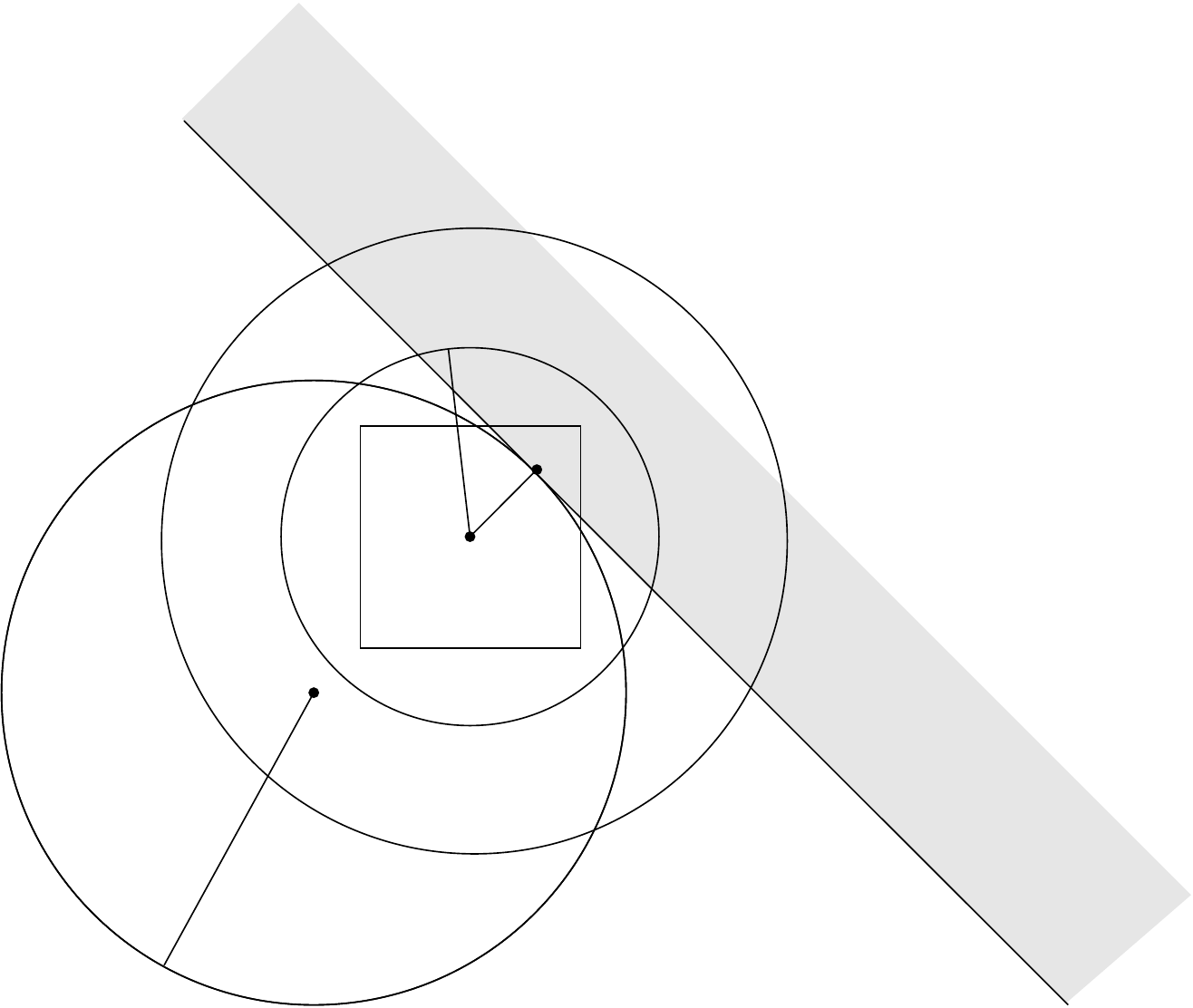_t}}
\caption{Diagram for the proof of Lemma~\ref{geoshells}.}
\label{geoshellsfig}
\end{figure}

\begin{lemma}
\label{geoshells}
Let $x \in B_r$ and let $\rho = r+1-|x|$.  Let
	\begin{equation} \label{shells} \mathcal{S}^*_k = \{ y \in B_r ~:~ 2^k \rho < |x-y| \leq 2^{k+1} \rho \}. \end{equation}
Let $\tau_k$ be the first hitting time of $\mathcal{S}^*_k$, and $T$ the first exit time from $B_r$.  Then 
	\[ \PP_x(\tau_k < T) \leq C_2 2^{-k}. \]
\end{lemma}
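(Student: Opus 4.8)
The plan is to avoid dealing with the spherical geometry of $\mathcal{S}^*_k$ directly and instead compare the event $\{\tau_k < T\}$ with the event that the walk leaves an $\ell^\infty$-cube around $x$, which can then be controlled by $2d$ applications of Lemma~\ref{hitthehyperplane}. First I would fix the unit vector $p = x/|x|$ (replacing it by $e_1$ in the degenerate case $x = o$) and introduce the half-space $H_0 = \{y : \langle y, p \rangle \ge r\}$. Since every $y \in B_r$ satisfies $\langle y, p \rangle \le |y| < r$, the half-space $H_0$ is disjoint from $B_r$, so the exit time $T$ from $B_r$ is at most the first hitting time $T_{H_0}$ of $H_0$; moreover $\mathrm{dist}(x, H_0) = r - |x| = \rho - 1$, which is exactly where the $\rho$ in the statement will come from.

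Next I would set $m = \lfloor 2^k \rho / \sqrt d \rfloor$. If $m = 0$ then $2^k\rho < \sqrt d$, hence $2^{-k} > 1/\sqrt d$ and the bound is trivial once $C_2 \ge \sqrt d$; so assume $m \ge 1$. Every point of the cube $Q(x,m)$ lies within Euclidean distance $m\sqrt d \le 2^k\rho$ of $x$, so $Q(x,m)$ is disjoint from $\mathcal{S}^*_k$. Consequently, on $\{\tau_k < T\}$ the walk travels from $x$ to a point of $\mathcal{S}^*_k$ while remaining in $B_r$, and in particular must leave $Q(x,m)$ before time $T$; this forces it to hit, before time $T$, one of the $2d$ coordinate half-spaces $H_i^\pm = \{y : \pm(y_i - x_i) \ge m\}$, each at distance $m$ from $x$. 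A union bound therefore reduces the claim to bounding $\PP_x(T_{H_i^\pm} < T)$, and since $T \le T_{H_0}$ this is at most $\PP_x(T_{H_i^\pm} < T_{H_0})$. Applying Lemma~\ref{hitthehyperplane} to the pair $H_0, H_i^\pm$ (and noting that if one of these half-spaces contains the other the probability in question is simply $0$) bounds each such term by $\tfrac52 \cdot \tfrac{(\rho-1)+1}{m}\big(1 + \tfrac{1}{2m}\big)^2 = \tfrac{5\rho}{2m}\big(1+\tfrac{1}{2m}\big)^2$; since $m \ge 2^k\rho/(2\sqrt d)$ when $m \ge 1$, this is at most a constant (depending only on $d$) times $2^{-k}$. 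Summing over the $2d$ half-spaces yields $\PP_x(\tau_k < T) \le C_2 2^{-k}$ with $C_2 = C_2(d)$.

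I do not expect a serious obstacle: the one genuine idea is to discard the annulus $\mathcal{S}^*_k$ in favor of the cube $Q(x,m)$, reducing everything to axis-aligned half-spaces plus the single ``tangent'' half-space $H_0$, after which the estimate is a mechanical invocation of Lemma~\ref{hitthehyperplane}. Trying instead to separate $x$ from all of $\mathcal{S}^*_k$ by one or two half-spaces runs into regime-dependent losses, because the shell wraps most of the way around $\partial B(x, 2^k\rho)$ when $2^k\rho$ is small compared with $r$. The only points that need a little care are the bookkeeping for small $k$ (where $m$ may be $0$ or $1$ and the bound is trivial or nearly so) and the degenerate cases $x = o$ or $p = \pm e_i$, where one checks that $H_0$ and $H_i^\pm$ still satisfy the hypotheses of Lemma~\ref{hitthehyperplane} (or else the relevant probability vanishes), none of which causes real trouble.
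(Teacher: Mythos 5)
Your proof is correct and is essentially the paper's own argument: the paper likewise takes the outer half-space tangent to $B_r$ at the boundary point nearest $x$, encloses $x$ in a cube of half-width about $2^k\rho/\sqrt d$ (hence disjoint from $\mathcal{S}^*_k$), and union-bounds over the $2d$ face half-spaces via Lemma~\ref{hitthehyperplane}. The only differences are cosmetic (your explicit handling of $m=0$ and of the degenerate containment case, where your claim that the probability ``vanishes'' holds in only one direction of containment, but the other direction forces $k=O(\log d)$ and is absorbed into $C_2$).
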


\begin{proof}
Let $H$ be the outer half-space tangent to $B_r$ at the point $z\in \partial B_r$ closest to $x$.  Let $Q$ be the cube of side length $2^k \rho/\sqrt{d}$ centered at $x$.  Then $Q$ is disjoint from $\mathcal{S}^*_k$, hence
	\[ \PP_x(\tau_k < T) \leq \PP_x(T_{\partial Q}<T) \leq \PP_x(T_{\partial Q} < T_H) \]
where $T_{\partial Q}$ and $T_H$ are the first hitting times of $\partial Q$ and $H$.  Let $H_1, \ldots, H_{2d}$ be the outer half-spaces defining the faces of $Q$, so that $Q = H_1^c \cap \ldots \cap H_{2d}^c$.  By Lemma~\ref{hitthehyperplane} we have
	\begin{align*}  \PP_x(T_{\partial Q} < T_H) &\leq \sum_{i=1}^{2d} \PP_x(T_{H_i} < T_H) \\
			&\leq \frac52 \sum_{i=1}^{2d} \frac{\text{dist}(x,H)+1}{\text{dist}(x,H_i)} \left(1+\frac{1}{2\,\text{dist}(x,H_i)}\right)^2. \end{align*}
Since dist$(x,H) = |x-z| \leq \rho$ and dist$(x,H_i) = 2^{k-1} \rho/\sqrt{d}$, and $\rho \geq 1$, taking $C_2 = 20\,d^{3/2}(1+\sqrt{d})^2$ completes the proof.
\end{proof}


\begin{lemma}
\label{greengradbound}
Let $G=G_{B_r}$ be the Green's function for random walk stopped on exiting $B_r$.  Let $x \in B_r$ and let $\rho = r-|x|$.   Then
 	\[ \sum_{y\in B_r} \sum_{z\sim y} |G(x,y)-G(x,z)| \leq C_3 \rho \log \frac{r}{\rho}. \]
\end{lemma}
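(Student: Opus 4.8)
The plan is to split $B_r$ into a ball of radius $\approx\rho$ about $x$ and the dyadic annular shells $\mathcal S^*_k$ around $x$ from Lemma~\ref{geoshells}, of which only $O(\log(r/\rho))$ are nonempty; to bound the contribution of the central ball by $O(\rho)$ directly; and to bound the contribution of each shell by $O(\rho)$ by combining the strong Markov property with the escape estimate of Lemma~\ref{geoshells}. Summing then gives the claimed bound. Throughout I would take $\rho=r+1-|x|\ge1$, matching Lemma~\ref{geoshells}; this differs from the $r-|x|$ in the statement by at most $1$, so it does not affect the conclusion.

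The first ingredient is the estimate behind Lemma~\ref{letsgetthispartystarted}, in a form valid for an arbitrary summation region: for every $v\in B_r$ and every finite $\Omega\subseteq B_r$,
\[ \sum_{y\in\Omega}\sum_{z\sim y}|G(v,y)-G(v,z)|\le C\,\textrm{Diam}(\Omega)+C. \]
This follows exactly as in the proof of Lemma~\ref{letsgetthispartystarted}: writing $G(v,\cdot)=g_1(v,\cdot)-\EE_v g_1(X_T,\cdot)$, using $|g_1(v,y)-g_1(v,z)|\le C_2|v-y|^{1-d}$ from (\ref{standardgreensestimate}) for $y,z\neq v$ and $\sum_{w\in\partial B_r}\PP_v(X_T=w)=1$, one bounds both $\sum_{y\in\Omega}|v-y|^{1-d}$ and $\sum_{y\in\Omega}|w-y|^{1-d}$ (for $w\in\partial B_r$, which lies outside $\Omega$) by $C_0\,\textrm{Diam}(\Omega)$ via Lemma~\ref{diameterbound} applied to the translated sets; the finitely many terms with $y=v$ or $z=v$ contribute $O(1)$. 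Taking $v=x$ and $\Omega=\{y\in B_r:|x-y|\le2\rho\}$ bounds the part of the sum over that central ball by $C\rho$.

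Now set $\mathcal S^*_k=\{y\in B_r:2^k\rho<|x-y|\le2^{k+1}\rho\}$ for $k\ge1$; together with the central ball these cover $B_r$, and $\mathcal S^*_k$ is empty once $2^k\rho\ge2r$, so only $k\le K:=O(\log(r/\rho))$ occur. Fix $k\ge1$, let $T$ be the first exit time of simple random walk from $B_r$, and let $\tau=\tau_{k-1}$ be its first hitting time of $\mathcal S^*_{k-1}$ starting from $x$. Since the walk is nearest-neighbor and $|x-y|>2^k\rho$ for $y\in\mathcal S^*_k$ --- and $|x-z|>2^{k-1}\rho$ for $z\sim y$, using $\rho\ge1$ --- every visit to $y$ or $z$ before time $T$ is preceded by a visit to $\mathcal S^*_{k-1}$, so by the strong Markov property at $\tau$,
\[ G(x,y)-G(x,z)=\EE_x\!\left[1_{\{\tau<T\}}\bigl(G(X_\tau,y)-G(X_\tau,z)\bigr)\right]. \]
Summing $|G(x,y)-G(x,z)|$ over $y\in\mathcal S^*_k$ and $z\sim y$, pulling the sum inside the expectation, and applying the displayed inequality above with base point $v=X_\tau\in\mathcal S^*_{k-1}\subseteq B_r$ and $\Omega=\mathcal S^*_k$ (so $\textrm{Diam}(\mathcal S^*_k)\le2^{k+2}\rho$), we get
\[ \sum_{y\in\mathcal S^*_k}\sum_{z\sim y}|G(x,y)-G(x,z)|\le\PP_x(\tau<T)\cdot C'2^k\rho. \]
By Lemma~\ref{geoshells}, $\PP_x(\tau<T)=\PP_x(\tau_{k-1}<T)\le C_2 2^{-(k-1)}$, so this is at most $2C'C_2\rho$, uniformly in $k$.

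Adding the central contribution and the at most $K$ shell contributions gives $\sum_{y\in B_r}\sum_{z\sim y}|G(x,y)-G(x,z)|\le C\rho+2C'C_2K\rho\le C_3\rho\log(r/\rho)$, in the regime $\rho\le r/2$ where $\log(r/\rho)\ge1$ (when $\rho$ is comparable to $r$ the trivial bound $C\,\textrm{Diam}(B_r)=O(r)$ already suffices). The only step that is more than routine is the per-shell estimate: the gradient sum of $G(v,\cdot)$ over $\mathcal S^*_k$ for a fixed base point $v$ grows like $2^k\rho$ (it is controlled only by $\textrm{Diam}(\mathcal S^*_k)$), and one genuinely needs the geometric decay $2^{-k}$ of $\PP_x(\tau_{k-1}<T)$ from Lemma~\ref{geoshells} to cancel this and obtain a per-shell bound independent of $k$. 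A minor technical point is that $\mathcal S^*_{k-1}$ and $\mathcal S^*_k$ are adjacent, so $X_\tau$ may be a neighbor of some $y\in\mathcal S^*_k$; this only enters through the additive $O(1)$ in the generalized estimate and is harmless.
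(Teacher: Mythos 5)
Your proof is correct and follows essentially the same route as the paper's: a dyadic-shell decomposition around $x$, the strong Markov property at an intermediate set to reduce to the local gradient estimate of Lemma~\ref{letsgetthispartystarted}, and the escape bound of Lemma~\ref{geoshells} supplying the $2^{-k}$ decay that makes each shell contribute $O(\rho)$, with $O(\log(r/\rho))$ shells in total. The only cosmetic difference is that you condition on hitting the previous shell $\mathcal{S}^*_{k-1}$ and invoke the local estimate in a mildly generalized, diameter-only form, whereas the paper conditions on hitting the inner boundary layer $W$ of $\mathcal{S}^*_k \cup \partial \mathcal{S}^*_k$ and applies Lemma~\ref{letsgetthispartystarted} as stated; these are interchangeable instances of the same argument.
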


\begin{proof}
Let $\mathcal{S}^*_k$ be given by (\ref{shells}), and let 
	\[ W = \{ w \in \partial( \mathcal{S}^*_k \cup \partial \mathcal{S}^*_k) ~:~ |w-x|<2^k \rho \}. \] 
Let $\tau_W$ be the first hitting time of $W$ and $T$ the first exit time from $B_r$.  For $w\in W$ let 
	\[ H_x(w) = \PP_x(X_{\tau_W \wedge T} = w). \]
Fixing $y \in \mathcal{S}^*_k$ and $z \sim y$, simple random walk started at $x$ must hit $W$ before hitting either $y$ or $z$, hence
	\[ |G(x,y) - G(x,z)| \leq \sum_{w \in W}  H_x(w)|G(w,y)-G(w,z)|. \]
For any $w \in W$ we have $|w|> r - (2^k+1) \rho$ and $|y-w|<3\cdot 2^k \rho$.  Lemma~\ref{letsgetthispartystarted} yields
	\[ \sum_{y \in \mathcal{S}^*_k} \sum_{z \sim y} |G(x,y) - G(x,z)| \leq C_1 (2^k+1) \rho \sum_{w \in W} H_x(w). \]
By Lemma~\ref{geoshells} we have $\sum_{w \in W} H_x(w) \leq C_2 2^{-k}$, so the above sum is at most  $2 C_1 C_2 \rho$.  Since the union of shells $\mathcal{S}^*_0, \mathcal{S}^*_1, \ldots, \mathcal{S}^*_{\ceil{\log_2(r/\rho)}}$ covers all of $B_r$ except for those points within distance $\rho$ of $x$, and $\sum_{|x-y|\leq\rho} \sum_{z \sim y} |G(x,y)-G(x,z)| \leq C_1 \rho$  by Lemma~\ref{letsgetthispartystarted}, the result follows.
\end{proof}

\begin{proof}[Proof of Theorem~\ref{rotorcircintro}, Inner Estimate]
Let $\kappa$ and $R$ be defined as in Lemma \ref{odomflow}.  Since the net number of particles to enter a site $x\neq o$ is at most one, we have $2d~\div \kappa (x) \geq -1$.  Likewise $2d~\div \kappa(o) = n-1$.  Taking the divergence in (\ref{gradodom}), we obtain
	\begin{align} \Delta u(x) &\leq 1 + \div R(x), \qquad x\neq o;  \label{laplodom}  \\ 
	\Delta u(o) &= 1-n + \div R(o). \label{laplodomat0} 
\end{align}
Let $T$ be the first exit time from $B_r$, and define
	\[ f(x) = \EE_x u(X_T) - \EE_x T + n \EE_x \# \{j<T | X_j=0\}. \]
Then $\Delta f(x) = 1$ for $x \in B_r-\{o\}$ and $\Delta f(o)=1-n$.  Moreover $f \geq 0$ on $\partial B_r$.  It follows from Lemma~\ref{gammaupperbound} with $m=n$ that $f \geq \gamma - C_4$ on $B_r$ for a suitable constant $C_4$.
	
We have
	\[ u(x)-\EE_x u(X_T) = \sum_{k \geq 0} \EE_x \Big( u(X_{k\wedge T}) - u(X_{(k+1)\wedge 
T}) \Big). \] 
Each summand on the right side is zero on the event $\{T \leq k\}$, hence
	\[ \EE_x \Big( u(X_{k\wedge T}) - u(X_{(k+1)\wedge T}) ~|~ \mathcal{F}_{k\wedge 
T} \Big) = -\Delta u (X_k) 1_{\{T>k\}}. \]
Taking expectations and using (\ref{laplodom}) and (\ref{laplodomat0}), we obtain
	\begin{align*}
	u(x)-\EE_x u(X_T) 
	&\geq \sum_{k\geq 0} \EE_x \Big[ 1_{\{T>k\}} (n 1_{\{X_k=o\}} -1 - \div R(X_k) ) \Big] \\
	& \hspace{-0.25in} = n\EE_x \# \{k < T | X_k=o \} - \EE_x T -  \sum_{k\geq 0} \EE_x \Big[ 1_{\{T>k\}}  \div R(X_k) \Big],
	\end{align*}
hence
	\begin{equation} \label{sumofRs}
	u(x) - f(x) \geq - \frac{1}{2d} \sum_{k \geq 0} \EE_x \left[ 1_{\{T>k\}} \sum_{z \sim X_k} R(X_k,z) \right]. \end{equation}
Since random walk exits $B_r$ with probability at least $\frac{1}{2d}$ every time it reaches a site adjacent to the boundary $\partial B_r$, the expected time spent adjacent to the boundary before time $T$ is at most $2d$.  Since $|R|\leq 4d$, the terms in (\ref{sumofRs}) with $z \in \partial B_r$ contribute at most $16d^3$ to the sum.  Thus
	\[ u(x) - f(x) \geq - \frac{1}{2d}  \sum_{k\geq 0} \EE_x  \left[ \sum_{\substack{y,z\in B_r \\ y\sim z}} 1_{\{T>k\} \cap \{X_k=y\}}R(y,z) \right] - 8d^2. \]
For $y \in B_r$ we have $\{X_k = y\} \cap \{T>k\} = \{X_{k\wedge T} = y\}$, hence
	\begin{equation} \label{triplesum} u(x) - f(x) \geq - \frac{1}{2d} \sum_{k\geq 0} \sum_{\substack{y,z \in B_r \\ y\sim z}} \PP_x(X_{k\wedge T}=y)R(y,z) - 8d^2. \end{equation}
Write $p_k(y) = \PP_x(X_{k\wedge T} = y)$.  Note that since $\nabla f$ and $\kappa$ are antisymmetric, $R$ is antisymmetric.  Thus
	\begin{align*} \sum_{\substack{y,z\in B_r \\ y\sim z}} p_k(y) R(y,z)
	&=  - \sum_{\substack{y,z\in B_r \\ y\sim z}} p_k(z) R(y,z) \\
	&= \sum_{\substack{y,z\in B_r \\ y\sim z}} \frac{p_k(y)-p_k(z)}{2} R(y,z). \end{align*}
Summing over $k$ and using the fact that $|R|\leq 4d$, we conclude from (\ref{triplesum}) that 
	\[ u(x) \geq f(x) - \sum_{\substack{y,z\in B_r \\ y\sim z}} |G(x,y) - G(x,z)| - 8d^2, \]
where $G=G_{B_r}$ is the Green's function for simple random walk stopped on exiting $B_r$.     By Lemma~\ref{greengradbound} we obtain
	\[ u(x) \geq f(x) - C_3 (r-|x|) \log \frac{r}{r-|x|} - 8d^2. \]
Using the fact that $f \geq \gamma - C_4$, we obtain from Lemma~\ref{gammalowerbound}
	\[ u(x) \geq (r-|x|)^2 - C_3 (r-|x|) \log \frac{r}{r-|x|} + O\left(\frac{r^d}{|x|^d}\right). \]
The right side is positive provided $r/3 \leq |x| < r- C_5 \log r$.  For $x\in B_{r/3}$, by Lemma~\ref{gammanearorigin} we have $u(x)>r^2/4-C_3 r \log \frac32 >0$, hence $B_{r-C_5\log r} \subset A_n$.
\end{proof}

\subsection{Outer Estimate}

The following result is due to Holroyd and Propp (unpublished); we include a proof for the sake of completeness.  Notice that the bound in (\ref{HPbound}) does not depend on the number of particles.

\begin{prop} \label{HP}
Let $\Gamma$ be a finite connected graph, and let $Y\subset Z$ be subsets of the vertex set of $\Gamma$.  Let $s$ be a nonnegative integer-valued function on the vertices of $\Gamma$.  Let $H_w(s,Y)$ be the expected number of particles stopping in $Y$ if $s(x)$ particles start at each vertex $x$ and perform independent simple random walks stopped on first hitting $Z$.  Let $H_r(s,Y)$ be the number of particles stopping in $Y$ if $s(x)$ particles start at each vertex $x$ and perform rotor-router walks stopped on first hitting $Z$.  Let $H(x) = H_w(1_{x},Y)$.  Then
	\begin{equation} \label{HPbound} 
	| H_r(s,Y) - H_w(s,Y) | \leq  \sum_{u \in G} \sum_{v \sim u} | H(u)-H(v) |
	\end{equation}
independent of $s$ and the initial positions of the rotors.
\end{prop}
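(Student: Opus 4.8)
The plan is to track the discrepancy between rotor-router walk and random walk one particle at a time, using an abelian/exchange argument, and to express the accumulated error in terms of the harmonic function $H$. I would introduce the "single-particle" harmonic function $H(x) = H_w(1_x, Y) = \PP_x(\text{walk stopped on hitting } Z \text{ ends in } Y)$, which is harmonic on $\Gamma \setminus Z$ with boundary values $1_Y$ on $Z$. The key observation is that if we route all $\sum_x s(x)$ particles one at a time (legitimate by the abelian property of rotor-router walk, which holds because $Z$ is an absorbing set), then the rotor configuration evolves deterministically, and after all particles have stopped, the net flow of particles across each edge differs from what "on average" random walk would do only through the bounded rotor discrepancies.

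Concretely, I would let $u(x)$ be the rotor-router odometer (number of times a particle leaves $x$), and argue as in Lemma~\ref{odomflow} that the net number of particle-crossings $\kappa(x,y)$ satisfies $\nabla u(x,y) = -2d\,\kappa(x,y) + R(x,y)$ with $|R| \le 4d-2$ on the graph $\Gamma$ (the constant will change with the graph, but boundedness is all that matters — and in fact for $\{0,1\}$-valued rotor stacks one gets a cleaner bound). Taking the divergence and using that each vertex in $Z$ absorbs particles while vertices outside $Z$ conserve them, one gets $\Delta u = (\text{something supported on } Z) + \div R$ off the sources, matching the Laplacian of the corresponding random-walk quantity up to $\div R$. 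Then $H_r(s,Y) - H_w(s,Y)$ can be written as a sum, over edges, of $\nabla H$ weighted against $R$: pair the telescoping identity $H_r(s,Y) = \sum_w (\text{rotor flow into } w)\,1_{w\in Y} $ against the random-walk expectation, and use the antisymmetry of $R$ to symmetrize, exactly as in the inner-estimate computation where the triple sum $\sum_k \sum_{y\sim z} p_k(y) R(y,z)$ was turned into $\sum_{y\sim z} |G(x,y)-G(x,z)|$. Summing the Green's-function identity over the source distribution $s$ collapses $\sum_x s(x) G_Z(x,\cdot)$ into the single harmonic function $H$ via $H(u) = \sum_{w} G_Z(u,w)\,(\text{boundary data})$, so that only $\nabla H$ survives, giving the bound $\sum_{u}\sum_{v\sim u}|H(u)-H(v)|$ with no dependence on $s$.

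The main obstacle is bookkeeping the absorption at $Z$ correctly: unlike the aggregation setting, particles here do not occupy sites but simply stop on first hitting $Z$, so the odometer is defined on $\Gamma \setminus Z$ and one must be careful that the boundary terms (crossings of edges incident to $Z$) are exactly the quantities $H_r(s,Y)$ and $H_w(s,Y)$, not merely bounded by them. Getting the antisymmetrization to eliminate the $s$-dependence cleanly — i.e. verifying that the "main term" in the rotor flow reproduces the random-walk expectation $H_w(s,Y)$ exactly, with the entire difference carried by the bounded edge-function $R$ against $\nabla H$ — is where I expect to spend the most care; once that identity is in place, the estimate $|R|\le$ const and the trivial bound $\bigl|\sum_{\text{edges}} R\,\nabla H\bigr| \le \text{const}\sum_{u\sim v}|H(u)-H(v)|$ finish it, and the constant can be absorbed since $H$ itself only takes values in $[0,1]$ (one can rescale $R$ against a normalized gradient). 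A clean way to avoid the constant entirely, which I would adopt if it works, is to run the comparison edge-by-edge as particles move, so that each particle contributes to the discrepancy at a vertex at most the "stack offset," and summing these offsets over a full cycle of each rotor gives exactly a telescoping $\nabla H$ with coefficient $1$.
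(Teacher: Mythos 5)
Your proposal is correct in substance, but it takes a genuinely different route from the paper. The paper's proof is a local conserved-quantity argument: it assigns to each rotor state at $u$ a weight built from the increments $H(u)-H(v_i)$ (consistent around a full turn of the rotor precisely because $H$ is harmonic off $Z$), assigns weight $H(u)$ to a particle at $u$, and observes that the total particle-plus-rotor weight is invariant under every routing step; since the initial particle weight is $H_w(s,Y)$ and the final one is $H_r(s,Y)$ (as $H=1_Y$ on $Z$), their difference is controlled by the total variation of the rotor weights, which is the sum in (\ref{HPbound}). Your argument is instead a global summation by parts: writing $\kappa(x,y)$ for the net number of crossings of the edge $(x,y)$, conservation of particles off $Z$ together with $H|_Z = 1_Y$ gives the exact identity $H_r(s,Y)-H_w(s,Y)=\tfrac12\sum_{x\sim y}\kappa(x,y)(H(y)-H(x))$; decomposing $\kappa(x,y)=u(x)/\deg(x)-u(y)/\deg(y)+R(x,y)$ with $|R|<2$ (the graph analogue of Lemma~\ref{odomflow}, best stated as $|N(x,y)-u(x)/\deg(x)|<1$ rather than in terms of $\nabla u$), the main term contributes $\sum_x u(x)\Delta H(x)=0$ because $H$ is harmonic off $Z$ and the odometer vanishes on $Z$, and the remainder is at most $\tfrac12\cdot 2\sum_{u}\sum_{v\sim u}|H(u)-H(v)|$ --- exactly the stated bound, so your hedging about an absorbed constant (and the suggestion to ``rescale $R$'') is unnecessary once the normalization is done per-vertex by $\deg(x)$. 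The paper's argument buys a step-by-step invariant valid at every intermediate time and needs no appeal to the abelian property or to a completed odometer; yours makes completely explicit where harmonicity enters and isolates the error as a bounded antisymmetric edge function paired against $\nabla H$, which is the same mechanism used later in the inner estimate of Theorem~\ref{rotorcircintro}.
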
 

\begin{proof}
For each vertex $u$, arbitrarily choose a neighbor $\eta(u)$.  Order the neighbors $\eta(u)=v_1, v_2, \ldots, v_d$ of $u$ so that the rotor at $u$ points to $v_{i+1}$ immediately after pointing to $v_i$ (indices mod $d$).  We assign {\it weight} $w(u,\eta(u))=0$ to a rotor pointing from $u$ to $\eta(u)$, and weight $w(u,v_i) = H(u) - H(v_i) + w(u,v_{i-1})$ to a rotor pointing from $u$ to $v_i$.  These assignments are consistent since $H$ is a harmonic function: $\sum_i (H(u) - H(v_i)) = 0$.  We also assign weight $H(u)$ to a particle located at $u$.  The sum of rotor and particle weights in any configuration is invariant under the operation of routing a particle and rotating the corresponding rotor.  Initially, the sum of all particle weights is $H_w(s,Y)$.  After all particles have stopped, the sum of the particle weights is $H_r(s,Y)$.  Their difference is thus at most the change in rotor weights, which is bounded above by the sum in (\ref{HPbound}).
\end{proof}


%
  
For $\rho \in \Z$ let
	\begin{equation} \label{shelldefn} \mathcal{S}_\rho = \{x\in \Z^d ~:~ \rho \leq |x| < \rho+1 \}. \end{equation}
Then
	\[ B_\rho = \{x\in \Z^d ~:~ |x| <\rho \} = \mathcal{S}_0 \cup \ldots \cup \mathcal{S}_{\rho-1}. \]
Note that for simple random walk started in $B_\rho$, the first exit time of $B_\rho$ and first hitting time of $\mathcal{S}_\rho$ coincide.  Our next result is a modification of Lemma~5(b) of \cite{LBG}.

\begin{figure}
\centering
\resizebox{4in}{!}{\input{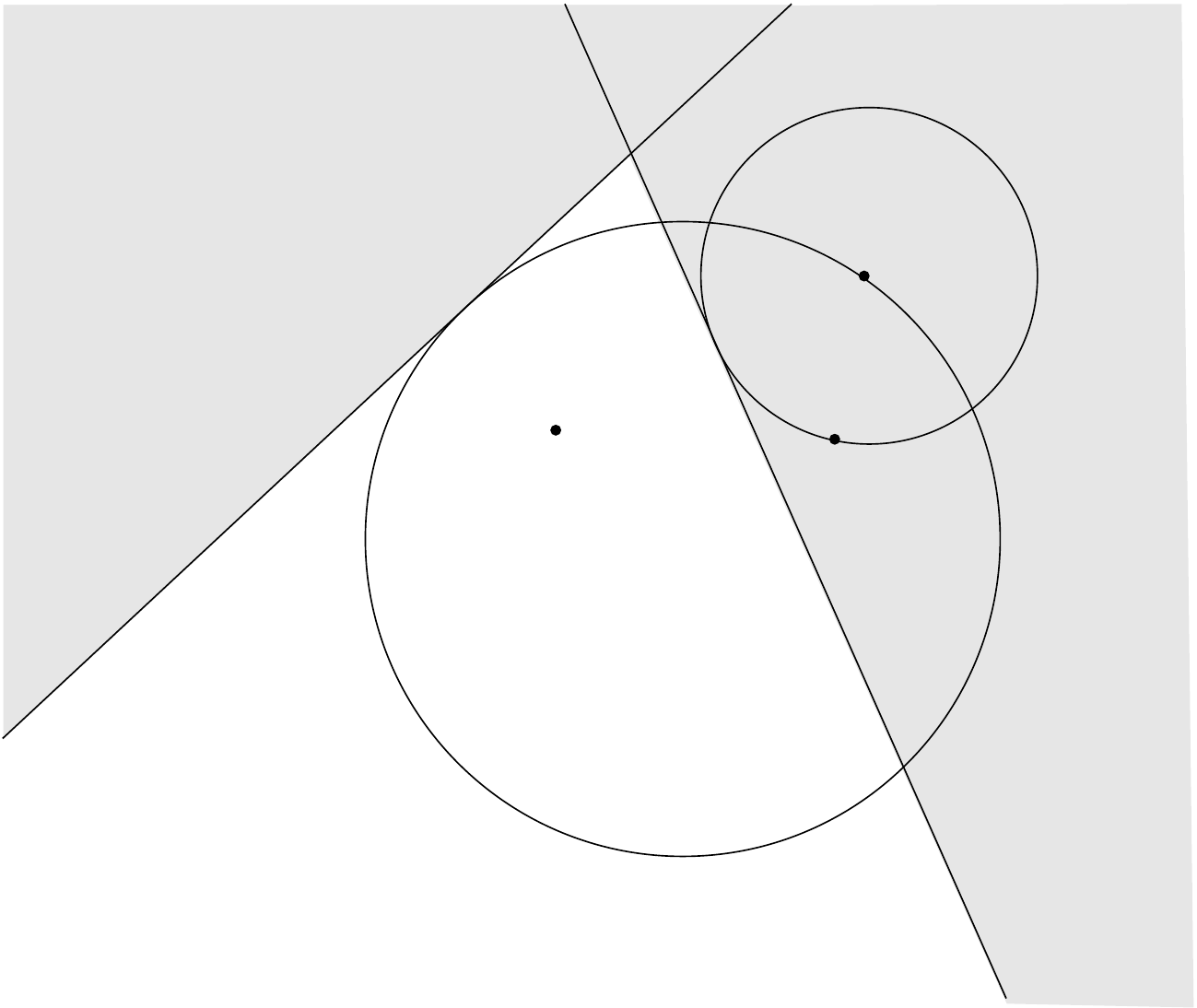_t}}
\caption{Diagram for the proof of Lemma~\ref{inthezone}.}
\label{inthezonefigure}
\end{figure}

\begin{lemma} \label{inthezone}
Fix $\rho \geq 1$ and $y\in \mathcal{S}_{\rho}$.  For $x \in B_\rho$ let $H(x) = \PP_x(X_T=y)$, where $T$ is the first hitting time of $\mathcal{S}_\rho$.  Then
	\begin{equation} \label{spreadingout} H(x) \leq \frac{J}{|x-y|^{d-1}} \end{equation}
for a constant J depending only on $d$.
\end{lemma}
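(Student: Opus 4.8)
The plan is to bound the hitting probability $H(x) = \PP_x(X_T = y)$ by comparing it to the probability of hitting a small ball around $y$ and then using a last-exit (or last-visit) decomposition at $y$. Fix $y \in \mathcal{S}_\rho$ and write $\delta = |x-y|$. The first step is to observe that in order for the walk to exit $B_\rho$ precisely at $y$, it must first reach the cube (or ball) $Q$ of side comparable to $\delta$ centered at $y$ without leaving $B_\rho$; let $q = \PP_x(\text{walk reaches } Q \text{ before hitting } \mathcal{S}_\rho)$. The Lemma~\ref{hitthehyperplane} gambler's-ruin estimate (exactly as used in Lemma~\ref{geoshells}), comparing the distance to the boundary half-space $H$ tangent to $B_\rho$ near $y$ against the distance to the faces of a cube around $x$ of side $\sim \delta$, gives $q \leq C\,\text{dist}(x,H)/\delta \leq C/\delta$ up to the usual $(1+O(1/\delta))$ factors, since $\text{dist}(x,H) \lesssim 1$ when... — wait, $x$ need not be near the boundary. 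So instead: condition on the \emph{first} time $\sigma$ the walk enters a ball $B(y, \delta/2)$ (before exiting $B_\rho$), at some point $x' \in \partial B(y,\delta/2)$. Then $H(x) = \EE_x[\,\mathbf{1}_{\sigma < T}\, H(X_\sigma)\,]$, so it suffices to bound $\sup_{x' : |x'-y| = \delta/2} \PP_{x'}(X_T = y)$ and this reduces the problem to the case $|x - y| \asymp 1$ after a single rescaling — that reduction is clean but does not obviously give the full $|x-y|^{1-d}$ decay.

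A cleaner route, and the one I would actually push: use a last-visit decomposition of the event $\{X_T = y\}$ at the \emph{last} visit to $y$ before the exit. Since $y \in \mathcal{S}_\rho$ is on the boundary shell, $\{X_T = y\}$ equals the event that the walk visits $y$ and, from $y$, exits $B_\rho$ immediately at $y$ — no, that's automatic since $y \notin B_\rho$... Actually $\mathcal{S}_\rho \subset B_\rho^c$ for the convention here, so hitting $\mathcal{S}_\rho$ means hitting $B_\rho^c$, and $X_T = y$ simply means $y$ is the first point of $B_\rho^c$ visited. So decompose at the last visit to the \emph{neighbor set} of $y$ inside $B_\rho$, or directly write $H(x) = \sum_{z \sim y,\, z \in B_\rho} G_{B_\rho}(x,z)\cdot \tfrac{1}{2d}$, where $G_{B_\rho}$ is the Green's function for walk killed on exiting $B_\rho$: indeed the walk exits at $y$ iff its last position in $B_\rho$ is some neighbor $z$ of $y$ and it then steps to $y$, which happens with probability $\tfrac{1}{2d}$ each. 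Thus $H(x) \leq \tfrac{1}{2d}\sum_{z \sim y} G_{B_\rho}(x,z)$.

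Now I bound $G_{B_\rho}(x,z)$ for $z$ adjacent to $\partial B_\rho$. Using the representation $G_{B_\rho}(x,z) = g_1(x,z) - \EE_x g_1(X_T, z)$ from the proof of Lemma~\ref{letsgetthispartystarted}: the subtracted harmonic term is nonnegative-ish but not obviously large; instead bound $G_{B_\rho}(x,z) \leq \EE_z\#\{\text{visits to } z \text{ before exiting } B_\rho\} \cdot \PP_x(\text{hit } z \text{ before } \partial B_\rho)$ by the strong Markov property at the first visit to $z$. The expected number of returns to $z$ before exiting $B_\rho$ is $O(1)$ because $z$ is within distance $1$ of $\partial B_\rho$, so the walk from $z$ escapes $B_\rho$ with probability $\geq \tfrac{1}{2d}$ before returning. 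And $\PP_x(\text{hit } z \text{ before exiting } B_\rho) \leq \PP_x(\text{hit } B(z,1))$, which by the standard discrete Green's-function / harmonic-measure estimate (again $g_1(x,z) \asymp |x-z|^{2-d}$ from \eqref{standardgreensestimate}, plus optional stopping on $g_1(\cdot,z)$) is $O(|x-z|^{2-d})$ — no wait, hitting probability of a single point in $\Z^d$, $d\geq 3$, is $\asymp |x-z|^{2-d}$, giving $H(x) = O(|x-y|^{2-d})$, one power short. The missing power comes from the boundary: the walk must hit $z$ \emph{and then leave $B_\rho$}, not return deep inside; equivalently, condition on the harmonic measure of $\partial B_\rho$ seen from $z$, which concentrates near $z$. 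The right statement is $G_{B_\rho}(x,z) \leq C\,\PP_x(\tau_z < T)\cdot G_{B_\rho}(z,z)$ and then one must show $\PP_x(\tau_z < T) = O(|x-z|^{1-d})$: the extra power $|x-z|^{-1}$ over the full-space hitting probability $|x-z|^{2-d}$ is precisely because $z$ lies on the boundary, so the walk is killed with uniformly positive probability each time it comes within $O(1)$ of $z$'s shell — a half-space killing argument, identical in spirit to Lemma~\ref{hitthehyperplane}/Lemma~\ref{geoshells}. Concretely: let $H$ be the outer half-space tangent to $B_\rho$ near $z$; then $\PP_x(\tau_z < T) \leq \PP_x(\tau_{B(z,1)} < T_H)$, and summing the Lemma~\ref{hitthehyperplane} bounds over a cube around $x$ of side $\asymp |x-z|$ gives the $O(|x-z|^{1-d})$ decay (this is literally the $d=2$... no, it works in all $d$: cube has $2d$ faces, each contributing $\lesssim \text{dist}(x,H)/|x-z|$, and one more power of $|x-z|^{-(d-2)}$ comes from the fact that after escaping the cube you still have to actually hit the $O(1)$-ball $B(z,1)$, which costs $|x-z|^{2-d}$ by the standard estimate — combining: $|x-z|^{-1}\cdot|x-z|^{2-d} = |x-z|^{1-d}$, hmm, that needs $\text{dist}(x,H) = O(1)$ which is false).

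Given the tangle above, the \textbf{clean argument} I would write is: $H$ is harmonic in $B_\rho \setminus \{y\}$... it isn't, it's only defined on $B_\rho$. Let me commit to the last-exit decomposition $H(x) \leq \tfrac1{2d}\sum_{z\sim y} G_{B_\rho}(x,z)$ combined with $G_{B_\rho}(x,z) \le G_{\Z^d\setminus H}(x,z)$ where $H$ is the outer tangent half-space, since $B_\rho \subset H^c$; and then invoke the \emph{explicit} Green's-function bound for a half-space, $G_{H^c}(x,z) \leq C\,\text{dist}(z,\partial H)\,\text{dist}(x,\partial H)\,|x-z|^{-d}$ (the standard ``boundary Harnack'' / reflection estimate for the discrete half-space Green's function), and use $\text{dist}(z,\partial H) = O(1)$ for $z \sim y \in \mathcal{S}_\rho$. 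This yields $G_{B_\rho}(x,z) \leq C\,\text{dist}(x,\partial H)\,|x-z|^{-d} \leq C |x-z|^{1-d}$, since $\text{dist}(x,\partial H) \leq |x - z|$ trivially. Summing over the $\leq 2d$ neighbors $z$ of $y$ and noting $|x-z| \geq |x-y| - 1 \geq \tfrac12|x-y|$ (we may assume $|x-y|$ large, else \eqref{spreadingout} is trivial by taking $J$ large), gives $H(x) \leq C' |x-y|^{1-d}$, which is \eqref{spreadingout}.

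\textbf{Main obstacle.} The crux is the boundary Green's-function estimate $G_{H^c}(x,z) \lesssim \text{dist}(z,\partial H)\,\text{dist}(x,\partial H)\,|x-z|^{-d}$ for the discrete half-space (the ``extra'' power of $|x-z|^{-1}$ beyond the whole-space behavior $|x-z|^{2-d}$, which is exactly what upgrades $|x-y|^{2-d}$ to $|x-y|^{1-d}$). In the continuum this is the reflection principle; in $\Z^d$ it follows from the method of images up to error terms controlled by \eqref{standardgreensestimate}, or alternatively from optional stopping applied to the martingale built from $g_1(\cdot,z)$ and $g_1(\cdot, z^*)$ where $z^*$ is the reflection of $z$ across $\partial H$. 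This is the step I expect to require the most care; everything else (the last-exit decomposition, the $O(1)$ return count near the boundary, absorbing small $|x-y|$ into the constant $J$) is routine. If one prefers to avoid the image computation, an honest alternative is the iterated gambler's-ruin / shell argument of Lemmas~\ref{hitthehyperplane} and~\ref{geoshells}: bound $\PP_x(X_T = y)$ by first forcing the walk through geometric shells $\mathcal{S}^*_k$ around $y$ as in Lemma~\ref{geoshells} (each transition costing a factor $\asymp 2^{-1}$ of escaping instead), which after $\log(|x-y|)$ many shells and a final local estimate near $y$ produces the same $|x-y|^{1-d}$ bound — this is closer to the style already established in the excerpt and is probably the safest route to write out in full.
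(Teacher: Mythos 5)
Your committed argument is a genuinely different route from the paper's. The paper proves the bound by induction on $|x-y|$: when $|y|-|x| \geq 2^{-d-3}|x-y|$ it simply invokes Lemma~5(b) of \cite{LBG}, and otherwise it applies Lemma~\ref{hitthehyperplane} to two tangent half-spaces to show that the walk reaches the ball of radius $\frac12|x-y|$ about $y$ before exiting $B_\rho$ with probability at most $2^{1-d}$, which exactly cancels the factor $(|x-y|/2)^{1-d}$ coming from the inductive hypothesis. You instead propose the exact last-exit identity $H(x) = \frac{1}{2d}\sum_{z\sim y,\,z\in B_\rho} G_{B_\rho}(x,z)$, domain monotonicity $G_{B_\rho}\leq G_{H^c}$ for the outer half-space $H$ tangent near $y$, and the boundary estimate $G_{H^c}(x,z) \leq C\,\mathrm{dist}(x,\partial H)\,\mathrm{dist}(z,\partial H)\,|x-z|^{-d}$. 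That skeleton is sound (the decomposition is an identity, the monotonicity is standard, and $\mathrm{dist}(z,\partial H)=O(1)$, $\mathrm{dist}(x,\partial H)\leq |x-z|$ finish it), and if completed it would be more self-contained than the paper's proof, treating interior and near-boundary $x$ uniformly without citing \cite{LBG}.

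The gap is that the single estimate carrying all the weight --- the discrete half-space Green's function bound --- is asserted, not proved, and your sketches for it do not go through as written. The tangent hyperplane has arbitrary orientation, so the method of images is not exact on $\Z^d$: the reflection $z^*$ is not a lattice point, and if you substitute the continuum potential, the error term $O(|w-z|^{-d})$ from (\ref{standardgreensestimate}) applied at $w = X_{T_H}$ is of the same order as the target bound whenever the walk exits within $O(1)$ of $z$; controlling the probability of that event is itself a Poisson-kernel estimate of strength $|x-z|^{1-d}$, so the argument as sketched is circular and needs a genuine bootstrap, a careful overshoot analysis, or a different mechanism (the paper's mechanism being precisely the induction plus the \cite{LBG} input). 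Your fallback ``iterate Lemma~\ref{geoshells}-style shells around $y$'' has the same hole in quantitative form: to reach exponent $1-d$ each halving of the distance to $y$ must cost a factor $2^{1-d}$, not $2^{-1}$ as you wrote, and Lemma~\ref{hitthehyperplane} delivers such a small factor only when the starting point is much closer to $\partial B_\rho$ than to $y$; for $x$ at depth comparable to $|x-y|$ it gives nothing, and that deep-interior case --- the one the paper disposes of by citing Lemma~5(b) of \cite{LBG} --- has no substitute in your proposal. So the plan is salvageable, but the half-space estimate (or an equivalent treatment of the deep-interior case) must actually be supplied for the proof to stand.
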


\begin{proof}
We induct on the distance $|x-y|$, assuming the result holds for all $x'$ with $|x'-y| \leq \frac12 |x-y|$; the base case can be made trivial by choosing $J$ sufficiently large.  
By Lemma~5(b) of \cite{LBG}, we can choose $J$ large enough so that the result holds provided $|y|-|x| \geq 2^{-d-3} |x-y|$.  Otherwise, let $H_1$ be the outer half-space tangent to $\mathcal{S}_{\rho}$ at the point of $\mathcal{S}_{\rho}$ closest to $x$, and let $H_2$ be the inner half-space tangent to the ball $\widetilde{S}$ of radius $\frac12 |x-y|$ about $y$, at the point of $\widetilde{S}$ closest to $x$.  By Lemma~\ref{hitthehyperplane} applied to these half-spaces, the probability that random walk started at $x$ reaches $\widetilde{S}$ before hitting $\mathcal{S}_\rho$ is at most $2^{1-d}$.   Writing $\widetilde{T}$ for the first hitting time of $\widetilde{S} \cup \mathcal{S}_{\rho}$, we have
	\[ H(x) \leq \sum_{x' \in \widetilde{S}} \PP_x(X_{\widetilde{T}}=x') H(x') \leq 2^{1-d} J \cdot \left(\frac{|x-y|}{2}\right)^{1-d} \]
where we have used the inductive hypothesis to bound $H(x')$.
\end{proof}
 
The \emph{lazy} random walk in $\Z^d$ stays in place with probability $\frac12$, and moves to each of the $2d$ neighbors with probability $\frac{1}{4d}$.
We will need the following standard result, which can be derived e.g.\ from the estimates in \cite{Lindvall}, section II.12; we include a proof for the sake of completeness. 
 
\begin{lemma}
\label{lazycoupling}
Given $u \sim v \in \Z^d$, lazy random walks started at $u$ and $v$ can be coupled with probability $1-C/R$ before either reaches distance $R$ from $u$, where $C$ depends only on $d$.
\end{lemma}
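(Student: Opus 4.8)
The plan is to build an explicit coupling of the two lazy walks, started at neighboring points $u \sim v$, in which at each step we either move the two walks in tandem (so their displacement vector is preserved) or attempt to close the single-coordinate gap between them, and to run this until either the walks coincide or one of them reaches distance $R$ from $u$. Write $v = u + e$ where $e$ is one of the $2d$ unit vectors; by symmetry assume $e = e_1$. The natural coupling is the ``mirror/synchronous'' coupling along the first coordinate: at each time step, with probability $\frac12$ both walks stay put; otherwise a coordinate $i$ and a sign are chosen uniformly, and if $i \neq 1$ both walks take the same step, while if $i = 1$ the two walks take \emph{opposite} steps in the first coordinate (reflecting the gap), except that once the first coordinates agree we switch to moving them synchronously in coordinate~$1$ as well. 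Under this coupling, coordinates $2, \ldots, d$ of the two walks are always equal, and the difference of the first coordinates performs a lazy random walk on $\Z$ (with its own holding probability, since only the $i=1$ moves affect it) started at $1$, moving by $\pm 2$ until it hits $0$ — actually it is cleaner to track $D_t := $ (first coordinate of walk $2$) $-$ (first coordinate of walk $1$), which starts at $1$, changes by $0$ or $\pm 2$, and we couple successfully at the first time $D_t = 0$.

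The key steps, in order, are: (1) set up the coupling precisely and verify each marginal is genuinely a lazy walk on $\Z^d$; (2) observe that the coupling time $\tau_{\mathrm{couple}}$ is the first hitting time of a lattice point by the one-dimensional lazy walk $D_t$ starting from an odd value near $0$ — here I would instead start the gap process at $1$ and note parity forces it through every integer, or simply replace $D_t$ by the simple (non-lazy, $\pm 1$ step) walk obtained by watching only the nonholding $i=1$ steps; (3) bound the probability that \emph{either} walk travels distance $R$ from $u$ before time $\tau_{\mathrm{couple}}$. For step (3) the cleanest route is a gambler's-ruin / optional-stopping estimate in the spirit of Lemma~\ref{hitthehyperplane}: the first coordinate of walk $1$ is a martingale with bounded increments, so the probability it moves by $\ge R$ in fewer than, say, $R^{3/2}$ steps is small, while the one-dimensional gap walk hits $0$ in expected $O(1)$ steps (the gap starts at distance $1$), so by Markov's inequality $\PP(\tau_{\mathrm{couple}} > R) = O(1/R)$; combining, $\PP(\text{fail to couple before distance }R) \le \PP(\tau_{\mathrm{couple}} > R^{?}) + \PP(\text{walk moves }\ge R\text{ in }\le R^{?}\text{ steps}) = O(1/R)$ after optimizing the exponent. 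A second-moment martingale ($dM_t^2 - t$, exactly as used for $\EE_x\widetilde T_1$ in Lemma~\ref{hitthehyperplane}) gives the needed control on how far the walk can wander in a given number of steps.

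I expect the main obstacle to be step (3): making the two tail estimates — (a) the coupling has not yet occurred, (b) one walk has already escaped to distance $R$ — fit together to yield a bound of the clean form $C/R$ rather than $C/\sqrt R$ or $C\log R / R$. The issue is that the gap-closing time has a heavy (only $L^{1/2-\varepsilon}$, essentially $\PP(\tau > t) \asymp t^{-1/2}$) tail, so one cannot simply say ``couple within $O(1)$ steps''; one must choose an intermediate time scale $t = t(R)$, bound $\PP(\tau_{\mathrm{couple}} > t) \lesssim t^{-1/2}$ by the standard one-dimensional first-passage estimate, and bound $\PP(\sup_{s \le t}|\text{displacement}| \ge R) \lesssim t/R^2$ by Kolmogorov/Doob applied to the coordinate martingales, then optimize $t \asymp R^{4/3}$ — wait, that gives $R^{-2/3} + R^{-2/3}$, which is weaker than $C/R$. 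To actually reach $C/R$ one instead runs the gap walk and the radial escape as competing events directly: condition on the gap walk, which is a one-dimensional walk killed at $0$; on the event it survives past distance $R$ of radial travel, a Beurling-type or reflection estimate shows this has probability $O(1/R)$ since a $\pm1$ walk from $1$ returns to $0$ before reaching height $R$ with probability $1 - 1/R$. So the refined plan for step (3) is: use the gambler's-ruin identity $\PP_1(\text{hit } R \text{ before } 0) = 1/R$ for the gap coordinate itself as the dominant contribution, and absorb the (smaller-order) event that the common coordinates $2,\ldots,d$ carry the walk out to radius $R$ into the same $O(1/R)$ bound by a separate hitting-time estimate. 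Cited references to \cite{Lindvall} cover the classical version; here only the explicit constant's dependence on $d$ needs checking.
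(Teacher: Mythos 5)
Your final plan is, in substance, the paper's argument: reflect the two walks in the coordinate where they differ, move them in tandem in the other coordinates, and bound the probability of escaping to distance $R$ before the walks meet by the two-hyperplane competition estimate. The paper does exactly this by letting $Q$ be a cube of side length about $R/\sqrt{d}$ centered at $u$ (so $Q \subset B(u,R)$) and applying Lemma~\ref{hitthehyperplane} with $h_1 = 1$ (distance to the bisecting hyperplane, where coupling occurs) and $h_2 \asymp R/\sqrt{d}$ (distance to a face of $Q$), summing over the $2d$ faces to get a bound of order $d^{3/2}(1+\sqrt d)^2/R$. Your first attempt via a time-scale split is indeed too weak, as you recognize, and your instinct to instead run the coupling and the escape as competing hitting events is the right one.

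Two points need repair. First, the parity of the gap: under the pure reflection coupling the difference $D_t$ of the first coordinates changes by $0$ or $\pm 2$, so starting from $D_0 = 1$ it stays odd forever and the walks never couple; ``parity forces it through every integer'' is exactly backwards here. The paper's fix is to make the first coordinate-$i$ move asymmetric (one walk steps, the other holds), which with probability $\tfrac12$ couples the walks immediately and otherwise leaves them at distance $2$ with the bisector on a lattice hyperplane; only then does one switch to reflection. (Your alternative of using the asymmetric coupling throughout, so the gap is a genuine $\pm 1$ martingale, also works, but then you must redo the escape estimate for the gap process rather than quoting the hyperplane-hitting formulation verbatim.) Second, the transverse escape --- the walk being carried to distance $R$ by coordinates other than $i$ before the gap closes --- is \emph{not} of smaller order and cannot be ``absorbed'' by a throwaway estimate: it contributes $\Theta(1/R)$ just like the gap coordinate, and controlling it is precisely the content of Lemma~\ref{hitthehyperplane}, whose proof needs the truncated hyperplane $\widetilde H_1$ and the second-moment martingales to convert ``expected time before hitting the bisector'' into ``transverse displacement before hitting the bisector.'' Gambler's ruin in the gap coordinate alone only rules out escape through the two faces of $Q$ perpendicular to $e_i$. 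Once you invoke Lemma~\ref{hitthehyperplane} uniformly for all $2d$ faces, the proof closes with the stated $C/R$ and a constant depending only on $d$.
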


\begin{proof}
Let $i$ be the coordinate such that $u_i \neq v_i$.  To define a step of the coupling, choose one of the $d$ coordinates uniformly at random.  If the chosen coordinate is different from $i$, let the two walks take the same lazy step so that they still agree in this coordinate.  If the chosen coordinate is $i$, let one walk take a step while the other stays in place.  With probability $\frac12$ the walks will then be coupled.  Otherwise, they are located at points $u',v'$ with $|u'-v'|=2$.  Moreover, $\PP\big(|u-u'|\geq \frac{R}{2\sqrt{d}}\big)<\frac{C'}{R}$ for a constant $C'$ depending only on $d$.  From now on, whenever coordinate $i$ is chosen, let the two walks take lazy steps in opposite directions. 

Let
	\[ H_1 = \left\{x\,\Big|\,x_i = \frac{u'_i+v'_i}{2}\right\} \]
be the hyperplane bisecting the segment $[u',v']$.  Since the steps of one walk are reflections in $H_1$ of the steps of the other, the walks couple when they hit $H_1$.  Let $Q$ be the cube of side length $R/\sqrt{d}+2$ centered at $u$, and let $H_2$ be a hyperplane defining one of the faces of $Q$.  By Lemma~\ref{hitthehyperplane} with $h_1=1$ and $h_2=R/4\sqrt{d}$, the probability that one of the walks exits $Q$ before the walks couple is at most $2d \cdot \frac52 \frac{h_1+1}{h_2} \left(1+\frac{1}{2h_2}\right)^2 \leq 40\, d^{3/2} \big(1+2\sqrt{d}\big)^2 /R$.
\end{proof}

\begin{lemma}
\label{harmonicmeasuregrad}
With $H$ defined as in Lemma~{\em \ref{inthezone}}, we have
	\[ \sum_{u \in B_\rho} \sum_{v \sim u} |H(u)-H(v)| \leq J' \log \rho \]
for a constant $J'$ depending only on $d$.
\end{lemma}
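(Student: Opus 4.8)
The plan is to reduce the gradient sum for $H$ to gradient sums for the Green's function $G = G_{B_\rho}$ treated in Lemma~\ref{greengradbound}, and then exploit the fact that when the source point sits adjacent to $\partial B_\rho$ the bound of that lemma is only of order $\log\rho$ rather than $\rho\log\rho$.

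First I would record the \emph{last--step decomposition} of $H$. Since a nearest--neighbor walk cannot jump over $\mathcal{S}_\rho$, on the event $\{X_T = y\}$ the walk occupies at time $T-1$ some neighbor $z$ of $y$ lying in $B_\rho$, and then steps to $y$. Summing over visits and using the Markov property (so that $\sum_{k\geq 0}\PP_x(X_k = z,\, T>k) = G(x,z)$ and each visit to $z$ contributes $\tfrac1{2d}$ to exiting through $y$), one gets
	\[ H(x) = \frac{1}{2d}\sum_{\substack{z\sim y\\ z\in B_\rho}} G(x,z), \qquad x\in B_\rho. \]
This identity holds for every $x\in B_\rho$; it fails only at $x=y$ (the right side is $0$ while $H(y)=1$), which affects only the at most $2d$ edges of $B_\rho$ incident to $y$, each contributing at most $1$ to the sum, for an additive cost of $O(1)$.

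Next, using the symmetry $G(x,z)=G(z,x)$ and the triangle inequality, for every edge $(u,v)$ with $u\in B_\rho$ and $v\neq y$ we have $|H(u)-H(v)| \leq \tfrac1{2d}\sum_{z\sim y,\, z\in B_\rho}|G(z,u)-G(z,v)|$, and hence
	\[ \sum_{u\in B_\rho}\sum_{v\sim u}|H(u)-H(v)| \;\leq\; 2d \;+\; \frac{1}{2d}\sum_{\substack{z\sim y\\ z\in B_\rho}}\ \sum_{u\in B_\rho}\sum_{v\sim u}|G(z,u)-G(z,v)|. \]
I would then apply Lemma~\ref{greengradbound} with $B_\rho$ in place of $B_r$ and with source point $z$. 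Because $z\sim y$ and $|y|\geq\rho$, we have $|z|\geq\rho-1$, so the radius $\rho-|z|$ appearing in that lemma lies in $(0,1]$; since $t\mapsto t\log(\rho/t)$ is increasing on $(0,1]$ once $\rho\geq e$, the lemma gives $\sum_{u}\sum_{v\sim u}|G(z,u)-G(z,v)|\leq C_3(\rho-|z|)\log\frac{\rho}{\rho-|z|}\leq C_3\log\rho$. As $y$ has at most $2d$ neighbors, summing yields
	\[ \sum_{u\in B_\rho}\sum_{v\sim u}|H(u)-H(v)| \;\leq\; 2d + C_3\log\rho, \]
which is at most $J'\log\rho$ for a suitable $J'=J'(d)$ once $\rho$ is bounded below; for bounded $\rho$ there is nothing to prove since $B_\rho$ then contains a bounded number of edges.

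The only content beyond bookkeeping lies in the two places flagged above: the last--step identity for $H$ (and its failure exactly at $y$), and the realization that Lemma~\ref{greengradbound} is being invoked with a source $z$ \emph{adjacent to the boundary} of $B_\rho$, which is precisely what collapses its right-hand side from $\rho\log\rho$ down to $\log\rho$. I expect the careful verification of the last--step decomposition — checking that $\sum_{k\ge 0}\PP_x(X_k=z,\,T>k)=G(x,z)$ and that the factor $\tfrac1{2d}$ accounts exactly for exiting through $y$ — to be the main thing to pin down; the remainder is the triangle inequality together with the already-proven Lemma~\ref{greengradbound}.
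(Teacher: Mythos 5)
Your proof is correct, but it takes a genuinely different route from the paper's. The paper couples the lazy walks started at $u$ and at $v\sim u$ (Lemma~\ref{lazycoupling}), combines the failure probability $O(1/|u-y|)$ of coupling before reaching distance $|u-y|/2$ with the pointwise bound $H(x)\leq J|x-y|^{1-d}$ of Lemma~\ref{inthezone} to get the local gradient estimate $|H(u)-H(v)|\leq 2CJ|u-y|^{-d}$, and then sums in spherical shells about $y$. You instead prove the exact last-exit identity $H(x)=\frac{1}{2d}\sum_{z\sim y,\,z\in B_\rho}G(x,z)$ and recycle Lemma~\ref{greengradbound} with a source adjacent to $\partial B_\rho$; your observation that $t\mapsto t\log(\rho/t)$ is increasing on $(0,1]$, so that the boundary-adjacent case collapses that lemma's bound to $O(\log\rho)$, is exactly the right point, and your $O(1)$ accounting for the edges incident to $y$ (where the identity fails because $H(y)=1$ while $G(y,\cdot)=0$) is also correct. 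What each approach buys: yours dispenses with the coupling lemma entirely and reuses machinery already established for the inner estimate, while the paper's yields the stronger pointwise statement $|H(u)-H(v)|=O(|u-y|^{-d})$ as a byproduct (and is close in spirit to the alternative the paper itself mentions via \cite[Thm.\ 1.7.1]{Lawler}). Two cosmetic caveats, neither fatal: Lemma~\ref{greengradbound} is stated with $\rho=r-|x|$ but its proof runs with the convention $\rho=r+1-|x|$ of Lemma~\ref{geoshells}; under the latter your $\rho-|z|\leq 1$ becomes $\rho+1-|z|\leq 2$ and the bound is $2C_3\log\rho$, so nothing changes. And at $\rho=1$ the claimed right-hand side $J'\log\rho$ vanishes while the left-hand side does not, so your closing remark that "there is nothing to prove" for bounded $\rho$ is not quite accurate --- but the paper's own proof has the identical defect, so this is a flaw of the statement rather than of your argument.
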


\begin{proof}
Given $u \in B_\rho$ and $v \sim u$, by Lemma~\ref{lazycoupling}, lazy random walks started at $u$ and $v$ can be coupled with probability $1-2C/|u-y|$ before either reaches distance $|u-y|/2$ from $u$.  If the walks reach this distance without coupling, by Lemma~\ref{inthezone} each has still has probability at most $J/|u-y|^{d-1}$ of exiting $B_\rho$ at $y$.  By the strong Markov property it follows that
	\[ |H(u) - H(v)| \leq \frac{2CJ}{|u-y|^d}. \]
Summing in spherical shells about $y$, we obtain
	\[ \sum_{u \in B_\rho} \sum_{v \sim u} |H(u)-H(v)| \leq \sum_{t=1}^{\rho} d\omega_d t^{d-1} \frac{2CJ}{t^d} \leq J' \log \rho. \qed \]
\renewcommand{\qedsymbol}{}
\end{proof}

We remark that Lemma~\ref{harmonicmeasuregrad} could also be inferred from Lemma~\ref{inthezone} using \cite[Thm.\ 1.7.1]{Lawler} in a ball of radius $|u-y|/2$ about $u$.

\begin{proof}[Proof of Theorem~\ref{rotorcircintro}, Outer Estimate]
Fix integers $\rho \geq r$, $h\geq 1$.
In the setting of Proposition~\ref{HP}, let $G$ be the lattice ball $B_{\rho+h+1}$, and let $Z = \mathcal{S}_{\rho+h}$.  Fix $y \in \mathcal{S}_{\rho+h}$ and let $Y=\{y\}$.  For $x \in \mathcal{S}_\rho$ let $s(x)$ be the number of particles stopped at $x$ if all particles in rotor-router aggregation are stopped upon reaching $\mathcal{S}_\rho$.  Write 
	 \[ H(x) = \PP_x(X_T=y) \]
where $T$ is the first hitting time of $\mathcal{S}_{\rho+h}$.   By Lemma~\ref{inthezone} we have
	\begin{equation} \label{H_wbound} H_w(s,y) = \sum_{x \in \mathcal{S}_\rho} s(x)H(x) \leq \frac{J N_\rho}{h^{d-1}} \end{equation}
where
	 \[ N_\rho = \sum_{x \in \mathcal{S}_\rho} s(x) \]
is the number of particles that ever visit the shell $\mathcal{S}_\rho$ in the course of rotor-router aggregation.

By Lemma~\ref{harmonicmeasuregrad} the sum in (\ref{HPbound}) is at most $J' \log h$, hence from Propositon~\ref{HP} and (\ref{H_wbound}) we have
	\begin{equation} \label{tinyRRs} H_r(s,y) \leq \frac{J N_\rho}{h^{d-1}} + J' \log h. \end{equation}
Let $\rho(0) = r$, and define $\rho(i)$ inductively by
	\begin{equation} \label{doublemin} \rho(i+1) = \text{min}\left\{\rho(i)+N_{\rho(i)}^{2/(2d-1)},~\text{min} \{\rho> \rho(i) | N_\rho \leq N_{\rho(i)}/2 \}\right\}. \end{equation}
Fixing $h < \rho(i+1)-\rho(i)$, we have
	\[ h^{d-1} \log h \leq N_{\rho(i)}^{\frac{2d-2}{2d-1}} \log N_{\rho(i)} \leq N_{\rho(i)}; \]
so (\ref{tinyRRs}) with $\rho=\rho(i)$ simplifies to
	\begin{equation} \label{eventinierRRs} H_r(s,y) \leq \frac{CN_{\rho(i)}}{h^{d-1}} \end{equation}
where $C = J + J'$.

Since all particles that visit $\mathcal{S}_{\rho(i)+h}$ during rotor-router aggregation must pass through $\mathcal{S}_{\rho(i)}$, we have
	\begin{equation} \label{alltermsaresmall} N_{\rho(i)+h} \leq \sum_{y\in \mathcal{S}_{\rho(i)+h}} H_r(s,y). \end{equation}
Let $M_k = \# (A_n \cap \mathcal{S}_k)$.  There are at most $M_{\rho(i)+h}$ nonzero terms in the sum on the right side of (\ref{alltermsaresmall}), and each term is bounded above by (\ref{eventinierRRs}), hence
	\[ M_{\rho(i)+h} \geq N_{\rho(i)+h} \frac{h^{d-1}}{CN_{\rho(i)}} \geq  \frac{h^{d-1}}{2C} \]
where the second inequality follows from $N_{\rho(i)+h} \geq N_{\rho(i)}/2$.  Summing over $h$, we obtain
	\begin{equation} \label{lotsavolume}  \sum_{\rho=\rho(i)+1}^{\rho(i+1)-1} M_\rho \geq \frac{1}{2dC} (\rho(i+1)-\rho(i)-1)^d. \end{equation}
The left side is at most $N_{\rho(i)}$, hence
	\[ \rho(i+1)-\rho(i) \leq (2dC N_{\rho(i)})^{1/d} \leq N_{\rho(i)}^{2/(2d-1)} \]
provided $N_{\rho(i)} \geq C' := (2dC)^{2d-1}$.
Thus the minimum in (\ref{doublemin}) is not attained by its first argument.  It follows that $N_{\rho(i+1)}\leq N_{\rho(i)}/2$, hence $N_{\rho(a \log r)} < C' $ for a sufficiently large constant $a$.

By the inner estimate, since the ball $B_{r-c\log r}$ is entirely occupied, we have
	\begin{align*} \sum_{\rho \geq r} M_\rho &\leq \omega_d r^d - \omega_d (r-c\log r)^d \\
	&\leq cd\omega_d r^{d-1} \log r. \end{align*}
Write $x_i = \rho(i+1)-\rho(i)-1$; by (\ref{lotsavolume}) we have
	\[ \sum_{i=0}^{a \log r} x_i^d \leq cd\omega_d r^{d-1} \log r, \]
By Jensen's inequality, subject to  this constraint, $\sum x_i$ is maximized when all $x_i$ are equal, in which case $x_i \leq C'' r^{1-1/d}$ and 
	\begin{equation} \rho(a \log r) = r+\sum x_i \leq r+C''r^{1-1/d} \log r. \label{outershellbound} \end{equation}
Since $N_{\rho(a \log r)} < C'$ we have $N_{\rho(a \log r)+C'} = 0$; that is, no particles reach the shell $\mathcal{S}_{\rho(a \log r)+C'}$.  Taking $c' = C' + C''$, we obtain from (\ref{outershellbound})
	\[ A_n \subset B_{r (1 + c'r^{-1/d}\log r)}. \qed \]
\renewcommand{\qedsymbol}{}
\end{proof}

\chapter{Scaling Limits for General Sources}
\label{scalinglimit}

This chapter is devoted to proving Theorems~\ref{intromain}, \ref{DFsum} and~\ref{multiplepointsources}.  The proofs use many ideas from potential theory, and the relevant background is developed in section~\ref{potentialtheorybackground}.  The proof of Theorem~\ref{intromain} is broken into three sections, one for each of the three aggregation models.  Of the three models the divisible sandpile is the most straightforward and is treated in section~\ref{divsandscalinglimit}.  The rotor-router model and internal DLA are treated in sections~\ref{rotorscalinglimit} and~\ref{IDLAscalinglimit}, respectively.  Finally, in section~\ref{multiplesources} we deduce Theorem~\ref{multiplepointsources} for multiple point sources from Theorem~\ref{DFsum} along with our results for single point sources.

\section{Potential Theory Background}
\label{potentialtheorybackground}

In this section we review the basic properties of superharmonic potentials and of the least superharmonic majorant.  
For more background on potential theory in general, we refer the reader to \cite{HFT,Doob}; for the obstacle problem in particular, see \cite{Caffarelli,Friedman}.

\subsection{Least Superharmonic Majorant}

Since we will often be working with functions on $\R^d$ which may not be twice differentiable, it is desirable to define superharmonicity without making reference to the Laplacian.  Instead we use the mean value property.  A function $u$ on an open set $\Omega \subset \R^d$ is {\it superharmonic} if it is lower-semicontinuous and for any ball $B(x,r) \subset \Omega$
	\begin{equation} \label{meanvalueproperty} u(x) \geq A_r u (x) := \frac{1}{\omega_d r^d} \int_{B(x,r)} u(y) dy. \index{$A_r u$, average in a ball} \end{equation}
Here $\omega_d$ is the volume of the unit ball in $\R^d$.  We say that $u$ is subharmonic if $-u$ is superharmonic, and harmonic if it is both super- and subharmonic.

The following properties of superharmonic functions are well known; for proofs, see e.g.\ \cite{HFT}, \cite{Doob} or \cite{LL}.

\begin{lemma}
\label{basicproperties}
Let $u$ be a superharmonic function on an open set $\Omega \subset \R^d$.  Then
\begin{enumerate}
\item[(i)] $u$ attains its minimum in $\bar{\Omega}$ on the boundary.
\item[(ii)] If $h$ is harmonic on $\Omega$ and $h=u$ on $\partial \Omega$, then $u \geq h$.
\item[(iii)] If $B(x,r_0) \subset B(x,r_1) \subset \Omega$, then
	\[ A_{r_0} u (x) \geq A_{r_1} u (x). \]
\item[(iv)] If $u$ is twice differentiable on $\Omega$, then $\Delta u \leq 0$ on $\Omega$.
\item[(v)] If $B\subset \Omega$ is an open ball, and $v$ is a function on $\Omega$ which is harmonic on $B$, continuous on $\bar{B}$, and agrees with $u$ on $B^c$, then $v$ is superharmonic.
\end{enumerate}
\end{lemma}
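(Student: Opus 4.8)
The plan is to treat all five items as classical consequences of the mean value inequality, proving the minimum principle (i) first and bootstrapping the rest. For (i), lower semicontinuity and compactness of $\bar\Omega$ give a point $x_0$ at which $u$ attains its infimum $m$ over $\bar\Omega$. If $x_0\in\Omega$, then for every $B(x_0,r)\subset\Omega$ we have $m=u(x_0)\ge A_r u(x_0)\ge m$, so $A_r u(x_0)=m$; since $u\ge m$ and its average over $B(x_0,r)$ equals $m$, it follows that $u=m$ a.e.\ there, hence $u\le m$ everywhere on $B(x_0,r)$ by lower semicontinuity, so $u\equiv m$ on $B(x_0,r)$. Thus $\{u=m\}$ is open in $\Omega$, and it is closed in $\Omega$ by lower semicontinuity, so $u\equiv m$ on the connected component of $x_0$, whose closure meets $\partial\Omega$ (argue componentwise if $\Omega$ is disconnected). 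Either way $\inf_{\partial\Omega}u=\inf_{\bar\Omega}u$.

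Item (ii) is then immediate: $u-h$ is superharmonic (mean value inequality for $u$ plus mean value equality for $h$), vanishes on $\partial\Omega$, and hence is $\ge0$ by (i). Item (iv) is local: if $u\in C^2$ near $x$, Taylor expansion gives $A_r u(x)-u(x)=\frac{r^2}{2(d+2)}\Delta u(x)+o(r^2)$ as $r\downarrow0$, and the left side being $\le0$ for all small $r$ forces $\Delta u(x)\le0$.

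For (iii) it suffices to show the spherical mean $M_r u(x):=\frac{1}{d\omega_d r^{d-1}}\int_{\partial B(x,r)}u\,dS$ is non-increasing in $r$, since $A_r u(x)=d\int_0^1 t^{d-1}M_{rt}u(x)\,dt$ then inherits the property. For $u\in C^2$ the divergence theorem gives $\frac{d}{dr}M_r u(x)=\frac{1}{d\omega_d r^{d-1}}\int_{B(x,r)}\Delta u\le0$. For a general superharmonic $u$ one mollifies with a radial bump $\varphi_\varepsilon$: $u_\varepsilon:=u*\varphi_\varepsilon$ is smooth and superharmonic (mean value inequality plus Fubini), and $u_\varepsilon\downarrow u$ as $\varepsilon\downarrow0$, so the monotonicity passes to the limit. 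Establishing $u_\varepsilon\downarrow u$ without circularity is the one genuinely delicate point, and is where I would defer to the cited texts \cite{HFT,Doob,LL} if a self-contained argument became unwieldy.

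For (v), let $v$ be the Poisson modification of $u$ in $B$. First $v\le u$ on $\Omega$: on $\bar B$ this is (ii) applied to $u$ and the harmonic function sharing its boundary values on $\partial B$, and off $B$ it is equality. Then $v$ is lower semicontinuous (trivially away from $\partial B$; across $\partial B$ because $v=u$ there, $v\le u$ nearby with $u$ lower semicontinuous, and $v$ is continuous from within $\bar B$), and it satisfies the mean value inequality for small balls at every point: for $x\in B$ small balls lie in $B$ where $v$ is harmonic, for $x\notin\bar B$ small balls avoid $\bar B$ so $v=u$ there, and for $x\in\partial B$ we have $A_r v(x)\le A_r u(x)\le u(x)=v(x)$. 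A lower semicontinuous function obeying the mean value inequality for all sufficiently small balls obeys it for all balls (by harmonic comparison, again as in the references), so $v$ is superharmonic. The recurring obstacle across (iii) and (v) is precisely the lack of $C^2$ regularity, which forces one to pass carefully between the mean value and Laplacian formulations; everything else is routine.
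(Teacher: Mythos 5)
Your proposal is correct in substance, but note that the paper does not prove this lemma at all: it simply records the statement as ``well known'' and cites \cite{HFT}, \cite{Doob}, \cite{LL}. So the relevant question is only whether your argument stands on its own, and essentially it does: (i) is the standard connectedness argument (lsc attains its infimum, equality of the average forces $u\equiv m$ on a ball, $\{u=m\}$ is open and closed in a component), (ii) and (iv) follow as you say, with the correct constant $\tfrac{r^2}{2(d+2)}$ in the Taylor computation, and (v) is the usual Poisson-modification argument, where your observation that the hypotheses force $v$ to coincide with the Poisson modification is what makes the reduction legitimate. Two caveats are worth recording. First, (i) as you prove it needs $\bar\Omega$ compact, i.e.\ $\Omega$ bounded; the lemma as stated allows arbitrary open $\Omega$, for which the conclusion can fail (a harmonic function on a half-space need not attain its infimum), but every application in the paper is to bounded sets, so this is a defect of the statement rather than of your proof -- just say explicitly that you assume $\Omega$ bounded. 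Second, the two steps you defer to the references are indeed the only nonroutine points, and the first of them can be short-circuited: you do not need $u_\varepsilon\downarrow u$. A radial decreasing mollifier is a superposition (layer-cake decomposition) of normalized indicators of balls centered at the origin, so $u_\varepsilon=u*\varphi_\varepsilon\le u$ follows directly from the volume mean-value inequality in the paper's definition; lower semicontinuity gives $\liminf_{\varepsilon\to 0}u_\varepsilon(x)\ge u(x)$, hence $u_\varepsilon\to u$ pointwise with $u_\varepsilon\le u$, and since $u$ is bounded below on compacts (lsc) and locally integrable, dominated convergence lets you pass the monotonicity of $r\mapsto A_r u_\varepsilon(x)$ to the limit. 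The remaining deferred step in (v) -- that lower semicontinuity plus the super-mean-value inequality on small balls implies it on all balls -- is genuinely standard (minimum principle plus harmonic comparison with continuous approximations of the boundary data from below) and is fair to cite, exactly as the paper itself cites the whole lemma.
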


Given a function $\gamma$ on $\R^d$ which is bounded above, the {\it least superharmonic majorant} of $\gamma$ (also called the solution to the obstacle problem with obstacle $\gamma$) is the function
	\begin{equation} \label{majorantdef} s(x) = \inf \{f(x) | f \text{ is continuous, superharmonic and }  f \geq \gamma \}. \end{equation}
Note that since $\gamma$ is bounded above, the infimum is taken over a nonempty set.

\begin{lemma}
\label{majorantbasicprops}
Let $\gamma$ be a uniformly continuous function which is bounded above, and let $s$ be given by (\ref{majorantdef}).  Then
\begin{enumerate}
\item[(i)] $s$ is superharmonic.
\item[(ii)] $s$ is continuous.
\item[(iii)] $s$ is harmonic on the domain
	\[ D = \{x \in \R^d | s(x)>\gamma(x) \}. \]
\end{enumerate}
\end{lemma}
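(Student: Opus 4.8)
The plan is to establish the three assertions in the order (i), then (ii), then (iii), since each one feeds into the next. For (i), I would argue exactly as in the discrete case sketched before Lemma~\ref{discretemajorantintro}: for any $f$ in the defining family and any ball $B(x,r)$ contained in the domain, the mean value inequality $f(x) \geq A_r f(x) \geq A_r s(x)$ holds (the last step because $f \geq s$ pointwise and averages are monotone). Taking the infimum over $f$ on the left gives $s(x) \geq A_r s(x)$. One must also check lower semicontinuity of $s$; here I would note that $s$ is an infimum of continuous functions, hence upper semicontinuous, so the real content is that $s$ is actually continuous, which is part (ii); alternatively one works with the lower-semicontinuous regularization and shows it coincides with $s$. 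It is cleanest to prove continuity first and deduce lower semicontinuity from it.

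For (ii), the standard approach is a two-sided modulus-of-continuity estimate. The upper bound: since $\gamma$ is a competitor's lower constraint and any translate-and-shift of a superharmonic majorant is still superharmonic, uniform continuity of $\gamma$ transfers to an upper bound on $s(x+h)-s(x)$. More precisely, if $f \geq \gamma$ is superharmonic, then $x \mapsto f(x+h) + \omega(|h|)$ is superharmonic and dominates $\gamma$, where $\omega$ is the modulus of continuity of $\gamma$; taking infima gives $s(x) \leq s(x+h) + \omega(|h|)$, and by symmetry $|s(x+h)-s(x)| \leq \omega(|h|)$. This simultaneously gives continuity and (being a genuine pointwise bound) confirms $s$ is lower semicontinuous, completing (i).

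For (iii), I would use a replacement/balayage argument of the type in Lemma~\ref{basicproperties}(v). Suppose $B = B(x_0, r)$ is a ball with $\bar B \subset D$. Let $v$ agree with $s$ off $B$ and be the harmonic function on $B$ with boundary values $s|_{\partial B}$ (solvable since $s$ is continuous). By Lemma~\ref{basicproperties}(v), $v$ is superharmonic on $\R^d$; since $s$ is superharmonic, $v \leq s$ on $B$ by Lemma~\ref{basicproperties}(ii). I then need $v \geq \gamma$ so that $v$ is a competitor in~(\ref{majorantdef}), forcing $v \geq s$, hence $v = s$ on $B$ and $s$ harmonic there. This is exactly where the hard part lies: a priori $v$ could dip below $\gamma$ somewhere in $B$. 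The resolution is to shrink $B$: because $s(x_0) > \gamma(x_0)$ and both $s$ and $\gamma$ are continuous, on a small enough ball we have $s \geq \gamma + \epsilon$; then since $v$ is close to $s$ (its boundary values are within $\epsilon/2$, say, of $s(x_0)$ after shrinking, and $v$ attains its extrema on $\partial B$ by harmonicity), we get $v \geq \gamma$ on $B$. Thus $s$ is harmonic in a neighborhood of every point of $D$, which is the claim. The main obstacle, then, is controlling $v$ versus $\gamma$ after the replacement, and the fix is the combination of continuity and the strict inequality defining $D$, used to pass to a sufficiently small ball.
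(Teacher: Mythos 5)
Your proposal is correct. Parts (i) and (iii) follow essentially the paper's own argument: the mean value inequality for (i) is word for word the same, and your replacement argument for (iii) — harmonic lift $v$ on a small ball, $v\leq s$ by Lemma~\ref{basicproperties}(ii), and $v\geq\gamma$ forced by shrinking the ball until the oscillations of $s$ and $\gamma$ are below $\epsilon/2=\tfrac12(s(x_0)-\gamma(x_0))$ — is exactly the paper's proof of (iii). Where you genuinely diverge is part (ii) and the lower semicontinuity needed in (i). The paper sandwiches $A_r s\leq s\leq A_r s+\omega(\gamma,r)$, concludes that $s$ is the increasing limit (as $r\downarrow 0$, using monotonicity of $r\mapsto A_r s$) of the continuous functions $A_r s$, hence lower semicontinuous, and gets upper semicontinuity for free since $s$ is an infimum of continuous functions. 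You instead use the translation trick: $f(\cdot+h)+\omega(|h|)$ is a competitor whenever $f$ is, so $|s(x+h)-s(x)|\leq\omega(|h|)$. Your route is shorter, avoids the monotonicity-in-$r$ step, and gives the quantitatively stronger conclusion that $s$ inherits the modulus of continuity of $\gamma$; the paper's route has the advantage of isolating lower semicontinuity (the only thing actually needed for superharmonicity) without invoking continuity of $s$ at all. Your reordering — prove (ii) first, then finish (i) — introduces no circularity, since your proof of (ii) uses only the definition of $s$ as an infimum over competitors.
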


\begin{proof}
(i) Let $f \geq \gamma$ be continuous and superharmonic.  Then $f \geq s$.  By the mean value property (\ref{meanvalueproperty}), we have
	\[ f \geq A_r f \geq A_r s. \]
Taking the infimum over $f$ on the left side, we conclude that $s \geq A_r s$.  

It remains to show that $s$ is lower-semicontinuous.  Let
	\[ \omega(\gamma,r) = \sup_{x,y \in \R^d, |x-y|\leq r} |\gamma(x)-\gamma(y)|. \]
Since
	\[ A_r s \geq A_r \gamma \geq \gamma - \omega(\gamma,r) \]
the function $A_r s + \omega(\gamma,r)$ is continuous, superharmonic, and lies above $\gamma$, so
	\[ A_r s \leq s \leq A_r s + \omega(\gamma,r). \]
Since $\gamma$ is uniformly continuous, we have $\omega(\gamma,r) \downarrow 0$ as $r \downarrow 0$, hence $A_r s \rightarrow s$ as $r \downarrow 0$.  Moreover if $r_0<r_1$, then by Lemma~\ref{basicproperties}(iii)
	\[ A_{r_0} s = \lim_{r \rightarrow 0} A_{r_0} A_r s \geq \lim_{r \rightarrow 0} A_{r_1} A_r s = A_{r_1} s. \]
Thus $s$ is an increasing limit of continuous functions and hence lower-semicontinuous.

(ii) Since $s$ is defined as an infimum of continuous functions, it is also upper-semicontinuous.

(iii) Given $x \in D$, write $\epsilon = s(x)-\gamma(x)$.  Choose $\delta$ small enough so that for all $y \in B = B(x,\delta)$
	\[ |\gamma(x)-\gamma(y)| < \frac{\epsilon}{2} \quad\text{and}\quad |s(x)-s(y)| < \frac{\epsilon}{2}. \]
Let $f$ be the continuous function which is harmonic in $B$ and agrees with $s$ outside $B$.  By Lemma~\ref{basicproperties}(v), $f$ is superharmonic.  By Lemma~\ref{basicproperties}(i), $f$ attains its minimum in $\overline{B}$ at a point $z \in \partial B$, hence for $y\in B$
	\[ f(y) \geq f(z) = s(z) \geq s(x) - \frac{\epsilon}{2} = \gamma(x) + \frac{\epsilon}{2} > \gamma(y). \]
It follows that $f \geq\gamma$ everywhere, hence $f \geq s$.  From Lemma~\ref{basicproperties}(ii) we conclude that $f=s$, and hence $s$ is harmonic at $x$.
\end{proof}

\subsection{Superharmonic Potentials}
\label{superharmonicpotentials}
Next we describe the particular class of obstacles which relate to the aggregation models we are studying.  For a bounded measurable function $\sigma$ on $\R^d$ with compact support, write
	\begin{equation} \label{thepotential} G\sigma(x) = \int_{\R^d} g(x,y) \sigma(y) dy, \index{$G\sigma$, superharmonic potential} \end{equation}
where
\begin{equation} \label{greenskernel} g(x,y) = \begin{cases} -\frac{2}{\pi} \log |x-y|, & d=2; \\ a_d |x-y|^{2-d}, & d\geq 3. \end{cases} \index{$g(x,y)$, Green's function on $\R^d$} \end{equation}
Here $a_d = \frac{2}{(d-2)\omega_d}$, where $\omega_d$ is the volume of the unit ball in $\R^d$.  Note that (\ref{greenskernel}) differs by a factor of $2d$ from the standard harmonic potential in $\R^d$; however, the normalization we have chosen is most convenient when working with the discrete Laplacian and random walk. 

The following result is standard; see \cite[Theorem 1.I.7.2]{Doob}.

\begin{lemma} Let $\sigma$ be a measurable function on $\R^d$ with compact support.
\label{smoothnessofpotential}
\begin{enumerate}
\item[(i)] If $\sigma$ is bounded, then $G\sigma$ is continuous.
\item[(ii)] If $\sigma$ is $C^1$, then $G\sigma$ is $C^2$ and
	\begin{equation} \label{laplacianofpotential} \Delta G \sigma = -2d\sigma. \end{equation}
\end{enumerate}
\end{lemma}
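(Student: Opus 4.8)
The plan is to exploit the translation invariance of the kernel: since $g(x,y)$ depends only on $x-y$, writing $\Phi(z)=g(0,z)$ we have $G\sigma = \Phi*\sigma$, where $\Phi$ (and, when $\sigma$ is smooth enough to warrant it, $\nabla\Phi$) is locally integrable on $\R^d$ — $\Phi(z)$ has the integrable singularity $|z|^{2-d}$ (resp.\ $\log|z|$), and $\nabla\Phi(z)$ the integrable singularity $|z|^{1-d}$ (resp.\ $|z|^{-1}$) — while $\Phi$ is smooth and harmonic away from the origin.

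For (i), I would fix $x_0$ and write $G\sigma(x)-G\sigma(x_0)=\int(\Phi(x-y)-\Phi(x_0-y))\sigma(y)\,dy$, splitting the domain into $\{|y-x_0|<2\delta\}$ and its complement. On the complement the integrand is bounded and tends to $0$ pointwise as $x\to x_0$ by continuity of $\Phi$ off the origin, so dominated convergence disposes of it; the contribution of the small ball is at most $2\|\sigma\|_\infty\int_{|w|<3\delta}|\Phi(w)|\,dw$, which is $o(1)$ as $\delta\downarrow 0$ by local integrability of $\Phi$. Choosing $\delta$ first and then $x$ near $x_0$ gives continuity.

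For (ii), I would differentiate under the integral. Since $\sigma\in C^1$ with compact support, $\partial_i G\sigma(x)=\int\Phi(z)(\partial_i\sigma)(x-z)\,dz$; then, using $(\partial_i\sigma)(x-z)=-\partial_{z_i}[\sigma(x-z)]$ and integrating by parts over $\{|z|>\epsilon\}$ — where $\Phi$ is smooth — one transfers the derivative onto $\Phi$, the boundary term on $\{|z|=\epsilon\}$ being $O(\epsilon^{2-d}\cdot\epsilon^{d-1})=O(\epsilon)$ (or $O(\epsilon\log\tfrac1\epsilon)$ when $d=2$) and hence negligible. This yields $\partial_i G\sigma=(\partial_i\Phi)*\sigma$; since $\partial_i\Phi\in L^1_{loc}$ and $\partial_j\sigma$ is bounded with compact support, differentiating once more under the integral gives $\partial_j\partial_i G\sigma(x)=\int(\partial_i\Phi)(z)(\partial_j\sigma)(x-z)\,dz$, continuous in $x$, so $G\sigma\in C^2$. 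To compute the Laplacian, sum to get $\Delta G\sigma(x)=-\int\nabla\Phi(z)\cdot\nabla_z[\sigma(x-z)]\,dz=-\lim_{\epsilon\downarrow 0}\int_{|z|>\epsilon}\nabla\Phi\cdot\nabla_z[\sigma(x-\cdot)]\,dz$, apply Green's identity on $\{|z|>\epsilon\}$ (the volume term drops since $\Delta\Phi=0$ there, the outer boundary contributes nothing since $\sigma(x-\cdot)$ has compact support), and read off $\int_{|z|=\epsilon}\sigma(x-z)\,\partial_n\Phi\,dS$. A direct computation gives $\partial_n\Phi=a_d(d-2)\epsilon^{1-d}$ on $\{|z|=\epsilon\}$, whose area is $d\omega_d\epsilon^{d-1}$, so by continuity of $\sigma$ the surface integral tends to $a_d(d-2)d\omega_d\,\sigma(x)=2d\,\sigma(x)$ as $\epsilon\downarrow 0$, using $a_d=\tfrac{2}{(d-2)\omega_d}$; the case $d=2$ is identical and also produces $2d\,\sigma(x)=4\sigma(x)$. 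Hence $\Delta G\sigma=-2d\sigma$.

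The main thing to be careful about is the two integrations by parts near the singularity: one must justify excising $B_\epsilon(0)$, verify that the boundary terms vanish as $\epsilon\downarrow 0$, and confirm that the truncated volume integrals converge to the full ones — all of which follow from local integrability of $\Phi$ and of $\nabla\Phi$. Everything else is routine (differentiation under the integral, the translation-invariance reduction), and the value of the constant is forced by the normalization of $a_d$. As an alternative, since a compactly supported $C^1$ function is Lipschitz and hence locally H\"older, one could instead invoke the classical regularity theory of the Newtonian potential, but the self-contained argument above is short.
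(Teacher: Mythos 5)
Your proof is correct. Note that the paper does not actually prove this lemma at all --- it simply records it as standard and cites Doob (Theorem 1.I.7.2), so you have supplied a self-contained argument where the thesis offers only a reference. What you give is the classical potential-theoretic computation: continuity of $\Phi * \sigma$ from local integrability of the kernel plus a small-ball/complement splitting, one differentiation moved onto $\sigma$ and then transferred back onto $\Phi$ by integration by parts on $\{|z|>\epsilon\}$, a second differentiation under the integral, and Green's identity on the punctured domain to produce the surface term that yields the constant. The sign bookkeeping at the final step is the only delicate point --- the outward normal of the exterior region $\{|z|>\epsilon\}$ points toward the origin, so $\partial_n \Phi = a_d(d-2)\epsilon^{1-d}$ rather than its negative --- and you have it right; combined with the sphere's surface measure $d\omega_d\epsilon^{d-1}$ and the normalization $a_d = \tfrac{2}{(d-2)\omega_d}$ this gives $+2d\sigma(x)$ for the surface integral and hence $\Delta G\sigma = -2d\sigma$, matching the paper's (nonstandard, $2d$-scaled) convention for $g$. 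Your closing remark is also apt: the hypothesis $\sigma \in C^1$ is stronger than needed (local H\"older continuity suffices for $C^2$ regularity of the potential, and this is essentially what Doob or any Schauder-type argument uses), but for the purposes of the thesis the $C^1$ version with the elementary proof you give is exactly what is required.
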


 
Regarding (ii), if we remove the smoothness assumption on $\sigma$, equation (\ref{laplacianofpotential}) remains true in the sense of distributions.
For our applications, however, we will not need this fact, and the following lemma will suffice.

\begin{lemma}
\label{superharmonicpotential}
Let $\sigma$ be a bounded measurable function on $\R^d$ with compact support.
If $\sigma \geq 0$ on an open set $\Omega \subset \R^d$, then $G \sigma$ is superharmonic on $\Omega$.  
\end{lemma}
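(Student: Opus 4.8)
The plan is to verify the two parts of the definition of superharmonicity directly. First, since $\sigma$ is bounded, Lemma~\ref{smoothnessofpotential}(i) gives that $G\sigma$ is continuous, hence in particular lower-semicontinuous; so the only thing to prove is the mean value inequality $G\sigma(x) \geq A_r G\sigma(x)$ for every ball $B(x,r) \subset \Omega$. I would split $\sigma = \sigma 1_\Omega + \sigma 1_{\Omega^c}$, both pieces being bounded and measurable with compact support. By linearity $G\sigma = G(\sigma 1_\Omega) + G(\sigma 1_{\Omega^c})$, and it suffices to show that $G(\sigma 1_\Omega)$ is superharmonic on all of $\R^d$ and that $G(\sigma 1_{\Omega^c})$ satisfies the mean value \emph{equality} at every point of $\Omega$.

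For the first piece, the key fact is that for fixed $y$ the function $g(\cdot,y)$ is, up to the positive constant, the fundamental solution of $-\Delta$ on $\R^d$, hence superharmonic there (see \cite{Doob}); in particular $A_r[g(\cdot,y)](x) \leq g(x,y)$ for every ball. Given a ball $B(x,r)$, I would write $A_r G(\sigma 1_\Omega)(x) = \frac{1}{\omega_d r^d}\int_{B(x,r)} \int_{\R^d} g(z,y)\,\sigma 1_\Omega(y)\,dy\,dz$ and interchange the order of integration. The swap is justified because $g(x,\cdot)$ is locally integrable and $\sigma$ is bounded with compact support, so $\int_{B(x,r)}\int |g(z,y)|\,\sigma 1_\Omega(y)\,dy\,dz < \infty$; the one point needing a line of care is the sign change of $-\frac{2}{\pi}\log|z-y|$ when $d=2$, handled by splitting the $y$-integral according to whether $|z-y|$ is at most or at least $1$. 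After the swap, $A_r G(\sigma 1_\Omega)(x) = \int A_r[g(\cdot,y)](x)\,\sigma 1_\Omega(y)\,dy \leq \int g(x,y)\,\sigma 1_\Omega(y)\,dy = G(\sigma 1_\Omega)(x)$, the inequality using superharmonicity of $g(\cdot,y)$ together with $\sigma 1_\Omega \geq 0$.

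For the second piece, $\sigma 1_{\Omega^c}$ is supported in the closed set $\Omega^c$. Fix $x \in \Omega$ and set $d(x) = \text{dist}(x,\Omega^c) > 0$. For $r < d(x)$ the closed ball $\bar B(x,r)$ lies in $\Omega$, hence is disjoint from the support of $\sigma 1_{\Omega^c}$, so each relevant $g(\cdot,y)$ is harmonic on a neighborhood of $\bar B(x,r)$ and $A_r[g(\cdot,y)](x) = g(x,y)$; the same Fubini argument gives $A_r G(\sigma 1_{\Omega^c})(x) = G(\sigma 1_{\Omega^c})(x)$. Letting $r \uparrow d(x)$ and using continuity extends this to all $r$ with $B(x,r) \subset \Omega$ (any such ball has $r \leq d(x)$). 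Combining, for any $B(x,r) \subset \Omega$ we get $A_r G\sigma(x) = A_r G(\sigma 1_\Omega)(x) + A_r G(\sigma 1_{\Omega^c})(x) \leq G\sigma(x)$, which with continuity shows $G\sigma$ is superharmonic on $\Omega$.

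The argument is largely routine; the only mildly delicate points are the Fubini interchange (the local-integrability bookkeeping for the logarithmic kernel in dimension two) and invoking the classical superharmonicity of $g(\cdot,y)$ on $\R^d$ — which, if one wanted the proof fully self-contained, could instead be derived from Newton's theorem on spherical averages of the Newtonian potential.
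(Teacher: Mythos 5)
Your proof is correct and follows essentially the same route as the paper's: both reduce the mean value inequality for $G\sigma$ to the superharmonicity of the kernel $g(\cdot,y)$ via an interchange of integrals. Your splitting $\sigma = \sigma 1_\Omega + \sigma 1_{\Omega^c}$ simply makes explicit the case analysis (for $y$ in the ball one uses $\sigma(y)\geq 0$ and superharmonicity of $g(\cdot,y)$; for $y$ outside, $g(\cdot,y)$ is harmonic on the ball so the sign of $\sigma(y)$ is irrelevant) that the paper's one-line computation leaves implicit.
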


\begin{proof}
Suppose $B(x,r) \subset \Omega$.  Since for any fixed $y$ the function $f(x) = g(x,y)$ is superharmonic in $x$, we have
	\begin{align} G\sigma(x) &= \int_{\R^d} \sigma(y) g(x,y) dy \nonumber \\
			&\geq \int_{\R^d} \sigma(y) A_r f(y) dy \nonumber \\
			&= A_r G\sigma(x). \qed \nonumber \end{align}
\renewcommand{\qedsymbol}{}
\end{proof}

\noindent By applying Lemma~\ref{superharmonicpotential} both to $\sigma$ and to $-\sigma$, we obtain that $G\sigma$ is harmonic off the support of $\sigma$.

Let $B=B(o,r)$ be the ball of radius $r$ centered at the origin in $\R^d$.  We compute in dimensions $d\geq 3$
	\begin{equation} \label{ballpotential} G1_B(x) = \begin{cases} \frac{dr^2}{d-2} - |x|^2, & |x|<r \\
						  \frac{2 r^2}{d-2} \Big( \frac{r}{|x|} \Big)^{d-2}, & |x|\geq r. \end{cases} 		\end{equation}
Likewise in two dimensions
	\begin{equation} \label{ballpotentialdim2} G1_B(x) = \begin{cases} r^2(1-2\log r)-|x|^2, & |x|<r \\
						 -2r^2 \log |x|, & |x| \geq r. \end{cases}
						 \end{equation}
						 
Fix a bounded nonnegative function $\sigma$ on $\R^d$ with compact support\index{$\sigma$, mass density on $\R^d$}, and let
	\begin{equation} \label{theobstacle} \gamma(x) = -|x|^2 - G\sigma(x). \index{$\gamma$, obstacle on $\R^d$} \end{equation}
Let
	\begin{equation} \label{themajorant} s(x) = \inf \{f(x) | f \text{ is continuous, superharmonic and }  f \geq \gamma \} \index{$s$, superharmonic majorant in~$\R^d$} \end{equation}
be the least superharmonic majorant of $\gamma$, and let
	\begin{equation} \label{thenoncoincidenceset} D = \{x\in \R^d| s(x)>\gamma(x)\} \index{$D$, noncoincidence set} \end{equation}
be the noncoincidence set.

\begin{lemma}
\label{laplacianofobstacle}
\begin{itemize}
\item[(i)] $\gamma(x) + |x|^2$ is subharmonic on $\R^d$. 
\item[(ii)] If $\sigma \leq M$ on an open set $\Omega \subset \R^d$, then $\gamma(x) - (M-1)|x|^2$ is superharmonic on $\Omega$.
\end{itemize}
\end{lemma}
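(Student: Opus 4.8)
The first part is immediate. By (\ref{theobstacle}) we have $\gamma(x)+|x|^2 = -G\sigma(x)$, and since $\sigma \geq 0$ on all of $\R^d$, Lemma~\ref{superharmonicpotential} applied with $\Omega = \R^d$ shows $G\sigma$ is superharmonic on $\R^d$; hence $-G\sigma = \gamma + |x|^2$ is subharmonic on $\R^d$.

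For the second part, note first that $\gamma(x) - (M-1)|x|^2 = -M|x|^2 - G\sigma(x)$. If $\sigma$ were $C^1$ we could conclude at once from Lemma~\ref{smoothnessofpotential}(ii), since $\Delta(-M|x|^2 - G\sigma) = -2dM + 2d\sigma \leq 0$ on $\Omega$; the only real issue is that $\sigma$ is merely bounded and measurable, so we must argue via the mean value property rather than the Laplacian, and the plan is to localize. Fix $x_0 \in \Omega$ and choose $r>0$ with $\bar B \subset \Omega$, where $B := B(x_0,r)$. Split $\sigma = \sigma 1_B + \sigma 1_{B^c}$, so $G\sigma = G(\sigma 1_B) + G(\sigma 1_{B^c})$ by linearity of $G$. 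The tail term $G(\sigma 1_{B^c})$ has source supported off $B$, hence is harmonic on $B$ by the remark following Lemma~\ref{superharmonicpotential}. For the local term, write $\sigma 1_B = M 1_B - (M-\sigma)1_B$; since $\sigma \leq M$ on $\Omega \supset B$, the density $(M-\sigma)1_B$ is nonnegative on all of $\R^d$, so $G((M-\sigma)1_B)$ is superharmonic on $\R^d$ by Lemma~\ref{superharmonicpotential}. Combining,
\[ \gamma(x) - (M-1)|x|^2 = -M\big(|x|^2 + G1_B(x)\big) + G\big((M-\sigma)1_B\big)(x) - G(\sigma 1_{B^c})(x). \]

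The key observation is that the first term on the right is harmonic on $B$: the explicit formulas (\ref{ballpotential}) and (\ref{ballpotentialdim2}), translated to center $x_0$, give $G1_B(x) = c_{d,r} - |x-x_0|^2$ for $x \in B$ with $c_{d,r}$ constant, so $|x|^2 + G1_B(x) = 2\,x_0 \cdot x - |x_0|^2 + c_{d,r}$ is affine, hence harmonic, on $B$. Thus on $B$ the right-hand side is a sum of (harmonic) $+$ (superharmonic) $+$ (harmonic), hence superharmonic; and $\gamma - (M-1)|x|^2$ is continuous because $G\sigma$ is continuous by Lemma~\ref{smoothnessofpotential}(i). So $\gamma - (M-1)|x|^2$ is superharmonic on a neighborhood of each point of $\Omega$, and since superharmonicity is a local property it is superharmonic on all of $\Omega$. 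The main obstacle is thus merely the lack of smoothness of $\sigma$, which is handled by the ball decomposition above; the rest is bookkeeping with the linearity of $G$ and the ball-potential formulas already recorded in (\ref{ballpotential})–(\ref{ballpotentialdim2}).
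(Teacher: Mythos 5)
Your proof is correct and follows essentially the same route as the paper's: part (i) is identical, and in part (ii) both arguments hinge on the ball-potential formulas (\ref{ballpotential})--(\ref{ballpotentialdim2}) to trade $-M|x|^2$ for $MG1_B$ up to a harmonic correction, so that Lemma~\ref{superharmonicpotential} applies to the nonnegative density $M1_B-\sigma$. The only difference is in the localization: the paper takes $B$ to be a single large ball centered at the origin containing the support of $\sigma$ (so no tail term $G(\sigma 1_{B^c})$ and no affine correction arise) and lets its radius tend to infinity, whereas you work in a small ball around each point of $\Omega$ and then invoke the locality of superharmonicity.
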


\begin{proof}
(i) By Lemma~\ref{superharmonicpotential}, since $\sigma$ is nonnegative, the function $\gamma(x) + |x|^2 = -G\sigma(x)$ is subharmonic on $\R^d$.  

(ii) Let $B=B(o,R)$ be a ball containing the support of $\sigma$.  By (\ref{ballpotential}) and (\ref{ballpotentialdim2}), for $x \in B$ we have
	\[ |x|^2 = c_d R^2 - G1_B(x) \]
where $c_2 = 1-2\log R$ and $c_d = \frac{d}{d-2}$ for $d \geq 3$.  Hence for $x \in B$ we have
	\begin{align*} \gamma(x) - (M-1)|x|^2 &= -G\sigma(x) -M|x|^2 \\
								  &= G(M1_B - \sigma)(x) - c_d M R^2. \end{align*}
Since $\sigma \leq M1_B$ on $\Omega$, by Lemma~\ref{superharmonicpotential} the function $\gamma - (M-1)|x|^2$ is superharmonic in $B \cap \Omega$.  Since this holds for all sufficiently large $R$, it follows that $\gamma - (M-1)|x|^2$ is superharmonic on all of $\Omega$.
\end{proof}

\begin{lemma}
\label{laplacianofodometer}
Let $u=s-\gamma$, where $\gamma$ and $s$ are given by (\ref{theobstacle}) and (\ref{themajorant}).  Then
\begin{itemize}
\item[(i)] $u(x) - |x|^2$ is superharmonic on $\R^d$.
\item[(ii)] If $\sigma \leq M$ on an open set $\Omega \subset \R^d$, then $u(x) + M|x|^2$ is subharmonic on $\Omega$.
\end{itemize}
\end{lemma}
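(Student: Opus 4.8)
The plan is to reduce both parts to facts already in hand: that $s$ is superharmonic on $\R^d$ and harmonic on $D$ (Lemma~\ref{majorantbasicprops}), the Laplacian bounds on the obstacle in Lemma~\ref{laplacianofobstacle}, and the superharmonicity of $G\sigma$ from Lemma~\ref{superharmonicpotential}. Throughout I will use that $u = s-\gamma$ is continuous and nonnegative (since $s\geq\gamma$), and that $D=\{s>\gamma\}$ is open.

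For part (i), I would just rewrite, using $-\gamma-|x|^2 = G\sigma$ from the definition (\ref{theobstacle}),
	\[ u - |x|^2 = s - \gamma - |x|^2 = s + G\sigma. \]
Since $\sigma\geq 0$ on all of $\R^d$, Lemma~\ref{superharmonicpotential} gives that $G\sigma$ is superharmonic on $\R^d$, and $s$ is superharmonic on $\R^d$ by Lemma~\ref{majorantbasicprops}(i). A sum of superharmonic functions is superharmonic (lower semicontinuity is preserved under sums, and the mean value inequality (\ref{meanvalueproperty}) is additive), so $u-|x|^2$ is superharmonic. This part is routine.

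For part (ii), set $v = u + M|x|^2 = s + w$, where $w := -\gamma + M|x|^2 = (M+1)|x|^2 + G\sigma$. First I would check $w$ is subharmonic on $\Omega$: by Lemma~\ref{laplacianofobstacle}(ii) the function $\gamma - (M-1)|x|^2 = -M|x|^2 - G\sigma$ is superharmonic on $\Omega$, i.e.\ $M|x|^2 + G\sigma$ is subharmonic on $\Omega$, and adding the smooth subharmonic function $|x|^2$ keeps it subharmonic on $\Omega$. Both $s$ and $w$ are continuous (Lemma~\ref{majorantbasicprops}(ii) and Lemma~\ref{smoothnessofpotential}(i)), so $v$ is continuous on $\Omega$. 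On the open set $D\cap\Omega$, $s$ is harmonic by Lemma~\ref{majorantbasicprops}(iii), so $v = s + w$ is subharmonic there. At any point $x_0 \in \Omega\setminus D$ we have $s(x_0) = \gamma(x_0)$, hence $v(x_0) = M|x_0|^2$; since $u\geq 0$ forces $v \geq M|x|^2$ everywhere and $M|x|^2$ is subharmonic, the sub-mean-value inequality $v(x_0) \leq A_r v(x_0)$ holds for every ball $B(x_0,r)\subset\Omega$. Thus $v$ is continuous and obeys the sub-mean-value inequality for all sufficiently small balls about each point of $\Omega$, and since subharmonicity is a local property this shows $v$ is subharmonic on $\Omega$, as claimed.

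The one delicate point is the gluing across the free boundary $\partial D$: neither of the two easy regions — the open set $D$, where $s$ is harmonic, nor the interior of $D^c$, where $v\equiv M|x|^2$ — contains $\partial D$, so one cannot merely invoke subharmonicity on each piece. The observation that resolves it is that on all of $D^c$, boundary included, one has the pointwise identity $v = M|x|^2$ together with the global bound $v\geq M|x|^2$; this furnishes the mean-value inequality at precisely the awkward points, after which locality of subharmonicity completes the proof.
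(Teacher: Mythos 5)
Your proof is correct, and part (i) is essentially the paper's argument in different notation (the paper writes $u-|x|^2 = s-(\gamma+|x|^2)$, a superharmonic minus a subharmonic function; you write it as $s+G\sigma$, a sum of two superharmonic functions — the same fact). Part (ii) is where you genuinely diverge. The paper never splits into cases on $D$ versus its complement: it first proves the stronger statement that $s+|x|^2$ is subharmonic on all of $\R^d$, by computing $A_r|x|^2 = |x|^2 + \frac{dr^2}{d+2}$ and observing that $A_r s + \frac{dr^2}{d+2} \geq A_r\gamma + A_r|x|^2 - |x|^2 \geq \gamma$ (the last step using subharmonicity of $\gamma+|x|^2$ from Lemma~\ref{laplacianofobstacle}(i)); since $A_r s + \frac{dr^2}{d+2}$ is then a continuous superharmonic majorant of $\gamma$, it dominates $s$, and rearranging gives $A_r(s+|x|^2) \geq s+|x|^2$ for \emph{every} ball. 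Subtracting the superharmonic function $\gamma-(M-1)|x|^2$ of Lemma~\ref{laplacianofobstacle}(ii) finishes. Your route instead uses harmonicity of $s$ on $D$ (Lemma~\ref{majorantbasicprops}(iii)) plus the free-boundary gluing via $u\geq 0$ and $u=0$ off $D$. Both work. The paper's trick buys a clean, case-free verification of the mean-value inequality for all balls at once — which exactly matches its definition of superharmonicity, requiring the inequality for every ball contained in the domain, not just small ones — and yields the globally useful intermediate fact that $s+|x|^2$ is subharmonic on $\R^d$. Your argument, by contrast, is the standard obstacle-problem gluing; it makes the role of the free boundary explicit, but at the cost of needing the local-to-global principle for subharmonicity at the final step (continuity plus the sub-mean-value inequality on sufficiently small balls at each point implies it for all balls). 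You correctly identify and invoke that principle, so there is no gap, but it is worth noting that the paper's averaging argument sidesteps it entirely.
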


\begin{proof}
(i) By Lemmas~\ref{majorantbasicprops}(i) and~\ref{laplacianofobstacle}(i), the function
	\[ u - |x|^2 = s - (\gamma + |x|^2) \]
is the difference of a superharmonic and a subharmonic function, hence superharmonic on $\R^d$.

(ii) With $A_r$ defined by (\ref{meanvalueproperty}), we have
	\begin{align*} A_r |x|^2 &= \frac{1}{\omega_d r^d} \int_{B(o,r)} (|x|^2 + 2x\cdot y + |y|^2) \,dy  \\ 
	&= |x|^2 + \frac{1}{\omega_d r^d} \int_0^r (d\omega_d t^{d-1})t^2 \,dt \\
	&= |x|^2 + \frac{dr^2}{d+2}. \end{align*}
By Lemma~\ref{laplacianofobstacle} we have
	\[ A_r s + \frac{dr^2}{d+2} \geq A_r \gamma + A_r |x|^2 - |x|^2 \geq \gamma. \]
Since $A_r s$ is continuous and superharmonic, it follows that $A_r s + \frac{dr^2}{d+2} \geq s$, hence
	\[ A_r s + A_r |x|^2 = A_r s + |x|^2 + \frac{dr^2}{d+2} \geq s + |x|^2. \]
Thus $s + |x|^2$ is subharmonic on $\R^d$, and hence by Lemma~\ref{laplacianofobstacle}(ii) the function
	\[ u + M|x|^2 = (s+|x|^2) - (\gamma - (M-1)|x|^2) \]
is subharmonic on $\Omega$.
\end{proof}

For $A \subset \R^d$, write $A^o$ for the interior of $A$ and $\bar{A}$ for the closure of $A$.  The boundary of $A$ is $\partial A = \bar{A}-A^o$. \index{$A^o$, interior} \index{$\bar{A}$, closure}

\begin{lemma}
\label{startingdensitygreaterthan1}
Let $D$ be given by (\ref{thenoncoincidenceset}).  Then $\{\sigma>1\}^o \subset D$.
\end{lemma}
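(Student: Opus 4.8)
The plan is to show that the obstacle $\gamma$ strictly exceeds its own average over small balls at every point of $\{\sigma>1\}^o$, so that it cannot coincide there with its least superharmonic majorant. First I would fix $x_0\in\{\sigma>1\}^o$ and choose $\rho>0$ small enough that $B:=B(x_0,\rho)\subset\{\sigma>1\}$. It then suffices to prove
\[ A_\rho\gamma(x_0) > \gamma(x_0), \]
since $s\geq\gamma$ everywhere and $s$ is superharmonic (Lemma~\ref{majorantbasicprops}(i)), so granting this we get $s(x_0)\geq A_\rho s(x_0)\geq A_\rho\gamma(x_0)>\gamma(x_0)$, i.e.\ $x_0\in D$.

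To establish the claim I would split $\gamma=-|x|^2-G\sigma$. The polynomial part contributes $A_\rho(-|x|^2)(x_0)+|x_0|^2=-\frac{d\rho^2}{d+2}$, using $A_\rho|x-x_0|^2(x_0)=\frac{d\rho^2}{d+2}$ (the same computation occurs in the proof of Lemma~\ref{laplacianofodometer}). For the potential part, Fubini together with the symmetry $g(x,y)=g(y,x)$ gives
\[ G\sigma(x_0) - A_\rho(G\sigma)(x_0) = \int_{\R^d}\Big(g(x_0,y)-\tfrac{1}{\omega_d\rho^d}G1_B(y)\Big)\sigma(y)\,dy =: \int_{\R^d} w(y)\,\sigma(y)\,dy . \]
Here $w(y)=g(x_0,y)-A_\rho g(\cdot,y)(x_0)\geq 0$ because $g(\cdot,y)$ is superharmonic on $\R^d$, and $w(y)=0$ for $|y-x_0|>\rho$ since there $g(\cdot,y)$ is harmonic on $B$; moreover $\int_B w = G1_B(x_0)-A_\rho(G1_B)(x_0)=\frac{d\rho^2}{d+2}$ by the explicit ball potential (\ref{ballpotential}) (and (\ref{ballpotentialdim2}) when $d=2$). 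Since $\sigma>1$ throughout $B$ while $w\geq0$ has positive integral over $B$, this yields $G\sigma(x_0)-A_\rho(G\sigma)(x_0)=\frac{d\rho^2}{d+2}+\int_B w(y)(\sigma(y)-1)\,dy>\frac{d\rho^2}{d+2}$. Adding the two contributions gives $A_\rho\gamma(x_0)-\gamma(x_0)=\big(G\sigma(x_0)-A_\rho(G\sigma)(x_0)\big)-\frac{d\rho^2}{d+2}>0$, completing the argument.

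The only steps requiring a little care are the two facts about $w$: that $w\geq0$, which is simply the mean-value inequality for the superharmonic kernel $g(\cdot,y)$ (valid even when the averaging ball contains the pole $y$), and the value $\int_B w=\frac{d\rho^2}{d+2}$, which reduces to the elementary identity $A_\rho|x-x_0|^2(x_0)=\frac{d\rho^2}{d+2}$ after inserting the explicit formula for $G1_B$ at the center of $B$. The interchange of integrals and finiteness of all the quantities involved are routine given the continuity of $G\sigma$ from Lemma~\ref{smoothnessofpotential}(i).
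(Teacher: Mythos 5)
Your argument is correct and rests on the same two ingredients as the paper's proof --- the explicit ball potential (\ref{ballpotential})--(\ref{ballpotentialdim2}) and the strict positivity of $\sigma-1$ on a small ball $B$ --- so it is essentially the same proof rearranged: the paper writes $\gamma=-c_d\rho^2-G(\sigma-1_B)$ on $B$ and invokes subharmonicity via Lemma~\ref{superharmonicpotential}, whereas you compute $A_\rho\gamma(x_0)-\gamma(x_0)$ directly via Fubini. A minor point in your favor is that your computation makes explicit the strict inequality (from $\int_B w(\sigma-1)\,dy>0$) that the paper's closing step ``in particular $s>\gamma$ in $B$'' leaves implicit.
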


\begin{proof}
If $\sigma>1$ in a ball $B=B(x,r)$, by (\ref{ballpotential}) and (\ref{ballpotentialdim2}), for $y\in B$ we have
	\[ \gamma(y) = -|y|^2 - G\sigma(y) = - c_d r^2 - G(\sigma-1_B)(y). \]
By Lemma~\ref{superharmonicpotential} it follows that $\gamma$ is subharmonic in $B$.  In particular, $s>\gamma$ in $B$, so $x \in D$.
\end{proof}

The next lemma concerns the monotonicity of our model: by starting with more mass, we obtain a larger odometer and a larger noncoincidence set; see also \cite{Sakai82}.

\begin{lemma}
\label{monotonicity}
Let $\sigma_1 \leq \sigma_2$ be functions on $\R^d$ with compact support, and suppose that $\int_{\R^d} \sigma_2(x)dx < \infty$.  Let
	\[ \gamma_i(x) = -|x|^2 - G\sigma_i(x), \qquad i=1,2. \]
Let $s_i$ be the least superharmonic majorant of $\gamma_i$, let $u_i = s_i-\gamma_i$, and let
	\[ D_i = \{ x | s_i(x)>\gamma_i(x) \}. \]
Then $u_1 \leq u_2$ and $D_1 \subset D_2$.
\end{lemma}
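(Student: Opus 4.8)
The plan is to exhibit, out of the data attached to $\sigma_2$, an explicit continuous superharmonic function lying above $\gamma_1$, and then invoke the minimality of $s_1$ as the least superharmonic majorant of $\gamma_1$. The key identity is
\[ \gamma_1 - \gamma_2 = G\sigma_2 - G\sigma_1 = G(\sigma_2 - \sigma_1), \]
and since $\sigma_2 - \sigma_1 \geq 0$ on all of $\R^d$, Lemma~\ref{superharmonicpotential} shows $G(\sigma_2 - \sigma_1)$ is superharmonic on $\R^d$, while Lemma~\ref{smoothnessofpotential}(i) shows it is continuous (using that $\sigma_2 - \sigma_1$ is bounded with compact support).

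First I would check that $\varphi := s_2 + (\gamma_1 - \gamma_2)$ is admissible in the infimum (\ref{themajorant}) defining $s_1$. It is superharmonic, being the sum of the superharmonic functions $s_2$ (Lemma~\ref{majorantbasicprops}(i)) and $G(\sigma_2 - \sigma_1)$; it is continuous, being the sum of the continuous functions $s_2$ (Lemma~\ref{majorantbasicprops}(ii)) and $G(\sigma_2 - \sigma_1)$; and it dominates $\gamma_1$, since $s_2 \geq \gamma_2$ gives
\[ \varphi = s_2 + \gamma_1 - \gamma_2 \geq \gamma_2 + \gamma_1 - \gamma_2 = \gamma_1. \]
Hence $s_1 \leq \varphi = s_2 + \gamma_1 - \gamma_2$, which rearranges at once to $u_1 = s_1 - \gamma_1 \leq s_2 - \gamma_2 = u_2$.

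For the inclusion of noncoincidence sets I would then simply observe that $u_i = s_i - \gamma_i \geq 0$ everywhere (since $s_i \geq \gamma_i$) and that $D_i = \{x : s_i(x) > \gamma_i(x)\} = \{x : u_i(x) > 0\}$; since $0 \leq u_1 \leq u_2$ pointwise, $u_1(x) > 0$ implies $u_2(x) > 0$, so $D_1 \subset D_2$.

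As for difficulty: there is essentially no obstacle here. The entire content is the observation that $s_2 + (\gamma_1 - \gamma_2)$ is a legitimate competitor for $s_1$; everything else is bookkeeping. The only points requiring any care are the verifications that this competitor is genuinely continuous and superharmonic and dominates $\gamma_1$, which rely on the boundedness of $\sigma_2 - \sigma_1$ (so that Lemma~\ref{smoothnessofpotential}(i) applies) and on the nonnegativity of $\sigma_2 - \sigma_1$ on all of $\R^d$ rather than merely on a subdomain (so that Lemma~\ref{superharmonicpotential} yields superharmonicity globally).
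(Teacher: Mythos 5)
Your proof is correct and is essentially identical to the paper's: both define the competitor $\tilde{s} = s_2 + G(\sigma_2-\sigma_1) = s_2 + \gamma_1 - \gamma_2$, observe it is a continuous superharmonic function dominating $\gamma_1$, and conclude $s_1 \leq \tilde{s}$, hence $u_1 \leq u_2$ and $D_1 \subset D_2$. Your write-up merely spells out the citations for continuity and superharmonicity that the paper leaves implicit.
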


\begin{proof}
Let
	\[ \tilde{s} = s_2 + G(\sigma_2-\sigma_1). \]
Then $\tilde{s}$ is continuous and superharmonic, and since $s_2(x) \geq -|x|^2-G\sigma_2(x)$ we have
	\[ \tilde{s}(x) \geq - |x|^2 - G\sigma_1(x) \]
hence $\tilde{s} \geq s_1$.  Now 
	\[ u_2-u_1 = s_2 - s_1 + G(\sigma_2-\sigma_1)	 = \tilde{s}-s_1 \geq 0. \]
Since $D_i$ is the support of $u_i$, the result follows.
\end{proof}

Our next lemma shows that we can restrict to a domain $\Omega \subset \R^d$ when taking the least superharmonic majorant, provided that $\Omega$ contains the noncoincidence set.

\begin{lemma}
\label{majorantonacompactset}
Let $\gamma,s,D$ be given by (\ref{theobstacle})-(\ref{thenoncoincidenceset}).  Let $\Omega \subset \R^d$ be an open set with $D \subset \Omega$.  Then 
	\begin{equation} \label{majorantinadomain} s(x) = \inf \{f(x)|\text{$f$ is superharmonic on $\Omega$, continuous, and $f \geq \gamma$}\}. \end{equation}
\end{lemma}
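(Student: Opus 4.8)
The plan is to show the two sides of (\ref{majorantinadomain}) are equal by proving each is $\leq$ the other. Write $s'(x)$ for the right-hand side; since every function superharmonic on all of $\R^d$ is in particular superharmonic on $\Omega$, the family over which we infimize to get $s'$ is larger, so immediately $s' \leq s$. The real work is the reverse inequality $s \leq s'$, for which it suffices to exhibit, given any admissible competitor $f$ for $s'$ (i.e.\ $f$ superharmonic on $\Omega$, continuous, $f \geq \gamma$), a function $\hat f$ that is superharmonic on \emph{all} of $\R^d$, continuous, satisfies $\hat f \geq \gamma$, and has $\hat f \leq f$ on $\Omega$ (or at least $\hat f(x) \leq f(x)$ at the point $x$ of interest, which lies in $\Omega$ when $x \in \Omega$; for $x \notin \Omega$ one has $x \notin D$ so $s(x) = \gamma(x) \leq s'(x)$ trivially). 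Taking the infimum over $f$ then gives $s(x) \leq s'(x)$ for $x \in \Omega$.

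The key step is the gluing construction for $\hat f$. The natural candidate is
\[ \hat f = \min(f, s) \text{ on } \Omega, \qquad \hat f = s \text{ on } \Omega^c, \]
but one must check this is well-defined and superharmonic across $\partial\Omega$. Here is where $D \subset \Omega$ is used: on the coincidence set $\R^d \setminus D$ we have $s = \gamma \leq f$, so near $\partial\Omega$ (which lies outside $D$, since $D$ is open with $\bar D \subset \Omega$ --- more carefully, $\partial\Omega \subset \Omega^c \subset D^c$) we have $\min(f,s) = s$, so $\hat f$ agrees with $s$ in a neighborhood of $\Omega^c$ and the two pieces match continuously; hence $\hat f$ is continuous on $\R^d$. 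For superharmonicity: $\hat f = s$ on an open neighborhood of $D^c$, where $s$ is superharmonic (Lemma~\ref{majorantbasicprops}(i)); and on the open set $\Omega$, $\hat f = \min(f,s)$ is the minimum of two superharmonic functions (recall $f$ is superharmonic on $\Omega$ by hypothesis and $s$ is superharmonic on $\R^d \supset \Omega$), hence superharmonic there. Since every point of $\R^d$ has a neighborhood --- either contained in $\Omega$, or contained in the neighborhood of $D^c$ on which $\hat f = s$ --- in which $\hat f$ is superharmonic, $\hat f$ is superharmonic on $\R^d$. Finally $\hat f \geq \gamma$: on $\Omega$, $\hat f = \min(f,s) \geq \min(\gamma,\gamma) = \gamma$; on $\Omega^c$, $\hat f = s \geq \gamma$. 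Thus $\hat f$ is admissible for the definition (\ref{themajorant}) of $s$, so $s \leq \hat f \leq f$ on $\Omega$, and taking the infimum over competitors $f$ yields $s \leq s'$ on $\Omega$.

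The main obstacle I anticipate is the boundary bookkeeping: making rigorous that $\partial\Omega$ is separated from $\bar D$ so that the gluing happens in the region where $s = \gamma \leq f$, and handling the lower-semicontinuity/continuity of the glued function carefully (one might prefer to phrase superharmonicity via the mean-value inequality (\ref{meanvalueproperty}) rather than via a Laplacian, exactly as the paper does, and invoke Lemma~\ref{basicproperties}). A mild subtlety is that $\min$ of two superharmonic functions is superharmonic --- this follows directly from the mean-value characterization: if $u \geq A_r u$ and $v \geq A_r v$ then $\min(u,v) \geq \min(A_r u, A_r v) \geq A_r \min(u,v)$ since averaging is monotone. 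None of this requires differentiability, so the argument works at the level of generality the paper needs.
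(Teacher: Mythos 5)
Your overall strategy---reducing to the global definition (\ref{themajorant}) by gluing a competitor $f$ for the right side of (\ref{majorantinadomain}) with $s$, i.e.\ $\hat f=\min(f,s)$ on $\Omega$ and $\hat f=s$ off $\Omega$---is a legitimate route, and it is different from the paper's. But as written there is a genuine gap at $\partial\Omega\cap\partial D$. The lemma assumes only $D\subset\Omega$; it does not give $\overline{D}\subset\Omega$ (indeed $\Omega=D$ is an admissible choice), so $\partial\Omega$ need not be separated from $\overline{D}$, and your key claim that ``$\hat f=s$ on an open neighborhood of $D^c$'' is unjustified: what you actually know is that $\hat f=s$ on $D^c$ itself (because $s=\gamma\le f$ there), and equality on a \emph{neighborhood} of $D^c$ would require $f\ge s$ on the part of $D$ near $\partial D$, which is essentially the inequality you are trying to prove. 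Consequently your covering argument for continuity and superharmonicity of $\hat f$ breaks down at boundary points of $\Omega$ that lie in $\overline{D}$; this is not mere bookkeeping, since such points genuinely occur for allowed choices of $\Omega$.

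The gap is repairable without changing your construction. Note that $\gamma\le\hat f\le s$ everywhere, with $\hat f=s=\gamma$ on $D^c$ (every point of $\partial\Omega$ lies in $D^c$). At any $x\in D^c$, continuity of $\hat f$ at $x$ follows by squeezing between the continuous functions $\gamma$ and $s$, which agree at $x$; and the mean value inequality holds there for every radius because $A_r\hat f(x)\le A_r s(x)\le s(x)=\hat f(x)$. Combined with superharmonicity of $\min(f,s)$ on the open set $\Omega$ (your mean-value argument for the minimum of two superharmonic functions is fine) and the local character of superharmonicity, this makes $\hat f$ a valid global competitor, and the rest of your argument goes through. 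Compare with the paper's proof, which avoids gluing altogether: for any competitor $f$, the function $f-s$ is superharmonic on $D$ (since $s$ is harmonic on $D$ by Lemma~\ref{majorantbasicprops}(iii)), hence attains its minimum over $\overline{D}$ on $\partial D$, where $s=\gamma\le f$; thus $f\ge s$ on $D$, while off $D$ one has $s=\gamma\le f$ trivially. That short minimum-principle argument gives $f\ge s$ directly and sidesteps all questions about the geometry of $\partial\Omega$.
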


\begin{proof}
Let $f$ be any continuous function which is superharmonic on $\Omega$ and $\geq \gamma$.  By Lemma~\ref{majorantbasicprops}(iii), $s$ is harmonic on $D$, so $f-s$ is superharmonic on $D$ and attains its minimum in $\overline{D}$ on the boundary.  Hence $f-s$ attains its minimum in $\overline{\Omega}$ at a point $x$ where $s(x)=\gamma(x)$.  Since $f \geq \gamma$ we conclude that $f \geq s$ on $\Omega$ and hence everywhere.  Thus $s$ is at most the infimum in (\ref{majorantinadomain}).
Since the infimum in (\ref{majorantinadomain}) is taken over a strictly larger set than that in (\ref{themajorant}), the reverse inequality is trivial.
\end{proof}

\subsection{Boundary Regularity for the Obstacle Problem}

Next we turn to the regularity of the solution to the obstacle problem (\ref{themajorant}) and of the free boundary $\partial D$.  There is a substantial literature on boundary regularity for the obstacle problem. In our setup, however, extra care must be taken near points where $\sigma(x)=1$: at these points the obstacle (\ref{theobstacle}) is harmonic, and the free boundary can be badly behaved.
We show only the minimal amount of regularity required for the proofs of our main theorems.  Much stronger regularity results are known in related settings; see, for example, \cite{Caffarelli, CKS}.

The following lemma shows that if the obstacle is sufficiently smooth, then the superharmonic majorant cannot separate too quickly from the obstacle near the boundary of the noncoincidence set.   For the proof, we follow the sketch in Caffarelli \cite[Theorem 2]{Caffarelli}.
As usual, we write $D_\epsilon$ for the inner $\epsilon$-neighborhood of $D$, given by (\ref{innerepsilonneighborhood})

\begin{lemma}
\label{smoothdensity}
Let $\sigma$ be a $C^1$ function on $\R^d$ with compact support.  Let $\gamma,s,D$ be given by (\ref{theobstacle})-(\ref{thenoncoincidenceset}), and write $u = s-\gamma$.  Then $u$ is $C^1$, and for $y \in \partial D_\epsilon$ we have $|\nabla u (y)| \leq C_0 \epsilon$, for a constant $C_0$ depending on $\sigma$.
\end{lemma}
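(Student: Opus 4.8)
The plan is to follow Caffarelli's strategy as cited: fix $y \in \partial D_\epsilon$, so that $B(y,\epsilon) \subset D$, and derive a quadratic upper bound $u(x) \leq C|x-y|^2$ on a ball around $y$, from which the gradient estimate $|\nabla u(y)| \le C_0\epsilon$ will follow. The starting point is Lemma~\ref{laplacianofodometer}, which tells us that $u - |x|^2$ is superharmonic everywhere and $u + M|x|^2$ is subharmonic on any open set where $\sigma \le M$; since $\sigma$ is $C^1$ with compact support it is bounded, say by $M$, so $u$ has bounded Laplacian in the distributional (indeed classical, by elliptic regularity) sense, with $-2dM \le \Delta u \le 0$ on $D$ — wait, more precisely $\Delta u = \sigma - 1$ on $D$ after accounting for the normalization, but in any case $|\Delta u|$ is bounded on $D$. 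Combined with $u \ge 0$ and $u = 0$ on $\partial D$, this is exactly the setup for a Caffarelli-type quadratic growth bound.

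The key steps, in order: first, establish that $u$ is $C^1$. This follows from the fact that $u = s - \gamma$, that $\gamma = -|x|^2 - G\sigma$ is $C^2$ by Lemma~\ref{smoothnessofpotential}(ii) since $\sigma$ is $C^1$, and that $s$ is $C^{1}$ — here one invokes the standard $C^{1,1}$ (hence $C^1$) regularity of the solution to the obstacle problem with $C^{1,1}$ obstacle; since $u$ has bounded Laplacian and is nonnegative, vanishing on $\partial D$, its gradient is continuous and vanishes on $\partial D$. Second, the quantitative step: fix $y$ with $B(y,\epsilon) \subset D$. Consider the function $w(x) = u(x) - \frac{\Lambda}{2}|x-y|^2$ where $\Lambda$ is chosen so that $\Delta(u - \frac{\Lambda}{2}|x-y|^2) \le 0$, i.e. $\Lambda \ge 2dM$ roughly (using $\Delta|x-y|^2 = 2d$ and the normalization constant). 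Then $w$ is superharmonic on $B(y,\epsilon)$, so by the mean value property and a comparison argument, $u(y) = w(y) \ge A_\epsilon w(y) = A_\epsilon u(y) - \frac{\Lambda}{2} A_\epsilon |x-y|^2 = A_\epsilon u(y) - c\Lambda \epsilon^2$. Hence $A_\epsilon u(y) \le u(y) + c\Lambda\epsilon^2$. Since $u \ge 0$ and $u(y)$ is itself controlled (one needs $u(y) = O(\epsilon^2)$, obtained by applying the same argument on a chain of balls or by noting $y$ is at distance $\epsilon$ from $\partial D$ where $u = 0$ and using the gradient bound iteratively / a barrier), we get that the average of $u$ over $B(y,\epsilon)$ is $O(\epsilon^2)$. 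Third, convert this $L^1$-type bound into a pointwise gradient bound at $y$: since $u$ has bounded Laplacian on $B(y,\epsilon/2)$, interior gradient estimates for the Poisson equation give $|\nabla u(y)| \le \frac{C}{\epsilon} \sup_{B(y,\epsilon/2)} u + C\epsilon \|\Delta u\|_\infty$, and sub/superharmonic comparison turns the sup into a constant times $A_\epsilon u(y) + O(\epsilon^2) = O(\epsilon^2)$, yielding $|\nabla u(y)| \le C_0 \epsilon$.

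I expect the main obstacle to be cleanly establishing the bound $u(y) = O(\epsilon^2)$ (equivalently $\sup_{B(y,\epsilon/2)} u = O(\epsilon^2)$) near $\partial D$ — the comparison argument needs a good barrier or an iteration along a sequence of shrinking balls marching from $\partial D$ inward, and getting the geometry right (that $B(y,\epsilon) \subset D$ forces closeness to the coincidence set in a usable way) requires care. The subtlety flagged in the paragraph preceding the lemma — that at points where $\sigma = 1$ the obstacle is harmonic and the free boundary misbehaves — is precisely why one cannot hope for more than this one-sided quantitative estimate; but for the upper bound on $|\nabla u|$ we only use $\Delta u \le 0$ on all of $\R^d$ (from Lemma~\ref{laplacianofodometer}(i), since $|x|^2$ has constant Laplacian) together with $\Delta u \ge -C$ on $D$, so the argument should go through. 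The remaining steps (the $C^1$ regularity, the mean-value manipulations, the interior gradient estimate) are standard and I would cite \cite{Caffarelli} and \cite{Friedman} rather than reprove them.
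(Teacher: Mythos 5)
Your architecture --- quadratic growth of $u$ off the free boundary, followed by an interior gradient estimate for the Poisson equation on $B(y,\epsilon)$ --- is sound and genuinely different from the paper's, but as written it has a hole exactly where you predict one: the bound $\sup_{B(y,\epsilon/2)} u = O(\epsilon^2)$ is never established, and it is the entire content of the lemma (once it is in hand, the interior estimate step is routine). Your mean-value manipulation only yields $A_\epsilon u(y) \leq u(y) + O(\epsilon^2)$, which controls the average by the center value --- the wrong direction --- and of your two suggestions for bounding $u(y)$ itself, ``using the gradient bound iteratively'' is circular. More seriously, the bound is \emph{false} under the hypotheses you list as sufficient ($u \geq 0$, $|\Delta u|$ bounded on $D$, $u=0$ on $\partial D$): take $D$ a half-space and $u(x) = \max(x_1,0)$, which is harmonic in $D$, vanishes on $\partial D$, and grows linearly. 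The indispensable extra input is that $u \pm C|x|^2$ is super/subharmonic \emph{across} the free boundary, i.e.\ on all of $\R^d$ (Lemma~\ref{laplacianofodometer}); this is what encodes the absence of a gradient jump at $\partial D$. Given that input, the paper's Lemma~\ref{continuumatmostquadratic}, applied at the nearest point $x_0 \in \partial D$ (where $u(x_0)=0$ since $s=\gamma$ on $\partial D$), gives $u \leq c' h^2$ on $B(x_0,h)$ in one shot via Harnack --- no barrier or chain of balls needed --- and citing it would close your gap. Note also the slip at the end: Lemma~\ref{laplacianofodometer}(i) gives $\Delta u \leq 2d$, not $\Delta u \leq 0$; indeed $\Delta u = 2d(1-\sigma) > 0$ wherever $\sigma<1$ in $D$.

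For comparison, the paper does not work with $u$ directly. It linearizes the obstacle at $x_0 \in \partial D$, setting $L(x) = \gamma(x_0) + \langle \nabla\gamma(x_0), x-x_0\rangle$, and shows $|s-L| = O(\epsilon^2)$ on $B(x_0,2\epsilon)$ by decomposing the nonnegative superharmonic function $w = s - L + c\epsilon^2$ into a harmonic part (controlled by Harnack, since its value at $x_0$ is at most $c\epsilon^2$) plus a potential part whose maximum is attained in the support of $\Delta s$, i.e.\ on the coincidence set $\{s=\gamma\}$, where $w \leq 2c\epsilon^2$ trivially by the Taylor bound on $\gamma - L$. The gradient bound then comes from Cauchy's estimate applied to the \emph{harmonic} function $w$ on $B(y,\epsilon)\subset D$, rather than from a Poisson-equation estimate for $u$. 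The two routes buy different things: the paper's argument simultaneously produces $\nabla s(x_0) = \nabla\gamma(x_0)$, hence differentiability of $u$ with vanishing gradient at every free boundary point, which is how it obtains the $C^1$ claim without invoking the full $C^{1,1}$ obstacle-problem regularity you cite as a black box; your route, once repaired via Lemma~\ref{continuumatmostquadratic}, is arguably more self-contained within the paper, since that lemma is proved there anyway.
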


\begin{proof}
Fix $x_0 \in \partial D$, and define
 	\[ L(x) = \gamma(x_0) + \langle \nabla \gamma(x_0), x-x_0 \rangle. \]
Since $\sigma$ is $C^1$, we have that $\gamma$ is $C^2$ by Lemma~\ref{smoothnessofpotential}(ii).
Let $A$ be the maximum second partial of $\gamma$ in the ball $B  = B(x_0, 4\epsilon)$.  By the mean value theorem and Cauchy-Schwarz, for $x \in B$ we have
	\begin{align} |L(x) - \gamma(x)| &= |\langle \nabla \gamma(x_0) - \nabla \gamma(x_*), x-x_0 \rangle| \nonumber \\
	&\leq A \sqrt{d} |x_0 - x_*| |x-x_0| \leq c \epsilon^2, \label{secondordererror} \end{align}
where $c = 16 A \sqrt{d}$.  Hence
	\[ s(x)  \geq \gamma(x) \geq L(x) - c\epsilon^2, \qquad x \in B. \]
Thus the function
	\[ w = s - L + c\epsilon^2 \]
is nonnegative and superharmonic in $B$.  Write $w = w_0+w_1$, where $w_0$ is harmonic and equal to $w$ on $\partial B$.  Then since $s(x_0) = \gamma(x_0)$, we have
	\[ w_0(x_0) \leq w(x_0) = s(x_0) - L(x_0) + c\epsilon^2 = c \epsilon^2. \]
By the Harnack inequality, it follows that $w_0 \leq c' \epsilon^2$ on the ball $B' = B(x_0, 2\epsilon)$, for a suitable constant $c'$. 

Since $w_1$ is nonnegative and vanishes on $\partial B$, it attains its maximum in $\bar{B}$ at a point $x_1$ in the support of its Laplacian.  Since $\Delta w_1 = \Delta s$, by Lemma~\ref{majorantbasicprops}(iii) we have $s(x_1) = \gamma(x_1)$, hence
	\[ w_1(x_1) \leq w(x_1) = s(x_1) - L(x_1) + c\epsilon^2 \leq 2 c \epsilon^2, \]
where in the last step we have used (\ref{secondordererror}).  We conclude that $0 \leq w \leq (2c+c')\epsilon^2$ on $B'$ and hence $|s-L| \leq (c+c') \epsilon^2$ on $B'$.  Thus on $B'$ we have
	\[ |u| = |s-\gamma| \leq |s - L| + |\gamma-L| \leq (2c+c')\epsilon^2. \]
In particular, $u$ is differentiable at $x_0$, and $\nabla u(x_0)=0$.  Since $s$ is harmonic in $D$ and equal to $\gamma$ outside $D$, it follows that $u$ is differentiable everywhere, and $C^1$ off $\partial D$.

Given $y \in \partial D_\epsilon$, let $x_0$ be the closest point in $\partial D$ to $y$.  Since $B(y,\epsilon) \subset D$, by Lemma~\ref{majorantbasicprops}(iii) the function $w$ is harmonic on $B(y,\epsilon)$, so by the Cauchy estimate \cite[Theorem 2.4]{HFT} we have
	\begin{equation} \label{fromcauchyest} |\nabla s(y) - \nabla \gamma(x_0)| = |\nabla w(y) | \leq \frac{C}{\epsilon} \sup_{z\in B(y,\epsilon)} w(z). \end{equation}
Since $B(y,\epsilon) \subset B'$, the right side of (\ref{fromcauchyest}) is at most $C(2c+c')\epsilon$, hence
	\[ |\nabla u(y)| \leq |\nabla s(y) - \nabla \gamma(x_0)| +  |\nabla \gamma(y) - \nabla \gamma(x_0)| \leq C_0 \epsilon \]
where $C_0 = C(c+c') + A \sqrt{d}$.  Thus $u$ is $C^1$ on $\partial D$ as well.
\end{proof}

Next we show that mass is conserved in our model: the amount of mass starting in $D$ is $\int_D \sigma(x) dx$, while the amount of mass ending in $D$ is $\Leb(D)$, the Lebesgue measure of $D$.
Since no mass moves outside of $D$, we expect these to be equal.  Although this seems intuitively obvious, the proof takes some work because we have no {\it a priori} control of the domain $D$.  In particular, we first need to show that the boundary of $D$ cannot have positive $d$-dimensional Lebesgue measure.

\begin{prop}
\label{boundaryregularitysmooth}
Let $\sigma$ be a $C^1$ function on $\R^d$ with compact support, such that $\Leb(\sigma^{-1}(1))=0$. Let $D$ be given by (\ref{thenoncoincidenceset}).  Then 
\begin{itemize}
\item[(i)] $\Leb (\partial D) = 0$.
\item[(ii)] For any function $h \in C^1(\bar{D})$ which is superharmonic on $D$,
	\[  \int_D h(x) dx \leq \int_D h(x) \sigma(x) dx. \]
\end{itemize}
\end{prop}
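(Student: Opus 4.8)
Set $u=s-\gamma\ge 0$, so that $D=\{u>0\}$. Since $s$ is harmonic on $D$ (Lemma~\ref{majorantbasicprops}(iii)) and $G\sigma\in C^2$ with $\Delta G\sigma=-2d\sigma$ (Lemma~\ref{smoothnessofpotential}(ii)), the function $u$ is $C^2$ on $D$ with $\Delta u=2d(1-\sigma)$ there, while $u\equiv 0$ on $D^c$; moreover $D$ is bounded, e.g.\ by Lemma~\ref{monotonicity} comparing $\sigma$ with $\|\sigma\|_\infty 1_{B_R}$, whose noncoincidence set is the explicit ball of~(\ref{ballpotential}). Two facts from Lemma~\ref{smoothdensity} will be used throughout: $u\in C^1(\R^d)$ with $\nabla u=0$ on $\partial D$, and $u$ vanishes quadratically there, $u(y)\le C\,\mathrm{dist}(y,\partial D)^2$, with $C$ uniform over $\partial D$ because $\gamma\in C^2$ has bounded Hessian.

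For (i) the plan is a free-boundary density argument in the style of Caffarelli. First, $\partial D\subseteq\{\sigma\le1\}$: if $\sigma(x_0)>1$ at some $x_0\in\partial D$, then $\sigma>1$ on a ball about $x_0$, which Lemma~\ref{startingdensitygreaterthan1} forces into the open set $D$, contradicting $x_0\in\partial D$. Next comes the key \emph{non-degeneracy} estimate: if $x_0\in\partial D$ and $\sigma(x_0)<1$, then $\sup_{B(x_0,r)}u\ge c_0 r^2$ for all small $r$. I would prove this by a barrier/maximum-principle argument: choose $\beta\in(0,1-\sigma(x_0))$ and $r_0$ with $\sigma<1-\beta$ on $B(x_0,r_0)$; for $r<r_0$ pick $x_1\in D\cap B(x_0,r/2)$; then $\phi:=u-\beta|x-x_1|^2$ has $\Delta\phi=2d(1-\sigma)-2d\beta>0$ on $D\cap B(x_1,r/2)$, so $\phi$ is subharmonic there, and since $\phi(x_1)=u(x_1)>0$ while $\phi\le0$ on $\partial D$, its maximum over $\overline{D\cap B(x_1,r/2)}$ is attained on $\partial B(x_1,r/2)$, producing $x_2\in B(x_0,r)$ with $u(x_2)\ge\beta r^2/4$. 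Pairing this lower bound with the quadratic upper bound shows the point of $\overline{B(x_0,r)}$ where $u$ is largest lies at distance $\ge c_1 r$ from $\partial D$, so a ball of radius $c_1 r$ about it sits inside $D\cap B(x_0,2r)$; hence $D$ has density bounded below at every such $x_0$. Finally, if $\Leb(\partial D)>0$ then, since $\partial D\subseteq\{\sigma\le1\}$ and $\Leb(\{\sigma=1\})=0$, also $\Leb(\partial D\cap\{\sigma<1\})>0$; by the Lebesgue density theorem a.e.\ such point is a density point of $\partial D$, at which $D$ (open, disjoint from $\partial D$) has density $0$, contradicting the lower bound. So $\Leb(\partial D)=0$.

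For (ii) the plan is Green's identity on $D$, the only genuine issue being that $\partial D$ is known merely to be $\Leb$-null. I would first upgrade Lemma~\ref{smoothdensity} to $u\in C^{1,1}(\R^d)$ via interior Schauder estimates on balls $B(y,\tfrac12\mathrm{dist}(y,\partial D))\subset D$ (rescaled, using $\|u\|_{L^\infty}\le C\,\mathrm{dist}(\cdot,\partial D)^2$ and $\sigma\in C^{0,\alpha}$) together with $\nabla u\equiv0$ off $D$. Then $\Delta u\in L^\infty(\R^d)$ equals $2d(1-\sigma)1_D$ a.e.\ (using part (i)), $u\in H^1_0(D)$ (approximate $u$ by the Lipschitz, compactly supported functions $(u-\delta)_+$), and $\nabla u=0$ a.e.\ on $D^c$. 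Extending $h$ to a compactly supported Lipschitz $\tilde h$ on $\R^d$ and integrating by parts twice,
\[ \int_D h\,\Delta u\,dx=\int_{\R^d}\tilde h\,\Delta u\,dx=-\int_{\R^d}\nabla\tilde h\cdot\nabla u\,dx=-\int_D\nabla h\cdot\nabla u\,dx. \]
On the other hand the weak formulation of the superharmonicity of $h$ tested against $u\in H^1_0(D)$ (justified by approximating $u$ with mollified $(u-\delta)_+$ and letting $\delta\downarrow0$ by monotone convergence, since $u\ge0$ and $-\Delta h\ge0$) gives $-\int_D\nabla h\cdot\nabla u\,dx=\int_D u\,d(\Delta h)\le0$. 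Therefore $2d\int_D h(1-\sigma)\,dx=\int_D h\,\Delta u\,dx\le0$, which is the claimed inequality. (Equivalently one can exhaust $D$ from inside by the finite-perimeter sets $\{\mathrm{dist}(\cdot,D^c)>\epsilon\}$, apply Gauss--Green there, and let $\epsilon\downarrow0$ along a sequence with $\epsilon\,\mathcal H^{d-1}(\partial\{\mathrm{dist}(\cdot,D^c)=\epsilon\})\to0$; the boundary integrals vanish because $u$ and $\nabla u$ vanish quadratically and linearly at $\partial D$.)

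The main obstacle is the non-degeneracy estimate of part (i) — the rest of (i) is Lebesgue-density bookkeeping — together with the fact that in (ii) one has no a priori handle on $\partial D$, which is exactly why part (i) and the $C^{1,1}$/$H^1_0$ regularity of $u$ have to be established before the integration by parts can be run rigorously.
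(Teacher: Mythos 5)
Your proposal is correct. Part (i) is essentially the paper's own argument: the paper also establishes non-degeneracy by comparing $u$ with a paraboloid of opening $\frac{1-\lambda}{2}$ on $B(x,\epsilon)\cap D$ (it centers the paraboloid at the free boundary point $x$ itself rather than at a nearby interior point $x_1$, using $w(x)=0$ and $w\le 0$ on $\partial D$ to push the maximum to $\partial B(x,\epsilon)$), then uses the gradient bound $|\nabla u|\le C_0\epsilon$ on $\partial D_\epsilon$ from Lemma~\ref{smoothdensity} -- equivalent to your quadratic upper bound -- to place a ball of radius $c\epsilon$ inside $D$, and finishes with the Lebesgue density theorem exactly as you do. Part (ii) is where you genuinely diverge. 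The paper stays elementary: it applies Green's identity on the polyhedral inner approximation $D'$ (the union of small boxes contained in $D$), bounds the resulting surface integral by $C(\delta^2\|\nabla h\|_\infty+\delta\|h\|_\infty)\Leb_{d-1}(\partial D')$ via Lemma~\ref{smoothdensity}, and uses part (i) to show $\delta\cdot\Leb_{d-1}(\partial D')\to 0$, so the boundary terms vanish in the limit. You instead upgrade $u$ to $C^{1,1}(\R^d)$ by interior Schauder estimates, identify the global distributional Laplacian as $2d(1-\sigma)1_D$, put $u\in H^1_0(D)$ via $(u-\delta)_+$, and test the weak superharmonicity of $h$ against $u$. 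Your route requires more machinery (optimal obstacle-problem regularity, Sobolev duality, the Riesz measure of $h$) but dispenses with all surface-measure bookkeeping and yields the stronger regularity statement $u\in C^{1,1}$ as a byproduct; the paper's route needs only the $C^1$ estimate of Lemma~\ref{smoothdensity} and part (i), at the cost of the explicit $\partial D'$ estimates. Both are sound, and both correctly identify part (i) as the prerequisite that makes the integration by parts in part (ii) legitimate.
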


Note that by applying (ii) both to $h$ and to $-h$, equality holds whenever $h$ is harmonic on $D$.  In particular, taking $h=1$ yields the conservation of mass: $\int_D \sigma(x)dx = \Leb(D)$.

The proof of part (i) follows Friedman \cite[Ch.\ 2, Theorem~3.5]{Friedman}.  It uses the Lebesgue density theorem, as stated in the next lemma.

\begin{lemma}
\label{density1}
Let $A \subset \R^d$ be a Lebesgue measurable set, and let
	\[ A' = \big\{x \in A \,|\, \liminf_{\epsilon \rightarrow 0} \epsilon^{-d} \Leb\big(B(x,\epsilon) \cap A^c\big) > 0 \big\}. \]
Then $\Leb(A')=0$.
\end{lemma}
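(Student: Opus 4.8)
The statement is the Lebesgue density theorem in disguise, and the slickest route is just to observe that the condition defining $A'$ forces
\[ \limsup_{\epsilon\to 0}\frac{\Leb(B(x,\epsilon)\cap A)}{\Leb(B(x,\epsilon))} \;=\; 1 - \frac{1}{\omega_d}\liminf_{\epsilon\to 0}\epsilon^{-d}\Leb\big(B(x,\epsilon)\cap A^c\big) \;<\; 1 \]
for every $x\in A'$; that is, no point of $A'$ is a point of density $1$ of $A$, and since $A'\subset A$ the conclusion $\Leb(A')=0$ follows from the density theorem. I would nonetheless include a short self-contained proof via the Vitali covering lemma, as follows.

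First I would reduce to the case $\Leb(A)<\infty$: it suffices to bound $\Leb(A'\cap B(0,N))$ for each $N$, and for $|x|<N$ and $\epsilon<1$ the set $B(x,\epsilon)\cap A^c$ is unchanged if $A$ is replaced by $A\cap B(0,N+1)$. Next, writing $A'_k=\{x\in A : \liminf_{\epsilon\to0}\epsilon^{-d}\Leb(B(x,\epsilon)\cap A^c)>1/k\}$ so that $A'=\bigcup_{k\geq 1}A'_k$, it is enough to show $\Leb(A'_k)=0$ for each fixed $k$. The key observation is that for $x\in A'_k$ the $\liminf$ inequality actually gives $\Leb(B(x,\epsilon)\cap A^c)>\epsilon^d/k$ for \emph{all} sufficiently small $\epsilon$, hence $\Leb(B(x,\epsilon)\cap A)<(1-\tfrac{1}{k\omega_d})\Leb(B(x,\epsilon))$ for all small $\epsilon$.

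Then I would run the standard Vitali argument. Fix $\delta>0$ and, by outer regularity of Lebesgue measure, choose a bounded open set $U\supset A'_k$ with $\Leb(U)<\Leb^*(A'_k)+\delta$. The closed balls $\overline B(x,\epsilon)$ with $x\in A'_k$, $\overline B(x,\epsilon)\subset U$, and $\Leb(B(x,\epsilon)\cap A^c)>\epsilon^d/k$ form a Vitali cover of $A'_k$, so the Vitali covering lemma extracts a countable disjoint subfamily $\{B_i\}_i$ with $\bigcup_i B_i\subset U$ and $\Leb^*(A'_k\setminus\bigcup_i B_i)=0$. Since $A'_k\subset A$,
\[ \Leb^*(A'_k)\le\sum_i\Leb(B_i\cap A)\le\Big(1-\tfrac{1}{k\omega_d}\Big)\sum_i\Leb(B_i)\le\Big(1-\tfrac{1}{k\omega_d}\Big)\Leb(U)<\Big(1-\tfrac{1}{k\omega_d}\Big)\big(\Leb^*(A'_k)+\delta\big), \]
and letting $\delta\downarrow 0$ forces $\Leb^*(A'_k)=0$, hence $\Leb(A'_k)=0$ for every $k$ and $\Leb(A')=0$.

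I do not expect any real obstacle: this is a textbook consequence of the Vitali covering lemma and could legitimately just be cited. The only points needing a little care are checking that the balls above genuinely form a Vitali cover (which works precisely because the $\liminf$ condition yields \emph{all} small radii, not merely arbitrarily small ones) and phrasing everything with outer measure $\Leb^*$ so that no measurability of $A'_k$ need be invoked.
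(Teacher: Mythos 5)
Your proof is correct. Note first that the paper does not actually prove this lemma: it is stated as a formulation of the Lebesgue density theorem and invoked as a known fact (the surrounding text says only that the proof of Proposition~\ref{boundaryregularitysmooth}(i) ``uses the Lebesgue density theorem, as stated in the next lemma''). So your opening observation --- that $x\in A'$ means precisely that $x$ is not a point of density $1$ of $A$, whence $\Leb(A')=0$ by the density theorem --- is already the full content of what the paper relies on, and would suffice as written. The self-contained Vitali argument you add is the standard textbook proof and is sound: the reduction to bounded $A$, the decomposition $A'=\bigcup_k A'_k$, the passage from ``$\liminf>1/k$'' to the pointwise bound $\Leb(B(x,\epsilon)\cap A)<(1-\tfrac{1}{k\omega_d})\Leb(B(x,\epsilon))$ for all small $\epsilon$ (which makes the family of admissible balls a fine cover), and the outer-measure bookkeeping with the disjoint Vitali subfamily are all handled correctly. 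The only cosmetic remark is that the constant $1-\tfrac{1}{k\omega_d}$ can be negative for small $k$ in high dimensions, but in that case $A'_k$ is empty (the defining inequality would force $\Leb(B(x,\epsilon)\cap A^c)>\Leb(B(x,\epsilon))$), so nothing breaks. Citing the result, as the paper does, would also have been legitimate.
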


To prove the first part of Proposition~\ref{boundaryregularitysmooth}, given a boundary point $x \in \partial D$, the idea is first to find a point $y$ in the ball $B(x,\epsilon)$ where $u=s-\gamma$ is relatively large, and then to argue using Lemma~\ref{smoothdensity} that a ball $B(y,c\epsilon)$ must be entirely contained in $D$.  Taking $A=\partial D$ in the Lebesgue density theorem, we obtain that $x \in A'$.

The proof of the second part of Proposition~\ref{boundaryregularitysmooth} uses Green's theorem in the form
	\begin{equation} \label{greensidentity} \int_{D'} (u\Delta h -  \Delta u h) dx = \int_{\partial D'} \left(u \frac{\partial h}{\partial\normal} - \frac{\partial u}{\partial \normal} h \right) dr. \end{equation}
Here $D'$ is the union of boxes $x^\Box$ that are contained in $D$, and $dx$ is the volume measure in $D'$, while $dr$ is the $(d-1)$-dimensional surface measure on $\partial D'$.  The partial derivatives on the right side of (\ref{greensidentity}) are in the outward normal direction from $\partial D'$.  

\begin{proof}[Proof of Propostion~\ref{boundaryregularitysmooth}]
(i) Fix $0<\lambda<1$.  For $x \in \partial D$ with $\sigma(x) \leq \lambda$, for small enough $\epsilon$ we have $\sigma \leq \frac{1+\lambda}{2}$ on $B(x,\epsilon)$.  By Lemma~\ref{laplacianofobstacle}(ii) the function
	\[ f(y) = \gamma(y) + \frac{1-\lambda}{2} |y|^2 \]
is superharmonic on $B(x,\epsilon) \cap D$, so by Lemma~\ref{majorantbasicprops}(iii) the function
	\begin{align*} w(y) &= u(y) - \frac{1-\lambda}{2} |x-y|^2 \\
					&=  s(y) - f(y) + (1-\lambda) \langle x,y \rangle - \frac{1-\lambda}{2} |x|^2 \end{align*}
is subharmonic on $B(x,\epsilon) \cap D$.  Since $w(x)=0$, its maximum is not attained on $\partial D$, so it must be attained on $\partial B(x,\epsilon)$, so there is a point $y$ with $|x-y|=\epsilon$ and
	\[ u(y) \geq \frac{1-\lambda}{2} \epsilon^2. \]

By Lemma~\ref{smoothdensity} we have $|\nabla u| \leq (1+c) C_0 \epsilon$ in the ball $B(y,c\epsilon)$.  Taking $c = \frac{1-\lambda}{4C_0}$,
we obtain for $z \in B(y,c\epsilon)$
	\begin{align*} u(z) &= u(y) + \langle \nabla u (y_*), z-y \rangle \\
				&\geq \frac{1-\lambda}{2} \epsilon^2 - c (1+c) C_0 \epsilon^2 > 0. \end{align*}
Thus for any $x \in \partial D \cap \{\sigma \leq \lambda\}$
	\[ \Leb(B(x,(1+c)\epsilon) \cap (\partial D)^c) \geq \omega_d (c \epsilon)^d. \]
By the Lebesgue density theorem it follows that 
	\begin{equation} \label{fromthedensitytheorem} \Leb(\partial D \cap \{\sigma \leq \lambda\}) = 0. \end{equation} 
By Lemma~\ref{startingdensitygreaterthan1} we have $\sigma \leq 1$ on $\partial D$.  Taking $\lambda \uparrow 1$ in (\ref{fromthedensitytheorem}), we obtain 
	\[ \Leb(\partial D) \leq \Leb(\partial D \cap \{\sigma < 1\}) + \Leb(\sigma^{-1}(1)) = 0. \]

(ii) Fix $\delta>0$ and let
	\[ D' = \bigcup_{x\in \delta \Z^d \,:\, x^\Box \subset D} x^\Box, \]
where $x^\Box = x + [-\frac{\delta}{2}, \frac{\delta}{2}]^d$.  By Lemmas~\ref{majorantbasicprops}(iii) and~\ref{smoothnessofpotential}(ii), in $D'$ we have
	\[ \Delta u = \Delta s - \Delta \gamma = 2d(1-\sigma). \]
Now by Green's theorem (\ref{greensidentity}), since $u$ is nonnegative and $h$ is superharmonic,
	\begin{equation} \label{greensthm} \int_{D'} (1-\sigma) h \,dx = \frac{1}{2d} \int_{D'} \Delta u h \,dx \leq \frac{1}{2d} \int_{\partial D'} \left(u \frac{\partial h}{\partial \normal} - \frac{\partial u}{\partial\normal}h \right) dr \end{equation}
where $\normal$ denotes the unit outward normal vector to $\partial D'$.
By Lemma~\ref{smoothdensity}, the integral on the right side is bounded by
	\begin{equation} \label{surfaceintegral} \int_{\partial D'} \left| u \frac{\partial h}{\partial \normal} \right| + \left| \frac{\partial u}{\partial\normal}h \right| dr \leq C_0 \sqrt{d} (\delta^2 ||\nabla h||_{\infty} + \delta ||h||_{\infty}) \Leb_{d-1}(\partial D'), \end{equation}
where $\Leb_{d-1}$ denotes $(d-1)$-dimensional Lebesgue measure.  Let
	\[ S = \bigcup_{x \in \delta \Z^d \,:\, x^\Box \cap \partial D \neq \emptyset} x^\Box. \]
Since $\Leb(\partial D)=0$, given $\epsilon>0$ we can choose $\delta$ small enough so that $\Leb(S)<\epsilon$.  Since $\partial D' \subset \partial S$, we have
	\[ \Leb_{d-1}(\partial D') \leq \Leb_{d-1}(\partial S) \leq \frac{2d\Leb(S)}{\delta}. \]
Since $D$ is open, $\Leb(D') \uparrow \Leb(D)$ as $\delta \downarrow 0$. Taking $\delta<\epsilon$ smaller if necessary so that $\Leb(D') \geq \Leb(D) - \epsilon$, we obtain from (\ref{greensthm}) and (\ref{surfaceintegral})
	\[ \int_{D} (1-\sigma)h\,dx \leq (M+1)\epsilon + C_0 \sqrt{d} (||h||_{\infty} + ||\nabla h||_{\infty}) \epsilon \]
where $M$ is the maximum of $|\sigma|$.  Since this holds for any $\epsilon>0$, we conclude that $\int_D h(x) dx \leq \int_D h(x) \sigma(x) dx$.
\end{proof}

We will need a version of Proposition~\ref{boundaryregularitysmooth} which does not assume that $\sigma$ is $C^1$ or even continuous.  We can replace the $C^1$ assumption and the condition that $\Leb(\sigma^{-1}(1))=0$ by the following condition.
\begin{equation} \label{boundedawayfrom1again} 
	\text{For all }x\in \R^d \text{ either }\sigma(x) \leq \lambda \text{ or } \sigma(x)\geq 1
	\end{equation}
for a constant $\lambda<1$.  Then we have the following result.

\begin{prop}
\label{boundaryregularity}
Let $\sigma$ be a bounded function on $\R^d$ with compact support.  Let $D$ be given by (\ref{thenoncoincidenceset}), and let $\widetilde{D} = D \cup \{\sigma \geq 1\}^o$.  If $\sigma$ is continuous almost everywhere and satisfies (\ref{boundedawayfrom1again}), then
\begin{itemize}
\item[(i)] $\Leb \big(\partial \widetilde{D}\big) = 0$.
\item[(ii)] $\Leb \big(D\big) = \int_D \sigma(x) dx$.
\item[(iii)] $\Leb \big(\widetilde{D}\big) = \int_{\widetilde{D}} \sigma(x) dx$.
\end{itemize}
\end{prop}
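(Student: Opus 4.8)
\emph{Strategy.} The plan is to reduce to the already-established $C^1$ case, Proposition~\ref{boundaryregularitysmooth}, by sandwiching $\sigma$ between smooth densities and exploiting the monotonicity of the construction, Lemma~\ref{monotonicity}. For each small $\epsilon>0$ I would choose $C^\infty$ functions with compact support
\[ \sigma^-_\epsilon \le \sigma \le \sigma^+_\epsilon, \qquad |\sigma^\pm_\epsilon| \le \sup|\sigma|, \]
with $\sigma^\pm_\epsilon(x) \to \sigma(x)$ at every continuity point of $\sigma$ (hence a.e.) — for instance, mollifications at scale $\epsilon/2$ of $x\mapsto \inf_{B(x,\epsilon)}\sigma$ and $x\mapsto\sup_{B(x,\epsilon)}\sigma$ — adjusted so that $\Leb\big((\sigma^\pm_\epsilon)^{-1}(1)\big)=0$, so that Proposition~\ref{boundaryregularitysmooth} applies. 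Let $\gamma^\pm_\epsilon, s^\pm_\epsilon, u^\pm_\epsilon = s^\pm_\epsilon-\gamma^\pm_\epsilon, D^\pm_\epsilon$ be the associated obstacle, majorant, odometer and noncoincidence set. By Lemma~\ref{monotonicity}, $D^-_\epsilon\subset D\subset D^+_\epsilon$; and since $\sigma^+_\epsilon\ge1$ on the open set $\{\sigma\ge1\}^o$, Lemma~\ref{startingdensitygreaterthan1} applied to $\sigma^+_\epsilon$ gives $\{\sigma\ge1\}^o\subset D^+_\epsilon$, so altogether
\[ D^-_\epsilon \subset D \subset \widetilde D \subset D^+_\epsilon. \]
Proposition~\ref{boundaryregularitysmooth} also provides $\Leb(\partial D^\pm_\epsilon)=0$ and the conservation of mass $\Leb(D^\pm_\epsilon)=\int_{D^\pm_\epsilon}\sigma^\pm_\epsilon$.

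\emph{The key estimate.} Everything reduces to showing
\[ \Leb\big(D^+_\epsilon\setminus D^-_\epsilon\big)\to 0 \qquad (\epsilon\downarrow0). \]
Granting this, part (i) follows because $\partial\widetilde D\subset \big(D^+_\epsilon\setminus D^-_\epsilon\big)\cup\partial D^+_\epsilon$ and $\Leb(\partial D^+_\epsilon)=0$; the squeeze $D^-_\epsilon\subset D,\widetilde D\subset D^+_\epsilon$ together with $\Leb(D^+_\epsilon)-\Leb(D^-_\epsilon)\to0$ forces $\Leb(D^\pm_\epsilon)\to\Leb(D)=\Leb(\widetilde D)$, so in particular $\Leb(\widetilde D\setminus D)=0$; and passing to the limit in $\Leb(D^-_\epsilon)=\int_{D^-_\epsilon}\sigma^-_\epsilon$, using $\|\sigma^\pm_\epsilon-\sigma\|_{L^1}\to0$ (dominated convergence) and $\Leb(D\bigtriangleup D^-_\epsilon)\to0$, gives part (ii), $\Leb(D)=\int_D\sigma$; then part (iii) follows since $\widetilde D$ and $D$ differ by a null set.

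\emph{Proof of the key estimate.} Subtracting the two mass identities,
\[ \Leb\big(D^+_\epsilon\setminus D^-_\epsilon\big) = \int_{D^+_\epsilon\setminus D^-_\epsilon}\sigma^+_\epsilon + \int_{D^-_\epsilon}\big(\sigma^+_\epsilon-\sigma^-_\epsilon\big), \]
so $\int_{D^+_\epsilon\setminus D^-_\epsilon}(1-\sigma^+_\epsilon)=\int_{D^-_\epsilon}(\sigma^+_\epsilon-\sigma^-_\epsilon)\le\|\sigma^+_\epsilon-\sigma^-_\epsilon\|_{L^1}\to0$. It remains to turn this $L^1$ bound into a bound on the Lebesgue measure of $D^+_\epsilon\setminus D^-_\epsilon$ itself, and this is where the hypotheses are used. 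A continuity point of $\sigma$ lying on $\partial\{\sigma\ge1\}$ would satisfy both $\sigma\ge1$ and $\sigma\le\lambda$, so $\partial\{\sigma\ge1\}$ is contained in the (null) discontinuity set of $\sigma$; being compact, its $2\epsilon$-neighborhood $N_\epsilon$ has $\Leb(N_\epsilon)\to0$. For $x\in(D^+_\epsilon\setminus D^-_\epsilon)\setminus N_\epsilon$ the ball $B(x,2\epsilon)$ is connected and disjoint from $\partial\{\sigma\ge1\}$, hence lies in $\{\sigma\ge1\}^o$ or in $\R^d\setminus\overline{\{\sigma\ge1\}}\subset\{\sigma\le\lambda\}$; in the first case $\sigma^-_\epsilon\ge1$ near $x$ and $\gamma^-_\epsilon$ is subharmonic there, forcing $x\in D^-_\epsilon$ (as in Lemma~\ref{startingdensitygreaterthan1}), a contradiction, while in the second case $\sigma^+_\epsilon(x)\le\lambda$. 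Thus $1-\sigma^+_\epsilon\ge1-\lambda>0$ on $(D^+_\epsilon\setminus D^-_\epsilon)\setminus N_\epsilon$, whence
\[ (1-\lambda)\,\Leb\big((D^+_\epsilon\setminus D^-_\epsilon)\setminus N_\epsilon\big)\le \int_{D^+_\epsilon\setminus D^-_\epsilon}(1-\sigma^+_\epsilon)+\big(\sup|\sigma|\big)\Leb(N_\epsilon)\to0, \]
and combining with $\Leb(N_\epsilon)\to0$ completes the proof of the key estimate.

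\emph{Main obstacle.} The real difficulty is precisely this last conversion of an $L^1$ estimate into a measure estimate for the transition region $D^+_\epsilon\setminus D^-_\epsilon$: the approximating noncoincidence sets are controlled only through the conservation of mass, and one genuinely needs the separation of $\sigma$ from the value $1$ (together with a.e.\ continuity, which makes $\partial\{\sigma\ge1\}$ null) to make the argument close. A secondary technical point is arranging the smooth approximants to have a null level set at height~$1$ while keeping $\sigma^-_\epsilon\le\sigma\le\sigma^+_\epsilon$ and preserving the inclusion used in the first case above; this is handled by a perturbation argument, using that on open sets where $\sigma$ is identically $1$ all three conclusions hold trivially, and is a technical rather than a conceptual obstacle.
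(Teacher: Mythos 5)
Your overall plan (sandwich $\sigma$ between $C^1$ densities and quote Proposition~\ref{boundaryregularitysmooth} plus Lemma~\ref{monotonicity}) is the same as the paper's, but your ``key estimate'' $\Leb\big(D^+_\epsilon\setminus D^-_\epsilon\big)\to0$ is false, and the failure occurs exactly at the point you set aside as a technical detail. Take $\sigma=1_A$ with $A$ an open ball: this satisfies all the hypotheses (values $0$ and $1$, continuous off the null set $\partial A$). Since $\sigma\leq1$, Lemma~\ref{laplacianofobstacle}(ii) with $M=1$ shows $\gamma$ is superharmonic, so $s=\gamma$ and $D=\emptyset$; the same applies to every lower approximant $\sigma^-_\epsilon\leq\sigma$, so $D^-_\epsilon=\emptyset$ for all $\epsilon$. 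On the other hand, any smooth $\sigma^+_\epsilon\geq\sigma$ with $\Leb\big((\sigma^+_\epsilon)^{-1}(1)\big)=0$ must satisfy $\sigma^+_\epsilon>1$ a.e.\ on $A$, so by Lemma~\ref{startingdensitygreaterthan1} the open set $\{\sigma^+_\epsilon>1\}$ lies in $D^+_\epsilon$ and $\Leb\big(D^+_\epsilon\setminus D^-_\epsilon\big)\geq\Leb(A)$ for every $\epsilon$. (If instead you decline to lift $\sigma^+_\epsilon$ above $1$, then Proposition~\ref{boundaryregularitysmooth} no longer applies to it, and the inclusion $\widetilde D\subset D^+_\epsilon$ fails, since here $\widetilde D=A$ while $D^+_\epsilon=\emptyset$; either way the argument breaks.) The specific invalid step inside your key estimate is the ``first case'': knowing $\sigma^-_\epsilon\geq1$ near $x$ only makes $\gamma^-_\epsilon$ subharmonic there and does \emph{not} force $x\in D^-_\epsilon$ --- Lemma~\ref{startingdensitygreaterthan1} requires the strict inequality $\sigma>1$, and when $\sigma\equiv1$ on an open set the noncoincidence set can omit that set entirely. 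The same example also refutes your intermediate claim $\Leb\big(\widetilde D\setminus D\big)=0$ (here $\widetilde D\setminus D=A$), so (iii) cannot be obtained from (ii) by ``differing by a null set''; the paper instead derives (iii) from the fact that $\sigma=1$ almost everywhere on $\widetilde D-D$.

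The paper circumvents exactly this difficulty by never comparing the noncoincidence sets $D_0,D_1$ of the approximants directly. It fixes $\lambda<\alpha<1$ and works with the augmented sets $S_i=D_i\cup\{\sigma_i\geq\alpha\}$, so the region where $\sigma\equiv1$ is captured by a superlevel set rather than by the noncoincidence set, and one has the sandwich $S_0\subset\widetilde D\subset\overline{S_1}$. To see that $\Leb(S_1-S_0)$ is small it introduces the modified densities $\sigma_i^\circ$ (equal to $1$ on $D_i$, to $\sigma_i$ off $D_i$), which represent the mass after redistribution; conservation of mass from Proposition~\ref{boundaryregularitysmooth}(ii) gives $\|\sigma_1^\circ-\sigma_0^\circ\|_1=\|\sigma_1-\sigma_0\|_1<\epsilon$, and since $\sigma_1^\circ\geq\alpha$ on $S_1$, a Chebyshev-type bound controls $\Leb(S_1-S_0)$. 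In short, the quantity that is provably close for the two approximants is the redistributed mass $\sigma_i^\circ$, not the measure of the noncoincidence set, which can genuinely differ by $\Leb\big(\{\sigma=1\}^o\big)>0$. Your proof needs this device (or an equivalent one) to close; handling $\{\sigma=1\}^o$ is the conceptual heart of the proposition, not a perturbation afterthought, so as written the argument has a genuine gap.
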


In particular, taking $\sigma = 1_A + 1_B$, we have 
	\[ A \oplus B = A \cup B \cup D = \widetilde{D} \]
where $\oplus$ denotes the smash sum (\ref{smashsumdef}).
From (iii) we obtain the volume additivity of the smash sum.

\begin{corollary}
\label{volumesadd}
Let $A,B \subset \R^d$ be bounded open sets whose boundaries have measure zero.  Then
	\[ \Leb(A \oplus B) = \Leb(A) + \Leb(B). \]
\end{corollary}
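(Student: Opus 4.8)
The plan is to deduce this from Proposition~\ref{boundaryregularity}(iii) applied to the source density $\sigma = 1_A + 1_B$, so the first task is to check that this $\sigma$ meets the hypotheses of that proposition. It is bounded, taking only the values $0$, $1$, $2$, and compactly supported since $A$ and $B$ are bounded. It satisfies condition~(\ref{boundedawayfrom1again}) with, say, $\lambda = \tfrac12$, because at every point $\sigma$ is either $0$ (hence $\le\lambda$) or at least $1$. Finally, $\sigma$ is continuous at every point of $\R^d \setminus (\partial A \cup \partial B)$, and $\partial A \cup \partial B$ has measure zero by assumption, so $\sigma$ is continuous almost everywhere. Thus Proposition~\ref{boundaryregularity} applies.

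Next I would identify the set $\widetilde D$ appearing there with $A \oplus B$. Since $A$ and $B$ are open, $\{\sigma \geq 1\} = A \cup B$ is open, so $\{\sigma \geq 1\}^o = A \cup B$, and therefore $\widetilde D = D \cup \{\sigma \geq 1\}^o = A \cup B \cup D$, which is exactly $A \oplus B$ by the definition~(\ref{smashsumdef}) (this is the remark following Proposition~\ref{boundaryregularity}). In particular $A \subset \widetilde D$ and $B \subset \widetilde D$.

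Then the conservation-of-mass identity Proposition~\ref{boundaryregularity}(iii) gives the result by a one-line computation:
\begin{align*}
	\Leb(A \oplus B) &= \Leb(\widetilde D) = \int_{\widetilde D} \sigma(x)\,dx \\
	&= \int_{\widetilde D} 1_A(x)\,dx + \int_{\widetilde D} 1_B(x)\,dx = \Leb(\widetilde D \cap A) + \Leb(\widetilde D \cap B),
\end{align*}
and since $A \subset \widetilde D$ and $B \subset \widetilde D$ the right-hand side is $\Leb(A) + \Leb(B)$.

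I do not expect a genuine obstacle here: all the substance has been absorbed into Proposition~\ref{boundaryregularity}, whose proof (via the Lebesgue density theorem and Green's theorem, together with the boundary-regularity Lemma~\ref{smoothdensity}) is the hard part. The only points needing a moment's attention are the verification of hypothesis~(\ref{boundedawayfrom1again}) and of almost-everywhere continuity of $1_A + 1_B$ from $\Leb(\partial A) = \Leb(\partial B) = 0$, and the observation that $A, B \subset \widetilde D$, which is what lets the integral of the indicators over $\widetilde D$ recover the full volumes of $A$ and $B$.
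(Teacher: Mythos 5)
Your proof is correct and is exactly the argument the paper intends: the corollary is stated immediately after Proposition~\ref{boundaryregularity} precisely so that part (iii), applied to $\sigma = 1_A + 1_B$ with $\widetilde{D} = A \cup B \cup D = A \oplus B$, yields the volume identity. Your verification of the hypotheses (condition~(\ref{boundedawayfrom1again}) with $\lambda = \tfrac12$, almost-everywhere continuity from $\Leb(\partial A) = \Leb(\partial B) = 0$, and $A, B \subset \widetilde{D}$) fills in the details the paper leaves implicit.
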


\begin{proof}[Proof of Proposition~\ref{boundaryregularity}]
(i) Fix $\epsilon>0$.  Since $\sigma$ is continuous almost everywhere, there exist $C^1$ functions $\sigma_0 \leq \sigma \leq \sigma_1$ with $\int_{\R^d} (\sigma_1 - \sigma_0) dx < \epsilon$.  Scaling by a factor of $1+\delta$ for sufficiently small $\delta$, we can ensure that $\Leb(\sigma_i^{-1}(1))=0$, so that $\sigma_0$ and $\sigma_1$ satisfy the hypotheses of Proposition~\ref{boundaryregularitysmooth}.  For $i=0,1$ let 
	\[ \gamma_i(x) = -|x|^2 - G\sigma_i(x), \]
and let $s_i$ be the least superharmonic majorant of $\gamma_i$.  Choose $\alpha$ with $\lambda<\alpha<1$ such that $\Leb(\sigma_i^{-1}(\alpha))=0$ for $i=0,1$, and write 
	\begin{align*} &D_i = \{s_i>\gamma_i\}; \\ 
		&S_i = D_i \cup \{\sigma_i \geq \alpha\}. \end{align*}	
By (\ref{boundedawayfrom1again}) we have $\{\sigma_0 \geq \alpha \} \subset \{\sigma \geq 1\}^o$, hence by Lemma~\ref{monotonicity}
	\begin{align} \label{leftsqueeze} S_0 = D_0 \cup \{\sigma_0 \geq \alpha\} \subset D \cup \{\sigma \geq 1\}^o &= \widetilde{D} \\  
	\label{rightsqueeze} &\subset \overline{\widetilde{D}} \subset \overline{D_1 \cup \{\sigma \geq 1\}} = \overline{S_1}. \end{align}
For $i=0,1$ write
	\[ \sigma_i^\circ (x)  =  \begin{cases} 1, & x \in D_i \\ \sigma_i(x), & x \not\in D_i. \end{cases} \]
By Proposition~\ref{boundaryregularitysmooth}(ii) with $h=1$, we have
	\begin{equation} \label{conservation} \int_{\R^d} \sigma_i^\circ = \Leb(D_i) + \int_{D_i^c} \sigma_i = \int_{\R^d} \sigma_i. \end{equation}
For $0< \alpha_0 < \alpha$, we have
	\begin{equation} \label{thisissmall} \Leb(S_1 - S_0) \leq \Leb \big( S_1 \cap \{ \sigma_0^\circ \leq \alpha_0\} \big) + \Leb \big( \{ \alpha_0 < \sigma_0^\circ < \alpha \} \big). \end{equation}
Since $\sigma_1^\circ \geq \alpha$ on $S_1$, the first term is bounded by
	\[ \Leb \big( S_1 \cap \{ \sigma_0^\circ \leq \alpha_0\} \big) \leq \frac{|| \sigma_1^\circ - \sigma_0^\circ ||_1}{\alpha-\alpha_0} = \frac{||\sigma_1 - \sigma_0||_1}{\alpha-\alpha_0} < \frac{\epsilon}{\alpha-\alpha_0}, \]
where the equality in the middle step follows from (\ref{conservation}).

By Proposition~\ref{boundaryregularitysmooth}(i) we have $\Leb(\partial D_1) = 0$, hence $\Leb(\partial S_1) = 0$.  Taking $\alpha_0 = \alpha - \sqrt{\epsilon}$ we obtain from (\ref{leftsqueeze}), (\ref{rightsqueeze}), and (\ref{thisissmall})
	 \[ \Leb \big(\partial \widetilde{D} \big) \leq \Leb \big(\overline{S_1} - S_0\big) = \Leb(S_1 - S_0) < \sqrt{\epsilon} + \Leb \big( \{ \alpha -\sqrt{\epsilon} < \sigma_0 < \alpha \} \big). \]
Since this holds for any $\epsilon>0$, the result follows.

(ii) Write
	\[ \sigma^\circ (x)  =  \begin{cases} 1, & x \in D \\ \sigma(x), & x \not\in D. \end{cases} \]
From (\ref{conservation}) we have
	\[ \int_{\R^d} \sigma_0 = \int_{\R^d} \sigma_0^\circ \leq \int_{\R^d} \sigma^\circ \leq \int_{\R^d} \sigma_1^\circ = \int_{\R^d} \sigma_1. \]
The left and right side differ by at most $\epsilon$.  Since $\int_{\R^d} \sigma_0 \leq \int_{\R^d} \sigma \leq \int_{\R^d} \sigma_1$, and $\epsilon>0$ is arbitrary, it follows that $\int_{\R^d} \sigma = \int_{\R^d}\sigma^\circ$, hence
	\begin{equation} \label{conservationinD}  \int_D \sigma = \int_{\R^d} \sigma^\circ - \int_{D^c} \sigma = \Leb(D). \end{equation}
	
(iii) By Lemma~\ref{startingdensitygreaterthan1}, if $\sigma(x)>1$ and $x \notin D$, then $\sigma$ is discontinuous at~$x$.  Thus $\sigma=1$ almost everywhere on $\widetilde{D}-D$, and we obtain from (\ref{conservationinD})
	\[ \int_{\widetilde{D}} \sigma = \Leb(D) + \int_{\widetilde{D}-D} \sigma = \Leb(\widetilde{D}). \qed \]
\renewcommand{\qedsymbol}{}
\end{proof}

Our next lemma describes the domain resulting from starting mass $m>1$ on a ball in $\R^d$.  Not surprisingly, the result is another ball, concentric with the original, of $m$ times the volume.  In particular, if $m$ is an integer, the $m$-fold smash sum of a ball with itself is again a ball.
						 
\begin{lemma}
\label{relaxingaball}
Fix $m>1$, and let $D$ be given by (\ref{thenoncoincidenceset}) with $\sigma = m1_{B(o,r)}$.  Then $D = B(o,m^{1/d}r)$.
\end{lemma}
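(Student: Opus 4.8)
The plan is to construct the least superharmonic majorant $s$ of $\gamma$ explicitly and read off $D = \{s>\gamma\}$. Since $\sigma = m1_{B(o,r)}$ is radially symmetric, so is $s$, and I look for $s$ in the form $s = \gamma + w$, where $w$ is a radial function, nonnegative, supported on a concentric ball $B' = B(o,\rho)$ (with $\rho$ to be found), positive on $B'$, and such that $s$ is harmonic on $B'$. Using the explicit potentials \eqref{ballpotential} and \eqref{ballpotentialdim2} one checks directly --- without invoking the distributional identity \eqref{laplacianofpotential} --- that $\gamma\in C^1(\R^d)$ with $\Delta\gamma = 2d(m-1)$ on $B:=B(o,r)$ and $\Delta\gamma = -2d$ on $\R^d\setminus\overline B$; in particular $\gamma$ is superharmonic off $\overline B$. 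Hence $s = \gamma+w$ is harmonic on $B'$ precisely when $\Delta w = 2d(1-m)$ on $B$ and $\Delta w = 2d$ on the annulus $B'\setminus\overline B$.

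Solving these radial equations and discarding the piece singular at the origin gives $w = (1-m)|x|^2 + c_1$ on $B$ and $w = |x|^2 + c_3 + c_4|x|^{2-d}$ on the annulus (with $\log|x|$ in place of $|x|^{2-d}$ when $d=2$). Imposing $C^1$ matching at $\partial B$ and $w=0$ on $\partial B'$ fixes $c_1,c_3,c_4$ in terms of $\rho$, and the remaining condition $w'(\rho)=0$ --- which is exactly what is needed for $s$ to glue up $C^1$, hence superharmonically, across the free boundary $\partial B'$ --- pins down $\rho$. Equivalently, by conservation of mass, $0 = \int_{B'}\Delta w\,dx = 2d\big(\Leb(B') - m\Leb(B)\big)$, so $\rho^d = m r^d$, i.e.\ $\rho = m^{1/d}r$. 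A short convexity check then shows $w>0$ on all of $B'$: on the annulus $w'' > 0$ with $w(\rho) = w'(\rho) = 0$, so $w>0$ and $w'<0$ there; on $[0,r]$ one has $w' = 2(1-m)|x| < 0$, so $w$ is minimized over $\overline B$ at $|x| = r$, where it is positive. Since $w\equiv 0$ off $B'$ by construction, $\{w>0\} = B'$.

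Finally I check that $s = \gamma + w$ is indeed the least superharmonic majorant. It is continuous, and $s - \gamma = w \ge 0$. For superharmonicity, $\Delta s = 0$ on the open sets $B$ and $B'\setminus\overline B$, and $\Delta s = \Delta\gamma \le 0$ on $\R^d\setminus\overline{B'}$; across each of $\partial B$ and $\partial B'$ both $\gamma$ and $w$ are $C^1$, so $s$ is $C^1$ there, and a $C^1$ function that is $C^2$ with nonpositive Laplacian on each side of a smooth hypersurface has nonpositive Laplacian distributionally (no surface term, since first derivatives do not jump), hence is superharmonic; thus $s$ is superharmonic on $\R^d$ and harmonic on $B'$. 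As $s$ is one of the competitors in \eqref{themajorant}, the true least superharmonic majorant $s^*$ satisfies $s^*\le s$; conversely $s^*\ge\gamma = s$ on $\partial B'\cup(\R^d\setminus\overline{B'})$, while on $B'$ the function $s^* - s$ is superharmonic, continuous on $\overline{B'}$, and $\ge 0$ on $\partial B'$, hence $\ge 0$ on $B'$ by Lemma~\ref{basicproperties}(i). Therefore $s^* = s$ and $D = \{s^* > \gamma\} = \{w>0\} = B(o, m^{1/d}r)$. The only delicate points are the gluing of $s$ across $\partial B'$ --- which is exactly what selects the radius $\rho = m^{1/d}r$ --- and this last comparison step; everything else is routine radial bookkeeping, and the $d=2$ case runs identically via \eqref{ballpotentialdim2}.
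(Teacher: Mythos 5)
Your proof is correct, but it takes a genuinely different route from the paper's. The paper disposes of this lemma in two lines: $\gamma$ is radial and the least superharmonic majorant commutes with rotations, so $D$ is a ball centered at $o$; then Proposition~\ref{boundaryregularity}(ii) (conservation of mass) gives $\Leb(D)=m\Leb(B(o,r))$, hence the radius $m^{1/d}r$. That argument is short but leans on the boundary-regularity machinery behind Proposition~\ref{boundaryregularity} and quietly uses that the radial open set $D\supset B(o,r)$ is actually a ball rather than, say, a union of annuli. Your construction is longer but entirely self-contained and verifiable by hand: you exhibit the majorant $s=\gamma+w$ explicitly, and the conservation-of-mass identity reappears locally as the $C^1$ matching condition $w'(\rho)=0$ at the free boundary, via $0=\int_{B'}\Delta w = 2d(\Leb(B')-m\Leb(B))$ --- which is exactly the global identity of Proposition~\ref{boundaryregularity}(ii) derived from the divergence theorem in this special case. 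As a bonus you obtain the explicit odometer $u=w$, which the paper's proof does not produce. The only mildly informal point is your gluing step (a $C^1$ function, piecewise $C^2$ with nonpositive Laplacian on each side of a smooth hypersurface, is superharmonic); this is standard and compatible with the paper's mean-value definition of superharmonicity, though it is not among the facts listed in Lemma~\ref{basicproperties}, so a careful write-up should either cite it or verify the mean-value inequality directly from the absence of a jump in the normal derivative.
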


\begin{proof}
Since $\gamma(x)=-|x|^2-G\sigma(x)$ is spherically symmetric and the least superharmonic majorant commutes with rotations, $D$ is a ball centered at the origin.  By Proposition~\ref{boundaryregularity}(ii) we have $\Leb(D) = m \Leb(B(o,r))$.
\end{proof}

Next we show that the noncoincidence set is bounded; see also \cite[Cor.~7.2]{Sakai}.

\begin{lemma}
\label{occupieddomainisbounded}
Let $\sigma$ be a function on $\R^d$ with compact support, satisfying $0 \leq \sigma \leq M$.  Let $D$ be given by (\ref{thenoncoincidenceset}).  Then $D$ is bounded.
\end{lemma}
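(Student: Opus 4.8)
The plan is to exhibit an explicit superharmonic function lying above $\gamma$ that coincides with $\gamma$ outside a large ball, and conclude from Lemma~\ref{majorantonacompactset} (or directly from the definition of the least superharmonic majorant) that $s=\gamma$ there, so that $D$ is contained in that ball. Concretely, let $R_0$ be large enough that $\mathrm{supp}\,\sigma \subset B(o,R_0)$, and set $N = \int_{\R^d}\sigma$. First I would compare $\gamma$ with a spherically symmetric obstacle: by monotonicity of the construction (Lemma~\ref{monotonicity}), replacing $\sigma$ by $M1_{B(o,R_0)}$ only enlarges $D$, so it suffices to prove the lemma for $\sigma = M1_{B(o,R_0)}$. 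For this radially symmetric density the obstacle $\gamma(x) = -|x|^2 - G\sigma(x)$ is explicitly computable outside $B(o,R_0)$ using the formulas (\ref{ballpotential}) and (\ref{ballpotentialdim2}) for $G1_B$.

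The key step is then to write down a candidate majorant $f$. Outside a ball $B(o,R_1)$ (with $R_1 \geq R_0$ to be chosen) I would take $f(x) = \gamma(x)$; inside $B(o,R_1)$ I would take $f$ to be the harmonic function on $B(o,R_1)$ with boundary values $\gamma$ on $\partial B(o,R_1)$ — equivalently, since everything is radial, an explicit affine-in-($|x|^{2-d}$ or $\log|x|$) function. By Lemma~\ref{basicproperties}(v), $f$ is superharmonic on all of $\R^d$ provided $f \geq \gamma$ on $B(o,R_1)$ and $f$ is continuous across $\partial B(o,R_1)$; continuity is automatic, and $f\geq\gamma$ inside will follow once I check that $\gamma$ restricted to $B(o,R_1)$ lies below its own harmonic replacement, which in turn reduces (using $\Delta\gamma = 2d(M-1) \geq 0$ on $B(o,R_0)$ and $\Delta\gamma = 2d$ away from the support, i.e. $\gamma+|x|^2$ subharmonic everywhere plus $-|x|^2$ dominating at large scales) to choosing $R_1$ large enough that the $-|x|^2$ term wins. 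More carefully: $\gamma(x) + |x|^2 = -G\sigma(x)$ is subharmonic everywhere and tends to $0$ (in $d\geq 3$) or to $-\infty$ like $-\tfrac{2}{\pi}N\log|x|$ (in $d=2$) as $|x|\to\infty$, while $-|x|^2$ makes $\gamma \to -\infty$; so for $R_1$ large, $\gamma$ on $B(o,R_1)^c$ is eventually decreasing and the harmonic function matching $\gamma$ on $\partial B(o,R_1)$ sits above it on the ball. Then $s \leq f$, and since also $s\geq\gamma = f$ off $B(o,R_1)$, we get $s=\gamma$ there, hence $D \subset B(o,R_1)$.

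The main obstacle is the two-dimensional case, where $G\sigma(x) \sim -\tfrac{2}{\pi}N\log|x| \to -\infty$, so $\gamma(x) = -|x|^2 - G\sigma(x) = -|x|^2 + \tfrac{2}{\pi}N\log|x| + o(1)$ still goes to $-\infty$ but only after the $-|x|^2$ term overwhelms a growing logarithm; one must quantify that $|x|^2$ beats $\log|x|$, which is routine but needs the explicit bound $G\sigma(x) \geq -\tfrac{2}{\pi}N\log|x| - C$ for $|x|\geq R_0$ (immediate from (\ref{greenskernel}) since $|x-y|\leq |x|+R_0 \leq 2|x|$ on the support). In $d\geq 3$ the argument is cleaner because $G\sigma$ is bounded, so $\gamma(x) \leq -|x|^2 + C$ and one only needs $R_1$ with $R_1^2 - C$ exceeding the maximum of the (bounded) harmonic interpolant, which is controlled by $\max_{\partial B(o,R_1)}\gamma$. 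In either dimension, once $R_1$ is fixed the verification that $f\geq\gamma$ on $B(o,R_1)$ uses only that $\gamma+|x|^2$ is subharmonic (Lemma~\ref{laplacianofobstacle}(i)) together with the maximum principle comparison on the annulus $B(o,R_1)\setminus B(o,R_0)$, so no delicate free-boundary analysis is needed.
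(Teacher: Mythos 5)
Your overall strategy---reduce via Lemma~\ref{monotonicity} to $\sigma = M1_{B(o,R_0)}$ and then exhibit an explicit continuous superharmonic majorant of $\gamma$ that coincides with $\gamma$ outside some ball---is sound, and the first step matches the paper, which then simply invokes Lemma~\ref{relaxingaball} (radial symmetry plus conservation of mass) to get $D=B(o,M^{1/d}R_0)$. But your candidate majorant fails at the crucial step $f\geq\gamma$ on $B(o,R_1)$. By radial symmetry, the harmonic function on $B(o,R_1)$ with boundary values $\gamma|_{\partial B(o,R_1)}$ is the constant equal to that boundary value, which behaves like $-R_1^2$ and tends to $-\infty$ as $R_1\to\infty$, whereas $\gamma(o)=-G\sigma(o)$ is a fixed number; so for large $R_1$ this constant lies \emph{below} $\gamma$ on most of the ball, and $f$ is not a competitor in the infimum (\ref{themajorant}). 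The source of the error is a sign slip: away from the support of $\sigma$ one has $\Delta\gamma=2d(\sigma-1)=-2d<0$, so $\gamma$ is \emph{super}harmonic there, not subharmonic as you wrote; hence its harmonic replacement on the annulus sits below it, not above it, and ``taking $R_1$ large so that $-|x|^2$ wins'' pushes in exactly the wrong direction. (A secondary point: Lemma~\ref{basicproperties}(v) requires the function being Poisson-modified to be superharmonic on the whole ball, which $\gamma$ is not where $\sigma<1$, so even granting $f\geq\gamma$ you would still need to verify superharmonicity of the glued function across $\partial B(o,R_1)$ by hand, via a concavity-of-the-kink / normal-derivative comparison.)

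The construction can be repaired by gluing at the \emph{critical} radius rather than a large one. For $\sigma=M1_{B(o,R_0)}$ with $M\geq 1$, the explicit radial formulas (\ref{ballpotential}) and (\ref{ballpotentialdim2}) show that $\gamma$, as a function of $\rho=|x|$, is increasing on $[0,\rho_1]$ and decreasing on $[\rho_1,\infty)$, where $\rho_1=M^{1/d}R_0$ is the unique zero of the radial derivative. Setting $f\equiv\max\gamma=\gamma|_{\partial B(o,\rho_1)}$ on $B(o,\rho_1)$ and $f=\gamma$ outside produces a $C^1$ gluing of a harmonic (constant) function to a superharmonic one; this $f$ is a continuous superharmonic majorant of $\gamma$ agreeing with $\gamma$ off $B(o,\rho_1)$, whence $s=\gamma$ there and $D\subset B(o,M^{1/d}R_0)$, exactly the bound the paper obtains. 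As written, however, your proof has a genuine gap at the step $f\geq\gamma$, and the prescription ``choose $R_1$ large'' cannot be made to work.
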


\begin{proof}
Let $B=B(o,r)$ be a ball containing the support of $\sigma$.  By Lemmas~\ref{monotonicity} and~\ref{relaxingaball}, the difference $s-\gamma$ is supported in $B(o,M^{1/d}r)$.
\end{proof}

\subsection{Convergence of Obstacles, Majorants and Domains}

In broad outline, many of our arguments have the following basic structure:
	\begin{align*} \text{convergence of densities} &\implies \text{convergence of obstacles} \\
			&\implies \text{convergence of majorants} \\
			&\implies \text{convergence of domains.} \end{align*}
An appropriate convergence of starting densities is built into the hypotheses of the theorems.  From these hypotheses we use Green's function estimates to deduce the relevant convergence of obstacles.  Next, as we have already seen in Lemmas~\ref{laplacianofobstacle} and~\ref{laplacianofodometer}, properties of the obstacle can often be parlayed into corresponding properties of the least superharmonic majorant.  Finally, deducing convergence of domains (i.e., noncoincidence sets) from the convergence of the majorants often requires additional regularity assumptions.  The following lemma illustrates this basic three-step approach.  

\begin{lemma}
\label{threesteps}
Let $\sigma$ and $\sigma_n$, $n=1,2,\dots$ be densities on $\R^d$ satisfying
	\[ 0 \leq \sigma, \sigma_n \leq M 1_B \]
for a constant $M$ and ball $B=B(o,R)$.  Suppose that $\sigma$ is continuous except on a set of Lebesgue measure zero, and that	
	\begin{equation} \label{convergingdensitiescontinuum} \sigma_n(x) \rightarrow \sigma(x) \end{equation}
as $n \rightarrow \infty$, for all continuity points $x$ of $\sigma$.  Then
\begin{enumerate}
\item[(i)] $G \sigma_n \rightarrow G \sigma$ uniformly on compact subsets of $\R^d$.
\item[(ii)] $s_n \rightarrow s$ uniformly on compact subsets of $\R^d$, where $s,s_n$ are the least superharmonic majorants of the functions $\gamma = -|x|^2 - G\sigma$, and $\gamma_n = -|x|^2 - G\sigma_n$, respectively.
\item[(iii)] For any $\epsilon>0$ we have $D_\epsilon \subset D_{(n)}$ for all sufficiently large $n$, where $D,D_{(n)}$ are the noncoincidence sets $\{s>\gamma\}$ and $\{s_n>\gamma_n\}$, respectively.
\end{enumerate}
\end{lemma}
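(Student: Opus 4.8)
The plan is to prove the three implications in sequence. For (i), I would use that $\sigma$ and all the $\sigma_n$ are supported in $B$, so $G\sigma_n(x)-G\sigma(x)=\int_B g(x,y)(\sigma_n-\sigma)(y)\,dy$, and split this integral, for $x$ in a fixed compact set $K$, into a near-diagonal part over $B(x,\delta)$ and a remainder. On the near part $|\sigma_n-\sigma|\le 2M$, so the contribution is at most $2M\int_{B(x,\delta)}|g(x,y)|\,dy$; since $g$ has an integrable singularity ($|x-y|^{2-d}$ for $d\ge3$, logarithmic for $d=2$) this bound tends to $0$ as $\delta\downarrow0$ \emph{uniformly in $x$}. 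On the remainder, $|g(x,y)|$ is bounded by a constant depending only on $K,B,\delta$, so the contribution is at most that constant times $\int_B|\sigma_n-\sigma|$, which tends to $0$ by dominated convergence ($|\sigma_n-\sigma|\le 2M\,1_B\in L^1$, and $\sigma_n\to\sigma$ at every continuity point of $\sigma$, hence almost everywhere). Choosing $\delta$ small and then $n$ large yields $\sup_K|G\sigma_n-G\sigma|\to0$, which is (i).

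For (ii) the key preliminary observation is that the noncoincidence sets are trapped in a fixed ball: by Lemmas~\ref{monotonicity} and~\ref{relaxingaball}, comparing $\sigma,\sigma_n\le M\,1_B$ gives $D,D_{(n)}\subset B':=B(o,M^{1/d}R)$. Set $B''=B(o,M^{1/d}R+1)$. By Lemma~\ref{majorantonacompactset} applied with $\Omega=B''$, both $s$ and each $s_n$ are infima over the same class of competitors: functions continuous on $\overline{B''}$, superharmonic on $B''$, and lying above $\gamma$ (resp.\ $\gamma_n$) there. Writing $\eta_n=\sup_{\overline{B''}}|\gamma_n-\gamma|=\sup_{\overline{B''}}|G\sigma_n-G\sigma|$, which $\to0$ by (i), I would argue: if $f$ is a competitor for $s$, then $f+\eta_n$ is a competitor for $s_n$ (adding a constant preserves superharmonicity, and $f+\eta_n\ge\gamma+\eta_n\ge\gamma_n$ on $\overline{B''}$), so $s_n\le s+\eta_n$ on $\overline{B''}$, and symmetrically $|s_n-s|\le\eta_n$ on $\overline{B''}$. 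Off $\overline{B''}$ one has $s=\gamma$ and $s_n=\gamma_n$ (the noncoincidence sets lying inside $B'$), so there $|s_n-s|=|G\sigma_n-G\sigma|$, again controlled by (i); together these give uniform convergence on every compact set. I expect this to be the main obstacle: the naive attempt — to bound $|\gamma_n-\gamma|$ by a constant on all of $\R^d$ and conclude directly — fails in dimension two, where $\gamma_n-\gamma=G(\sigma-\sigma_n)$ need not be globally bounded, and the localization to $B''$ (legitimate precisely because $D,D_{(n)}\subset B'$) is what repairs it.

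Finally, for (iii) I may assume $D_\epsilon\ne\emptyset$. It is compact and contained in $D=\{s>\gamma\}$, and $s-\gamma$ is continuous by Lemma~\ref{majorantbasicprops}(ii), so $\theta:=\min_{D_\epsilon}(s-\gamma)>0$. By (i) and (ii), for all sufficiently large $n$ we have $\sup_{D_\epsilon}|s_n-s|<\theta/3$ and $\sup_{D_\epsilon}|\gamma_n-\gamma|<\theta/3$, whence for every $x\in D_\epsilon$,
\[ s_n(x)-\gamma_n(x)\ \ge\ \bigl(s(x)-\gamma(x)\bigr)-|s_n(x)-s(x)|-|\gamma_n(x)-\gamma(x)|\ >\ \theta/3\ >\ 0, \]
so $x\in D_{(n)}$. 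Hence $D_\epsilon\subset D_{(n)}$ for all large $n$.
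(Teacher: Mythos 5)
Your parts (i) and (iii) are correct and essentially identical to the paper's argument (for (i) you use dominated convergence where the paper runs an Egorov-type argument on the set where $\sigma_n$ has not yet converged, but the decomposition into a near-diagonal piece controlled by the integrable singularity of $g$ and a remainder controlled by $\int_B|\sigma_n-\sigma|$ is the same; for (iii) the argument is the paper's verbatim, with $\theta/3$ in place of $\beta/2$).

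In (ii) you have correctly located the difficulty (in $d=2$ the difference $\gamma_n-\gamma=G(\sigma-\sigma_n)$ need not be globally small) and the correct remedy (localize to a ball $B''$ containing all the noncoincidence sets, via Lemmas~\ref{monotonicity} and~\ref{relaxingaball}), but the step ``if $f$ is a competitor for $s$, then $f+\eta_n$ is a competitor for $s_n$'' does not literally follow from Lemma~\ref{majorantonacompactset} as stated. That lemma represents $s_n$ as an infimum over functions that are superharmonic on $\Omega$ \emph{and satisfy $f\geq\gamma_n$ on all of $\R^d$}, whereas you only verify $f+\eta_n\geq\gamma_n$ on $\overline{B''}$. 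Outside $B''$ the inequality $f+\eta_n\geq\gamma_n$ is exactly the global bound on $\gamma_n-\gamma$ that you (rightly) said is unavailable in dimension two, so as written the localization has not actually discharged the obligation. There are two standard repairs. One is to observe that the proof of Lemma~\ref{majorantonacompactset} only uses $f\geq\gamma_n$ at a minimum point of $f-s_n$ in $\overline{\Omega}$, so a version of the lemma with domination required only on $\overline{\Omega}$ and conclusion only on $\Omega$ is valid; you would need to state and justify this strengthening. The other, which is what the paper does, is to replace your competitor by $f_n=\max(s+\epsilon,\gamma_n)$: on the compact set this equals $s+\epsilon$ (since $s+\epsilon\geq\gamma+\epsilon>\gamma_n$ there) and hence is superharmonic where it needs to be, while globally it dominates $\gamma_n$ by construction, so Lemma~\ref{majorantonacompactset} applies as stated and yields $s_n\leq s+\epsilon$ on the compact set. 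Either patch completes your argument; without one of them, the dimension-two case is not actually covered.
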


\begin{proof}
(i) Since $\sigma,\sigma_n$ are supported on $B$, we have
	\begin{equation} \label{triangleineq} | G\sigma_n(x) - G\sigma(x) | \leq \int_B |g(x,y)| |\sigma_n(y)-\sigma(y)| dy. \end{equation}
Fix $0<\epsilon<1$.  Since $0 \leq \sigma,\sigma_n \leq M$, we have by (\ref{ballpotential}) and (\ref{ballpotentialdim2})
	\begin{equation} \label{tinyball} \int_{B(x,\epsilon)} |g(x,y)| |\sigma_n(y) - \sigma(y)| dy \leq 3M \epsilon^2.	
\end{equation}
Now let $B_0 = B-B(x,\epsilon)$ and
	\[ A_N = \{ y\in B_0 ~:~ |\sigma_n(y)-\sigma(y)|<\epsilon \text{ for all } n\geq N\}. \]
Then by (\ref{convergingdensitiescontinuum})
	\[ \bigcup_{N\geq 1} A_N \supseteq B_0-DC(\sigma) \]
where $DC(\sigma)$ is the set of discontinuities of $\sigma$.  As $\Leb(DC(\sigma))=0$ and the sets $A_N$ are monotone increasing in $N$, we can choose $N$ large enough so that
	\begin{equation} \label{almostallcontinuitypoints} \Leb(A_N) > \Leb(B_0) - \frac{\epsilon^{d-1}}{\log(R/\epsilon)}. \end{equation}
Given $n\geq N$, for $y \in B_0$ we have
	\[ |g(x,y)| \leq a_d \epsilon^{2-d} \log \frac{R}{\epsilon} \leq \frac{a_d \epsilon}{\Leb(B_0-A_n)}. \] 
Splitting the right side of (\ref{triangleineq}) into separate integrals over $B(x,\epsilon)$, $A_n$, and $B_0-A_n$, we obtain
	\begin{align*}	| G\sigma_n(x) - G\sigma(x) | 
		&< 6M\epsilon^2 + \epsilon \int_{A_n} |g(x,y)| dy + 2Ma_d \epsilon \\
		&\leq \left(3R^2 \log \text{max}(R,|x|) + (2a_d+3)M \right) \epsilon. \end{align*}

(ii) By Lemmas~\ref{monotonicity} and~\ref{relaxingaball}, the noncoincidence sets $D,D_{(n)}$ are contained in the ball $B_1 = M^{1/d} B$.  Given $\epsilon>0$ and a compact set $K$ containing $B_1$, choose $N$ large enough so that $|\gamma_n-\gamma|<\epsilon$ on $K$ for all $n\geq N$.  Then
	\[ s+\epsilon \geq \gamma + \epsilon > \gamma_n \]
on $K$, so the function $f_n= $ max$(s+\epsilon,\gamma_n)$ is superharmonic on $K$.  By Lemma~\ref{majorantonacompactset} we have $f_n \geq s_n$, and hence $s + \epsilon \geq s_n$ on $K$.  Likewise
	\[ s_n + \epsilon \geq \gamma_n + \epsilon > \gamma \]
on $K$, so the function $\tilde{f}_n =$ max$(s_n+\epsilon,\gamma)$ is superharmonic on $K$.  By Lemma~\ref{majorantonacompactset} we have $\tilde{f}_n \geq s$ and hence $s_n + \epsilon \geq s$ on $K$.  Thus $s_n \rightarrow s$ uniformly on $K$.

(iii) Let $\beta>0$ be the minimum value of $s-\gamma$ on $\overline{D_\epsilon}$, and choose $N$ large enough so that $|\gamma-\gamma_n|, |s-s_n| < \beta/2$ on $\overline{D}$ for all $n \geq N$.  Then
	\[ s_n - \gamma_n > s- \gamma - \beta > 0 \]
on $D_\epsilon$, hence $D_\epsilon \subset D_{(n)}$ for all $n \geq N$.
\end{proof}

According to the following lemma, a nonnegative function on $\Z^d$ with bounded Laplacian can grow at most quadratically.  We will use this fact repeatedly.

\begin{lemma}
\label{atmostquadratic}
Fix $0<\beta<1$.  There is a constant $c_\beta$ such that any nonnegative function $f$ on $\Z^d$ with $f(o)=0$ and $|\Delta f| \leq \lambda$ in $B(o,R)$ satisfies
	\[ f(x) \leq c_\beta \lambda |x|^2, \qquad  x \in B(o,\beta R). \]
\end{lemma}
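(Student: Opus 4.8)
The plan is to compare $f$ with the solution of a discrete Dirichlet problem on a ball whose radius is comparable to $|x|$. The point is that the hypothesis $f(o)=0$ forces the harmonic part of this comparison to be small at the origin, and the Harnack inequality then transfers that smallness to $x$; the remaining ``potential'' part is controlled directly by the bound on the Laplacian.

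Concretely, I would first dispose of the trivial cases: if $x=o$ then $f(x)=0$, and if $\beta R<1$ then $B(o,\beta R)=\{o\}$, so we may assume $x\neq o$ and $\beta R\geq 1$. Set $\rho=|x|/\beta$; then $1\le|x|<\rho<R$, so $x\in B_\rho$ and $B_\rho\subset B(o,R)$, where $B_\rho=\{z\in\Z^d:|z|<\rho\}$. Let $T$ be the first exit time of simple random walk from $B_\rho$ and $G_\rho(y,z)=\EE_y\#\{k<T:X_k=z\}$ the corresponding Green's function, which satisfies $\Delta_y G_\rho(y,z)=-\delta_{yz}$ for $y\in B_\rho$ and vanishes for $y\in\partial B_\rho$. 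Decompose $f=h+p$ on $B_\rho\cup\partial B_\rho$, where $h$ is discrete harmonic on $B_\rho$ with $h=f$ on $\partial B_\rho$, and $p(y)=-\sum_{z\in B_\rho}G_\rho(y,z)\,\Delta f(z)$; this is legitimate because $f-h-p$ is harmonic on $B_\rho$ and vanishes on $\partial B_\rho$, hence vanishes identically. Since $f\ge 0$, the function $h(y)=\EE_y f(X_T)$ is nonnegative; and since $|\Delta f|\le\lambda$ on $B_\rho$ and $\sum_z G_\rho(y,z)=\EE_y T\le(\rho+1)^2$ — the latter from optional stopping applied to the martingale $|X_k|^2-k$, using $\Delta|\cdot|^2\equiv 1$ — we get $|p(y)|\le\lambda(\rho+1)^2$ for all $y$.

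Evaluating at the origin then gives $0=f(o)=h(o)+p(o)$, so $h(o)=-p(o)\le\lambda(\rho+1)^2$. Because $h\ge 0$ is harmonic on $B_\rho$ and $|x|=\beta\rho$, the discrete Harnack inequality (cf.\ \cite{Lawler}) gives $h(x)\le C\,h(o)$ for a constant $C=C(d,\beta)$. Hence
\[ f(x)=h(x)+p(x)\le (C+1)\lambda(\rho+1)^2\le 4(C+1)\lambda\rho^2=\frac{4(C+1)}{\beta^2}\,\lambda|x|^2, \]
so the lemma holds with $c_\beta=4(C+1)/\beta^2$.

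The main obstacle is the Harnack step: the textbook discrete Harnack inequality is usually stated for concentric balls in the ratio $1/2$, so for a general $\beta<1$ I would chain it over a bounded number (depending only on $d$ and $\beta$) of overlapping balls to pass from $h(o)$ to $h(x)$; this is where the dependence of $c_\beta$ on $\beta$ enters, and it is the only genuinely nontrivial input. Everything else — the Green's-function decomposition, the identity $\Delta_y G_\rho(y,z)=-\delta_{yz}$, and the estimate $\EE_y T\le(\rho+1)^2$ — is routine discrete potential theory.
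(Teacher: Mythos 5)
Your argument is correct, and it reaches the conclusion by a genuinely different (though closely related) route from the paper's. The paper uses the quadratic barriers $f_\pm(y)=f(y)\pm\lambda|y|^2$ (sub- and superharmonic on $B(o,R)$, since $\Delta|y|^2=1$), takes the harmonic functions $h_\pm$ on $B(o,r-1)$, $r=\min(R,2|x|)$, with boundary values $f_\pm$, sandwiches $f_+\le h_+\le h_-+2\lambda r^2\le f_-+2\lambda r^2$, and then applies Harnack to $h_+$, using $f_-(o)=f(o)=0$ to get $h_+(o)\le 8\lambda|x|^2$. You instead perform a Riesz-type decomposition $f=h+p$ on $B_\rho$, $\rho=|x|/\beta$, with $p$ the Green potential of $-\Delta f$, bound $|p|\le\lambda\,\EE_y T\le\lambda(\rho+1)^2$ by optional stopping for $|X_k|^2-k$, and then run the same endgame: $f(o)=0$ forces $h(o)\le\lambda(\rho+1)^2$, and Harnack transfers this to $x$. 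The skeleton (a nonnegative harmonic part that is small at $o$ because $f(o)=0$, plus Harnack, plus an $O(\lambda\rho^2)$ correction) is the same; what differs is the implementation. The paper's barrier argument needs only the maximum principle and Harnack, which is why the continuum analogue (Lemma~\ref{continuumatmostquadratic}) follows verbatim by replacing $|y|^2$ with $|y|^2/2d$; your version imports the Green's function and an exit-time estimate, but in exchange the error term is explicit and the hypotheses enter transparently ($|\Delta f|\le\lambda$ only on $B_\rho$, $f\ge 0$ only on $\partial B_\rho$). The Harnack-chaining step you flag is the same input the paper takes from \cite[Theorem 1.7.2]{Lawler}, and for radii $\rho$ bounded in terms of $d$ and $\beta$ one can instead chain the one-step inequality $h(y)\ge\frac{1}{2d}h(y')$ along a monotone lattice path from $o$ to $x$ (which stays in $B_\rho$), so uniformity of the constant over all $\rho$ is not a gap.
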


\begin{proof}
Consider the functions
	\begin{equation} \label{quadraticcorrection} f_\pm(y) = f(y) \pm \lambda |y|^2. \end{equation}
By hypothesis, $f_+$ is subharmonic and $f_-$ is superharmonic on $B(o,R)$.  Given $x \in B(o,\beta R)$, let $r =$ min$(R,2|x|)$, and let $h_\pm$ be the harmonic function on the ball $B=B(o,r-1)$ which agrees with $f_\pm$ on $\partial B$.  Then 
	 \begin{equation} \label{f_+upperbound} f_+ \leq h_+ \leq h_- + 2 \lambda r^2 \leq f_- + 2\lambda r^2. \end{equation}
By the Harnack inequality \cite[Theorem 1.7.2]{Lawler} there is a constant $\tilde{c}_\beta$ such that
	\[ h_+(x) \leq \tilde{c}_\beta h_+(o). \]
Since $f_-(o)=f(o)=0$ we have from (\ref{f_+upperbound})
	\[ h_+(o) \leq f_-(o) + 2 \lambda r^2 \leq 8 \lambda |x|^2. \]
hence 
	\[ f(x) \leq h_+(x) - \lambda |x|^2 \leq c_\beta \lambda |x|^2 \] 
with $c_\beta = 8 \tilde{c}_\beta -1$.
\end{proof}

The following continuous version of Lemma~\ref{atmostquadratic} is proved in the same way, using the continuous Harnack inequality in place of the discrete one, and replacing $|y|^2$ by $|y|^2/2d$ in (\ref{quadraticcorrection}).

\begin{lemma}
\label{continuumatmostquadratic}
Fix $0<\beta<1$.  There is a constant $c'_\beta$ such that the following holds.  Let $f$ be a nonnegative function on $\R^d$ with $f(o)=0$, and let
	\[ f_\pm(x) = f(x) \pm \lambda \frac{|x|^2}{2d}. \]
If $f_+$ is subharmonic and $f_-$ is superharmonic in $B(o,R)$, then
	\[ f(x) \leq c'_\beta \lambda |x|^2, \qquad x \in B(o,\beta R). \]
\end{lemma}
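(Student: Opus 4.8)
The plan is to transcribe the proof of Lemma~\ref{atmostquadratic} almost word for word, the only substantive changes being that the quadratic correction is now $\lambda|y|^2/2d$ rather than $\lambda|y|^2$ (because the continuous Laplacian satisfies $\Delta|y|^2 = 2d$, so that $\lambda|y|^2/2d$ has Laplacian $\lambda$, matching the role of $\lambda|y|^2$ in the discrete setting where $\Delta|y|^2 = 1$), and that the classical Harnack inequality for nonnegative harmonic functions on $\R^d$ replaces the discrete Harnack inequality. Note first that $f$ is continuous: $f_+$ is subharmonic hence upper-semicontinuous, $f_-$ is superharmonic hence lower-semicontinuous, and $f = f_+ - \lambda|x|^2/2d = f_- + \lambda|x|^2/2d$, so $f$ is both upper- and lower-semicontinuous.

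First I would fix $x \in B(o,\beta R)$, set $r = \min(R, 2|x|)$, let $B = B(o,r)$, and let $h_\pm$ be the harmonic function on $B$ with boundary values $f_\pm$ on $\partial B$ (the Dirichlet problem is solvable since $f_\pm$ is continuous). Since $f_+$ is subharmonic, $f_+ \leq h_+$ on $B$; since $f_-$ is superharmonic, $h_- \leq f_-$ on $B$. On $\partial B$ one has $h_+ - h_- = f_+ - f_- = \lambda r^2/d$, a constant, so by the maximum principle $h_+ = h_- + \lambda r^2/d$ throughout $B$. Moreover $h_+ \geq f_+ = f + \lambda|x|^2/2d \geq 0$ on $B$ by the minimum principle, so $h_+$ is a nonnegative harmonic function on $B$.

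Next I would evaluate the chain at the origin: using $f_-(o) = f(o) = 0$ and $r \leq 2|x|$,
$h_+(o) = h_-(o) + \lambda r^2/d \leq f_-(o) + \lambda r^2/d = \lambda r^2/d \leq 4\lambda|x|^2/d$.
Then the continuous Harnack inequality, applied to $h_+$ on $B(o,r)$ at the point $x$, gives $h_+(x) \leq \widetilde c_\beta\, h_+(o)$ for a constant $\widetilde c_\beta$ depending only on $\beta$ and $d$. Combining, $f(x) = f_+(x) - \lambda|x|^2/2d \leq h_+(x) - \lambda|x|^2/2d \leq (4\widetilde c_\beta/d)\lambda|x|^2$, which is the claim with $c'_\beta = 4\widetilde c_\beta/d$.

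The one point needing a little care — and the only thing I would flag as a potential obstacle, though it is minor — is the bookkeeping ensuring that $x$ sits at a definite fractional distance inside $B(o,r)$, so that the Harnack constant depends only on $\beta$ and $d$: if $2|x| \leq R$ then $r = 2|x|$ and $|x| = r/2$, while if $2|x| > R$ then $r = R$ and $|x| < \beta R = \beta r$; in either case $x \in B(o, \max(\tfrac12,\beta)\, r)$ with $\max(\tfrac12,\beta) < 1$, which is exactly what Harnack requires. Everything else is a routine copy of the discrete argument, with the term $2\lambda r^2$ appearing there replaced here by $\lambda r^2/d$ and the correction $|y|^2$ in \eqref{quadraticcorrection} replaced by $|y|^2/2d$.
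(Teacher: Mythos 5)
Your proof is correct and is precisely the argument the paper intends: the paper's own ``proof'' of Lemma~\ref{continuumatmostquadratic} is just the remark that it is proved in the same way as Lemma~\ref{atmostquadratic}, using the continuous Harnack inequality and the correction $|y|^2/2d$, which is exactly what you carried out. The only cosmetic point is that in the case $r=R$ you should pose the Dirichlet problem on a slightly smaller ball (the discrete proof uses $B(o,r-1)$ for the same reason), since continuity of $f_\pm$ up to $\partial B(o,R)$ is not guaranteed by the hypotheses; shrinking the radius to, say, $\tfrac{1+\beta}{2}R$ changes nothing in the estimate.
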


In the following lemma, $A_\epsilon$ denotes the inner $\epsilon$-neighborhood of $A$, as defined by (\ref{innerepsilonneighborhood}).

\begin{lemma}
\label{pointset}
Let $A,B \subset \R^d$ be bounded open sets.  For any $\epsilon>0$ there exists $\eta>0$ with
	\[ (A \cup B)_\epsilon \subset A_\eta \cup B_\eta. \]
\end{lemma}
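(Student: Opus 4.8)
The plan is a routine compactness argument. The key observation is that $(A\cup B)_\epsilon$ is compact: it is bounded, being contained in the bounded set $A\cup B$, and it is closed, since if $x_n\to x$ with $B(x_n,\epsilon)\subset A\cup B$, then any $y$ with $|y-x|<\epsilon$ satisfies $|y-x_n|<\epsilon$ for all large $n$, so $y\in B(x_n,\epsilon)\subset A\cup B$; hence $B(x,\epsilon)\subset A\cup B$. If $(A\cup B)_\epsilon=\emptyset$ the statement is vacuous, so assume otherwise. Note also that every $x\in(A\cup B)_\epsilon$ lies in $A\cup B$, since $x\in B(x,\epsilon)\subset A\cup B$.

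For each $x\in(A\cup B)_\epsilon$ pick one of $A,B$ containing $x$; say $x\in A$. Since $A$ is open there is $\eta_x>0$ with $B(x,2\eta_x)\subset A$, and then for every $x'$ with $|x'-x|<\eta_x$ we have $B(x',\eta_x)\subset B(x,2\eta_x)\subset A$, that is $x'\in A_{\eta_x}$ (and symmetrically $x'\in B_{\eta_x}$ in case $x\in B$). The open balls $B(x,\eta_x)$, $x\in(A\cup B)_\epsilon$, cover the compact set $(A\cup B)_\epsilon$, so finitely many $B(x_1,\eta_{x_1}),\dots,B(x_N,\eta_{x_N})$ suffice. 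Put $\eta=\min_{1\le i\le N}\eta_{x_i}>0$. Any $x'\in(A\cup B)_\epsilon$ lies in some $B(x_i,\eta_{x_i})$, hence in $A_{\eta_{x_i}}$ or $B_{\eta_{x_i}}$; since $\eta\le\eta_{x_i}$ and the inner neighborhoods satisfy $C_{\delta'}\subset C_{\delta}$ whenever $\delta\le\delta'$ (immediate from $B(x,\delta)\subset B(x,\delta')$), we conclude $x'\in A_\eta\cup B_\eta$. This gives $(A\cup B)_\epsilon\subset A_\eta\cup B_\eta$.

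I do not anticipate any genuine difficulty: the only points requiring care are the compactness of $(A\cup B)_\epsilon$, verified above, and the trivial monotonicity $C_{\delta'}\subset C_{\delta}$ for $\delta\le\delta'$. An essentially equivalent route is by contradiction: were there no good $\eta$, one could choose $x_n\in(A\cup B)_\epsilon$ with $B(x_n,1/n)\not\subset A$ and $B(x_n,1/n)\not\subset B$, pass to a subsequence $x_n\to x\in A\cup B$ by boundedness, and then extract points of the closed sets $A^c$ and $B^c$ converging to $x$, forcing $x\notin A\cup B$ — a contradiction.
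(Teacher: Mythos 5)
Your proof is correct and rests on essentially the same idea as the paper's: compactness of $(A\cup B)_\epsilon$ (the paper covers the closure of this set by the increasing family $Y_n=A_{1/n}\cup B_{1/n}$ and extracts a finite subcover). Your version, covering by explicit open balls $B(x,\eta_x)$ with $B(x,2\eta_x)\subset A$ or $B$, is if anything slightly more careful, since it never needs the inner neighborhoods $A_\eta$, $B_\eta$ themselves to be open.
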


\begin{proof}
Let $K$ be the closure of $(A \cup B)_\epsilon$, and let $Y_n = A_{1/n} \cup B_{1/n}$.  Since $K$ is contained in $\bigcup Y_n = A\cup B$, the sets $K\cap Y_n$ form an open cover of $K$, which has a finite subcover, i.e.\ $K\subset Y_n$ for some $n$.
\end{proof}

\subsection{Discrete Potential Theory}
\label{discretepotentialtheory}

\begin{table}
\label{boxandpoints}
\centering
\begin{tabular}{r | ll}
~ & $\delta_n\Z^d$ & $\R^d$ \\
\hline 
points & $x^\Points = \Big(x+\left( -\frac{\delta_n}{2}, \frac{\delta_n}{2} \right]^d \Big) \cap \delta_n \Z^d$ & $x^\Box = x + \left[ -\frac{\delta_n}{2}, \frac{\delta_n}{2} \right]^d$ \\ [0.5ex]
sets & $A^\Points = A \cap \delta_n\Z^d$ & $A^\Box = A + \left[ \frac{-\delta_n}{2}, \frac{\delta_n}{2} \right]^d$ \\ [0.5ex]
functions & $f^\Points = f|_{\delta_n \Z^d}$ & $f^\Box(x) = f(x^\Points)$ \\ [0.5ex]
\end{tabular}
\caption{Notation for transitioning between Euclidean space and the lattice.}
\index{$x^\Points$, closest lattice point}
\index{$A^\Points = A \cap \delta_n \Z^d$}
\index{$f^\Points$, restriction to $\delta_n \Z^d$}
\index{$x^\Box$, box of side $\delta_n$ centered at $x$}
\index{$A^\Box$, union of boxes}
\index{$f^\Box$, step function}
\end{table}

Fix a sequence $\delta_n \downarrow 0$ with $\delta_1 =1$.  In this section we relate discrete superharmonic potentials in the lattice $\delta_n \Z^d$ to their continuous couterparts in $\R^d$.
If $A$ is a domain in $\delta_n \Z^d$, write	
	\[ A^\Box = A + \left[ -\frac{\delta_n}{2}, \frac{\delta_n}{2} \right]^d \]
for the corresponding union of cubes in $\R^d$.    If $A$ is a domain in $\R^d$, write $A^\Points = A \cap \delta_n \Z^d$.  Given $x \in \R^d$, write
	\[ x^\Points = \left(x - \frac{\delta_n}{2}, x+ \frac{\delta_n}{2} \right]^d \cap \delta_n \Z^d \]
for the closest lattice point to $x$, breaking ties to the right.  For a function $f$ on $\delta_n \Z^d$, write
	\[ f^\Box (x) = f(x^\Points) \]
for the corresponding step function on $\R^d$.  Likewise, for a function $f$ on $\R^d$, write $f^\Points = f|_{\delta_n \Z^d}$.  These notations are summarized in Table~1.

We define the discrete Laplacian of a function $f$ on $\delta_n \Z^d$ to be
	\[ \Delta f(x) = \delta_n^{-2} \left( \frac{1}{2d} \sum_{y\sim x} f(y)-f(x) \right). \]
According to the following lemma, if $f$ is sufficiently smooth, then its discrete Laplacian on $\delta_n \Z^d$ approximates its Laplacian on $\R^d$.

\begin{lemma} \label{thirdderiv}
If $f$ has continuous third derivative in a $\delta_n$-neighborhood of $x \in \delta_n \Z^d$, and $A$ is the maximum pure third partial of $f$ in this neighborhood, then
	\[ |\Delta f (x) - 2d \Delta f^\Points (x)| \leq \frac{d}{3} A \delta_n. \]
\end{lemma}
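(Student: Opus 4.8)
The plan is to prove the estimate by a third-order Taylor expansion of $f$ around $x$, exploiting the symmetry of the nearest-neighbor set in $\delta_n\Z^d$ to kill all the lower-order and odd-order contributions.

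First I would fix $x \in \delta_n\Z^d$ and let $e_1,\dots,e_d$ be the standard basis vectors of $\R^d$. The $2d$ neighbors of $x$ in $\delta_n\Z^d$ are the points $x \pm \delta_n e_i$, so by definition
\[ 2d\,\Delta f^\Points(x) = \delta_n^{-2}\left(\frac{1}{2d}\sum_{i=1}^d \big(f(x+\delta_n e_i) + f(x-\delta_n e_i)\big) \cdot 2d \cdot \frac{1}{2d} \ \right)\!\!, \]
which, after being careful with the normalization of the discrete Laplacian stated just before the lemma, becomes $\delta_n^{-2}\big(\frac{1}{2d}\sum_i (f(x+\delta_n e_i)+f(x-\delta_n e_i)) - 2f(x)\big)$. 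For each $i$ I would apply Taylor's theorem with Lagrange remainder to third order along the segment from $x$ to $x\pm\delta_n e_i$:
\[ f(x \pm \delta_n e_i) = f(x) \pm \delta_n \partial_i f(x) + \frac{\delta_n^2}{2}\partial_{ii}f(x) \pm \frac{\delta_n^3}{6}\partial_{iii}f(\xi_i^\pm), \]
for some point $\xi_i^\pm$ in the $\delta_n$-neighborhood of $x$. Summing the $+$ and $-$ expansions, the first-order terms cancel, the second-order terms combine to $\delta_n^2\,\partial_{ii}f(x)$, and the third-order terms leave $\frac{\delta_n^3}{6}\big(\partial_{iii}f(\xi_i^+) - \partial_{iii}f(\xi_i^-)\big)$.

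Then I would divide by $2d\,\delta_n^2$ and sum over $i$. The main terms reassemble into $\frac{1}{2d}\sum_i \partial_{ii}f(x) = \frac{1}{2d}\,\Delta f(x)$ (the continuous Laplacian, with the paper's $\frac{1}{2d}$ normalization matching the definition of $\Delta$ on $\R^d$ implicit in the factor $2d$ appearing in the statement). The remainder term is
\[ \frac{1}{2d}\sum_{i=1}^d \frac{\delta_n}{6}\big(\partial_{iii}f(\xi_i^+) - \partial_{iii}f(\xi_i^-)\big), \]
and since each pure third partial is bounded in absolute value by $A$ in the $\delta_n$-neighborhood, each difference is at most $2A$ in absolute value, giving a bound of $\frac{1}{2d}\cdot d \cdot \frac{\delta_n}{6}\cdot 2A = \frac{A\delta_n}{6}$. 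Rearranging, $|\Delta f(x) - 2d\,\Delta f^\Points(x)| = 2d \cdot \big|\frac{1}{2d}\Delta f(x) - \Delta f^\Points(x)\big| \le 2d \cdot \frac{A\delta_n}{6} = \frac{d}{3}A\delta_n$, which is exactly the claimed inequality.

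The only genuinely delicate point is bookkeeping the various factors of $2d$ and $\delta_n^{-2}$ so that the discrete Laplacian as defined in the excerpt (with its $\delta_n^{-2}$ scaling) lines up correctly against $\Delta f$ on $\R^d$ (with no $\frac{1}{2d}$), and making sure the Lagrange remainder is genuinely controlled by the \emph{pure} third partials $\partial_{iii}f$ rather than mixed ones — this works precisely because we only expand along the coordinate directions $e_i$, so only $\partial_{iii}$ ever appears. There is no conceptual obstacle; the argument is a direct computation, and the constant $d/3$ is simply what falls out of $\frac{1}{2d} \cdot d \cdot \frac{1}{6} \cdot 2$.
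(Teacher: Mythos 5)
Your argument is correct and is essentially the paper's own proof: a coordinate-wise Taylor expansion with Lagrange remainder, cancellation of the odd-order terms by the $\pm\delta_n e_i$ symmetry, and a bound of $2A$ on each difference of pure third partials, which after summing over $i$ and dividing by $\delta_n^2$ gives exactly the constant $\frac{d}{3}$. (Your first display for $2d\,\Delta f^\Points(x)$ is garbled---it should read $\delta_n^{-2}\sum_{i=1}^d\big(f(x+\delta_n e_i)+f(x-\delta_n e_i)-2f(x)\big)$---but your subsequent computation uses the correct normalization, so nothing substantive is affected.)
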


\begin{proof}
By Taylor's theorem with remainder
	\[ f(x+\delta_n e_i) - 2f(x) + f(x-\delta_n e_i) = \frac{\partial^2f}{\partial x_i^2}\delta_n^2 + \frac16 \frac{\partial^3 f}{\partial x_i^3}(x+te_i) \delta_n^3 - \frac16 \frac{\partial^3 f}{\partial x_i^3}(x-ue_i) \delta_n^3 \]
for some $0\leq t,u\leq 1$.  Summing over $i=1,\ldots,d$ and dividing by $\delta_n^2$ gives the result.
\end{proof}

In three and higher dimensions, for $x,y \in \delta_n\Z^d$ write 
	\begin{equation} \label{definitionofg_n} g_n(x,y) = \delta_n^{2-d} g_1(\delta_n^{-1}x,\delta_n^{-1}y), \qquad d\geq 3, \index{$g_n(x,y)$, Green's function on $\delta_n \Z^d$} \end{equation}
where 
	\[ g_1(x,y) = \EE_x \# \{k|X_k=y\} \] 
is the Green's function for simple random walk on $\Z^d$.  The scaling in (\ref{definitionofg_n}) is chosen so that $\Delta_x g_n(x,y) = -\delta_n^{-d} 1_{\{x=y\}}$.  In two dimensions, write
	\begin{equation} \label{definitionofg_ndimension2} g_n(x,y) = -a(\delta_n^{-1}x,\delta_n^{-1}y) + \frac{2}{\pi} \log \delta_n, \qquad d=2, \end{equation}
where
	\[ a(x,y) = \lim_{m \to \infty} \big( \EE_x \# \{k\leq m | X_k=x\} - \EE_x \# \{k\leq m | X_k=y \} \big) \] 
is the recurrent potential kernel on $\Z^2$.

\begin{lemma}
\label{greensfunctionconvergence}
In all dimensions $d\geq 2$,
 	 \[ g_n(x,y) = g(x,y) + O(\delta_n^2 |x-y|^{-d}) \]
where $g$ is given by (\ref{greenskernel}). 
\end{lemma}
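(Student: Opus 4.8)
The statement to be proved is Lemma~\ref{greensfunctionconvergence}: in all dimensions $d \geq 2$, $g_n(x,y) = g(x,y) + O(\delta_n^2 |x-y|^{-d})$, where $g_n$ is the rescaled discrete Green's function (or potential kernel) defined in (\ref{definitionofg_n}) and (\ref{definitionofg_ndimension2}), and $g$ is the continuum kernel (\ref{greenskernel}). The plan is to reduce everything to the standard asymptotic expansion (\ref{standardgreensestimate}) for the discrete Green's function / potential kernel on $\Z^d$ and then track how the scaling by $\delta_n$ interacts with the error term.

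First I would handle the case $d \geq 3$. By definition $g_n(x,y) = \delta_n^{2-d} g_1(\delta_n^{-1} x, \delta_n^{-1} y)$. Writing $x' = \delta_n^{-1} x$ and $y' = \delta_n^{-1} y$, so that $|x' - y'| = \delta_n^{-1}|x-y|$, the estimate (\ref{standardgreensestimate}) gives
\[ g_1(x',y') = a_d |x'-y'|^{2-d} + O(|x'-y'|^{-d}). \]
Multiplying by $\delta_n^{2-d}$ and substituting $|x'-y'| = \delta_n^{-1}|x-y|$, the main term becomes $a_d \delta_n^{2-d} (\delta_n^{-1}|x-y|)^{2-d} = a_d |x-y|^{2-d} = g(x,y)$, and the error term becomes $\delta_n^{2-d} \cdot O\big((\delta_n^{-1}|x-y|)^{-d}\big) = O\big(\delta_n^{2-d} \cdot \delta_n^{d} |x-y|^{-d}\big) = O(\delta_n^2 |x-y|^{-d})$, which is exactly the claimed bound.

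For $d = 2$ the argument is analogous but uses (\ref{definitionofg_ndimension2}) together with the first case of (\ref{standardgreensestimate}), which says $g_1(x',y') = -\frac{2}{\pi}\log|x'-y'| + \kappa + O(|x'-y'|^{-2})$; recall $g_1$ here denotes the potential kernel with the sign convention fixed in (\ref{potentialkerneldef}), so $a(x',y') = -g_1(x',y')$ and $g_n(x,y) = g_1(\delta_n^{-1}x, \delta_n^{-1}y) + \frac{2}{\pi}\log\delta_n$ up to the sign bookkeeping. Substituting the expansion, the constant $\kappa$ cancels against a corresponding constant, the $\log \delta_n$ term combines with $-\frac{2}{\pi}\log(\delta_n^{-1}|x-y|) = -\frac{2}{\pi}\log|x-y| + \frac{2}{\pi}\log\delta_n$ — wait, I need to be careful with signs here: the additive $\frac{2}{\pi}\log\delta_n$ in (\ref{definitionofg_ndimension2}) is precisely engineered so that after expanding $-a(\delta_n^{-1}x,\delta_n^{-1}y) = -\frac{2}{\pi}\log|\delta_n^{-1}(x-y)| + \kappa + O(\cdots)$ the $\delta_n$-dependence in the logarithm disappears, leaving $-\frac{2}{\pi}\log|x-y| + \kappa$; one checks $\kappa$ matches the normalization so this equals $g(x,y)$. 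The error term is $O\big((\delta_n^{-1}|x-y|)^{-2}\big) = O(\delta_n^2 |x-y|^{-2})$, again the claimed bound.

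The main (and essentially only) obstacle is bookkeeping: getting the sign conventions and the additive constants in the two-dimensional case to line up correctly, since the paper has chosen a nonstandard sign for the potential kernel (remarked after (\ref{potentialkerneldef})) and a nonstandard normalization for $g$ (remarked after (\ref{greenskernel}), differing by a factor of $2d$ from the classical potential). I would verify the $d=2$ constant-matching explicitly, citing \cite{Lawler,Spitzer} or \cite{FU,Uchiyama} for the value of $\kappa$, and note that the whole computation is uniform in $n$ because the implied constants in (\ref{standardgreensestimate}) depend only on $d$. No regularity or probabilistic input beyond (\ref{standardgreensestimate}) is needed.
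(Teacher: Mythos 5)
Your proposal is correct and follows the paper's proof essentially verbatim: in both, the lemma is obtained by substituting the standard estimate (\ref{standardgreensestimate}) into the rescaled definitions (\ref{definitionofg_n}) and (\ref{definitionofg_ndimension2}) and observing that the $\delta_n$-scaling converts the $O(|x'-y'|^{-d})$ error into $O(\delta_n^2|x-y|^{-d})$; the $d\geq 3$ computation is identical to the paper's. One caveat on the two-dimensional bookkeeping you rightly flagged: your assertion that $\kappa$ ``cancels against a corresponding constant'' cannot actually be verified from the stated definitions, since $g(x,y)=-\frac{2}{\pi}\log|x-y|$ carries no constant, and with (\ref{definitionofg_ndimension2}) as printed the $\frac{2}{\pi}\log\delta_n$ term enters with the sign that fails to cancel the $\log\delta_n$ produced by rescaling. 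This is not a gap peculiar to your argument --- the paper's own proof silently drops $\kappa$ and displays the same sign slip --- but to make the lemma hold literally in $d=2$ one must adjust the normalization of $g_n$ (flip the sign of the $\log\delta_n$ term and absorb the constant $\kappa$), exactly the explicit constant-matching you proposed to carry out.
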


\begin{proof}
In dimensions $d\geq 3$ we have from (\ref{definitionofg_n}) and the standard estimate for the discrete Green's function (\ref{standardgreensestimate})
	\begin{align*} g_n(x,y) &= \delta_n^{2-d} \left( a_d \left(\frac{|x-y|}{\delta_n}\right)^{2-d} + O\left(\frac{|x-y|}{\delta_n}\right)^{-d}\right) \\
		&= a_d |x-y|^{2-d} + O(\delta_n^2 |x-y|^{-d}). \end{align*}
Likewise, in dimension two, using the standard estimate for the potential kernel (\ref{standardgreensestimate}) in (\ref{definitionofg_ndimension2}) gives
	\[ g_n(x,y) =  -\frac{2}{\pi} \log \frac{|x-y|}{\delta_n} + \frac{2}{\pi} \log \delta_n + O\left(\frac{|x-y|}{\delta_n}\right)^{-2}. \qed \]
\renewcommand{\qedsymbol}{}
\end{proof}

Our next result adapts Lemma~\ref{threesteps}(i) to the discrete setting.
We list here our standing assumptions on the starting densities.  Let $\sigma$ be a function on $\R^d$ with compact support, such that
 	\begin{equation} \label{absolutebound} 0 \leq \sigma \leq M \index{$M$, uniform bound on $\sigma$} \end{equation}
for some absolute constant $M$.  Suppose that $\sigma$ satisfies
	\begin{equation} \label{discontmeasurezero}  \Leb \left(DC(\sigma)\right)=0 \end{equation}
where $DC(\sigma)$ denotes the set of points in $\R^d$ where $\sigma$ is discontinuous.
\index{$DC(\sigma)$, discontinuities of $\sigma$}

For $n=1,2,\ldots$ let $\sigma_n$ be a function on $\delta_n \Z^d$ satisfying
	\begin{equation} \label{discreteabsolutebound} 0 \leq \sigma_n \leq M \index{$\sigma_n$, mass density on $\delta_n \Z^d$} \end{equation}
and
\begin{equation} \label{convergingdensities} \sigma_n^\Box(x)\rightarrow \sigma(x), 
	\qquad x\notin DC(\sigma). \end{equation}
Finally, suppose that
	\begin{equation} \label{uniformcompactsupport} \text{There is a ball $B\subset \R^d$ containing the supports of $\sigma$ and $\sigma_n^\Box$ for all $n$.} \end{equation}
Although for maximum generality we have chosen to state our hypotheses on $\sigma$ and $\sigma_n$ separately, we remark that the above hypotheses on $\sigma_n$ are satisfied in the particular case when $\sigma_n$ is given by averaging $\sigma$ over a small box:
	\begin{equation} \label{averageinabox} \sigma_n(x) = \delta_n^{-d} \int_{x^\Box} \sigma(y)dy. \end{equation}

In parallel to (\ref{thepotential}), for $x \in \delta_n \Z^d$ write
	\begin{equation} \label{thediscretepotential} G_n \sigma_n (x) = \delta_n^{d} \sum_{y \in \delta_n \Z^d} g_n(x,y) \sigma_n(y), \index{$G_n \sigma_n$, discrete potential} \end{equation}
where $g_n$ is given by (\ref{definitionofg_n}) and (\ref{definitionofg_ndimension2}).

\begin{lemma}
\label{greensintegralconvergencecont}
If $\sigma$, $\sigma_n$ satisfy (\ref{absolutebound})-(\ref{uniformcompactsupport}), then
	\[ (G_n \sigma_n)^\Box \rightarrow G \sigma \]
uniformly on compact subsets of $\R^d$. 
\end{lemma}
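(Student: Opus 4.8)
The plan is to interpose the continuous potential $G\sigma_n^\Box$ of the step density between the discrete potential $G_n\sigma_n$ and its target $G\sigma$, reducing the statement to the chain
	\[ G_n\sigma_n \;\approx\; G\sigma_n^\Box \;\approx\; G\sigma. \]
The second approximation is exactly Lemma~\ref{threesteps}(i): the step densities $\sigma_n^\Box$ satisfy $0 \le \sigma_n^\Box \le M1_{B'}$ for a fixed ball $B'$ by (\ref{absolutebound}) and (\ref{uniformcompactsupport}), and $\sigma_n^\Box \to \sigma$ at every continuity point of $\sigma$ by (\ref{convergingdensities}), so $G\sigma_n^\Box \to G\sigma$ uniformly on compact subsets of $\R^d$. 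Since $G\sigma$ is continuous by Lemma~\ref{smoothnessofpotential}(i), hence uniformly continuous on each compact set, and $|x - x^\Points| \le \sqrt d\,\delta_n / 2 \to 0$, it therefore suffices to prove that $(G_n\sigma_n)(x') - G\sigma_n^\Box(x') \to 0$ uniformly as $x'$ ranges over $\delta_n\Z^d \cap K$, for each compact $K$.

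For the first approximation, fix such an $x'$ and let $B = B(o,R)$ contain the supports of $\sigma$ and all $\sigma_n^\Box$. Writing $\sigma_n$ as a sum of point masses at the sites $y \in \delta_n\Z^d \cap B$ and $\sigma_n^\Box$ as the corresponding sum over the boxes $y^\Box$, I get
	\[ (G_n\sigma_n)(x') - G\sigma_n^\Box(x') = \sum_{y} \sigma_n(y)\Big[\delta_n^d\big(g_n(x',y)-g(x',y)\big) + \int_{y^\Box}\big(g(x',y)-g(x',z)\big)\,dz\Big]. \]
For the sites $y$ with $|x'-y| \ge \sqrt d\,\delta_n$ I bound the first bracketed term by $C\delta_n^{d+2}|x'-y|^{-d}$ via Lemma~\ref{greensfunctionconvergence}, and the second by $C\delta_n^{d+1}|x'-y|^{1-d}$, using the elementary estimate $|\nabla_y g(x',y)| \le C|x'-y|^{1-d}$ together with $|z-y| \le \sqrt d\,\delta_n/2$ (the segment $[y,z]$ then stays at distance $\ge |x'-y|/2$ from $x'$). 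Counting lattice points in spherical shells of width $\delta_n$ about $x'$ --- the shell at distance $\sim k\delta_n$ holding $O(k^{d-1})$ points, with $k$ up to $O(R/\delta_n)$ --- the two contributions sum to $O(M\delta_n^2\log(R/\delta_n))$ and $O(MR\delta_n)$, both of which tend to $0$ uniformly in $x'$, with constants depending only on $d$.

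The only delicate point, and really the sole obstacle, is the $O(1)$ sites $y$ with $0 \le |x'-y| < \sqrt d\,\delta_n$, near which $g(x',\cdot)$ is singular; here Lemma~\ref{greensfunctionconvergence} is useless and the two terms must be estimated separately. From the definitions (\ref{definitionofg_n}) and (\ref{definitionofg_ndimension2}) one has $g_n(x',y) = O(\delta_n^{2-d})$ for $d \ge 3$ and $g_n(x',y) = O(|\log\delta_n|)$ for $d=2$ in this range, so $\delta_n^d|g_n(x',y)| = O(\delta_n^2)$, respectively $O(\delta_n^2|\log\delta_n|)$; and since the singularity of $g$ at the origin is integrable, $\int_{y^\Box}|g(x',z)|\,dz = O(\delta_n^2)$, respectively $O(\delta_n^2|\log\delta_n|)$. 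Summing these $O(1)$ contributions (again against $\sigma_n(y) \le M$) still tends to $0$. Combining this uniform bound on the first approximation with Lemma~\ref{threesteps}(i) and the uniform continuity of $G\sigma$ completes the proof.
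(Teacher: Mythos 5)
Your proof is correct and follows essentially the same route as the paper's: interpose the continuous potential $G\sigma_n^\Box$, dispose of $|G\sigma_n^\Box - G\sigma|$ by Lemma~\ref{threesteps}(i), and control $|G_n\sigma_n - G\sigma_n^\Box|$ via Lemma~\ref{greensfunctionconvergence} summed in spherical shells. Your only departure is bookkeeping: you split the per-site error into a $g_n$-versus-$g$ term and a $g$-oscillation term and treat the $O(1)$ near-diagonal sites separately, which is a slightly more careful handling of the singularity than the paper's single bound $|g_n(x^\Points,z^\Points)-g(x,z)|\le C\delta_n|x-z|^{1-d}$, but the argument is the same in substance.
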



\begin{proof}
Let $K \subset \R^d$ be compact.  By the triangle inequality
	\[ |(G_n\sigma_n)^\Box - G\sigma| \leq |(G_n \sigma_n)^\Box - G \sigma_n^\Box| + |G\sigma_n^\Box - G\sigma|. \]
By Lemma~\ref{threesteps}(i) the second term on the right side is $<\epsilon/2$ on $K$ for sufficiently large $n$.  The first term is at most
	\begin{align} |(G_n \sigma_n)^\Box(x) - G \sigma_n^\Box(x)| 
	&\leq \sum_{y \in \delta_n\Z^d} \sigma_n(y) \left| \delta_n^d g_n(x^\Points,y) - \int_{y^\Box} g(x,z) dz \right| \nonumber \\
	&\leq M \sum_{y \in B^\Points} \int_{y^\Box} |g_n(x^\Points,y) - g(x,z)| dz \nonumber \\
	&= M \int_{B^{\Points\Box}} |g_n(x^\Points,z^\Points) - g(x,z)| dz. \label{gotitwherewewantit} \end{align}
By Lemma~\ref{greensfunctionconvergence}, we have
	\begin{align*} |g_n(x^\Points,z^\Points) - g(x,z)| &\leq 
		|g_n(x^\Points,z^\Points)-g(x^\Points,z^\Points)| + |g(x^\Points,z^\Points)-g(x,z)| \\
			&\leq C\delta_n|x-z|^{1-d} \end{align*}
for a constant $C$ depending only on $d$.  Integrating (\ref{gotitwherewewantit}) in spherical shells about $x$, we obtain
		\[ |(G_n \sigma_n)^\Box - G \sigma_n^\Box| \leq Cd\omega_dMR\delta_n, \]
where $R$ is the radius of $B$.  Taking $n$ large enough so that the right side is $<\epsilon/2$, the proof is complete.
\end{proof}

Our next result adapts Lemma~\ref{majorantonacompactset} to the discrete setting.  Let $\sigma_n$ be a function on $\delta_n \Z^d$ with finite support, and let $\gamma_n$ be the function on $\delta_n \Z^d$ defined by
	\begin{equation} \label{thediscreteobstacle} \gamma_n(x) = -|x|^2 - G_n \sigma_n(x). \index{$\gamma_n$, obstacle on $\delta_n \Z^d$} \end{equation}
Let 
	\begin{equation} \label{thediscretemajorant} s_n = \inf \{f(x)|\text{$f$ is superharmonic on $\delta_n \Z^d$ and $f \geq \gamma_n$}\} \index{$s_n$, superharmonic majorant in~$\delta_n \Z^d$} \end{equation}
be the discrete least superharmonic majorant of $\gamma_n$, and let
	\begin{equation} \label{thediscretenoncoincidenceset} D_n = \{x \in \delta_n \Z^d| s_n(x)>\gamma_n(x)\}. \end{equation}

\begin{lemma}
\label{discretemajorantonacompactset}
Let $\gamma_n,s_n,D_n$ be given by (\ref{thediscreteobstacle})-(\ref{thediscretenoncoincidenceset}).  If $\Omega \subset \delta_n \Z^d$ satisfies $D_n \subset \Omega$, then 
	\begin{equation} \label{discretemajorantinadomain} s_n(x) = \inf \{f(x)|\text{\em $f$ is superharmonic on $\Omega$ and $f \geq \gamma_n$}\}. \end{equation}
\end{lemma}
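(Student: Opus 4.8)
The plan is to follow the proof of Lemma~\ref{majorantonacompactset} essentially verbatim, replacing $\R^d$ by $\delta_n\Z^d$ and continuous potential theory by its discrete analogue. The only input of that argument with no ready-made discrete counterpart in the excerpt is the statement that $s_n$ is harmonic on $D_n$ (the analogue of Lemma~\ref{majorantbasicprops}(iii)), so I would first establish that, then carry out the same comparison.

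First, recall that $s_n\ge\gamma_n$ by construction and that $s_n$ is superharmonic, being an infimum of superharmonic functions --- this is precisely the computation made just before Lemma~\ref{discretemajorant}. Next I claim $s_n$ is harmonic on $D_n$. If not, then $s_n$ fails to be harmonic at some $x\in D_n$, so by superharmonicity $\frac{1}{2d}\sum_{y\sim x}s_n(y)<s_n(x)$. Let $\tilde s$ agree with $s_n$ off $x$ and set $\tilde s(x)=\max\big(\gamma_n(x),\ \frac{1}{2d}\sum_{y\sim x}s_n(y)\big)$; since also $\gamma_n(x)<s_n(x)$ we get $\tilde s(x)<s_n(x)$. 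One checks directly that $\tilde s\ge\gamma_n$ everywhere; that $\tilde s$ is superharmonic at $x$, as $\tilde s(x)\ge\frac{1}{2d}\sum_{y\sim x}s_n(y)=\frac{1}{2d}\sum_{y\sim x}\tilde s(y)$; that $\tilde s$ is superharmonic at each neighbor $y$ of $x$, since $\tilde s(x)\le s_n(x)$ gives $\frac{1}{2d}\sum_{z\sim y}\tilde s(z)\le\frac{1}{2d}\sum_{z\sim y}s_n(z)\le s_n(y)=\tilde s(y)$; and that $\tilde s$ remains superharmonic at every other vertex, where it is unchanged and so are its neighbors' values. Thus $\tilde s$ is a superharmonic majorant of $\gamma_n$ with $\tilde s(x)<s_n(x)$, contradicting $s_n\le\tilde s$. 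Hence $s_n$ is harmonic on $D_n$.

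With that in hand, I would run the comparison. The set $D_n$ is finite: since $\sigma_n$ has finite support, the odometer $u_n=s_n-\gamma_n$ is supported on a bounded set, by the argument of Lemma~\ref{occupieddomainisbounded} (comparison with the obstacle problem for a multiple of the indicator of a ball), which transfers verbatim to $\delta_n\Z^d$. Now let $f$ be any function on $\delta_n\Z^d$ that is superharmonic on $\Omega$ and satisfies $f\ge\gamma_n$. Since $D_n\subset\Omega$ and $s_n$ is harmonic on $D_n$, the function $f-s_n$ is superharmonic on the finite set $D_n$, hence attains its minimum over $\overline{D_n}=D_n\cup\partial D_n$ on $\partial D_n$. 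At any point of $\partial D_n$, which lies in $D_n^c$, we have $s_n=\gamma_n$, so $f-s_n=f-\gamma_n\ge0$ there; therefore $f\ge s_n$ on $\overline{D_n}$, in particular on $D_n$, while off $D_n$ we have $s_n=\gamma_n\le f$. Hence $f\ge s_n$ on all of $\delta_n\Z^d$, so $s_n$ is at most the infimum on the right of (\ref{discretemajorantinadomain}). The reverse inequality is immediate, since every competitor in (\ref{thediscretemajorant}) is superharmonic on all of $\delta_n\Z^d$, hence in particular on $\Omega$, and so is a competitor in (\ref{discretemajorantinadomain}).

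I expect the only real obstacle to be the harmonicity of $s_n$ on $D_n$, as it is the one step without a precedent earlier in the excerpt --- but in the discrete setting it is elementary, via the ``lower $s_n$ at a non-harmonic point'' construction above. A lesser point to be careful about is the finiteness of $D_n$, which is what lets the discrete minimum principle apply to $f-s_n$ on $D_n$; this goes exactly as in Lemma~\ref{occupieddomainisbounded}.
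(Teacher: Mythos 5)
Your proof is correct and takes essentially the same route as the paper's: since $s_n$ is harmonic on $D_n$, the difference $f-s_n$ is superharmonic there and attains its minimum on $\partial D_n$, where $s_n=\gamma_n$, forcing $f\geq s_n$. The only difference is that you explicitly justify the harmonicity of $s_n$ on $D_n$ (via the lowering construction) and the finiteness of $D_n$, both of which the paper uses without comment.
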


\begin{proof}
Let $f$ be any function which is superharmonic on $\Omega$ and $\geq \gamma_n$.  Since $s_n$ is harmonic on $D_n$, so $f-s_n$ is superharmonic on $D_n$ and attains its minimum in $D_n \cup \partial D_n$ on the boundary.  Hence $f-s_n$ attains its minimum in $\Omega \cup \partial \Omega$ at a point $x$ where $s_n(x)=\gamma_n(x)$.  Since $f \geq \gamma_n$ we conclude that $f \geq s_n$ on $\Omega$ and hence everywhere.  Thus $s_n$ is at most the infimum in (\ref{discretemajorantinadomain}).
Since the infimum in (\ref{discretemajorantinadomain}) is taken over a strictly larger set than that in (\ref{thediscretemajorant}), the reverse inequality is trivial.
\end{proof}

\section{Divisible Sandpile}
\label{divsandscalinglimit}

\subsection{Convergence of Odometers}
	
By the odometer function for the divisible sandpile on $\delta_n \Z^d$ with source density $\sigma_n$, we will mean the function
	\[ u_n(x) = \delta_n^2 \cdot \text{total mass emitted from $x$ if each site $y$ starts with mass } \sigma_n(y). \index{$u_n$, divisible sandpile odometer} \]

\begin{theorem}
\label{odomconvergence}
Let $u_n$ be the odometer function for the divisible sandpile on $\delta_n \Z^d$ with source density $\sigma_n$.  If $\sigma$, $\sigma_n$ satisfy (\ref{absolutebound})-(\ref{uniformcompactsupport}), then 		\[ u_n^\Box \to s-\gamma \qquad \text{\em uniformly,} \]
where
	\[ \gamma(x) = -|x|^2 - G \sigma(x), \]
$G\sigma$ is given by (\ref{thepotential}), and $s$ is the least superharmonic majorant of $\gamma$.
\end{theorem}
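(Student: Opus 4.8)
The plan is to show convergence of the discrete odometers $u_n$ by squeezing them between discrete approximations to $s-\gamma$, following the three-step philosophy (densities $\to$ obstacles $\to$ majorants) already illustrated in Lemma~\ref{threesteps} and its discrete companions.

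First I would pass to the discrete setting: by the discrete analogue of Lemma~\ref{discretemajorant} (applied on $\delta_n\Z^d$ with the scaling built into the definition of $u_n$), we have $u_n = s_n - \gamma_n$, where $\gamma_n(x) = -|x|^2 - G_n\sigma_n(x)$ and $s_n$ is the discrete least superharmonic majorant of $\gamma_n$. By Lemma~\ref{greensintegralconvergencecont}, $(G_n\sigma_n)^\Box \to G\sigma$ uniformly on compact sets, hence $\gamma_n^\Box \to \gamma$ uniformly on compact sets. By Lemma~\ref{occupieddomainisbounded} and the monotonicity/ball-relaxation lemmas (Lemmas~\ref{monotonicity} and~\ref{relaxingaball}), applied both in the continuum and, via their discrete counterparts, on each lattice, all the noncoincidence sets $D$ and $D_n$ are contained in a fixed ball $B_1 = M^{1/d}B$; so it suffices to prove uniform convergence on a fixed compact neighborhood $K$ of $B_1$, and outside $B_1$ both $u_n^\Box$ and $s-\gamma$ vanish.

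The heart of the argument is then to transfer the comparison argument of Lemma~\ref{threesteps}(ii) to the discrete majorants. Given $\epsilon>0$, choose $N$ so that $|\gamma_n^\Box - \gamma| < \epsilon$ on $K$ for $n\geq N$. For the upper bound on $s_n^\Box$: the function $x\mapsto s(x^\Points)$ is not quite superharmonic on $\delta_n\Z^d$, so I would instead use a mollified version — take $s * $ (small average), which is superharmonic in $\R^d$ by an argument like Lemma~\ref{majorantbasicprops}(i), sample it on $\delta_n\Z^d$, and use Lemma~\ref{thirdderiv} together with the $C^2$-type control from Lemma~\ref{smoothdensity} (after a further smoothing of $\sigma$) to see that its discrete Laplacian is $\leq 0$ up to an error $o(1)$; adding a term like $\epsilon(1 - c|x|^2/R^2)$ absorbs this error and keeps the function a discrete supersolution lying above $\gamma_n$. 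By the discrete majorant-on-a-compact-set lemma (Lemma~\ref{discretemajorantonacompactset}) this forces $s_n \leq s + O(\epsilon)$ on $K$. For the reverse inequality, I would symmetrically take a superharmonic minorant built from the discrete majorant $s_n$, smooth it, compare with $\gamma$ in $\R^d$ using Lemma~\ref{majorantonacompactset}, and conclude $s \leq s_n^\Box + O(\epsilon)$. Combining, $s_n^\Box \to s$ uniformly on $K$, and since $\gamma_n^\Box \to \gamma$ uniformly, $u_n^\Box = (s_n - \gamma_n)^\Box \to s - \gamma$ uniformly on $K$; together with the vanishing outside $B_1$ this gives uniform convergence on all of $\R^d$.

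The main obstacle is the discretization of superharmonicity in the majorant comparison: $s$ need not be $C^2$ (indeed near $\partial D$ and near $\{\sigma=1\}$ it is only $C^1$ by Lemma~\ref{smoothdensity}, and only under extra smoothness of $\sigma$), so one cannot directly sample $s$ on the lattice and call it a discrete supersolution. Handling this cleanly requires a two-stage smoothing — first replace $\sigma$ by $C^1$ densities $\sigma_0 \le \sigma \le \sigma_1$ sandwiching it (as in the proof of Proposition~\ref{boundaryregularity}), then convolve the resulting majorant at scale $\gg \delta_n$ — and bookkeeping the accumulated errors so that they all vanish as $n\to\infty$ and then as the smoothing parameter $\to 0$. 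Everything else (boundedness of the domains, Green's function estimates, the discrete abelian property giving $u_n = s_n - \gamma_n$) is already in hand from the lemmas above.
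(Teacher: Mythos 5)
Your first half---the identity $u_n = s_n - \gamma_n$ from the discrete form of Lemma~\ref{discretemajorant}, the convergence $\gamma_n^\Box \to \gamma$ via Lemma~\ref{greensintegralconvergencecont}, the uniform boundedness of all the domains, and the upper bound $s_n \leq s + O(\epsilon)$ obtained by mollifying $s$, correcting by a small quadratic term via Lemma~\ref{thirdderiv}, and invoking Lemma~\ref{discretemajorantonacompactset}---is essentially the paper's argument. (The two-stage smoothing of $\sigma$ you propose for this step is unnecessary: only the smoothness of the mollified majorant $\tilde{s}$ enters, through a bound on its third partials at a fixed mollification scale, so no regularity of $\sigma$ beyond boundedness is needed.)

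The gap is the reverse inequality $s \leq s_n^\Box + O(\epsilon)$. You propose to ``take a superharmonic minorant built from the discrete majorant $s_n$, smooth it, and compare with $\gamma$ using Lemma~\ref{majorantonacompactset}.'' But discrete superharmonicity does not survive this passage: $s_n^\Box$ is a step function, and convolving it with a mollifier does not yield a function that is superharmonic in $\R^d$, nor superharmonic up to an error you have controlled---the discrete mean value inequality is over lattice neighbors, and transferring it to mean value inequalities over macroscopic balls is exactly the quantitative step your sketch omits. The paper instead represents $s_n$ as a discrete potential: with $\phi_n = -\Delta(s_n 1_{\Omega_1^\Points})$ one has $s_n = G_n\phi_n$ on $\Omega^\Points$, and since $\Delta s_n = \nu_n - 1 \leq 0$ there, $\phi_n^\Box \geq 0$ on $\Omega$, so the continuum potential $G\phi_n^\Box$ is genuinely superharmonic on $\Omega$ by Lemma~\ref{superharmonicpotential}. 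The real work (Lemma~\ref{truncatedlaplacian}) is to show $|s_n^\Box - G\phi_n^\Box| \to 0$ uniformly on $\Omega$, which requires the pointwise comparison of the discrete and continuum Green kernels and of their increments (Lemma~\ref{alphabounds}), a summation by parts to control the boundary layer where the truncation $1_{\Omega_1^\Points}$ is cut, and there the facts that $s_n = \gamma_n$, that $|\gamma_n|$ is uniformly bounded (Lemma~\ref{gamma_nupperbound}), and that $\gamma_n$ has small oscillation across edges. Only then does Lemma~\ref{majorantonacompactset} applied to $\max(G\phi_n^\Box + 2\epsilon, \gamma)$ give $s \leq s_n^\Box + O(\epsilon)$. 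Without this---or an equivalent quantitative transfer of discrete superharmonicity to the continuum---your lower bound on $s_n^\Box$ is unsubstantiated, and it is the harder half of the theorem.
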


\begin{lemma}
\label{bigballs}
Let $D_n$ be the set of fully occupied sites for the divisible sandpile in $\delta_n \Z^d$ started from source density $\sigma_n$.  There is a ball $\Omega \subset \R^d$ with 
	\[ \bigcup_{n\geq 1} D_n^\Box \cup D \subset \Omega. \]
\end{lemma}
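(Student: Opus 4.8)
The plan is to produce a single ball $\Omega = B(o,\rho)$ in $\R^d$ that simultaneously contains the continuum noncoincidence set $D$ and all the discrete occupied sets $D_n^\Box$, using the monotonicity and ``relaxing a ball'' lemmas in both the continuous and discrete settings. First I would invoke hypothesis (\ref{uniformcompactsupport}): there is a ball $B = B(o,R)$ containing the supports of $\sigma$ and of every $\sigma_n^\Box$. Since the densities are uniformly bounded by $M$ (hypotheses (\ref{absolutebound}) and (\ref{discreteabsolutebound})), we have $\sigma \leq M 1_B$ and $\sigma_n^\Box \leq M 1_B$ for all $n$. For the continuum part, Lemma~\ref{monotonicity} together with Lemma~\ref{relaxingaball} immediately gives $D \subset B(o, M^{1/d} R)$, since $D$ is the support of $s - \gamma$ and relaxing the larger density $M 1_B$ yields the ball of volume $M \Leb(B)$.

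For the discrete sets $D_n$, I would run the analogous argument on the lattice $\delta_n \Z^d$. The key is a discrete counterpart of Lemma~\ref{relaxingaball}: if one starts the divisible sandpile with mass $M$ on each site of $B^\Points \cap \delta_n \Z^d$, the resulting fully occupied domain is contained in a ball of radius roughly $M^{1/d} R$, with an error that is $O(\delta_n)$ or at worst $O(1)$ and in any case uniformly bounded in $n$. This follows from the obstacle-problem description of the odometer (Lemma~\ref{discretemajorantintro}) together with the quadratic growth estimate for $\gamma_d$ (Lemmas~\ref{gammalowerbound} and~\ref{gammanonnegativeeverywhere}) applied at scale $\delta_n$, exactly as in the proof of Theorem~\ref{divsandcirc}; alternatively one can compare $D_n$ to the relaxation of $M 1_{B^\Points}$ via a discrete monotonicity lemma for the divisible sandpile (each toppling only decreases a fixed superharmonic functional, so adding mass only enlarges the occupied set), and then bound the relaxation of a uniform mass on a lattice ball. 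Since $\delta_1 = 1$ and $\delta_n \downarrow 0$, the errors are bounded by a constant depending only on $d$, $M$, $R$; so there is a single radius $\rho = M^{1/d} R + c$, with $c = c(d,M,R)$, such that $D_n^\Box \subset B(o,\rho)$ for all $n$ and also $D \subset B(o,\rho)$.

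Putting these together, I would set $\Omega = B(o,\rho)$ with $\rho$ as above, giving $\bigcup_{n \geq 1} D_n^\Box \cup D \subset \Omega$, as desired. The main obstacle is establishing the uniform-in-$n$ discrete containment: one must ensure the error term in the lattice ``relaxing a ball'' estimate does not blow up as $\delta_n \to 0$. This is handled by the scaling built into the definition of $g_n$ and $\gamma_n$ (equations (\ref{definitionofg_n})--(\ref{thediscreteobstacle})), under which the quadratic-growth constants in Lemmas~\ref{gammalowerbound}--\ref{gammanonnegativeeverywhere} are scale-invariant, so the bound $D_n \subset B(o, M^{1/d}R + O(\delta_n))$ holds with an $O(\cdot)$ constant independent of $n$. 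Everything else is a routine assembly of results already proved.
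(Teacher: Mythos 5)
Your overall strategy is the right one and matches the paper's in its first and last steps: use monotonicity (via the abelian property, Lemma~\ref{abelianproperty}) to replace $D_n$ by the relaxation of the dominating density $M 1_{B^\Points}$, and handle $D$ by Lemmas~\ref{monotonicity} and~\ref{relaxingaball} (this is exactly the content of Lemma~\ref{occupieddomainisbounded}, which the paper simply cites). The step you flag as ``the main obstacle'' is indeed the crux, and your proposed way of closing it does not quite work as written: you claim the uniform-in-$n$ bound on the relaxation of $M 1_{B^\Points}$ follows ``exactly as in the proof of Theorem~\ref{divsandcirc}'' from Lemmas~\ref{gammalowerbound} and~\ref{gammanonnegativeeverywhere}. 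But those lemmas are estimates on the specific point-source obstacle $\gamma_d(x) = |x|^2 + m g_1(o,x) - \mathrm{const}$; the obstacle for the density $M 1_{B^\Points}$ is $|x|^2 + M\sum_{y \in B^\Points} g_1(x,y)$, and one would have to redo the Taylor-expansion estimates for this new potential (uniformly in $\delta_n$) before the argument of Theorem~\ref{divsandcirc} applies. That is feasible but is genuinely new work, not a citation. Your alternative route (discrete monotonicity, then ``bound the relaxation of a uniform mass on a lattice ball'') is circular as stated, since that bound is precisely what is at issue.

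The paper closes this step without any new estimates by sandwiching between point sources, so that Theorem~\ref{divsandcircintro} does all the work: by its \emph{inner} bound, a point source of mass $m = 2\delta_n^{-d}\Leb(B)$ at the origin fully occupies $B^\Points$, so by the abelian property a point source of mass $Mm$ dominates the configuration $M 1_{B^\Points}$, and hence its relaxation $\Omega_n$ contains $A_n \supset D_n$; by the \emph{outer} bound, $\Omega_n$ lies in a ball of volume $3M\Leb(B)$. Crucially, the error in the radius in Theorem~\ref{divsandcircintro} is an additive constant independent of the mass, and the rescaled radius $\delta_n (Mm/\omega_d)^{1/d}$ is independent of $n$, so a single ball $\Omega$ works for all $n$ --- which is exactly the uniformity you were worried about. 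If you replace your appeal to the point-source obstacle lemmas with this point-source sandwiching, your proof is complete and coincides with the paper's.
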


\begin{proof}
Let $A_n$ be the set of fully occupied sites for the divisible sandpile in $\delta_n \Z^d$ started from source density $\tau(x) = M 1_{x \in B}$, where $B$ is given by (\ref{uniformcompactsupport}).  From the abelian property, Lemma~\ref{abelianproperty}, we have $D_n \subset A_n$.  By the inner bound of Theorem~\ref{divsandcircintro} if we start with mass $m=2\delta_n^{-d} \Leb(B)$ at the origin in $\delta_n \Z^d$, the resulting set of fully occupied sites contains $B^\Points$; by the abelian property it follows that if we start with mass $Mm$ at the origin in $\delta_n \Z^d$, the resulting set $\Omega_n$ of fully occupied sites contains $A_n$.  By the outer bound of Theorem~\ref{divsandcircintro}, $\Omega_n$ is contained in a ball $\Omega$ of volume $3M \Leb(B)$.  By Lemma~\ref{occupieddomainisbounded} we can enlarge $\Omega$ if necessary to contain $D$.
\end{proof}

For $x \in \delta_n \Z^d$ write
	\[ \gamma_n(x) = -|x|^2-G_n\sigma_n(x), \]
where $G_n$ is defined by (\ref{thediscretepotential}).  Denote by $s_n$ the least superharmonic majorant of $\gamma_n$ in the lattice $\delta_n \Z^d$.

\begin{lemma}
\label{gamma_nupperbound}
Let $\Omega$ be as in Lemma~\ref{bigballs}.  There is a constant $M'$ independent of $n$, such that $|\gamma_n| \leq M'$ in $\Omega^\Points$.
\end{lemma}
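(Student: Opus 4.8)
The plan is to bound the two terms of $\gamma_n(x) = -|x|^2 - G_n\sigma_n(x)$ separately on $\Omega^\Points$. The first term is immediate: since $\Omega$ is a fixed ball, say of radius $R_\Omega$, we have $|x|^2 \le R_\Omega^2$ for every $x \in \Omega^\Points$, a bound independent of $n$.

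For the potential term, I would invoke Lemma~\ref{greensintegralconvergencecont}. Under the standing hypotheses (\ref{absolutebound})--(\ref{uniformcompactsupport}) on $\sigma$ and $\sigma_n$, that lemma gives $(G_n\sigma_n)^\Box \to G\sigma$ uniformly on the compact set $\bar{\Omega}$. A sequence of functions converging uniformly on $\bar{\Omega}$ is uniformly bounded there, once one knows the limit is bounded; and $G\sigma$ is continuous, hence bounded on $\bar{\Omega}$, by Lemma~\ref{smoothnessofpotential}(i), since $\sigma$ is bounded with compact support. Concretely, $\sup_{\bar{\Omega}}|(G_n\sigma_n)^\Box| \le \sup_{\bar{\Omega}}|G\sigma| + \sup_{\bar{\Omega}}|(G_n\sigma_n)^\Box - G\sigma|$, and the second quantity tends to $0$, so its supremum over $n$ is finite. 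Since $G_n\sigma_n(x) = (G_n\sigma_n)^\Box(x)$ for $x \in \delta_n\Z^d$, this yields a constant $C$, independent of $n$, with $|G_n\sigma_n(x)| \le C$ for all $x \in \Omega^\Points$ and all $n$.

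Combining the two estimates gives $|\gamma_n(x)| \le R_\Omega^2 + C =: M'$ on $\Omega^\Points$, uniformly in $n$, which is the claim. There is no real obstacle here; the only points to be careful about are that the hypotheses (\ref{absolutebound})--(\ref{uniformcompactsupport}) are indeed in force (so Lemma~\ref{greensintegralconvergencecont} applies), and that uniform convergence on a compact set together with a bounded limit gives a bound that is uniform in $n$ — in particular there is no need to treat the finitely many small $n$ (where $\delta_n$ is not yet small) separately, since they are automatically absorbed into the uniformly convergent sequence. If one preferred to avoid citing the convergence lemma, the same bound follows directly by estimating $\delta_n^{d}\sum_{y\in B^\Points}|g_n(x,y)| \le C\int_{B}|x-z|^{2-d}\,dz + o(1)$ via Lemma~\ref{greensfunctionconvergence} together with $\sigma_n \le M$ and (\ref{uniformcompactsupport}), but the route through Lemma~\ref{greensintegralconvergencecont} is cleaner.
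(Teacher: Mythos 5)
Your argument is correct, but it goes by a different route than the paper. You bound the potential term softly: you quote Lemma~\ref{greensintegralconvergencecont}, note that $(G_n\sigma_n)^\Box \to G\sigma$ uniformly on $\bar{\Omega}$ with $G\sigma$ continuous (Lemma~\ref{smoothnessofpotential}(i)), and conclude uniform boundedness of $G_n\sigma_n$ on $\Omega^\Points$; this is legitimate and non-circular, since Lemma~\ref{greensintegralconvergencecont} is established earlier from Lemmas~\ref{threesteps}(i) and~\ref{greensfunctionconvergence} without reference to the present lemma, and your handling of the finitely many small $n$ (each error term is finite and the sequence tends to $0$, so the supremum over $n$ is finite) is sound. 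The paper instead estimates directly: $|G_n\sigma_n(x)| \leq \delta_n^d \sum_{y \in B^\Points} |g_n(x,y)|\,\sigma_n(y) \leq 2M\,\delta_n^d \sum_{y \in B^\Points} |g(x,y)| \leq CMR^2\log R$, using only Lemma~\ref{greensfunctionconvergence} and summation in spherical shells, which yields an explicit constant $M' = (CM+1)R^2\log R$ depending only on $d$, $M$ and the radius of $\Omega$. So the trade-off is: your route is shorter given the machinery already proved, but invokes a stronger convergence result than needed and produces no explicit constant; the paper's route is more elementary and quantitative. The alternative you sketch in your last sentence is essentially the paper's proof.
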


\begin{proof}
By Lemma~\ref{greensfunctionconvergence} we have for $x \in \Omega^\Points$
	\begin{align*} |G_n\sigma_n(x)| &\leq \delta_n^d \sum_{y \in B^\Points} |g_n(x,y)| \sigma_n(y) \\
			&\leq 2M \delta_n^d \sum_{y \in B^\Points} |g(x,y)| \\
			&\leq CM R^2 \log R \end{align*}
for a constant $C$ depending only on $d$, where $R$ is the radius of $\Omega$.  It follows that $|\gamma_n| \leq (CM+1)R^2 \log R$ in $\Omega^\Points$.  
\end{proof}

\begin{lemma}
\label{alphabounds}
Fix $x\in \R^d$, and for $y \in \delta_n\Z^d$ let 
	\begin{equation} \label{alphadef} \alpha(y) = \delta_n^d g_n(x^\Points,y) - \int_{y^\Box} g(x,z) dz. \end{equation}
There are constants $C_1, C_2$ depending only on $d$, such that
	\begin{enumerate}
	\item[{\em (i)}] $|\alpha(y)| \leq C_1 \delta_n^{1+d} |x-y|^{1-d}$.
	\item[{\em (ii)}] If $y_1 \sim y_2$, then $|\alpha(y_1)-\alpha(y_2)| \leq C_2 \delta_n^{2+d} |x-y_1|^{-d}$.
	\end{enumerate}
\end{lemma}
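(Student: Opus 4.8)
The plan is to treat both estimates by splitting on the size of $|x-y|$ relative to the mesh: a \emph{near-diagonal} regime $|x-y|\le K\delta_n$ and a \emph{far} regime $|x-y|>K\delta_n$, where $K=K(d)$ is a large constant. In the near-diagonal regime I will show directly that $|\alpha(y)|\le C\delta_n^2$, and this already gives both (i) and (ii): indeed $|x-y|\le K\delta_n$ forces $\delta_n^{1+d}|x-y|^{1-d}\ge K^{1-d}\delta_n^2$ and $\delta_n^{2+d}|x-y|^{-d}\ge K^{-d}\delta_n^2$, while for (ii) one also has $|x-y_2|\le(K+1)\delta_n$, so $|\alpha(y_1)-\alpha(y_2)|\le|\alpha(y_1)|+|\alpha(y_2)|\le 2C\delta_n^2$. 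For $d\ge 3$ the bound $|\alpha(y)|\le C\delta_n^2$ is immediate from the scaling (\ref{definitionofg_n}): $\delta_n^d g_n(x^\Points,y)=\delta_n^2 g_1(\delta_n^{-1}x^\Points,\delta_n^{-1}y)\le g_1(o,o)\,\delta_n^2$, whereas $y^\Box\subset B(x,K'\delta_n)$ (with $K'=K+\sqrt d$) gives $0\le\int_{y^\Box}g(x,z)\,dz\le a_d\int_{B(x,K'\delta_n)}|x-z|^{2-d}\,dz=O(\delta_n^2)$. For $d=2$ each of these two quantities is itself of order $\delta_n^2|\log\delta_n|$, but on passing to the rescaled variables $\delta_n^{-1}x^\Points,\delta_n^{-1}y$ through (\ref{definitionofg_ndimension2}) and substituting $z=\delta_n\zeta$ in the integral, the $\log\delta_n$ contributions cancel exactly, leaving $\alpha(y)=\delta_n^2$ times a bounded expression built from the recurrent potential kernel and $\int\log|\,\cdot\,|$ over an $O(1)$-sized region; hence again $|\alpha(y)|\le C\delta_n^2$.

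In the far regime, for part (i) I would write $\alpha(y)=\delta_n^d\big[g_n(x^\Points,y)-g(x^\Points,y)\big]+\int_{y^\Box}\big[g(x^\Points,y)-g(x,z)\big]\,dz$. By Lemma~\ref{greensfunctionconvergence} and $|x^\Points-y|\ge\frac12|x-y|$, the first term is $O(\delta_n^{d+2}|x-y|^{-d})\le\delta_n^{d+1}|x-y|^{1-d}$ since $\delta_n\le|x-y|$. For the second, the mean value theorem together with the bound $|\nabla g|\lesssim\mathrm{dist}^{1-d}$ gives, for $z\in y^\Box$, $|g(x^\Points,y)-g(x,z)|\le|g(x^\Points,y)-g(x^\Points,z)|+|g(x^\Points,z)-g(x,z)|\lesssim\delta_n|x-y|^{1-d}$, all relevant points being within $O(\delta_n)$ of one another and at distance $\approx|x-y|$; integrating over the box of volume $\delta_n^d$ yields $O(\delta_n^{d+1}|x-y|^{1-d})$. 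This proves (i).

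The substantive part is (ii) in the far regime, where the estimate of (i) must be sharpened by a factor $\delta_n/|x-y|$. Differencing the two pieces above term by term only reproduces the bound of (i); instead I would exploit that $\alpha$ is \emph{nearly discrete-harmonic} off $x^\Points$. The normalization of $g_n$ gives $\Delta\big(\delta_n^d g_n(x^\Points,\cdot)\big)=0$ away from $x^\Points$, while $\Delta\big(y\mapsto\int_{y^\Box}g(x,z)\,dz\big)=\int_{[-\delta_n/2,\delta_n/2]^d}\Delta_y g(x,y+w)\,dw$, and the discrete Laplacian $\Delta_y g(x,\cdot)$ differs from the vanishing continuous Laplacian of $g$ by $O(\delta_n^2|x-y|^{-d-2})$ via a fourth-order Taylor expansion refining Lemma~\ref{thirdderiv}; integrating over the box gives $|\Delta\alpha(y)|\lesssim\delta_n^{d+2}|x-y|^{-d-2}$ on the discrete ball $B=B\big(y_1,|x-y_1|/4\big)$, which for $K$ large avoids $x^\Points$ and keeps $|x-z|\approx|x-y_1|$ throughout.

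Finally I would decompose $\alpha|_B=\alpha_h+\alpha_p$, with $\alpha_h$ discrete-harmonic on $B$ and equal to $\alpha$ on $\partial B$, and $\alpha_p$ vanishing on $\partial B$. By part (i), $\|\alpha_h\|_{L^\infty(B)}=\|\alpha\|_{L^\infty(\partial B)}\lesssim\delta_n^{d+1}|x-y_1|^{1-d}$, so the discrete Harnack gradient estimate \cite{Lawler} yields $|\alpha_h(y_1)-\alpha_h(y_2)|\lesssim\frac{\delta_n}{|x-y_1|}\|\alpha_h\|_\infty\lesssim\delta_n^{d+2}|x-y_1|^{-d}$; and comparing the Poisson problem for $\alpha_p$ with random-walk exit times from $B$ gives $\|\alpha_p\|_{L^\infty(B)}\lesssim|x-y_1|^2\,\|\Delta\alpha\|_{L^\infty(B)}\lesssim\delta_n^{d+2}|x-y_1|^{-d}$, whence $|\alpha_p(y_1)-\alpha_p(y_2)|\le 2\|\alpha_p\|_\infty$ is of the same order. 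Adding the two contributions proves (ii). The main obstacle is exactly this final sharpening: the extra power of $\delta_n/|x-y|$ cannot come from crude term-by-term estimates and must be extracted from near-harmonicity via the Poisson and Harnack machinery, with the logarithmic bookkeeping in the $d=2$ near-diagonal case a minor secondary nuisance.
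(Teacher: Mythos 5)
Your proposal is correct, and for part (i) it is essentially the paper's argument (triangle inequality, Lemma~\ref{greensfunctionconvergence}, and the gradient bound $|\nabla g|\le C\,\mathrm{dist}^{1-d}$, integrated over $y^\Box$), with your explicit near-diagonal case $|x-y|\le K\delta_n$ a harmless extra precaution since the claimed bounds are weak at that scale anyway. For part (ii), however, you take a genuinely different route. The paper gets the extra factor $\delta_n/|x-y_1|$ by a purely local algebraic trick: after replacing $g_n$ by $g$ at the cost of $O(\delta_n^{d+2}|x-y_1|^{-d})$ via Lemma~\ref{greensfunctionconvergence}, it writes the integrand of $\alpha(y_1)-\alpha(y_2)$ as a \emph{second difference} $f(o)-f(p)-f(q)+f(p+q)$ of the smooth kernel $f(w)=g(w,x^\Points-y_1)$, with increments $p=y_2-y_1$ and $q=x^\Points-x-y_1+z$ of size $O(\delta_n)$, so Taylor's theorem with second derivatives of $g$ (of size $|x-y_1|^{-d}$) gives the bound directly after integrating over the box of volume $\delta_n^d$. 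You instead extract the gain from near-harmonicity of $\alpha$: a fourth-order refinement of Lemma~\ref{thirdderiv} to get $|\Delta\alpha|=O(\delta_n^{d+2}|x-y_1|^{-d-2})$ off $x^\Points$, then a harmonic/Poisson splitting on $B(y_1,|x-y_1|/4)$, the discrete gradient estimate for the harmonic part (fed by your part (i) on $\partial B$), and an exit-time bound for the Poisson part. The bookkeeping checks out with the rescaled Laplacian used in this chapter (both pieces come out as $O(\delta_n^{d+2}|x-y_1|^{-d})$), and you correctly identify that Lemma~\ref{thirdderiv} as stated (third-order error $O(\delta_n)$) is not sharp enough and must be upgraded, which is the one place your route needs an ingredient not already in the paper. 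The trade-off: your argument invokes heavier machinery (gradient estimate, Green's function/exit-time bounds) but is conceptually robust and would generalize to other kernels with the same harmonicity and decay; the paper's second-difference computation is shorter, entirely elementary, and needs only two derivatives of $g$ rather than four.
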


\begin{proof}
(i) By Lemma~\ref{greensfunctionconvergence}, if $z\in y^\Box$ then
	\begin{align*} | g_n(x^\Points,y) - g(x,z) | &\leq 
|g_n(x^\Points,y)-g(x^\Points,y)| + |g(x^\Points,y)-g(x,z)| \\
	&\leq C \delta_n |x-y|^{1-d} \end{align*}
for a constant $C$ depending only on $d$.  Integrating over $y^\Box$ gives the result.
	
(ii) Let $p=y_2-y_1$.  By Lemma~\ref{greensfunctionconvergence}, we have
	\begin{align*} |\alpha(y_1) - \alpha(y_2)| &\leq \int_{y_1^\Box} |g_n(x^\Points,y_1)-g_n(x^\Points,y_2)-g(x,z)+g(x,z+p)| \,dz \\
	&\leq \int_{y_1^\Box} |g(x^\Points,y_1)-g(x^\Points,y_2)-g(x,z)+g(x,z+p)| \,dz \;+ \\ 
		&\qquad\qquad\qquad\qquad\qquad + O(\delta_n^{d+2} |x-y_1|^{-d}).  \end{align*}
Writing $q=x^\Points - x - y_1+z$, the quantity inside the integral can be expressed as
	\[ |f(o)-f(p)-f(q)+f(p+q)|, \]
where
	\[ f(w) = g(w,x^\Points-y_1). \]
Since $|p|,|q|,|p+q| \leq \delta_n (\sqrt{d}+1)$, we have by Taylor's theorem with remainder
	\begin{align*} |f(o)-f(p)-f(q)+f(p+q)| &\leq 3d\delta_n^2 \sum_{i,j=1}^{d} \left| \frac{\partial^2 f}{\partial x_i \partial x_j} (o) \right| \\
		& \leq \begin{cases} 6d^2 (d-1)(d-2) a_d \delta_n^2 |x-y_1|^{-d}, &d\geq 3 \\
						(24/\pi) \delta_n^2 |x-y_1|^{-2}, &d=2. \qed \end{cases}
		\end{align*}
\renewcommand{\qedsymbol}{}
\end{proof}

\begin{lemma}
\label{truncatedlaplacian}
Let $\Omega$ be an open ball as in Lemma~\ref{bigballs}, and let $\Omega_1$ be a ball with $\bar{\Omega} \subset \Omega_1$.  Let
	\[ \phi_n = -\Delta(s_n1_{\Omega_1^\Points}). \]
Then
	\[ \left| s_n^\Box - G\phi_n^\Box \right| \rightarrow 0 \]
uniformly on $\Omega$.
\end{lemma}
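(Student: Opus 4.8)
The plan is to identify $s_n$ with the discrete potential $G_n\phi_n$ on $\Omega_1^\Points$, and then to compare $G_n\phi_n$ with its continuum counterpart $G\phi_n^\Box$ using the pointwise estimates on the quantity $\alpha$ from Lemma~\ref{alphabounds}. First I would note that $\phi_n=-\Delta(s_n 1_{\Omega_1^\Points})$ has finite support and satisfies $\sum_{y}\phi_n(y)=0$ (discrete summation by parts for a finitely supported function), so $G_n\phi_n$ is well defined in every dimension $d\ge2$ and $\Delta G_n\phi_n=-\phi_n=\Delta(s_n1_{\Omega_1^\Points})$. Hence $G_n\phi_n-s_n1_{\Omega_1^\Points}$ is a bounded function on $\delta_n\Z^d$ with vanishing Laplacian and limit $0$ at infinity, so it vanishes identically; in particular $G_n\phi_n=s_n$ on $\Omega_1^\Points$. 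Since $\overline\Omega\subset\Omega_1$, for $x\in\Omega$ and $n$ large we have $x^\Points\in\Omega_1^\Points$, whence $s_n^\Box(x)=(G_n\phi_n)^\Box(x)$. Fixing $x\in\Omega$ and partitioning $\R^d$ into the cubes $y^\Box$, $y\in\delta_n\Z^d$, gives, with $\alpha$ as in Lemma~\ref{alphabounds},
\[ (G_n\phi_n)^\Box(x)-G\phi_n^\Box(x)=\sum_{y\in\delta_n\Z^d}\phi_n(y)\,\alpha(y), \]
so the lemma reduces to showing this sum is $o(1)$ uniformly in $x\in\Omega$.

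Next I would split $\phi_n=\phi_n^{I}+\phi_n^{II}$, where $\phi_n^{I}$ is the restriction of $\phi_n$ to the interior lattice points of $\Omega_1^\Points$ (those all of whose neighbors lie in $\Omega_1^\Points$) and $\phi_n^{II}$ is the restriction to the remaining support, which is contained in the set $L_n$ of lattice points within distance $\delta_n$ of $\partial\Omega_1$. On interior points there is no truncation effect, so $\phi_n^{I}=-\Delta s_n=1-\nu_n\in[0,1]$; using Lemma~\ref{alphabounds}(i) together with a rescaled form of Lemma~\ref{diameterbound}, $\bigl|\sum_y\phi_n^{I}(y)\alpha(y)\bigr|\le C_1\delta_n^{1+d}\sum_{y\in\Omega_1^\Points}|x-y|^{1-d}\le C\delta_n$, the single term $y=x^\Points$ being absorbed by the direct estimate $|\alpha(x^\Points)|=O(\delta_n^2\log(1/\delta_n))$.

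The main obstacle is $\phi_n^{II}$: on $L_n$ one has only $|\phi_n^{II}|=O(\delta_n^{-2})$, and since $\#L_n\asymp\delta_n^{1-d}$ a term-by-term bound via Lemma~\ref{alphabounds}(i) yields merely $O(1)$, not $o(1)$. The cancellation to be exploited is that near $\partial\Omega_1$ we have $s_n=\gamma_n$ and $\sigma_n=0$ (because $L_n$ lies outside $\overline{D_n}$ and outside the support of $\sigma_n$, which by Lemma~\ref{bigballs} stay at distance $\ge 2\rho_0:=\operatorname{dist}(\overline\Omega,\partial\Omega_1)>0$ from $\partial\Omega_1$), so that $\phi_n^{II}$ is a discrete double--layer density. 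Writing $\phi_n^{II}$ explicitly on the inner and outer layers $\partial^{+}$, $\partial^{-}$ of $\partial\Omega_1$ (using $\Delta s_n=-1$ on $\partial^{+}$) and pairing the edges $(z,y)$ crossing $\partial\Omega_1$, with $z\in\partial^{+}$ and $y\in\partial^{-}$, gives
\[ \sum_{z\in L_n}\phi_n^{II}(z)\alpha(z)=\sum_{z\in\partial^{+}}\alpha(z)+\frac{1}{2d\,\delta_n^{2}}\sum_{(z,y)}\Bigl(s_n(y)\bigl(\alpha(z)-\alpha(y)\bigr)+\alpha(y)\bigl(s_n(y)-s_n(z)\bigr)\Bigr). \]

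To finish, the first sum above is $O(\delta_n^{2})$, since $\#\partial^{+}=O(\delta_n^{1-d})$ and $|\alpha|\le C_1\delta_n^{1+d}\rho_0^{1-d}$ on $L_n$. In the edge sum I would use $|\alpha(z)-\alpha(y)|\le C_2\delta_n^{2+d}\rho_0^{-d}$ (Lemma~\ref{alphabounds}(ii)), $|\alpha(y)|\le C_1\delta_n^{1+d}\rho_0^{1-d}$ (Lemma~\ref{alphabounds}(i)), the bound $|s_n(y)|=|\gamma_n(y)|\le M_1$ obtained by the argument proving Lemma~\ref{gamma_nupperbound}, and $|s_n(y)-s_n(z)|=|\nabla\gamma_n(z,y)|\le 2R_1\delta_n+|\nabla G_n\sigma_n(z,y)|=O(\delta_n)$, where $R_1$ is the radius of $\Omega_1$ and the gradient of $G_n\sigma_n$ near $\partial\Omega_1$ is controlled by the rescaled Green's-function difference estimate $|g_n(z,w)-g_n(y,w)|\le C\delta_n|z-w|^{1-d}$ (as in the proof of Lemma~\ref{letsgetthispartystarted}) summed against $\sigma_n$, which is supported far from $\partial\Omega_1$. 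Since there are $O(\delta_n^{1-d})$ crossing edges, the edge sum is $\delta_n^{-2}\cdot O(\delta_n^{1-d})\cdot O(\delta_n^{2+d})=O(\delta_n)$. Altogether $\sum_y\phi_n(y)\alpha(y)=O(\delta_n)$, with constants depending only on $d$, $M$, $R_1$ and $\rho_0$ but not on $x\in\Omega$, which is exactly the asserted uniform convergence. The delicate point throughout is that the truncation at $\Omega_1$ is genuinely needed (the unbounded part $-|x|^2$ of $s_n$ is not a potential of a compactly supported density), yet it injects a layer of size $O(\delta_n^{-2})$ near $\partial\Omega_1$ whose contribution survives only through the double-layer cancellation above.
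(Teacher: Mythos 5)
Your proof is correct and follows essentially the same route as the paper's: identify $s_n$ with $G_n\phi_n$ on $\Omega_1^\Points$, write $s_n^\Box - G\phi_n^\Box$ as $\sum_y \phi_n(y)\alpha(y)$ with $\alpha$ as in Lemma~\ref{alphabounds}, bound the interior contribution (where $|\Delta s_n|\leq 1$) by summing Lemma~\ref{alphabounds}(i) in shells, and control the boundary layer through the same pairing $s_n(y)\alpha(z)-s_n(z)\alpha(y)=s_n(y)\bigl(\alpha(z)-\alpha(y)\bigr)+\alpha(y)\bigl(s_n(y)-s_n(z)\bigr)$ estimated via Lemmas~\ref{gamma_nupperbound} and~\ref{alphabounds}(ii). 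Your only departures are refinements rather than a different argument: you spell out why $s_n=G_n\phi_n$ on $\Omega_1^\Points$ (mean-zero density, decay at infinity, Liouville), and you bound $|\gamma_n(y)-\gamma_n(z)|$ by $O(\delta_n)$ through a Green's-function gradient estimate where the paper only invokes uniform continuity of $\gamma$ to get an $\epsilon$-bound, so you obtain an explicit $O(\delta_n)$ rate.
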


\begin{proof}
Let $\nu_n(x)$ be the amount of mass present at $x$ in the final state of the divisible sandpile on $\delta_n \Z^d$.  By Lemma~\ref{discretemajorant} we have $s_n = u_n + \gamma_n$, hence
	\[ \Delta s_n = (\nu_n - \sigma_n) + (\sigma_n - 1) = \nu_n -1. \]
In particular, $|\Delta s_n| \leq 1$.

Since
	\[ G\phi_n^\Box(x) = \sum_{y \in \Omega_1^\Points \cup \partial \Omega_1^\Points} \phi_n(y) \int_{y^\Box} g(x,z) dz \]
and $s_n = G_n \phi_n$ in $\Omega^\Points$, we have for $x\in \Omega$
	\begin{align*} G\phi_n^\Box(x) - s_n^\Box(x) &= G\phi_n^\Box(x) -  G_n\phi_n(x^\Points) \\
			&= - \sum_{y \in \Omega_1^\Points \cup \partial \Omega_1^\Points} \phi_n(y) \alpha(y) \end{align*}
where $\alpha(y)$ is given by (\ref{alphadef}).  Hence
	\begin{equation} \label{interiorandboundaryterms} G\phi_n^\Box(x) - s_n^\Box(x) = \sum_{y \in \Omega_1^\Points} \Delta s_n(y) \alpha(y) + \delta_n^{-2} \sum_{\substack{y \in \Omega_1^\Points, z \notin \Omega_1^\Points \\ y\sim z}} (s_n(y)\alpha(z) - s_n(z)\alpha(y)). \end{equation}

   Let $R,R_1$ be the radii of $\Omega,\Omega_1$.  By Lemma~\ref{alphabounds}(i), summing in spherical shells about $x$, the first sum in (\ref{interiorandboundaryterms}) is bounded in absolute value by 
	\[ \sum_{y \in \Omega_1^\Points} |\alpha(y)| \leq \int_0^{2R_1} \frac{d \omega_d r^{d-1}}{\delta_n^d} (C_1 \delta_n^{1+d} r^{1-d}) dr = C_1 d \omega_d R_1\delta_n. \]
To bound the second sum in (\ref{interiorandboundaryterms}), note that $s_n=\gamma_n$ outside $\Omega$, so
	\begin{equation} \label{twogradients} |s_n(y)\alpha(z) - s_n(z)\alpha(y)| \leq |\gamma_n(y)| |\alpha(y)-\alpha(z)| + |\alpha(y)| |\gamma_n(y)-\gamma_n(z)|. \end{equation}
By Lemmas~\ref{gamma_nupperbound} and~\ref{alphabounds}(ii), the first term is bounded by
	\[ |\gamma_n(y)| |\alpha(y)-\alpha(z)| \leq C_2 M'\delta_n^{2+d} |x-y|^{-d}. \]
Fix $\epsilon>0$, and let $\Omega_2$ be a ball with $\bar{\Omega}_1 \subset \Omega_2$.  Since $\gamma$ is uniformly continuous on $\Omega_2$, and $\gamma_n \rightarrow \gamma$ uniformly on $\Omega_2$ by Lemma~\ref{greensintegralconvergencecont}, for sufficiently large $n$ we have
	\[ |\gamma_n(y)-\gamma_n(z)| \leq |\gamma_n(y)-\gamma(y)| + |\gamma(y)-\gamma(z)| + |\gamma(z)-\gamma_n(z)| \leq \epsilon. \]
Thus by Lemma~\ref{alphabounds}(i) the second term in (\ref{twogradients}) is bounded by
	\[ |\alpha(y)| |\gamma_n(y)-\gamma_n(z)| \leq C_1 \delta_n^{1+d} |x-y|^{1-d} \epsilon. \]
Since $x\in \Omega$ and $y$ is adjacent to $\partial \Omega_1$, we have $|x-y| \geq R_1-R-\delta_n$, so the second term in (\ref{interiorandboundaryterms}) is bounded in absolute value by 
	\[ 2\delta_n^{-2} \# \partial \Omega_1^\Points \left(C_2 M' \delta_n^{2+d} (R_1-R)^{-d} + C_1 \delta_n^{1+d} (R_1-R)^{1-d} \epsilon \right) \leq C_3 \epsilon \]
for sufficiently large $n$.
\end{proof}

\begin{lemma}
\label{majorantconvergence}
$s_n^\Box \rightarrow s$ uniformly on compact subsets of $\R^d$.
\end{lemma}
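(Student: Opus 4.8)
The plan is to deduce Lemma~\ref{majorantconvergence} from the preceding two lemmas (Lemma~\ref{truncatedlaplacian} and Lemma~\ref{greensintegralconvergencecont}) together with the continuum convergence result Lemma~\ref{threesteps}(ii). The key identity to exploit is the one established in Lemma~\ref{truncatedlaplacian}: on the ball $\Omega$, the discrete majorant $s_n$ is essentially the continuum potential of its own (truncated) negative Laplacian, namely $|s_n^\Box - G\phi_n^\Box| \to 0$ uniformly on $\Omega$, where $\phi_n = -\Delta(s_n 1_{\Omega_1^\Points})$. Since $\Delta s_n = \nu_n - 1$ inside the occupied region and $s_n = \gamma_n$ off $\Omega$ (with bounded Laplacian there), the masses $\phi_n$ are bounded densities supported in a fixed ball; the total mass $\delta_n^d \sum \phi_n(y)$ is controlled, uniformly in $n$, by the conservation of mass for the divisible sandpile (the initial mass $\delta_n^d\sum\sigma_n$ is bounded by (\ref{uniformcompactsupport}) and (\ref{discreteabsolutebound})).

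The main steps I would carry out are as follows. First, fix a large compact set $K \subset \R^d$ and, enlarging if necessary, take $K \supset \bar\Omega$ where $\Omega$ is the ball from Lemma~\ref{bigballs} containing all the $D_n^\Box$ and $D$. Outside $\Omega$ both $s_n$ and $s$ coincide with $\gamma_n$ and $\gamma$ respectively, and $\gamma_n^\Box \to \gamma$ uniformly on $K$ by Lemma~\ref{greensintegralconvergencecont} (applied via $G_n\sigma_n$), so the problem reduces to proving uniform convergence on $\bar\Omega$. On $\bar\Omega$, use Lemma~\ref{truncatedlaplacian} to replace $s_n^\Box$ by $G\phi_n^\Box$ up to a vanishing error. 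Next, I would argue that $\phi_n^\Box$, viewed as a sequence of bounded densities on a fixed ball, has its potentials $G\phi_n^\Box$ converging: by weak-$*$ compactness pass to a subsequential limit measure $\mu$, and by potential-theoretic continuity (Lemma~\ref{smoothnessofpotential}(i), plus the uniform integrability of the kernel $g$ over bounded sets) the potentials converge uniformly on compacts to $G\mu$. It then remains to identify $G\mu$ with $s$ on $\Omega$. For this I would use that the discrete majorants satisfy the obstacle conditions $s_n \geq \gamma_n$ and $s_n$ superharmonic, which pass to the limit (lower-semicontinuity of the superharmonicity condition, uniform convergence of the obstacles) to show the limit is a superharmonic majorant of $\gamma$, hence $\geq s$; and conversely the complementary slackness $s_n = \gamma_n$ off $D_n$ together with $D_n^\Box \subset \Omega$ and harmonicity of $s_n$ on $D_n$ pass to the limit to force equality $G\mu = s$ on $\Omega$. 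Since every subsequential limit equals $s$, the full sequence converges.

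The step I expect to be the main obstacle is the identification of the subsequential limit with $s$ — that is, verifying that the limiting object genuinely solves the continuum obstacle problem rather than merely being some superharmonic majorant. The inequality ``limit $\geq s$'' is the easy direction (semicontinuity plus $\gamma_n \to \gamma$). The reverse requires transferring the discrete coincidence/noncoincidence structure to the limit: one must rule out that the limit separates from $\gamma$ on a set strictly larger than $D$, using that $s_n$ is harmonic precisely on $D_n$, that $s_n - \gamma_n \geq 0$ with equality off $D_n$, and that $\overline{D_n^\Box}$ cannot escape $\Omega$. A clean way to sidestep some of this is to invoke Lemma~\ref{majorantonacompactset} (restriction to a domain containing $D$) and compare directly: $G\mu$ is a continuous superharmonic function on $\Omega \supset D$ lying above $\gamma$, so $G\mu \geq s$ there, while on the other hand $s$ is a continuous superharmonic majorant of $\gamma$ and one checks $s \geq \limsup s_n^\Box = G\mu$ by the same majorant argument applied on the discrete side via Lemma~\ref{discretemajorantonacompactset} — taking $\Omega^\Points \supset D_n$ as the domain and letting $n \to \infty$. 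This symmetric squeeze, executed carefully, should close the argument without an explicit variational characterization of the limit.
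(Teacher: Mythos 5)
Your lower-bound half is essentially the paper's argument: Lemma~\ref{truncatedlaplacian} gives $|s_n^\Box - G\phi_n^\Box|<\epsilon$ on $\Omega$, the function $G\phi_n^\Box$ is superharmonic on $\Omega$ (where $\phi_n^\Box\geq 0$) and lies above $\gamma-2\epsilon$, so $\max(G\phi_n^\Box+2\epsilon,\gamma)\geq s$ by Lemma~\ref{majorantonacompactset}, whence $s_n^\Box\geq s-3\epsilon$. Note this is done for each fixed $n$; no subsequential limit is needed. Your weak-$*$ compactness step is in fact both unnecessary and unjustified: $\phi_n=-\Delta(s_n 1_{\Omega_1^\Points})$ is \emph{not} a bounded density. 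Away from $\partial\Omega_1$ it equals $1-\nu_n\in[0,1]$, but the truncation creates a dipole layer of magnitude $\delta_n^{-2}|\gamma_n|$ on the $O(\delta_n^{1-d})$ sites adjacent to $\partial\Omega_1^\Points$, so $\delta_n^d\sum_y|\phi_n(y)|$ is of order $\delta_n^{-1}$ and the measures $\delta_n^d\phi_n$ have no weak-$*$ convergent subsequence. (The signed total mass is exactly zero by telescoping, not ``controlled by conservation of mass.'') The potential $G\phi_n^\Box$ is nonetheless well behaved on $\Omega$ only because of the cancellation in the dipole layer, which is precisely what the antisymmetrized boundary sum in the proof of Lemma~\ref{truncatedlaplacian} exploits; you cannot recover this by passing to a limit measure.

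The genuine gap is the upper bound $s_n^\Box\leq s+O(\epsilon)$. You propose to run ``the same majorant argument on the discrete side'' via Lemma~\ref{discretemajorantonacompactset}, but that lemma requires a function that is superharmonic \emph{for the discrete Laplacian} on $\Omega^\Points$ and dominates $\gamma_n$. The natural candidate $s^\Points+\epsilon$ does not qualify: $s$ is only continuum-superharmonic (and merely continuous, not $C^2$), and continuum superharmonicity does not imply discrete superharmonicity of the restriction. The missing idea is to first mollify $s$ to a smooth superharmonic $\tilde s$ with $|s-\tilde s|<\epsilon$ on $\bar\Omega$, and then correct the discretization error using Lemma~\ref{thirdderiv}: the function $q_n=\tilde s^\Points-\frac16 A_\epsilon\delta_n|x|^2$ (with $A_\epsilon$ the maximum third partial of $\tilde s$) \emph{is} discretely superharmonic on $\Omega^\Points$, and since $\gamma_n\to\gamma^\Points$ uniformly one gets $q_n+3\epsilon\geq\gamma_n$, so $\max(q_n+3\epsilon,\gamma_n)\geq s_n$ by Lemma~\ref{discretemajorantonacompactset}, giving $s_n\leq s^\Points+4\epsilon$. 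Without the mollification-plus-quadratic-correction step, your upper bound does not go through; and your alternative route (identifying a subsequential limit as the solution of the continuum obstacle problem via harmonicity on $D_n$) is exactly the hard variational-convergence argument you flag, which the paper avoids entirely.
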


\begin{proof}
By Lemma~\ref{bigballs} there is a ball $\Omega$ containing $D$ and $D_n^\Box$ for all $n$.  Outside $\Omega$ we have 
 	\[ s_n^\Box = \gamma_n^\Box \rightarrow \gamma = s \]
uniformly on compact sets by Lemma~\ref{greensintegralconvergencecont}.  

To show convergence in $\Omega$, write
	\begin{equation} \label{mollified} \tilde{s}(x) = \int_{\R^d} s(y) \lambda^{-d} \eta \left(\frac{x-y}{\lambda}\right) dy, \end{equation}
where $\eta$ is the standard smooth mollifier
	\[ \eta(x) = \begin{cases} Ce^{1/(|x|^2-1)}, &|x|<1 \\
					0, &|x|\geq 1  \end{cases} \]
normalized so that $\int_{\R^d} \eta \, dx = 1$ (see \cite[Appendix C.4]{Evans}).  Then $\tilde{s}$ is smooth and superharmonic.  Fix $\epsilon>0$.  By Lemma~\ref{majorantbasicprops}(ii) and compactness, $s$ is uniformly continuous on $\bar{\Omega}$, so taking $\lambda$ sufficiently small in (\ref{mollified}) we have $|s-\tilde{s}|<\epsilon$ in $\Omega$.   Let $A_\epsilon$ be the maximum third partial of $\tilde{s}$ in $\Omega$.  By Lemma~\ref{thirdderiv} the function
	\[ q_n(x) = \tilde{s}^\Points(x) - \frac16 A_\epsilon \delta_n |x|^2 \]
is superharmonic in $\Omega^\Points$.  By Lemma~\ref{greensintegralconvergencecont} we have $\gamma_n^\Box \rightarrow \gamma$ uniformly in $\Omega$.  Taking $N$ large enough so that $\frac16 A_\epsilon \delta_n |x|^2 < \epsilon$ in $\Omega$ and $|\gamma_n-\gamma^\Points|<\epsilon$ in $\Omega^\Points$ for all $n>N$, we obtain
	\[ q_n > \tilde{s}^\Points - \epsilon > s^\Points - 2\epsilon \geq \gamma^\Points - 2\epsilon > \gamma_n - 3\epsilon \]
in $\Omega^\Points$.  In particular, the function $f_n = \text{max}(q_n+3\epsilon,\gamma_n)$ is superharmonic in $\Omega^\Points$.  By Lemma~\ref{discretemajorantonacompactset} it follows that $f_n \geq s_n$, hence
	\[ s_n \leq q_n + 3\epsilon < \tilde{s}^\Points + 3\epsilon < s^\Points + 4\epsilon \]
in $\Omega^\Points$.  By the uniform continuity of $s$ on $\bar{\Omega}$, taking $N$ larger if necessary we have $|s-s^{\Points\Box}|<\epsilon$ in $\Omega$, and hence $s_n^\Box < s + 5\epsilon$ in $\Omega$ for all $n>N$.

For the reverse inequality, let
	\[ \phi_n = - \Delta(s_n 1_{\Omega_1^\Points}) \] 
where $\Omega_1$ is an open ball containing $\bar{\Omega}$.  By Lemma~\ref{truncatedlaplacian} we have
	\[ \left| s_n^\Box - G\phi_n^\Box \right| < \epsilon \]
and hence
	\begin{equation} \label{abovegamma} G\phi_n^\Box > \gamma_n^\Box - \epsilon > \gamma - 2\epsilon. \end{equation}
on $\Omega$ for sufficiently large $n$.  Since $\phi_n^\Box$ is nonnegative on $\Omega$, by Lemma~\ref{superharmonicpotential} the function $G\phi_n^\Box$ is superharmonic on $\Omega$, so by (\ref{abovegamma}) the function 
	\[ \psi_n = \text{max}(G\phi_n^\Box+2\epsilon,\gamma) \]
is superharmonic on $\Omega$ for sufficiently large $n$.  By Lemma~\ref{majorantonacompactset} it follows that $\psi_n \geq s$, hence
	\[ s_n^\Box > G\phi_n^\Box - \epsilon \geq s-3\epsilon \]
on $\Omega$ for sufficiently large $n$.
\end{proof}

\begin{proof}[Proof of Theorem~\ref{odomconvergence}]
Let $\Omega$ be as in Lemma~\ref{bigballs}.  By Lemmas~\ref{greensintegralconvergencecont} and~\ref{majorantconvergence} we have $\gamma_n^\Box \to \gamma$ and $s_n^\Box \to s$ uniformly on $\Omega$.  By Lemma~\ref{discretemajorant},
we have $u_n = s_n-\gamma_n$.   Since $u_n^\Box = 0 = s-\gamma$ off $\Omega$, we conclude that $u_n^\Box \rightarrow s-\gamma$ uniformly.
\end{proof}

\subsection{Convergence of Domains}

In addition to the conditions (\ref{absolutebound})-(\ref{uniformcompactsupport}) assumed in the previous section, we assume in this section that the source density $\sigma$ satisfies 
	\begin{equation} \label{boundedawayfrom1} 
	\text{For all }x\in \R^d \text{ either }\sigma(x) \leq \lambda \text{ or } \sigma(x)\geq 1 \index{$\lambda$, bounds $\sigma$ away from $1$}
	\end{equation}
for a constant $\lambda<1$.  We also assume that
	\begin{equation} \label{closureofinterior}
	\{\sigma \geq 1\} = \overline{\{\sigma \geq 1\}^o}. 
	\end{equation}
Moreover, we assume that for any $\epsilon>0$ there exists $N(\epsilon)$ such that
	\begin{equation} \label{sillytechnicality}
	\text{If }x \in \{\sigma \geq 1\}_\epsilon, \text{ then } \sigma_n(x) \geq 1 \text{ for all }n \geq N(\epsilon);
	\end{equation}
and
	\begin{equation} \label{discretetechnicality}
	\text{If }x \notin \{\sigma \geq 1\}^\epsilon, \text{ then } \sigma_n(x) \leq \lambda \text{ for all }n \geq N(\epsilon).
	\end{equation}
As before, we have chosen to state the hypotheses on $\sigma$ and $\sigma_n$ separately for maximum generality, but all hypotheses on $\sigma_n$ are satisfied in the particular case when $\sigma_n$ is given by averaging $\sigma$ in a small box (\ref{averageinabox}).

We set
	\[ \gamma(x) = -|x|^2-G\sigma(x) \]
with $s$ the least superharmonic majorant of $\gamma$ and 
	\[ D = \{x \in\R^d | s(x)>\gamma(x)\}. \]
We also write
	\[ \widetilde{D} = D \cup \{x \in \R^d | \sigma(x)\geq 1 \}^o. \index{$\widetilde{D}$, enlarged noncoincidence set} \]
For a domain $A \subset \R^d$, denote by $A_\epsilon$ and $A^\epsilon$ its inner and outer open $\epsilon$-neighborhoods, respectively.  

\begin{theorem}
\label{domainconvergence}
Let $\sigma$ and $\sigma_n$ satisfy (\ref{absolutebound})-(\ref{uniformcompactsupport}) and (\ref{boundedawayfrom1})-(\ref{discretetechnicality}).  For $n\geq 1$ let $D_n$ be the domain of fully occupied sites for the divisible sandpile in $\delta_n \Z^d$ started from source density $\sigma_n$.  For any $\epsilon>0$ we have for large enough $n$
	\begin{equation} \label{domainbounds} \widetilde{D}_\epsilon^\Points \subset D_n \subset \widetilde{D}^{\epsilon\Points}. \end{equation}
\end{theorem}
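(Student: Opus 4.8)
The plan is to prove the two inclusions in (\ref{domainbounds}) separately: the inner one is deduced from uniform convergence of the odometers (Theorem~\ref{odomconvergence}), and the outer one from a path argument modeled on the outer estimate in the proof of Theorem~\ref{divsandcircintro}, together with a quantitative "collar estimate'' near $\partial\widetilde D$ which will be the main obstacle.

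\emph{Inner bound.}
I will use repeatedly the elementary fact that in the divisible sandpile a site which topples ends fully occupied: after its final toppling a site only receives mass, while the final configuration is everywhere $\le1$, so its final mass is exactly $1$; likewise any site with $\sigma_n\ge1$ ends fully occupied. Hence $\{u_n>0\}\subseteq D_n$ and $\{\sigma_n\ge1\}\subseteq D_n$. Fix $\epsilon>0$ and use Lemma~\ref{pointset} to pick $\eta>0$ with $\widetilde D_\epsilon\subseteq D_\eta\cup(\{\sigma\ge1\}^o)_\eta$. On the compact set $\overline{D_\eta}\subset D$ the continuous function $u=s-\gamma$ is bounded below by some $\beta>0$; since $u_n^\Box\to u$ uniformly by Theorem~\ref{odomconvergence}, we get $u_n>0$ on $D_\eta^\Points$ for $n$ large, so $D_\eta^\Points\subseteq D_n$. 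Since $(\{\sigma\ge1\}^o)_\eta\subseteq\{\sigma\ge1\}_\eta$, hypothesis (\ref{sillytechnicality}) gives $\sigma_n\ge1$ on $(\{\sigma\ge1\}^o)_\eta^\Points$ for $n\ge N(\eta)$, so this set too lies in $D_n$. Thus $\widetilde D_\epsilon^\Points\subseteq D_n$ for all large $n$.

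\emph{Outer bound.}
First, if $x\in D_n$ with $u_n(x)=0$ and $\sigma_n(x)<1$, then $\Delta u_n(x)=\nu_n(x)-\sigma_n(x)=1-\sigma_n(x)>0$, forcing $u_n>0$ at a neighbor of $x$; hence $D_n\subseteq\{u_n>0\}\cup\partial\{u_n>0\}\cup\{\sigma_n\ge1\}$. By (\ref{discretetechnicality}), $\{\sigma_n\ge1\}\subseteq\{\sigma\ge1\}^{\epsilon/8}\subseteq\widetilde D^{\epsilon/8}$ for $n$ large, so it suffices to prove $\{u_n>0\}\subseteq\widetilde D^{\epsilon/2\,\Points}$ (the layer $\partial\{u_n>0\}$ is then inside $\widetilde D^{\epsilon/2+\delta_n}\subseteq\widetilde D^\epsilon$). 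Since $s_n$ is harmonic on $\{u_n>0\}$, there $\Delta u_n=1-\sigma_n$, and $u_n$ vanishes on $\partial\{u_n>0\}$; from a toppled site $z$ with $\sigma_n(z)<1$, repeatedly stepping to a neighbor of maximal odometer produces a path inside $\{u_n>0\}$ along which $u_n$ strictly increases, with increment $\ge\delta_n^2(1-\sigma_n(x_i))\ge\delta_n^2(1-\lambda)$ at the $i$-th step as long as the path stays outside $\{\sigma\ge1\}^{\epsilon/8}$ (by (\ref{discretetechnicality})). As $u_n$ is bounded (Theorem~\ref{odomconvergence}) and strictly increasing along the path, the path is finite and must end at a site with $\sigma_n\ge1$, hence in $\widetilde D^{\epsilon/8}$. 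If $\operatorname{dist}(z,\widetilde D)\ge\epsilon/2$ the path meets the collar $\mathcal C=\{x:\tfrac{\epsilon}{4}\le\operatorname{dist}(x,\widetilde D)\le\tfrac{3\epsilon}{8}\}$; letting $x_j$ be the first such point, all earlier points satisfy $\sigma_n\le\lambda$, so $u_n(x_j)\ge j\,\delta_n^2(1-\lambda)$, while $|z-x_j|\le j\delta_n$ and $\operatorname{dist}(x_j,\widetilde D)\le\tfrac{3\epsilon}{8}$. Therefore a bound of the form $u_n\le C\delta_n^2$ on $\mathcal C\cap\delta_n\Z^d$ forces $j\le C/(1-\lambda)$ and hence $\operatorname{dist}(z,\widetilde D)\le\tfrac{3\epsilon}{8}+j\delta_n<\epsilon/2$ for $n$ large, a contradiction; so $\{u_n>0\}\subseteq\widetilde D^{\epsilon/2\,\Points}$.

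\emph{The main obstacle} is precisely the collar estimate $u_n\le C\delta_n^2$ on $\mathcal C\cap\delta_n\Z^d$, since the uniform convergence $u_n^\Box\to u$ of Theorem~\ref{odomconvergence} supplies only $u_n\to0$ there, with no rate. My plan to obtain the rate is by comparison with a smoother, slightly larger source. Choose a $C^1$ density $\sigma^+\ge\sigma$ with $\int(\sigma^+-\sigma)$ arbitrarily small and $\Leb\bigl((\sigma^+)^{-1}(1)\bigr)=0$ (as in the proof of Proposition~\ref{boundaryregularity}); by monotonicity of the divisible sandpile (Lemma~\ref{monotonicity}, together with its discrete analogue, which follows from the abelian property, Lemma~\ref{abelianproperty}) the odometer $u_n$ is dominated by the odometer of a suitable discretization of $\sigma^+$, whose noncoincidence set $D^+$ can be made to lie in an arbitrarily small neighborhood of $\widetilde D$. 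For the smooth density $\sigma^+$, Lemma~\ref{smoothdensity} gives $|\nabla(s^+-\gamma^+)|\le C_0\rho$ on $\partial D^+_\rho$, i.e.\ quadratic flattening of the continuum odometer at the free boundary; transferring this to the lattice by the mollifier-and-discretization technique of the proof of Lemma~\ref{majorantconvergence} (Lemmas~\ref{thirdderiv}, \ref{greensintegralconvergencecont} and~\ref{discretemajorantonacompactset}) should yield the $O(\delta_n^2)$ bound on the collar. The genuinely delicate point is the behavior near boundary points of $\widetilde D$ where $\sigma=1$, at which the obstacle is harmonic and the limiting free boundary may be irregular; the enlargement $\sigma^+\ge\sigma$ with $\Leb((\sigma^+)^{-1}(1))=0$ is exactly what lets one sidestep this at the cost of an arbitrarily small amount of volume.
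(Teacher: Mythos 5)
Your inner bound is correct and is essentially identical to the paper's: Lemma~\ref{pointset} to split $\widetilde{D}_\epsilon$ into $D_\eta\cup\{\sigma\ge1\}_\eta$, a positive lower bound for $u$ on $\overline{D_\eta}$, Theorem~\ref{odomconvergence} for the first piece, and (\ref{sillytechnicality}) for the second.

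The outer bound, however, has a genuine gap, and you have correctly located it yourself: everything hinges on the collar estimate $u_n\le C\delta_n^2$ on $\mathcal{C}\cap\delta_n\Z^d$, and that estimate is not obtainable from the tools you cite. Lemma~\ref{majorantconvergence} gives only qualitative uniform convergence $s_n^\Box\to s$ (its proof fixes $\epsilon$ first and then takes $n$ large, with no rate), and Lemma~\ref{smoothdensity} is a purely continuum statement; nothing in the paper transfers the quadratic flattening of $u$ at $\partial D$ to the lattice at $O(\delta_n^2)$ precision. Worse, the collar estimate is essentially equivalent in strength to the outer inclusion itself: a posteriori $u_n\equiv 0$ on the collar for large $n$, so any proof of it must already contain the theorem. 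The difficulty is an artifact of taking one lattice step at a time, which gains only $\delta_n^2(1-\lambda)$ per step and therefore forces you to control the number of steps. The correct move (the paper's Lemma~\ref{quadraticgrowth}) is to apply the maximum principle \emph{once, at macroscopic scale}: for $x\in D_n$ with $x\notin\widetilde{D}^\epsilon$, the ball $B=B(x,\epsilon/2)^\Points$ satisfies $\sigma_n\le\lambda$ on $B$ by (\ref{closureofinterior}) and (\ref{discretetechnicality}), so $w(y)=u_n(y)-(1-\lambda)|x-y|^2$ is subharmonic on $B\cap D_n$; since $w(x)\ge 0$ and $u_n$ vanishes on $\partial D_n$, the maximum is attained at some $y\in\partial B$, giving
\begin{equation*}
u_n(y)\;\ge\;u_n(x)+(1-\lambda)\left(\tfrac{\epsilon}{2}\right)^2 .
\end{equation*}
This produces a \emph{fixed} gain of order $\epsilon^2$ within distance $\epsilon/2+\delta_n$ of $x$, which contradicts $u_n<\tfrac{1-\lambda}{4}\epsilon^2$ on $B(x,\epsilon/2)^\Points\cup\partial B(x,\epsilon/2)^\Points$ — and the latter follows from Theorem~\ref{odomconvergence} alone, since $u\equiv 0$ there; no rate of convergence is needed. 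Replacing your path-plus-collar argument by this single application of the maximum principle closes the proof and makes the entire smooth-majorant construction in your last paragraph unnecessary.
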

							
According to the following lemma, near any occupied site $x \in D_n$ lying outside $\widetilde{D}^\epsilon$, we can find a site $y$ where the odometer $u_n$ is relatively large.  This is a discrete version of a standard argument for the obstacle problem; see for example Friedman \cite{Friedman}, Ch.\ 2 Lemma 3.1. 

\begin{lemma}
\label{quadraticgrowth}
Fix $\epsilon>0$ and $x \in D_n$ with $x\notin \widetilde{D}^\epsilon$.  If $n$ is sufficiently large, there is a point $y\in \delta_n\Z^d$ with $|x-y|\leq \frac{\epsilon}{2}+\delta_n$ and 
	\[ u_n(y) \geq u_n(x) + \frac{1-\lambda}{4}\epsilon^2. \]
\end{lemma}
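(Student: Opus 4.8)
The plan is to mimic the standard continuous obstacle-problem argument (Friedman, Ch.~2, Lemma~3.1) at the discrete level, using the fact that $u_n = s_n - \gamma_n$ satisfies a one-sided Laplacian bound on $D_n$. The key point is that on the set where $\sigma_n \le \lambda$, the function $\gamma_n$ plus a suitable multiple of $|x|^2$ is superharmonic, so $u_n$ minus that same multiple of $|x|^2$ is subharmonic there, and the subharmonic function vanishing at $x$ must attain a positive value of the advertised size somewhere on a small sphere around $x$.

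More precisely, here is the sequence of steps. First, fix $\epsilon > 0$ and $x \in D_n$ with $x \notin \widetilde D^\epsilon$. Using hypothesis~(\ref{discretetechnicality}), for $n$ large we have $\sigma_n \le \lambda$ on all of $B(x, \epsilon/2) \cap \delta_n\Z^d$, since this ball is disjoint from $\{\sigma \ge 1\}^{\epsilon/2}$ (shrinking $\epsilon/2$ slightly to absorb the $\delta_n$ discretization if needed). Second, on $D_n$ we have $\Delta u_n = \nu_n - 1 \ge -1$ wherever $\nu_n \le 1$ but actually we want the reverse: recall from Lemma~\ref{discretemajorant} that $s_n = u_n + \gamma_n$ and $\Delta \gamma_n = \sigma_n - 1$ at interior lattice points, while $\Delta u_n \le 1 - \sigma_n$ fails to be directly useful; instead use that $u_n$ is nonnegative, $u_n(x) \ge 0$, and that $\Delta u_n = \nu_n - \sigma_n$ so that on fully occupied sites $\Delta u_n = 1 - \sigma_n \ge 1 - \lambda$. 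Thus $u_n$ itself is \emph{subharmonic with Laplacian at least $1-\lambda$} on the portion of $B(x,\epsilon/2)$ lying in $D_n$. Third, form $w_n(y) = u_n(y) - \frac{1-\lambda}{2d}\,|y - x|^2$ (or with the appropriate normalization matching the discrete Laplacian convention $\Delta|y|^2 = 1$, so $w_n(y) = u_n(y) - (1-\lambda)|y-x|^2$ up to the factor coming from $\delta_n$-scaling); this $w_n$ is subharmonic on $B(x,\epsilon/2) \cap D_n$. Since the boundary of this region inside $B(x,\epsilon/2)$ consists of sites where $u_n = 0$ (points of $\partial D_n$), and $w_n(x) = u_n(x) \ge 0$, the maximum of $w_n$ over the closed discrete ball cannot be attained at such a boundary point (where $w_n \le 0 < w_n(x)$ unless $x$ itself is on the boundary, in which case $u_n(x) = 0$ and $w_n(x) = 0$, and we still get a strict statement at a neighbor). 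Hence the maximum is attained at a site $y$ with $|x - y| \le \epsilon/2 + \delta_n$ and $|x-y|$ roughly $\epsilon/2$, giving
\[
u_n(y) = w_n(y) + (1-\lambda)|x-y|^2 \ge w_n(x) + (1-\lambda)\Bigl(\tfrac{\epsilon}{2}\Bigr)^2 \ge u_n(x) + \frac{1-\lambda}{4}\epsilon^2,
\]
after checking the constant against the normalization and absorbing lower-order $\delta_n$ terms.

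The main obstacle I anticipate is bookkeeping two things carefully: (a) the exact discrete Laplacian normalization on $\delta_n\Z^d$ (the factor $\delta_n^{-2}$ and the $\frac{1}{2d}$) so that $|x-y|^2$ really has Laplacian a positive constant and the resulting coefficient works out to exactly $\frac{1-\lambda}{4}$ rather than some other constant — here one may need to scale $|x-y|^2$ by $2d$ or by $\delta_n^{-2}$ as in the continuum Lemma~\ref{atmostquadratic} and its proof; and (b) handling the boundary of the discrete ball $B(x,\epsilon/2)$ that does \emph{not} lie in $D_n$: on such sites we only know $\sigma_n \le \lambda$ and $u_n \ge 0$, which is exactly what is needed for the maximum-principle argument to push the maximum of the subharmonic $w_n$ onto $\partial B(x,\epsilon/2)$ rather than into the interior. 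One should also invoke that for $n$ large the discrete ball of radius $\epsilon/2$ around $x$ genuinely sits inside the region where $\sigma_n \le \lambda$, which is where hypothesis~(\ref{discretetechnicality}) enters; a small loss in the radius (replacing $\epsilon/2$ by $\epsilon/2 - \delta_n$ internally, then restoring $+\delta_n$ in the final bound) takes care of the discretization. Everything else is routine.
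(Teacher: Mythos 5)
Your proposal is correct and follows essentially the same route as the paper's proof: restrict to $B(x,\epsilon/2)^\Points$ where (\ref{discretetechnicality}) gives $\sigma_n\leq\lambda$, note $\Delta u_n = 1-\sigma_n\geq 1-\lambda$ on $B\cap D_n$, and apply the maximum principle to $w(y)=u_n(y)-(1-\lambda)|x-y|^2$, which cannot peak on $\partial D_n$ since $w(x)\geq 0$ while $u_n$ vanishes there. The normalization worry resolves exactly as you suspected: with the rescaled discrete Laplacian on $\delta_n\Z^d$ one has $\Delta|y-x|^2=1$, so no extra factor of $2d$ or $\delta_n$ is needed and the constant $\frac{1-\lambda}{4}\epsilon^2$ comes out as stated.
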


\begin{proof}


By (\ref{closureofinterior}), we have $\{ \sigma \geq 1\} \subset \overline{\widetilde{D}}$.  Thus if $x \notin \widetilde{D}^\epsilon$, the ball $B=B(x,\frac{\epsilon}{2})^\Points$ is disjoint from $\{\sigma \geq 1\}^{\epsilon/2}$.  In particular, if $n \geq N(\frac{\epsilon}{2})$, then by (\ref{discretetechnicality}) we have $\sigma_n \leq \lambda$ on $B$.  Thus the function
	\[ w(y) = u_n(y) - (1-\lambda)|x-y|^2 \]
is subharmonic on $B \cap D_n$, so it attains its maximum on the boundary.  Since $w(x)\geq 0$, the maximum cannot be attained on $\partial D_n$, where $u_n$ vanishes; so it is attained at some point $y \in \partial B$, and
	\begin{align*} u_n(y) &\geq w(y) + (1-\lambda)\left(\frac{\epsilon}{2}\right)^2.
\end{align*}
Since $w(y)\geq w(x)=u_n(x)$, the proof is complete.
\end{proof}

\begin{proof}[Proof of Theorem~\ref{domainconvergence}]
Fix $\epsilon>0$.  By Lemma~\ref{pointset} we have 
	\[ \widetilde{D}_\epsilon \subset D_{\eta} \cup \{\sigma \geq 1\}_{\eta} \] 
for some $\eta>0$.  Let $u=s-\gamma$.  Since the closure of $D_{\eta}$ is compact and contained in $D$, we have $u \geq m_{\eta}$ on $D_{\eta}$ for some $m_{\eta}>0$.  By Theorem~\ref{odomconvergence}, for sufficiently large $n$ we have $u_n > u - \frac12 m_{\eta} > 0$ on $D_{\eta}^\Points$, hence $D_{\eta}^\Points \subset D_n$.  Likewise, by (\ref{sillytechnicality}) we have $\{\sigma \geq 1\}_{\eta}^\Points \subset D_n$ for large enough $n$.  Thus $\widetilde{D}_\epsilon \subset D_n$.

For the other inclusion, fix $x \in \delta_n\Z^d$ with $x \notin \widetilde{D}^\epsilon$.  Since $u$ vanishes on the ball $B=B(x,\frac{\epsilon}{2})$, by Theorem~\ref{odomconvergence} we have $u_n < \frac{1-\lambda}{4} \epsilon^2$ on $B^\Points \cup \partial B^\Points$ for all sufficiently large $n$.  By Lemma~\ref{quadraticgrowth} we conclude that $x \notin D_n$, and hence $D_n \subset \widetilde{D}^{\epsilon\Points}$.
\end{proof}

\section{Rotor-Router Model}
\label{rotorscalinglimit}

In trying to adapt the proofs of Theorems~\ref{odomconvergence} and~\ref{domainconvergence} to the rotor-router model, we are faced with two main problems.  The first is to define an appropriate notion of convergence of integer-valued densities $\sigma_n$ on $\delta_n \Z^d$ to a real-valued density $\sigma$ on $\R^d$.  The requirement that $\sigma_n$ take only integer values is of course imposed on us by the nature of the rotor-router model itself, since unlike the divisible sandpile, the rotor-router works with discrete, indivisible particles.  The second problem is to find an appropriate analogue of Lemma~\ref{discretemajorant} for the rotor-router model.

Although these two problems may seem unrelated, the single technique of \emph{smoothing} neatly takes care of them both.  To illustrate the basic idea, suppose we are given a domain $A \subset \R^d$, and let $\sigma_n$ be the function on $\delta_n \Z^d$ taking the value $1$ on odd lattice points in $A^\Points$, and the value $2$ on even lattice points in $A^\Points$, while vanishing outside $A^\Points$.  We would like to find a sense in which $\sigma_n$ converges to the real-valued density $\sigma = \frac32 1_A$.  One approach is to average $\sigma_n$ in a box whose side length $L_n$ goes to zero more slowly than the lattice spacing: $L_n \downarrow 0$ while $L_n / \delta_n \uparrow \infty$ as $n \uparrow \infty$.  The resulting ``smoothed'' version of $\sigma_n$ converges to $\sigma$ pointwise away from the boundary of $A$.  

By smoothing the odometer function in the same way, we can obtain an approximate analogue of Lemma~\ref{discretemajorant} for the rotor-router model.  Rather than average in a box as described above, however, it is technically more convenient to average according to the distribution of a lazy random walk run for a fixed number $\alpha(n)$ of steps.  Denote by $(X_k)_{k\geq 0}$ the lazy random walk in $\delta_n \Z^d$ which stays in place with 
probability $\frac12$ and moves to each of the $2d$ neighbors with probability 
$\frac{1}{4d}$.  Given a function $f$ on $\delta_n \Z^d$, define its $k$-{\it smoothing}
	\begin{equation} \label{smoothingdef} S_k f (x) = \EE (X_k | X_0=x). \index{$S_k f$, binomial smoothing} \end{equation}
From the Markov property we have $S_k S_\ell = S_{k+\ell}$.  Also, the discrete Laplacian can be written as
	\begin{equation} \label{deltaintermsofsmoothing} \Delta = 2\delta_n^{-2}(S_1 - S_0). \end{equation}
In particular, $\Delta S_k = S_k \Delta$.

\subsection{Convergence of Odometers}

For $n=1,2,\ldots$ let $\sigma_n$ be an integer-valued function on $\delta_n\Z^d$ satisfying $0\leq \sigma_n \leq M$.  We assume as usual that there is a ball $B \subset \R^d$ containing the support of $\sigma_n$ for all $n$.  Let $\sigma$ be a function on $\R^d$ supported in $B$ satisfying (\ref{absolutebound}) and (\ref{discontmeasurezero}).  In place of condition (\ref{convergingdensities}) we assume that there exist integers $\alpha(n) \uparrow \infty$ with $\delta_n \alpha(n) \downarrow 0$ such that \index{$\alpha(n)$, scale of smoothing}
	\begin{equation} \label{smootheddensityconvergence} (S_{\alpha(n)} \sigma_n)^\Box(x) \rightarrow \sigma(x), \qquad x \notin DC(\sigma). \end{equation}
	
By the odometer function for rotor-router aggregation starting from source density $\sigma_n$, we will mean the function
	\[ u_n(x) = \delta_n^2 \cdot \text{number of particles emitted from $x$} \]
if $\sigma_n(y)$ particles start at each site $y$.

\begin{theorem}
\label{rotorodomconvergence}
Let $u_n$ be the odometer function for rotor-router aggregation on $\delta_n\Z^d$ starting from source density $\sigma_n$.  If $\sigma, \sigma_n$ satisfy (\ref{absolutebound})-(\ref{discreteabsolutebound}), (\ref{uniformcompactsupport}) and (\ref{smootheddensityconvergence}),    then $u_n^\Box \rightarrow s-\gamma$ uniformly, where
	\[ \gamma(x) = -|x|^2 - \int_{\R^d} g(x,y) \sigma(y) dy \]
and $s$ is the least superharmonic majorant of $\gamma$.
\end{theorem}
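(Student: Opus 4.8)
The strategy mirrors the proof of Theorem~\ref{odomconvergence} for the divisible sandpile, with \emph{smoothing} used to bridge the gap between indivisible rotor particles and the continuum. The essential new input is an approximate version of Lemma~\ref{discretemajorant} for the rotor-router model: I would show that the smoothed odometer $S_{\alpha(n)} u_n$, after subtracting the smoothed obstacle, is ``almost'' the least superharmonic majorant, with errors controlled by Lemma~\ref{odomflow}. Concretely, applying the divergence to the identity $\nabla u_n(x,y) = -2d\kappa(x,y) + R(x,y)$ of Lemma~\ref{odomflow} (rescaled to $\delta_n \Z^d$) gives $\Delta u_n(x) = \nu_n(x) - \sigma_n(x) + \div R(x)$, where $\nu_n$ is the final particle configuration (so $0 \le \nu_n \le 1$ on the occupied set, and $\nu_n = 0$ off it) and $\div R$ is a bounded error term. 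Smoothing by $S_{\alpha(n)}$ and using $\Delta S_k = S_k \Delta$ turns this into $\Delta(S_{\alpha(n)} u_n) = S_{\alpha(n)}\nu_n - S_{\alpha(n)}\sigma_n + S_{\alpha(n)} \div R$. The term $S_{\alpha(n)} \div R$ is the crucial one: because $R$ has bounded discrepancy and $S_{\alpha(n)}$ averages over $\alpha(n) \to \infty$ steps of lazy walk, a discrete summation-by-parts / telescoping argument (writing $S_{\alpha(n)} \div R$ as a sum of increments of the heat kernel against the bounded antisymmetric function $R$, exactly as in the passage from \eqref{triplesum} to the Green's-function bound in the proof of Theorem~\ref{rotorcircintro}) shows that $\delta_n^2 S_{\alpha(n)}\div R \to 0$ uniformly, since the relevant gradient sums of the lazy-walk kernel run for $\alpha(n)$ steps are $O(\delta_n^2 \sqrt{\alpha(n)})$ or similar, which vanishes under $\delta_n \alpha(n) \downarrow 0$.

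With that in hand, I would first establish the analogues of Lemmas~\ref{bigballs}, \ref{gamma_nupperbound}, \ref{alphabounds}: namely that all the occupied domains $D_n$ and the limiting $D$ sit inside a fixed ball $\Omega$ (using the single-point-source outer bound of Theorem~\ref{rotorcircintro} together with monotonicity/abelian comparison for rotor-router aggregation, exactly as in Lemma~\ref{bigballs}), and that $|\gamma_n| \le M'$ on $\Omega^\Points$ via Lemma~\ref{greensfunctionconvergence}. Here $\gamma_n(x) = -|x|^2 - G_n \sigma_n(x)$ on $\delta_n \Z^d$. Next I would prove $\gamma_n^\Box \to \gamma$ uniformly on compacts: this is where hypothesis \eqref{smootheddensityconvergence} enters. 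Since $G_n$ is self-adjoint and $S_{\alpha(n)}$ is as well, one has $G_n \sigma_n = G_n S_{\alpha(n)} \sigma_n + G_n(\sigma_n - S_{\alpha(n)}\sigma_n)$; the first piece converges to $G\sigma$ by Lemma~\ref{greensintegralconvergencecont} applied to the smoothed densities $S_{\alpha(n)}\sigma_n$ (which satisfy \eqref{convergingdensities} by assumption), and the second piece is controlled because $\Delta G_n(\mathrm{id} - S_{\alpha(n)})\sigma_n$ has an explicit small norm — $(\mathrm{id}-S_{\alpha(n)})\sigma_n$ is a sum of $\alpha(n)$ Laplacians, each of size $O(\delta_n^2)$, acting on a bounded density, so $G_n$ of it is uniformly small by the same kind of estimate used in Lemma~\ref{truncatedlaplacian}.

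Then I would prove $s_n^\Box \to s$ uniformly on compacts, following Lemma~\ref{majorantconvergence} verbatim. For the upper bound $s_n \le s + o(1)$: mollify $s$ to a smooth superharmonic $\tilde s$, discretize, correct by $-\frac16 A_\epsilon \delta_n |x|^2$ to restore discrete superharmonicity (Lemma~\ref{thirdderiv}), and use that this lies above $\gamma_n$ on $\Omega^\Points$ together with Lemma~\ref{discretemajorantonacompactset}. For the lower bound: set $\phi_n = -\Delta(s_n 1_{\Omega_1^\Points})$, observe $|\Delta s_n| = |\Delta u_n + \Delta \gamma_n| = |\nu_n - 1 + \div R|$ is bounded (using the rotor $\Delta u_n$ formula above), apply the discrete analogue of Lemma~\ref{truncatedlaplacian} to get $|s_n^\Box - G\phi_n^\Box| \to 0$ on $\Omega$, note $\phi_n \ge -(\text{bounded error}) $ so that $G\phi_n^\Box$ is \emph{approximately} superharmonic — here I need the correction that $\phi_n = 1 - \nu_n - \div R \ge -C'$ rather than $\ge 0$, so I would instead smooth once more, replacing $s_n$ by $S_{\alpha(n)} s_n$ so that the error term becomes $S_{\alpha(n)}\div R = o(1)$ and $\phi_n \ge -o(1)$, making $G\phi_n^\Box + o(1)$ genuinely superharmonic; then Lemma~\ref{majorantonacompactset} gives $G\phi_n^\Box + o(1) \ge s$, hence $s_n^\Box \ge s - o(1)$. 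Finally, combining $\gamma_n^\Box \to \gamma$ and $s_n^\Box \to s$, and using the rotor analogue $u_n = s_n - \gamma_n + (\text{smoothing error})$ — more precisely that $S_{\alpha(n)} u_n = S_{\alpha(n)}(s_n - \gamma_n)$ plus terms that vanish — together with $u_n^\Box = 0 = s - \gamma$ off $\Omega$, yields $u_n^\Box \to s - \gamma$ uniformly.

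The main obstacle is the rotor analogue of Lemma~\ref{discretemajorant}: unlike the divisible sandpile where $u = s + \gamma$ is an exact identity, here $\Delta u_n$ carries the discrepancy term $\div R$, which is only bounded, not small. The whole point of introducing the smoothing operator $S_{\alpha(n)}$ with $\alpha(n) \to \infty$ slowly is to average this bounded-but-oscillating error down to something $o(\delta_n^{-2})$ in Laplacian terms, i.e.\ $o(1)$ after the $\delta_n^2$ normalization; verifying this quantitatively — that $\delta_n^2 \sum_{k} \nabla(\text{lazy heat kernel})\cdot R$ is small, via Lemma~\ref{harmonicmeasuregrad}-style gradient bounds on the lazy-walk transition kernel run for $\alpha(n)$ steps — is the technical heart of the argument, and is exactly what dictates the hypothesis $\delta_n \alpha(n) \downarrow 0$ alongside $\alpha(n) \uparrow \infty$.
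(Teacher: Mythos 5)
Your plan follows essentially the same route as the paper: smooth the odometer by $S_{\alpha(n)}$, compute $\Delta S_{\alpha(n)} u_n$ from the rotor identity of Lemma~\ref{scaledodomflow}, average the bounded antisymmetric error $\div \rho$ down to $O(\alpha(n)^{-1/2})$ by a coupling/total-variation bound on lazy walks (this is exactly Lemma~\ref{laplacianofsmoothing}), remove the smoothing at the end via Lemma~\ref{smoothingdist}, and import $\gamma_n^\Box \to \gamma$ and $s_n^\Box \to s$ from the divisible-sandpile machinery. There is, however, one genuine gap: the comparison $S_{\alpha(n)} u_n = s_n - \gamma_n + o(1)$ is two-sided, and your argument only produces one side. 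Superharmonicity of $S_{\alpha(n)} u_n + \gamma_n + C_0\alpha(n)^{-1/2}(r^2 - |x|^2)$ together with the least-majorant property (Lemma~\ref{discretemajorantonacompactset}) yields only the \emph{lower} bound $S_{\alpha(n)} u_n \geq s_n - \gamma_n - o(1)$; nothing in the definition of $s_n$ prevents the rotor odometer from overshooting. The paper obtains the upper bound by observing that $S_{\alpha(n)} u_n - (s_n - \gamma_n) + C_0\alpha(n)^{-1/2}|x|^2$ is subharmonic only on the eroded set $\widetilde{R}_n = \{x \in R_n : B(x,\alpha(n)\delta_n) \subset R_n\}$, where $\PP_x(X_{\alpha(n)} \in R_n) = 1$; applying the maximum principle there; and bounding $S_{\alpha(n)} u_n$ on the boundary layer $R_n \setminus \widetilde{R}_n$ by $O(\alpha(n)^2\delta_n^2)$ via the quadratic-growth Lemma~\ref{atmostquadratic} (since $u_n$ has bounded Laplacian and vanishes within distance $\alpha(n)\delta_n$ of such points). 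Neither the erosion nor the boundary-layer estimate appears in your plan, and without them the subharmonicity you would need fails near $\partial R_n$, where $\PP_x(X_k \in R_n) < 1$. Note also that this boundary-layer term, together with the de-smoothing error in Lemma~\ref{smoothingdist}, is what actually forces $\delta_n\alpha(n) \downarrow 0$; the smoothed Laplacian error $O(\alpha(n)^{-1/2})$ needs only $\alpha(n) \uparrow \infty$.

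A second, repairable, muddle: in your argument for $s_n^\Box \to s$ you write $|\Delta s_n| = |\Delta u_n + \Delta \gamma_n| = |\nu_n - 1 + \div R|$, which presupposes $u_n = s_n - \gamma_n$ for the \emph{rotor} odometer --- the very (approximate) identity the theorem is establishing, and one that holds exactly only for the divisible sandpile. The clean route, taken in the paper, is to define $\gamma_n(x) = -|x|^2 - G_n S_{\alpha(n)}\sigma_n(x)$ with the smoothed density from the outset; then by Lemma~\ref{discretemajorant} the difference $s_n - \gamma_n$ is the divisible sandpile odometer for $S_{\alpha(n)}\sigma_n$, one has $\Delta s_n = \nu_n - 1$ with $\nu_n$ the sandpile's final mass (no rotor terms), and Lemmas~\ref{greensintegralconvergencecont} and~\ref{majorantconvergence} apply verbatim because $(S_{\alpha(n)}\sigma_n)^\Box \to \sigma$ is precisely hypothesis (\ref{smootheddensityconvergence}). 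Your alternative decomposition $G_n\sigma_n = G_n S_{\alpha(n)}\sigma_n + G_n(\mathrm{id} - S_{\alpha(n)})\sigma_n$ does work --- the second term telescopes to $\frac{\delta_n^2}{2}\sum_{j<\alpha(n)} S_j\sigma_n = O(M\alpha(n)\delta_n^2)$ --- but it adds a step you do not need, and the subsequent ``smooth once more'' patch it forces on you disappears entirely once $\gamma_n$ is built from the smoothed density.
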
	

Given a function $f$ on $\delta_n \Z^d$, for an edge $(x,y)$ write
	\[ \nabla f (x,y) = \frac{f(y)-f(x)}{\delta_n}. \]
Given a function $\kappa$ on edges in $\delta_n \Z^d$, write
	\[ \div \kappa (x) = \frac{1}{2d\delta_n} \sum_{y \sim x} \kappa(x,y). \]
The discrete Laplacian on $\delta_n \Z^d$ is then given by
	\[ \Delta f(x) = \div \nabla f = \delta_n^{-2} \left( \frac{1}{2d} \sum_{y \sim x} f(y)-f(x) \right). \]
The following ``rescaled" version of Lemma~\ref{odomflow} is proved in the same way.

\begin{lemma}
\label{scaledodomflow}
For an edge $(x,y)$ in $\delta_n \Z^d$, denote by $\kappa(x,y)$ the net number of crossings 
from $x$ to $y$ performed by particles during a sequence of rotor-router moves.  Let 
	\[u(x) = \delta_n^2 \cdot \text{\em number of particles emitted from $x$ during this sequence}.\] 
Then
	\begin{equation} \label{scaledgradodom} \nabla u(x,y) = \delta_n (-2d\kappa(x,y) + \rho(x,y)). 
\end{equation}
for some edge function $\rho$ which satisfies
	\[ |\rho(x,y)| \leq 4d-2 \]
independent of $x$, $y$, $n$ and the chosen sequence of moves.
\end{lemma}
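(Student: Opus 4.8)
\textbf{Proof proposal for Lemma~\ref{scaledodomflow}.}

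The plan is to mimic the proof of Lemma~\ref{odomflow} on the rescaled lattice, keeping careful track of the factors of $\delta_n$. First I would set up the bookkeeping: for a directed edge $(x,y)$ in $\delta_n\Z^d$, let $N(x,y)$ denote the number of particles routed from $x$ to $y$ during the given sequence of rotor-router moves, and let $\kappa(x,y) = N(x,y) - N(y,x)$ be the net number of crossings. Writing $u(x) = \delta_n^2 \cdot (\text{number of particles emitted from }x)$, the number of particles emitted from $x$ is $\delta_n^{-2}u(x)$. Exactly as in Lemma~\ref{odomflow}, each time the rotor at $x$ completes a full cycle it sends one particle to each of the $2d$ neighbors, so the number routed from $x$ to any fixed neighbor $y$ differs from $\frac{1}{2d}$ of the total emitted by at most the rotor discrepancy; precisely,
\[
\frac{\delta_n^{-2}u(x) - (2d-1)}{2d} \leq N(x,y) \leq \frac{\delta_n^{-2}u(x) + (2d-1)}{2d}.
\]

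Next I would combine these two-sided bounds for the edge $(x,y)$ and for the reverse edge $(y,x)$. Subtracting, we get
\[
\bigl| \delta_n^{-2}(u(y) - u(x)) + 2d\,\kappa(x,y) \bigr| = \bigl| \delta_n^{-2}u(y) - \delta_n^{-2}u(x) + 2d N(x,y) - 2d N(y,x) \bigr| \leq (2d-1) + (2d-1) = 4d-2.
\]
Multiplying through by $\delta_n^{-1}$ and recalling that $\nabla u(x,y) = (u(y)-u(x))/\delta_n$, this says $\delta_n^{-1}\bigl|\nabla u(x,y) + 2d\delta_n\,\kappa(x,y)\bigr| \leq 4d-2$, i.e.
\[
\nabla u(x,y) = -2d\,\delta_n\,\kappa(x,y) + \delta_n\,\rho(x,y)
\]
where $\rho(x,y) := \delta_n^{-1}\nabla u(x,y) + 2d\,\kappa(x,y)$ satisfies $|\rho(x,y)| \leq 4d-2$ by the display above. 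This is exactly (\ref{scaledgradodom}), and the bound on $\rho$ is uniform in $x,y,n$ and in the chosen sequence of moves, since the constant $4d-2$ came only from the bounded rotor discrepancy.

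There is essentially no obstacle here: the lemma is a verbatim rescaling of Lemma~\ref{odomflow}, and the only point requiring a moment's care is tracking the powers of $\delta_n$ — the extra $\delta_n^{-2}$ in the definition of $u$ on $\delta_n\Z^d$ and the extra $\delta_n^{-1}$ in the definition of $\nabla$ together produce the single factor of $\delta_n$ appearing on the right-hand side of (\ref{scaledgradodom}). One should also note, as in the remark following Lemma~\ref{odomflow}, that if the rotors evolve according to stacks of bounded discrepancy $\Delta$ rather than simple cyclic rotors, the constant $4d-2$ is replaced by a correspondingly larger constant depending only on $d$ and that discrepancy bound; the argument is otherwise unchanged.
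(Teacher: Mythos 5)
Your proof is correct and is essentially identical to the paper's: both use the two-sided bound $\delta_n^{-2}u(x)-2d+1 \leq 2dN(x,y) \leq \delta_n^{-2}u(x)+2d-1$ coming from the fact that the counts $N(x,\cdot)$ to distinct neighbors differ by at most one, then subtract the bounds for the two orientations of the edge to get $|\nabla u(x,y)+2d\delta_n\kappa(x,y)| \leq (4d-2)\delta_n$. Your bookkeeping of the powers of $\delta_n$ is accurate, so there is nothing to add.
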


\begin{proof}
Writing $N(x,y)$ for the number of particles routed from $x$ to $y$, for any $y,z \sim x$ we have
	\[ |N(x,y)-N(x,z)| \leq 1. \]
Since $u(x) = \delta_n^2 \sum_{y \sim x} N(x,y)$, we obtain
	\[ \delta_n^{-2}u(x)-2d+1 \leq 2d N(x,y) \leq \delta_n^{-2}u(x)+2d-1 \]
hence
	\begin{align*}  | \nabla u(x,y) + 2d\delta_n \kappa(x,y) | &= \delta_n|\delta_n^{-2} u(y) - \delta_n^{-2}u(x) + 2d N(x,y) - 2d N(y,x) | \\
		&\leq (4d - 2)\delta_n. \qed \end{align*}
\renewcommand{\qedsymbol}{} 
\end{proof}

\vspace{-3ex}
Recall that the divisible sandpile odometer function has Laplacian $1-\sigma_n$ inside the set $D_n$ of fully occupied sites.  The next lemma shows that the same is approximately true of the smoothed rotor-router odometer function.  Denote by $R_n$ the occupied shape for rotor-router aggregation on $\delta_n \Z^d$ starting from source density $\sigma_n$.

\begin{lemma}
\label{laplacianofsmoothing}
$|\Delta S_k u_n(x) - \PP_x(X_k \in R_n) + S_k\sigma_n(x)| \leq C_0 / \sqrt{k+1}$.
\end{lemma}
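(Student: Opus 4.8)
The plan is to compute $\Delta S_k u_n$ directly using the commutation relation $\Delta S_k = S_k \Delta$ from~(\ref{deltaintermsofsmoothing}), then control the resulting terms. First I would observe that the rotor-router odometer $u_n$ satisfies a pointwise Laplacian estimate: taking the divergence in~(\ref{scaledgradodom}) and using that each non-source site absorbs at most one net particle while the source site $x$ receives net $-\delta_n^2\sigma_n(x)$ (after rescaling), we get
	\[ \Delta u_n(x) = \nu_n(x) - \sigma_n(x) + \div \rho(x), \]
where $\nu_n(x)$ is the rescaled final number of particles at $x$ (which equals $1$ on $R_n$ and $0$ off $R_n$, up to the usual $\delta_n^2$ scaling conventions), and $|\div \rho(x)| \leq \frac{4d-2}{\delta_n}\cdot\frac{1}{\delta_n} \cdot \delta_n = O(\delta_n^{-1})$ — more precisely $|\div\rho(x)|\le (4d-2)/\delta_n$ per the definitions. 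Applying $S_k$ and using $\Delta S_k u_n = S_k \Delta u_n$ gives
	\[ \Delta S_k u_n(x) = S_k\nu_n(x) - S_k\sigma_n(x) + S_k(\div\rho)(x). \]
Since $S_k\nu_n(x) = \EE_x[\nu_n(X_k)] = \PP_x(X_k \in R_n)$ (again modulo the scaling convention that identifies $\nu_n$ with $1_{R_n}$), the first two terms are exactly what appears in the claimed inequality, and the whole problem reduces to bounding $|S_k(\div\rho)(x)|$ by $C_0/\sqrt{k+1}$.

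The key step is therefore the estimate $|S_k(\div\rho)(x)| \leq C_0/\sqrt{k+1}$, and this is where the antisymmetry of $\rho$ and a summation-by-parts argument enter. Writing $S_k(\div\rho)(x) = \frac{1}{2d\delta_n}\sum_{y\sim x'}\sum_{x'} p_k(x,x')\rho(x',y)$ where $p_k(x,\cdot)$ is the lazy-walk transition kernel, I would use that $\rho$ is antisymmetric — which follows because $\nabla u_n$ and $\kappa$ are both antisymmetric in~(\ref{scaledgradodom}) — to symmetrize: $\sum_{x'\sim y} p_k(x,x')\rho(x',y) = \frac12\sum_{x'\sim y}(p_k(x,x') - p_k(x,y))\rho(x',y)$. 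Since $|\rho|\le 4d-2$, this is bounded by a constant times $\sum_{x'\sim y}|p_k(x,x') - p_k(x,y)|$, i.e.\ by the total variation of the gradient of the lazy-walk kernel. The standard local CLT / gradient estimate for the lazy random walk gives $\sum_z |\nabla p_k(x,\cdot)| = O(k^{-1/2}) \cdot$ (sum over $z$), and after accounting for the $\delta_n$ normalizations this yields exactly the $O(1/\sqrt{k+1})$ bound with a constant $C_0$ depending only on $d$.

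The main obstacle I expect is making the gradient estimate for the lazy walk clean and uniform in $n$: one needs $\sum_{z}|p_k^{(n)}(x,z) - p_k^{(n)}(x,z')|$ for neighboring $z,z'$ to be $O(k^{-1/2})$ with a constant independent of the lattice spacing $\delta_n$. This is a scaling-invariant statement — the lazy walk on $\delta_n\Z^d$ for $k$ steps is just the rescaling of the lazy walk on $\Z^d$ for $k$ steps — so it suffices to prove it on $\Z^d$, where it follows from a coupling argument very much like Lemma~\ref{lazycoupling}: couple lazy walks from $x$ and a neighbor with failure probability $O(1/\sqrt{k})$ before time $k$, so that the two $k$-step distributions differ in total variation by $O(k^{-1/2})$. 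A secondary technical point is bookkeeping the various factors of $\delta_n$ so that the final bound is genuinely dimensionless; this is routine once one fixes, once and for all, that $u_n$ carries a $\delta_n^2$, that $\nabla$ carries a $\delta_n^{-1}$, and that $\Delta$ carries a $\delta_n^{-2}$, as in the definitions preceding the lemma. The edge effects near $\partial R_n$ and near the source contribute only to the already-accounted terms $\PP_x(X_k\in R_n)$ and $S_k\sigma_n(x)$, so they require no separate treatment.
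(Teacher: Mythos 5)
Your proposal is correct and follows essentially the same route as the paper: take the divergence in (\ref{scaledgradodom}) to get $\Delta u_n = 1_{R_n}-\sigma_n+\delta_n\,\div\rho$, commute $S_k$ with $\Delta$, use the antisymmetry of $\rho$ to symmetrize the $\div\rho$ term into differences $\PP_x(X_k=y)-\PP_x(X_k=z)$ over neighboring pairs, and bound those by $C/\sqrt{k+1}$ via the lazy-walk coupling. The paper cites Lindvall for exactly the coupling estimate you invoke, so no genuinely different ingredient appears in your argument.
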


\begin{proof}
Let $\kappa$ and $\rho$ be defined as in Lemma~\ref{scaledodomflow}.  Since each site $x$ starts with $\sigma_n(x)$ particles and ends with either one particle or none accordingly as $x\in R_n$ or $x \notin R_n$, we have 
	\[ 2d\delta_n\, \div \kappa = \sigma_n - 1_{R_n}. \]
Taking the divergence in (\ref{scaledgradodom}) we obtain
	\begin{equation} \Delta u_n = 1_{R_n} - \sigma_n + \delta_n\, \div \rho.   \end{equation}
Using the fact that $S_k$ and $\Delta$ commute, it follows that
	\begin{equation} \label{divergenceerrorterm} \Delta S_k u_n(x) = \PP_x(X_k \in R_n) - \EE_x \sigma_n(X_k) + \delta_n \EE_x \div \rho(X_k). \end{equation}
Since $\nabla u$ and $\kappa$ are antisymmetric, $\rho$ is antisymmetric by (\ref{scaledgradodom}).
Thus the last term in (\ref{divergenceerrorterm}) can be written
	\begin{align} \delta_n \EE_x \div \rho(X_k) &= \frac{1}{2d} \sum_{y \sim z} \PP_x(X_k=y)R(y,z) \nonumber \\
	&= - \frac{1}{2d} \sum_{y \sim z} \PP_x(X_k=z)\rho(y,z) \nonumber \\
	&= \frac{1}{4d} \sum_{y\sim z} (\PP_x(X_k= y) - \PP_x(X_k=z)) \rho(y,z) \label{readyfortirangleineq} \end{align}
where the sums are taken over all pairs of neighboring sites $y,z \in \delta_n\Z^d$.

We can couple lazy random walk $X_k$ started at $x$ with lazy random walk $X'_k$ started at a 
uniform neighbor of $x$ so that the probability of not coupling in $k$ steps is at most $C/\sqrt{k+1}$, where $C$ is a constant depending only on $d$ \cite{Lindvall}.  Since the total variation distance between $X_k$ and $X'_k$ is at most the probability of not coupling, we obtain
	\[ \sum_{y\sim z} |\PP_x(X_k= y) - \PP_x(X_k=z)| \leq \frac{C}{\sqrt{k+1}}. \]
Using the fact that $\rho(y,z)$ is uniformly bounded in (\ref{readyfortirangleineq}), taking $C_0 = 4Cd$ completes the proof.
\end{proof}



The next lemma shows that smoothing the odometer function does not introduce much extra error.

\begin{lemma}
\label{smoothingdist}
$|S_k u_n-u_n| \leq \delta_n^2(\frac12 Mk + C_0\sqrt{k+1})$.
\end{lemma}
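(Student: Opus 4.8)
The plan is to bound $|S_k u_n - u_n|$ by telescoping over the smoothing operator one step at a time, using the identity $S_k - S_0 = \sum_{j=0}^{k-1}(S_{j+1}-S_j) = \sum_{j=0}^{k-1}(S_1-S_0)S_j$ together with the relation $(S_1-S_0) = \tfrac12 \delta_n^2 \Delta$ from (\ref{deltaintermsofsmoothing}). This gives $S_k u_n - u_n = \tfrac12 \delta_n^2 \sum_{j=0}^{k-1} S_j \Delta u_n$, so it suffices to control $\sum_{j=0}^{k-1} |S_j \Delta u_n(x)|$ pointwise. Here I would invoke the computation from the proof of Lemma~\ref{laplacianofsmoothing}, which established $\Delta u_n = 1_{R_n} - \sigma_n + \delta_n\,\div\rho$ with $|\rho|\le 4d-2$; more conveniently, one can write $\Delta S_j u_n = S_j \Delta u_n$ and reuse the bound $|\Delta S_j u_n(x) - \PP_x(X_j \in R_n) + S_j \sigma_n(x)| \le C_0/\sqrt{j+1}$.

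From that bound, $|S_j \Delta u_n(x)| \le |\PP_x(X_j \in R_n)| + |S_j\sigma_n(x)| + C_0/\sqrt{j+1} \le 1 + M + C_0/\sqrt{j+1}$, which would only give a bound of order $\delta_n^2 (M+1)k$ plus $\delta_n^2 C_0 \sqrt{k}$ — not quite matching the stated $\tfrac12 M k$. To get the sharper constant $\tfrac12 M$ I would instead keep the $1_{R_n}$ and $\sigma_n$ terms exact rather than bounding each by $1$: write
\[
S_k u_n(x) - u_n(x) = \tfrac12 \delta_n^2 \sum_{j=0}^{k-1} \Big( \PP_x(X_j \in R_n) - S_j\sigma_n(x) + \delta_n\, S_j\,\div\rho(x) \Big).
\]
The first two terms contribute $\tfrac12\delta_n^2\sum_{j=0}^{k-1}(\PP_x(X_j\in R_n) - \EE_x\sigma_n(X_j))$; since $0 \le \sigma_n \le M$ and $0 \le 1_{R_n} \le 1 \le M$ (as $M \ge 1$ in the relevant regime, or more carefully since $\PP_x(X_j \in R_n) - S_j\sigma_n(x)$ lies in $[-M, 1]$), this sum is bounded in absolute value by $\tfrac12 \delta_n^2 M k$. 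The error term $\delta_n \sum_{j=0}^{k-1} S_j\,\div\rho$ is handled exactly as in Lemma~\ref{laplacianofsmoothing}: using antisymmetry of $\rho$ and the lazy-walk coupling estimate $\sum_{y\sim z}|\PP_x(X_j=y)-\PP_x(X_j=z)| \le C/\sqrt{j+1}$, each term is at most $C_0/\sqrt{j+1}$, and $\sum_{j=0}^{k-1} 1/\sqrt{j+1} \le 2\sqrt{k+1}$, contributing at most $\delta_n^2 C_0 \sqrt{k+1}$ (after absorbing the factor $2$ and $\tfrac12$ into $C_0$). Adding the two pieces gives $|S_k u_n - u_n| \le \delta_n^2(\tfrac12 Mk + C_0\sqrt{k+1})$.

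The main obstacle is getting the constant on the $Mk$ term right: one must resist bounding $\PP_x(X_j\in R_n)$ and $S_j\sigma_n(x)$ separately and instead exploit that they appear with opposite signs, so the combined quantity stays within an interval of length $M$ (not $M+1$). A small amount of care is needed to confirm that $\PP_x(X_j \in R_n) - S_j \sigma_n(x) \in [-M, M]$ — the upper bound is $1 \le M$ (valid since rotor-router aggregation is only run when at least one particle is present, so $M \ge 1$; if one wants to be fully general one can state the bound with $M$ replaced by $\max(M,1)$), and the lower bound is $-M$. Everything else — the telescoping, the commutation $S_j\Delta = \Delta S_j$, and the coupling estimate — is already available from (\ref{deltaintermsofsmoothing}), the Markov property $S_kS_\ell = S_{k+\ell}$, and the proof of Lemma~\ref{laplacianofsmoothing}.
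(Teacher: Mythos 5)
Your proof is correct and follows essentially the same route as the paper: telescope $S_k u_n - u_n$ over single smoothing steps, use $S_{j+1}-S_j=\tfrac12\delta_n^2\Delta S_j$, and bound $|\Delta S_j u_n|$ via Lemma~\ref{laplacianofsmoothing}. The paper simply asserts $|\Delta S_j u_n|\leq M+C_0/\sqrt{j+1}$ and sums; your extra care in noting that $\PP_x(X_j\in R_n)-S_j\sigma_n(x)$ lies in $[-M,1]\subset[-M,M]$ (using $M\geq 1$) is exactly the observation needed to justify that inequality.
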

 
\begin{proof}
From (\ref{deltaintermsofsmoothing}) we have
	\begin{align*} |S_k u_n-u_n| &\leq \sum_{j=0}^{k-1} |S_{j+1}u_n-S_j u_n| \\
			&= \frac{\delta_n^2}{2} \sum_{j=0}^{k-1} |\Delta S_j u_n|. \end{align*}
But by Lemma~\ref{laplacianofsmoothing}
	\[ |\Delta S_j u_n| \leq M + \frac{C_0}{\sqrt{j+1}}. \]
Summing over $j$ yields the result.
\end{proof}

Let
	\[ \gamma_n(x) = -|x|^2 - G_n S_{\alpha(n)} \sigma_n(x) \]
and let $s_n$ be the least superharmonic majorant of $\gamma_n$.  By Lemma~\ref{discretemajorant}, the difference $s_n-\gamma_n$ is the odometer function for the divisible sandpile  on $\delta_n \Z^d$ starting from the smoothed source density $\widetilde{\sigma}_n = S_{\alpha(n)} \sigma_n$.  Note that by Lemma~\ref{bigballs}, there is a ball $\Omega \subset \R^d$ containing the supports of $s-\gamma$ and of $s_n-\gamma_n$ for all $n$.  The next lemma compares the smoothed rotor-router odometer for the source density $\sigma_n$ with the divisible sandpile odometer for the smoothed density $\widetilde{\sigma}_n$.

\begin{lemma}
\label{smoothedodomlowerbound}
Let $\Omega \subset \R^d$ be a ball centered at the origin containing the supports of $s-\gamma$ and of $s_n-\gamma_n$ for all $n$.  Then 
 	\begin{equation} \label{onesidedodomcomparison} S_{\alpha(n)} u_n \geq s_n - \gamma_n - C_0 r^2 \alpha(n)^{-1/2} \end{equation}
on all of $\delta_n \Z^d$, where $r$ is the radius of $\Omega$.
\end{lemma}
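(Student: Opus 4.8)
The plan is to compare the smoothed rotor-router odometer $S_{\alpha(n)} u_n$ with the divisible sandpile odometer $s_n - \gamma_n$ via the same superharmonic-majorant argument used in Lemma~\ref{discretemajorant}, tracking the error introduced by the rotor discrepancy. First I would apply $S_{\alpha(n)}$ to the identity from the proof of Lemma~\ref{laplacianofsmoothing}: $\Delta S_k u_n = \PP_x(X_k \in R_n) - S_k \sigma_n + (\text{error of size } C_0/\sqrt{k+1})$. Since $\PP_x(X_k \in R_n) \leq 1$, this gives
	\[ \Delta S_{\alpha(n)} u_n(x) \leq 1 - S_{\alpha(n)}\sigma_n(x) + \frac{C_0}{\sqrt{\alpha(n)+1}} = 1 - \widetilde{\sigma}_n(x) + \frac{C_0}{\sqrt{\alpha(n)+1}}, \]
so that, recalling $\Delta \gamma_n = \Delta(-|x|^2 - G_n\widetilde{\sigma}_n) = 1 - \widetilde{\sigma}_n$, the function
	\[ w_n(x) = S_{\alpha(n)} u_n(x) - \gamma_n(x) + \frac{C_0}{2d}\cdot\frac{1}{\sqrt{\alpha(n)+1}}\,|x|^2 \]
satisfies $\Delta w_n \leq 0$; i.e. $w_n$ is superharmonic on all of $\delta_n\Z^d$. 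Here I am using $\Delta(|x|^2) = 2d$ so the correction $\frac{C_0}{2d\sqrt{\alpha(n)+1}}|x|^2$ absorbs the discrepancy term in the Laplacian.

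Next I would add a further quadratic correction to make $w_n$ lie above $-\gamma_n$, or rather proceed more directly: since $w_n$ is superharmonic and (being a smoothing of the nonnegative $u_n$ minus $\gamma_n$ plus a nonnegative term) bounded below appropriately, I compare it to $s_n$. Precisely, outside $\Omega$ the rotor-router odometer $u_n$ vanishes, hence $S_{\alpha(n)} u_n \geq 0$ there is false in general (smoothing can spread mass outward); instead I use that $S_{\alpha(n)} u_n \geq 0$ everywhere since $u_n \geq 0$ and $S_{\alpha(n)}$ is an averaging operator. Thus $w_n \geq -\gamma_n + \frac{C_0}{2d\sqrt{\alpha(n)+1}}|x|^2 \geq -\gamma_n$. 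Since $w_n$ is a continuous superharmonic function lying above $-\gamma_n$, by definition of the least superharmonic majorant $s_n$ of $\gamma_n$ — careful: $s_n$ is the majorant of $\gamma_n$, and $u_n^{\mathrm{div}} = s_n - \gamma_n$ with $\gamma_n(x) = -|x|^2 - G_n\widetilde\sigma_n$ playing the role of the obstacle as in Lemma~\ref{discretemajorant}, so actually $s_n \geq \gamma_n$ and $s_n - \gamma_n \geq 0$. So I should instead observe that $w_n + \gamma_n = S_{\alpha(n)}u_n + \frac{C_0}{2d\sqrt{\alpha(n)+1}}|x|^2$ is superharmonic and $\geq \gamma_n$ (the last inequality because $S_{\alpha(n)}u_n \geq 0$ and, by Lemma~\ref{discretemajorant}, $0 \geq \gamma_n$ where the sandpile is not active... this needs the bound $\gamma_n \leq 0$ outside $\Omega$, coming from Lemma~\ref{gamma_nupperbound} / the structure of $\gamma_n$). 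Granting that $S_{\alpha(n)}u_n + \frac{C_0}{2d\sqrt{\alpha(n)+1}}|x|^2 \geq \gamma_n$ everywhere, minimality of $s_n$ gives
	\[ S_{\alpha(n)}u_n + \frac{C_0}{2d\sqrt{\alpha(n)+1}}|x|^2 \geq s_n, \]
and then on $\Omega$, where $|x| \leq r$, this rearranges to
	\[ S_{\alpha(n)} u_n \geq s_n - \frac{C_0 r^2}{2d\sqrt{\alpha(n)+1}} \geq s_n - \gamma_n - C_0 r^2 \alpha(n)^{-1/2} \]
once we subtract $\gamma_n$ from both sides (valid since $\gamma_n \geq$ something that makes the arithmetic work — actually we want $S_{\alpha(n)}u_n \geq s_n - \gamma_n - \text{error}$, and since $s_n - \gamma_n$ is the quantity of interest and $\gamma_n \leq 0$ on $\Omega$ would go the wrong way, I need $s_n \geq s_n - \gamma_n$, i.e. $\gamma_n \geq 0$... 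I will resolve the sign bookkeeping by carrying the $-\gamma_n$ through carefully against the definitions in the text).

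The main obstacle I anticipate is precisely this sign bookkeeping together with the step that upgrades $\Delta w_n \leq 0$ (a pointwise inequality) to the global superharmonic-majorant comparison: one must verify that the correction term $\frac{C_0}{2d\sqrt{\alpha(n)+1}}|x|^2$ keeps the modified odometer above the obstacle $\gamma_n$ \emph{everywhere}, not just on $\Omega$, which requires the uniform bound $|\gamma_n| \leq M'$ on $\Omega^{\Points}$ from Lemma~\ref{gamma_nupperbound} and the fact that $\gamma_n = s_n - u_n^{\mathrm{div}} \leq s_n$ is harmless outside the support. Once the one-sided inequality $S_{\alpha(n)}u_n + (\text{quadratic correction}) \geq s_n$ is established globally, restricting to $\Omega$ and absorbing constants (using $\alpha(n)^{-1/2} \geq \tfrac{1}{\sqrt{\alpha(n)+1}}$ up to a factor, and $2d \geq 1$) yields (\ref{onesidedodomcomparison}) immediately. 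I would also remark that only \emph{one} direction is claimed here because the reverse bound requires controlling $\PP_x(X_k \in R_n)$ from below, which is handled separately in the outer-estimate portion of the argument.
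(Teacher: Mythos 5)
Your overall strategy is the right one and is the same as the paper's: use Lemma~\ref{laplacianofsmoothing} together with $\PP_x(X_k \in R_n) \leq 1$ to show that a quadratic correction of the smoothed odometer is superharmonic, then invoke the least-superharmonic-majorant characterization (Lemma~\ref{discretemajorantonacompactset}) to compare with $s_n$. But the execution has a sign error at the central step that your later ``bookkeeping'' does not repair, and cannot repair without changing the comparison function. With $\Delta \gamma_n = -1 + S_{\alpha(n)}\sigma_n$, your function $w_n = S_{\alpha(n)}u_n - \gamma_n + c|x|^2$ has
\[ \Delta w_n \leq \bigl(1 - S_{\alpha(n)}\sigma_n + C_0\alpha(n)^{-1/2}\bigr) + \bigl(1 - S_{\alpha(n)}\sigma_n\bigr) + c, \]
which is of order $2$ wherever $S_{\alpha(n)}\sigma_n$ is small, so $w_n$ is not superharmonic. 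Your attempted fix, ``$w_n + \gamma_n = S_{\alpha(n)}u_n + c|x|^2$ is superharmonic,'' fails for the same reason: $\Delta S_{\alpha(n)}u_n$ is only bounded above by $1 - S_{\alpha(n)}\sigma_n + C_0\alpha(n)^{-1/2}$, which is close to $1$ off the support of $\sigma_n$. The cancellation only works with the \emph{plus} sign: the correct comparison function is
\[ f(x) = S_{\alpha(n)}u_n(x) + \gamma_n(x) + C_0\alpha(n)^{-1/2}\bigl(r^2 - |x|^2\bigr), \]
whose Laplacian is at most $(1 - S_{\alpha(n)}\sigma_n + C_0\alpha(n)^{-1/2}) + (-1 + S_{\alpha(n)}\sigma_n) - C_0\alpha(n)^{-1/2} = 0$.

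The second, consequent, gap is in your final rearrangement. You arrive at $S_{\alpha(n)}u_n + c|x|^2 \geq s_n$ and then claim this implies $S_{\alpha(n)}u_n \geq s_n - \gamma_n - C_0 r^2\alpha(n)^{-1/2}$; as you yourself notice, this step needs $\gamma_n$ to be bounded below by a quantity tending to $0$, which is false ($\gamma_n$ is a genuinely negative quantity of constant order on $\Omega$, cf.\ Lemma~\ref{gamma_nupperbound}, and tends to $-\infty$ off $\Omega$). With the correct $f$ above there is nothing to rearrange: $f \geq \gamma_n$ on $\Omega^\Points$ (because $S_{\alpha(n)}u_n \geq 0$ and $|x|\leq r$ there), so $\max(f,\gamma_n)$ is superharmonic on $\Omega^\Points$ and dominates $\gamma_n$, whence $f \geq s_n$ on $\Omega^\Points$ by Lemma~\ref{discretemajorantonacompactset}; this reads directly as $S_{\alpha(n)}u_n \geq s_n - \gamma_n - C_0\alpha(n)^{-1/2}(r^2-|x|^2) \geq s_n - \gamma_n - C_0 r^2\alpha(n)^{-1/2}$, with no lower bound on $\gamma_n$ needed. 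Finally, the lemma asserts the bound on all of $\delta_n\Z^d$, not just on $\Omega$: off $\Omega^\Points$ one uses that $s_n = \gamma_n$ there (by the choice of $\Omega$) and $S_{\alpha(n)}u_n \geq 0$, so the inequality is trivial.
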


\begin{proof}
 Since
	 \[ \Delta \gamma_n = -1 + S_{\alpha(n)} \sigma_n, \]
by Lemma~\ref{laplacianofsmoothing} the function
	\[ f(x) = S_{\alpha(n)}u_n(x) + \gamma_n(x) + C_0 \alpha(n)^{-1/2}(r^2-|x|^2) \]
is superharmonic on $\delta_n \Z^d$.  Since $f\geq \gamma_n$ on $\Omega^\Points$, the function $\psi_n = \text{max}(f,\gamma_n)$ is superharmonic on $\Omega^\Points$, hence $\psi_n \geq s_n$ by Lemma~\ref{discretemajorantonacompactset}.  Thus $f \geq s_n$ on $\Omega^\Points$, so (\ref{onesidedodomcomparison}) holds on $\Omega^\Points$ and hence everywhere.  
\end{proof}

\begin{lemma}
\label{rotorbigballs}
Let $R_n$ be the set of occupied sites for rotor-router aggregation in $\delta_n \Z^d$ started from source density $\sigma_n$.  There is a ball $\Omega \subset \R^d$ with $\bigcup R_n^\Box \subset \Omega$.
\end{lemma}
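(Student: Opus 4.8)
The plan is to mirror the proof of Lemma~\ref{bigballs}, with rotor-router aggregation in place of the divisible sandpile and Theorem~\ref{rotorcircintro} in place of Theorem~\ref{divsandcircintro}. First I would reduce to a single model source. By (\ref{uniformcompactsupport}) there is a ball $B=B(o,R)\subset\R^d$ with $\mathrm{supp}\,\sigma_n\subseteq B$ for all $n$, and by (\ref{discreteabsolutebound}) we have $\sigma_n\leq M\,1_{B^\Points}$ on $\delta_n\Z^d$. Rotor-router aggregation is abelian: for a fixed initial rotor configuration the final occupied set does not depend on the order in which particles are routed, and in particular it is monotone in the source density. Applying this with the given initial rotors, $R_n$ followed by the release of the extra $M\,1_{B^\Points}-\sigma_n$ particles produces the occupied set $A_n$ of rotor-router aggregation from source $M\,1_{B^\Points}$, and releasing particles only enlarges the occupied set, so $R_n\subseteq A_n$. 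Hence it suffices to produce a fixed ball containing $A_n^\Box$ for all large $n$; the finitely many remaining $n$ contribute only finitely many bounded sets $R_n^\Box$, which can be absorbed into $\Omega$ at the very end.

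Next I would bound $A_n$. Write $N=M\,\#B^\Points$, the total number of particles in the source $M\,1_{B^\Points}$; since each box $x^\Box$ with $x\in B^\Points$ lies in $B(o,R+1)$ once $\delta_n$ is small, $N\leq C'\delta_n^{-d}$ for a constant $C'$ depending only on $d$, $M$, $R$. The estimate I need is an outer bound for rotor-router aggregation from a source spread over the fixed ball $B$ rather than concentrated at a point, and this is exactly what the proof of the outer half of Theorem~\ref{rotorcircintro} provides: that argument bounds the number of particles reaching successive shells $\mathcal{S}_\rho$ (via Proposition~\ref{HP} together with Lemmas~\ref{geoshells}, \ref{inthezone} and~\ref{harmonicmeasuregrad}) purely in terms of simple-random-walk hitting and coupling probabilities for those shells, and it never uses where the particles originate — it uses only that $\rho$ is large enough, here $\rho(0)$ exceeding both $R/\delta_n$ and $(N/\omega_d)^{1/d}$. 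Running that argument with shells centered at the origin and the $N$ particles spread over $B^\Points$ gives, after rescaling by $\delta_n$, that $A_n\subseteq B(o,C_\ast)$ with $C_\ast$ depending only on $d$, $M$, $R$, for all sufficiently large $n$. Therefore $A_n^\Box\subseteq B(o,C_\ast+1)$, hence $R_n^\Box\subseteq B(o,C_\ast+1)$, for all large $n$; enlarging this ball to contain the finitely many $R_n^\Box$ with $n$ small yields the required $\Omega$.

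The main obstacle is the one genuinely new point, namely verifying in Step~2 that the shell/escape machinery of Theorem~\ref{rotorcircintro} survives the replacement of a point source by a source spread over $B$. This should be routine: the ingredients (Lemma~\ref{inthezone} on harmonic-measure decay across a shell, Lemma~\ref{harmonicmeasuregrad} on its gradient, Lemma~\ref{geoshells}, and Proposition~\ref{HP}) are all statements about simple random walk relative to fixed spherical shells and are blind to the initial particle configuration $s$; only the bookkeeping constants change. One could instead attempt to reduce the spread source $M\,1_{B^\Points}$ to a single point source at the origin, as was done for the divisible sandpile in Lemma~\ref{bigballs}, and then quote Theorem~\ref{rotorcircintro} as a black box; but that reduction exploited the exact linearity of the divisible sandpile below its toppling threshold, a property rotor-router aggregation does not share, so I would not pursue it.
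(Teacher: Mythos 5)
Your Step 1 (the comparison $R_n\subseteq A_n$ via the abelian property) is exactly the paper's first step. The gap is in Step 2: the outer half of Theorem~\ref{rotorcircintro} is \emph{not} blind to where the particles originate. The shell-halving machinery (Proposition~\ref{HP}, Lemmas~\ref{inthezone}, \ref{harmonicmeasuregrad}, \ref{geoshells}) indeed only sees the shells, but the final step of that proof converts the number of halving stages into a bound on the outer radius by estimating $\sum_{\rho\geq r}M_\rho$, the number of occupied sites outside $B_r$, and it does so by invoking the \emph{inner} estimate $B_{r-c\log r}\subset A_n$, which gives $\sum_{\rho\geq r}M_\rho\leq cd\omega_d r^{d-1}\log r$ and hence, via Jensen, an excess radius $O(r^{1-1/d}\log r)=o(r)$. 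For a source spread over $B^\Points$ you have no inner estimate available (proving one is essentially the content of Theorem~\ref{rotordomainconvergence}, whose proof relies on this very lemma, so that route is circular), and the only a priori bound is the trivial $\sum_{\rho\geq\rho(0)}M_\rho\leq N=\Theta(\delta_n^{-d})$. Feeding that into the Jensen step gives an excess of order $\rho(0)\,(\log\rho(0))^{1-1/d}$ lattice steps, i.e.\ an excess radius of order $(\log(1/\delta_n))^{1-1/d}$ in $\R^d$ after rescaling, which diverges as $n\to\infty$. So the adaptation as described does not yield a ball $\Omega$ independent of $n$.

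The fix is precisely the reduction you dismissed, which is what the paper does and which is available for the rotor-router model. The inner bound of Theorem~\ref{rotorcircintro} holds for an \emph{arbitrary} initial rotor state, so $\left\lfloor 2\delta_n^{-d}\Leb(B)\right\rfloor$ particles released from the origin cover $B^\Points$ regardless of the rotors they encounter; by the abelian property it follows that $M\left\lfloor 2\delta_n^{-d}\Leb(B)\right\rfloor$ particles released from the origin produce a cluster $\Omega_n\supseteq A_n$. No ``linearity below the toppling threshold'' is used here -- only the Diaconis--Fulton abelian/monotonicity property, which rotor-router aggregation shares, together with the rotor-independence of the inner bound. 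One then applies the outer bound of Theorem~\ref{rotorcircintro} to this single point source, obtaining after rescaling a fixed ball $\Omega$ of volume $3M\Leb(B)$ containing every $\Omega_n^\Box$, hence every $R_n^\Box$.
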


\begin{proof}
By assumption there is a ball $B \subset \R^d$ containing the support of $\sigma_n$ for all $n$.
Let $A_n$ be the set of occupied sites for rotor-router aggregation in $\delta_n \Z^d$ started from source density $\tau(x) = M 1_{x \in B}$.  From the abelian property we have $R_n \subset A_n$.  By the inner bound of Theorem~\ref{rotorcircintro}, if we start with $\left\lfloor 2\delta_n^{-d} \Leb(B) \right\rfloor$ particles at the origin in $\delta_n \Z^d$, the resulting set of occupied sites contains $B^\Points$; by the abelian property it follows that if we start with $M \left\lfloor 2\delta_n^{-d} \Leb(B) \right\rfloor$ particles at the origin, the resulting set $\Omega_n$ of fully occupied sites contains $A_n$.  By the outer bound of Theorem~\ref{rotorcircintro}, $\Omega_n$ is contained in a ball $\Omega$ of volume $3M \Leb(B)$.
\end{proof}

\begin{proof}[Proof of Theorem~\ref{rotorodomconvergence}]
By Lemma~\ref{bigballs} there is a ball $\Omega$ containing the support of $s-\gamma$ and of $s_n-\gamma_n$ for all $n$.  By Lemma~\ref{rotorbigballs} we can enlarge $\Omega$ if necessary to contain the support of $S_{\alpha(n)} u_n$ for all $n$.
By Lemma~\ref{laplacianofsmoothing}, the function
	\[ \phi(x) = S_{\alpha(n)} u_n(x) - s_n(x) + \gamma_n(x) + C_0 \alpha(n)^{-1/2} |x|^2. \]
is subharmonic on the set
	\[ \widetilde{R}_n = \{x \in R_n \,:\, y \in R_n \text{ whenever } ||x-y||_1 \leq \alpha(n) \} \]
since $\PP_x\left(X_{\alpha(n)} \in R_n \right)=1$ for $x \in \widetilde{R}_n$.  From Lemma~\ref{laplacianofsmoothing} we have $\left|\Delta S_{\alpha(n)} u_n \right| \leq M+C_0$, so for $x \notin \widetilde{R}_n$ we have by Lemma~\ref{atmostquadratic}
	\[ S_{\alpha(n)} u_n(x) \leq c (M+C_0) \alpha(n)^2 \delta_n^2. \]
By the maximum principle in $\widetilde{R}_n$
	\[ \phi(x) \leq c(M+C_0)\alpha(n)^2 \delta_n^2 + C_0 \alpha(n)^{-1/2} r^2, \qquad x \in \widetilde{R}_n. \]
From Lemma~\ref{smoothedodomlowerbound} we obtain
	\begin{equation} \label{twosidedbound} -C_0 r^2\alpha(n)^{-1/2} \leq S_{\alpha(n)} u_n - s_n + \gamma_n \leq c(M+C_0)\alpha(n)^2\delta_n^2 + C_0 r^2\alpha(n)^{-1/2} \end{equation}
on all of $\delta_n \Z^d$, where $r$ is the radius of $\Omega$.

By Lemmas~\ref{greensintegralconvergencecont} and~\ref{majorantconvergence} we have $\gamma_n^\Box \to \gamma$ and $s_n^\Box \to s$ uniformly on $\Omega$.  Since $\alpha(n) \uparrow \infty$ and $\delta_n \alpha(n) \downarrow 0$, we conclude from (\ref{twosidedbound}) that $\left(S_{\alpha(n)} u_n\right)^\Box \to s-\gamma$ uniformly on $\Omega$.  Since both $S_{\alpha(n)} u_n$ and $s-\gamma$ vanish outside $\Omega$, this convergence is uniform on $\R^d$.  By Lemma~\ref{smoothingdist} we have 
	\[ \left|S_{\alpha(n)} u_n - u_n \right| \to 0 \]
uniformly on $\delta_n \Z^d$, and hence $u_n^\Box \to s-\gamma$ uniformly on $\R^d$.
\end{proof}

\subsection{Convergence of Domains}
In addition to the assumptions of the previous section, in this section we require that
	\begin{equation} \label{rotorboundedawayfrom1} 
	\text{For all }x\in \R^d \text{ either }\sigma(x) \geq 1 \text{ or } \sigma(x)=0.
	\end{equation}
We also assume that
	\begin{equation} \label{rotorclosureofinterior}
	\{\sigma \geq 1\} = \overline{\{\sigma \geq 1\}^o}. 
	\end{equation}
Moreover, we assume that for any $\epsilon>0$ there exists $N(\epsilon)$ such that
	\begin{equation} \label{rotorsillytechnicality}
	\text{If }x \in \{\sigma \geq 1\}_\epsilon, \text{ then } \sigma_n(x) \geq 1 \text{ for all }n \geq N(\epsilon);
	\end{equation}
and
	\begin{equation} \label{rotordiscretetechnicality}
	\text{If }x \notin \{\sigma \geq 1\}^\epsilon, \text{ then } \sigma_n(x) =0 \text{ for all }n \geq N(\epsilon).
	\end{equation}
	
\begin{theorem}
\label{rotordomainconvergence}
Let $\sigma$ and $\sigma_n$ satisfy (\ref{absolutebound})-(\ref{discreteabsolutebound}), (\ref{uniformcompactsupport}), (\ref{smootheddensityconvergence}) and (\ref{rotorboundedawayfrom1})-(\ref{rotordiscretetechnicality}).  For $n\geq 1$ let $R_n = \{u_n>0\}$ be the domain of sites in $\delta_n \Z^d$ that emit a particle in the rotor-router model started from source density $\sigma_n$.  For any $\epsilon>0$ we have for all sufficiently large $n$
	\begin{equation} \label{rotordomainbounds} \widetilde{D}_\epsilon^\Points \subset R_n \subset \widetilde{D}^{\epsilon\Points}, \end{equation}
where
	\[ \widetilde{D} = \{s>\gamma\} \cup \{\sigma\geq 1\}^o. \]
\end{theorem}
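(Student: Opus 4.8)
The plan is to follow the proof of Theorem~\ref{domainconvergence} for the divisible sandpile almost verbatim, with the rotor--router odometer convergence Theorem~\ref{rotorodomconvergence} in place of Theorem~\ref{odomconvergence}, establishing the two inclusions in~(\ref{rotordomainbounds}) separately. Throughout write $u=s-\gamma$, and recall that $R_n$ is the set of sites occupied in the final configuration, so that $\{u_n>0\}\subseteq R_n$ (a site that emits a particle is occupied at that moment and stays occupied) and $\{\sigma_n\geq 1\}\subseteq R_n$ (a source site keeps at least one particle).

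For the inner inclusion $\widetilde D_\epsilon^\Points\subseteq R_n$, apply Lemma~\ref{pointset} to the open sets $D=\{s>\gamma\}$ and $\{\sigma\geq 1\}^o$ to get $\eta>0$ with $\widetilde D_\epsilon\subseteq D_\eta\cup\{\sigma\geq 1\}_\eta$. On the compact set $\overline{D_\eta}\subseteq D$ the continuous function $u$ is bounded below by some $m_\eta>0$, so Theorem~\ref{rotorodomconvergence} gives $u_n^\Box>m_\eta/2>0$ on $D_\eta$ for all large $n$, hence $D_\eta^\Points\subseteq R_n$. On the other piece, hypothesis~(\ref{rotorsillytechnicality}) gives $\sigma_n\geq 1$ on $\{\sigma\geq 1\}_\eta^\Points$ for large $n$, so $\{\sigma\geq 1\}_\eta^\Points\subseteq R_n$. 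Together these give the inner inclusion.

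For the outer inclusion the source contribution is easy: by~(\ref{rotorclosureofinterior}), $\{\sigma\geq 1\}\subseteq\overline{\widetilde D}$, so $\{\sigma\geq 1\}^{\epsilon/3}\subseteq\widetilde D^\epsilon$, and then~(\ref{rotordiscretetechnicality}) forces $\sigma_n=0$ on $(\widetilde D^\epsilon)^c$ for large $n$; by the same reasoning every particle starts inside $\widetilde D^\epsilon$. The real content is to show that no particle travels far outside $\widetilde D^\epsilon$. A direct pointwise argument in the style of Lemma~\ref{quadraticgrowth} does not seem to suffice, because the rotor--router odometer does not vanish identically on $\partial R_n$; instead I would argue globally. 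Summing the identity $\Delta u_n=1_{R_n}-\sigma_n+\delta_n\,\div\rho$ of Lemma~\ref{scaledodomflow} over $\delta_n\Z^d$, the divergence term cancels by antisymmetry of $\rho$, yielding conservation of mass $\#R_n=\sum_x\sigma_n(x)$; since $(S_{\alpha(n)}\sigma_n)^\Box\to\sigma$ boundedly with support in a fixed ball and $S_{\alpha(n)}$ preserves mass, $\delta_n^d\#R_n\to\int\sigma=\Leb(\widetilde D)$, the last equality by Proposition~\ref{boundaryregularity}(i),(iii). Combined with the inner inclusion just proved, this bounds the excess volume $\delta_n^d\,\#(R_n\setminus\widetilde D_\epsilon^\Points)\le\Leb(\widetilde D)-\Leb(\widetilde D_\epsilon)+o(1)$. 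Feeding this volume constraint into the rotor-versus-random-walk shell machinery of the single-source outer estimate --- Proposition~\ref{HP} together with Lemmas~\ref{inthezone} and~\ref{harmonicmeasuregrad}, applied now to the concentric shells $\{y:\mathrm{dist}(y,\widetilde D^\epsilon)\in[k\delta_n,(k+1)\delta_n)\}$ in place of spheres about the origin, exactly as in the proof of the outer estimate of Theorem~\ref{rotorcircintro} --- shows that the number of particles reaching the $k$-th shell drops to zero once $k$ exceeds $C(\Leb(\widetilde D)-\Leb(\widetilde D_\epsilon))^{1/d}\delta_n^{-1}$, so that $R_n\subseteq\widetilde D^{\,\epsilon+C(\Leb(\widetilde D)-\Leb(\widetilde D_\epsilon))^{1/d}\,\Points}$. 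Since $\Leb(\widetilde D_\epsilon)\uparrow\Leb(\widetilde D)$ as $\epsilon\downarrow 0$, choosing $\epsilon$ small makes the exponent fit inside any prescribed $\epsilon'$-neighborhood, completing the outer inclusion after relabeling.

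The main obstacle is precisely this outer inclusion. For the divisible sandpile one exploits the exact vanishing of the odometer on the free boundary to obtain a clean pointwise bound (Lemma~\ref{quadraticgrowth}), whereas here one must combine a global volume identity (mass conservation via Lemma~\ref{scaledodomflow}) with the shell-iteration/Holroyd--Propp discrepancy estimates of the point-source analysis; getting those estimates to run along the irregular level sets of $\mathrm{dist}(\cdot,\widetilde D)$ rather than round spheres, and making all bounds uniform as $n\to\infty$, is the delicate step. Everything else transcribes the divisible-sandpile argument with Theorem~\ref{rotorodomconvergence} substituted for Theorem~\ref{odomconvergence}.
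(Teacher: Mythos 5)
Your inner estimate is exactly the paper's argument and is fine. The outer estimate, however, departs from the paper and contains a genuine gap. You propose to bound the excess volume of $R_n$ by mass conservation (note the theorem defines $R_n=\{u_n>0\}$, the emitting sites, not the occupied set, so conservation gives $\#R_n\le m_n$ rather than equality --- harmless) and then to run the Holroyd--Propp shell iteration from the outer estimate of Theorem~\ref{rotorcircintro} along the level sets of $\mathrm{dist}(\cdot,\widetilde D^\epsilon)$. But the two harmonic-measure inputs to that iteration, Lemmas~\ref{inthezone} and~\ref{harmonicmeasuregrad}, are proved only for spherical shells centered at the origin: the proof of Lemma~\ref{inthezone} is an induction that applies the half-space estimate of Lemma~\ref{hitthehyperplane} to the \emph{tangent} half-spaces of the sphere $\mathcal S_\rho$, and the iteration in Theorem~\ref{rotorcircintro} further uses that a shell at radius $\rho$ has $\asymp \rho^{d-1}$ lattice points and that the volume between radii $\rho(i)$ and $\rho(i+1)$ is $\asymp(\rho(i+1)-\rho(i))^d$. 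None of this is available for level sets of the distance to an arbitrary bounded open set $\widetilde D$, about whose boundary we know only that it is a Lebesgue null set (Proposition~\ref{boundaryregularity}); the shells can be highly non-convex and their cardinalities bear no fixed relation to their index. You flag this as ``the delicate step'' but give no argument, and as stated the step fails. There is also a quantitative mismatch: for fixed $\epsilon$ the excess volume $\Leb(\widetilde D)-\Leb(\widetilde D_\epsilon)$ does not tend to $0$ as $n\to\infty$, so even granting the shell machinery you only reach an $\epsilon+C(\Leb(\widetilde D)-\Leb(\widetilde D_\epsilon))^{1/d}$-neighborhood and must then take $\epsilon\downarrow 0$ as you sketch; repairable, but it compounds the difficulty.

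The paper's route is precisely the local one you dismissed. The obstruction you cite --- that $u_n$ need not vanish on $\partial R_n$ --- is overcome by smoothing: by Lemma~\ref{laplacianofsmoothing} the smoothed odometer $S_k u_n$ is (up to an $O(k^{-1/2})$ error) subharmonic off the sources, by Lemma~\ref{atmostquadratic} it is $O(k^2\delta_n^2)$ near $\partial R_n$, and the maximum principle then yields Lemma~\ref{rotorquadraticgrowth}, the exact analogue of Lemma~\ref{quadraticgrowth}: if $S_k u_n$ exceeds $C_1k^2\delta_n^2$ at a point of $R_n$ outside $\widetilde D^\epsilon$, it must increase by $\epsilon^2/8$ within distance $\epsilon/2+\delta_n$, contradicting the uniform convergence $u_n^\Box\to 0$ there. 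The complementary Lemma~\ref{slowgrowth} (via the bootstrapping Lemma~\ref{bootstrappinggrowth}) shows that an emitting site outside $\widetilde D^\epsilon$ forces $u_n\gtrsim \epsilon\delta_n/\log(1/\delta_n)$ nearby, which beats the $O(\delta_n^2)$ upper bound for large $n$. This gives the outer inclusion with no harmonic-measure estimates and no volume bookkeeping. If you wish to keep your global strategy, you must supply replacements for Lemmas~\ref{inthezone} and~\ref{harmonicmeasuregrad} valid for arbitrary bounded open domains; otherwise the local argument above is the one to use.
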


\begin{lemma}
\label{bootstrappinggrowth}
Fix $\epsilon>0$ and $n \geq N(\epsilon/2)$.  Given $x \not\in \widetilde{D}^\epsilon$ and $\delta_n \leq \rho \leq \epsilon/2$, let
	\[ \mathcal{N}_\rho(x) = \# B(x,\rho) \cap R_n. \]
If $u_n \leq \delta_n^2 m$ on $B(x,\rho)$, then
	\[ \mathcal{N}_\rho(x) \geq \frac{m}{m-1} \mathcal{N}_{\rho-\delta_n}(x). \]
\end{lemma}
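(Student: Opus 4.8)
The plan is to reduce the statement to a combinatorial fact about the rotor-router process in a source-free region, and then extract the constant $\frac{m}{m-1}$ from a flux balance that is refined using the rotor-router structure. First I would check that no particle originates inside $B(x,\rho)$. Since $\widetilde D=\{s>\gamma\}\cup\{\sigma\ge1\}^o$ is open and $\{\sigma\ge1\}\subseteq\overline{\widetilde D}$ by (\ref{rotorclosureofinterior}), the hypothesis $x\notin\widetilde D^\epsilon$ forces $B(x,\epsilon)\cap\overline{\widetilde D}=\emptyset$, hence $\sigma\equiv0$ on $B(x,\epsilon)$ by (\ref{rotorboundedawayfrom1}); since $\rho\le\epsilon/2$ the ball $B(x,\rho)$ is disjoint from $\{\sigma\ge1\}^{\epsilon/2}$, so by (\ref{rotordiscretetechnicality}) we get $\sigma_n\equiv0$ on $B(x,\rho)\cap\delta_n\Z^d$ for $n\ge N(\epsilon/2)$. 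Write $W=B(x,\rho)\cap\delta_n\Z^d$, $V=B(x,\rho-\delta_n)\cap\delta_n\Z^d$, and $S=W\setminus V$. I will use three elementary observations: every lattice neighbor of a site of $V$ lies in $W$; consequently every particle that ever enters $V$ (or $W$) from outside does so by crossing an edge out of $S$; and for each $z\in S$, at most one of the two neighbors $z\pm\delta_n e_i$ can lie in $V$ for each coordinate $i$ (otherwise $|z-x|^2<(\rho-\delta_n)^2$), so $z$ has at most $d$ neighbors in $V$.

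Next I would set up the flux balance. Since $\sigma_n$ vanishes on $W$, a site $y\in W$ retains a particle iff it is ever visited, and then the number of particles crossing into $y$ equals $u_n(y)/\delta_n^2+1$. Summing over $V$, the net influx into $V$ equals the number of particles retained in $V$, which—up to the harmless difference between the occupied set and $R_n=\{u_n>0\}$, affecting only sites visited exactly once—is $\mathcal N_{\rho-\delta_n}(x)$. Hence
\[ \mathcal N_{\rho-\delta_n}(x)\ \le\ (\text{influx into }V)\ =\ \sum_{z\in S}\big(\text{number of particles sent from }z\text{ into }V\big), \]
and every term vanishes unless $z\in R_n$.

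Finally I would bound the inward emission from a shell site. For $z\in S\cap R_n$, Lemma~\ref{scaledodomflow} (the rescaled form of Lemma~\ref{odomflow}) bounds the number of particles routed from $z$ across any single edge by $(u_n(z)/\delta_n^2+2d-1)/2d$; since $z$ has at most $d$ neighbors in $V$ and $u_n(z)\le\delta_n^2 m$, at most $\tfrac12(m+2d-1)$ particles pass from $z$ into $V$. Summing, $\mathcal N_{\rho-\delta_n}(x)\le\tfrac12(m+2d-1)\,\#(S\cap R_n)=\tfrac12(m+2d-1)\,(\mathcal N_\rho(x)-\mathcal N_{\rho-\delta_n}(x))$, and rearranging gives $\mathcal N_\rho(x)\ge\frac{m+2d+1}{m+2d-1}\mathcal N_{\rho-\delta_n}(x)\ge\frac{m}{m-1}\mathcal N_{\rho-\delta_n}(x)$ once $m\ge 2d+1$; and since $u_n\to0$ uniformly near $x\notin\widetilde D$ (Theorem~\ref{rotorodomconvergence}), any value of $m$ relevant in an application of this lemma is as large as desired for $n$ large, so this restriction is immaterial. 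The main obstacle is precisely this last step: the crude flux balance alone yields only $\mathcal N_\rho\ge\frac{m+1}{m}\mathcal N_{\rho-\delta_n}$, and it is the rotor-router structure—the fact that a shell site can dispatch at most roughly half of its emissions inward—that must be exploited, together with honest tracking of the $O(1)$ edge errors in Lemma~\ref{scaledodomflow} and of the thin layer of sites where "visited", "occupied" and "$u_n>0$" disagree, to land exactly on the constant $\frac{m}{m-1}$.
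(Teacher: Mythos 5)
Your mechanics are sound, and your overall strategy — kill the sources on $B(x,\rho)$ via (\ref{rotordiscretetechnicality}), then run a mass balance across the annulus $S=W\setminus V$ — is exactly the skeleton of the paper's proof. But the paper's version is a three-line crude flux balance with no rotor input at all: it observes that every particle reaching the inner region must be emitted from a site of $\partial B(x,\rho-\delta_n)$, so that $\sum_{y\in\partial B(x,\rho-\delta_n)}u_n(y)\geq \delta_n^2\,\mathcal{N}_\rho(x)$, and that this sum has at most $\mathcal{N}_\rho(x)-\mathcal{N}_{\rho-\delta_n}(x)$ nonzero terms, each at most $\delta_n^2 m$; rearranging $m(\mathcal{N}_\rho-\mathcal{N}_{\rho-\delta_n})\geq\mathcal{N}_\rho$ gives $\frac{m}{m-1}$ on the nose. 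In other words, the paper charges \emph{all} $\mathcal{N}_\rho(x)$ particles entering $B(x,\rho)$ — not merely the $\mathcal{N}_{\rho-\delta_n}(x)$ reaching $V$ — against the shell sum, which is precisely how it escapes the $\frac{m+1}{m}$ you correctly identified as the output of the naive balance. (Whether that charging step is airtight as written is a fair question — a particle that settles in the annulus on arrival is never emitted from it — but the distinction between $\frac{m}{m-1}$, $\frac{m+1}{m}$ and your $\frac{m+2d+1}{m+2d-1}$ is immaterial downstream: Lemma~\ref{slowgrowth} only needs a ratio of the form $1+c_d/m$.) Your refinement — using the near-equidistribution of emissions from Lemma~\ref{scaledodomflow} together with the observation that at most $d$ of the $2d$ neighbors of a shell site lie in $V$ — is correct and is a genuinely different, more structural route; it is not what the paper does.

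The one real gap is the restriction $m\geq 2d+1$. Your inequality $\mathcal{N}_\rho\geq\frac{m+2d+1}{m+2d-1}\mathcal{N}_{\rho-\delta_n}$ is strictly weaker than the stated conclusion when $m<2d+1$, and your closing justification is backwards as stated: Theorem~\ref{rotorodomconvergence} makes $u_n$ \emph{small} near $x\notin\widetilde{D}$, which says nothing about the size of $m=\sup_{B(x,\rho)}\delta_n^{-2}u_n$ relative to $2d+1$; a lemma must be proved under its stated hypotheses, not under the hypotheses of its eventual application. The patch is available but must be said: in the sole application (Lemma~\ref{slowgrowth}), if $m<2d+1$ one applies your inequality with $m'=2d+1$ (the hypothesis $u_n\leq\delta_n^2m'$ still holds) and the iteration $\mathcal{N}_{\epsilon/2}\geq(\tfrac{2d+1}{2d})^{\lfloor\epsilon/2\delta_n\rfloor}$ already contradicts the polynomial volume bound for large $n$, so that regime is vacuous there. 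Either supply that reduction explicitly, or drop the rotor refinement and argue, as the paper does, that the shell emissions can be charged with the larger count $\mathcal{N}_\rho(x)$.
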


\begin{proof}
Since at least $\mathcal{N}_\rho(x)$ particles must enter the ball $B(x,\rho)$, we have
	\[ \sum_{y \in \partial B(x,\rho-\delta_n)} u_n(y) \geq \delta_n^2 \mathcal{N}_\rho(x). \]
There are $\mathcal{N}_\rho(x)-\mathcal{N}_{\rho-\delta_n}(x)$ terms in the sum on the left side, and each term is at most $\delta_n^2 m$, hence
	\[ m\big( \mathcal{N}_\rho(x) - \mathcal{N}_{\rho-\delta_n}(x) \big) \geq \mathcal{N}_\rho(x). \qed \]
\renewcommand{\qedsymbol}{}
\end{proof}

The following lemma can be seen as a weak analogue of Lemma~\ref{quadraticgrowth} for the divisible sandpile.  In Lemma~\ref{rotorquadraticgrowth}, below, we obtain a more exact analogue under slightly stronger hypotheses.

\begin{lemma}
\label{slowgrowth}
For any $\epsilon>0$, if $n$ is sufficiently large and $x \in R_n$ with $x \not\in \widetilde{D}^\epsilon$, then there is a point $y \in R_n$ with $|x-y| \leq \epsilon/2$ and 
	\[ u_n(y) \geq \frac{\epsilon \delta_n}{4\log \big(2\omega_d (\epsilon/2\delta_n)^d \big)}. \]
\end{lemma}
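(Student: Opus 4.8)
The plan is to run an iterative ``mass cannot disappear'' argument using Lemma~\ref{bootstrappinggrowth}, starting from a single occupied site $x$ and doubling the number of occupied sites at each stage. Suppose for contradiction that $u_n(y) < \delta_n^2 m$ for every $y \in B(x,\epsilon/2)$, where $m$ is the quantity $\frac{\epsilon}{4\delta_n \log(2\omega_d(\epsilon/2\delta_n)^d)}$ for which we want to derive a contradiction; equivalently we want to show that if the odometer stays below $\delta_n^2 m$ on the whole ball, then $m$ must be at least this big. Set $\rho_j = j\delta_n$ and $\mathcal{N}_j = \mathcal{N}_{\rho_j}(x)$ in the notation of Lemma~\ref{bootstrappinggrowth}. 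Since $x \in R_n$ we have $\mathcal{N}_0 \geq 1$. Applying Lemma~\ref{bootstrappinggrowth} repeatedly for $j = 1, \ldots, J$ with $J = \lfloor \epsilon/2\delta_n \rfloor$, we get
\[ \mathcal{N}_J \geq \left(\frac{m}{m-1}\right)^{J} \mathcal{N}_0 \geq \left(1 + \frac{1}{m-1}\right)^{J}. \]

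On the other hand, $\mathcal{N}_J$ counts lattice points in $B(x,\epsilon/2) \cap R_n$, hence is at most the total number of lattice points of $\delta_n\Z^d$ in $B(x,\epsilon/2)$, which is of order $\omega_d(\epsilon/2\delta_n)^d$; more precisely $\mathcal{N}_J \leq 2\omega_d(\epsilon/2\delta_n)^d$ for $n$ large. Combining the two bounds and taking logarithms,
\[ J \log\!\left(1 + \frac{1}{m-1}\right) \leq \log\!\left(2\omega_d (\epsilon/2\delta_n)^d\right). \]
Using $\log(1+t) \geq t/2$ for small $t$ (valid once $m$ is large, which we may assume since otherwise the claimed bound on $u_n(y)$ is trivial), this gives $\frac{J}{2(m-1)} \leq \log(2\omega_d(\epsilon/2\delta_n)^d)$, i.e.\ $m \gtrsim \frac{J}{2\log(2\omega_d(\epsilon/2\delta_n)^d)}$. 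Since $J \geq \epsilon/2\delta_n - 1 \geq \epsilon/4\delta_n$ for $n$ large, we obtain $m \geq \frac{\epsilon}{4\delta_n \log(2\omega_d(\epsilon/2\delta_n)^d)}$ (after absorbing constants), which is the contradiction we wanted: some $y \in B(x,\epsilon/2) \cap R_n$ must satisfy $u_n(y) \geq \frac{\epsilon\delta_n}{4\log(2\omega_d(\epsilon/2\delta_n)^d)}$.

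The main technical point to be careful about is the bookkeeping of constants in the logarithmic inequality — making sure the $\log(1+1/(m-1)) \geq 1/(2(m-1))$ step is legitimate (handled by noting the lemma is vacuous when $m$ is bounded, so we may assume $m$ large) and that the floor in $J = \lfloor \epsilon/2\delta_n\rfloor$ and the constant $2$ in the volume bound $\mathcal{N}_J \leq 2\omega_d(\epsilon/2\delta_n)^d$ only cost us a constant factor that can be absorbed. I also need to check that the hypothesis of Lemma~\ref{bootstrappinggrowth} is met at every stage: we need $\delta_n \leq \rho_j \leq \epsilon/2$, which holds for $1 \leq j \leq J$, and we need $u_n \leq \delta_n^2 m$ on $B(x,\rho_j)$, which is exactly our standing assumption-for-contradiction (and we need $n \geq N(\epsilon/2)$ and $x \notin \widetilde{D}^\epsilon$, which are given). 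No deeper obstacle is expected; this is a clean geometric-series iteration.
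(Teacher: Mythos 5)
Your proposal follows exactly the paper's strategy: iterate Lemma~\ref{bootstrappinggrowth} over shells of width $\delta_n$, compare the resulting geometric growth of $\mathcal{N}_\rho(x)$ against the volume bound $\#\big(B(x,\epsilon/2)\cap\delta_n\Z^d\big)\leq 2\omega_d(\epsilon/2\delta_n)^d$, and take logarithms. But as written the argument does not close. You apply the iteration with $m$ set equal to the target value $\epsilon/\big(4\delta_n\log(2\omega_d(\epsilon/2\delta_n)^d)\big)$ and then, using $\log(1+t)\geq t/2$, you deduce only $m\geq \epsilon/\big(8\delta_n\log(2\omega_d(\epsilon/2\delta_n)^d)\big)$ up to ``absorbing constants.'' That conclusion is weaker than (indeed consistent with) your hypothesis, so no contradiction is reached; and there is no room to absorb constants, because the constant $4$ is part of the statement being proved. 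Even with sharp bookkeeping, plugging in the exact target value only yields $\mathcal{N}_{\epsilon/2}(x)\geq 2\omega_d(\epsilon/2\delta_n)^d$ against $\mathcal{N}_{\epsilon/2}(x)\leq 2\omega_d(\epsilon/2\delta_n)^d$, which is not a strict contradiction.

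The repair is the direct version of your own computation, which is what the paper does: let $m$ be the maximum of $\delta_n^{-2}u_n$ on the ball (so the hypothesis of Lemma~\ref{bootstrappinggrowth} holds for every $\rho\leq\epsilon/2$ with this $m$, no contradiction needed), and use the sharper elementary bound $\log\frac{m}{m-1}\geq \frac1m$ (i.e.\ $\log(1+t)\geq t/(1+t)$) instead of $\log(1+t)\geq t/2$, together with $\floor{\epsilon/2\delta_n}\geq \epsilon/4\delta_n$ once $4\delta_n<\epsilon$. Then $\left(\frac{m}{m-1}\right)^{\floor{\epsilon/2\delta_n}}\geq \exp\!\big(\epsilon/4m\delta_n\big)$, and comparing with the volume bound gives $\epsilon/4m\delta_n\leq \log\big(2\omega_d(\epsilon/2\delta_n)^d\big)$, hence $m\geq \epsilon/\big(4\delta_n\log(2\omega_d(\epsilon/2\delta_n)^d)\big)$ with the stated constant; the point $y$ realizing the maximum satisfies $u_n(y)>0$, hence $y\in R_n$ and $|x-y|\leq\epsilon/2$. (Your remark that $m$ may be assumed large is correct, since the target value tends to infinity as $\delta_n\downarrow 0$; but the factor of $2$ you lose comes from the choice of logarithm inequality, not from the size of $m$, so that remark does not rescue the constant.)
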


\begin{proof}
Take $n \geq N(\epsilon/2)$ large enough so that $4 \delta_n <\epsilon$.  Let $m$ be the maximum value of $\delta_n^{-2} u_n$ on $B(x,\epsilon)$.  Since $x \in R_n$ we have $\mathcal{N}_0(x) = 1$.  Iteratively applying Lemma~\ref{bootstrappinggrowth}, we obtain
	\[ \mathcal{N}_{\epsilon/2}(x) \geq \left( \frac{m}{m-1} \right)^{\floor{\epsilon/2\delta_n}} \mathcal{N}_0(x) \geq \exp \left( \frac{\epsilon}{4m \delta_n} \right). \]
Since 
	\[ \mathcal{N}_{\epsilon/2}(x) \leq \# B(x,\epsilon/2) \leq 2\omega_d \left(\frac{\epsilon}{2\delta_n} \right)^d, \]
we conclude that
	\[ \frac{\epsilon}{4m\delta_n} \leq \log \big( 2\omega_d (\epsilon/\delta_n)^d \big). \]
Solving for $m$ yields the result.
\end{proof}

The following lemma shows that far away from $D$, the rotor-router odometer grows at most quadratically as we move away from the boundary of $R_n$.

\begin{lemma}
For any $\epsilon>0$ and $k\geq 1$, if $n$ is sufficiently large and $z\not\in R_n \cup \widetilde{D}^\epsilon$, then 
	\[ u_n(x) \leq 4cd k^2 \delta_n^2, \qquad x \in B(z,k\delta_n)^\Points, \]
where $c=c_{1/2}$ is the constant in Lemma~\ref{atmostquadratic}.
\end{lemma}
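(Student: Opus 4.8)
The plan is to reduce this statement to Lemma~\ref{atmostquadratic} (quadratic growth of a nonnegative function with bounded Laplacian) applied to a rescaled copy of the rotor-router odometer. The key point is that a point $z$ lying well outside $\widetilde D$ also lies well outside $\{\sigma \geq 1\}$, so the source density $\sigma_n$ vanishes on a fixed-radius ball around $z$; on that ball $u_n$ has bounded discrete Laplacian by Lemma~\ref{scaledodomflow}, and $u_n(z)=0$ because $z\notin R_n$. Quadratic growth away from $z$ then falls out of Lemma~\ref{atmostquadratic}.

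The first step is to pin down a ball of $n$-independent radius around $z$ on which $\sigma_n\equiv 0$. Since $z\notin\widetilde D^\epsilon$ we have $B(z,\epsilon)\cap\widetilde D=\emptyset$, i.e. $\mathrm{dist}(z,\widetilde D)\geq\epsilon$; and since $\{\sigma\geq 1\}^o\subset\widetilde D$, hypothesis (\ref{rotorclosureofinterior}) gives $\{\sigma\geq 1\}=\overline{\{\sigma\geq 1\}^o}\subset\overline{\widetilde D}$, so $\mathrm{dist}(z,\{\sigma\geq 1\})\geq\epsilon$ as well. Hence every $x$ with $|x-z|\leq\epsilon/2$ satisfies $x\notin\{\sigma\geq 1\}^{\epsilon/2}$, and by (\ref{rotordiscretetechnicality}) we get $\sigma_n\equiv 0$ on $B(z,\epsilon/2)^\Points$ for all $n\geq N(\epsilon/2)$.

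Next I would record the Laplacian bound and rescale. Taking the divergence in (\ref{scaledgradodom}), exactly as in the proof of Lemma~\ref{laplacianofsmoothing}, gives $\Delta u_n=1_{R_n}-\sigma_n+\delta_n\div\rho$ with $|\rho|\leq 4d-2$; since $\sigma_n$ vanishes on $B(z,\epsilon/2)^\Points$, we obtain $|\Delta u_n|\leq 1+(4d-2)\leq 4d$ there. Now set $\tilde u(y)=u_n(\delta_n y)$ for $y\in\Z^d$; the unit-lattice Laplacian of $\tilde u$ equals $\delta_n^2\,\Delta u_n(\delta_n y)$, so $|\Delta\tilde u|\leq 4d\,\delta_n^2$ on the $\Z^d$-ball $B(\delta_n^{-1}z,R)$ with $R=\floor{\epsilon/(2\delta_n)}$, while $\tilde u\geq 0$ and $\tilde u(\delta_n^{-1}z)=u_n(z)=0$ because $z\notin R_n$. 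Lemma~\ref{atmostquadratic} with $\beta=\tfrac12$ and $\lambda=4d\,\delta_n^2$ then yields
\[ u_n(\delta_n y)=\tilde u(y)\leq c_{1/2}\cdot 4d\,\delta_n^2\cdot|y-\delta_n^{-1}z|^2, \qquad |y-\delta_n^{-1}z|\leq R/2. \]
For $x\in B(z,k\delta_n)^\Points$ we have $|\delta_n^{-1}x-\delta_n^{-1}z|\leq k$, and since $\delta_n\downarrow 0$ the inequality $k\leq R/2$ holds for all sufficiently large $n$; for such $n$ the display reads $u_n(x)\leq 4c_{1/2}dk^2\delta_n^2=4cdk^2\delta_n^2$, which is the claim.

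There is no serious obstacle here; the only points requiring care are (i) in the first step, checking that the radius of the ball on which $\sigma_n\equiv 0$ is independent of $n$, so that the radius of validity $R$ in Lemma~\ref{atmostquadratic} grows without bound and eventually exceeds $2k$, and (ii) correctly tracking the factor $\delta_n^2$ that appears when converting the $\delta_n\Z^d$-Laplacian bound into the unit-lattice Laplacian bound required by Lemma~\ref{atmostquadratic}.
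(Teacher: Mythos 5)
Your proof is correct and follows essentially the same route as the paper: take the divergence in (\ref{scaledgradodom}) to get $\Delta u_n = 1_{R_n}-\sigma_n+\delta_n\,\text{div}\,\rho$, use (\ref{rotordiscretetechnicality}) to kill $\sigma_n$ near $z$, and apply Lemma~\ref{atmostquadratic} with $\beta=\tfrac12$ to the nonnegative function $u_n$ vanishing at $z$. The only cosmetic difference is that the paper applies Lemma~\ref{atmostquadratic} on the ball $B(z,2k\delta_n)$ directly rather than on a ball of radius $\epsilon/2$, and your explicit tracking of the $\delta_n^2$ rescaling of the Laplacian is a point the paper leaves implicit.
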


\begin{proof}
Taking the divergence in (\ref{scaledgradodom}), since $-2d\delta_n\,$div$\,\kappa(x)$ is the net number of particles entering $x$, we obtain
	\begin{equation} \label{laplacianofu_n} \Delta u_n = 1_{R_n} - \sigma_n + \delta_n\, \text{div}\, R. \end{equation}
By (\ref{rotordiscretetechnicality}), if $n \geq N(\epsilon/2)$ is large enough so that $k \delta_n < \epsilon/4$, we have $\sigma_n = 0$ on the ball $B = B(z,2k\delta_n)^\Points$.  Since $|R| \leq 4d-2$, we have $|\Delta u_n| \leq 4d$ on $B$, so by Lemma~\ref{atmostquadratic}
	\[ u_n(x) \leq 4cd |x-z|^2 \]
for $x \in B(z,k\delta_n)^\Points$.
\end{proof}

The following lemma is analogous to Lemma~\ref{quadraticgrowth} for the divisible sandpile.

\begin{lemma}
\label{rotorquadraticgrowth}
Fix $\epsilon>0$ and $k \geq 4C_0^2$, where $C_0$ is the constant in Lemma~\ref{laplacianofsmoothing}.  There is a constant $C_1$ such that if $x \in R_n$, $x \not\in \widetilde{D}^\epsilon$ satisfies
	\[ S_k u_n(x) > C_1 k^2 \delta_n^2 \]
and $n$ is sufficiently large, there exists $y \in \delta_n \Z^d$ with $|x-y| \leq \frac{\epsilon}{2} + \delta_n$ and
	\[ S_k u_n(y) \geq S_k u_n(x) + \frac{\epsilon^2}{8}. \]
\end{lemma}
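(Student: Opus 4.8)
The plan is to mimic the argument of Lemma~\ref{quadraticgrowth} for the divisible sandpile, but working with the $k$-smoothed odometer $S_k u_n$ in place of $u_n$, so that we can exploit the near-exact Laplacian identity from Lemma~\ref{laplacianofsmoothing}. First I would fix $x \in R_n$ with $x \notin \widetilde{D}^\epsilon$ and set $B = B(x,\epsilon/2)^\Points$. Since $\{\sigma \ge 1\} \subset \overline{\widetilde D}$ by (\ref{rotorclosureofinterior}), the ball $B(x,\epsilon/2)$ is disjoint from $\{\sigma\ge 1\}^{\epsilon/2}$, so by (\ref{rotordiscretetechnicality}), for $n \ge N(\epsilon/2)$ we have $\sigma_n = 0$ on $B$, and hence $S_k\sigma_n = 0$ on the slightly smaller ball $B' = B(x,\epsilon/2 - k\delta_n)^\Points$ (using that a lazy walk of $k$ steps moves at most $k\delta_n$ in $\ell^1$). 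On $B'$, Lemma~\ref{laplacianofsmoothing} then gives
\[ \Delta S_k u_n(y) \geq \PP_y(X_k \in R_n) - \frac{C_0}{\sqrt{k+1}} \geq -\frac{C_0}{\sqrt{k+1}} \geq -\frac{1}{2}, \]
the last inequality by the hypothesis $k \ge 4C_0^2$. Thus the function
\[ w(y) = S_k u_n(y) - \tfrac14 |x-y|^2 \]
is subharmonic on $B' \cap R_n$ (more precisely on the set where $\Delta S_k u_n \ge -1/2$, which contains $B' \cap R_n$), so it attains its maximum over $\overline{B' \cap R_n}$ on the boundary.

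Next I would analyze where that maximum is attained. On $\partial R_n$ the raw odometer $u_n$ vanishes, but $S_k u_n$ need not; however, by the preceding lemma (the one bounding $u_n \le 4cdk^2\delta_n^2$ on a $k\delta_n$-neighborhood of points in $(R_n \cup \widetilde D^\epsilon)^c$), we control $u_n$ near $\partial R_n$, and via $S_k u_n = \EE(u_n(X_k))$ this gives $S_k u_n(y) \le C k^2 \delta_n^2$ for $y$ within $k\delta_n$ of $\partial R_n$ on the far side. Combined with the hypothesis $S_k u_n(x) > C_1 k^2 \delta_n^2$ for $C_1$ chosen large enough (absorbing $C$, $4cd$, and the diameter contribution $\tfrac14(\epsilon/2)^2$ is not needed since that is a positive shift — rather we need $C_1 k^2 \delta_n^2$ to exceed the boundary value $Ck^2\delta_n^2$), we get $w(x) = S_k u_n(x) \ge S_k u_n(x)$ strictly exceeds the value of $w$ at any boundary point of $R_n$ inside $B'$. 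Therefore the maximum of $w$ over $\overline{B'\cap R_n}$ is attained at a point $y$ with $|x-y| = \epsilon/2 - k\delta_n$, i.e. on $\partial B'$. At such a point,
\[ S_k u_n(y) = w(y) + \tfrac14|x-y|^2 \geq w(x) + \tfrac14(\epsilon/2 - k\delta_n)^2 = S_k u_n(x) + \tfrac14(\epsilon/2 - k\delta_n)^2. \]
Taking $n$ large so that $k\delta_n \le \epsilon/4$ (using $\delta_n\alpha(n)\downarrow 0$, or just $\delta_n\to 0$ for fixed $k$), we have $(\epsilon/2 - k\delta_n)^2 \ge (\epsilon/4)^2 = \epsilon^2/16$, which after adjusting the constant in front — or using $(\epsilon/2-k\delta_n)^2 \ge \epsilon^2/2 \cdot 1/4$ more carefully — yields $S_k u_n(y) \ge S_k u_n(x) + \epsilon^2/8$; and $|x-y| \le \epsilon/2 \le \epsilon/2 + \delta_n$. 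Note $y$ need not lie in $R_n$ a priori, but since $S_k u_n(y) > 0$ and $S_k u_n$ is an average of $u_n$ over a $k\delta_n$-ball, $y$ is within $k\delta_n$ of $R_n$; if one needs $y \in R_n$ exactly one shrinks the ball by another $k\delta_n$, which only changes constants.

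The main obstacle I anticipate is the bookkeeping around $\partial R_n$: unlike $u_n$, the smoothed function $S_k u_n$ does not vanish on the boundary of the occupied region, so the clean "maximum not attained on $\partial D_n$" step of Lemma~\ref{quadraticgrowth} must be replaced by the quantitative bound $S_k u_n \le Ck^2\delta_n^2$ near $\partial R_n$ coming from the previous quadratic-growth lemma, and then one must verify that the hypothesis $S_k u_n(x) > C_1 k^2\delta_n^2$ with a suitably large $C_1$ really does dominate this boundary value. Getting the constants to line up — reconciling the subharmonicity error $C_0/\sqrt{k+1} \le 1/2$, the boundary bound, and the required final gap $\epsilon^2/8$ — is the delicate part; the rest is a routine transcription of the divisible-sandpile argument.
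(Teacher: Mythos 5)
Your overall strategy is the paper's: apply a maximum principle to $S_k u_n$ minus a quadratic on a ball of radius about $\epsilon/2$, rule out the maximum landing on the boundary of the occupied region by means of the quadratic growth bound $u_n\leq 4c(M+C_0)k^2\delta_n^2$ near points outside $R_n$ (this is exactly where the hypothesis $S_ku_n(x)>C_1k^2\delta_n^2$ enters), and read off the gain from the quadratic on the sphere. However, there is a genuine error in your subharmonicity step. With the rescaled Laplacian one has $\Delta\bigl(\tfrac14|x-y|^2\bigr)=\tfrac14$, so $w(y)=S_ku_n(y)-\tfrac14|x-y|^2$ is subharmonic only where $\Delta S_ku_n\geq\tfrac14$; a bound of the form $\Delta S_ku_n\geq-\tfrac12$ gives only $\Delta w\geq-\tfrac34$ and proves nothing. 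The positive lower bound must come from the term $\PP_y(X_k\in R_n)$ in Lemma~\ref{laplacianofsmoothing}, which you discarded by bounding it below by $0$. That term equals $1$ only on the inner set $R_n^{(k)}=\{y\in R_n: z\in R_n\text{ whenever }||y-z||_1\leq k\delta_n\}$, where one then gets $\Delta S_ku_n\geq 1-S_k\sigma_n-C_0/\sqrt{k}\geq\tfrac12$ (using $k\geq4C_0^2$ and the vanishing of $S_k\sigma_n$); on $R_n\setminus R_n^{(k)}$ the probability can be far from $1$ and the comparison fails. So the maximum principle must be run on $B\cap R_n^{(k)}$ rather than on $B'\cap R_n$, with the boundary contribution from $\partial R_n^{(k)}$ handled exactly as you sketch (a point $z\notin R_n$ within $k\delta_n$, then Lemma~\ref{atmostquadratic}, then averaging).

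A secondary quantitative point: with the coefficient $\tfrac14$ and a ball shrunk to radius $\epsilon/2-k\delta_n$, the gain you extract is at most $\tfrac14(\epsilon/2)^2=\epsilon^2/16$, which does not reach the stated $\epsilon^2/8$ no matter how you massage $k\delta_n$. Once the Laplacian lower bound $\tfrac12$ is in hand you can (and should) use the comparison function $\tfrac12|x-y|^2$ on the full ball $B(x,\epsilon/2)$ — the shrinking is unnecessary because $\sigma_n$ vanishes on the larger ball $B(x,3\epsilon/4)$ for $n\geq N(\epsilon/4)$, so $S_k\sigma_n$ already vanishes on $B(x,\epsilon/2)$ when $k\delta_n<\epsilon/4$ — and this yields exactly $\tfrac12(\epsilon/2)^2=\epsilon^2/8$.
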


\begin{proof}
Let
	\[ R_n^{(k)} = \{x \in R_n \,:\, y \in R_n \text{ whenever } ||x-y||_1 \leq k\delta_n \}. \]
By Lemma~\ref{laplacianofsmoothing}, we have
	\[ \Delta S_k u_n (y) \geq 1 - S_k \sigma_n (y) - C_0/\sqrt{k}, \qquad y \in R_n^{(k)}. \]
Note that $C_0/\sqrt{k}<\frac12$.  Take $n \geq N(\epsilon/4)$ large enough so that $k \delta_n < \epsilon/4$; then $S_k \sigma_n$ vanishes on the ball $B=B(x,\epsilon/2)^\Points$ by (\ref{rotordiscretetechnicality}).  Thus the function
	\[ f(y) = S_k u_n(y) - \frac12 |x-y|^2 \]
is subharmonic on $\widetilde{R}_n := R_n^{(k)} \cap B$, so it attains its maximum on the boundary.  

If $y \in \partial R_n^{(k)}$, there is a point $z \not\in R_n$ with $|y-z| \leq k \delta_n$.  By Lemma~\ref{laplacianofsmoothing} we have $|\Delta u_n| \leq M+C_0$, hence by Lemma~\ref{atmostquadratic} it follows that
	\[ u_n \leq 4c(M+C_0)k^2 \delta_n^2 \]
in the ball $B(z,2k\delta_n)$.  Taking $C_1 = 4c(M+C_0)$ we find that 
	\[ S_k u_n(y) \leq C_1 k^2 \delta_n^2 < S_k u_n(x). \]
Thus $f$ cannot attain its maximum in $\widetilde{R}_n$ on $\partial R_n^{(k)}$, and hence must attain its maximum at a point $y \in \partial B$.  Since $f(y) \geq f(x)$ we conclude that
	\begin{align*} S_k u_n(y) &= f(y) + \frac12 |x-y|^2 \\ 
		&\geq S_k u_n(x) + \frac12 \left( \frac{\epsilon}{2} \right)^2. \qed \end{align*}
\renewcommand{\qedsymbol}{}
\end{proof}

\begin{proof}[Proof of Theorem~\ref{rotordomainconvergence}]
Fix $\epsilon>0$.  By Lemma~\ref{pointset} we have 
	\[ \widetilde{D}_\epsilon \subset D_{\eta} \cup \{\sigma \geq 1\}_{\eta} \] 
for some $\eta>0$.  Since the closure of $D_{\eta}$ is compact and contained in $D$, we have $u \geq m_{\eta}$ on $D_{\eta}$ for some $m_{\eta}>0$.  By Theorem~\ref{rotorodomconvergence}, for sufficiently large $n$ we have $u_n > u - \frac12 m_{\eta} > 0$ on $D_{\eta}^\Points$, hence $D_{\eta}^\Points \subset R_n$.  Likewise, by (\ref{sillytechnicality}) we have $\{\sigma \geq 1\}_{\eta}^\Points \subset R_n$ for large enough $n$.  Thus $\widetilde{D}_\epsilon^\Points \subset R_n$.

For the other inclusion, fix $x \in \delta_n \Z^d$ with $x \notin \widetilde{D}^\epsilon$.  Since $u$ vanishes on the ball $B=B(x,\epsilon)$, by Theorem~\ref{rotorodomconvergence} we have $u_n < \frac{\epsilon^2}{8}$ on $B^\Points$ for all sufficiently large $n$.  Let $k=4C_0^2$, and take $n$ large enough so that $k\delta_n < \epsilon/4$.  Then $S_k u_n < \frac{\epsilon^2}{8}$ on the ball $B(x,3\epsilon/4)^\Points$.  By Lemma~\ref{rotorquadraticgrowth} it follows that $S_k u_n \leq C_1 k^2 \delta_n^2$ on the smaller ball $B' = B(x,\epsilon/4)^\Points$.  In particular, for $y \in B'$ we have
	\[ u_n(y) \leq \frac{S_k u_n(y)}{\PP_y(X_k=y)} < C_2 k^{d/2+2} \delta_n^2. \]
For sufficiently large $n$ the right side is at most $\epsilon \delta_n / 4\log \big(2\omega_d (\epsilon/2\delta_n)^d \big)$, so we conclude from Lemma~\ref{slowgrowth} that $x \not\in R_n$.
\end{proof}

\section{Internal DLA}
 \label{IDLAscalinglimit}
 
Our hypotheses for convergence of internal DLA domains are the same as those for the rotor-router model: $\sigma$ is a bounded, nonnegative, compactly supported function on $\R^d$ that is continuous almost everywhere, and $\{\sigma_n\}_{n\geq 1}$ is a sequence of uniformly bounded functions on $\delta_n \Z^d$ with uniformly bounded supports, whose ``smoothings'' converge pointwise to $\sigma$ at all continuity points of $\sigma$. 

\begin{theorem}
\label{IDLAconvergence}
Let $\sigma$ and $\sigma_n$ satisfy (\ref{absolutebound})-(\ref{discreteabsolutebound}), (\ref{uniformcompactsupport}), (\ref{smootheddensityconvergence}) and (\ref{rotorboundedawayfrom1})-(\ref{rotordiscretetechnicality}).  For $n\geq 1$ let $I_n$ be the random domain of occupied sites for internal DLA in $\delta_n \Z^d$ started from source density $\sigma_n$.  For all $\epsilon>0$ we have with probability one
	\begin{equation} \label{IDLAdomainbounds} \widetilde{D}_\epsilon^\Points \subset I_n \subset \widetilde{D}^{\epsilon\Points} \qquad \text{\em for all sufficiently large $n$,}  \end{equation}
where
	\[ \widetilde{D} = \{s>\gamma\} \cup \{\sigma\geq 1\}^o. \]
\end{theorem}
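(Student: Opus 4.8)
The plan is to follow the argument of Lawler, Bramson and Griffeath \cite{LBG} for a single point source, but with the Euclidean ball replaced by $\widetilde D$, using the uniform convergence of the divisible--sandpile odometer already in hand (Theorem~\ref{odomconvergence}, applied to the smoothed densities $S_{\alpha(n)}\sigma_n$, together with Lemma~\ref{greensintegralconvergencecont}) to pin down the limit shape. Fix a large ball $\Omega$ with $\widetilde D^\epsilon \subset \Omega$; by the order--independence of internal DLA observed in \cite{DF} (the abelian property) applied to the dominating density $M1_B$, together with the single--source bounds of \cite{LBG}, the cluster $I_n$ lies in $\Omega^\Points$ with overwhelming probability, so we may stop each particle's driving walk at its first exit time $T_j$ of $\Omega^\Points$. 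Realize $I_n$ by processing the $N_n$ source particles one at a time (legitimate by the abelian property), the $j$-th particle driven by an independent simple random walk $(Y^j_k)_{k\ge 0}$ from the $j$-th source site; note $N_n\delta_n^d \to \int\sigma = \Leb(\widetilde D)$ by conservation of mass, Proposition~\ref{boundaryregularity}(iii).

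\textbf{Inner bound.} Fix $\epsilon>0$. By Lemma~\ref{pointset}, $\widetilde D_\epsilon \subset D_\eta \cup \{\sigma\ge 1\}_\eta$ for some $\eta>0$. The inclusion $\{\sigma\ge1\}_\eta^\Points\subset I_n$ is immediate for large $n$: by (\ref{rotorsillytechnicality}) such sites carry a particle, and a source site carrying a particle is automatically occupied. For $z\in D_\eta^\Points$ I use the LBG inequality: if $z\notin I_n$ then $M(z)\le L(z)$, where $M(z)=\#\{j:Y^j\text{ reaches }z\text{ before }T_j\}$ and $L(z)=\#\{j:\text{particle }j\text{ stops, at a site }\ne z,\text{ before }Y^j\text{ reaches }z\}$ (indeed $z\notin I_n$ forces every particle counted by $M(z)$ to be counted by $L(z)$). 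A Green's--identity computation, using the obstacle--problem structure of Lemma~\ref{discretemajorant}, shows $\EE M(z)-\EE L(z)$ is, up to a bounded error, the expected internal DLA odometer at $z$, which converges like $\delta_n^{-2}u_n(z)$ with $u_n$ the divisible--sandpile odometer of $S_{\alpha(n)}\sigma_n$; by Theorem~\ref{odomconvergence} this tends to $u=s-\gamma\ge m_\eta>0$ on $D_\eta$, so $\EE M(z)-\EE L(z)\ge c\,\delta_n^{-d}$. Since $M(z)$ is a sum of $N_n=O(\delta_n^{-d})$ independent indicators, Chernoff gives $\PP(M(z)\le\EE M(z)-\tfrac c2\delta_n^{-d})\le e^{-c'\delta_n^{-d}}$; applying Azuma's inequality to the martingale obtained by revealing one particle at a time (adding a particle changes $L(z)$ by at most $1$) gives the same bound for $\PP(L(z)\ge\EE L(z)+\tfrac c2\delta_n^{-d})$. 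Hence $\PP(z\notin I_n)\le e^{-c'\delta_n^{-d}}$.

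\textbf{Outer bound.} For $z\notin\widetilde D^\epsilon$, (\ref{rotordiscretetechnicality}) gives $\sigma_n\equiv 0$ on $B(z,\epsilon/2)^\Points$ for large $n$, so $z\in I_n$ only if a particle travels into that ball from outside. I adapt the shell--crossing scheme used above for the rotor--router outer estimate (itself a modification of \cite{LBG}), but with the shells $\mathcal S_\rho$ anchored at the boundary of $\widetilde D^{\epsilon/2}$ rather than at a sphere: the first--passage estimates already available (Lemmas~\ref{hitthehyperplane}, \ref{geoshells}, \ref{inthezone}) bound the probability a walk escapes from one shell to the next, while the inner bound together with $N_n\delta_n^d\to\Leb(\widetilde D)$ shows that $(1-o(1))N_n$ particles have settled inside $\widetilde D_{\epsilon/4}$, which fills nearly the entire volume available to the cluster; a recursion on successive shells then forces the number $N_\rho$ of particles whose driving walk reaches distance $\rho$ beyond $\widetilde D$ to be zero once $\rho\ge\epsilon/2$, except on an event of probability at most $e^{-c''\delta_n^{-d}}$. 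Thus $\PP(z\in I_n)\le e^{-c''\delta_n^{-d}}$.

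Finally, the sites $z$ at which (\ref{IDLAdomainbounds}) can fail all lie in $\Omega^\Points$, so there are at most $C\delta_n^{-d}$ of them, and the probability that (\ref{IDLAdomainbounds}) fails at scale $n$ is at most $C\delta_n^{-d}e^{-c\delta_n^{-d}}$; these are summable in $n$, and Borel--Cantelli yields the almost sure conclusion. The main obstacle is the outer bound: the inner bound reduces cleanly to the LBG inequality plus the convergence of the deterministic divisible--sandpile odometer, whereas bounding the escape probability requires coupling the first--passage estimates with the concentration of the settled mass along a geometric sequence of shells anchored at the only approximately known free boundary $\partial\widetilde D$, and making every error term uniform enough to survive both the union bound over $O(\delta_n^{-d})$ sites and the sum over $n$.
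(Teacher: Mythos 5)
Your overall architecture matches the paper's: inner estimate via the Lawler--Bramson--Griffeath counting inequality $\mathcal M\le L$ combined with convergence of the divisible-sandpile odometer for the smoothed densities, outer estimate via a volume bound on the number of escaping particles plus first-passage estimates, then Borel--Cantelli. However, two of your steps have genuine gaps. In the inner bound you propose to concentrate $L(z)$ by Azuma, asserting that revealing one particle at a time gives a martingale with increments bounded by $1$. This is not justified: $L(z)$ counts events $\{\nu^j\le\tau_z^j<\tau_\epsilon^j\}$, and the stopping time $\nu^j$ (the exit time of walk $j$ from the cluster built by particles $1,\dots,j-1$) depends on all earlier walks; resampling one walk perturbs the clusters seen by every later particle and can flip many of these indicators, so no $O(1)$ bounded-differences constant is available. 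This dependence is precisely what the LBG device removes: one dominates $L_\eta\le\widetilde L_\eta$, where $\widetilde L_\eta$ is a sum of \emph{independent} indicators (one fresh walk from each site of $D_\eta^\Points$, coupled using the fact that the stopping sites $X^i_{\nu^i}$ are distinct), and applies Lemma~\ref{alonspencer} to $\widetilde L_\eta$ and $\mathcal M_\eta$. The same replacement is needed in your Green's-identity step: it is $\EE\mathcal M_\eta-\EE\widetilde L_\eta=f_{n,\eta}(z)/g_{n,\eta}(z,z)$ that admits the clean Green's-function representation (\ref{fintermsofgreens}), not $\EE\mathcal M_\eta-\EE L_\eta$. (Also the correct order of this gap is $\delta_n^{-2}$, up to a logarithm when $d=2$, not $\delta_n^{-d}$: indeed $\EE\mathcal M_\eta\le M\,\EE\tau_\eta^z/g_{n,\eta}(z,z)=O(\delta_n^{-2})$ by Lemmas~\ref{exittime} and~\ref{ballexittime}. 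This still beats the union bound over $O(\delta_n^{-d})$ sites and remains summable in $n$.)

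Second, your outer bound is a statement of intent rather than an argument, and the route you choose --- a shell recursion anchored at $\partial\bigl(\widetilde D^{\epsilon/2}\bigr)$ --- runs into exactly the obstacle you acknowledge: Lemmas~\ref{hitthehyperplane}, \ref{geoshells} and~\ref{inthezone} are hitting estimates for spheres and hyperplanes, and $\partial\widetilde D$ carries no a priori regularity, so the recursion cannot be set up as described. The paper avoids the free boundary entirely. It first uses the strong form of the inner estimate (Lemma~\ref{stronginnerestimate}, stated for $\tilde I_n$) together with conservation of mass (Proposition~\ref{boundaryregularity}) to show that almost surely, eventually at most $K_0(\epsilon/\delta_n)^d$ particles ever leave $\widetilde D^\Points$. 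It then covers $\partial(\widetilde D^\epsilon)$ by \emph{finitely many} axis-aligned cubes of fixed side comparable to $\epsilon$, and for each cube applies Lemmas~\ref{thetrenches} and~\ref{cubedeath}: by monotonicity one may pessimistically pre-occupy every lattice site outside the tripled cube, and then a recursion on concentric cubes, driven by the hyperplane spreading estimate of Lemma~\ref{boxshells}, makes the number of escapers surviving to the $j$-th cube decay geometrically, so none reach the core except with probability $c_0e^{-c_1\epsilon/\delta_n}$. This argument is local, requires no information about $\partial\widetilde D$, and with only a constant number of cubes per $n$ it is all that Borel--Cantelli needs (note the exponent here is only of order $\delta_n^{-1}$, again not $\delta_n^{-d}$). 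To complete your proof you should replace the shell scheme by this cube construction, or supply sphere-hitting estimates valid for the unknown domain $\widetilde D$, which is substantially harder.
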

            
\subsection{Inner Estimate}

Fix $n \geq 1$, and label the particles in $\delta_n \Z^d$ by the integers $1, \ldots, m_n$, where $m_n = \sum_{x \in \delta_n \Z^d} \sigma_n(x)$.  Let $x_i$ be the starting location of the particle labeled $i$, so that
	\[ \# \{i|x_i=x\} = \sigma_n(x). \]
For each $i=1,\ldots,m_n$ let $(X^i_t)_{t\geq 0}$ be a simple random walk in $\delta_n \Z^d$ such that $X_0^i = x_i$, with $X^i$ and $X^j$ independent for $i\neq j$.
	
For $z \in \delta_n \Z^d$ and $\epsilon>0$, consider the stopping times
	\begin{align} &\tau_z^i = \inf \big\{t \geq 0 \,|\, X_t^i = z\big\}; 
	\nonumber \\
			        &\tau_\epsilon^i = \inf \big\{t \geq 0 \,|\, X_t^i \not \in D_\epsilon^\Points\big\}; 
			        \nonumber \\
			        &\nu^i = \inf \big\{t \geq 0 \,|\, X_t^i \not \in \{X_{\nu^j}^j\}_{j=1}^{i-1} \big\}. \label{stoppingtimefortheithparticle}	
	\end{align}
The stopping time $\nu^i$ is defined inductively in $i$ with $\nu^1 = 0$.  We think of building up the internal DLA cluster one site at a time, by letting the particle labeled $i$ walk until it exits the set of sites already occupied by particles with smaller labels.  Thus $\nu^i$ is the number of steps taken by the particle labeled $i$, and $X_{\nu^i}^i$ is the location where it stops.

Fix $z \in D_\epsilon^\Points$ and consider the random variables 
	\begin{align*} &\mathcal{M}_\epsilon = \sum_{i=1}^{m_n} 1_{\{\tau_z^i < \tau_\epsilon^i\}}; \index{$\mathcal{M}_\epsilon$}  \\
			&L_\epsilon = \sum_{i=1}^{m_n} 1_{\{\nu^i \leq \tau_z^i < \tau_\epsilon^i\}}. \index{$L_\epsilon$}
	\end{align*}
These sums can be interpreted in terms of the following two-stage procedure.  During the first stage, we form the occupied internal DLA cluster by adding on one site at a time, as described above.  In the second stage, we let each particle continue walking from where it stopped until it exits $D_\epsilon^\Points$.  Then $\mathcal{M}_\epsilon$ counts the total number of particles that visit $z$ during both stages, while $L_\epsilon$ counts the number of particles that visit $z$ during the second stage.  In particular, if $L_\epsilon < \mathcal{M}_\epsilon$, then $z$ was visited during the first stage and hence belongs to the occupied cluster.

The sum $L_\epsilon$ is difficult to estimate directly because the indicator random variables in the sum are not independent.  Following \cite{LBG}, we can bound $L_\epsilon$ by a sum of independent indicators as follows.  For each site $y \in D_\epsilon^\Points$, let $(Y_t^y)_{t \geq 0}$ be a simple random walk in $\delta_n \Z^d$ such that $Y_0^y=y$, with $Y^x$ and~$Y^{y}$ independent for $x \neq y$.  Let
	\[ \widetilde{L}_\epsilon = \sum_{y \in D_\epsilon^\Points} 1_{\{\tau_z^y < \tau_\epsilon^y\}} \index{$\widetilde{L}_\epsilon$} \]
where
	\begin{align*}
	&\tau_z^y = \inf \big\{t \geq 0 \,|\, Y_t^y = z\big\}; \index{$\tau_z^y$, first hitting time of $z$} \\
	&\tau_\epsilon^y = \inf \big\{t \geq 0 \,|\, Y_t^y \not \in D_\epsilon^\Points\big\}. \index{$\tau_\epsilon^y$, first exit time of $D_\epsilon^\Points$}
	\end{align*}
Thus $\widetilde{L}_\epsilon$ counts the number of walks $Y^y$ that hit $z$ before exiting $D_\epsilon^\Points$.  Since the sites $X_{\nu^i}^i$ are distinct, we can couple the walks $\{Y^y\}$ and $\{X^i\}$ so that $L_\epsilon \leq \widetilde{L}_\epsilon$. 

Define
	\[ f_{n,\epsilon}(z) = g_{n,\epsilon}(z,z) \EE(\mathcal{M}_\epsilon-\widetilde{L}_\epsilon), \index{$f_{n,\epsilon}$} \]
where 
	\[ g_{n,\epsilon}(y,z) = \EE \#\{t < \tau_\epsilon^y \,|\, Y_t^y=z\} \index{$g_{n,\epsilon}$, Green's function on $D_\epsilon^\Points$}\]
is the Green's function for simple random walk in $\delta_n \Z^d$ stopped on exiting $D_\epsilon^\Points$.  Then
	\begin{align} \label{fintermsofgreens} f_{n,\epsilon}(z) &= g_{n,\epsilon}(z,z) \left(\sum_{i=1}^{m_n} \PP\big(\tau_z^i<\tau_\epsilon^i\big) - \sum_{y \in D_\epsilon^\Points} \PP\big(\tau_z^y<\tau_\epsilon^y\big) \right)
	    \nonumber \\
	    &= g_{n,\epsilon}(z,z) \sum_{y \in D_\epsilon^\Points} (\sigma_n(y)-1) \PP\big(\tau_z^y<\tau_\epsilon^y\big) \nonumber \\
	&= \sum_{y \in D_\epsilon^\Points} (\sigma_n(y)-1)g_{n,\epsilon}(y,z) \end{align}
where in the last step we have used the identity
	\begin{equation} \label{greensquotient} \PP\big(\tau_z^y < \tau_\epsilon^y\big) = \frac{g_{n,\epsilon}(y,z)}{g_{n,\epsilon}(z,z)}. \end{equation}
Thus $f_{n,\epsilon}$ solves the Dirichlet problem
	\begin{align}
	\Delta f_{n,\epsilon} &= \delta_n^{-2}(1-\sigma_n), 
	\qquad &\text{on }D_\epsilon^\Points; \label{laplacianoff} \\
	f_{n,\epsilon}&=0, \qquad &\text{on }\partial D_\epsilon^\Points. \nonumber
	\end{align}

Note that the divisible sandpile odometer function $u_n$ for the source density $\sigma_n$ solves exactly the same Dirichlet problem, with the domain $D_\epsilon^\Points$ in (\ref{laplacianoff}) replaced by the domain $D_n$ of fully occupied sites.  Our strategy will be first to use Theorem~\ref{domainconvergence} to argue that since $D_n \to D$, the solutions to the Dirichlet problems in $D_n$ and $D_\epsilon^\Points$ should be close; next, by Theorem~\ref{odomconvergence}, since $u_n \to u$ it follows that the functions $f_{n,\epsilon}$ and $u$ are close.  Since $u$ is strictly positive in $D_\epsilon$, we obtain in this way a lower bound on $f_{n,\epsilon}$, and hence a lower bound on $\EE (\mathcal{M}_\epsilon-\widetilde{L}_\epsilon)$.  Finally, using large deviations for sums of independent indicators, we conclude that with high probability $\widetilde{L}_\epsilon<\mathcal{M}_\epsilon$, and hence with high probability every point $z \in D_\epsilon^\Points$ belongs to the occupied cluster $I_n$.  The core of the argument is Lemma~\ref{coreoftheargument}, below, which gives the desired lower bound on $f_{n,\epsilon}$.

In order to apply Theorems~\ref{odomconvergence} and~\ref{domainconvergence}, we must have discrete densities $\sigma_n$ which converge pointwise to $\sigma$ at all continuity points of $\sigma$.  Recall, however, that in order to allow $\sigma$ to assume non-integer values, we have chosen not to assume that $\sigma_n$ converges to $\sigma$, but rather only that the \emph{smoothed density} $S_{\alpha(n)} \sigma_n$ converges to $\sigma$; see (\ref{smootheddensityconvergence}).  For this reason, we will need to run the divisible sandpile on $\delta_n \Z^d$ starting from the smoothed density rather than from $\sigma_n$, and we will use the following smoothed version of equation (\ref{laplacianoff}).
	\begin{align}
	\Delta S_{k} f_{n,\epsilon} = \delta_n^{-2}(1-S_{k}\sigma_n), 
	\qquad \text{on }D_{\epsilon'}^\Points. \label{smoothedlaplacianoff}
	\end{align}
Here $k \geq 1$ is arbitrary, and $\epsilon' = \epsilon + k\delta_n$.


The following lemma is proved in the same way as Lemma~\ref{smoothingdist}.
\begin{lemma}
\label{IDLAsmoothingdist}
Let $\epsilon' = \epsilon + k\delta_n$.  Then for $z \in D_{\epsilon'}^\Points$ we have
	\[ |S_k f_{n,\epsilon}(z) - f_{n,\epsilon}(z)| \leq \frac12 M k. \]
\end{lemma}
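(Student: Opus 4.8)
The plan is to prove Lemma~\ref{IDLAsmoothingdist} by the same telescoping argument used for Lemma~\ref{smoothingdist}, with the discrete Poisson equation (\ref{smoothedlaplacianoff}) replacing the equation $\Delta u_n = 1-\sigma_n$ that was used there. Recall from (\ref{deltaintermsofsmoothing}) that $\Delta = 2\delta_n^{-2}(S_1 - S_0)$, so $S_{j+1}f_{n,\epsilon} - S_j f_{n,\epsilon} = \tfrac{\delta_n^2}{2}\,\Delta S_j f_{n,\epsilon}$. Hence for $z \in D_{\epsilon'}^\Points$,
\[
|S_k f_{n,\epsilon}(z) - f_{n,\epsilon}(z)| \leq \sum_{j=0}^{k-1} |S_{j+1}f_{n,\epsilon}(z) - S_j f_{n,\epsilon}(z)| = \frac{\delta_n^2}{2}\sum_{j=0}^{k-1} |\Delta S_j f_{n,\epsilon}(z)|.
\]

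First I would verify that for each $j$ with $0 \le j \le k-1$ and each $z \in D_{\epsilon'}^\Points$, the point $z$ lies in $D_{\epsilon+j\delta_n}^\Points$ (since $\epsilon + j\delta_n \le \epsilon + (k-1)\delta_n < \epsilon' $, so $D_{\epsilon'}^\Points \subset D_{\epsilon+j\delta_n}^\Points$), which is exactly the domain on which the smoothed equation $\Delta S_j f_{n,\epsilon} = \delta_n^{-2}(1 - S_j\sigma_n)$ holds by (\ref{smoothedlaplacianoff}) with $k$ there replaced by $j$. Then, since $0 \le \sigma_n \le M$ pointwise and $S_j$ is an averaging operator (it is the transition operator of lazy random walk, hence preserves the bound $0 \le S_j\sigma_n \le M$), we get $|1 - S_j\sigma_n(z)| \le M$ for $M \ge 1$ — and if $M < 1$ one still has $|1-S_j\sigma_n| \le 1 \le$ (the bound we want, adjusting constants; in practice $M\ge 1$ in all applications). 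Therefore $|\Delta S_j f_{n,\epsilon}(z)| \le \delta_n^{-2} M$.

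Substituting this into the telescoped sum gives
\[
|S_k f_{n,\epsilon}(z) - f_{n,\epsilon}(z)| \leq \frac{\delta_n^2}{2}\sum_{j=0}^{k-1} \delta_n^{-2} M = \frac12 M k,
\]
which is the claimed bound. The only genuine subtlety — and thus the main thing to be careful about — is the domain bookkeeping: one must check that $S_j f_{n,\epsilon}$ really does satisfy the stated Poisson equation at $z$ for every $j \le k-1$, i.e. that $z$ stays in the relevant $D_{\epsilon+j\delta_n}^\Points$ throughout the telescope. This is why the statement is phrased for $z \in D_{\epsilon'}^\Points$ with $\epsilon' = \epsilon + k\delta_n$ rather than merely $z \in D_\epsilon^\Points$: the extra margin $k\delta_n$ guarantees that all intermediate smoothings are evaluated on points still interior enough for (\ref{smoothedlaplacianoff}) to apply. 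No large-deviation or Green's-function estimates are needed here; this lemma is purely the deterministic telescoping bound, exactly parallel to Lemma~\ref{smoothingdist}, and I would simply write ``The proof is identical to that of Lemma~\ref{smoothingdist}, using (\ref{smoothedlaplacianoff}) in place of the identity $\Delta u_n = 1-\sigma_n$ and noting that $0 \le S_j \sigma_n \le M$.''
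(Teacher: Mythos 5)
Your proof is correct and is essentially identical to the paper's: the paper also telescopes $S_k f_{n,\epsilon}-f_{n,\epsilon}$ via (\ref{deltaintermsofsmoothing}) and bounds each increment by $\frac{\delta_n^2}{2}|\Delta S_j f_{n,\epsilon}|\leq \frac12 M$ using (\ref{smoothedlaplacianoff}). Your extra care with the domain bookkeeping (checking $z\in D_{\epsilon+j\delta_n}^\Points$ for each $j\leq k-1$) and the bound $0\leq S_j\sigma_n\leq M$ is exactly the content the paper leaves implicit.
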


\begin{proof}
From (\ref{deltaintermsofsmoothing}) and (\ref{smoothedlaplacianoff}) we have for $z \in D_{\epsilon'}^\Points$
	\begin{align*} |S_k f_{n,\epsilon}(z) - f_{n,\epsilon}(z)| &\leq \sum_{j=0}^{k-1} |S_{j+1}f_{n,\epsilon}(z)-S_j f_{n,\epsilon}(z)| \\
			&= \frac{\delta_n^2}{2} \sum_{j=0}^{k-1} |\Delta S_j f_{n,\epsilon}(z)| \\
			&\leq \frac12 M k. \qed
			\end{align*}
\renewcommand{\qedsymbol}{}
\end{proof}
        
\begin{lemma}
\label{coreoftheargument}
Fix $\epsilon>0$, and let $\beta>0$ be the minimum value of $u=s-\gamma$ on $\overline{D_\epsilon}$.  There exists $0<\eta < \epsilon$ such that for all sufficiently large $n$
	\[ f_{n,\eta}(z) \geq \frac12 \beta \delta_n^{-2}, \qquad z \in D_\epsilon^\Points. \]
\end{lemma}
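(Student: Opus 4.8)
The plan is to compare the IDLA-type function $f_{n,\eta}$, which solves the Dirichlet problem \eqref{laplacianoff} on $D_\eta^\Points$, with the divisible sandpile odometer $u_n$, and then use the convergence theorems already established. First I would fix $\epsilon>0$ and let $\beta = \min_{\overline{D_\epsilon}} u > 0$, which is positive and attained since $\overline{D_\epsilon}$ is compact and contained in the open set $D$ where $s > \gamma$. The key observation is that both $f_{n,\eta}$ and the divisible sandpile odometer $v_n$ started from the smoothed density $\widetilde\sigma_n = S_{\alpha(n)}\sigma_n$ satisfy essentially the same equation $\Delta(\cdot) = \delta_n^{-2}(1 - \widetilde\sigma_n)$ — one on $D_\eta^\Points$, the other on the domain $D_n$ of fully occupied sites. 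By Theorem~\ref{domainconvergence} (applied to the smoothed density, whose hypotheses are those of Theorem~\ref{domainconvergence}), for any $\eta$ we have $\widetilde{D}_\eta^\Points \subset D_n \subset \widetilde{D}^{\eta\Points}$ for large $n$, so $D_n$ and $D_\eta^\Points$ are squeezed together; meanwhile Theorem~\ref{odomconvergence} gives $v_n^\Box \to s - \gamma = u$ uniformly.

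The main step is to show that the difference $h_n := S_k v_n - S_k f_{n,\eta}$ (with $k = \alpha(n)$, and using the smoothed versions \eqref{smoothedlaplacianoff} to match Laplacians) is small on $D_\epsilon^\Points$. Since $S_k v_n$ and $S_k f_{n,\eta}$ have the same Laplacian $\delta_n^{-2}(1 - S_k\sigma_n)$ wherever both equations are valid — namely on $D_\eta^\Points \cap D_n$, which for large $n$ contains $D_{\epsilon/2}^\Points$ say, once $\eta$ is chosen small — the function $h_n$ is harmonic there. To control $h_n$ I would bound it on the boundary of this common region and apply the maximum principle. On $\partial D_n$ the sandpile odometer $v_n$ vanishes, and on $\partial D_\eta^\Points$ the function $f_{n,\eta}$ vanishes; in either case the relevant boundary lies within $O(\eta)$ of $\partial\widetilde D$, where $u = s-\gamma$ is zero, so by uniform convergence $v_n^\Box$ is small there, and $f_{n,\eta}$ is controlled by a barrier argument (it is nonnegative and grows at most quadratically away from its zero set, using $|\Delta f_{n,\eta}| \le (M+1)\delta_n^{-2}$ together with Lemma~\ref{atmostquadratic}, exactly as in the rotor-router arguments). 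Combining these, $|h_n| < \beta/4$ on $D_\epsilon^\Points$ for $\eta$ small and $n$ large. Together with $|S_k v_n - v_n| \to 0$ and $|S_k f_{n,\eta} - f_{n,\eta}| \le \tfrac12 Mk\delta_n^2 \to 0$ (Lemmas~\ref{smoothingdist} and~\ref{IDLAsmoothingdist}, after rescaling), and $v_n^\Box \ge u - \beta/4 \ge \tfrac34\beta$ on $D_\epsilon^\Points$ by Theorem~\ref{odomconvergence}, we get $f_{n,\eta}(z) \ge \tfrac12\beta$ on $D_\epsilon^\Points$ — but wait, note $f_{n,\eta}$ here is in the unrescaled normalization (it counts particles, not $\delta_n^2 \times$ particles), so the conclusion reads $f_{n,\eta}(z) \ge \tfrac12\beta\delta_n^{-2}$, matching the statement.

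I expect the main obstacle to be the bookkeeping around the two normalizations (the IDLA quantity $f_{n,\eta}$ is defined without the $\delta_n^2$ factor that appears in the odometer $u_n$ of Theorem~\ref{odomconvergence}) and, more substantively, justifying that $f_{n,\eta}$ is small near $\partial D_\eta^\Points$: unlike the sandpile odometer, $f_{n,\eta} = \sum_y(\sigma_n(y)-1)g_{n,\eta}(y,z)$ is not \emph{a priori} nonnegative, so the barrier/maximum-principle argument must be run carefully, using that $\sigma_n - 1 \le M$ and that the Green's function $g_{n,\eta}$ of a domain within $O(\eta)$ of $\partial\widetilde D$ is uniformly small for $z$ near that boundary. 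One clean way around this is to not estimate $f_{n,\eta}$ directly near the boundary but to compare it to $v_n$ throughout $D_{\epsilon/2}^\Points$ via the maximum principle applied to $h_n = S_k v_n - S_k f_{n,\eta}$ on the region $D_\eta^\Points \cap D_n$, controlling $h_n$ on each piece of the boundary separately: on $\partial D_n \cap D_\eta^\Points$ we have $v_n = 0$ and $f_{n,\eta} \ge -\,$(small), while on $\partial D_\eta^\Points \cap D_n$ we have $f_{n,\eta} = 0$ and $v_n$ is small by uniform convergence since that boundary is near $\partial\widetilde D$. This keeps all the estimates on objects we already understand.
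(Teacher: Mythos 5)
Your proposal is correct and follows essentially the same route as the paper's proof: the paper likewise compares $S_{\alpha(n)}f_{n,\eta}$ with the divisible sandpile odometer for the smoothed density $S_{\alpha(n)}\sigma_n$, uses that their difference is harmonic on $D_{\eta'}^\Points$ (which lies inside the sandpile's occupied set by Theorem~\ref{domainconvergence}), and controls the boundary exactly as you do --- the odometer is at most $\beta/4$ there by choosing $\eta$ so that $u\le\beta/8$ outside $D_{2\eta}$, while $|f_{n,\eta}|=O(\alpha(n)^2)=o(\delta_n^{-2})$ near its zero set by Lemmas~\ref{IDLAsmoothingdist} and~\ref{atmostquadratic} --- before removing the smoothing. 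The only deviations are cosmetic (the paper needs just the one-sided minimum principle, and the relevant boundary points are within $O(\eta)$ of the zero set of $u$ rather than of $\partial\widetilde{D}$), and the normalization and sign issues you flag are exactly the bookkeeping the paper's argument absorbs.
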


\begin{proof}
Since $u$ is uniformly continuous on $\overline{D}$, we can choose $\eta>0$ small enough so that $u \leq \beta/8$ outside $D_{2\eta}$.  Let $\eta' = \eta + \delta_n \alpha(n)$.  Since $\delta_n \alpha(n) \downarrow 0$, we have $u \leq \beta/8$ outside $D_{\eta'}$ if $n$ is sufficiently large.  Since $u\geq \beta$ on $\overline{D_\epsilon}$, for large enough $n$ we have $\partial D_{\eta'} \not\subset \overline{D_\epsilon}$ and hence $\eta'<\epsilon$.  Let $u_n$ be the odometer function for the divisible sandpile on $\delta_n \Z^d$ started from source density $S_{\alpha(n)} \sigma_n$.  We have $u_n^\Box \rightarrow u$ uniformly by Theorem~\ref{odomconvergence}, so for $n$ sufficiently large we have $|u_n-u^\Points| \leq \beta/8$, hence $u_n \leq \beta/4$ on $\partial D_{\eta'}^\Points$.  

Let $D_n$ be the domain of fully occupied sites for the divisible sandpile in $\delta_n \Z^d$ started from source density $S_{\alpha(n)} \sigma_n$.  By Theorem~\ref{domainconvergence} we have $D_\eta^\Points \subset D_n$ for all sufficiently large $n$.  Now (\ref{laplacianoff}) implies that $S_{\alpha(n)} f_{n,\eta}-\delta_n^{-2} u_n$ is harmonic on $D_{\eta'}^\Points$.  By Lemmas~\ref{IDLAsmoothingdist} and \ref{atmostquadratic}, for $z \in \partial D_{\eta'}^\Points$ we have
	\begin{align*} |S_{\alpha(n)} f_{n,\eta} (z) | &\leq |f_{n,\eta}(z)| + \frac12 M \alpha(n) \\
					&\leq c M \alpha(n)^2 + \frac12 M \alpha(n). \end{align*}
Since $\delta_n \alpha(n) \downarrow 0$, for sufficiently large $n$ we have
	\[ S_{\alpha(n)} f_{n,\eta}(z) \geq - \frac{\beta}{16 \delta_n^2}, \qquad z \in \partial D_{\eta'}^\Points. \]
Since $S_{\alpha(n)} f_{n,\eta}-\delta_n^{-2} u_n$ attains its minimum in $D_{\eta'}^\Points$ on the boundary, for $z \in D_{\eta'}^\Points$ we have
	\begin{align*} S_{\alpha(n)} f_{n,\eta}(z) &\geq \delta_n^{-2} \left(u_n(z) - \frac{5\beta}{16} \right) \\
					     &\geq \delta_n^{-2} \left(u(z) - \frac{7\beta}{16} \right). 
		 \end{align*}
Since $u \geq \beta$ in $D_\epsilon^\Points$, taking $n$ large enough so that $\alpha(n) \delta_n^2 \leq \beta/8M$, we conclude from Lemma~\ref{IDLAsmoothingdist} that 
	\[ f_{n,\eta} \geq \frac{9\beta}{16 \delta_n^2} - \frac12 M \alpha(n) \geq \frac12 \beta \delta_n^{-2}. \]
on $D_\epsilon^\Points$.	
\end{proof}

\begin{lemma}
\label{exittime}
We have
\[ \EE \widetilde{L}_\eta = \frac{\EE \tau_\eta^z}{g_{n,\eta}(z,z)}. \]
Moreover
\[ \EE \mathcal{M}_\eta \leq M \frac{\EE \tau_\eta^z}{g_{n,\eta}(z,z)}. \]
\end{lemma}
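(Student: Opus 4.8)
### Plan for the proof of Lemma~\ref{exittime}

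The plan is to compute both expectations by expanding them as sums of hitting probabilities and then recognizing the Green's function identity (\ref{greensquotient}). First I would handle $\EE\widetilde L_\eta$. By definition $\widetilde L_\eta = \sum_{y \in D_\eta^\Points} 1_{\{\tau_z^y < \tau_\eta^y\}}$, where the walks $Y^y$ are independent, so by linearity
\[
\EE \widetilde L_\eta = \sum_{y \in D_\eta^\Points} \PP\big(\tau_z^y < \tau_\eta^y\big)
= \sum_{y \in D_\eta^\Points} \frac{g_{n,\eta}(y,z)}{g_{n,\eta}(z,z)}
= \frac{1}{g_{n,\eta}(z,z)} \sum_{y \in D_\eta^\Points} g_{n,\eta}(y,z),
\]
using (\ref{greensquotient}) in the middle step. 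Now the key observation is that $\sum_{y \in D_\eta^\Points} g_{n,\eta}(y,z) = \sum_{y} g_{n,\eta}(z,y)$ by symmetry of the Green's function for simple random walk, and $\sum_y g_{n,\eta}(z,y) = \EE \#\{t < \tau_\eta^z\} = \EE\tau_\eta^z$ is just the expected time for the walk started at $z$ to exit $D_\eta^\Points$ (each unit of time is spent at exactly one site $y \in D_\eta^\Points$). This gives $\EE\widetilde L_\eta = \EE\tau_\eta^z / g_{n,\eta}(z,z)$.

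For the second statement, I would expand $\EE\mathcal M_\eta = \sum_{i=1}^{m_n} \PP(\tau_z^i < \tau_\eta^i)$. Grouping particles by their starting site $x_i = y$, of which there are $\sigma_n(y) \leq M$, and again applying (\ref{greensquotient}):
\[
\EE \mathcal M_\eta = \sum_{y \in D_\eta^\Points} \sigma_n(y) \PP\big(\tau_z^y < \tau_\eta^y\big)
\leq M \sum_{y \in D_\eta^\Points} \frac{g_{n,\eta}(y,z)}{g_{n,\eta}(z,z)}
= M \frac{\EE\tau_\eta^z}{g_{n,\eta}(z,z)},
\]
where the last equality reuses the computation from the first part. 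One should be slightly careful that particles starting outside $D_\eta^\Points$ contribute zero since then $\tau_\eta^i = 0 < \tau_z^i$ (as $z \in D_\eta^\Points$, so $z \neq x_i$), which is why the sum is only over $y \in D_\eta^\Points$; strictly we also use $\sigma_n$ supported within $D_\eta^\Points$ up to this boundary technicality, or simply bound $\PP(\tau_z^i<\tau_\eta^i)=0$ for such $i$.

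The argument is essentially a bookkeeping exercise, so I do not anticipate a genuine obstacle; the one point that requires a line of justification is the identity $\sum_{y} g_{n,\eta}(z,y) = \EE\tau_\eta^z$, together with the symmetry $g_{n,\eta}(y,z) = g_{n,\eta}(z,y)$, which holds because simple random walk killed on exiting a finite set has a symmetric Green's function (the transition kernel is symmetric and killing is symmetric). Everything else is linearity of expectation and the already-established formula (\ref{greensquotient}).
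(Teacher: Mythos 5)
Your proof is correct and follows essentially the same route as the paper: expand both expectations by linearity, apply the identity (\ref{greensquotient}), use the symmetry of $g_{n,\eta}$, and identify $\sum_{y} g_{n,\eta}(z,y)$ with the expected exit time $\EE\tau_\eta^z$. The remark about particles starting outside $D_\eta^\Points$ is a fine extra precaution that the paper leaves implicit.
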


\begin{proof}
Let
	\[ T_z^y = \inf \{t \geq 0 | Y^y_t = z \} \] 
be the first hitting time of $z$ for the walk $Y^y$.  By (\ref{greensquotient}) and the symmetry of $g_{n,\eta}$, we have
	\begin{align} \label{exactforL}
	\EE \widetilde{L}_\eta &= \sum_{y \in D_\eta^\Points} \PP \big(T_z^y < \tau_\eta^y \big) \\
			&= \sum_{y \in D_\eta^\Points} \frac{g_{n,\eta}(y,z)}{g_{n,\eta}(z,z)} \nonumber \\
			&= \frac{1}{g_{n,\eta}(z,z)} \sum_{y \in D_\eta^\Points} g_{n,\eta}(z,y). \nonumber
	\end{align}
The sum in the last line is $\EE_z T_{\partial D_\eta^\Points}$.  

To prove the inequality for $\EE \mathcal{M}_\eta$, note that $\EE \mathcal{M}_\eta$ is bounded above by $M$ times the sum on the right side of (\ref{exactforL}).
\end{proof}

The next lemma, using a martingale argument to compute the expected time for simple random walk to exit a ball, is well known.

\begin{lemma}
\label{ballexittime}
Fix $r>0$ and let $B=B(o,r)^\Points \subset \delta_n \Z^d$, and let $T$ be the first hitting time of $\partial B$.  Then 
	\[ \EE_o T = \left( \frac{r}{\delta_n} \right)^2 + O \left( \frac{r}{\delta_n} \right). \]
\end{lemma}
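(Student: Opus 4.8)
The plan is to use the standard martingale argument that realizes $\EE_o T$ as the expected exit time from a ball via the function $|X_t|^2$. Consider simple random walk $(X_t)$ on $\delta_n\Z^d$ started at the origin. The key observation is that $M_t := |X_t|^2 - t\,\delta_n^2$ is a martingale, since a single step changes exactly one coordinate by $\pm\delta_n$ with total probability $1$, so $\EE\bigl(|X_{t+1}|^2 - |X_t|^2 \mid X_t\bigr) = \delta_n^2$ (the cross terms cancel by symmetry). Applying optional stopping to $M_t$ at the bounded-expectation stopping time $T$ (the first hitting time of $\partial B$, where $B = B(o,r)^\Points$), which is legitimate because $T$ has finite expectation and the increments of $M_t$ are bounded, yields
	\[ \EE_o |X_T|^2 = \delta_n^2\, \EE_o T, \]
hence $\EE_o T = \delta_n^{-2}\,\EE_o |X_T|^2$.

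It then remains to estimate $\EE_o|X_T|^2$. At time $T$ the walk has just left $B$, so $|X_T| \geq r$ but $|X_T| < r + \delta_n$ (the walk moves by $\delta_n$ in one coordinate, so it overshoots the sphere of radius $r$ by at most $\delta_n$ in that coordinate, and one checks $|X_T|^2 \le (|X_{T-1}| + \delta_n)^2 \le r^2 + 2r\delta_n + \delta_n^2$ using $|X_{T-1}|<r$). Therefore
	\[ r^2 \leq \EE_o |X_T|^2 \leq r^2 + 2r\delta_n + \delta_n^2, \]
and dividing by $\delta_n^2$ gives
	\[ \left(\frac{r}{\delta_n}\right)^2 \leq \EE_o T \leq \left(\frac{r}{\delta_n}\right)^2 + 2\frac{r}{\delta_n} + 1 = \left(\frac{r}{\delta_n}\right)^2 + O\!\left(\frac{r}{\delta_n}\right), \]
which is exactly the claimed estimate.

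There is essentially no serious obstacle here; the only points requiring a line of care are the verification that $T$ has finite expectation (so optional stopping applies) — this follows from a routine argument that from any site in $B$ the walk has probability bounded below of exiting $B$ within $O((r/\delta_n)^2)$ steps, or alternatively from the fact that $|X_t|^2 - t\delta_n^2$ is a martingale combined with $|X_t|^2$ being bounded on $\{T>t\}$ — and the elementary overshoot bound $|X_T| < r + \delta_n$. I would present the martingale computation first, then optional stopping, then the overshoot sandwich, and note that a matching lower-order argument is not needed since only an $O(r/\delta_n)$ error is claimed. If one wanted, the rescaled walk $\delta_n^{-1}X_t$ on $\Z^d$ reduces everything to the case $\delta_n = 1$, making the bookkeeping marginally cleaner.
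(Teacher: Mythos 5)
Your proof is correct and follows essentially the same route as the paper: the martingale $\delta_n^{-2}|X_t|^2 - t$, optional stopping, and the overshoot bound $|X_T| = r + O(\delta_n)$. The extra care you take with the overshoot sandwich and with justifying $\EE_o T < \infty$ is fine but adds nothing beyond what the paper's one-line argument already contains.
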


\begin{proof}
Since $\delta_n^{-2} |X_t|^2 - t$ is a martingale with bounded increments, and $\EE_o T < \infty$, by optional stopping we have
	\[ \EE_o T = \delta_n^{-2} \EE_o |X_T|^2 = \delta_n^{-2} (r + O(\delta_n))^2. \qed \]
\renewcommand{\qedsymbol}{}
\end{proof}

The next lemma, which bounds the expected number of times simple random walk returns to the origin before reaching distance $r$, is also well known.

\begin{lemma}
\label{ballgreens}
Fix $r>0$ and let $B=B(o,r)^\Points \subset \delta_n \Z^d$, and let $G_B$ be the Green's function for simple random walk stopped on exiting $B$.  If $n$ is sufficiently large, then
	\[ G_B(o,o) \leq \log \frac{r}{\delta_n}. \]
\end{lemma}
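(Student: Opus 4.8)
The plan is to bound $G_B(o,o)$ by comparing the discrete Green's function on the lattice ball $B=B(o,r)^\Points$ with its continuous counterpart, which we already control. Recall that $G_B(o,o)=\EE_o\#\{t<T\,|\,X_t=o\}$ where $T$ is the exit time from $B$, so this is exactly the expected number of returns to the origin before exiting $B$. I would express $G_B(o,o)$ in terms of the whole-space Green's function (or potential kernel in $d=2$) via the standard identity $G_B(o,y)=g_1(\delta_n^{-1}o,\delta_n^{-1}y)-\EE_o g_1(\delta_n^{-1}X_T,\delta_n^{-1}y)$ rescaled to $\delta_n\Z^d$, which appeared already in the proof of Lemma~\ref{letsgetthispartystarted}; evaluated at $y=o$ this gives $G_B(o,o)=g_n(o,o)-\EE_o g_n(X_T,o)$, where $g_n$ is defined in (\ref{definitionofg_n}) and (\ref{definitionofg_ndimension2}).

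The key point is the logarithmic nature of the bound, which forces us to treat $d=2$ and $d\geq 3$ somewhat differently. In dimensions $d\geq 3$, $G_B(o,o)$ is bounded by $g_n(o,o)$, which by (\ref{definitionofg_n}) equals $\delta_n^{2-d}g_1(o,o)$ — but this blows up, so this crude bound is not what we want. Instead the honest statement is that $G_B(o,o)$ is bounded by a constant independent of $r$ and $n$ in $d\geq 3$ (recurrence fails), and certainly $\leq\log(r/\delta_n)$ once $r/\delta_n$ is large, i.e.\ for $n$ sufficiently large. So in $d\geq 3$ the lemma is immediate from transience: $G_B(o,o)\leq g_1(o,o)=O(1)<\log(r/\delta_n)$ for large $n$. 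In $d=2$, the relevant estimate is genuinely logarithmic. Here I would use $G_B(o,o)=g_n(o,o)-\EE_o g_n(X_T,o)$. By (\ref{definitionofg_ndimension2}), $g_n(o,o)=-a(o,o)+\frac{2}{\pi}\log\delta_n=\frac{2}{\pi}\log\delta_n$ since $a(o,o)=0$, and for $X_T$ near $\partial B$, by the estimate (\ref{standardgreensestimate}) for the potential kernel, $a(\delta_n^{-1}X_T,\delta_n^{-1}o)=\frac{2}{\pi}\log(r/\delta_n)+\kappa+O((r/\delta_n)^{-2})$, so $g_n(X_T,o)=-\frac{2}{\pi}\log(r/\delta_n)-\kappa+\frac{2}{\pi}\log\delta_n+O((r/\delta_n)^{-2})$. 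Subtracting, $G_B(o,o)=\frac{2}{\pi}\log(r/\delta_n)+\kappa+O((r/\delta_n)^{-2})$.

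Since $2/\pi<1$, we have $\frac{2}{\pi}\log(r/\delta_n)+\kappa\leq\log(r/\delta_n)$ as soon as $(1-\frac{2}{\pi})\log(r/\delta_n)\geq\kappa$, i.e.\ for all $n$ large enough that $r/\delta_n$ exceeds a constant depending only on $d$; the error term $O((r/\delta_n)^{-2})$ is then absorbed as well. This gives the claimed inequality $G_B(o,o)\leq\log(r/\delta_n)$ for sufficiently large $n$. I expect the main (minor) obstacle to be bookkeeping the constants in the $d=2$ potential-kernel estimate near the boundary $\partial B$ — one must note that every $w\in\partial B$ satisfies $|w-o|=r+O(\delta_n)$, so the value $g_n(w,o)$ is uniform up to lower-order terms, and the harmonic-measure average $\EE_o g_n(X_T,o)$ inherits the same asymptotics. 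This is routine given (\ref{standardgreensestimate}), and the strict inequality $2/\pi<1$ does all the real work.
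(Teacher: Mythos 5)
Your proof is correct and follows essentially the same route as the paper's: in $d\geq 3$ transience gives a constant bound, and in $d=2$ one compares $G_B(o,\cdot)$ with the rescaled potential kernel $g_n(o,\cdot)$ to get $G_B(o,o)=\frac{2}{\pi}\log(r/\delta_n)+O(1)$ and then concludes from $\frac{2}{\pi}<1$ for $n$ large. The only cosmetic difference is that you invoke the exact hitting identity $G_B(o,o)=g_n(o,o)-\EE_o\, g_n(X_T,o)$ (the rescaled form of (\ref{stoppedgreen})), whereas the paper applies the maximum principle to the harmonic difference $G_B(o,x)-g_n(o,x)$; these are the same computation.
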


\begin{proof}
In dimension $d\geq 3$ the result is trivial since simple random walk on $\delta_n \Z^d$ is transient.  In dimension two, consider the function
	\[ f(x) = G_B(o,x) - g_n(o,x) \]
where $g_n$ is the rescaled potential kernel defined in (\ref{definitionofg_ndimension2}).  
Since $f$ is harmonic in $B$ it attains its maximum on the boundary, hence by Lemma~\ref{greensfunctionconvergence} we have for $x \in B$
	\[ f(x) \leq \frac{2}{\pi} \log r + O \left ( \frac{\delta_n^2}{r^2} \right). \]
Since $g_n(o,o) = \frac{2}{\pi} \log \delta_n$, the result follows on setting $x=o$.
\end{proof}

We will use the following large deviation bound; for a proof, see \cite[Cor.\ A.14]{AS}. 

\begin{lemma}
\label{alonspencer}
If $N$ is a sum of finitely many independent indicator random variables, then for all $\lambda>0$
	\[ \PP(|N-\EE N|>\lambda \EE N) < 2e^{-c_\lambda \EE N} \]
where $c_\lambda>0$ is a constant depending only on $\lambda$.
\end{lemma}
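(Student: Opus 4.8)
The final statement is Lemma~\ref{alonspencer}, the standard Chernoff-type large deviation bound for sums of independent indicator random variables. Since the lemma explicitly cites \cite[Cor.~A.14]{AS}, the paper itself does not prove it from scratch, so a proof proposal should sketch the standard argument one would use if forced to include a proof.

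\textbf{Proof proposal for Lemma~\ref{alonspencer}.} The plan is to use the exponential moment (Bernstein/Chernoff) method. Write $N = \sum_{i=1}^m Y_i$ where the $Y_i$ are independent indicators with $p_i = \PP(Y_i=1)$, and set $\mu = \EE N = \sum_i p_i$. First I would handle the upper tail $\PP(N > (1+\lambda)\mu)$. For any $t>0$, Markov's inequality applied to $e^{tN}$ gives $\PP(N > (1+\lambda)\mu) \leq e^{-t(1+\lambda)\mu} \EE e^{tN} = e^{-t(1+\lambda)\mu} \prod_i (1 + p_i(e^t - 1))$. Using $1+x \leq e^x$ on each factor yields $\EE e^{tN} \leq \exp(\mu(e^t-1))$, so the bound becomes $\exp(\mu(e^t - 1 - t(1+\lambda)))$. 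Optimizing over $t$ (take $t = \log(1+\lambda)$) gives $\PP(N > (1+\lambda)\mu) \leq \exp(-\mu[(1+\lambda)\log(1+\lambda) - \lambda])$, which is of the form $e^{-c_\lambda^{+}\mu}$ with $c_\lambda^{+} = (1+\lambda)\log(1+\lambda) - \lambda > 0$ for $\lambda>0$.

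Next I would treat the lower tail $\PP(N < (1-\lambda)\mu)$, which only needs attention when $0<\lambda<1$ (for $\lambda \geq 1$ the event is empty and there is nothing to prove, or one simply absorbs it into the constant). Here one applies Markov's inequality to $e^{-tN}$ for $t>0$: $\PP(N < (1-\lambda)\mu) \leq e^{t(1-\lambda)\mu}\prod_i(1 + p_i(e^{-t}-1)) \leq \exp(\mu(e^{-t}-1) + t(1-\lambda)\mu)$. Optimizing with $t = -\log(1-\lambda)$ gives $\PP(N < (1-\lambda)\mu) \leq \exp(-\mu[(1-\lambda)\log(1-\lambda) + \lambda])$, i.e.\ $e^{-c_\lambda^{-}\mu}$ with $c_\lambda^{-} = (1-\lambda)\log(1-\lambda) + \lambda > 0$ for $0<\lambda<1$. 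Combining the two tails by a union bound and setting $c_\lambda = \min(c_\lambda^{+}, c_\lambda^{-})$ yields $\PP(|N - \mu| > \lambda\mu) < 2e^{-c_\lambda \mu}$, as claimed.

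The only mild subtlety — and the one place care is needed rather than a genuine obstacle — is that the $Y_i$ need not be identically distributed, so one must keep the bound in terms of $\mu = \sum p_i$ throughout and rely on the inequality $\prod_i(1+p_i(e^{t}-1)) \leq \exp(\mu(e^t-1))$ rather than any i.i.d.\ simplification; everything else is routine convexity. Since the statement is classical and a reference is already given, in the paper itself it suffices to cite \cite{AS}; the above is the argument one would write out if a self-contained proof were wanted.
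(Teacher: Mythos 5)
Your proof is correct and is exactly the standard Chernoff/exponential-moment argument that the cited reference \cite[Cor.\ A.14]{AS} uses; the paper itself offers no proof beyond that citation, so there is nothing different to compare against. The only cosmetic points are that the strict inequality in the statement is obtained by slightly shrinking $c_\lambda$ (the Chernoff bounds themselves give $\leq$), and the degenerate case $\EE N = 0$ is trivial since then $N=0$ almost surely; neither is a genuine gap.
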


Let
	\[ \tilde{I}_n = \big\{X_{\nu^i}^i| \nu^i<\tilde{\tau}^i \big\} \subset I_n \index{$\tilde{I}_n$} \]
where $\nu^i$ is given by (\ref{stoppingtimefortheithparticle}), and
	\[ \tilde{\tau}^i = \inf \big\{t \geq 0| X_t^i \notin \widetilde{D}^\Points \big\}. \]
The inner estimate of Theorem~\ref{IDLAconvergence} follows immediately from the lemma below.  Although for the inner estimate it suffices to prove Lemma~\ref{stronginnerestimate} with $I_n$ in place of $\tilde{I}_n$, we will make use of the stronger statement with $\tilde{I}_n$ in the proof of the outer estimate in the next section.

\begin{lemma}
\label{stronginnerestimate}
For any $\epsilon>0$,
	\[ \PP \big(\widetilde{D}_\epsilon^\Points \subset \tilde{I}_n \text{\em~for all but finitely many $n$}\big) = 1. \]
\end{lemma}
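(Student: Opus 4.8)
The plan is to fix $\epsilon > 0$ and show that with probability one, for all sufficiently large $n$, every point $z \in \widetilde{D}_\epsilon^\Points$ is visited during the cluster-building phase, i.e.\ $z \in \tilde{I}_n$. The point $z$ fails to be in $\tilde{I}_n$ only if every particle that visits $z$ before exiting $\widetilde{D}^\Points$ in the two-stage procedure does so \emph{after} it has already stopped (during the second stage); in the notation set up above, this is the event $\{L_\epsilon = \mathcal{M}_\epsilon\}$ (more precisely $\{L_\eta \geq \mathcal{M}_\eta\}$ for the appropriate $\eta$), so it suffices to show $\mathcal{M}_\eta > \widetilde{L}_\eta$ with overwhelming probability, using $L_\eta \leq \widetilde{L}_\eta$.

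First I would invoke Lemma~\ref{coreoftheargument} to choose $\eta = \eta(\epsilon) \in (0,\epsilon)$ so that $f_{n,\eta}(z) \geq \tfrac12 \beta \delta_n^{-2}$ for all $z \in D_\epsilon^\Points$ and all large $n$, where $\beta > 0$ is the minimum of $u = s-\gamma$ on $\overline{D_\epsilon}$; since $\widetilde{D}_\epsilon \subset D_\eta \cup \{\sigma\geq 1\}_\eta$ by Lemma~\ref{pointset}, and $u$ is bounded below on $\overline{D_\epsilon}$ while the $\{\sigma\geq 1\}^o$ part of $\widetilde{D}$ is handled separately (points deep inside $\{\sigma\geq1\}^o$ receive at least one particle immediately, or one reduces to the $D$ case by monotonicity), it is enough to treat $z \in D_\epsilon^\Points$. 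Recall $f_{n,\eta}(z) = g_{n,\eta}(z,z)\,\EE(\mathcal{M}_\eta - \widetilde{L}_\eta)$, so this lower bound gives
\[
\EE(\mathcal{M}_\eta - \widetilde{L}_\eta) \geq \frac{\beta}{2\,\delta_n^2\, g_{n,\eta}(z,z)}.
\]
Next I would bound $g_{n,\eta}(z,z)$ from above: since $D_\eta^\Points$ is contained in a fixed ball $B(o,\rho)$, monotonicity of Green's functions and Lemma~\ref{ballgreens} give $g_{n,\eta}(z,z) \leq G_{B(o,\rho)^\Points}(o,o) \leq \log(\rho/\delta_n)$ for large $n$ (in $d\geq 3$ it is simply $O(1)$). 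Hence $\EE(\mathcal{M}_\eta - \widetilde{L}_\eta) \geq c\, \delta_n^{-2} / \log(1/\delta_n)$, which tends to infinity. I would also note $\EE\widetilde{L}_\eta \leq \EE\mathcal{M}_\eta$ and, via Lemma~\ref{exittime} together with Lemma~\ref{ballexittime}, that both $\EE\widetilde{L}_\eta$ and $\EE\mathcal{M}_\eta$ are at most a constant times $\delta_n^{-2}\EE_z\tau_\eta^z / g_{n,\eta}(z,z) = O(\delta_n^{-2})$.

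Then the large-deviation step: $\mathcal{M}_\eta$ is a sum of independent indicators $1_{\{\tau_z^i < \tau_\eta^i\}}$, and $\widetilde{L}_\eta$ is a sum of independent indicators $1_{\{\tau_z^y < \tau_\eta^y\}}$, so Lemma~\ref{alonspencer} applies to each. Choosing $\lambda$ small enough, with probability at least $1 - 4e^{-c\,\EE(\cdot)}$ both $\mathcal{M}_\eta$ and $\widetilde{L}_\eta$ lie within a factor $(1\pm\lambda)$ of their means; since $\EE\mathcal{M}_\eta - \EE\widetilde{L}_\eta \geq c\,\delta_n^{-2}/\log(1/\delta_n)$ dominates $\lambda(\EE\mathcal{M}_\eta + \EE\widetilde{L}_\eta)$ once $\lambda$ is small — here I use that $\EE\mathcal{M}_\eta - \EE\widetilde{L}_\eta$ is only a $\log$ factor smaller than the means, so a genuine care is needed in picking $\lambda$, and in fact the cleaner route is to absorb the means: since $\EE\mathcal{M}_\eta, \EE\widetilde{L}_\eta = \Theta(\delta_n^{-2})$ and their difference is $\Omega(\delta_n^{-2}/\log(1/\delta_n))$, pick the deviation threshold to be, say, half the difference; this is a $(\text{const}/\log(1/\delta_n))$-fraction of each mean, and Lemma~\ref{alonspencer} with $\lambda$ of that order still yields a bound $2\exp(-c_\lambda \EE\mathcal{M}_\eta)$ with $c_\lambda \gtrsim 1/\log^2(1/\delta_n)$, hence $\leq 2\exp(-c\,\delta_n^{-2}/\log^2(1/\delta_n))$. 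Thus $\PP(z \notin \tilde{I}_n)$ is summable. Taking a union bound over the at most $O(\delta_n^{-d})$ points $z \in \widetilde{D}_\epsilon^\Points$ and over $n$ still gives a summable series (the exponential beats the polynomial factor), so by Borel--Cantelli, almost surely $\widetilde{D}_\epsilon^\Points \subset \tilde{I}_n$ for all but finitely many $n$, which is the claim.

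The main obstacle is the large-deviation step: because $\EE\mathcal{M}_\eta - \EE\widetilde{L}_\eta$ is smaller than the means themselves by a logarithmic factor (coming from the recurrence of random walk in $d=2$, reflected in $g_{n,\eta}(z,z) \asymp \log(1/\delta_n)$), one must be careful that the Chernoff-type bound of Lemma~\ref{alonspencer}, applied with a small relative deviation $\lambda \asymp 1/\log(1/\delta_n)$, still produces an exponent $\asymp \delta_n^{-2}/\log^2(1/\delta_n)$ that decays fast enough to survive the union bound over $O(\delta_n^{-d})$ lattice points and summation over $n$. In dimensions $d \geq 3$ this difficulty disappears since $g_{n,\eta}(z,z) = O(1)$ and the exponent is $\asymp \delta_n^{-2}$; in $d = 2$ it survives because $\delta_n^{-2}/\log^2(1/\delta_n)$ still dominates $d\log(1/\delta_n)$ for small $\delta_n$. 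Everything else is assembling the already-proved lemmas (Lemmas~\ref{coreoftheargument}, \ref{exittime}, \ref{ballexittime}, \ref{ballgreens}, \ref{alonspencer}) in the right order.
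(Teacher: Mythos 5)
Your proposal follows the paper's proof of this lemma essentially step for step: Borel--Cantelli over $n$ and over $z$, disposal of the $\{\sigma\geq 1\}$ portion of $\widetilde{D}_\epsilon$ via Lemma~\ref{pointset} and (\ref{rotorsillytechnicality}), the lower bound $\EE(\mathcal{M}_\eta-\widetilde{L}_\eta) = f_{n,\eta}(z)/g_{n,\eta}(z,z) \geq \beta/(2\delta_n^2 g_{n,\eta}(z,z))$ from Lemma~\ref{coreoftheargument}, and then Lemma~\ref{alonspencer} applied to the two sums of independent indicators with a threshold placed between the two means.

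The one place where your accounting goes wrong is the claim that $\EE\mathcal{M}_\eta - \EE\widetilde{L}_\eta$ is a logarithmic factor smaller than the means, forcing $\lambda \asymp 1/\log(1/\delta_n)$. This is not so: by Lemmas~\ref{exittime} and~\ref{ballexittime}, $\EE\mathcal{M}_\eta \leq M\,\EE\tau_\eta^z/g_{n,\eta}(z,z) \leq MR^2/(\delta_n^2 g_{n,\eta}(z,z))$, which carries exactly the same factor $1/(\delta_n^2 g_{n,\eta}(z,z))$ as the lower bound on the difference. Hence the relative gap $(\EE\mathcal{M}_\eta-\EE\widetilde{L}_\eta)/\EE\mathcal{M}_\eta$ is bounded below by the constant $\beta/2MR^2$, independent of $n$ and of the dimension; one takes $\lambda = \beta/4MR^2$ fixed (as the paper does), and the only logarithm in the final exponent comes from the lower bound $\EE\widetilde{L}_\eta \gtrsim \delta_n^{-2}/\log(R/\delta_n)$ via Lemma~\ref{ballgreens}. (Relatedly, your assertion $\EE\mathcal{M}_\eta = \Theta(\delta_n^{-2})$ is off by the factor $g_{n,\eta}(z,z) \asymp \log(1/\delta_n)$ in $d=2$.) Your workaround is not fatal --- an exponent of order $\delta_n^{-2}/\log^3(1/\delta_n)$ still beats the $\delta_n^{-d}$ union bound and remains summable under the standing assumption $\delta_n \leq 1/\log n$ --- but it is both unnecessary and slightly shaky as written, since Lemma~\ref{alonspencer} is stated for a fixed $\lambda$ with an unspecified constant $c_\lambda$; to apply it with $\lambda \to 0$ you would need the quantitative dependence $c_\lambda \asymp \lambda^2$, which the cited lemma does not provide. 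Keeping $\lambda$ constant removes the issue entirely.
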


\begin{proof}
For $z \in \widetilde{D}_\epsilon^\Points$, let $\mathcal{E}_z(n)$ be the event that $z \notin \tilde{I}_n$.  By Borel-Cantelli it suffices to show that
	\begin{equation} \label{borelcantelliwiths} \sum_{n \geq 1} \sum_{z \in \widetilde{D}_\epsilon^\Points} \PP(\mathcal{E}_z(n)) < \infty. \end{equation}
By Lemma~\ref{pointset}, since $\widetilde{D} = D \cup \{\sigma \geq 1\}^o$ we have
	\[ \widetilde{D}_\epsilon \subset D_{\epsilon'} \cup \{\sigma \geq 1\}_{\epsilon'} \]
for some $\epsilon'>0$.  By (\ref{rotorsillytechnicality}), for $n \geq N(\epsilon')$ the terms in (\ref{borelcantelliwiths}) with $z \in \{\sigma \geq 1\}_{\epsilon'}$ vanish, so it suffices to show
	\begin{equation} \label{borelcantelliwithd} \sum_{n \geq 1} \sum_{z \in D_{\epsilon'}^\Points} \PP(\mathcal{E}_z(n)) < \infty. \end{equation}

By Lemma~\ref{coreoftheargument} there exists $0<\eta < \epsilon'$ such that 
	\begin{equation} \label{fromthecore} f_{n,\eta}(z) \geq \frac12 \beta \delta_n^{-2}, \qquad z \in D_{\epsilon'}^\Points \end{equation}
for all sufficiently large $n$, where $\beta>0$ is the minimum value of $u$ on $\overline{D_{\epsilon'}}$.  Fixing $z \in D_{\epsilon'}^\Points$, since $L_\eta \leq \widetilde{L}_\eta$ we have
	\begin{align} \PP(\mathcal{E}_z(n)) &\leq \PP(\mathcal{M}_\eta = L_\eta) \nonumber \\
						 &\leq \PP(\mathcal{M}_\eta \leq \widetilde{L}_\eta) \nonumber \\
						 &\leq \PP(\mathcal{M}_\eta \leq a) + \PP(\widetilde{L}_\eta \geq a) \label{choiceofa} \end{align}
for a real number $a$ to be chosen below.  By Lemma~\ref{alonspencer}, since $\widetilde{L}_\eta$ and $\mathcal{M}_\eta$ are sums of independent indicators, we have
\begin{align} & \PP( \widetilde{L}_\eta \geq (1+\lambda) \EE \widetilde{L}_\eta) < 2e^{-c_\lambda \EE \widetilde{L}_\eta} \label{largedevbounds} \\
			&\PP(\mathcal{M}_\eta \leq (1-\lambda) \EE \mathcal{M}_\eta) < 2e^{-c_\lambda \EE \mathcal{M}_\eta} \nonumber
			 \end{align}
where $c_\lambda$ depends only on $\lambda$, chosen below.  Now since $z\in D_{\epsilon'}$ and $D$ is bounded by Lemma~\ref{occupieddomainisbounded}, we have
	\[ B(z,\epsilon'-\eta) \subset D_\eta \subset B(z,R) \]
hence by Lemma~\ref{ballexittime}
	\begin{equation} \label{quadraticexittimebound} \frac12 \left( \frac{\epsilon'-\eta}{\delta_n} \right)^2 \leq \EE \tau_\eta^z \leq \left( \frac{R}{\delta_n} \right)^2. \end{equation}
By Lemma~\ref{exittime} it follows that $\EE \mathcal{M}_\eta \leq MR^2/\delta_n^2 g_{n,\eta}(z,z)$.  Taking 
	\[ a = \EE \widetilde{L}_\eta + \frac{\beta}{4\delta_n^2 g_{n,\eta}(z,z)} \] 
in (\ref{choiceofa}), and letting $\lambda = \beta/4MR^2$ in (\ref{largedevbounds}), we have
	\begin{equation} \label{thisfollowsfromthedefinitions} \lambda \EE \widetilde{L}_\eta \leq \lambda \EE \mathcal{M}_\eta \leq \frac{\beta}{4 \delta_n^2 g_{n,\eta}(z,z)}, \end{equation}
hence
		\[ a \geq (1+\lambda) \EE \widetilde{L}_\eta. \]
Moreover, from (\ref{fromthecore}) we have
	\[ \EE \mathcal{M}_\eta - E\widetilde{L}_\eta = \frac{f_{n,\eta}(z)}{g_{n,\eta}(z,z)} 
						\geq \frac{\beta}{2\delta_n^2 g_{n,\eta}(z,z)}, \]
hence by (\ref{thisfollowsfromthedefinitions})
	\[ a \leq \EE \mathcal{M}_\eta - \frac{\beta}{4\delta_n^2 g_{n,\eta}(z,z)} \leq (1-\lambda) \EE \mathcal{M}_\eta. \]

Thus we obtain from (\ref{choiceofa}) and (\ref{largedevbounds})
	\begin{equation} \label{largedev} \PP(\mathcal{E}_z(n)) \leq 4 e^{-c_\lambda \EE \widetilde{L}_\eta}. \end{equation}
By Lemmas~\ref{exittime} and~\ref{ballgreens} along with (\ref{quadraticexittimebound})
	\begin{align*} \EE \widetilde{L}_\eta &= \frac{\EE_z T_{\partial D_\eta^\Points}}{g_{n,\eta}(z,z)} \\
		&\geq \frac12 \left( \frac{\epsilon'-\eta}{\delta_n} \right)^2 \frac{1}{\log(R/\delta_n)}. \end{align*}
Using (\ref{largedev}), the sum in (\ref{borelcantelliwithd}) is thus bounded by
	\[ \sum_{n \geq 1} \sum_{z \in D_{\epsilon'}^\Points} \PP (\mathcal{E}_z(n)) \leq
	   \sum_{n \geq 1} \delta_n^{-d} \omega_d R^d \cdot 4 \exp \left(-\frac{c_\lambda(\epsilon'-\eta)^2}{2\delta_n^2 \log(R/\delta_n)} \right) < \infty. \qed \]
\renewcommand{\qedsymbol}{}	
\end{proof}

\subsection{Outer Estimate}

For $x \in \Z^d$ write
	\[ Q(x,h) = \{y \in \Z^d \,:\, ||x-y||_{\infty} \leq h \} \]
for the cube of side length $2h+1$ centered at $x$.  According to the next lemma, if we start a simple random walk at distance $h$ from a hyperplane $H \subset \Z^d$, then the walk is farily ``spread out'' by the time it hits $H$, in the sense that its chance of first hitting $H$ at any particular point $z$ has the same order of magnitude as $z$ ranges over a $(d-1)$-dimensional cube of side length order $h$.

\begin{lemma}
\label{boxshells}
Fix $y \in \Z^d$ with $y_1 = h$, and let $T$ be the first hitting time of the hyperplane $H=\{x\in \Z^d|x_1=0\}$.  Let $F = H \cap Q(y,h)$.   For any $z \in F$ we have  
	\[ \PP_y(X_T = z) \geq ah^{1-d} \]
for a constant $a$ depending only on $d$.
\end{lemma}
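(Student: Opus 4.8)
The statement asserts a lower bound on the harmonic measure of a point $z$ in a $(d-1)$-dimensional cube $F = H \cap Q(y,h)$, as seen from $y$ at distance $h$ from the hyperplane $H$. The plan is to reduce to two ingredients: a \emph{spreading estimate} showing that random walk started at $y$ reaches the vicinity of $z$ (within distance of order $h$) before hitting $H$ with probability bounded below by a constant, and a \emph{local coupling / last-exit decomposition} argument showing that once the walk is within distance of order $h$ of $z$ but still on the $y$-side of $H$, it hits $H$ first at $z$ with probability of order $h^{1-d}$.

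For the first ingredient, I would use a gambler's-ruin-type martingale argument in the $x_1$-coordinate (exactly as in Lemma~\ref{hitthehyperplane}), together with the fact that in the $d-1$ transverse coordinates the walk behaves like an unbiased walk run for expected time $\Theta(h^2)$ before $T$, so with probability bounded below it is within distance $h$ of the transverse projection of $z$ at some time before hitting $H$. Concretely: let $w$ be the point on the $y$-side of $H$ at distance $1$ from $z$ (i.e.\ $w = z + e_1$); I claim $\PP_y(\text{walk visits } w \text{ before } T) \geq c$ for a constant $c = c(d)$. This follows because $w$ lies in a cube of size comparable to $h$ about $y$, and the expected exit time of $Q(y, Ch)$ for suitable $C$ is $\Theta(h^2)$, while the walk accumulates $\Theta(h^2)$ visits to an $h$-neighborhood; a standard second-moment / Green's function bound for the walk killed on $H$ gives the claim. (Alternatively, one can iterate Lemma~\ref{hitthehyperplane} with nested half-spaces as in Lemma~\ref{geoshells}.)

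For the second ingredient, once the walk is at $w = z+e_1$, it hits $H$ on the very next step at $z$ with probability exactly $\frac{1}{2d}$. Combining: $\PP_y(X_T = z) \geq \PP_y(\text{visit } w \text{ before } T) \cdot \frac{1}{2d} \geq \frac{c}{2d}$, which is far stronger than claimed. The catch is that this naive bound only works when $z$ is such that $w = z + e_1$ is itself easy to reach; for $z$ near the corners of $F$ (distance up to $h\sqrt{d-1}$ from the foot of the perpendicular from $y$) the ``visit $w$ before $T$'' probability genuinely degrades, and one needs the $h^{1-d}$ scaling there rather than a constant. So the honest argument is: show $\PP_y(\text{walk enters } B(z, h/2) \text{ on the } y\text{-side before } T) \geq c(d)$ (valid uniformly over $z \in F$ because $B(z,h/2)$ on the $y$-side is a target of diameter $h$ inside $Q(y, 2h)$, reachable by the spreading argument), then from any entrance point $x' \in B(z,h/2)$ with $x'_1 \geq 1$ use the standard estimate (e.g.\ \cite[Thm.\ 1.7.1]{Lawler} or Lemma~\ref{inthezone}-type harmonic-measure bounds for a half-space) that $\PP_{x'}(X_T = z) \geq a' |x'-z|^{1-d} \geq a'' h^{1-d}$.

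The main obstacle is the uniformity over all $z \in F$, including corner points: one must verify that the "reach a fixed-fraction neighborhood of $z$ before hitting $H$" probability does not itself decay, which requires a genuine spreading estimate (invariance-principle heuristic made rigorous via martingales and exit-time second moments) rather than a one-dimensional gambler's-ruin bound alone. Everything else is routine: the final step is a half-space harmonic-measure lower bound, which is classical and already available in the forms cited in this chapter.
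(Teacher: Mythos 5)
Your overall strategy (reach a macroscopic neighborhood of $z$ with constant probability, then pay a local harmonic-measure factor) is genuinely different from the paper's, and as written it contains a gap in the second step. The estimate you invoke, $\PP_{x'}(X_T=z)\geq a'|x'-z|^{1-d}$ for an arbitrary entrance point $x'\in B(z,h/2)$ with $x'_1\geq 1$, is false: the half-space Poisson kernel behaves like $x'_1\,|x'-z|^{-d}$, so if the walk enters $B(z,h/2)$ at a point with $x'_1=1$ and transverse displacement of order $h$ from $z$, the probability of first hitting $H$ at $z$ is of order $h^{-d}$, not $h^{1-d}$. To salvage the argument you would need the walk to reach a point that is simultaneously within $O(h)$ of $z$ \emph{and} at height comparable to $h$ above $H$ (say a ball of radius $h/4$ about $z+(h/2)e_1$) with probability bounded below uniformly over $z\in F$, including the corners of $F$; that spreading estimate is true but you only gesture at it, and you yourself flag it as the main obstacle. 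So the proposal has two holes: one fatal as stated, one unproven.

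The paper's proof is shorter and sidesteps both issues. For fixed $w\in H$ the function $f(x)=\PP_x(X_T=w)$ is harmonic in $B(y,h)$, so Harnack gives $f(x)\geq c\,f(y)$ on $B(y,h/2)$; by translation invariance this reads $\PP_y(X_T=w')\geq c\,\PP_y(X_T=w)$ for $w'\in H\cap B(w,h/2)$, and a chain of order $\sqrt{d}$ such steps shows that all the probabilities $\PP_y(X_T=z)$, $z\in F$, agree up to a constant factor. Since the walk exits the cube $Q(y,h-1)$ through the face $F$ with probability exactly $\frac{1}{2d}$ by symmetry, and exiting through $F$ forces $X_T\in F$, the total mass on $F$ is at least $\frac{1}{2d}$; dividing by the $(2h+1)^{d-1}$ points of $F$ gives the bound. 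The device worth internalizing is the comparison of harmonic measures of nearby boundary points via Harnack applied to $x\mapsto\PP_x(X_T=w)$ combined with translation invariance: it replaces both of your steps at once and never requires any quantitative spreading or Poisson-kernel estimate.
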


\begin{proof}
For fixed $w \in H$, the function
	\[ f(x) = \PP_x (X_T=w) \]
is harmonic in the ball $B=B(y,h)$.  By the Harnack inequality \cite[Theorem 1.7.2]{Lawler} we have
	\[ f(x) \geq c f(y), \qquad x \in B(y,h/2) \]
for a constant $c$ depending only on $d$.  By translation invariance, it follows that for $w' \in H \cap B(w,h/2)$, letting $x = y+w-w'$ we have
	\[ \PP_y (X_T = w') = f(x) \geq c f(y) = c \PP_y (X_T = w). \]
Iterating, we obtain for any $w' \in H \cap Q(w,h)$
	\begin{equation} \label{harnackincube} \PP_y (X_T = w') \geq c^{\sqrt{d}} \PP_y (X_T = w). \end{equation}
Let $T'$ be the first exit time of the cube $Q(y,h-1)$.  Since $F$ is a boundary face of this cube, we have $\{X_{T'} \in F\} \subset \{X_T \in F\}$.  Let $z_0$ be the closest point to $y$ in $H$.  Taking $w'=z_0$, we have $w' \in H \cap Q(w,h)$ whenever $w \in F$.  Summing (\ref{harnackincube}) over $w \in F$, we obtain
	\[ (2h+1)^{d-1} c^{-\sqrt{d}} \PP_y(X_T=z_0)  \geq \PP_y(X_T \in F) \geq \PP(X_{T'} \in F) = \frac{1}{2d}. \]
Now for any $z\in F$, taking $w=z_0$ and $w'=z$ in (\ref{harnackincube}), we conclude that
	\[ \PP_y(X_T = z) \geq c^{\sqrt{d}} \PP_y(X_T=z_0) \geq \frac{c^{2\sqrt{d}}}{2d}(2h+1)^{1-d}. \qed \]
\renewcommand{\qedsymbol}{}
\end{proof}

\begin{lemma}
\label{thetrenches}
Let $h,\rho$ be positive integers.  Let $N$ be the number of particles that ever visit the cube $Q(o,\rho)$ during the internal DLA process, if one particle starts at each site $y \in \Z^d - Q(o,\rho+h)$, and $k$ additional particles start at sites $y_1,\ldots, y_k \notin Q(o,\rho+h)$.  If $k \leq \frac14 h^d$, then 
	\[ N \leq \text{\em Binom}(k,p), \]
where $p<1$ is a constant depending only on $d$.
\end{lemma}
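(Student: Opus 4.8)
The plan is to invoke the abelian property of internal DLA to release the particles in a convenient order, reduce the assertion to a deterministic random-walk estimate, and then establish that estimate using the hyperplane-spreading bound of Lemma~\ref{boxshells} (this is the internal-DLA analogue of Lemma~6 of~\cite{LBG}).

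First I would reorder the particles. By the abelian property of internal DLA \cite{DF}, we may release the one particle starting at each $y \in \Z^d - Q(o,\rho+h)$ before any of the $k$ additional particles. Each of these immediately settles at its own (unoccupied) site, so after this stage the occupied set is precisely $\Z^d - Q(o,\rho+h)$. We then release the $k$ remaining particles one at a time in any order; let $\mathcal{F}_{j-1}$ denote the information generated by the first $j-1$ of them, and let $Z_j \in \{0,1\}$ indicate that the $j$-th such particle ever visits $Q(o,\rho)$. Then $N = Z_1 + \cdots + Z_k$, and it suffices to prove
\[ \PP(Z_j = 1 \mid \mathcal{F}_{j-1}) \le p \]
for some $p<1$ depending only on $d$, since any sum of indicators with this property is stochastically dominated by $\mathrm{Binom}(k,p)$.

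To bound this conditional probability, observe that when the $j$-th particle is released the occupied set consists of $\Z^d - Q(o,\rho+h)$ together with at most $j-1 \le k-1 < \tfrac14 h^d$ further sites inside $Q(o,\rho+h)$; the particle performs simple random walk and stops at the first site it reaches in the unoccupied set $U \subseteq Q(o,\rho+h)$. Writing $T = Q(o,\rho+h) - Q(o,\rho)$ for the trench and $V = U \cap T$, we have $|T - V| < \tfrac14 h^d$, and an elementary observation gives $\{Z_j=1\} \subseteq \{\tau_{Q(o,\rho)} < \tau_V\}$: if the walk hit a site of $V$ first it would stop there, inside the trench and outside $Q(o,\rho)$. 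By the strong Markov property applied at the first entry to $Q(o,\rho+h)$, it therefore suffices to prove the deterministic estimate that there is $p<1$, depending only on $d$, with
\[ \PP_w\big(\tau_{Q(o,\rho)} < \tau_V\big) \le p \]
for every $w$ in the inner boundary $\mathcal{C}_h := \{x \in Q(o,\rho+h) : \|x\|_\infty = \rho+h\}$ and every $V \subseteq T$ with $|T-V| < \tfrac14 h^d$, where $\tau_A$ is the first hitting time of $A$.

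Finally I would prove this random-walk estimate by a shell argument. Relabeling coordinates, assume $w$ lies on the face $\{x_1 = \rho+h\}$. Decompose the trench into the concentric cubical shells $\mathcal{C}_\ell = \{x : \|x\|_\infty = \rho+\ell\}$, $\ell = 1,\dots,h$; since a step changes $\|x\|_\infty$ by at most $1$, the walk must pass through every $\mathcal{C}_\ell$ before reaching $Q(o,\rho)$. Using Lemma~\ref{boxshells}, which shows that a walk approaching a hyperplane from distance $t$ hits it near-uniformly over a $(d-1)$-cube of side $\asymp t$, each point carrying probability $\gtrsim t^{1-d}$, one shows that on a positive fraction of the shells the walk has a uniformly positive chance of landing on an unoccupied site of $V$ before advancing further inward; the hypothesis $|T-V| < \tfrac14 h^d$ is exactly what guarantees that only boundedly many (in a suitable averaged sense) of the $\Theta(h)$ shells can be too heavily occupied for this to apply. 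Multiplying the resulting conditional survival probabilities over the shells gives $\PP_w(\tau_{Q(o,\rho)} < \tau_V)$ bounded away from $1$ by a constant depending only on $d$. I expect this last step — making the shell accounting close with the constant $\tfrac14$, and dealing with the edges and corners of the cubical shells where Lemma~\ref{boxshells} is not directly applicable (handled by a further union bound over the $2d$ faces) — to be the main technical obstacle.
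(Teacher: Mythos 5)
Your skeleton is the same as the paper's: settle the singleton particles first by the abelian property, release the $k$ extra particles one at a time, bound $\PP(Z_j=1\mid\mathcal{F}_{j-1})$ by a constant $p<1$, and finish by stochastic domination. The reduction $\{Z_j=1\}\subset\{\tau_{Q(o,\rho)}<\tau_V\}$, where $V$ is the set of unoccupied trench sites, is also correct, and Lemma~\ref{boxshells} is indeed the key input. The gap is in the final step. A shell-by-shell product of conditional survival probabilities cannot work, because consecutive shells $\mathcal{C}_{\ell+1}$ and $\mathcal{C}_\ell$ are at distance $1$: once you condition on where the walk first hits $\mathcal{C}_{\ell+1}$, its entry into $\mathcal{C}_\ell$ is concentrated at the adjacent sites, so Lemma~\ref{boxshells} provides no spreading and the conditional chance of landing on $V$ at shell $\ell$ has no uniform lower bound. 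Concretely, the occupied set $T-V$ may be a radial tube of lateral width just under $h/4$ spanning the whole trench (this uses fewer than $(h/4)^{d-1}\cdot h\le\frac14 h^d$ sites); then \emph{every} shell is fully occupied on the local patch the walk sees while it remains in the tube, so your averaged pigeonhole over shells fails for the patches that matter, and no shell-by-shell accounting forces the product away from $1$. What actually kills the tube is that the walk is unlikely to stay laterally within width $h/4$ while advancing radial distance $h$ --- a statement about spreading across the \emph{full} width of the trench, which is invisible at the single-shell scale.

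The paper's quantitative step uses one well-chosen separating hyperplane and no intermediate conditioning. In your notation, start from $w$ with $\|w\|_\infty=\rho+h$, say $w_1=\rho+h$, and consider the roughly $h/2$ hyperplanes $H=\{x_1=\rho+m\}$, $1\le m\le h/2$. Each meets the trench in a patch of at least $h^{d-1}$ sites within lateral distance $h/2$ of $w$, and since $|T-V|<\frac14 h^d$, by pigeonhole some $H_j$ has at most $\frac12 h^{d-1}$ occupied sites in its patch. On the event that the particle reaches $Q(o,\rho)$ it is still in motion when it first hits $H_j$, so its first hit of $H_j$ must be an occupied site; but the walk has by then travelled distance $h-m\ge h/2$ with no conditioning, so Lemma~\ref{boxshells} gives each of the at least $\frac12 h^{d-1}$ unoccupied patch sites first-hitting probability at least $ah^{1-d}$, whence $\PP(Z_j=1\mid\mathcal{F}_{j-1})\le 1-a/2=:p$. (In the tube example this works because the tube blocks only a $4^{1-d}$ fraction of the patch on $H_j$.) So: right structure, right lemma, but the multiplicative shell decomposition must be replaced by a single hyperplane at macroscopic distance, chosen by pigeonhole to have a mostly unoccupied patch.
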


\begin{proof}
Let $F_j$ be a half-space defining a face of the cube $Q=Q(o,\rho)$, such that dist$(y_j,F_j) \geq h$.  Let $z_j\in F_j$ be the closest point to $y_j$ in $F_j$, and let $B_j = Q(z_j,h/2) \cap F_j^c$.  Let $Z_j$ be the random set of sites where the particles starting at $y_1, \ldots, y_j$ stop.  Since $\#B_j \geq \frac12 h^d \geq 2k$, there is a hyperplane $H_j$ parallel to $F_j$ and intersecting $B_j$, such that
	\begin{equation} \label{sparseslice} \# B_j \cap H_j \cap Z_{j-1} \leq \frac12 h^{d-1}. \end{equation}
Denote by $A_j$ the event that the particle starting at $y_j$ ever visits $Q$.  On this event, the particle must pass through $H_j$ at a site which was already occupied by an earlier particle.  Thus if $\{X_t\}_{t \geq 0}$ is the random walk performed by the particle starting at $y_j$, then
	\[ A_j \subset \{ X_T \in Z_{j-1} \}, \]
where $T$ is the first hitting time of $H_j$.  By Lemma~\ref{boxshells}, every site $z \in B_j\cap H_j$ satisfies
	\[ \PP (X_T = z) \geq ah^{1-d}. \]
From (\ref{sparseslice}), since $\# B_j \cap H_j \geq h^{d-1}$ we obtain $\PP(X_T \notin Z_{j-1}) \geq a/2$, hence
	\[ \PP(A_j | \mathcal{F}_{j-1}) \leq p \]
where $p=1-a/2$, and $\mathcal{F}_i$ is the $\sigma$-algebra generated by the walks performed by the particles starting at $y_1, \ldots, y_i$.  Thus we can couple the indicators $1_{A_j}$ with i.i.d. indicators $I_j \geq 1_{A_j}$ of mean $p$ to obtain
	\[ N = \sum_{j=1}^k 1_{A_j} \leq \sum_{j=1}^k I_j = \text{Binom}(k,p). \qed \]
\renewcommand{\qedsymbol}{}
\end{proof}

The next lemma shows that if few enough particles start outside a cube $Q(o,3\rho)$, it is highly unlikely that any of them will reach the smaller cube $Q(o,\rho)$.

\begin{lemma}
\label{cubedeath}
Let $\rho,k$ be positive integers with
	\[ k \leq \frac14 \big(1-p^{1/2d}\big)^d \rho^d \]
where $p<1$ is the constant in Lemma~\ref{thetrenches}.  Let $N$ be the number of particles that ever visit the cube $Q(o,\rho)$ during the internal DLA process, if one particle starts at each site $y \in \Z^d - Q(o,3\rho)$, and $k$ additional particles start at sites $y_1,\ldots, y_k \notin Q(o,3\rho)$.  Then
	\[ \PP(N>0) \leq c_0 e^{-c_1\rho} \]
where $c_0,c_1>0$ are constants depending only on $d$.
\end{lemma}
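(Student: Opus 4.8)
The plan is to iterate Lemma~\ref{thetrenches} across a geometrically growing sequence of nested cubes, so that at each stage the number of ``extra'' particles that could possibly reach the next smaller cube is cut down by a constant factor. First I would set $\rho_i = p^{i/2d}\,\rho$ (or a comparable geometric sequence with ratio in $(p^{1/2d},1)$), and consider the cubes $Q_i = Q(o,\rho_i)$, so that $Q_0 = Q(o,\rho)$ and $\rho_i \downarrow 0$. The key observation is that any particle starting outside $Q(o,3\rho)$ that eventually visits $Q_0$ must in particular visit every $Q_i$, and on its first visit to $Q_{i}$ it is a particle entering $Q_i$ from outside the slightly larger cube $Q_{i-1}'$ whose ``inner core'' is $Q_i$ and whose half-width exceeds that of $Q_i$ by $h_i \asymp \rho_{i-1}-\rho_i \asymp \rho_i$. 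Let $N_i$ denote the number of particles that ever visit $Q_i$; then $N_0$ is the quantity of interest, and the particles starting at lattice sites $y\in\Z^d - Q(o,3\rho)$ together with the $k$ extra particles all start outside $Q_0 \supset Q_i$, so the hypotheses of Lemma~\ref{thetrenches} apply at each level with the ``$k$'' there replaced by $N_{i-1}$ (the particles that got as far as $Q_{i-1}$) and the ``$h$'' there replaced by $h_i$.

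Next I would run the induction. Lemma~\ref{thetrenches} requires $N_{i-1} \leq \tfrac14 h_i^d$; since $h_i$ is a fixed fraction of $\rho_i = p^{i/2d}\rho$, and the initial hypothesis $k \leq \tfrac14(1-p^{1/2d})^d\rho^d$ is exactly tailored (with $h_1 \asymp (1-p^{1/2d})\rho$) to make the $i=1$ step legitimate, one checks that the bound $N_{i-1} \leq p^{i-1}\cdot(\text{const}\cdot\rho^d)$ propagates: Lemma~\ref{thetrenches} gives $N_i \preceq \mathrm{Binom}(N_{i-1},p)$, so in expectation $\EE N_i \leq p\,\EE N_{i-1}$, and more importantly a large-deviations bound (Lemma~\ref{alonspencer}) shows that with overwhelming probability $N_i \leq \tfrac{1+p}{2}\,N_{i-1}$ as long as $N_{i-1}$ is at least of order $\rho_i^{d}$, i.e. not yet microscopic. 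Carrying this out for $i = 1, \dots, i^*$ where $\rho_{i^*} \asymp 1$ — a number of steps of order $\log \rho$ — we find that off an event of probability at most $\sum_{i \le i^*} c_0' e^{-c_1' \rho_i^d / \text{poly}}$, which sums to at most $c_0 e^{-c_1 \rho}$ for suitable constants (the first, largest term $i=1$ dominates and already gives $e^{-c_1\rho}$), the count $N_{i^*}$ is smaller than $1$, hence zero. Since $N_0 \geq N_1 \geq \dots \geq N_{i^*}$ is impossible unless $N_0$ feeds the chain, actually one argues in the reverse direction: $N_0 > 0$ forces $N_i > 0$ for all $i$, in particular $N_{i^*} \ge 1$, which we have just shown happens with probability at most $c_0 e^{-c_1\rho}$.

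The main obstacle I anticipate is bookkeeping the constants so that the hypothesis $k \le \tfrac14(1-p^{1/2d})^d\rho^d$ precisely licenses the first application of Lemma~\ref{thetrenches} and the inductive hypothesis $N_{i-1} \le \tfrac14 h_i^d$ is genuinely preserved at every stage — this is why the peculiar constant $(1-p^{1/2d})^d$ appears, and getting the geometry of the nested cubes (the gap $h_i$ between $Q_i$ and $Q_{i-1}$, versus the side of $Q_i$) consistent with it requires care. A secondary technical point is that Lemma~\ref{thetrenches} is a statement about stochastic domination by a single Binomial, and to chain the levels one needs the strong Markov property / a filtration argument so that the domination at level $i$ holds conditionally on everything that happened at levels $<i$; since the particles stop where they stop and later particles only see the occupied set, this conditioning is legitimate, but it should be stated carefully. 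Once these two points are handled, the large-deviation sum and the $\log\rho$ bound on the number of stages are routine, and the geometric decay ensures the total failure probability is dominated by its first term $c_0 e^{-c_1\rho}$.
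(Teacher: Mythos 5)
Your iteration scheme has a genuine gap in the endgame. You propose to run the stochastic domination of Lemma~\ref{thetrenches} across cubes $Q(o,\rho_i)$ with $\rho_i=p^{i/2d}\rho$ all the way down to microscopic scale $\rho_{i^*}\asymp 1$, and you claim the total failure probability $\sum_i e^{-c\,k_i}$ is dominated by its first term. But the monotonicity runs the other way: the large-deviation bound from Lemma~\ref{alonspencer} has error $e^{-c\,k_i}$ where $k_i$ is the \emph{current particle count}, and as the counts shrink geometrically toward $O(1)$ these error terms \emph{grow} toward a constant. The last stages of your iteration, where $N_{i-1}$ is of order $1$, contribute failure probabilities of order $1$, not $e^{-c_1\rho}$, so the chain of conditional bounds does not close. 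You half-acknowledge this (``as long as $N_{i-1}$ is \dots not yet microscopic'') but do not supply the mechanism that handles the microscopic regime, and no purely probabilistic version of this iteration can, since a single Binomial$(O(1),p)$ step fails to vanish with probability bounded away from zero.

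The paper's proof resolves exactly this issue with two devices you are missing. First, all the nested cubes are kept between radius $2\rho$ and $3\rho$ (namely $\rho_j=(2+p^{j/2d})\rho$), and the iteration is \emph{stopped} at the stage $j$ where the count $k_j=p^{j/2}k$ has dropped to between $p^{1/2}\rho$ and $\rho$; since every count appearing in a large-deviation bound is then at least of order $\rho$, each of the $O(\log\rho)$ error terms is at most $2e^{-c\rho}$ and the sum is genuinely $c_0e^{-c_1\rho}$. Second, the passage from ``at most $\rho$ particles reach $Q(o,2\rho)$'' to ``no particle reaches $Q(o,\rho)$'' is \emph{deterministic}: the first particle to visit each cube $Q(o,2\rho-i)$ must stop inside it, so each of the $\rho$ intermediate cubes absorbs at least one particle, and $\rho$ particles cannot penetrate $\rho$ nested shells. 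This absorption argument is the missing idea; without it (or some substitute) your plan cannot produce an exponentially small bound. Your secondary points about the geometry of the gaps $h_i$ versus the counts, and about conditioning via the filtration to chain the Binomial dominations, are correct and match the paper.
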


\begin{proof}
Let $N_j$ be the number of particles that ever visit the cube $Q_j = Q(o, \rho_j)$, where
	\[ \rho_j = \big(2+p^{j/2d}\big)\rho. \]
Let $k_j = p^{j/2}k$, and let $A_j$ be the event that $N_j \leq k_j$.  Taking $h = \rho_j - \rho_{j+1}$ in Lemma~\ref{thetrenches}, since
	\[ k_j \leq \frac14 p^{j/2} \big(1-p^{1/2d}\big)^d \rho^d = \frac14 h^d \]
we obtain
	\[ N_{j+1}1_{A_j} \leq \text{Binom}(N_j,p). \]
Hence
	\begin{align} \label{conditionalprobintermsofbinomial} \PP(A_{j+1}|A_j) &\geq \PP \big(\text{Binom}(k_j, p) \leq k_{j+1} \big) \nonumber \\		
		&\geq 1-2e^{-ck_j} \end{align}
where in the second line we have used Lemma~\ref{alonspencer} with $\lambda=\sqrt{p}-p$.

Now let 
	\[ j = \left\lfloor 2 \frac{\log k - \log \rho}{\log(1/p)} \right\rfloor \]
so that $p^{1/2} \rho \leq k_j \leq \rho$.  On the event $A_j$, at most $\rho$ particles visit the cube $Q(o,2\rho)$.  Since the first particle to visit each cube $Q(o,2\rho-i)$ stops there,  at most $\rho-i$ particles visit $Q(o,2\rho-i)$.  Taking $i=\rho$ we obtain $\PP(N=0) \geq \PP(A_j)$.  
 From (\ref{conditionalprobintermsofbinomial}) we conclude that
	\[ \PP(N=0) \geq \PP(A_1) \PP(A_2 | A_1) \cdots \PP(A_j|A_{j-1}) \geq 1 - 2je^{-c\rho}. \]
The right side is at least $1-c_0e^{-c_1\rho}$ for suitable constants $c_0,c_1$.
\end{proof}

\begin{proof}[Proof of Theorem~\ref{IDLAconvergence}] The inner estimate is immediate from Lemma~\ref{stronginnerestimate}.  For the outer estimate, let
	\[ N_n = \# \big\{1 \leq i \leq m_n \big| \nu^i \geq \tilde{\tau}^i\big\} = m_n - \# \tilde{I}_n \]
be the number of particles that leave $\widetilde{D}^\Points$ before aggregating to the cluster.
Let $K_0 = \frac14 \left(\frac{2(1-p^{1/2d})}{3\sqrt{d}}\right)^d$, where $p$ is the constant in Lemma~\ref{thetrenches}.  For $\epsilon>0$ and $n_0 \geq 1$, consider the event 
	\[ F_{n_0} = \big\{ N_n \leq K_0 (\epsilon/\delta_n)^d \text{ for all } n \geq n_0 \big\}. \]
By (\ref{rotorboundedawayfrom1}) and (\ref{rotorclosureofinterior}), the closure of $\widetilde{D}$ contains the support of $\sigma$, so by Proposition~\ref{boundaryregularity},
	\begin{equation} \label{fromconservationofmass} \delta_n^d m_n = \delta_n^d \sum_{x \in \delta_n \Z^d} \sigma_n(x) \rightarrow \int_{\R^d} \sigma(x) dx = \Leb(\widetilde{D}). \end{equation}
Moreover, by Proposition~\ref{boundaryregularity}(i), for sufficiently small $\eta$ we have 
	\[ \Leb\big(\widetilde{D}-\widetilde{D}_{2\eta}\big) \leq \frac12 K_0 \epsilon^d. \]
Taking $n$ large enough so that $\delta_n<\eta$, we obtain
	\[ \delta_n^d \#\widetilde{D}_\eta^\Points = \Leb \big(\widetilde{D}_\eta^{\Points\Box}\big) \geq \Leb\big(\widetilde{D}_{2\eta}\big) \geq \Leb\big(\widetilde{D}\big) - \frac12 K_0 \epsilon^d. \]
Thus for sufficiently large $n$, on the event $\widetilde{D}_\eta^\Points \subset \tilde{I}_n$ we have
	\begin{align*} N_n &\leq m_n - \# \widetilde{D}_\eta^\Points \\
	 	&\leq m_n - \delta_n^{-d} \Leb\big(\widetilde{D}\big) + \frac12 K_0 \epsilon^d \delta_n^{-d} \\
		& \leq K_0 \epsilon^d \delta_n^{-d}, \end{align*}
where in the last line we have used (\ref{fromconservationofmass}).  From Lemma~\ref{stronginnerestimate} we obtain
	\begin{equation} \label{probtendingto1} \PP\big(F_{n_0}\big) \geq \PP\big( \widetilde{D}_\eta^\Points \subseteq \tilde{I}_n \text{ for all } n \geq n_0 \big) \uparrow 1 \end{equation}
as $n_0 \uparrow \infty$.

By compactness, we can find finitely many cubes $Q_1, \ldots, Q_m$ of side length $\rho = 2\epsilon/3\sqrt{d}$ centered at points in $\partial (\widetilde{D}^\epsilon)$, with $\partial (\widetilde{D}^\epsilon) \subset \bigcup Q_i$.  Taking $k= \floor{K_0(\epsilon/\delta_n)^d}$ in Lemma~\ref{cubedeath}, since $3Q_i$ is disjoint from $\widetilde{D}$, we obtain for $n \geq n_0$
	\[ \PP\big(\{Q_i \cap I_n \neq \emptyset\} \cap F_{n_0}\big) \leq c_0e^{-c_1\rho}. \]
Summing over $i$ yields
	\[ \PP \big(\big\{\widetilde{D}^\epsilon \cap I_n \neq \emptyset \big\} \cap F_{n_0}\big) \leq c_0me^{-c_1\rho}. \]
By Borel-Cantelli, if $G$ is the event that $I_n \not\subset \widetilde{D}^{\epsilon\Points}$ for infinitely many $n$, then $\PP\big(F_{n_0} \cap G\big) = 0$.  From (\ref{probtendingto1}) we conclude that $\PP(G)=0$.
\end{proof}

\section{Multiple Point Sources}
\label{multiplesources}

This section is devoted to proving Theorems~\ref{multiplepointsources} and~\ref{algebraicboundary}.

\subsection{Associativity and Hausdorff Continuity of the Smash Sum}


In this section we establish two basic properties of the smash sum (\ref{smashsumdef}) that are needed to prove Theorems~\ref{multiplepointsources} and~\ref{algebraicboundary}.  In Lemma~\ref{associativity} we show that the smash sum is associative, and in Lemma~\ref{sumofneighborhoods} we show that it is continuous in the Hausdorff metric.

\begin{lemma}
\label{associativity}
Let $A,B,C \subset \R^d$ be bounded open sets whose boundaries have measure zero.  Then
	\begin{align*} (A \oplus B) \oplus C &= A \oplus (B \oplus C) \\
							        &= A \cup B \cup C \cup D \end{align*}
where $D$ is given by (\ref{thenoncoincidenceset}) with $\sigma = 1_A + 1_B + 1_C$.
\end{lemma}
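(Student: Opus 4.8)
\textbf{Proof strategy for Lemma~\ref{associativity} (associativity of the smash sum).}
The plan is to reduce associativity to a statement about the noncoincidence sets and then apply the monotonicity and mass-conservation results already proved. Write $\sigma_{AB} = 1_A + 1_B$, let $D_{AB}$ be the corresponding noncoincidence set, and recall $A \oplus B = A \cup B \cup D_{AB} = \widetilde{D}_{AB}$ by Proposition~\ref{boundaryregularity}. The key observation is that forming $(A\oplus B)\oplus C$ amounts to solving the obstacle problem twice, and the superposition structure of the potential $G\sigma$ lets us combine the two stages. Concretely, I want to show that if $E = A\oplus B$, then the odometer for source $1_E + 1_C$, \emph{added to} the odometer for source $1_A + 1_B$, produces the same final mass distribution as the odometer for source $1_A + 1_B + 1_C$ directly. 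This is the continuum analogue of the abelian property (Lemma~\ref{abelianproperty}): emitting mass in two batches (first relax $1_A+1_B$ to equilibrium on $\widetilde D_{AB}$, where it has density $1$, then add $1_C$ and relax again) gives the same result as relaxing $1_A+1_B+1_C$ all at once.

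The cleanest route avoids re-deriving an abelian property and instead argues directly with least superharmonic majorants. Let $\sigma = 1_A + 1_B + 1_C$ with obstacle $\gamma = -|x|^2 - G\sigma$, majorant $s$, odometer $u = s-\gamma$, and noncoincidence set $D$; set $\widetilde D = D \cup \{\sigma \geq 1\}^o = A\cup B\cup C\cup D$. On the other side, let $E = A\oplus B = \widetilde D_{AB}$, which by Corollary~\ref{volumesadd} and Proposition~\ref{boundaryregularity} satisfies $\Leb(E) = \Leb(A)+\Leb(B)$ and carries final density exactly $1$ on $E$ (from the mass-conservation statement $\Leb(D_{AB}) = \int_{D_{AB}}\sigma_{AB}$). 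Now form $E \oplus C$ using source density $1_E + 1_C$. I claim $(E\oplus C) = \widetilde D$. For the inclusion $(E\oplus C)\subseteq \widetilde D$: one checks that $u_{AB} + (\text{odometer for }1_E+1_C)$ is a candidate showing $s \leq$ (the corresponding majorant), so by uniqueness/monotonicity the noncoincidence set for $1_E+1_C$ sits inside $D$; here I use that $1_E + 1_C \leq$ the final density of the $\sigma$-process pushed forward, together with Lemma~\ref{monotonicity}. For the reverse inclusion, I use that starting with \emph{more} mass, namely $1_A+1_B+1_C$ compared to the post-relaxation density $1_E + 1_C$ (which is $\leq 1_A+1_B+1_C$ pointwise after accounting for overlaps via $\Leb(A\cap B)$), cannot shrink the domain — again Lemma~\ref{monotonicity} — so $D \subseteq D_{E\oplus C}$, hence $\widetilde D \subseteq E\oplus C$. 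Combining, $(A\oplus B)\oplus C = \widetilde D = A\cup B\cup C\cup D$, and by the symmetry of this description in $B,C$ (and in $A$), the same set equals $A\oplus(B\oplus C)$.

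The main obstacle will be making the comparison ``$1_E + 1_C$ versus $1_A+1_B+1_C$'' rigorous, since these densities are genuinely different functions (they differ on $A\cap B$, on $E\setminus(A\cup B) = D_{AB}\setminus(A\cup B)$, and on $E\cap C$), so Lemma~\ref{monotonicity} does not apply directly in either direction. The honest fix is to run the argument at the level of odometers rather than raw densities: set $v = u_{AB}$ (odometer for $1_A+1_B$, so $\Delta v = \nu_{AB} - \sigma_{AB}$ where $\nu_{AB} = 1_E$ as a density after relaxation), and observe that $u - v$ is nonnegative, superharmonic off $D$, and has Laplacian $1_E + 1_C - 1$ on the right set — i.e.\ $u-v$ \emph{is} the odometer for source $1_E + 1_C$, by the characterization in Lemma~\ref{laplacianofodometer} together with the mass-conservation identity from Proposition~\ref{boundaryregularity}(ii) that pins down $\nu_{AB} = 1$ a.e.\ on $E$. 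Once this identity $u = u_{AB} + u_{E,C}$ is established, associativity is immediate: the total odometer $u$ depends only on $\sigma = 1_A+1_B+1_C$, which is symmetric in the three sets, so it does not matter whether we relax $A,B$ first or $B,C$ first. I expect the boundary-measure-zero hypotheses to be used exactly as in Proposition~\ref{boundaryregularity} to guarantee the relevant final densities are indicators (not just $\leq 1$), which is what makes the two-stage and one-stage obstacle problems line up.
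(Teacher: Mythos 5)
You have correctly identified the key identity, namely $u = u_{AB} + \hat u$ where $\hat u$ is the odometer for the source $1_{A\oplus B} + 1_C$; this is exactly the equation $u + \hat u = \tilde u$ on which the paper's proof turns. The gap is in how you propose to prove it. You appeal to ``the characterization in Lemma~\ref{laplacianofodometer},'' but that lemma is not a characterization: it only asserts that $u - |x|^2$ is superharmonic and $u + M|x|^2$ is subharmonic, and nowhere in the paper is there a uniqueness statement of the form ``any nonnegative $w$ vanishing outside a bounded set, with $w + \hat\gamma$ superharmonic, $\geq \hat\gamma$, and harmonic on $\{w>0\}$, must equal $\hat s - \hat\gamma$.'' You would have to formulate and prove such a complementarity characterization before you could conclude that $u - u_{AB}$ \emph{is} the odometer for $1_{A\oplus B}+1_C$. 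Worse, verifying its hypotheses for $w = u - u_{AB}$ requires controlling the distributional Laplacian of $s_{AB}$ across $\partial D_{AB}$: a jump in $\nabla u_{AB}$ there would contribute a positive singular measure to $-\Delta s_{AB}$ and destroy the superharmonicity of $w + \hat\gamma$. The $C^1$ regularity that rules this out (Lemma~\ref{smoothdensity}) is proved only for $C^1$ densities, and $1_A+1_B$ is an indicator, so an additional approximation layer is needed on top of everything else. Your fallback of deducing the two inclusions from Lemma~\ref{monotonicity} fails for the reason you yourself flag: $1_{A\oplus B}+1_C$ and $1_A+1_B+1_C$ are not pointwise comparable in either direction.

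The missing idea is to prove the odometer identity where an abelian property is actually available: in the discrete model. The paper writes $(A\oplus B)\oplus C = A\cup B\cup C\cup\{u_{AB}+\hat u>0\}$, then lets $u_n$ and $\hat u_n$ be the divisible sandpile odometers on $\delta_n\Z^d$ for the sources $1_{A^\Points}+1_{B^\Points}$ and $\nu_n+1_{C^\Points}$, where $\nu_n$ is the final mass configuration of the first stage. Lemma~\ref{abelianproperty} gives the exact identity $u_n+\hat u_n=\tilde u_n$ for the one-stage source $1_{A^\Points}+1_{B^\Points}+1_{C^\Points}$, and Theorems~\ref{odomconvergence} and~\ref{domainconvergence} (the latter to see that $\nu_n^\Box\to 1_{A\oplus B}$ almost everywhere, so that $\hat u_n\to\hat u$) pass the identity to the limit, giving $u_{AB}+\hat u=\tilde u$ and hence the symmetric description $A\cup B\cup C\cup D$. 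A purely continuum proof along your lines can likely be made to work, but you must supply the obstacle-problem uniqueness lemma and the boundary regularity yourself; the paper deliberately routes around both.
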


\begin{proof}
Let
	\[ \gamma(x) = -|x|^2 - G(1_A + 1_B)(x) \]
and
	\[ \hat{\gamma}(x) = -|x|^2 - G(1_{A\oplus B} + 1_C)(x). \]
Let $u = s-\gamma$ and $\hat{u} = \hat{s}-\hat{\gamma}$, where $s,\hat{s}$ are the least superharmonic majorants of $\gamma,\hat{\gamma}$.  Then
	\begin{align} (A \oplus B) \oplus C &= (A \oplus B) \cup C \cup \{\hat{u}>0\} \nonumber \\
							&= A \cup B \cup \{u>0\} \cup C \cup \{\hat{u}>0\} \nonumber \\
\label{sumofodometers}			&= A \cup B \cup C \cup \{u+\hat{u}>0\}. \end{align}
Let $\nu_n$ be the final mass density for the divisible sandpile in $\delta_n \Z^d$ started from source density $1_{A^\Points} + 1_{B^\Points}$, and let $u_n$ be the corresponding odometer function.  By Theorem~\ref{odomconvergence} we have $u_n \to u$ as $n \to \infty$.  Moreover, by Theorem~\ref{domainconvergence} we have
	\begin{equation} \label{restatementofdomainconvergence} \nu_n^\Box(x) \to 1_{A\oplus B}(x) \end{equation}
for all $x \notin \partial(A \oplus B)$.  Let $\hat{u}_n$ be the odometer function for the divisible sandpile on $\delta_n \Z^d$ started from source density $\nu_n + 1_{C^\Points}$.  By Proposition~\ref{boundaryregularity}(i) the right side of (\ref{restatementofdomainconvergence}) is continuous almost everywhere, so by Theorem~\ref{odomconvergence} we have $\hat{u}_n \to \hat{u}$ as $n \to \infty$.  On the other hand, by the abelian property, Lemma~\ref{abelianproperty}, the sum $u_n + \hat{u}_n$ is the odometer function for the divisible sandpile on $\delta_n \Z^d$ started from source density $1_{A^\Points} + 1_{B^\Points} + 1_{C^\Points}$, so by Theorem~\ref{odomconvergence} we have $u_n + \hat{u}_n \to \tilde{u} := \tilde{s} - \tilde{\gamma}$, where
	\[ \tilde{\gamma}(x) = -|x|^2 - G(1_A + 1_B + 1_C)(x) \]
and $\tilde{s}$ is the least superharmonic majorant of $\tilde{\gamma}$.  In particular, 
	\[ u+\hat{u} = \lim_{n \to \infty} (u_n + \hat{u}_n) = \tilde{u}, \] 
so the right side of (\ref{sumofodometers}) 
is equal to $A \cup B \cup C \cup D$.
\end{proof}

The following lemma shows that the smash sum of two sets with a small intersection cannot extend very far beyond their union.  As usual, $\mathcal{L}$ denotes Lebesgue measure in $\R^d$, and $A^\epsilon$ denotes the outer $\epsilon$-neighborhood of a set $A \subset \R^d$.

\begin{lemma}
\label{limitedexpansion}
Let $A,B \subset \R^d$ be bounded open sets whose boundaries have measure zero, and let $\rho = (\Leb (A \cap B))^{1/d}$.  There is a constant $c$, independent of $A$ and $B$, such that
	\[ A\oplus B \subset (A \cup B)^{c\rho}. \]
\end{lemma}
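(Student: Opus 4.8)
The plan is to reduce the statement to a volume estimate for the divisible sandpile and then transfer it to the continuum via the convergence theorems already proved. Set $\sigma = 1_A + 1_B$, so that $A \oplus B = \widetilde D$ with $\widetilde D = D \cup \{\sigma \geq 1\}^o$ by the remark following Proposition~\ref{boundaryregularity}. The amount of ``excess mass'' is $\int_{\R^d}(\sigma - 1_{A \cup B})\,dx = \Leb(A \cap B) = \rho^d$, since $\sigma - 1_{A\cup B} = 1_{A \cap B}$. The intuition is that this small amount of excess mass, spread over the union, can only push the occupied domain out a distance of order $\rho$.

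First I would make this precise on the lattice. Fix $\delta_n$ small, and consider the divisible sandpile on $\delta_n \Z^d$ started from the source density $1_{(A \cup B)^\Points} + 1_{(A \cap B)^\Points}$; by the abelian property (Lemma~\ref{abelianproperty}) the resulting occupied domain $D_n$ is the same as the one obtained by starting from $1_{A^\Points} + 1_{B^\Points}$, and by Theorem~\ref{domainconvergence} we have $D_n \to \widetilde D$ in the sense of~(\ref{convergenceinthehausdorffmetric}). Now I would argue directly about $D_n$: pick a point $x \in D_n$ far from $A \cup B$, say $x \notin (A \cup B)^{t}$ for some $t \gg \rho$. Any mass reaching $x$ must have come through a spherical shell surrounding $A \cup B$; following the kind of argument in Lemma~\ref{boundarypath}, I would track the odometer $u_n$ along a decreasing chain of sites from $x$ back toward $A \cup B$. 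Outside $(A \cup B)^\Points$ the Laplacian satisfies $\Delta u_n \geq 0$ (no source there, and $\nu_n \le 1$ means $\Delta u_n = \nu_n - 1 \le 0$ — wait, this gives the wrong sign), so I should instead use that $u_n$ restricted to the complement of the starting support is harmonic where $\nu_n = 1$ and superharmonic elsewhere, hence the set $\{u_n > 0\} \setminus (A\cup B)^\Points$ behaves like the relaxation of the excess mass $1_{A \cap B}$ alone. More cleanly: by monotonicity (Lemma~\ref{monotonicity}), the odometer for $1_{A \cup B} + 1_{A \cap B}$ differs from the odometer for $1_{A \cup B}$ by at most the odometer of a single pile of mass $\rho^d$; since the latter, by Theorem~\ref{divsandcircintro}, occupies a ball of radius $O(\rho)$, the additional spreading beyond $A \cup B$ is controlled. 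I would make this rigorous by comparing to the sandpile started from mass $\Leb(A\cap B)$ concentrated at a single point near $A \cap B$, using Lemma~\ref{monotonicity} twice (push the excess mass up to a point mass supported in $A\cap B$, relax, observe the result is contained in a ball of radius $c\rho$ by Theorem~\ref{divsandcircintro}), to conclude $D_n \subset ((A\cup B)^\Points)^{c\rho}$ uniformly in $n$ for a constant $c$ depending only on $d$.

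The main obstacle is the monotonicity comparison: Lemma~\ref{monotonicity} as stated compares odometers and noncoincidence sets when one density dominates another pointwise, but $1_{A\cup B} + 1_{A\cap B}$ is not dominated by $1_{A \cup B} + \rho^d \delta_z$ for a point $z$, nor vice versa. I expect to need a slightly different device: use that the divisible sandpile odometer is \emph{sublinear} in the source in the sense that $u_{\sigma_1 + \sigma_2} \le u_{\sigma_1} + u_{\sigma_2^*}$ where $\sigma_2^*$ is $\sigma_2$ ``collapsed'' appropriately — this can be extracted from the least-superharmonic-majorant characterization of Lemma~\ref{discretemajorant} together with the fact that $G$ (and $G_n$) are monotone and the least superharmonic majorant is subadditive in the obstacle. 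Concretely, $s_{\gamma_1 + \gamma_2 - |x|^2 \text{ correction}} \le$ (majorant of $\gamma_1$) $+$ (majorant of the potential of $\sigma_2$ minus its harmonic part), and the potential of $1_{A\cap B}$ is dominated by the potential of a point mass of the same total, up to an additive harmonic error, which relaxes to a ball of radius $c\rho$. Once that comparison is in hand, the final step is routine: pass $n \to \infty$ using $D_n \to \widetilde D$ and the observation that $((A \cup B)^\Points)^{c\rho} \to (A\cup B)^{c\rho}$ (up to enlarging $c$ slightly), to conclude $A \oplus B = \widetilde D \subset (A\cup B)^{c\rho}$. I would also note that in the cleanest route, $c$ can be taken to be the constant $c'$ from Theorem~\ref{divsandcircintro} plus an absolute constant absorbing the passage between the lattice and $\R^d$ and between $\rho^d$-mass-at-a-point and $\rho^d$-mass-spread-over-$A\cap B$.
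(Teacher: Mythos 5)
Your guiding intuition --- that the excess mass $\int_{\R^d}(\sigma - 1_{A\cup B})\,dx = \Leb(A\cap B) = \rho^d$ can only push the domain out a distance of order $\rho$ --- is exactly right, but the mechanism you propose for converting volume into distance does not work. The central step, comparing the relaxation of $1_{A\cup B}+1_{A\cap B}$ with that of $1_{A\cup B}+\rho^d\delta_z$, fails as an inclusion of domains: concentrating the excess mass at a single point $z$ makes the domain spread \emph{less} far near the parts of $A\cap B$ distant from $z$ (take $A=B$ a long thin box and $z$ at one end; the true sum $A\oplus A$ thickens the box along its whole length, while the point-source comparison does not). You flag this obstacle yourself, but the proposed repair rests on two claims that are not available: (i) $G1_{A\cap B}$ is \emph{not} dominated by $\rho^d g(\cdot,z)$ up to a globally harmonic error --- the error is harmonic only away from $A\cap B$ and $z$, and only a globally harmonic perturbation of the obstacle leaves the noncoincidence set unchanged; and (ii) the odometer is in fact \emph{superadditive} in the source, $u_{\sigma_1+\sigma_2}\geq u_{\sigma_1}+u_{\sigma_2}$ (this is what the proof of Lemma~\ref{associativity} together with Lemma~\ref{monotonicity} yields), so the subadditivity you want goes the wrong way and cannot be extracted from the majorant characterization: the candidate majorant $s_1+s_2+|x|^2$ does lie above $\gamma_1+\gamma_2+|x|^2=\gamma_{12}$, but it fails to be superharmonic on $D_1\cap D_2$, where $s_1,s_2$ are harmonic and $|x|^2$ is strictly subharmonic. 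The lattice reduction and the limit $n\to\infty$ are also unnecessary detours, since the statement concerns only the continuum obstacle problem.

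The correct way to cash in the volume heuristic is local and involves no point-source comparison. Fix $x\in(A\oplus B)-(A\cup B)$ at distance $r$ from $A\cup B$. On $(A\oplus B)\cap B(x,r/2)$ the function $w(y)=u(y)-|x-y|^2=s(y)+|y|^2+G(1_A+1_B)(y)-|x-y|^2$ is harmonic ($s$ is harmonic on $D$ by Lemma~\ref{majorantbasicprops}(iii), $G(1_A+1_B)$ is harmonic off $\bar A\cup\bar B$, and $|y|^2-|x-y|^2$ is linear in $y$); since $w(x)=u(x)>0$, the maximum principle produces $y$ with $|x-y|=r/2$ and $u(y)>r^2/4$. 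On the other hand, Lemmas~\ref{laplacianofodometer} and~\ref{continuumatmostquadratic} give $u\leq c'h^2$ on every ball $B(z,h)$ centered at a point $z\notin A\oplus B$, so the entire ball $B(y,r/2\sqrt{c'})$ must lie in $A\oplus B$; it is disjoint from $A\cup B$, and Corollary~\ref{volumesadd} then forces $\omega_d(r/2\sqrt{c'})^d\leq\Leb(A\oplus B)-\Leb(A\cup B)\leq\Leb(A\cap B)$, i.e.\ $r\leq c\rho$. You already have all of these ingredients in hand; the missing idea is that quadratic growth of $u$ into the domain, combined with the quadratic upper bound near the complement, converts a single occupied point far from $A\cup B$ into an occupied ball of definite volume, at which point volume additivity caps the distance.
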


\begin{proof}
Let $\gamma,s$ be given by (\ref{theobstacle}) and (\ref{themajorant}) with $\sigma = 1_A+1_B$, and write $u=s-\gamma$.  Fix $x \in (A\oplus B)-(A\cup B)$ and let $r =$ dist$(x,A \cup B)$.  Let $\mathcal{B} = B(x,r/2)$.  By Lemma~\ref{majorantbasicprops}(iii), $s$ is harmonic in $A\oplus B - (A \cup B)$, so by Lemma~\ref{superharmonicpotential} the function
	\begin{align*} w(y) &= u(y) - |x-y|^2 \\  
				&= s(y) + |y|^2 + G\big(1_{A}+1_{B}\big)(y) - |x-y|^2 \end{align*}
is harmonic on the intersection $(A\oplus B) \cap \mathcal{B}$; hence it attains its maximum on the boundary.  Since $w(x)>0$ the maximum cannot be attained on $\partial (A\oplus B)$, so it is attained at some point $y \in \partial \mathcal{B}$, and
	\begin{equation} \label{largeodom} u(y) \geq w(y) + \frac{r^2}{4} > \frac{r^2}{4}. \end{equation}

If $z$ is any point outside $A\oplus B$, then by Lemma~\ref{laplacianofodometer} and Lemma~\ref{continuumatmostquadratic} with $\lambda=4d$, there is a constant $c' \geq 1$ such that $u \leq c' h^2$ on $B(z,h)$ for all $h$.  Taking $h=r/2\sqrt{c'}$, we conclude from (\ref{largeodom}) that $B(y,h) \subset A\oplus B$.
Since $B(y,h)$ is disjoint from $A\cup B$, we have by Corollary~\ref{volumesadd}
	\[ \Leb(A\cup B) + \omega_d h^d \leq \Leb(A\oplus B) = \Leb(A) + \Leb(B), \]
hence $\omega_d h^d \leq \Leb(A \cap B)$.  Taking $c=2 \sqrt{c'}/\omega_d^{1/d}$ yields $r = c\, \omega_d^{1/d} h \leq c\rho$.
\end{proof}

The following lemma, together with Lemma~\ref{monotonicity}, shows that the smash sum operation 
is continuous in the Hausdorff metric.

\begin{lemma}
\label{sumofneighborhoods}
Let $A,B \subset \R^d$ be bounded open sets whose boundaries have measure zero.  For any $\epsilon>0$ there exists $\eta>0$ such that
	\begin{equation} \label{threeinclusions} (A \oplus B)_\epsilon \subset A_\eta \oplus B_\eta \subset A^\eta \oplus B^\eta \subset (A \oplus B)^\epsilon. \end{equation}
\end{lemma}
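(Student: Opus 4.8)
The plan is to establish the three inclusions in (\ref{threeinclusions}) separately, using the monotonicity of the smash sum (Lemma~\ref{monotonicity}), the volume additivity (Corollary~\ref{volumesadd}), and the ``limited expansion'' estimate (Lemma~\ref{limitedexpansion}). The middle inclusion $A_\eta \oplus B_\eta \subset A^\eta \oplus B^\eta$ is immediate from monotonicity, since $A_\eta \subset A^\eta$ and $B_\eta \subset B^\eta$, so $1_{A_\eta}+1_{B_\eta} \leq 1_{A^\eta}+1_{B^\eta}$ pointwise and the associated odometers and noncoincidence sets are nested. This holds for every $\eta>0$, so the real content is in the outer two inclusions.

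For the leftmost inclusion $(A \oplus B)_\epsilon \subset A_\eta \oplus B_\eta$: the idea is that as $\eta \downarrow 0$, the densities $\sigma_\eta = 1_{A_\eta}+1_{B_\eta}$ increase to $1_A + 1_B = \sigma$ (pointwise on the interiors, away from boundaries), and the corresponding obstacles $\gamma_\eta$ converge to $\gamma$ uniformly on compact sets by the Green's potential estimate — essentially Lemma~\ref{threesteps}(i) applied to the increasing approximations, noting that $\Leb(\partial A) = \Leb(\partial B) = 0$ so the $\sigma_\eta$ converge to $\sigma$ almost everywhere. Then by Lemma~\ref{threesteps}(iii) (or its direct analogue for the continuum majorants), for any fixed $\epsilon'>0$ the noncoincidence set $D_\eta = \{s_\eta > \gamma_\eta\}$ eventually contains $D_{\epsilon'}$, where $D = \{s>\gamma\}$. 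Since $A \oplus B = A \cup B \cup D$, and $(A\cup B)_\epsilon \subset A_\eta \cup B_\eta$ for small $\eta$ by Lemma~\ref{pointset}, a careful combination of these inner-neighborhood estimates gives $(A\oplus B)_\epsilon \subset A_\eta \oplus B_\eta$ for $\eta$ small enough.

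For the rightmost inclusion $A^\eta \oplus B^\eta \subset (A \oplus B)^\epsilon$: here I would use Lemma~\ref{limitedexpansion} together with volume additivity. Write $\rho(\eta) = (\Leb(A^\eta \cap B^\eta))^{1/d}$; as $\eta \downarrow 0$ we have $\Leb(A^\eta \cap B^\eta) \to \Leb(\bar A \cap \bar B)$, which need not be small, so Lemma~\ref{limitedexpansion} alone is not quite enough — instead I would apply associativity (Lemma~\ref{associativity}) to write $A^\eta \oplus B^\eta$ in terms of $A \oplus B$ smashed with the thin ``shells'' $A^\eta - A$ and $B^\eta - B$, whose Lebesgue measure tends to $0$ with $\eta$ since $\Leb(\partial A) = \Leb(\partial B) = 0$. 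Applying Lemma~\ref{limitedexpansion} to these successive smash sums with a set of small measure, each step extends the domain by at most $c(\text{measure of the new piece})^{1/d}$ beyond the union, which can be made $<\epsilon$; combined with $A^\eta \cup B^\eta \subset (A\cup B)^\eta \subset (A\oplus B)^\eta$, this gives $A^\eta \oplus B^\eta \subset (A\oplus B)^{\epsilon}$ for $\eta$ small.

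The main obstacle I expect is the rightmost inclusion: controlling the outward growth of the smash sum when the ``overlap'' region $A^\eta \cap B^\eta$ has non-negligible measure. The resolution via associativity and peeling off the boundary shells is the key trick — it lets one reduce to smashing with sets of arbitrarily small volume, where Lemma~\ref{limitedexpansion} gives the quantitative control needed. A secondary technical point is verifying that the hypotheses of Lemma~\ref{limitedexpansion} and Corollary~\ref{volumesadd} (boundaries of measure zero) are preserved for the shell sets and their iterated smash sums, which should follow from Proposition~\ref{boundaryregularity}(i).
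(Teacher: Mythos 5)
Your proposal is correct and follows essentially the same route as the paper's proof: Lemma~\ref{pointset} combined with Lemma~\ref{threesteps}(iii) (applied to the increasing densities $1_{A_\eta}+1_{B_\eta}$) for the first inclusion, Lemma~\ref{monotonicity} for the second, and associativity (Lemma~\ref{associativity}) together with Lemma~\ref{limitedexpansion} applied to the thin shells $A^\eta - A$ and $B^\eta - B$ of vanishing measure for the third.
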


\begin{proof}
By Lemma~\ref{pointset}, since $A\oplus B = A\cup B\cup D$, we have
	\[ (A\oplus B)_\epsilon \subset A_{\epsilon'} \cup B_{\epsilon'} \cup D_{\epsilon'} \]
for some $\epsilon'>0$.  By Lemma~\ref{threesteps}(iii) with $\sigma = 1_A+1_B$ and $\sigma_n = 1_{A_{1/n}} + 1_{B_{1/n}}$, for sufficiently small $\eta$ we have $D_{\epsilon'} \subset A_\eta \oplus B_\eta$.  This proves the first inclusion in (\ref{threeinclusions}).

The second inclusion is immediate from Lemma~\ref{monotonicity}. 

 For the final inclusion, write $A' = A^\eta - A$ and $B' = B^\eta - B$.  Since  $\Leb(A') \downarrow \Leb(\partial A)=0$ and $\Leb(B') \downarrow \Leb(\partial B) = 0$ as $\eta \downarrow 0$, for small enough $\eta$ we have by Lemmas~\ref{associativity} and~\ref{limitedexpansion}
	 \begin{align*} A^\eta \oplus B^\eta &= A \oplus B \oplus A' \oplus B' \\
	 		&\subset ((A \oplus B) \cup A^\eta \cup B^\eta)^{\epsilon-\eta} \\
			&\subset (A \oplus B)^\epsilon. \qed \end{align*}
\renewcommand{\qedsymbol}{}
\end{proof}

\subsection{Smash Sums of Balls}

In this section we deduce Theorem~\ref{multiplepointsources} from our other results.  The following result on internal DLA with a single point source is a restatement of the main result of \cite{LBG}.

\begin{theorem}
\label{singlepointsource}
Fix $\lambda>0$, and let $I_n$ be the random set of occupied sites for internal DLA in $\delta_n \Z^d$, starting with $m = \big\lfloor\lambda \delta_n^{-d}\big\rfloor$ particles at the origin.  If $\delta_n \leq 1/n$ for all $n$, then for any $\epsilon>0$, we have with probability one
	\[ B_\epsilon^\Points \subset I_n \subset B^{\epsilon\Points} \qquad \text{\em for all sufficiently large $n$,} \] 
where $B$ is the ball of volume $\lambda$ centered at the origin in $\R^d$.
\end{theorem}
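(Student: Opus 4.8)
The statement is Theorem~\ref{singlepointsource}, which asserts that internal DLA with a single point source of $m = \lfloor \lambda \delta_n^{-d}\rfloor$ particles at the origin in $\delta_n\Z^d$ converges almost surely to the ball $B$ of volume $\lambda$, in the sense of (\ref{convergenceinthehausdorffmetric}). Since this is explicitly described in the excerpt as ``a restatement of the main result of \cite{LBG},'' the proof is essentially a translation of the Lawler--Bramson--Griffeath theorem from the fixed lattice $\Z^d$ to the rescaled lattices $\delta_n\Z^d$, combined with a Borel--Cantelli argument to upgrade ``convergence in probability for each $n$'' to ``almost sure convergence for all large $n$ simultaneously.''

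\textbf{Key steps.} First I would recall the LBG result in its quantitative form: for internal DLA in $\Z^d$ started from $N$ particles at the origin, writing $R = (N/\omega_d)^{1/d}$, one has for every $\epsilon>0$ that $\PP(B_{(1-\epsilon)R} \subset I(N) \subset B_{(1+\epsilon)R})$ tends to $1$ as $N\to\infty$, and in fact the failure probability decays fast enough (super-polynomially in $R$, indeed like $\exp(-c R^{2}/\log R)$ or similar — the precise rate in \cite[Theorem~1]{Lawler95} or the original \cite{LBG} bounds suffice) to be summable. Second, I would apply this on the lattice $\delta_n\Z^d$ with $N = m = \lfloor\lambda\delta_n^{-d}\rfloor$: by scaling, internal DLA on $\delta_n\Z^d$ with $m$ particles is just $\delta_n$ times internal DLA on $\Z^d$ with $m$ particles, and the radius in $\Z^d$ is $R_n = (m/\omega_d)^{1/d} \sim \delta_n^{-1}(\lambda/\omega_d)^{1/d} = \delta_n^{-1} r$, where $r$ is the radius of $B$. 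So $\delta_n R_n \to r$, and the LBG estimate gives $\PP\big(B_{r-\epsilon}^\Points \not\subset I_n \text{ or } I_n \not\subset B^{\epsilon\Points}\big) \leq c_0 \exp(-c_1 (r/\delta_n)^{2}/\log(r/\delta_n))$ for all large $n$. Third, using the hypothesis $\delta_n \leq 1/n$, the right-hand side is bounded by $c_0\exp(-c_1 (rn)^2/\log(rn))$, which is summable in $n$; Borel--Cantelli then gives that almost surely, for all sufficiently large $n$, both inclusions $B_\epsilon^\Points \subset I_n \subset B^{\epsilon\Points}$ hold. (Here I am slightly abusing notation, matching the excerpt's convention that $B_\epsilon$ means the inner $\epsilon$-neighborhood and $B^\epsilon$ the outer one, so that the displayed inclusions really say $A_n \to B$ in the sense of (\ref{convergenceinthehausdorffmetric}).) Finally, since this holds for each $\epsilon = 1/k$ on an almost sure event, intersecting countably many such events gives a single almost sure event on which $I_n \to B$ for every $\epsilon>0$.

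\textbf{Main obstacle.} The only real content beyond citation is making sure the LBG bound is quoted in a form with a tail estimate summable against the sequence $\delta_n$; the original \cite{LBG} paper states convergence in probability, while the quantitative refinements (Lawler \cite{Lawler95}, and the bounds $B_{r-r^{1/3}}\subset I_n \subset B_{r+r^{1/3}}$ mentioned earlier in the excerpt with failure probability summable) are what one actually needs. The condition $\delta_n \leq 1/n$ in the hypothesis is precisely there to guarantee summability, so the argument is clean once the right input is in hand. No new ideas are needed — the scaling is trivial and the Borel--Cantelli step is routine — so I expect this proof to be quite short, essentially a paragraph invoking \cite{LBG} (or \cite{Lawler95}) plus the scaling and Borel--Cantelli observations.
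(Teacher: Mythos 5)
Your proposal is correct and matches the paper's treatment: the paper offers no proof of Theorem~\ref{singlepointsource} at all, simply declaring it a restatement of the main result of \cite{LBG}, and the content you supply (rescaling $\delta_n\Z^d$ to $\Z^d$, quoting the quantitative LBG/Lawler tail bounds, and using $\delta_n\leq 1/n$ to make the failure probabilities summable for Borel--Cantelli across the now-independent clusters) is exactly the routine transfer the paper leaves implicit. The only caveat is that the summable per-$N$ tail bound must be extracted from the proofs in \cite{LBG} rather than from their theorem statement, which is an a.s.\ result for the nested sequence on a fixed lattice; but those bounds are indeed present there, so this is a citation issue rather than a gap.
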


\begin{proof}[Proof of Theorem~\ref{multiplepointsources}]
For $i=1,\ldots,k$ let $D_n^i, R_n^i, I_n^i$ be the domain of occupied sites in $\delta_n \Z^d$ 
starting from a single point source of $\big\lfloor \delta_n^{-d} \lambda_i\big\rfloor$ particles at $x_i^\Points$.
By Theorems~\ref{rotorcircintro} and~\ref{divsandcircintro}, for any $\eta>0$ we have
	\begin{equation} \label{singlepointsources} (B_i)_\eta^\Points \subset D_n^i, R_n^i \subset (B_i)^{\eta\Points} \qquad \text{for all $i$ and all sufficiently large $n$.} \end{equation}
Moreover if $\delta_n \leq 1/n$, then by Theorem~\ref{singlepointsource} we have with probability one
	\begin{equation} \label{singlepointsourcesIDLA} (B_i)_\eta^\Points \subset I_n^i \subset (B_i)^{\eta\Points} \qquad \text{for all $i$ and all sufficiently large $n$.} \end{equation}

Next we argue that the domains $D_n,R_n,I_n$ can be understood as smash sums of $D_n^i, R_n^i, I_n^i$ as $i$ ranges over the integers $1, \ldots, k$.
By the abelian property \cite{DF}, the domain $I_n$ is the Diaconis-Fulton smash sum in $\delta_n \Z^d$ of the domains $I_n^i$.  Likewise, if $r_1$ is an arbitrary rotor configuration on $\delta_n \Z^d$, let $S_n^2$ be the smash sum of $R_n^1$ and $R_n^2$ formed using rotor-router dynamics with initial rotor configuration $r_1$, and let $r_2$ be the resulting final rotor configuration.  For $i \geq 3$ define $S_n^i$ inductively as the smash sum of $S_n^{i-1}$ and $R_n^i$ formed using rotor-router dynamics with initial rotor configuration $r_{i-1}$, and let $r_i$ be the resulting final rotor configuration.  Then $R_n=S_n^{k}$.  Finally, by Lemma~\ref{abelianproperty}, the domain $D_n$ contains the smash sum of domains $D_n^i$ formed using divisible sandpile dynamics, and $D_n$ is contained in the smash sum of the domains $D_n^i \cup \partial D_n^i$.  

Fixing $\epsilon>0$, by Theorem~\ref{DFsum} and Lemma~\ref{monotonicity}, it follows from (\ref{singlepointsources}) that for all sufficiently large $n$
	\begin{equation} \label{squeezed} A_{\epsilon/2}^\Points \subset D_n, R_n, I_n \subset \widetilde{A}^{\epsilon/2 \Points} \end{equation}
where $A$ is the smash sum of the balls $(B_i)_\eta$, and $\widetilde{A}$ is the smash sum of the balls $(B_i)^\eta$.  By Lemma~\ref{sumofneighborhoods} we can take $\eta$ sufficiently small so that
	\[ D_{\epsilon/2} \subset A \subset \widetilde{A} \subset D^{\epsilon/2} \]
where $D = B_1 \oplus \ldots \oplus B_k$.  Together with (\ref{squeezed}), this completes the proof.
\end{proof}

\subsection{Algebraic Boundary}

In this section we prove Theorem~\ref{algebraicboundary} showing that the boundary of a smash sum of disks in $\R^2$ is an algebraic curve.  The first step is to establish the classical quadrature identity (\ref{classicalquadrature}), which holds in any dimension.  We will prove the following quadrature inequality generalizing (\ref{classicalquadrature}) for superharmonic functions on a smash sum of balls in $\R^d$.

\begin{prop}
\label{ballquadrature}
Fix $x_1, \ldots, x_k \in \R^d$ and $\lambda_1, \ldots, \lambda_k>0$.  Let $B_i$ be the ball of volume $\lambda_i$ centered at $x_i$.  Then
	\[ \int_{B_1 \oplus \ldots \oplus B_k} h(x) dx \leq \sum_{i=1}^k \lambda_i h(x_i) \]
for all integrable superharmonic functions $h$ on $B_1 \oplus \ldots \oplus B_k$.
\end{prop}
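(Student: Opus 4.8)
The plan is to realize $B_1\oplus\cdots\oplus B_k$ as a scaling limit of divisible sandpiles with point sources and to push the discrete quadrature inequality (\ref{discretequadrature}) through to the limit. Fix a sequence $\delta_n\downarrow 0$ and, in $\delta_n\Z^d$, run the divisible sandpile started with mass $m_i=\lfloor \lambda_i\delta_n^{-d}\rfloor$ at $x_i^\Points$, $i=1,\dots,k$; by the abelian property (Lemma~\ref{abelianproperty}) the final mass density $\nu_n$ and the fully occupied domain $D_n$ are well defined, $\nu_n\le 1$ everywhere, and $\nu_n$ is supported on $D_n\cup\partial D_n$. Exactly as in the derivation of (\ref{discretequadrature}), a single toppling at a site $z$ changes $\sum_x v(x)\sigma(x)$ by $(\text{mass emitted at }z)\cdot\Delta v(z)$, so for any $v$ that is discretely superharmonic at every site of $D_n$ (the only sites that ever topple),
\[
\sum_{x} v(x)\,\nu_n(x)\,\delta_n^d \;\le\; \sum_{x} v(x)\,\sigma_n(x)\,\delta_n^d \;=\; \sum_{i=1}^{k} m_i\,\delta_n^d\, v(x_i^\Points),
\]
where $\sigma_n=\sum_i m_i\,1_{\{x=x_i^\Points\}}$.

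First I would prove the proposition for $h$ superharmonic on an open neighborhood $U$ of $\overline{B_1\oplus\cdots\oplus B_k}$. Mollify to $h_\epsilon=h*\eta_\epsilon$, which is smooth and superharmonic on a slightly smaller neighborhood and satisfies $h_\epsilon\uparrow h$ as $\epsilon\downarrow 0$. By the outer bound in Theorem~\ref{multiplepointsources}, for large $n$ the support of $\nu_n$ lies well inside $U$, and there, by Lemma~\ref{thirdderiv}, the discrete Laplacian of $h_\epsilon^\Points$ on $\delta_n\Z^d$ is at most $C_\epsilon\delta_n$; since $|x|^2$ has discrete Laplacian $1$ on $\delta_n\Z^d$, the function $v_n=h_\epsilon^\Points-C_\epsilon\delta_n|x|^2$ is discretely superharmonic on $D_n$ for $n$ large. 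Feeding $v_n$ into the displayed inequality and letting $n\to\infty$: the right-hand side tends to $\sum_i\lambda_i h_\epsilon(x_i)$ (the quadratic correction is $O(\delta_n)$), and the left-hand side equals $\int h_\epsilon^\Box\,\nu_n^\Box\,dx$ up to the same vanishing correction. So everything comes down to showing $\int h_\epsilon^\Box\,\nu_n^\Box\,dx\to\int_{B_1\oplus\cdots\oplus B_k}h_\epsilon\,dx$. This is the step I expect to be the main obstacle, and I would handle it by a squeeze: the total mass $\sum_i m_i\delta_n^d\to\sum_i\lambda_i=\Leb(B_1\oplus\cdots\oplus B_k)$ by Corollary~\ref{volumesadd} and Proposition~\ref{boundaryregularity}; $\nu_n\le 1$ with $\operatorname{supp}\nu_n^\Box$ shrinking to $\overline{B_1\oplus\cdots\oplus B_k}$ (outer bound of Theorem~\ref{multiplepointsources}, together with $\Leb(\partial(B_1\oplus\cdots\oplus B_k))=0$ from Proposition~\ref{boundaryregularity}); and $\nu_n=1$ on $(B_1\oplus\cdots\oplus B_k)_\eta^\Points$ for large $n$ (inner bound). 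Any weak subsequential limit $\mu$ of $\nu_n^\Box\,dx$ then satisfies $\Leb|_{D}\le\mu\le\Leb$ and $\mu(\overline D)=\Leb(D)$ with $D=B_1\oplus\cdots\oplus B_k$, forcing $\mu=1_D\,dx$; weak convergence against the bounded continuous function $h_\epsilon$ gives the claim. Hence $\int_D h_\epsilon\le\sum_i\lambda_i h_\epsilon(x_i)$ for every $\epsilon>0$; letting $\epsilon\downarrow 0$, monotone convergence on the left and $h_\epsilon(x_i)\uparrow h(x_i)$ (lower semicontinuity of $h$) on the right finish this case.

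To remove the assumption that $h$ is superharmonic on a neighborhood of $\overline D$ and obtain the stated generality for an arbitrary integrable superharmonic $h$ on the open set $D=B_1\oplus\cdots\oplus B_k$, I would exhaust $D$ from inside by smaller smash sums. For small $t>0$ let $B_i(t)$ be the ball of volume $\lambda_i-t$ centered at $x_i$ and $D(t)=B_1(t)\oplus\cdots\oplus B_k(t)$. By monotonicity (Lemma~\ref{monotonicity}) $D(t)\subset D$, and by Hausdorff continuity of the smash sum (Lemma~\ref{sumofneighborhoods}) together with $\Leb(D(t))=\sum_i\lambda_i-kt\uparrow\Leb(D)$ one gets $D(t)\uparrow D$; in particular $h$ is superharmonic on the neighborhood $D$ of each compact set $\overline{D(t)}$. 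Applying the neighborhood version just proved, with the balls $B_i(t)$ (still centered at $x_i$) and masses $\lambda_i-t$, yields $\int_{D(t)}h\le\sum_i(\lambda_i-t)h(x_i)$, and letting $t\downarrow 0$—dominated convergence on the left, using $\int_D|h|<\infty$—gives $\int_D h\le\sum_i\lambda_i h(x_i)$, as desired. Note that no induction on $k$ is needed: the discrete approximation handles all sources simultaneously, and the only external inputs are the convergence statement of Theorem~\ref{multiplepointsources}, the abelian property, the discrete quadrature inequality, and the mass-conservation and Hausdorff-continuity lemmas.
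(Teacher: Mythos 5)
Your route is genuinely different from the paper's: the paper invokes Sakai's theorem (Theorem~\ref{fromsakai}) applied to the density $\sigma = 2\sum 1_{B'_i}$ built from half-volume balls, then uses the mean value property and the identity $B_i = B'_i \oplus B'_i$, whereas you rederive the quadrature inequality from scratch by passing the discrete inequality (\ref{discretequadrature}) for the divisible sandpile to the scaling limit. The first half of your argument --- mollify $h$, correct $h_\epsilon^\Points$ by $C_\epsilon \delta_n |x|^2$ to make it discretely superharmonic on the toppled sites, test it against the sandpile, and identify the weak limit of $\nu_n^\Box\,dx$ as $1_D\,dx$ by the squeeze you describe --- is sound, and there is no circularity, since the divisible sandpile case of Theorem~\ref{multiplepointsources} does not rely on Proposition~\ref{ballquadrature}. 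This establishes the inequality whenever $h$ is superharmonic on an open neighborhood of $\overline{B_1 \oplus \ldots \oplus B_k}$, which is weaker than the stated generality.

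The gap is in the final reduction to an arbitrary integrable superharmonic $h$ on the open set $D = B_1 \oplus \ldots \oplus B_k$. To apply your neighborhood version to $D(t) = B_1(t) \oplus \ldots \oplus B_k(t)$ you need $D$ to be an open neighborhood of $\overline{D(t)}$, i.e.\ $\overline{D(t)} \subset D$. Monotonicity (Lemma~\ref{monotonicity}) gives only $D(t) \subset D$, and Hausdorff continuity (Lemma~\ref{sumofneighborhoods}) gives only the opposite-direction inclusion $D_\epsilon \subset D(t)$ for small $t$; neither rules out $\partial D(t)$ meeting $\partial D$ at points outside $\bigcup B_i$, and nested open sets increasing to $D$ can perfectly well all have closures touching $\partial D$. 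What is actually needed is a strict outward motion of the free boundary when mass is added at interior points --- a Hopf-lemma-type statement for the obstacle problem --- and this is precisely the sort of free-boundary regularity the paper is careful never to assume, since $\partial D(t)$ may a priori have singular points where such arguments fail. So either this inclusion must be proved (for instance by showing the odometer for $\sigma = \sum 1_{B_i}$ is bounded below by a positive constant on $\partial D(t) \setminus \bigcup B_i$), or the reduction must be restructured. The paper sidesteps the issue entirely because Sakai's theorem is already stated for integrable superharmonic functions on $D$ itself; your approach, which rebuilds that theorem from the sandpile, inherits the obligation to justify this last approximation step, and as written it does not.
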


We could deduce Proposition~\ref{ballquadrature} from Proposition~\ref{boundaryregularitysmooth} by integrating smooth approximations to $h$ against smooth densities $\sigma_n$ converging to the sum of the indicators of the $B_i$.  However, it more convenient to use the following result of M. Sakai, which is proved by a similar type of approximation argument. 

\begin{theorem} \cite[Thm.\ 7.5]{Sakai}
\label{fromsakai}
Let $\Omega \subset \R^d$ be a bounded open set, and let $\sigma$ be a bounded function on $\R^d$ supported on $\bar{\Omega}$ satisfying $\sigma>1$ on $\Omega$.  Let $\gamma, s, D$ be given by (\ref{theobstacle})-(\ref{thenoncoincidenceset}).  
	\[ \int_D h(x) dx \leq \int_D h(x) \sigma(x) dx \]
for all integrable superharmonic functions $h$ on $D$.
\end{theorem}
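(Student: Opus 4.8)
The plan is to deduce this from the smooth case already in hand, Proposition~\ref{boundaryregularitysmooth}(ii), by approximating \emph{both} the superharmonic test function $h$ and the density $\sigma$. A heuristic for why the inequality should hold comes from the lattice: if $h$ is superharmonic on $\delta_n\Z^d$ and the divisible sandpile started from $\sigma_n$ relaxes to a final density $\nu_n$, then every toppling can only decrease $\sum_x h(x)\sigma_n(x)$, exactly as in \eqref{discretequadrature}, so $\delta_n^d\sum_x h(x)\nu_n(x)\le\delta_n^d\sum_x h(x)\sigma_n(x)$; letting $n\to\infty$ and invoking Theorems~\ref{odomconvergence} and~\ref{domainconvergence} (with $\nu_n$ converging to the density that is $1$ on $\widetilde D$ and equals $\sigma$ elsewhere) gives $\int_{\widetilde D}h\le\int_{\widetilde D}h\sigma$. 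Since $\sigma>1$ on $\Omega$, Lemma~\ref{startingdensitygreaterthan1} gives $\Omega\subset D$, and as $\sigma$ is supported on $\bar\Omega$ we get $\widetilde D=D$ up to a null set, which is the desired statement. However, the hypotheses of Theorem~\ref{domainconvergence} force $\sigma$ to be continuous a.e.\ and bounded away from $1$, which Sakai's hypotheses do not supply, so I would instead run the argument directly in the continuum.

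First I would reduce to the case of a smooth $h$. For $\epsilon>0$, a (double) mollification $h_\epsilon$ of $h$ is $C^\infty$ and superharmonic on the inner neighborhood $D_{2\epsilon}$; by the mean value property \eqref{meanvalueproperty} and Lemma~\ref{basicproperties}(iii) one has $h_\epsilon\le h$ with $h_\epsilon\uparrow h$ pointwise as $\epsilon\downarrow0$. Since $D$ is bounded (Lemma~\ref{occupieddomainisbounded}) and $h$ is integrable, monotone convergence reduces the claim to proving $\int_{D}h_\epsilon\,dx\le\int_{D}h_\epsilon\,\sigma\,dx+o(1)$ as $\epsilon\downarrow0$ for each smooth $h_\epsilon$. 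Next I would smooth $\sigma$: choose $C^1$ densities $\sigma_j^-\le\sigma\le\sigma_j^+$ with $\|\sigma_j^+-\sigma_j^-\|_{1}\to0$, with $\sigma_j^->1$ on an open set exhausting $\Omega$, and — after a harmless dilation, as in the proof of Proposition~\ref{boundaryregularity} — with $\Leb((\sigma_j^\pm)^{-1}(1))=0$, so that Proposition~\ref{boundaryregularitysmooth} applies. Writing $D_j^\pm$ for the corresponding noncoincidence sets, Lemma~\ref{monotonicity} gives $D_j^-\subset D\subset\overline{D_j^+}$ and $u_j^-\le u\le u_j^+$ for the odometers, while the argument of Lemma~\ref{threesteps}(iii) gives $D_\delta\subset D_j^-$ for $j$ large, so $\Leb(D\setminus D_j^-)\to0$ and, via Proposition~\ref{boundaryregularity}, $\Leb(D_j^+\setminus D)\to0$ as well. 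Proposition~\ref{boundaryregularitysmooth}(ii) applied to $\sigma_j^-$ with test function $h_\epsilon$ then yields $\int_{D_j^-}h_\epsilon\le\int_{D_j^-}h_\epsilon\,\sigma_j^-$.

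Finally I would pass to the limit, letting $j\to\infty$ and then $\epsilon\downarrow0$ and controlling the boundary layers by the $L^1$ estimates above: $\int_{D_j^-}h_\epsilon\to\int_D h_\epsilon$ and $\int_{D_j^-}h_\epsilon\,\sigma_j^-\to\int_D h_\epsilon\,\sigma$ because $h_\epsilon$ is bounded, $\Leb(D\setminus D_j^-)\to0$, and $\sigma_j^-\to\sigma$ in $L^1$; then $\int_D h_\epsilon\uparrow\int_D h$ and $\int_D h_\epsilon\,\sigma\to\int_D h\,\sigma$. I expect the genuine difficulty to be exactly this interleaving of the two approximations: $h_\epsilon$ is only known to be smooth on the shrunken set $D_{2\epsilon}$, whereas $D_j^-$ swells up to fill $D$, so one must choose $j=j(\epsilon)$ carefully — or replace $D_j^-$ by $D_{2\epsilon}\cap D_j^-$ and absorb the discrepancy into the $o(1)$ terms using integrability of $h$ near $\partial D$ — and one must check that the surface-integral bound underlying Proposition~\ref{boundaryregularitysmooth}(ii), which relied on the $C^1$ regularity of $u=s-\gamma$ from Lemma~\ref{smoothdensity}, holds for the smoothed densities $\sigma_j^-$ uniformly enough to make the limit legitimate. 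Everything else is routine given the monotonicity and boundary-regularity results already established.
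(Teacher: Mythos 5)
The paper does not actually prove this statement: it is quoted verbatim from Sakai \cite[Thm.\ 7.5]{Sakai}, with only the remark that it ``is proved by a similar type of approximation argument'' to the one you are attempting, and a comment reconciling Sakai's form of the obstacle with (\ref{theobstacle}). So your plan is in the intended spirit, but as written it does not prove the theorem at the stated level of generality. The decisive gap is the step ``choose $C^1$ densities $\sigma_j^-\leq\sigma\leq\sigma_j^+$ with $\|\sigma_j^+-\sigma_j^-\|_1\to 0$.'' Such a sandwich (indeed, even a one-sided continuous minorant $\sigma_j^-\leq\sigma$ with $\|\sigma-\sigma_j^-\|_1\to 0$) exists only when $\sigma$ is continuous almost everywhere; that hypothesis appears in Proposition~\ref{boundaryregularity} and Lemma~\ref{threesteps}, but it is \emph{not} among Sakai's hypotheses here, which require only that $\sigma$ be bounded, supported on $\bar{\Omega}$, and $>1$ on $\Omega$. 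Concretely, take $\Omega$ a ball, $A\subset\Omega$ a compact nowhere dense set of positive measure, and $\sigma=\tfrac32+1_A$ on $\Omega$, $\sigma=0$ off $\bar{\Omega}$. Any continuous $g\leq\sigma$ satisfies $g\leq\tfrac32$ on the dense set $\Omega\setminus A$, hence everywhere on $\Omega$, so $\int(\sigma-g)\geq\Leb(A)>0$ for every continuous minorant; your approximation scheme cannot start, and the same obstruction defeats the appeal to Lemma~\ref{threesteps}(iii) and to the $L^1$ convergence $\sigma_j^-\to\sigma$ in the limiting step. What your argument can deliver is the a.e.-continuous case, i.e.\ essentially the setting already covered by Proposition~\ref{boundaryregularitysmooth}/\ref{boundaryregularity} (which, to be fair, is all the paper needs for Proposition~\ref{ballquadrature}), not the cited theorem.

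A secondary, unresolved point is the interleaving you yourself flag. Proposition~\ref{boundaryregularitysmooth}(ii) is proved by Green's theorem on sets exhausting the noncoincidence set of the \emph{same} smooth density, with the boundary terms killed by $u_j^-=0$ and $|\nabla u_j^-|=O(\epsilon)$ near $\partial D_j^-$ (Lemma~\ref{smoothdensity}); it requires $h\in C^1(\overline{D_j^-})$ superharmonic on all of $D_j^-$. Your $h_\epsilon$ is superharmonic only on $D_{2\epsilon}$, while $D_j^-$ must swell to fill $D$ in measure, and the suggested fallback of integrating over $D_{2\epsilon}\cap D_j^-$ breaks the boundary estimate: along $\partial D_{2\epsilon}$ inside $D_j^-$ neither $u_j^-$ nor $\nabla u_j^-$ is small, and integrability of $h$ near $\partial D$ alone does not make those surface terms negligible. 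So even in the a.e.-continuous case the limit interchange needs a genuine argument (for instance, proving a version of Proposition~\ref{boundaryregularitysmooth}(ii) for $h$ merely superharmonic and integrable by a further approximation of $h$ adapted to the fixed smooth density, rather than mollifying $h$ at a scale independent of $j$), which the proposal acknowledges but does not supply.
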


We remark that the form of the obstacle in \cite{Sakai} is superficially different from ours: taking $w=1$ and $\omega=\sigma$ in section~7 of \cite{Sakai}, the obstacle is given by
	\[ \psi(x) = G1_B - G\sigma \]
for a large ball $B$, rather than our choice (\ref{theobstacle}) of
	\[ \gamma(x) = -|x|^2 - G\sigma. \]
Note, however that $\gamma-\psi$ is constant on $B$ by (\ref{ballpotential}) and (\ref{ballpotentialdim2}).  Thus if $B$ is sufficiently large, the two obstacle problems have the same noncoincidence set $D$ by Lemmas~\ref{majorantonacompactset} and~\ref{occupieddomainisbounded}.

To prove Proposition~\ref{ballquadrature} using Theorem~\ref{fromsakai}, we must produce an appropriate density $\sigma$ on $\R^d$ strictly exceeding $1$ on its support.  We will take $\sigma$ to be twice the sum of indicators of balls of half the volume of the $B_i$.

\begin{proof}[Proof of Proposition~\ref{ballquadrature}]
Let $B'_i$ be the ball of volume $\lambda_i/2$ centered at $x_i$, and consider the sum of indicators
	\[ \sigma = 2 \sum_{i=1}^k 1_{B'_i}. \]
Let $\gamma, s, D$ be given by (\ref{theobstacle})-(\ref{thenoncoincidenceset}).  By Theorem~\ref{fromsakai} and the mean value proprerty for superharmonic functions (\ref{meanvalueproperty}), we have
	\[ \int_D h(x) dx \leq 2 \sum_{i=1}^k \int_{B'_i} h(x) dx \leq \sum_{i=1}^k \lambda_i h(x_i) \]
for all integrable superharmonic functions $h$ on $D$.

It remains to show that $D = B_1 \oplus \ldots \oplus B_k$.  By Lemma~\ref{startingdensitygreaterthan1} we have $B'_i \subset D$ for all $i$, hence by Lemma~\ref{associativity}
	\[ D = B'_1 \oplus B'_1 \oplus \ldots \oplus B'_k \oplus B'_k. \]
By Lemma~\ref{relaxingaball} we have $B_i = B'_i \oplus B'_i$, completing the proof. 
\end{proof}

Theorem~\ref{algebraicboundary} now follows from Proposition~\ref{ballquadrature} by a result of Gustafsson \cite[sec.\ 6]{Gustafsson83}; see also \cite[Lemma 1.1(a)]{Gustafsson88}.  The methods of \cite{Gustafsson83} rely heavily on complex analysis, which is why they apply only in dimension two.  Indeed, in higher dimensions it is not known whether classical quadrature domains have boundaries given by algebraic surfaces \cite[Ch.\ 2]{Shapiro}.

\chapter{Aggregation on Trees}
\label{trees}

\section{The Sandpile Group of a Tree}
\label{sandpiletree}

In this section we describe a short exact sequence relating the sandpile group of a tree to those of its principal subtrees.  In the case of a regular tree this sequence splits, enabling us to compute the full decomposition of the sandpile group into cyclic subgroups. This resolves in the affirmative a conjecture of E. Toumpakari concerning the ranks of the Sylow $p$-subgroups.

We begin with a simple combinatorial problem.  Let $T_n$ be the $d$-regular tree of height $n$.  Collapse all the leaves of $T_n$ to a single vertex $s$, the {\it sink}, and add an edge connecting the root to the sink.  

\begin{lemma} \label{teaser}
Let $t_n$ be the number of oriented spanning trees of $T_n$ rooted at the sink.  Then
	\[ t_n = t_{n-1}^{d-2} (dt_{n-1}-(d-1)t_{n-2}^{d-1}). \]
\end{lemma}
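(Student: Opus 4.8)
The plan is to count oriented spanning trees of $T_n$ rooted at the sink by conditioning on what happens at the root and on each of the $d-1$ principal subtrees hanging below it. Recall the structure of $T_n$: the root $r$ is joined to the sink $s$ by an edge and has $d-1$ children, each internal vertex has $d-1$ children, and the leaves (at distance $n$ from $r$) are all identified with $s$. Thus each of the $d-1$ subtrees hanging below $r$ is a copy of the graph $S_{n-1}$, where for $h\ge 1$ I write $S_h$ for the height-$h$ rooted tree in which every internal vertex has $d-1$ children and whose leaves are identified with $s$, but \emph{with no edge joining its root to $s$}; note that $T_h$ is obtained from $S_h$ by adjoining such a root--sink edge. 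For a copy of $S_h$ with root $v$, introduce two counts: $N_h^{\mathrm{in}}$, the number of oriented spanning trees of $S_h\cup\{s\}$ with all edges directed toward $s$; and $N_h^{\mathrm{up}}$, the number of oriented spanning forests of $S_h\cup\{s\}$ with exactly two components, one directed toward $v$ and one directed toward $s$. The degenerate case $h=1$ is handled by hand ($S_1\cup\{s\}$ is $v$ and $s$ joined by $d-1$ parallel edges, so $N_1^{\mathrm{in}}=d-1$, $N_1^{\mathrm{up}}=1$, and it is convenient to set $N_0^{\mathrm{in}}=1$, $t_0=1$).

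The first step is the identity $t_h = N_h^{\mathrm{in}}+N_h^{\mathrm{up}}$, which I would obtain by deletion--contraction on the root--sink edge $e$ of $T_h$: an oriented spanning tree of $T_h$ rooted at $s$ either omits $e$, in which case it is one of the $N_h^{\mathrm{in}}$ oriented spanning trees of $T_h-e=S_h\cup\{s\}$; or it contains $e$, which must then be oriented $v\to s$, and deleting that edge leaves an oriented two-component forest of $S_h\cup\{s\}$ with roots $v$ and $s$, of which there are $N_h^{\mathrm{up}}$. The converse inclusions are immediate, since adjoining the edge $v\to s$ to such a two-component forest creates no directed cycle.

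The second step is the pair of recursions
\[
N_h^{\mathrm{up}} \;=\; t_{h-1}^{\,d-1}, \qquad N_h^{\mathrm{in}} \;=\; (d-1)\,t_{h-1}^{\,d-2}\,N_{h-1}^{\mathrm{in}} \qquad (h\ge 1).
\]
The key structural observation is that the restriction of a global configuration of the relevant type to one principal subtree $S^{(i)}\cong S_{h-1}\cup\{s\}$ of $S_h$ is necessarily one of exactly two local pictures: an ``in'' picture (the subtree drains entirely into $s$ and the edge joining its root to $v$ is unused) or an ``up'' picture (that edge is present, directed from the subtree root up to $v$, and the subtree is a two-component forest rooted at its own root and at $s$). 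Moreover, any choice of one of the two local pictures for each of the $d-1$ subtrees, together with an admissible out-edge at $v$, recombines into a unique global configuration, because a directed cycle would have to live inside a single $S^{(i)}$ or pass through $v$ or $s$, all of which are excluded. For $N_h^{\mathrm{up}}$ the vertex $v$ has out-degree $0$, so each subtree independently contributes $N_{h-1}^{\mathrm{in}}+N_{h-1}^{\mathrm{up}}=t_{h-1}$ choices (using the first step at height $h-1$); for $N_h^{\mathrm{in}}$ the vertex $v$ must point into exactly one of the $d-1$ subtrees, that one is forced to be of ``in'' type and contributes $N_{h-1}^{\mathrm{in}}$, and each of the remaining $d-2$ contributes $t_{h-1}$ choices.

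Finally I would combine the two steps. From $t_h=N_h^{\mathrm{in}}+N_h^{\mathrm{up}}$ and $N_h^{\mathrm{up}}=t_{h-1}^{\,d-1}$ we get $N_h^{\mathrm{in}}=t_h-t_{h-1}^{\,d-1}$; substituting this (at height $n-1$) into the recursion for $N_n^{\mathrm{in}}$ and using $t_n=N_n^{\mathrm{in}}+N_n^{\mathrm{up}}$ gives
\[
t_n \;=\; (d-1)\,t_{n-1}^{\,d-2}\,N_{n-1}^{\mathrm{in}} + t_{n-1}^{\,d-1} \;=\; (d-1)\,t_{n-1}^{\,d-2}\bigl(t_{n-1}-t_{n-2}^{\,d-1}\bigr)+t_{n-1}^{\,d-1} \;=\; t_{n-1}^{\,d-2}\bigl(d\,t_{n-1}-(d-1)\,t_{n-2}^{\,d-1}\bigr),
\]
which is the asserted recurrence (for $n\ge 2$, with $t_0=1$ and $t_1=d$). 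I expect the main obstacle to be making the restriction-and-recombination claim of the third paragraph fully rigorous: identifying precisely the two possible induced local pictures, checking that the recombination map is a bijection onto valid oriented spanning trees/forests (the acyclicity argument), and confirming that the small cases $h=0,1$ are consistent with the recursions.
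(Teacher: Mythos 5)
Your proof is correct, and at bottom it is the same counting argument as the paper's, organized differently. Both split on whether the root--sink edge is used: the ``used'' case gives $t_{n-1}^{d-1}$ because each principal branch together with its edge to the root is a copy of $T_{n-1}$ (your in/up dichotomy, $N^{\mathrm{in}}_{h-1}+N^{\mathrm{up}}_{h-1}=t_{h-1}$, is exactly this identification made explicit). For the ``unused'' case the paper enumerates the whole directed root-to-sink path, getting $t_n=t_{n-1}^{d-1}+(d-1)^{n-1}\prod_{k=1}^{n-1}t_k^{d-2}$, and then eliminates the product by writing the same identity with $n-1$ in place of $n$; your auxiliary count $N^{\mathrm{in}}_h$ is precisely that ``unused'' count, your one-step recursion $N^{\mathrm{in}}_h=(d-1)\,t_{h-1}^{d-2}N^{\mathrm{in}}_{h-1}$ is the paper's path enumeration taken one level at a time, and your elimination $N^{\mathrm{in}}_h=t_h-t_{h-1}^{d-1}$ plays the role of the paper's substitution trick, so the two derivations coincide after unrolling. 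What your packaging buys is locality (no product over all levels, no substitution step) and an explicit statement of the recombination/acyclicity checks that the paper leaves implicit; the paper's version is shorter. One minor discrepancy: your height convention is shifted by one from the paper's (in the paper $T_2$ is already the root joined to $s$ by $d$ parallel edges, so $t_1=1$, $t_2=d$, whereas you have $t_1=d$), but since this only relabels indices it does not affect the recurrence, which in the paper's indexing holds for $n\ge 3$.
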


\begin{proof}
If the edge $(r,s)$ from the root to the sink is included in the spanning tree, then each of the principal branches of $T_n$ may be assigned an oriented spanning tree independently, so there are $t_{n-1}^{d-1}$ such spanning trees.  On the other hand, if $(r,s)$ is not included in the spanning tree, there is a directed path $r \rightarrow x_1 \rightarrow \ldots \rightarrow x_{n-1} \rightarrow s$ in the spanning tree from the root to the sink.  In this case, every principal branch except the one rooted at $x_1$ may be assigned an oriented spanning tree independently; within the branch rooted at $x_1$, every subbranch except the one rooted at $x_2$ may be assigned an oriented spanning tree independently; and so on (see Figure~\ref{spanningtreerecurrence}).  Since there are $(d-1)^{n-1}$ possible paths $x_1 \rightarrow \ldots \rightarrow x_{n-1}$, we conclude that
	\begin{equation} \label{productformofrecurrence} t_n = t_{n-1}^{d-1} + (d-1)^{n-1} \prod_{k=1}^{n-1} t_k^{d-2}. \end{equation}
Substituting $n-1$ for $n$ we find that
	\[ (d-1)^{n-2} \prod_{k=1}^{n-2} t_k^{d-2} = t_{n-1} - t_{n-2}^{d-1} \]
hence from (\ref{productformofrecurrence})
	\[ t_n = t_{n-1}^{d-2}(t_{n-1} + (d-1)(t_{n-1} - t_{n-2}^{d-1})). \qed \]
\renewcommand{\qedsymbol}{}
\end{proof}

\begin{figure}
\centering
\includegraphics[scale=1.1]{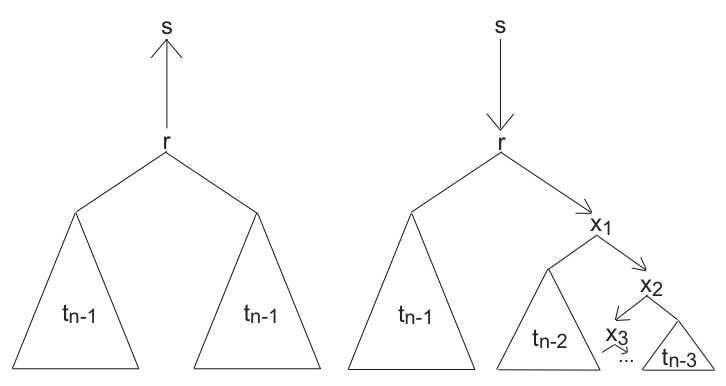}
\caption{The two cases in the proof of Lemma~\ref{teaser}}
\label{spanningtreerecurrence}
\end{figure}

From Lemma~\ref{teaser} one can readily show by induction that
	\[ t_n = (1+a+\ldots+a^{n-1}) \prod_{k=1}^{n-2} (1 + a + \ldots + a^k)^{a^{n-2-k}(a-1)}. \]
where $a=d-1$.  A variant of this formula was found by Toumpakari \cite{Toumpakari}, who gives an algebraic proof.  

For any graph $G$ there is an abelian group, the {\it sandpile group}, whose order is the number of oriented spanning trees of $G$ rooted at a fixed vertex; its definition and properties are reviewed in section~\ref{generaltrees}.  A product formula such as the one above immediately raises the question of an analogous factorization of the sandpile group.  Our main goal in this section is to prove Theorem~\ref{sandpilegroupdecomp}, which establishes precisely such a factorization.  In section~\ref{rotortree} we will apply this result to study the rotor-router model on regular trees.

The next three sections are organized as follows.  In section~\ref{generaltrees} we review the definition and basic properties of the sandpile group, and characterize the recurrent states on a tree explicitly in terms of what we call {\it critical vertices}.  We prove a general result, Theorem~\ref{quotientisom}, relating the sandpile group of an arbitrary tree to the sandpile groups of its principal branches (i.e.\ the subtrees rooted at the children of the root).  This result takes the form of an isomorphism between two quotients.  In section~\ref{regulartrees} we take advantage of the symmetry of regular trees to define a projection map back onto the subgroup being quotiented.  This allows us to express $SP(T_n)$ as the direct sum of a cyclic group and a quotient of the direct sum of $d-1$ copies of $SP(T_{n-1})$, which enables us to prove Theorem~\ref{sandpilegroupdecomp} by induction.  Finally, in section~\ref{toumpakari}, we prove Toumpakari's conjecture~\cite{Toumpakari}.
 
\subsection{General Trees}
\label{generaltrees}

Let $T$ be a finite rooted tree.  Collapse all the leaves to a single vertex $s$, the {\it sink}, and add an edge connecting the root to the sink.  If the vertices of $T$ are $x_1, \ldots, x_n=s$, the {\it sandpile group} of $T$ is defined by
	\[ SP(T) = \Z^n / \Delta \]
where $\Delta \subset \Z^n$ is the lattice
	\[ \Delta = \langle s,\Delta_{x_1}, \ldots, \Delta_{x_{n-1}} \rangle. \]
Here $\Delta_{x_i}$ is the vector in $\Z^n$ taking value $1$ at each neighbor of $x_i$, value $-$deg$(x)$ at $x$, and value $0$ elsewhere.  The sandpile group of a graph was originally defined in \cite{Biggs,BLS,Dhar}.

A nonnegative vector $u \in \Z^n$ may be thought of as a {\it chip configuration} with $u_i$ chips at vertex $x_i$.  A vertex $x \neq s$ is {\it unstable} if $u(x)\geq \text{deg}(x)$.  An unstable vertex may {\it topple}, sending one chip to each neighbor.  Note that the operation of toppling the vertex $x$ corresponds to adding the vector $\Delta_x$ to $u$.  We say that $u$ is {\it stable} if no vertex is unstable.  Given chip configurations $u$ and $v$, we define $u+v$ as the stable configuration resulting from starting with $u(x)+v(x)$ chips at each vertex $x$ and toppling any unstable vertices; the order in which topplings are performed does not affect the final configuration, as observed by Dhar \cite{Dhar} and (in a more general setting) Diaconis and Fulton \cite{DF}.

A chip configuration $u$ is {\it recurrent} \cite{CR} if $u+v = u$ for a nonnegative configuration $v$.  In \cite{CR} it is proved that every equivalence class mod $\Delta$ has a unique recurrent representative.  Thus the sandpile group $SP(T)$ may be thought of as the set of recurrent configurations under the operation of addition followed by toppling.  Note that if $v$ is a nonnegative configuration, its recurrent representative is given by
	\[ \hat{v} := v+e \]
where $e$ is the identity element of $SP(T)$ (the recurrent representative of $0$); indeed, $\hat{v}$ is recurrent since $e$ is recurrent, and $\hat{v} \equiv v$ (mod $\Delta$) since $e \in \Delta$.  Note that if $u$ is a recurrent configuration and $v$ is a nonnegative configuration, then
	\begin{equation} \label{recurrentpluspositive} u+\hat{v} = u+(v+e) = (u+e)+v = u+v. \end{equation}

The following result is a simple variant of the ``burning algorithm'' \cite{Dhar}; see also \cite[Cor.\ 2.6]{CR}.

\begin{lemma}
\label{burning}
Let $\beta(x)$ be the number of edges from $x$ to the sink.
A chip configuration on $T$ is recurrent if and only if adding $\beta(x)$ chips at each vertex $x$ causes every vertex to topple exactly once.
\end{lemma}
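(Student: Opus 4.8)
\textbf{Proof proposal for Lemma~\ref{burning}.}

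The plan is to reduce the statement to Dhar's burning criterion, which characterizes recurrent configurations on a graph with sink. First I would recall the standard formulation: a stable configuration $u$ on $T$ (with sink $s$) is recurrent if and only if the following ``burning'' process succeeds in toppling every non-sink vertex exactly once. Fire the sink, meaning each neighbor $x$ of $s$ receives $\beta(x)$ extra chips, where $\beta(x)$ is the number of edges from $x$ to $s$; then topple every vertex that becomes unstable, in any order. The configuration is recurrent precisely when this process returns to $u$ with every vertex having toppled exactly once. Since adding $\beta(x)$ chips at each vertex $x$ is exactly the effect of ``firing the sink'' on $T$ (the sink is adjacent only to those $x$ with $\beta(x)>0$, and it sends $\beta(x)$ chips along its $\beta(x)$ edges to $x$), the lemma is just this criterion restated in the present notation. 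So the real content is to supply a short self-contained proof of the burning criterion on a tree, or to cite it from \cite{Dhar} and \cite{CR} and explain the translation.

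The key steps, in order, would be: (1) Observe that firing all vertices once (including the sink) returns any configuration to itself, since the total toppling vector $\sum_x \Delta_x$ together with the sink relation is trivial in $\Z^n/\Delta$ — more concretely, each edge contributes $+1$ and $-1$. (2) For the forward direction, assume $u$ is recurrent. Take $v$ nonnegative with $u + v = u$; we may enlarge $v$ (using \eqref{recurrentpluspositive}) so that $v$ dominates the sink-firing vector $\beta$. Then stabilizing $u+v$ back to $u$ topples each vertex at least once (else some vertex never fires, and one shows by a boundary/leaf argument on the tree that a never-firing vertex forces a contradiction with $u+v=u$ unless it also received no net chips, which propagates). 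Because firing everything once is the identity and the stabilization is unique, each vertex topples exactly once when $v = \beta$ exactly; monotonicity of the toppling process (abelian property, Lemma~\ref{abelianproperty}-style, or directly from \cite{Dhar}) handles the comparison between $v = \beta$ and larger $v$. (3) For the converse, assume adding $\beta(x)$ chips everywhere causes each vertex to topple exactly once and returns to $u$; then $u + \beta = u$ as stable configurations, and since $\beta \geq 0$ is nonnegative, $u$ is recurrent by definition.

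The main obstacle I expect is step (2): making rigorous the claim that in the stabilization of $u + \beta$, no vertex can topple zero times and no vertex can topple twice. On a tree this should be cleaner than on a general graph — I would argue by induction from the leaves (the collapsed sink) inward, or equivalently use the fact that a minimal ``un-fired'' set would have to be closed under having all out-neighbors un-fired, which on a tree with the root joined to the sink is impossible unless empty. The ``at most once'' direction follows from uniqueness of stabilization combined with the identity observation in step (1). Everything else is bookkeeping with the relation lattice $\Delta$ and the already-cited uniqueness of recurrent representatives from \cite{CR}. I would keep the written proof short by citing \cite{Dhar} for the burning algorithm and \cite[Cor.~2.6]{CR}, and devoting the visible argument only to the notational translation ``fire the sink $=$ add $\beta(x)$ chips at each $x$.''
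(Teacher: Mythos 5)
You have correctly identified the statement as a version of Dhar's burning criterion, and your converse direction matches the paper's: if every vertex topples exactly once then the net effect of adding $\beta$ and toppling is $\beta+\sum_{x\neq s}\Delta_x$, which vanishes off the sink, so $u+\beta=u$ with $\beta\geq 0$ and $u$ is recurrent. The paper's whole proof is the algebraic identity $\beta=\deg(s)\,s+\Delta_s=\deg(s)\,s-\sum_{x\neq s}\Delta_x$ run in both directions, and your step (1) is exactly this identity.

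The forward direction is where your sketch has two soft spots. First, your route to $u+\beta=u$ --- ``take $v\geq 0$ with $u+v=u$ and enlarge $v$ so that it dominates $\beta$'' --- does not work as stated: the witness $v$ in the definition of recurrence need not have full support (it could be a single chip at one vertex), so no multiple of it dominates $\beta$, and monotonicity of toppling does not compare stabilizations from incomparable additions. The correct move, for which you already have the ingredient, is that $\beta\in\Delta$ (by your step (1)), hence $\hat\beta=e$, hence $u+\beta=u+\hat\beta=u+e=u$ by~(\ref{recurrentpluspositive}) and uniqueness of recurrent representatives. Second, once $u+\beta=u$ is in hand, the combinatorial argument you defer (minimal unfired set / leaf induction for ``at least once,'' least action for ``at most once'') is the genuinely hard part of Dhar's proof and is not actually carried out in your proposal --- and it is unnecessary. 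The toppling numbers $n_x$ satisfy $\beta+\sum_{x\neq s}n_x\Delta_x=0$ on the non-sink coordinates, i.e.\ $\sum_{x\neq s}(n_x-1)\Delta_x$ is supported on the sink, and linear independence of $\{\Delta_x\}_{x\neq s}$ (invertibility of the reduced Laplacian) forces $n_x=1$ for all $x$. Replacing your propagation argument by this one line closes the gap and recovers the paper's proof.
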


\begin{proof}
Note that 
	\begin{equation} \label{sumofalldeltas} \beta = \text{deg}(s) s+\Delta_s 
		=  \text{deg}(s) s - \sum_{x	 \neq s} \Delta_x. \end{equation}  
If every vertex topples exactly once, then
	\[ u+\beta = u + \beta + \sum_{x \neq s} \Delta_x = u, \]
so $u$ is recurrent.  Conversely, suppose $u$ is recurrent.  Since $\beta \in \Delta$ we have $\hat{\beta}=e$, hence from (\ref{recurrentpluspositive})
	\[ u+\beta = u+e = u. \]
By (\ref{sumofalldeltas}), since $\{\Delta_x\}_{x\neq s}$ are linearly independent, every vertex topples exactly once.
\end{proof}

\begin{figure}
\centering
\includegraphics[scale=1.15]{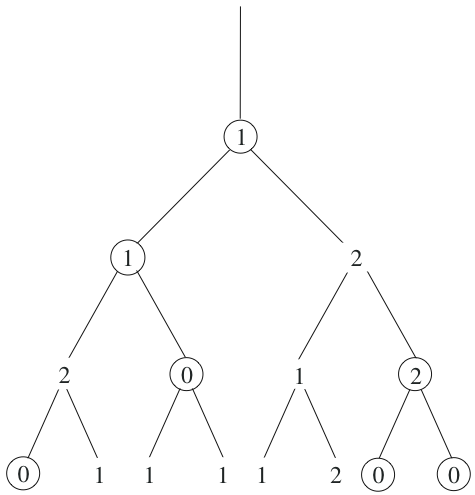}
\caption{A recurrent configuration on the ternary tree of height $5$.  Critical vertices are circled; if any of the circled vertices had fewer chips, the configuration would not be recurrent.}
\end{figure}

We first characterize the recurrent configurations of a tree explicitly.  The characterization uses the following inductive definition.  Denote by $C(x)$ the set of children of a vertex $x \in T$. 

\begin{definition}
A vertex $x \in T$ is {\it critical} for a chip configuration $u$ if $x\neq s$ and
	\begin{equation} \label{criticality} u(x) \leq \# \{y \in C(x) ~|~ \text{$y$ is critical}\}. \end{equation}
\end{definition}

\begin{prop}
\label{recurrentcharacterization}
A configuration $u \in SP(T)$ is recurrent if and only if equality holds in (\ref{criticality}) for every critical vertex $x$.
\end{prop}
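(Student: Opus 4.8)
The plan is to use the burning algorithm characterization of recurrence (Lemma~\ref{burning}): a configuration $u$ is recurrent if and only if adding $\beta(x)$ chips at each vertex causes every vertex to topple exactly once. I will analyze the burning process on the tree from the leaves inward, and show that the set of vertices which fail to topple (in the ``partial burning'' where we only fire vertices once they become unstable) is precisely the set of critical vertices — with the extra equality condition being exactly what is needed for a critical vertex never to topple \emph{and} never to block its parent improperly.

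First I would set up the right bookkeeping. After collapsing the leaves to the sink $s$ and adding the root--sink edge, note that $\beta(x)$ counts edges from $x$ to $s$; for an internal (non-root, non-sink) vertex $x$ all of whose children in the original tree were leaves, those children became $s$, so $\beta(x) = \#\{\text{leaf-children of }x\}$, and similarly $\beta(\text{root})=1$ from the added edge. The burning algorithm says: start from $u$, add $\beta$, and fire every vertex that ever becomes unstable; $u$ is recurrent iff each vertex fires exactly once. I would instead run the ``lazy'' version: process vertices in order of increasing height (leaves of $T$ first), firing $x$ the moment it has received chips from all of its already-fired children plus its $\beta(x)$ burning chips and thereby reached $\mathrm{deg}(x)$. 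The key observation is that a vertex $x$ receives, from below, exactly $\#\{y \in C(x): y \text{ fires}\}$ chips, plus $\beta(x)$ burning chips, plus (if $x$ fires) eventually one chip back from its parent. Since $\mathrm{deg}(x) = \#C(x) + \beta(x) + 1$ for an internal non-root vertex (children, leaf-edges-to-sink, and the edge up to the parent), $x$ fires during the upward sweep iff
	\[ u(x) + \beta(x) + \#\{y\in C(x): y\text{ fires}\} \geq \#C(x) + \beta(x) + 1, \]
i.e. iff $u(x) \geq 1 + \#\{y \in C(x): y\text{ does not fire}\}$. Taking the contrapositive, $x$ does \emph{not} fire in the upward sweep iff $u(x) \leq \#\{y \in C(x): y\text{ does not fire}\}$ — which, by induction on height, is exactly the definition of $x$ being critical, provided one checks the base case and that ``does not fire in the upward sweep'' coincides with ``critical''. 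So by induction the vertices that fail to fire on the way up are precisely the critical vertices.

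Next I would handle the downward pass. When a fired vertex's parent fires, it sends a chip back down, and this can make a previously-unfired (critical) vertex fire after all. So recurrence requires that no critical vertex ever topples, even after receiving this extra chip from above. A critical vertex $x$ with $u(x) = \#\{y\in C(x): y\text{ critical}\}$ receives, at the end of the whole process, $\beta(x)$ burning chips, $\#\{y\in C(x): y\text{ fires}\} = \#C(x) - u(x)$ chips from its fired children, and one chip from its (necessarily firing) parent — the parent of a critical vertex always fires, since if the parent were also critical then $x$ would be counted among its critical children and this is consistent; I need to be slightly careful here and argue it directly. The total is $\beta(x) + \#C(x) - u(x) + 1 = \mathrm{deg}(x) - u(x)$, so $x$ ends with $u(x) + \mathrm{deg}(x) - u(x) = \mathrm{deg}(x)$ chips — exactly at the firing threshold, hence $x$ does \emph{not} topple (the burning algorithm fires only when strictly at or above $\mathrm{deg}$; re-examining the convention in Lemma~\ref{burning}, ``unstable'' means $u(x) \geq \mathrm{deg}(x)$, so I must double-check whether ending exactly at $\mathrm{deg}(x)$ counts as toppling; if it does, one uses that in a tree the returned chip from the parent is the \emph{last} event, and argues via the standard burning convention that a vertex already firing once is not fired again — here I'll lean on the precise statement of Lemma~\ref{burning} that recurrence is ``every vertex topples exactly once,'' so I need each critical vertex to topple zero times net, contradiction unless... ). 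Conversely, if strict inequality $u(x) < \#\{y\in C(x): y\text{ critical}\}$ held at some critical $x$, then $x$ would end with strictly fewer than $\mathrm{deg}(x)$ chips and so fail to topple even once, so by Lemma~\ref{burning} $u$ is not recurrent. This gives both directions.

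\textbf{Main obstacle.} The delicate point is the interaction between the definition of ``critical'' (which is purely about the upward sweep) and the downward返chip from the parent: I must verify that the parent of every critical vertex does in fact fire (so that the critical vertex receives its one chip from above), and I must pin down the firing-threshold convention so that a critical vertex with \emph{equality} ends at exactly $\mathrm{deg}(x)$ chips and this is compatible with ``topples exactly once'' — most cleanly by running the burning process in a fixed legal order and showing the critical vertices are exactly those that are never fired, with equality being forced because a recurrent configuration's burning run must fire every non-critical vertex exactly once and leave every critical vertex stable, and a counting/conservation argument (total chips added $= \sum\beta(x)$, total fired once) then forces equality at each critical vertex. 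I expect this conservation-of-chips argument to be the linchpin, and everything else (the induction on height characterizing the upward sweep, the degree identity $\mathrm{deg}(x) = \#C(x)+\beta(x)+1$) to be routine.
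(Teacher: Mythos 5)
Your overall strategy — invoke the burning algorithm of Lemma~\ref{burning}, run the burning process upward from the leaves, identify the vertices that fail to fire on the upward sweep with the critical vertices via the degree identity $\mathrm{deg}(x) = \#C(x) + \beta(x) + 1$, and then analyze the downward pass — is exactly the paper's approach. The upward-sweep induction and the forward direction (strict inequality at some critical vertex forces that vertex never to topple, so $u$ is not recurrent) are correct and match the paper.

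The genuine gap is in your downward pass, where you assert that ``recurrence requires that no critical vertex ever topples.'' This is backwards, and it is the source of the ``contradiction unless\dots'' you could not resolve: by Lemma~\ref{burning}, recurrence requires \emph{every} vertex to topple exactly once, critical vertices included. The correct picture is the following. After the upward sweep, an untoppled critical vertex $x$ with equality holds exactly $u(x) + \beta(x) + \#\{y \in C(x): y \text{ not critical}\} = \mathrm{deg}(x) - 1$ chips (your own count, before the parent's chip arrives) — it is \emph{primed}, one chip short of threshold. The equality condition is precisely what guarantees that when its parent topples and sends down one chip, $x$ reaches $\mathrm{deg}(x)$ and topples exactly once; with strict inequality it stays below threshold even then and never topples. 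Your second worry — that the parent of a critical vertex might itself be critical and hence might not fire — is resolved not by a conservation argument but by a downward induction from the root: the root always topples (it receives the burning chip along the added root--sink edge, and every non-critical vertex has toppled on the way up), and if a critical vertex has not yet toppled then its parent is also an untoppled critical vertex, so starting from the root the whole chain of primed critical vertices topples in order. Once you flip the sign of your conclusion about critical vertices and add this downward induction, your argument becomes the paper's proof.
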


\begin{proof}

If $x$ is critical, then
	\begin{equation} \label{restatementofcriticality} u(x) + \# \{y \in C(x) ~|~ y \text{ is not critical}\} \leq \text{deg}(x)-1. \end{equation}
Thus after chips are added as prescribed by Lemma~\ref{burning}, inducting upward in decreasing distance to the root, if $x \neq r$ is critical, its parent must topple before it does.  In particular, if strict inequality holds in (\ref{criticality}), and hence in (\ref{restatementofcriticality}), for some vertex $x$, that vertex will never topple, so $u$ is not recurrent.

Conversely, suppose equality holds in (\ref{criticality}), hence in (\ref{restatementofcriticality}), for every critical $x$.  Begin toppling vertices in order of decreasing distance from the root.  Note that a non-critical vertex $x$ satisfies
	\begin{equation} u(x) + \# \{y \in C(x) ~|~ y \text{ is not critical}\} \geq \text{deg}(x), \end{equation}
Inducting upward, every non-critical vertex topples once.  Hence by equality in (\ref{restatementofcriticality}), once all vertices other than the root are stable, every critical vertex $x$ has either toppled (if its parent toppled) or is left with exactly deg$(x)-1$ chips (if its parent did not topple).  In particular, the root now topples, as it was given an extra chip in the beginning.  Now if $x$ is a critical vertex that has not yet toppled, its parent is also such a vertex.  Inducting downward from the root, since all of these vertices are primed with deg$(x)-1$ chips, they each topple once, and $u$ is recurrent.
\end{proof}

Write $T_1, \ldots, T_k$ for the principal branches of $T$ (i.e.\ the subtrees rooted at the children of the root).  If $u_i$ is a chip configuration on $T_i$, and $a$ is an integer, we will use the notation $\left(\begin{array}{c} a \\ u_1,\ldots,u_k \end{array} \right)$ for the configuration on $T$ which has $a$ chips at the root and coincides with $u_i$ on $T_i$.  The following result is an immediate consequence of Proposition~\ref{recurrentcharacterization}.

\begin{lemma}
\label{recurrentsubconfigs}
Let $u = \left(\begin{array}{c} a \\ u_1,\ldots,u_k \end{array} \right)$.
	\begin{enumerate}
	\item[(i)]  If $u$ is recurrent, each $u_i$ is recurrent. 
	\item[(ii)]  If $u_1, \ldots, u_k$ are recurrent and $a=k$, then $u$ is recurrent.
	\end{enumerate}
\end{lemma}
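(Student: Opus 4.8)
\textbf{Proof proposal for Lemma~\ref{recurrentsubconfigs}.}

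The statement is an immediate corollary of the explicit characterization of recurrent configurations in Proposition~\ref{recurrentcharacterization}, so the plan is simply to unwind the definition of criticality and see how it restricts to and extends from the principal branches. The key observation is that for a vertex $x$ lying in a principal branch $T_i$, the set $C(x)$ of children of $x$ in $T$ is exactly the set of children of $x$ in $T_i$, and moreover whether $x$ is critical depends only on the values of $u$ on the subtree rooted at $x$; in particular, for $x \in T_i$, the vertex $x$ is critical for $u = \left(\begin{array}{c} a \\ u_1,\ldots,u_k \end{array}\right)$ on $T$ if and only if it is critical for $u_i$ on $T_i$ (here one uses that the root of $T_i$, being a child of the root $r$ of $T$, is never the sink of $T$, and that the sink-collapsing and the added root--sink edge only affect the root of $T$). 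So criticality of vertices in $T_i$ is intrinsic to the pair $(T_i, u_i)$.

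For part (i): suppose $u$ is recurrent on $T$. Then by Proposition~\ref{recurrentcharacterization}, equality holds in (\ref{criticality}) at every critical vertex of $T$. Fix $i$; by the observation above, the critical vertices of $u_i$ in $T_i$ are precisely the critical vertices of $u$ lying in $T_i$, and at each such vertex the defining inequality (\ref{criticality}) is the same inequality whether computed in $T$ or in $T_i$. Hence equality holds at every critical vertex of $u_i$, so $u_i$ is recurrent by Proposition~\ref{recurrentcharacterization}.

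For part (ii): suppose $u_1,\ldots,u_k$ are recurrent and $a = k$. We must check that equality holds in (\ref{criticality}) at every critical vertex $x$ of $u$. If $x$ lies in some $T_i$, then $x$ is a critical vertex of $u_i$, and since $u_i$ is recurrent, equality holds there (again using that the inequality is computed identically in $T$ and $T_i$). The only remaining case is $x = r$, the root of $T$. But $u(r) = a = k$, while $\#\{y \in C(r) \mid y \text{ critical}\} \leq k$ since $r$ has exactly $k$ children. For $r$ to be critical at all we would need $u(r) = k \leq \#\{\text{critical children}\} \leq k$, forcing equality; so either $r$ is not critical (and there is nothing to check) or equality holds in (\ref{criticality}) at $r$. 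In all cases equality holds at every critical vertex, so $u$ is recurrent by Proposition~\ref{recurrentcharacterization}.

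There is no real obstacle here beyond carefully justifying the ``intrinsic'' claim that criticality and the value of the defining inequality for a vertex $x \in T_i$ agree whether read off in $T$ or in $T_i$ — this is the one spot where one must be slightly careful about the role of the sink and the extra root--sink edge, but it follows directly from the fact that these modifications only touch the root of $T$, which is not a vertex of any $T_i$. Once that is in place, both parts are just case analysis on whether the critical vertex in question is the root of $T$ or lies in a branch.
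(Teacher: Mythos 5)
Your proof is correct and follows exactly the route the paper intends: the paper states Lemma~\ref{recurrentsubconfigs} as an immediate consequence of Proposition~\ref{recurrentcharacterization} without further argument, and your write-up simply makes explicit the two points that make it immediate (criticality of a vertex in a branch $T_i$ is intrinsic to $(T_i,u_i)$, and the root with $a=k$ children either is non-critical or satisfies equality automatically). No gaps.
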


Write $\delta_x$ for a single chip at a vertex $x$, and denote by $\hat{x} = e+\delta_x$ the recurrent form of $\delta_x$.  Note that by (\ref{recurrentpluspositive}), if $u$ is recurrent then
	\begin{equation} \label{youcanaddhoweveryouwant} u + \hat{x} = u+ \delta_x. \end{equation}

\begin{theorem}
\label{quotientisom}
Let $T_1, \ldots, T_k$ be the principal branches of $T$.  Then
	\[ SP(T)/(\hat{r}) \simeq \bigoplus_{i=1}^k SP(T_i) / ((\hat{r}_1,\ldots,\hat{r}_k)) \]
where $r$, $r_i$ are the roots of $T$, $T_i$ respectively.
\end{theorem}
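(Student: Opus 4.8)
The plan is to build an explicit isomorphism between the two quotient groups by sending a recurrent configuration on $T$ to the tuple of its restrictions to the principal branches. Concretely, define $\Phi\colon SP(T) \to \bigoplus_i SP(T_i)$ on recurrent configurations $u = \left(\begin{array}{c} a \\ u_1,\ldots,u_k \end{array}\right)$ by $\Phi(u) = (u_1,\ldots,u_k)$; by Lemma~\ref{recurrentsubconfigs}(i) each $u_i$ is indeed recurrent, so this is well-defined as a map of sets into the product of sandpile groups. The first task is to check that $\Phi$ is a group homomorphism, i.e.\ that it respects the sandpile addition operation. This is the place where one has to be a little careful: adding two recurrent configurations on $T$ and stabilizing may involve toppling the root, which emits chips into each branch; but toppling the root sends exactly one chip into each $T_i$, and one chip into the sink. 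So the effect on the $i$-th branch is to add $u_i + v_i + \delta_{r_i}$ and stabilize, while the ``extra'' chip at each $r_i$ coming from a root-topple is exactly what the identity $u_i + \hat r_i = u_i + \delta_{r_i}$ from~(\ref{youcanaddhoweveryouwant}) lets us absorb. Tracking how many times the root topples during stabilization on $T$ versus stabilization within each branch is the main bookkeeping step, and I expect it is the main obstacle: one must argue that $\Phi(u \oplus v)$ and $\Phi(u)\oplus\Phi(v)$ differ by the same number of copies of $(\hat r_1,\ldots,\hat r_k)$, hence agree in the quotient by $(\hat r_1,\ldots,\hat r_k)$. In other words, $\Phi$ will not literally be a homomorphism $SP(T)\to\bigoplus SP(T_i)$, but the induced map to $\bigoplus SP(T_i)/((\hat r_1,\ldots,\hat r_k))$ will be.

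Next I would identify the kernel of the induced map $\bar\Phi\colon SP(T)\to \bigoplus_i SP(T_i)/((\hat r_1,\ldots,\hat r_k))$. An element $u$ lies in the kernel iff $(u_1,\ldots,u_k) = m\cdot(\hat r_1,\ldots,\hat r_k)$ in $\bigoplus SP(T_i)$ for some integer $m$, i.e.\ $u_i$ is the recurrent form of $m\delta_{r_i}$ for each $i$. Using Lemma~\ref{recurrentsubconfigs}(ii) (together with the characterization of recurrence in Proposition~\ref{recurrentcharacterization}) one checks that such $u$ are exactly the recurrent forms of multiples of $\delta_r$, that is, the cyclic subgroup $(\hat r)$ generated by $\hat r$ in $SP(T)$ — here one uses that $\hat r$, when stabilized, pushes a chip into each branch, so $m\hat r$ has branch-restrictions equal to the recurrent forms of $m\delta_{r_i}$. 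So $\ker\bar\Phi = (\hat r)$, giving the injection $SP(T)/(\hat r)\hookrightarrow \bigoplus_i SP(T_i)/((\hat r_1,\ldots,\hat r_k))$.

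Finally I would check surjectivity, which I expect to be comparatively easy: given recurrent configurations $u_1,\ldots,u_k$ on the branches, the configuration $\left(\begin{array}{c} k \\ u_1,\ldots,u_k \end{array}\right)$ is recurrent on $T$ by Lemma~\ref{recurrentsubconfigs}(ii) and maps to $(u_1,\ldots,u_k)$ under $\Phi$, hence to its class under $\bar\Phi$. So $\bar\Phi$ is onto, and combined with the kernel computation we get the claimed isomorphism
	\[ SP(T)/(\hat r) \;\simeq\; \bigoplus_{i=1}^k SP(T_i)/((\hat r_1,\ldots,\hat r_k)). \]
The one subtlety worth double-checking throughout is the role of the extra edge from the root to the sink: toppling the root sends one chip to the sink and one to each child-root $r_i$, which is precisely why the quotient on the right is by the single diagonal element $(\hat r_1,\ldots,\hat r_k)$ rather than by each $\hat r_i$ separately, and why on the left we quotient by the single element $\hat r$. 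I would state the homomorphism property and the kernel/image computations as short lemmas and assemble them at the end.
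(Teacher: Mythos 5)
Your proposal matches the paper's proof: the paper defines the same restriction map $\phi(u)=(u_1,\ldots,u_k)$, shows it descends to the quotients and is a homomorphism by tracking root topplings exactly as you describe, gets surjectivity from Lemma~\ref{recurrentsubconfigs}(ii), and establishes injectivity of the induced map on $SP(T)/(\hat{r})$ --- the substance of your kernel computation --- by adding $c(k+1)$ chips at the root so that it topples exactly $c$ times and then absorbing the leftover discrepancy at the root into further multiples of $\hat{r}$. The only difference is cosmetic: you package the argument via the first isomorphism theorem rather than checking injectivity of the descended map directly.
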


\begin{proof}
Define $\phi : SP(T) \rightarrow \bigoplus_{i=1}^k SP(T_i)$ by 
	\begin{align} 
	\left(\begin{array}{c} a \\ u_1,\ldots,u_k \end{array} \right) &\mapsto (u_1,\ldots,u_k). 
	\end{align}
Lemma~\ref{recurrentsubconfigs}(i) ensures this map is well-defined.  Note that if 
	\[ \left(\begin{array}{c} a \\ u_1,\ldots,u_k \end{array} \right) = \left(\begin{array}{c} b \\ v_1,\ldots,v_k \end{array} \right) + \hat{r}, \]
by (\ref{youcanaddhoweveryouwant}) either $b<k$ and $u_i=v_i$ for all $i$; or $b=k$ and the root topples, in which case $u_i = v_i + \hat{r}_i$.  Thus $\phi$ descends to a map of quotients $\bar{\phi} : SP(T)/(\hat{r}) \rightarrow \bigoplus_{i=1}^k SP(T_i) / ((\hat{r}_1,\ldots,\hat{r}_k))$.

By adding two configurations without allowing the root to topple, the configurations on each branch add independently, hence by (\ref{youcanaddhoweveryouwant})
	\[ \left(\begin{array}{c} a \\ u_1,\ldots,u_k \end{array} \right) + \left(\begin{array}{c} b \\ v_1,\ldots,v_k \end{array} \right) = \left(\begin{array}{c} c \\ u_1+v_1,\ldots,u_k+v_k \end{array} \right)  + d\hat{r} \]
for some nonnegative integers $c,d$.  Thus $\bar{\phi}$ is a group homomorphism.  Moreover, $\bar{\phi}$ is surjective by Lemma~\ref{recurrentsubconfigs}(ii).  Finally, if
	\[ (u_1,\ldots,u_k) = (v_1,\ldots,v_k)+c(\hat{r}_1,\ldots,\hat{r}_k), \]
then by (\ref{youcanaddhoweveryouwant}), allowing the root to topple exactly $c$ times, we obtain
	\[ \left(\begin{array}{c} a \\ u_1,\ldots,u_k \end{array} \right) = \left(\begin{array}{c} b \\ v_1,\ldots,v_k \end{array} \right) + (c(k+1)+d)\hat{r}, \]
for a suitable integer $d$.  Thus $\bar{\phi}$ is injective.
\end{proof}

\subsection{Regular Trees}
\label{regulartrees}



In this section we show that for regular trees, Theorem~\ref{quotientisom} can be strengthened to express $SP(T)$ as a direct sum.

Let $T_n$ be the regular tree of degree $d$ and height $n$, with leaves collapsed to the sink vertex and an edge added from the root to the sink as in section~\ref{generaltrees}.  The chip configurations which are constant on the levels of $T_n$ form a subgroup of $SP(T_n)$.  If each vertex at height $k$ has $a_k$ chips, we can represent the configuration as a vector $(a_1,\ldots,a_{n-1})$.  If such a recurrent configuration is zero on a level, all vertices above that level are critical, so by Proposition~\ref{recurrentcharacterization} they must have $d-1$ chips each.  The recurrent configurations constant on levels are thus in bijection with integer vectors $(a_1,\ldots, a_{n-1})$ with $0\leq a_i \leq d-1$ subject to the constraint that if $a_i=0$ then $a_1=\ldots = a_{i-1}=d-1$.  

\begin{figure}
\[ \begin{array}{ccccccccccccccc}
 \hat{r} & 2\hat{r} & 3\hat{r} & 4\hat{r} & 5\hat{r} & 6\hat{r} &7\hat{r} & 8\hat{r} & 9\hat{r} & 10\hat{r} & 11\hat{r} & 12\hat{r} & 13\hat{r} & 14\hat{r} & 15\hat{r}=e \\ ~ \\
 
     2 & 0 & 1 & 2 & 0 & 1 & 2 & 2 & 2 & 0 & 1 & 2 & 0 & 1 & 2  \\
     0 & 1 & 1 & 1 & 2 & 2 & 2 & 2 & 0 & 1 & 1 & 1 & 2 & 2 & 2  \\
     2 & 2 & 2 & 2 & 2 & 2 & 2 & 0 & 1 & 1 & 1 & 1 & 1 & 1 & 1
\end{array} \]
\caption{Multiples of the root $\hat{r}$ in the ternary tree of height $4$.  Each column vector represents a chip configuration which is constant on levels of the tree.}
\label{lexorderfigure}
\end{figure}

The following lemma uses the lexicographic order given by $a<b$ if for some $k$ we have $a_{n-1}=b_{n-1}, \ldots, a_{k+1}=b_{k+1}$ and $a_k<b_k$.  In the {\it cyclic lexicographic order} on recurrent vectors we have also $(d-1,\ldots,d-1)<(d-1,\ldots,d-1,0)$.

\begin{lemma}
\label{lexorder}
If $u,v$ are recurrent configurations on $T_n$ that are constant on levels, write $u \leadsto v$ if $v$ follows $u$ in the cyclic lexicographic order on the set of recurrent vectors.     Then for every integer $k \geq 0$, we have
	\[ k\hat{r} \leadsto (k+1)\hat{r}. \]
\end{lemma}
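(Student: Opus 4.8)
The plan is to understand how adding a single chip $\delta_r$ at the root (equivalently, by \eqref{youcanaddhoweveryouwant}, adding $\hat{r}$ to a recurrent configuration) moves a level-constant recurrent vector $a = (a_1,\dots,a_{n-1})$ to the next one in the cyclic lexicographic order. First I would recall the explicit description of the level-constant recurrent vectors obtained just above the lemma statement: these are integer vectors with $0 \le a_i \le d-1$ subject to the constraint that $a_i = 0$ forces $a_1 = \dots = a_{i-1} = d-1$. I would then carry out the toppling analysis for the configuration $a + \delta_r$ directly, using the burning-algorithm characterization (Proposition~\ref{recurrentcharacterization}, Lemma~\ref{burning}) and the structure of $T_n$, to show that the resulting recurrent form is exactly the cyclic-lexicographic successor of $a$.

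The key steps, in order: (1) Start with a level-constant recurrent $a$ and add one chip at the root, so the root now has $a_1 + 1$ chips. If $a_1 < d-1$, no vertex is unstable (the root needs $d = \deg(r)$ chips to topple, and $a_1 + 1 \le d-1 < d$), so the new recurrent configuration is $(a_1+1, a_2, \dots, a_{n-1})$ — which is indeed the lexicographic successor since the low coordinate increments. (2) If $a_1 = d-1$, the root topples, sending one chip to the sink and one chip to each of its $d-1$ children; now every vertex at height $2$ has $a_2 + 1$ chips, and the root is left with $0$ chips (it received nothing back). If $a_2 < d-1$ the cascade stops here, leaving the vector $(0, a_2+1, a_3, \dots, a_{n-1})$; I must check this is recurrent (the constraint $a_1 = 0 \Rightarrow$ everything below is $d-1$ is vacuous since there is nothing below level $1$) and that it is the cyclic-lex successor of $(d-1, a_2, \dots)$. (3) Iterate: as long as $a_1 = \dots = a_j = d-1$, the topplings propagate up to level $j+1$, each toppled level resets to $0$ and level $j+1$ goes to $a_{j+1}+1$; the cascade halts at the first level where $a_i < d-1$. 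The recurrence constraint is exactly what guarantees that the zeros created at levels $1,\dots,j$ are legal (they sit below an unbroken run of $(d-1)$'s, well, above — below in the tree, i.e. closer to the sink being the higher-index coordinates; I would be careful with the index convention here, since $a_i$ is the value at height $i$ and $a_{n-1}$ is deepest). (4) Handle the wraparound case $a = (d-1, d-1, \dots, d-1)$: here the full cascade runs all the way up, every level resets, and by Lemma~\ref{teaser}/the order computation this lands on $(d-1, \dots, d-1, 0)$ — precisely the extra relation $(d-1,\dots,d-1) < (d-1,\dots,d-1,0)$ defining the cyclic order — after which continued addition cycles back through; one should also confirm $e = b_r \hat{r}$ where $b_r$ is the order, consistent with Figure~\ref{lexorderfigure}.

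I expect the main obstacle to be step (3)–(4): verifying carefully that the toppling cascade produces exactly the claimed vector and that this vector is genuinely the \emph{next} recurrent vector in the cyclic lexicographic order — i.e.\ that no recurrent vector lies strictly between $a$ and its claimed successor. This requires a small combinatorial argument that the map ``increment the lowest-index non-maximal coordinate and zero out everything before it'' is a bijection of the recurrent vectors onto themselves that advances the lex order by one step, with the constraint set being precisely the set closed under this operation. A clean way to package this is to note the recurrent level-constant vectors are in order-preserving bijection with $\{0, 1, \dots, t'_n - 1\}$ (where $t'_n$ counts them) via their lex rank, and that adding $\hat r$ implements ``$+1 \bmod t'_n$'' on this rank; the toppling computation above is exactly the verification that the successor function matches. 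Everything else — the stability bookkeeping, the legality of intermediate configurations — is routine given Proposition~\ref{recurrentcharacterization} and \eqref{youcanaddhoweveryouwant}.
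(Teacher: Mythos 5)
Your overall strategy is the one the paper uses: reduce via \eqref{youcanaddhoweveryouwant} to adding a single chip $\delta_r$ at the root and then track the toppling cascade explicitly. But your execution of the cascade in steps (3)--(4) contains a genuine error. When a vertex at level $i$ topples, it sends one chip to its \emph{parent} as well as one to each of its $d-1$ children; consequently the parent, which had just dropped to $0$ by toppling, is replenished to $d-1$ by the return chips from its $d-1$ children. So in the case $a_1=\cdots=a_{j-1}=d-1$, $a_j<d-1$, the cascade does \emph{not} leave zeros at all the toppled levels: it terminates in $(d-1,\ldots,d-1,0,a_j+1,a_{j+1},\ldots,a_{n-1})$, with a single $0$ at level $j-1$ and every level above it restored to $d-1$. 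Your claimed outcome, with ``zeros created at levels $1,\ldots,j$,'' is not even a recurrent vector --- by the constraint you yourself recalled, a level-constant recurrent vector has at most one zero coordinate --- and it is not the cyclic lexicographic successor. (Your step (2) is correct precisely because there the root's children do not topple and hence send nothing back; your step (4) states the correct endpoint $(d-1,\ldots,d-1,0)$ but the accompanying description ``every level resets'' would give the all-zero vector instead.)

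Once the cascade is computed correctly the argument closes as you intended, and is exactly the paper's proof: $(d-1,\ldots,d-1,0,a_j+1,\ldots,a_{n-1})$ is the successor of $(d-1,\ldots,d-1,a_j,\ldots,a_{n-1})$ because any recurrent vector exceeding the latter and agreeing with it in coordinates above $j$ must have $b_j\geq a_j+1$, and among recurrent prefixes $(b_1,\ldots,b_{j-1})$ the unique minimal one in this order is $(d-1,\ldots,d-1,0)$, since $b_{j-1}=0$ forces all earlier coordinates to equal $d-1$. The wraparound case is identical with the $0$ landing at level $n-1$, whose children are sinks and return nothing.
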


Figure~\ref{lexorderfigure} demonstrates the lemma for a ternary tree of height $4$.

\begin{proof}
By (\ref{youcanaddhoweveryouwant}) we have
	\[ (k+1)\hat{r} = k\hat{r} + \delta_r. \]
Thus if $k\hat{r} = (a_1, \ldots, a_{n-1})$ with $a_1<d-1$, then $(k+1)\hat{r} = (a_1+1,a_2,\ldots,a_{n-1})$ as desired.  Otherwise, if not all $a_i$ equal $d-1$, let $j>1$ be such that $a_1=\ldots=a_{j-1}=d-1$ and $a_j<d-1$.  Adding a chip at the root initiates the toppling cascade
	\[ \left( \begin{array}{c} d \\ d-1 \\ d-1 \\ \vdots \\ d-1 \\ d-1 \\ a_j \\ a_{j+1} \\ \vdots \\ a_{n-1} \end{array} \right) \rightarrow
	\left( \begin{array}{c} 0 \\ d \\ d-1 \\ \vdots \\ d-1 \\ d-1 \\ a_j \\ a_{j+1} \\ \vdots \\ a_{n-1} \end{array} \right) \rightarrow
	\left( \begin{array}{c} d-1 \\ 0 \\ d \\ \vdots \\ d-1 \\ d-1 \\ a_j \\ a_{j+1} \\ \vdots \\ a_{n-1} \end{array} \right) \rightarrow \ldots \rightarrow
	\left( \begin{array}{c} d-1 \\ d-1 \\ d-1 \\ \vdots \\ 0 \\ d \\ a_j \\ a_{j+1} \\ \vdots \\ a_{n-1} \end{array} \right) \rightarrow
	\left( \begin{array}{c} d-1 \\ d-1 \\ d-1 \\ \vdots \\ d-1 \\ 0 \\ a_j+1 \\ a_{j+1} \\ \vdots \\ a_{n-1} \end{array} \right), \]
as desired.  If all $a_i=d-1$ the cascade will travel all the way down, ending in $(d-1,\ldots,d-1,0)$ as desired.
\end{proof}

\begin{prop}
\label{rootsubgroup}
Let $T_n$ be the regular tree of degree $d$ and height $n$, and let $R(T_n)$ be the subgroup of $SP(T_n)$ generated by $\hat{r}$.  Then $R(T_n)$ consists of all recurrent configurations that are constant on levels, and its order is
	\begin{equation} \label{orderofrootsubgroup} \# R(T_n) = \frac{(d-1)^n - 1}{d-2}. \end{equation}
\end{prop}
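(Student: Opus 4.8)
The plan is to prove Proposition~\ref{rootsubgroup} by first identifying $R(T_n)$ as a set, then counting it.

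\textbf{Step 1: $R(T_n)$ consists precisely of the recurrent configurations constant on levels.} The containment $\supseteq$ is the substantive direction. Every recurrent configuration constant on levels corresponds, as described just before Lemma~\ref{lexorder}, to an integer vector $(a_1,\ldots,a_{n-1})$ with $0 \leq a_i \leq d-1$ satisfying the constraint that if $a_i = 0$ then $a_1 = \cdots = a_{i-1} = d-1$. Call the set of such vectors $\mathcal{R}$. By Lemma~\ref{lexorder}, the successive multiples $\hat{r}, 2\hat{r}, 3\hat{r}, \ldots$ trace out $\mathcal{R}$ in cyclic lexicographic order; since $\hat{r}$ has finite order in $SP(T_n)$ and the map $k \mapsto k\hat{r}$ advances by exactly one step in this cyclic order each time, the orbit $\{k\hat{r} : k \geq 0\}$ must be all of $\mathcal{R}$ — one needs here that the cyclic lexicographic order is a genuine cyclic order on the finite set $\mathcal{R}$, so that repeatedly taking "the next element" cycles through everything before returning to the start. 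The containment $\subseteq$ is easy: $\hat{r}$ itself is constant on levels, and by \eqref{youcanaddhoweveryouwant} adding $\hat{r}$ to a level-constant recurrent configuration and stabilizing preserves the property of being constant on levels (the toppling cascade in the proof of Lemma~\ref{lexorder} only ever produces level-constant intermediate states), so the subgroup generated by $\hat{r}$ stays inside the level-constant recurrents.

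\textbf{Step 2: Count $\mathcal{R}$.} Having identified $\# R(T_n) = \#\mathcal{R}$, I count the vectors $(a_1,\ldots,a_{n-1})$ with $0 \le a_i \le d-1$ subject to: if some $a_i = 0$, then all earlier coordinates equal $d-1$. Condition on the smallest index $i$ with $a_i = 0$ (or the case where no coordinate is zero). If no $a_i$ is zero, there are $(d-1)^{n-1}$ choices. If $a_i = 0$ is the first zero, then $a_1 = \cdots = a_{i-1} = d-1$ is forced, and $a_{i+1}, \ldots, a_{n-1}$ are free in $\{0,1,\ldots,d-1\}$ — wait, that's not right; I should instead observe directly that the constraint says the nonzero-prefix structure is: there is some $0 \le j \le n-1$ such that $a_1 = \cdots = a_j = d-1$ and $a_{j+1} \ne 0$ (when $j < n-1$), with $a_{j+1}, \ldots, a_{n-1}$ otherwise unconstrained except $a_{j+1} \in \{1,\ldots,d-1\}$ and $a_{j+2},\ldots,a_{n-1} \in \{0,\ldots,d-1\}$... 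Rather than fuss, the clean route is a bijective or generating-function count giving $\sum_{j=0}^{n-1}(d-1)^{j} = \frac{(d-1)^n - 1}{d-2}$: partition by the length $j$ of the maximal initial run of $(d-1)$'s that is immediately followed either by the end of the vector or by a coordinate $< d-1$; one checks each case contributes $(d-1)^{j}$ and the cases for $j = 0, \ldots, n-1$ are exactly the allowed vectors. Summing the geometric series yields \eqref{orderofrootsubgroup}.

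\textbf{Main obstacle.} The delicate point is Step 1, specifically making rigorous that iterating "next in cyclic lex order" starting from $\hat r$ exhausts $\mathcal{R}$. Lemma~\ref{lexorder} gives $k\hat r \leadsto (k+1)\hat r$, but to conclude the orbit is all of $\mathcal R$ I must know the orbit is a \emph{contiguous} cyclic block and that $\hat r$ has order $\#\mathcal R$ — equivalently, that no smaller positive multiple of $\hat r$ returns to $e$. This follows because the $\leadsto$ relation defines a single cycle through $\mathcal R$ (each vector has a unique successor and unique predecessor), so the orbit of $\hat r$ under "add $\delta_r$, stabilize" can only close up after visiting every element. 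I expect this to be the step requiring the most care to phrase cleanly, though it is conceptually straightforward. The counting in Step 2 is routine once the case analysis is set up correctly.
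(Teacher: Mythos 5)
Your proof is correct and follows essentially the same route as the paper: use Lemma~\ref{lexorder} to show the multiples of $\hat{r}$ sweep out every level-constant recurrent configuration (the paper states this in one line; your elaboration that $\leadsto$ is a single cycle on $\mathcal{R}$, so the orbit cannot close up early, is exactly the implicit justification), combine with the symmetry observation that $R(T_n)$ lies inside the level-constant recurrents, and count $\mathcal{R}$ as the geometric series $\sum_{j=0}^{n-1}(d-1)^j$. The only blemish is a harmless indexing slip in Step~2 (a maximal initial run of length $j$ contributes $(d-1)^{n-1-j}$ vectors, not $(d-1)^j$, though the total is the same series).
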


\begin{proof}
Since the toppling rule is symmetric, all configurations in $R(T_n)$ are constant on levels.  The number of such recurrent configurations is the number of vectors of the form $(d-1,\ldots,d-1,0,a_j,\ldots,a_{n-1})$, with $a_i \in [d-1]$, which is
	\[  \sum_{j=0}^{n-1} (d-1)^j = \frac{(d-1)^n - 1}{d-2}. \]
Moreover, by Lemma~\ref{lexorder}, any such vector can be expressed as a multiple of $\hat{r}$, so $R(T_n)$ contains all the recurrent configurations that are constant on levels.
\end{proof}

Index the vertices of the $d$-regular tree of height $n$ by words of length $\leq n-2$ in the alphabet $\{1,\ldots,d-1\}$.  Let $\sigma_i$ be the automorphism of the tree given by
	\[ \sigma_i(w_1\ldots w_k) = w_1 \ldots (w_i+1) \ldots w_k \]
with the sum taken mod $d-1$; if $k<i$ then $\sigma_i(w)=w$.  Given a map $\alpha : [n-2] \rightarrow [d-1]$ let $\sigma_\alpha$ be the composition $\prod_{i=1}^{n-2} \sigma_i^{\alpha(i)}$.

If $\sigma$ is an automorphism of the form $\sigma_\alpha$, write $\sigma u$ for the chip configuration $\sigma u(x) = u(\sigma x)$.  Writing $u\oplus v$ for addition in the sandpile group and $u+v$ for the ordinary vector sum, we have
	\[ u\oplus v = u+v+\sum_{j=1}^m \Delta_{x_j} \]
Since $\sigma \Delta_x = \Delta_{\sigma x}$ we obtain
	\[ \sigma (u\oplus v) = \sigma(u)+\sigma(v) +  \sum_{j=1}^m \Delta_{\sigma x_j}. \]
The configuration on the right side is stable, recurrent, and $\equiv \sigma(u)+\sigma(v)$ $($mod $\Delta)$, so it is equal to $\sigma(u)\oplus \sigma(v)$.  Thus $\sigma$ is an automorphism of the sandpile group.

\begin{prop}
\label{regulardirectsum}
Let $T_n$ be the regular tree of degree $d$ and height $n$, and let $R(T_n)=(\hat{r})$ be the subgroup of $SP(T_n)$ generated by the root.  Then
	\[ SP(T_n) \simeq R(T_n) \oplus \frac{SP(T_{n-1}) \oplus \ldots \oplus SP(T_{n-1})}{(R(T_{n-1}),\ldots,R(T_{n-1}))} \]
with $d-1$ summands of $SP(T_{n-1})$ on the right side.
\end{prop}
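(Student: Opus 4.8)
The plan is to upgrade the isomorphism of quotients in Theorem~\ref{quotientisom} to a direct-sum decomposition by exhibiting a splitting. Taking $T = T_n$, its $d-1$ principal branches are each copies of $T_{n-1}$, and their roots are the children $r_1, \ldots, r_{d-1}$ of the root $r$. Theorem~\ref{quotientisom} already gives
\[ SP(T_n)/(\hat{r}) \simeq \frac{SP(T_{n-1})^{\oplus (d-1)}}{(\hat{r}_1,\ldots,\hat{r}_{d-1})}. \]
So it suffices to show that the short exact sequence
\[ 0 \to R(T_n) \to SP(T_n) \to SP(T_n)/(\hat{r}) \to 0 \]
splits, where $R(T_n) = (\hat{r})$ is the cyclic subgroup of Proposition~\ref{rootsubgroup}. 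I would construct a retraction $\pi : SP(T_n) \to R(T_n)$, i.e. a group homomorphism that restricts to the identity on $R(T_n)$; then $SP(T_n) \simeq R(T_n) \oplus \ker \pi$ and $\ker\pi \simeq SP(T_n)/(\hat{r})$, which is the claimed decomposition.

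First I would build $\pi$ using the symmetry of the regular tree. The key tool is the action on $SP(T_n)$ of the automorphisms $\sigma_\alpha = \prod_{i=1}^{n-2}\sigma_i^{\alpha(i)}$ introduced just before the statement; the excerpt already checks each $\sigma_\alpha$ is a group automorphism of $SP(T_n)$. Averaging a recurrent configuration $u$ over the group $\Gamma = \{\sigma_\alpha\}$ (which acts transitively on the vertices at each fixed level) should produce something constant on levels; but since $SP(T_n)$ is a finite abelian group and $|\Gamma| = (d-1)^{n-2}$ need not be invertible in it, literal averaging fails. Instead I would define $\pi(u)$ to be the unique recurrent representative, constant on levels, of the class $\sum_{\sigma \in \Gamma} \sigma(u)$ modulo $\Delta$ — or, more cleanly, argue that the "symmetrization" map $u \mapsto \bigoplus_{\sigma\in\Gamma}\sigma(u)$ lands in $R(T_n)$ (configurations constant on levels are $\Gamma$-fixed and form exactly $R(T_n)$ by Proposition~\ref{rootsubgroup}), is a homomorphism (sum of automorphisms), and that on $R(T_n)$ itself it is multiplication by $|\Gamma| = (d-1)^{n-2}$. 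The catch is that this scalar need not be a unit in $R(T_n) \simeq \Z/\frac{(d-1)^n-1}{d-2}\Z$.

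So the real argument must be slightly more careful: I would instead observe that $\gcd\bigl((d-1)^{n-2},\, \tfrac{(d-1)^n-1}{d-2}\bigr) = 1$ — since $\tfrac{(d-1)^n-1}{d-2} = 1 + (d-1) + \cdots + (d-1)^{n-1} \equiv 1 \pmod{d-1}$ is coprime to any power of $d-1$ — hence multiplication by $(d-1)^{n-2}$ is invertible on $R(T_n)$. Therefore composing the symmetrization homomorphism with the inverse of that scalar on $R(T_n)$ yields the desired retraction $\pi$. It remains to verify $\pi$ is a homomorphism (clear, being a composition of homomorphisms), that $\pi|_{R(T_n)} = \mathrm{id}$ (by construction), and that $\ker\pi$ maps isomorphically onto $SP(T_n)/(\hat{r})$ via the quotient map (standard: $\ker\pi \cap R(T_n) = 0$ and $\ker\pi + R(T_n) = SP(T_n)$ since $u - \iota(\pi(u)) \in \ker\pi$ for every $u$). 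Combining with Theorem~\ref{quotientisom} and $SP(T_{n-1}), R(T_{n-1})$ in place of their $T_n$ analogues on each branch finishes the proof.

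I expect the main obstacle to be pinning down the symmetrization map precisely and proving it actually takes values in $R(T_n)$ rather than merely in the $\Gamma$-fixed subgroup — one must check that the recurrent representative of a $\Gamma$-invariant class is itself constant on levels, which uses Proposition~\ref{recurrentcharacterization} together with uniqueness of recurrent representatives. The coprimality computation and the splitting bookkeeping are routine by comparison. An alternative, perhaps cleaner, route would avoid $\pi$ entirely: show directly that $R(T_n) \cap (\ker\bar\phi\text{-lift}) = 0$ and that the orders multiply, using $\#R(T_n) = \frac{(d-1)^n-1}{d-2}$ from Proposition~\ref{rootsubgroup} and $\#SP(T_n) = t_n$ from Lemma~\ref{teaser}; but establishing the internal direct sum still requires producing a complement, so the retraction approach seems most transparent.
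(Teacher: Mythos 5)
Your proposal is correct and takes essentially the same approach as the paper: both split the exact sequence coming from Theorem~\ref{quotientisom} by building a retraction onto $R(T_n)$ out of the symmetrization $\sum_\alpha \sigma_\alpha u$, whose image lands in $R(T_n)$ because a recurrent configuration fixed by all $\sigma_\alpha$ is constant on levels. The only (cosmetic) difference is that the paper defines $p(u) = (d-1)^2\sum_\alpha \sigma_\alpha u$, so that on $R(T_n)$ the total scalar is $(d-1)^n \equiv 1 \pmod{\# R(T_n)}$ by (\ref{orderofrootsubgroup}), rather than explicitly inverting $(d-1)^{n-2}$ via your coprimality observation.
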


\begin{proof}
Define $p : SP(T_n) \rightarrow SP(T_n)$ by
	\begin{equation} \label{symmetrization} p(u) = (d-1)^2 \sum_{\alpha: [n-2]\rightarrow [d-1]} \sigma_\alpha u. \end{equation}
By construction $p(u)$ is constant on levels, so the image of $p$ lies in $R(T_n)$ by Proposition~\ref{rootsubgroup}.  Given $u \in R(T_n)$, since $u$ is constant on levels we have $\sigma_\alpha(u)=u$ for all $\alpha$.  Since there are $(d-1)^{n-2}$ terms in the sum (\ref{symmetrization}), we obtain
	\[ p(u) = (d-1)^n u = u \]
where the second inequality follows from (\ref{orderofrootsubgroup}).  Thus $R(T_n)$ is a summand of $SP(T_n)$, and the result follows from Theorem~\ref{quotientisom}.
\end{proof}

\begin{figure}
\centering
\includegraphics[scale=1.2]{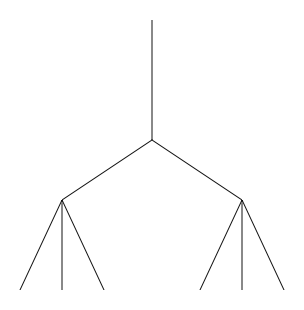}
\caption{A non-regular tree for which Proposition~\ref{regulardirectsum} fails.}
\label{counterexample}
\end{figure}

Proposition~\ref{regulardirectsum} fails for general trees.  For example, if $T$ is the tree consisting of a root with 2 children each of which have 3 children (Figure~\ref{counterexample}), then $\hat{r}=\left(\begin{array}{c} 2 \\ 3,3 \end{array} \right)$ has order $10$ and the element $x=\left(\begin{array}{c} 2 \\ 0,3 \end{array} \right)$ satisfies $4x=\hat{r}$, so $x$ has order $40$.  The total number of recurrent configurations is $4\cdot 4\cdot 3 - 8 = 40$, so $SP(T) \simeq \Z/40\Z$, and $R(T) \simeq \Z/10\Z$ is not a summand.  

Write $\Z_p^q$ as a shorthand for $(\Z/p\Z) \oplus \ldots \oplus (\Z/p\Z)$ with $q$ summands.

\begin{theorem}
\label{main}
Let $T_n$ be the regular tree of degree $d=a+1$ and height $n$, with leaves collapsed to the sink vertex and an edge joining the root to the sink.  Then
	\[ SP(T_n) \simeq \Z_{1+a}^{a^{n-3}(a-1)} \oplus \Z_{1+a+a^2}^{a^{n-4}(a-1)} \oplus \ldots \oplus \Z_{1+a+\ldots+a^{n-2}}^{a-1} \oplus \Z_{1+a+\ldots+a^{n-1}}. \]
\end{theorem}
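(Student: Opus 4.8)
The plan is to prove the formula by induction on $n$, using Proposition~\ref{regulardirectsum} as the engine of the induction together with Proposition~\ref{rootsubgroup} to identify the ``new'' cyclic factor at each level. Set $a = d-1$ and write $R(T_n) = (\hat r)$. By Proposition~\ref{rootsubgroup}, $R(T_n)$ is cyclic of order $(a^n-1)/(a-1) = 1 + a + \ldots + a^{n-1}$, so it contributes exactly the last summand $\Z_{1+a+\ldots+a^{n-1}}$ in the claimed decomposition. The remaining work is to show that the quotient
	\[ \frac{SP(T_{n-1})^{\oplus a}}{(R(T_{n-1}),\ldots,R(T_{n-1}))} \]
appearing in Proposition~\ref{regulardirectsum} has the decomposition $\Z_{1+a}^{a^{n-3}(a-1)} \oplus \ldots \oplus \Z_{1+a+\ldots+a^{n-2}}^{a-1}$, i.e.\ exactly the decomposition of $SP(T_n)$ with its last factor removed and every exponent multiplied appropriately. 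First I would check the base cases $n=1,2$ (and perhaps $n=3$) directly: $T_1$ and $T_2$ are small enough to compute $SP$ by hand, and the formula should reduce to a single cyclic group there.

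For the inductive step, assume the formula holds for $SP(T_{n-1})$, so
	\[ SP(T_{n-1}) \simeq \Z_{1+a}^{a^{n-4}(a-1)} \oplus \ldots \oplus \Z_{1+a+\ldots+a^{n-3}}^{a-1} \oplus \Z_{1+a+\ldots+a^{n-2}}, \]
and the last factor is precisely $R(T_{n-1})$. Taking the direct sum of $a$ copies and quotienting by the diagonally embedded $R(T_{n-1})^{\,\mathrm{diag}}$, the key algebraic observation is that $R(T_{n-1})$ sits inside $SP(T_{n-1})$ as the explicit cyclic summand $\Z_{1+a+\ldots+a^{n-2}}$ in the above decomposition (this is what Proposition~\ref{regulardirectsum} gives us: $R$ is a \emph{summand}, not merely a subgroup). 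Therefore $SP(T_{n-1})^{\oplus a} / R^{\,\mathrm{diag}}$ decomposes as the direct sum of $a$ copies of the ``non-$R$'' part $\Z_{1+a}^{a^{n-4}(a-1)} \oplus \ldots \oplus \Z_{1+a+\ldots+a^{n-3}}^{a-1}$, together with $\Z_{1+a+\ldots+a^{n-2}}^{\,\oplus a} / (\text{diagonal})$. The latter quotient is, by a standard fact about finite cyclic groups, isomorphic to $\Z_{1+a+\ldots+a^{n-2}}^{\,a-1}$ (quotienting $C^{\oplus a}$ by the diagonal copy of $C$ yields $C^{\oplus(a-1)}$ — map $(x_1,\ldots,x_a)\mapsto(x_1-x_a,\ldots,x_{a-1}-x_a)$). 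Multiplying the exponents of the remaining factors by $a$ gives $a \cdot a^{n-4}(a-1) = a^{n-3}(a-1)$ for the $\Z_{1+a}$ factor, and so on, reproducing exactly the exponents $a^{n-3}(a-1), a^{n-4}(a-1), \ldots, (a-1)$ claimed for $SP(T_n)$. Reassembling via Proposition~\ref{regulardirectsum} and adjoining $R(T_n) = \Z_{1+a+\ldots+a^{n-1}}$ completes the step.

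The main obstacle I anticipate is the bookkeeping around \emph{which} cyclic summand of $SP(T_{n-1})$ equals $R(T_{n-1})$, and making sure the quotient interacts cleanly with the direct-sum decomposition. Proposition~\ref{regulardirectsum} tells us $R(T_{n-1})$ is \emph{a} summand, and the inductive hypothesis produces \emph{some} decomposition whose last factor has the right order; one must argue these can be taken to coincide — i.e.\ that the summand complement furnished by the projection $p$ of Proposition~\ref{regulardirectsum} can be matched up with the first $n-2$ factors of the inductive decomposition. The cleanest route is probably to track the decomposition more carefully: prove by induction the slightly stronger statement that $SP(T_n) = R(T_n) \oplus Q_n$ where $Q_n$ is an \emph{explicit} group (the direct sum of the non-last factors), and feed $Q_{n-1}$ rather than an abstract isomorphism type back into Proposition~\ref{regulardirectsum}. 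A secondary, purely routine, point is verifying the cyclic-group quotient identity $C^{\oplus a}/C^{\,\mathrm{diag}} \simeq C^{\oplus(a-1)}$ and checking that no unexpected cancellation or merging of invariant factors occurs when the orders $1+a+\ldots+a^k$ for distinct $k$ are (as one checks from $\gcd$ considerations) pairwise distinct and suitably ordered.
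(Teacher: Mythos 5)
Your proposal is correct and follows essentially the same route as the paper: induction on $n$ with base case $n=2$, feeding Proposition~\ref{rootsubgroup} and the inductive hypothesis into Proposition~\ref{regulardirectsum}, and using $C^{\oplus a}/C^{\mathrm{diag}}\simeq C^{\oplus(a-1)}$ for the diagonal quotient. The one subtlety you flag --- that $R(T_{n-1})$ must be matched with the last cyclic summand of the inductive decomposition, best handled by carrying the explicit splitting $SP(T_{n-1})=R(T_{n-1})\oplus Q_{n-1}$ through the induction --- is a detail the paper's two-line computation elides, and your treatment of it is the right one.
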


\begin{proof}
Induct on $n$.  For the base case $n=2$ we have 
	\[ SP(T_2) = R(T_2) \simeq \Z_d = \Z_{1+a}. \]
Write $q_n = 1+a+\ldots + a^{n-1}$.  By Proposition~\ref{rootsubgroup}, the root subgroup $R(T_n)$ is cyclic of order $q_n$.  By Proposition~\ref{regulardirectsum} and the inductive hypothesis, it follows that
	\begin{align*} SP(T_n) &\simeq \Z_{q_n} \oplus \frac{SP(T_{n-1})^{\oplus a}}{\Z_{q_{n-1}}} \\
					&\simeq \Z_{q_n} \oplus \Z_{q_{n-1}}^{a-1} \oplus \Z_{q_{n-2}}^{a(a-1)} \oplus \ldots \oplus \Z_{q_2}^{a^{n-3}(a-1)}. \qed \end{align*}
\renewcommand{\qedsymbol}{}
\end{proof}

\subsection{Proof of Toumpakari's Conjecture}
\label{toumpakari}

As before write $a=d-1$ and
	\[ q_n = 1 + a + \ldots + a^{n-1}. \]
If $p$ is a prime not dividing $d(d-1)$, let $t_p$ be the least positive $n$ for which $p|q_n$.  Then
	\[ t_p = \begin{cases} p & \text{if } a \equiv 1 ~(\text{mod }p) \\
					\text{ord}_p(a), & \text{else}. \end{cases} \]
Here $\text{ord}_p(a)$ is the least positive $k$ for which $p|a^k-1$.  Note that $p|q_n$ if and only if $t_p|n$.  The following result was conjectured by E. Toumpakari in \cite{Toumpakari} (where the factor of $d-2$ was left out, presumably an oversight).

\begin{theorem} 
Let $B_n$ be the ball of radius $n$ in the $d$-regular tree, with each leaf connected by $d-1$ edges to the sink, but with  no edge connecting the root to the sink.  Let $p$ be a prime not dividing $d(d-1)$, and let $S_p(n)$ be the Sylow-$p$ subgroup of the sandpile group $SP(B_n)$.  Then
\[ \text{\em rank}(S_p(n)) = \begin{cases} d(d-2) \sum_{\substack{m<n \\ m \equiv n \,(\text{\em mod } t_p)}} (d-1)^m, 
					& \text{\em if } n \not\equiv -1 ~(\text{\em mod }  t_p); \\
			 d(d-2) \sum_{\substack{m<n \\ m \equiv n \,(\text{\em mod } t_p)}} (d-1)^m + d-1, 
					& \text{\em if } n \equiv -1 (\text{\em mod } t_p). \end{cases} \]
\end{theorem}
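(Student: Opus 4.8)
The plan is to deduce Toumpakari's formula from the decomposition in Theorem~\ref{main} together with a comparison between the two wirings of the tree (root joined to the sink versus root not joined to the sink). First I would set up the combinatorial dictionary: Toumpakari's ball $B_n$ of radius $n$ is essentially the tree $T_{n+1}$ in our sense, but with the extra root--sink edge removed. So I need to understand how deleting that one edge changes the sandpile group. The clean way to do this is to note that $SP(T_{n+1})$ and $SP(B_n)$ differ by a single generator relation, so there is a surjection (or a short exact sequence) relating them, and since $p \nmid d(d-1)$ the prime $p$ does not see the ``extra'' cyclic factor coming from that edge except possibly in a controlled way; I would track exactly which cyclic summand absorbs the change.

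Next, the heart of the argument is to read off the Sylow-$p$ rank from the explicit product
	\[ SP(T_{n}) \simeq \Z_{q_{2}}^{a^{n-3}(a-1)} \oplus \Z_{q_{3}}^{a^{n-4}(a-1)} \oplus \ldots \oplus \Z_{q_{n-1}}^{a-1} \oplus \Z_{q_{n}}, \]
where $q_m = 1 + a + \ldots + a^{m-1}$ and $a = d-1$. A cyclic factor $\Z_{q_m}$ contributes $1$ to $\mathrm{rank}(S_p)$ exactly when $p \mid q_m$, i.e.\ exactly when $t_p \mid m$, and $0$ otherwise. Hence $\mathrm{rank}(S_p(SP(T_n)))$ is a sum of the multiplicities $a^{n-1-m}(a-1)$ over those $m$ in the range $2 \le m \le n-1$ with $t_p \mid m$, plus the correction $[\,t_p \mid n\,]$ from the final cyclic factor $\Z_{q_n}$. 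Reindexing $m \mapsto n-1-m$ (so the multiplicity $a^{n-1-m}(a-1)$ becomes $(a-1)a^{\,m'}$ summed over $m'$ in an arithmetic progression mod $t_p$) and using $a-1 = d-2$, $a = d-1$, this sum becomes $d(d-2)\sum (d-1)^{m}$ over $m < n$ with $m \equiv n \pmod{t_p}$ — modulo bookkeeping of the endpoints of the range and of which residue class the progression lands in. I would check the endpoint of the progression carefully: the congruence condition $t_p \mid m$ on the original index translates, after reindexing, into $m' \equiv n-1 \equiv n \pmod{t_p}$ only when $t_p \mid $ the shift, so the split into the two cases ($n \equiv -1 \pmod{t_p}$ or not) will emerge precisely from whether the final factor $\Z_{q_n}$ and the boundary term of the geometric sum are ``in phase.''

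Finally I would reconcile the factor $d(d-2)$ against the factor $(a-1)a^{m}$ appearing naively: summing the geometric-type series $\sum_{m' \equiv c} (a-1) a^{m'}$ over a window telescopes, and the factor $d$ comes from the passage $B_n \leftrightarrow T_{n+1}$ (the extra edge multiplies one layer's contribution) — this is exactly the place where removing the root--sink edge enters, and I expect the exponent shift $n \to n+1$ in $T_{n+1}$ is what produces the leading $d$. \textbf{The main obstacle} will be the first step: correctly relating $SP(B_n)$ (no root--sink edge, but each leaf joined to the sink by $d-1$ parallel edges) to our $SP(T_{n+1})$, getting the indexing shift right, and verifying that the Sylow-$p$ rank is unaffected by the structural differences since $p \nmid d(d-1)$. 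Once that bridge is in place, the rest is the purely formal manipulation of the geometric sums described above, and the two-case split in the statement should fall out of keeping track of the boundary terms and the residue of $n$ modulo $t_p$.
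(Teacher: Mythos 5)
Your counting step---reading the Sylow-$p$ rank off an explicit cyclic decomposition by noting that $\Z_{q_m}$ contributes $1$ exactly when $t_p \mid m$, then reindexing the multiplicities into a sum of powers of $d-1$ over a residue class mod $t_p$---matches the paper's and is fine. But the bridge you propose from $T_{n+1}$ to $B_n$ rests on a false identification, and that is where the proof actually lives. The ball $B_n$ is \emph{not} ``$T_{n+1}$ with the root--sink edge removed'': the root of $B_n$ has $d$ children, while the root of $T_{n+1}$ has only $d-1$ (plus the edge to the sink). There is no useful ``single generator relation'' comparing the two groups, and their orders differ by far more than one cyclic factor. The correct bridge is to apply Theorem~\ref{quotientisom} to $B_n$ itself: its $d$ principal branches are each isomorphic to $T_{n+1}$, giving
\[ SP(B_n)/(\hat r) \;\simeq\; SP(T_{n+1})^{\oplus d} \big/ \big(R(T_{n+1}),\ldots,R(T_{n+1})\big), \]
and then to substitute the decomposition of Theorem~\ref{main}. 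In particular the factor $d$ in $d(d-2)$ is the number of branches ($a+1=d$), not an effect of the missing edge, and the extra $d-1$ in the case $n\equiv -1 \pmod{t_p}$ comes from the specific summand $\Z_{q_{n+1}}^{d-1}$ (the $d$ copies of the top cyclic factor $\Z_{q_{n+1}}$ modulo the diagonal copy $R(T_{n+1})\simeq \Z_{q_{n+1}}$), not from a boundary term of the geometric sum being ``in phase.''

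A second missing ingredient: the decomposition above is only of the quotient $SP(B_n)/(\hat r)$, so you must justify that passing to this quotient does not change the Sylow-$p$ subgroup. That requires knowing the order of the root subgroup $(\hat r)\subset SP(B_n)$; it equals $d(d-1)^n$ (Toumpakari's Proposition~7.2), which is coprime to $p$ by hypothesis, and only then does the rank formula transfer back to $SP(B_n)$. Note that Proposition~\ref{rootsubgroup} as proved in the paper computes the order of the root subgroup of $T_n$ (with the root--sink edge present), so the corresponding fact for $B_n$ needs its own justification and cannot be waved through as part of the ``bookkeeping.''
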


\begin{proof}
By Theorem~\ref{quotientisom} we have 
	\[ SP(B_n)/(\hat{r}) \simeq \frac{SP(T_{n+1}) \oplus \ldots \oplus SP(T_{n+1})}{(R(T_{n+1}), \ldots, R(T_{n+1}))} \]
with $d$ summands.  By Proposition~\ref{rootsubgroup} we have $R(T_{n+1}) \simeq \Z_{q_{n+1}}$, so from Theorem~\ref{main}
	\begin{equation} \label{balldecomp} SP(B_n)/(\hat{r}) \simeq \Z_{q_{n+1}}^a \oplus \Z_{q_n}^{(a-1)(a+1)} \oplus \Z_{q_{n-1}}^{(a-1)a(a+1)} \oplus \ldots \oplus \Z_{q_2}^{(a-1)a^{n-2}(a+1)}. \end{equation}
By Proposition~7.2 of \cite{Toumpakari}, the root subgroup $(\hat{r})$ of $SP(B_n)$ has order $d(d-1)^n$.  Thus for $p$ not dividing $d(d-1)$ the Sylow $p$-subgroup of $SP(B_n)$ is the same as that of the quotient $SP(B_n)/(\hat{r})$.  Each summand $\Z_{q_k}$ in (\ref{balldecomp}) contributes $1$ to the rank of $S_p(n)$ if $t_p|k$ and $0$ otherwise.  If $n \not\equiv -1$ $($mod $t_p)$, the total rank is therefore
	\begin{align*} \text{rank}(S_p(n)) &= \sum_{\substack{ 2 \leq k \leq n \\ t_p|k}} (a-1)a^{n-k}(a+1) \\
						&= d(d-2) \sum_{\substack{0 \leq m \leq n-2 \\ m \equiv n \,(\text{mod }t_p)}} (d-1)^m. \end{align*}
In the case that $n \equiv -1$ $($mod $t_p)$, the first summand $\Z_{q_{n+1}}^a$ in (\ref{balldecomp}) contributes an additional rank $a=d-1$ to $S_p(n)$.
\end{proof}

\section{The Rotor-Router Model on Trees}
\label{rotortree}

This section is devoted to proving Theorems~\ref{aggregintro} and~\ref{escapeseqs}.  

\subsection{The Rotor-Router Group}
\label{rotorgroup}

In this section we define the {\it rotor-router group} of a graph and show it is isomorphic to the sandpile group.  
This isomorphism, Theorem~\ref{groupisom}, is mentioned in the physics literature; see \cite{PDDK, PPS}.  To our knowledge the details of the proof are not written down anywhere.  While our main focus is on the tree, the isomorphism is just as easily proved for general graphs, and it seems to us worthwhile to record the general proof here.

Let $G$ be a strongly connected finite directed graph without loops.
Fix a sink vertex $s$ in $G$, and write $Rec(G)$ for the set of oriented spanning trees of $G$ rooted at the sink.
Given a configuration of rotors $T$, write $e_x(T)$ for the configuration resulting from starting a chip at $x$ and letting it walk according to the rotor-router rule until it reaches the sink.  (Note that if the chip visits a vertex infinitely often, it visits all of its neighbors infinitely often; since $G$ is strongly connected, the chip eventually reaches the sink.)  We view $T$ as a subgraph of $G$ in which every vertex except the sink has out-degree one.  Note that such a subgraph is an oriented spanning tree rooted at the sink if and only if it has no oriented cycles.

\begin{lemma}
If $T \in Rec(G)$, then $e_x(T) \in Rec(G)$.
\end{lemma}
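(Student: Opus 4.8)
The plan is to show that routing a chip from $x$ to the sink, while rotating rotors along the way, transforms one oriented spanning tree rooted at $s$ into another. The key point is that $e_x(T)$, viewed as a subgraph with every non-sink vertex having out-degree exactly one, must be acyclic. First I would record the structural observation already noted in the excerpt: a subgraph in which each vertex other than $s$ has out-degree one is an oriented spanning tree rooted at $s$ if and only if it contains no oriented cycle. So the whole task reduces to ruling out cycles in $e_x(T)$.

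The main step is the following invariant. As the chip walks from its current position according to the rotor-router rule, consider at each moment the rotor configuration together with the chip's location. I would prove by induction on the number of steps that the configuration consisting of the current rotors restricted to the vertices the chip has \emph{already left at least once}, together with the directed edge from the chip's current vertex $v$ back along the rotor path, never contains an oriented cycle; more precisely, that the rotors at vertices already visited, followed from any such vertex, lead either to $s$ or to the chip's current location $v$ without cycling. Equivalently: at every moment the "visited part" of the rotor graph is a forest oriented toward $\{s, v\}$. When the chip takes a step from $v$ to $w$, the rotor at $v$ advances to point to $w$; since $w$ was reached, the path of rotors from $w$ (if $w$ was previously visited) still leads to $s$ or to the \emph{old} $v$, and now $v$ points to $w$, so no cycle through $v$ is created — a cycle would have to pass through $v$, but the only edge out of $v$ now goes to $w$, and from $w$ one cannot return to $v$ by the inductive hypothesis. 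When the chip finally reaches $s$, its current location is the sink, so the visited-part rotor graph is a forest oriented toward $s$; vertices never visited retain their rotors from $T$, which already formed a tree oriented toward $s$, and one checks these two pieces glue without creating a cycle (an unvisited vertex's rotor path in $T$ eventually either stays in the unvisited set and reaches $s$, or enters the visited set, after which it continues to $s$). Hence $e_x(T)$ is acyclic, so $e_x(T) \in Rec(G)$.

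The step I expect to be the main obstacle is making the inductive invariant precise enough that the gluing at the end — combining the modified rotors on visited vertices with the untouched rotors of $T$ on unvisited vertices — manifestly produces no oriented cycle. The subtlety is that an oriented cycle in $e_x(T)$ could in principle use edges of both types, so the induction hypothesis must be stated for the \emph{mixed} graph (current rotors on visited vertices, $T$-rotors elsewhere) at every step, not just for the visited part in isolation. Once the invariant is formulated as "the full current rotor graph has no oriented cycle not passing through the chip's location, and the chip's location has out-degree zero in it," the inductive step is a short case check: advancing the rotor at $v$ adds exactly one out-edge at $v$ (to the new chip location $w$) and the chip moves to $w$, so any new cycle would have to enter $v$ and leave via $w$, contradicting that from $w$ one cannot reach $v$. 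I would also handle the (harmless) possibility that $x = s$ or that the chip revisits vertices many times by phrasing everything in terms of the rotor graph rather than the chip's trajectory.
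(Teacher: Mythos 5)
Your reduction to acyclicity and your choice of invariant are essentially right: the statement you want to carry along the walk, which the paper isolates separately as the $Cyc_{x_k}(G)$ criterion in the lemma that follows, is that at every intermediate moment any oriented cycle in the full current rotor configuration must pass through the chip's current location; since the final location is the sink, which has no out-edge, the final configuration is acyclic. But your inductive step as written proves a false statement. You claim that when the chip steps from $v$ to $w$ ``no cycle through $v$ is created'' because ``from $w$ one cannot return to $v$ by the inductive hypothesis.'' The inductive hypothesis gives you no such thing --- it only says that every cycle in the old configuration passes through $v$ --- and a cycle genuinely can be created at this step: if $w$ was visited earlier and the rotor path from $w$ leads back to $v$ (a possibility your own first paragraph allows, since you say that path ``leads to $s$ or to the old $v$''), then advancing the rotor at $v$ to point at $w$ closes the oriented cycle $v \to w \to \cdots \to v$. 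The invariant survives not because no cycle appears but because any cycle that does appear contains $w$: a cycle in the new configuration either avoids the out-edge of $v$, in which case it was already a cycle of the old configuration not containing the old chip location $v$ (contradiction, since a cycle contains a vertex if and only if it uses that vertex's unique out-edge), or it uses the new edge $v \to w$ and hence contains the new chip location $w$. With the step corrected this way your induction goes through, and no separate gluing of visited and unvisited parts is needed; the clause ``the chip's location has out-degree zero'' should likewise be replaced by ``deleting the rotor at the chip's location destroys all cycles.''

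For comparison, the paper avoids the induction entirely. For an arbitrary set $Y$ of vertices, either the chip never visits $Y$, so the rotors on $Y$ are unchanged from $T$ and cannot form a cycle; or one takes the last vertex $y \in Y$ that the chip visits: in $e_x(T)$ the rotor at $y$ records the chip's last exit from $y$, which goes to a vertex visited afterwards and hence outside $Y$ (unless $y = s$). Either way $Y$ cannot carry an oriented cycle. This is shorter, and applied to intermediate times it also yields the refined statement about cycles through the chip's location with almost no extra work.
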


\begin{proof}
Let $Y$ be any collection of vertices of $G$.  If the chip started at $x$ reaches the sink without ever visiting $Y$, then the rotors at vertices in $Y$ point the same way in $e_x(T)$ as they do in $T$, so they do not form an oriented cycle.  If the chip does visit $Y$, let $y\in Y$ be the last vertex it visits.  Then either $y=s$, or the rotor at $y$ points to a vertex not in $Y$; in either case, the rotors at vertices in $Y$ do not form an oriented cycle.
\end{proof}

We will need slightly more refined information about the intermediate states that occur before the chip falls into the sink. These states may contain oriented cycles, but only of a very restricted form.  For a vertex $x$ we write $Cyc_x(G)$ for the set of rotor configurations $U$ such that
	\begin{itemize}
	\item[(i)] $U$ contains an oriented cycle; and
	\item[(ii)] If the rotor $U(x)$ is deleted, the resulting configuration contains no oriented cycles.
	\end{itemize}

\begin{lemma}
\label{cyclecriterion}
Starting from a rotor configuration $T_0 \in Rec(G)$ with a chip at $x_0$, let $T_k$ and $x_k$ be the rotor configuration and chip location after $k$ steps.  Then
\begin{itemize}
\item[(i)] If $T_k \notin Rec(G)$, then $T_k \in Cyc_{x_k}(G)$.
\item[(ii)] If $T_k \in Rec(G)$, then $x_k \notin \{x_0,\ldots,x_{k-1}\}$.
\end{itemize}
\end{lemma}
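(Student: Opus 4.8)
The natural approach is induction on $k$, proving both statements simultaneously, since they feed into each other. The base case $k=0$ is immediate: $T_0 \in Rec(G)$ by hypothesis, so (i) is vacuous, and (ii) holds because the set $\{x_0,\ldots,x_{k-1}\}$ is empty. For the inductive step, I would analyze the single rotor-router move taking $(T_k, x_k)$ to $(T_{k+1}, x_{k+1})$: the rotor at $x_k$ is advanced to point to some neighbor $y$, and $x_{k+1} = y$. Write $T_{k+1}$ as $T_k$ with the old rotor $T_k(x_k)$ replaced by the new rotor pointing to $y$.

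The core combinatorial observation I would isolate as a sub-claim is this: deleting the rotor at the current chip location from $T_{k+1}$ gives a configuration with no oriented cycle. To see why, I would split on the inductive hypothesis at step $k$. If $T_k \in Rec(G)$, then $T_k$ already has no oriented cycle anywhere, so even after rerouting the rotor at $x_k$ and then deleting the (new) rotor at $x_{k+1}$, the only possible new cycle would have to pass through the newly added edge $x_k \to x_{k+1}$; but such a cycle would use the old rotors at every other vertex and so would already be a cycle of $T_k$ with the edge out of $x_k$ removed and the edge $x_k\to y$ inserted — a short argument shows this cannot be a cycle unless $T_k$ had one, or unless the cycle passes through $x_{k+1}$, which is excluded by the deletion. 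If instead $T_k \in Cyc_{x_k}(G)$, then by definition $T_k$ minus its rotor at $x_k$ is acyclic; rerouting the rotor at $x_k$ keeps the property that deleting that rotor yields an acyclic configuration, and one then checks that deleting the rotor at $x_{k+1}$ from the rerouted configuration is also acyclic (a cycle through $x_{k+1}$ is excluded by deletion; a cycle avoiding $x_{k+1}$ would have to use the rotor at $x_k$ pointing to $x_{k+1}$, contradiction). This sub-claim is exactly condition (ii) in the definition of $Cyc_{x_{k+1}}(G)$.

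Given the sub-claim, the inductive step closes cleanly. Either $T_{k+1}$ contains an oriented cycle, in which case together with the sub-claim it lies in $Cyc_{x_{k+1}}(G)$, establishing (i) for $k+1$; or $T_{k+1}$ contains no oriented cycle, hence $T_{k+1}\in Rec(G)$, and we must verify (ii), namely $x_{k+1}\notin\{x_0,\ldots,x_k\}$. For this last point I would argue by contradiction: if the chip returned to a previously visited vertex $x_{k+1}=x_j$ with $j\le k$ while $T_{k+1}\in Rec(G)$, then tracing the rotors from $x_j$ forward through $T_{k+1}$ one follows the path the chip most recently took, and because each visited vertex's rotor was last set to point along that path, one produces an oriented cycle in $T_{k+1}$ — contradicting $T_{k+1}\in Rec(G)$. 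Making this ``following the rotors reproduces the recent trajectory'' argument precise is the main obstacle: one needs to track carefully, among the vertices visited since the last time the configuration was in $Rec(G)$, that their rotors in $T_{k+1}$ encode a functional graph whose only cycle (if any) passes through the current location. I expect this bookkeeping — essentially showing the ``active region'' of visited vertices always forms a near-tree with at most one cycle anchored at the chip — to be where the real work lies, while the rest is routine case-checking on a single move.
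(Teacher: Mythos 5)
Your plan is correct, and for part (i) it takes a genuinely different route from the paper. The paper proves (i) directly at each time $k$, with no induction: for any vertex set $Y$ not containing $x_k$, either $Y$ is disjoint from $\{x_0,\ldots,x_{k-1}\}$ (so its rotors are those of $T_0$, which has no cycle), or the vertex $y\in Y$ visited latest before time $k$ has its rotor pointing to the chip's next location, which lies outside $Y$; hence every oriented cycle of $T_k$ passes through $x_k$. Your single-step induction replaces this global argument by the local observation that only the rotor at $x_k$ changes and it points to $x_{k+1}$, whose rotor is exactly the one being deleted; this is a clean and arguably more economical way to maintain the invariant that $T_k\in Rec(G)$ or $T_k\in Cyc_{x_k}(G)$, at the cost of carrying statement (ii) along in the induction. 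For part (ii) your plan coincides with the paper's argument, and the step you flag as ``the main obstacle'' is in fact short and needs none of the ``active region / near-tree'' bookkeeping you anticipate --- that structural information is precisely what (i) provides and is not used. All that is needed is: supposing $x_{k+1}=x_j$ with $j\le k$, set $y_0=x_{k+1}$ and $y_{i+1}=T_{k+1}(y_i)$, and note that the rotor at any previously visited vertex points to the vertex the chip moved to on its most recent exit from it; that destination is either $x_{k+1}$ itself (closing an oriented cycle), or the sink (impossible, since the walk would already have ended), or another previously visited vertex, so the chain stays among visited non-sink vertices and by finiteness must repeat, producing an oriented cycle in $T_{k+1}$ and contradicting $T_{k+1}\in Rec(G)$. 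With that observation supplied your induction closes; the paper reaches the same conclusions for each $k$ directly, but the content of part (ii) is the same in both arguments.
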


\begin{proof}
(i) It suffices to show that any oriented cycle in $T_k$ contains $x_k$.  Let $Y$ be any set of vertices of $G$ not containing $x_k$.  If $Y$ is disjoint from $\{x_0, \ldots, x_{k-1}\}$, then the rotors at vertices in $Y$ point the same way in $T_k$ as they do in $T_0$, so they do not form an oriented cycle.  Otherwise, let $y\in Y$ be the vertex visited latest before time $k$.  The rotor at $y$ points to a vertex not in $Y$, so the rotors at vertices in $Y$ do not form an oriented cycle.

(ii) Suppose $x_k \in \{x_0,\ldots,x_{k-1}\}$.  Let $y_0=x_k$, and for $i=0,1,\ldots$ let $y_{i+1} = T_k(y_i)$.  Then the last exit from $x_k$ before time $k$ was to $y_1$, and by induction if $y_1, \ldots, y_{i-1}$ are disjoint from $x_k$, then $y_{i-1}$ was visited before time $k$, and the last exit from $y_{i-1}$ before time $k$ was to $y_i$.  It follows that $y_i=x_k$ for some $i$, and hence $T_k$ contains an oriented cycle.
\end{proof}

\begin{lemma}
If $T_1, T_2 \in Rec(G)$ and $e_x(T_1)=e_x(T_2)$, then $T_1=T_2$.
\end{lemma}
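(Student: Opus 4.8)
The statement is that the rotor-router operation $e_x$ is injective on $Rec(G)$, the set of oriented spanning trees rooted at the sink. The natural strategy is to show that $e_x$ is \emph{reversible}: from knowledge of the output configuration $e_x(T)$ alone (together with the fixed vertex $x$ and sink $s$), one can reconstruct $T$ uniquely. First I would observe that a chip walking from $x$ to $s$ performs a finite sequence of steps, and the key invariant is that the number of times a rotor is advanced at a vertex $v$ equals the number of times the chip \emph{exits} $v$, which in turn (since the chip starts at $x$ and ends at $s$) equals the number of times it \emph{enters} $v$, except at $x$ where it exits once more than it enters and at $s$ where it enters once more than it exits. So the walk is an Eulerian-type path in the multigraph of traversed edges.

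The cleanest route is to run the process \emph{backwards}. Given the final configuration $U = e_x(T)$, place a chip at $s$ and perform the reverse rotor-router moves: at each step, the chip at the current vertex $v$ moves backward along the rotor that points \emph{to} $v$, i.e.\ it retreats to the vertex $w$ from which the last step into $v$ came, and then the rotor at $w$ is rotated one notch \emph{counterclockwise} (the inverse of the forward rotation). I would argue by induction on the number of forward steps $k$ that after $k$ reverse steps applied to $(U, s)$ we recover exactly the intermediate state $(T_{k-j}, x_{k-j})$ of the forward process, using Lemma~\ref{cyclecriterion} to control the structure of the intermediate configurations: because each intermediate $T_j$ is either in $Rec(G)$ or in $Cyc_{x_j}(G)$, the rotor configuration together with the chip location always determines unambiguously which rotor was \emph{most recently} advanced, so the reverse move is well-defined and deterministic. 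The process terminates precisely when the chip returns to $x$ with the configuration back in $Rec(G)$, which by Lemma~\ref{cyclecriterion}(ii) happens at the right moment. Hence $T$ is recovered from $U$ and $x$ alone, giving injectivity.

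Concretely the argument runs: (1) record the forward trajectory $(T_0 = T, x_0 = x), (T_1,x_1), \ldots, (T_N, x_N = s)$ where $N$ is the hitting time of $s$; (2) define the reverse step and show that from any state $(T_j, x_j)$ with $j \geq 1$ one can compute $(T_{j-1}, x_{j-1})$ — the chip came into $x_j$ from the vertex $x_{j-1}$, which is identified as the unique vertex whose rotor in $T_j$ points to $x_j$ and was the last to be advanced, and this is pinned down by the $Cyc_{x_j}$ / $Rec(G)$ dichotomy of Lemma~\ref{cyclecriterion}(i); (3) show the reverse process started from $(e_x(T), s)$ runs for exactly $N$ steps and ends at $(T, x)$; (4) conclude that if $e_x(T_1) = e_x(T_2)$, running the reverse process from the common configuration yields $T_1 = T_2$.

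The main obstacle is step (2): making precise the claim that in an intermediate configuration the ``last rotor advanced'' is determined by the pair (configuration, chip position). The oriented cycle structure guaranteed by Lemma~\ref{cyclecriterion}(i) is exactly what one needs — the chip sits on the unique oriented cycle through $x_j$ (if there is one), and following the cycle backward from $x_j$ locates $x_{j-1}$; if there is no cycle, one must argue instead via the spanning-tree structure that $x_{j-1}$ is still uniquely recoverable. Care is needed because the naive ``invert the last move'' description presupposes we know the last move, so the real content is showing this information is encoded in the static data. Once that lemma is in hand, the inductive bookkeeping in steps (3)–(4) is routine.
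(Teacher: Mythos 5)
Your proposal is correct and is essentially the paper's own argument: the paper likewise proves injectivity by showing that each intermediate state $(U,y)$ has a unique predecessor, using Lemma~\ref{cyclecriterion}(i) to locate the predecessor as the vertex preceding $y$ on the oriented cycle when $U \notin Rec(G)$ (any other candidate $z$ with $U(z)=y$ would yield a state in neither $Rec(G)$ nor $Cyc_z(G)$), and Lemma~\ref{cyclecriterion}(ii) together with the path in $U$ from $x$ to the sink to handle the case $U \in Rec(G)$. The one step you left open --- recovering the predecessor when the intermediate configuration is acyclic --- is exactly the path-following argument the paper supplies, so no change of strategy is needed.
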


\begin{proof}



We will show that $T$ can be recovered from $e_x(T)$ by reversing one rotor step at a time.
Given rotor configurations $U,U'$ and vertices $y,y'$, we say that $(U',y')$ is a predecessor of $(U,y)$ if a chip at $y'$ with rotors configured according to $U'$ would move to $y$ in a single step with resulting rotors configured according to $U$.  For each neighbor $z \rightarrow y$ with $U(z)=y$ there is a unique predecessor of the form $(U',z)$, which we will denote $P_z(U,y)$.

Suppose $(U,y)$ is an intermediate state in the evolution from $T$ to $e_x(T)$.  If $U \notin Rec(G)$, then by case (i) of Lemma~\ref{cyclecriterion} there is a cycle of rotors $U(y)=y_1, U(y_1)=y_2, \ldots, U(y_n)=y$.  If $U(z)= y$ and $z \neq y_n$, then $z$ is not in this cycle, so the predecessor $P_z(U,y)$ has a cycle disjoint from its chip location.  Thus $P_z(U,y)$ does not belong to $Rec(G)$ or to $Cyc_z(G)$, so by Lemma~\ref{cyclecriterion} it cannot be an intermediate state in the evolution from $T$ to $e_x(T)$.  The state immediately preceding $(U,y)$ in the evolution from $T$ to $e_x(T)$ must therefore be $P_{y_n}(U,y)$.

Now suppose $U \in Rec(G)$.  By case (ii) of Lemma~\ref{cyclecriterion}, $U$ is the rotor configuration when $y$ is first visited.  If $y=x$, then $U=T$.  Otherwise, let $x=x_0 \rightarrow x_1 \rightarrow \ldots \rightarrow x_k=s$ be the path in $U$ from $x$ to the sink.  Then the last exit from $x$ before visiting $y$ was to $x_1$.  By induction, if $x_1, \ldots, x_{j-1}$ are different from $y$, then $x_{j-1}$ was visited before $y$ and the last exit from $x_{j-1}$ before visiting $y$ was to $x_j$.  It follows that $x_j=y$ for some $j$, and the state immediately preceding $(U,y)$ must be $P_{x_{j-1}}(U,y)$.
\end{proof}

Thus for any vertex $x$ of $G$, the operation $e_x$ of adding a chip at $x$ and routing it to the sink acts invertibly on the set of states $Rec(G)$ whose rotors form oriented spanning trees rooted at the sink.  It is for this reason that we call these states recurrent.  We define the {\it rotor-router group} $RR(G)$ as the subgroup of the permutation group of $Rec(G)$ generated by $\{e_x\}_{x\in G}$.  Note that if there are two (indistinguishable) chips on $G$ and each takes a single step according to the rotor-router rule, the resulting rotor configuration does not depend on the order of the two steps.  Thus the operators $e_x$ commute, and the group $RR(G)$ is abelian; for a general discussion of this property, which is shared by a number of models including the abelian sandpile and the rotor-router, see \cite{DF}.

\begin{lemma}
\label{transitivity}
$RR(G)$ acts transitively on $Rec(G)$.
\end{lemma}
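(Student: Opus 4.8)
The goal is to prove that for any two spanning trees $T, T' \in Rec(G)$ there is $g \in RR(G)$ with $gT = T'$. A first reduction: since $Rec(G)$ is finite, every generator $e_x$ has finite order, so $e_x^{-1}$ is a positive power of $e_x$; hence an arbitrary element of $RR(G)$ is realized by a finite sequence of honest chip-additions (each routed to the sink), and it suffices to produce, for each target $T'$, \emph{some} such sequence transforming a fixed reference tree $T_0$ into $T'$. Equivalently, I must show the orbit $RR(G)\cdot T_0$ exhausts $Rec(G)$.

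The plan is to install the rotors of the target $T'$ by routing chips in a carefully chosen order. Enumerate the non-sink vertices $v_1,\dots,v_m$ so that each $v_i$ precedes its $T'$-parent (a leaves-first, reverse breadth-first order of $T'$). At stage $i$, assuming the rotors at $v_1,\dots,v_{i-1}$ already agree with $T'$, I would route chips out of $v_i$ until the rotor at $v_i$ points to its $T'$-parent; each such routing advances the rotor at $v_i$ by at least one step of its cyclic order, so finitely many suffice to reach any target direction — \emph{provided} the earlier rotors are not permanently disturbed. Controlling this is where the acyclicity characterizations proved above enter: a chip's trajectory admits a last-exit decomposition, and the intermediate configurations lie in $Rec(G) \cup Cyc_{y}(G)$ by Lemma~\ref{cyclecriterion}, which constrains how the rotors on the already-matched portion can move. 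An alternative, and possibly cleaner, route is to work backwards: apply the reversal procedure from the proof that $e_x$ is invertible to un-route chips from $T'$, and show that these reverse steps already sweep out all of $Rec(G)$.

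The main obstacle is exactly the bookkeeping just indicated: a single chip-routing typically changes the rotors at many vertices simultaneously, so a naive ``correct one rotor, freeze it, move on'' induction does not close, and the crux is a lemma guaranteeing that adjusting the rotor at $v_i$ can be carried out without spoiling — or while automatically repairing — the rotors already set to $T'$. I expect to handle this with a monovariant (for instance, the number of vertices at which the current configuration disagrees with $T'$, or a weighted rotor-distance to $T'$ along the edges of $T'$) combined with the spanning-tree structure, the last-exit decomposition doing the essential work. Once transitivity is established, it combines with the freeness of the action (treated separately) to give $|RR(G)| = |Rec(G)|$, the number of oriented spanning trees of $G$ rooted at $s$, matching $|SP(G)|$ and providing the first step toward the isomorphism $RR(G) \cong SP(G)$.
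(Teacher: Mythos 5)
Your outline stops exactly where the real work begins. The vertex-by-vertex installation scheme is not a proof: routing a chip out of $v_i$ until its rotor points to its $T'$-parent will in general turn the rotors at many other vertices along the chip's path to the sink, including vertices you have already ``fixed,'' and neither Lemma~\ref{cyclecriterion} nor the leaves-first ordering of $T'$ gives any control over which rotors get disturbed or whether they get repaired. The monovariants you propose (number of disagreeing vertices, weighted rotor-distance to $T'$) are not shown to decrease, and it is easy to see they need not decrease under a single chip-routing, since one routing can increase the number of disagreements by the length of the chip's trajectory. The alternative ``un-route chips from $T'$ and show the reverse steps sweep out all of $Rec(G)$'' simply restates transitivity. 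So the crux you yourself identify is genuinely missing, and I do not see how to close it along these lines without substantial new ideas.

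The difficulty disappears if you stop trying to correct one rotor at a time and instead correct all rotors simultaneously by one step each, exploiting the abelian property. Given $T_1,T_2 \in Rec(G)$, let $u(x)$ be the number of rotor turns needed to get from $T_1(x)$ to $T_2(x)$, and start $u(x)$ chips at each vertex $x$ with rotors in configuration $T_1$. Let every chip take exactly one step: the rotors are now precisely in configuration $T_2$, with some number $v(x)$ of chips sitting at each $x$. Now let all chips continue to the sink. Reading the same evolution in two ways (from the beginning, and from the moment after the first steps) gives
\[ \Bigl( \sum_{x} u(x)e_x \Bigr) T_1 = \Bigl( \sum_{x} v(x)e_x \Bigr) T_2, \]
and since each $e_x$ acts invertibly on $Rec(G)$ (which you have already established), the element $g=\sum_x (u(x)-v(x))e_x \in RR(G)$ satisfies $gT_1=T_2$. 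This is essentially the paper's argument; note that it uses only the commutativity and invertibility of the $e_x$, with no induction over vertices and no need to protect previously set rotors.
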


\begin{proof}
Given $T_1, T_2 \in Rec(G)$, let $u(x)$ be the number of rotor turns needed to get from $T_1(x)$ to $T_2(x)$.  Let $v(x)$ be the number of chips ending up at $x$ if $u(y)$ chips start at each vertex $y$, with rotors starting in configuration $T_1$, and each chip takes a single step.  After each chip has taken a step, the rotors are in configuration $T_2$, hence
		\[ \left( \sum_{x \in V(G)} u(x)e_x \right) T_1 =  \left( \sum_{x \in V(G)} v(x)e_x \right) T_2. \]
Letting $g = \sum_{x \in V(G)} (u(x)-v(x))e_x$ we obtain $T_2 = gT_1$.
\end{proof}

\begin{theorem}
\label{groupisom}
Let $G$ be a strongly connected finite directed graph without loops, let $RR(G)$ be its rotor-router group, and $SP(G)$ its sandpile group.  Then $RR(G) \simeq SP(G)$.
\end{theorem}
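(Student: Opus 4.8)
The plan is to realise $RR(G)$ as a quotient of $\Z^n$ by the sandpile lattice $\Delta$, and then to conclude by a cardinality count. Since the operators $e_{x_1},\dots,e_{x_{n-1}},e_s$ commute and generate $RR(G)$, and since $e_s$ is the identity of $RR(G)$ (a chip started at the sink has already arrived), there is a unique surjective group homomorphism $\psi\colon \Z^n\to RR(G)$ carrying the standard basis vector at $x_i$ to $e_{x_i}$ and the one at $s$ to the identity. The first task is to show that $\psi$ annihilates $\Delta=\langle s,\Delta_{x_1},\dots,\Delta_{x_{n-1}}\rangle$; since $\psi$ already kills the basis vector at $s$, it suffices to check $\psi(\Delta_{x})=\mathrm{id}$ for each non-sink vertex $x$. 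Writing $d=\deg(x)$, this amounts to the rotor identity $e_x^{\,d}=\prod_{x\to y} e_y$ in $RR(G)$, the product taken over the out-neighbours of $x$ with multiplicity. To prove it, I would route $d$ chips from $x$ to the sink: because single rotor steps (and hence the $e_v$) commute, the routings may be reordered so that first each of the $d$ chips takes one step, after which the rotor at $x$ has completed a full cycle and returned to its initial position while exactly one chip sits on each out-neighbour of $x$, and only then do those chips proceed to the sink. This yields $e_x^{\,d}=\prod_{x\to y}e_y$, i.e.\ $\psi(\Delta_x)=\mathrm{id}$, so $\psi$ descends to a surjection $\bar\psi\colon SP(G)=\Z^n/\Delta\to RR(G)$.

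It remains to see that $\bar\psi$ is injective, which I would deduce from $|SP(G)|=|RR(G)|$. For the right-hand side: by Lemma~\ref{transitivity}, $RR(G)$ acts transitively on $Rec(G)$, and it acts faithfully by construction, being a subgroup of the symmetric group of $Rec(G)$; because $RR(G)$ is abelian, the stabiliser of one point equals the stabiliser of every point (stabilisers within a single orbit are conjugate, and conjugate subgroups of an abelian group coincide), so the common stabiliser acts trivially and is therefore the trivial subgroup. Hence the action of $RR(G)$ on $Rec(G)$ is simply transitive and $|RR(G)|=|Rec(G)|$, the number of oriented spanning trees of $G$ rooted at $s$. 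For the left-hand side: $|SP(G)|$ equals the absolute value of the determinant of the reduced Laplacian of $G$ (delete the row and column of $s$), which by the matrix--tree theorem also counts the oriented spanning trees of $G$ rooted at $s$. Therefore $|SP(G)|=|RR(G)|$, and a surjection between finite groups of equal order is an isomorphism.

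The step I expect to be the main obstacle is the careful verification of $e_x^{\,\deg x}=\prod_{x\to y}e_y$ in the directed setting. One must correctly invoke the abelian property to justify performing all $\deg x$ first steps before any chip continues, be precise that ``$\deg x$'' is the out-degree, and observe that one complete turn of the rotor at $x$ deposits exactly one chip on each out-neighbour and restores the rotor; the sign conventions in $\Delta_x$ (value $-\deg x$ at $x$ and $+1$ at the out-neighbours) must then be matched to this group relation. Once the relation is in hand, the construction of $\psi$, the passage to the quotient, and the order count are all routine, so the bulk of the work is this single commutation-and-bookkeeping argument.
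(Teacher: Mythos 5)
Your proposal is correct and follows essentially the same route as the paper: define the homomorphism $\Z^V \to RR(G)$, kill the lattice $\Delta$ via the rotor relation $\operatorname{outdeg}(x)\,e_x = \sum_{y \leftarrow x} e_y$ (justified exactly as you do, by letting $\operatorname{outdeg}(x)$ chips each take one step so the rotor at $x$ completes a full turn), and then conclude by comparing orders using transitivity of the action on $Rec(G)$ together with the matrix--tree theorem. The only cosmetic difference is that you upgrade transitivity to simple transitivity via the abelian-stabilizer argument, whereas the paper only needs the inequality $\#RR(G) \geq \#Rec(G)$; both suffice.
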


\begin{proof}
Let $V$ be the vertex set of $G$.  Recall \cite{CR} 
that the sandpile group can be expressed as the quotient
	\[ SP(G) \simeq \Z^V / (s,\Delta_x)_{x\in V} \]
where $s \in V$ is the sink and
	\[ \Delta_x = \sum_{y\leftarrow x} y -  \text{outdeg}(x) x. \]
Define $\phi : \Z^{G} \rightarrow RR(G)$ by
	\[ \phi(u) = \sum_{x \in V(G)} u(x)e_x. \]
Since adding outdeg$(x)$ chips at $x$ and letting each chip take one step results in one chip at each neighbor $y \leftarrow x$ with the rotors unchanged, we have
	\[ \text{outdeg}(x) e_x = \sum_{y \leftarrow x} e_y. \]
Thus $\phi(\Delta_x)=0$ and $\phi$ descends to a map $SP(G) \rightarrow RR(G)$.  This map is trivially surjective; to show it is injective, by Lemma~\ref{transitivity} we have
	\[ \# RR(G) \geq \# Rec(G) = \# SP(G), \]
where the equality on the right is the matrix-tree theorem \cite{Stanley}. 
\end{proof}

\subsection{Aggregation on Regular Trees}
\label{rotoraggtree}

Let $T$ be the infinite $d$-regular tree.  Fix an origin vertex $o$ in $T$.  In {\it rotor-router aggregation}, we grow a cluster of points in $T$ by repeatedly starting chips at the origin and letting them walk until they exit the cluster.  Beginning with $A_1 = \{o\}$, define the cluster $A_n$ inductively by
	\[ A_n = A_{n-1} \cup \{x_n\}, \qquad n >1. \]
where $x_n \in T$ is the endpoint of a rotor-router walk started at $o$ and stopped on first exiting $A_{n-1}$.  We do not change the positions of the rotors when adding a new chip. 

In this section we use the group isomorphism proved in the last section to show that $A_n$ is a perfect ball for suitable values of $n$ (Theorem~\ref{aggregintro}).  The proof makes crucial use of the calculation of the order of the subgroup of the sandpile group of $T_n$ generated by the root, Proposition~\ref{rootsubgroup}.

A function $H$ on the vertices of a directed graph $G$ is {\it harmonic} if
	\[ H(x) = \frac{1}{\text{outdeg}(x)} \sum_{y \leftarrow x} H(y) \]
for all vertices $x$.

\begin{lemma}
\label{HPinvariant}
Let $H$ be a harmonic function on the vertices of $G$.  Suppose chips on $G$ can be routed, starting with $u(x)$ chips at each vertex $x$ and ending with $v(x)$ chips at each vertex $x$, in such a way that the initial and final rotor configurations are the same.  Then
	\[ \sum_{x \in V(G)} H(x) u(x) = \sum_{x \in V(G)} H(x) v(x). \]
\end{lemma}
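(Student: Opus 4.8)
The plan is to exploit the fact that routing a single chip one step according to the rotor-router rule changes the ``potential'' $\sum_x H(x)(\text{number of chips at }x) + \sum_{\text{rotors}} w(\text{rotor})$ by exactly zero, provided we assign each rotor an appropriate weight built from the harmonic function $H$. This is the same bookkeeping device used in the proof of Proposition~\ref{HP}. Concretely, for each vertex $x$ fix a cyclic ordering $v_1, v_2, \ldots, v_{\mathrm{outdeg}(x)}$ of the out-neighbors of $x$ in the order the rotor at $x$ cycles through them, and assign weight $w(x,v_1)=0$ and $w(x,v_{i}) = w(x,v_{i-1}) + H(x) - H(v_i)$ to the rotor at $x$ pointing to $v_i$. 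The telescoping sum together with harmonicity of $H$, namely $\mathrm{outdeg}(x)\,H(x) = \sum_{y\leftarrow x} H(y)$, guarantees that after the rotor at $x$ has been turned a full cycle of $\mathrm{outdeg}(x)$ steps, its weight returns to its original value; so the weight assignment is consistent (well-defined as a function of the rotor's current direction).

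The key steps, in order: first, set up the weight function $w$ as above and check consistency using harmonicity. Second, observe that one rotor-router move of a chip at $x$ — rotating the rotor at $x$ from pointing at $v_{i-1}$ to pointing at $v_i$, then moving the chip from $x$ to $v_i$ — changes the total potential $\Phi = \sum_x H(x)(\text{chips at }x) + \sum_x w(x, \text{current direction of rotor at }x)$ by $\big(H(v_i) - H(x)\big) + \big(w(x,v_i) - w(x,v_{i-1})\big) = \big(H(v_i)-H(x)\big) + \big(H(x)-H(v_i)\big) = 0$. Third, conclude that $\Phi$ is invariant under any sequence of rotor-router moves. Fourth, apply this to the routing in the hypothesis: the initial potential is $\sum_x H(x)u(x) + \sum_x w(x, T_0(x))$ where $T_0$ is the common initial/final rotor configuration, and the final potential is $\sum_x H(x)v(x) + \sum_x w(x, T_0(x))$. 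Since the rotor contributions are identical (same configuration $T_0$ at the start and end), the chip contributions must be equal, giving $\sum_x H(x)u(x) = \sum_x H(x)v(x)$.

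I expect no serious obstacle here; this is essentially a repackaging of the invariant already used in Proposition~\ref{HP}, now with a general harmonic $H$ in place of the specific harmonic measure. The one point requiring a small amount of care is the consistency of the rotor weights — one must verify that $\sum_{i=1}^{\mathrm{outdeg}(x)} \big(H(x) - H(v_i)\big) = 0$, which is exactly the harmonicity condition $\mathrm{outdeg}(x) H(x) = \sum_{y \leftarrow x} H(y)$ — and the bookkeeping that the routing involves only finitely many moves, so that the telescoping and the invariance argument are literally finite (the hypothesis that all chips reach their final positions guarantees this). A secondary cosmetic point is that the sums $\sum_x H(x) u(x)$ should be understood as finite sums (only finitely many vertices carry chips and are visited), so no convergence issue arises.
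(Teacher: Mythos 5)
Your proof is correct and is essentially the paper's argument in different packaging: the rotor weights $w(x,v_i)$ you build from $H$ (borrowed from Proposition~\ref{HP}) are exactly the partial sums that the paper handles by telescoping $\sum_i \big(H(x_i)-H(y_i)\big)$ over the moves and observing that, since the rotor configuration returns to its initial state, each rotor makes an integer number of full turns, each turn contributing $\sum_{y\leftarrow x}\big(H(x)-H(y)\big)=0$ by harmonicity. Your consistency check for the weights is that same harmonicity computation, so the two proofs coincide in content.
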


\begin{proof}
Let $u=u_0, u_1, \ldots, u_k=v$ be the intermediate configurations.  If $u_{i+1}$ is obtained from $u_i$ by routing a chip from $x_i$ to $y_i$, then
	\begin{equation}\label{stepbystep} \sum_{x \in V(G)} H(x) (u(x)-v(x)) = \sum_i H(x_i)-H(y_i). \end{equation}
If the initial and final rotor configurations are the same, then each rotor makes an integer number of full turns, so the sum in (\ref{stepbystep}) can be written 
	\[ \sum_i H(x_i)-H(y_i) = \sum_{x \in V(G)} N(x) \sum_{y \leftarrow x} (H(x)-H(y)) \]
where $N(x)$ is the number of full turns made by the rotor at $x$.  Since $H$ is harmonic, the inner sum on the right vanishes.
\end{proof}

Let $T_n$ be the regular tree of degree $d$ and height $n$, with an edge added from the root $r$ to the sink $o$.  Denote by $(X_t)_{t\geq 0}$ the simple random walk on $T_n$, and let $\tau$ be the first hitting time of the set consisting of the leaves and the sink.  Fix a leaf $z$ of $T_n$, and let
	\begin{equation} \label{ourharmonicfunction} H(x) = \PP_x(X_\tau = z) \end{equation}
be the probability that random walk started at $x$ and stopped at time $\tau$ stops at $z$.  

The quantity $H(r)$ can be computed by a standard martingale argument.  Recall that the process
	\[ M_t = a^{-|X_t|} \]
is a martingale, where $a=d-1$ and $|x|$ denotes the distance from $x$ to the sink.  Since $M_t$ has bounded increments and $\EE_r \tau < \infty$, we obtain from optional stopping
	\[ a^{-1} = \EE_r M_0 = \EE_r M_\tau = p + (1-p) a^{-n} \]
where $p = \PP_r(X_\tau = o)$.  Solving for $p$ we obtain
	\begin{equation} \label{gamblersruin} \PP_r(X_\tau=o) = \frac{a^{n-1}-1}{a^n-1}. \end{equation}
In the event that the walk stops at a leaf, by symmetry it is equally likely to stop at any leaf.  Since there are $a^{n-1}$ leaves, we obtain from (\ref{gamblersruin})
	\begin{equation} \label{hittingprobofleaf} H(r) = \frac{1 - \PP_r(X_\tau = o)}{a^{n-1}}
										= \frac{a-1}{a^n-1}. \end{equation}

\begin{lemma}
\label{exitmeasure}
Let $a=d-1$.  If the initial rotor configuration on $T_n$ is acyclic, then starting with $\frac{a^n-1}{a-1}$ chips at the root, and stopping each chip when it reaches a leaf or the sink, exactly one chip stops at each leaf, and the remaining $\frac{a^{n-1}-1}{a-1}$ chips stop at the sink.  Moreover the starting and ending rotor configurations are identical.
\end{lemma}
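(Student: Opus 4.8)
\textbf{Proof proposal for Lemma~\ref{exitmeasure}.}
The plan is to use the rotor-router group $RR(T_n)$ and its isomorphism with the sandpile group $SP(T_n)$ (Theorem~\ref{groupisom}), together with the computation of $\#R(T_n)$ in Proposition~\ref{rootsubgroup}. Write $a = d-1$ and $N = \frac{a^n-1}{a-1} = 1 + a + \ldots + a^{n-1} = q_n$; note this is exactly the order of the cyclic subgroup $R(T_n) = (\hat r)$ of $SP(T_n)$. Under the isomorphism $SP(T_n) \simeq RR(T_n)$, the generator $\hat r$ corresponds to the operator $e_r$ of adding a chip at the root and routing it to the sink. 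Hence $e_r^{N}$ is the identity permutation of $Rec(T_n)$: if we start from a recurrent (acyclic, spanning-tree) rotor configuration $T$, add $N$ chips at the root one at a time and route each to the sink, the final rotor configuration equals $T$. Since the hypothesis says the initial configuration is acyclic, it lies in $Rec(T_n)$, so the ``starting and ending rotor configurations are identical'' conclusion follows immediately.

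It remains to identify \emph{where} the $N$ chips stop. For this I would run all $N$ chips ``in parallel'': by the abelian property the final rotor configuration and the multiset of stopping locations do not depend on the order in which chip moves are performed, so we may route chips from leaves and the sink as soon as they arrive and record only the count $v(x)$ of chips absorbed at each leaf $x$ (and at the sink $o$). Because the initial and final rotor configurations agree, every rotor makes an integer number of full turns during the whole process, so Lemma~\ref{HPinvariant} applies: for any harmonic function $H$ on $T_n$ (harmonic off the absorbing set, i.e.\ off the leaves and sink),
\[
\sum_{x} H(x)\, u(x) = \sum_{x} H(x)\, v(x),
\]
where $u = N\delta_r$ is the initial configuration. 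Taking $H$ as in (\ref{ourharmonicfunction}), the hitting probability of a fixed leaf $z$, gives $N\, H(r) = v(z)$, and by (\ref{hittingprobofleaf}) we have $H(r) = \frac{a-1}{a^n-1} = 1/N$, so $v(z) = 1$ for every leaf $z$. Since there are $a^{n-1}$ leaves, exactly $a^{n-1}$ of the $N$ chips are absorbed at leaves, and the remaining $N - a^{n-1} = \frac{a^{n-1}-1}{a-1}$ chips are absorbed at the sink. (Alternatively the count at the sink can be gotten directly from the harmonic function $H(x) = \PP_x(X_\tau = o)$ via (\ref{gamblersruin}).)

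The one technical point that needs care — and which I expect to be the main obstacle — is justifying that Lemma~\ref{HPinvariant} may legitimately be applied here, i.e.\ that the routing of $N$ chips from the root to the absorbing set can be arranged as a finite sequence of single-chip moves returning the rotors to their starting configuration, with absorbed chips simply held in place. One must check that ``stopping'' a chip at a leaf or at $o$ is consistent with the hypothesis of Lemma~\ref{HPinvariant}, which is stated for chips routed on $G$ with no absorption; the clean way is to work with the graph $T_n$ in which the leaves and the sink are genuine sink-like vertices (out-degree handled so they never move), observe that $e_r$ acts on $Rec(T_n)$ and that $e_r^N = \mathrm{id}$ by Proposition~\ref{rootsubgroup} and Theorem~\ref{groupisom}, and then invoke Lemma~\ref{HPinvariant} with $N(x)$ the total number of full rotor turns at $x$ — which is a nonnegative integer precisely because the rotor configuration is restored. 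Everything else is bookkeeping: the acyclicity hypothesis guarantees we are in $Rec(T_n)$, the symmetry of $T_n$ under the automorphisms $\sigma_\alpha$ makes $H$ depend only on the height (so all leaves are hit equally, as already used), and the arithmetic $N - a^{n-1} = \frac{a^{n-1}-1}{a-1}$ is immediate.
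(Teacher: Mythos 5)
Your proposal is correct and follows essentially the same route as the paper's proof: both deduce that $e_r$ has order $\frac{a^n-1}{a-1}$ from Theorem~\ref{groupisom} and Proposition~\ref{rootsubgroup} (so the rotors return to their initial state), and both then apply Lemma~\ref{HPinvariant} with the harmonic function $H$ of (\ref{ourharmonicfunction}) and the value $H(r)=\frac{a-1}{a^n-1}$ from (\ref{hittingprobofleaf}) to conclude that exactly one chip stops at each leaf, with the remainder at the sink by arithmetic.
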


\begin{proof}
By Theorem~\ref{groupisom} and Proposition~\ref{rootsubgroup}, the element $e_r \in RR(T_n)$ has order $\frac{a^n-1}{a-1}$, so the starting and ending rotor configurations are identical.  Fix a leaf $z$ and let $H$ be the harmonic function given by (\ref{ourharmonicfunction}).  By Lemma~\ref{HPinvariant} and (\ref{hittingprobofleaf}), the number of chips stopping at $z$ is 
	\[  \sum H(x) v(x) = \sum H(x) u(x) = \frac{a^n-1}{a-1} H(r) = 1. \]
Since there are $a^{n-1}$ leaves, the remaining $\frac{a^n-1}{a-1}-a^{n-1} = \frac{a^{n-1}-1}{a-1}$ chips stop at the sink.
\end{proof}

Let $T$ be the infinite $d$-regular tree.  The ball of radius $\rho$ centered at the origin in $o \in T$ is
	\[ B_\rho = \{x \in T ~:~ |x| \leq \rho \} \]
where $|x|$ is the length of the shortest path from $o$ to $x$.  Write
	\[ b_\rho = \# B_\rho = 1 + (a+1) \frac{a^\rho-1}{a-1}. \]
As the following result shows, provided we start with an acyclic configuration of rotors, the rotor-router aggregation cluster $A_n$ is a perfect ball at those times when an appropriate number of chips have aggregated.  It follows that at all other times, the cluster is as close as possible to a ball: if $b_\rho<n<b_{\rho+1}$ then $B_\rho \subset A_n \subset B_{\rho+1}$.

\begin{theorem}
\label{treecirc}
Let $A_n$ be the region formed by rotor-router aggregation on the infinite $d$-regular tree, starting from $n$ chips at the origin.  If the initial rotor configuration is acyclic, then $A_{b_\rho} = B_\rho$ for all $\rho \geq 0$.
\end{theorem}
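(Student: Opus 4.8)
The plan is to induct on $\rho$, using Lemma~\ref{exitmeasure} to show that each successive shell of the tree gets filled exactly once. The base case $\rho=0$ is immediate since $b_0=1$ and $A_1=\{o\}$. For the inductive step, suppose $A_{b_{\rho-1}}=B_{\rho-1}$ and that the current rotor configuration, restricted to $B_{\rho-1}$, is acyclic (this acyclicity needs to be tracked as part of the induction hypothesis, since it is what lets us apply Lemma~\ref{exitmeasure}). We must show that after adding $b_\rho - b_{\rho-1} = (a+1)a^{\rho-1}$ more chips, each started at $o$ and stopped on first exiting the current cluster, the cluster grows to exactly $B_\rho$, and the rotors inside $B_{\rho-1}$ return to an acyclic configuration.

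The key idea is to recognize $B_{\rho-1}$ (together with the vertices at distance $\rho$, which act as ``leaves'') as a copy of the finite tree $T_\rho$ from Section~\ref{sandpiletree}: collapse the distance-$\rho$ vertices and the ``outside'' to the sink, and add the edge from $o$ (playing the role of the root $r$) to the sink. A chip started at $o$ that walks until exiting $B_{\rho-1}$ is exactly a chip started at the root of $T_\rho$ that walks until hitting a leaf or the sink. Now $T_\rho$ has $a^{\rho-1}\cdot(\text{something})$... more precisely, in the notation of Lemma~\ref{exitmeasure} with the tree of height $\rho$, the root element $e_r$ has order $\frac{a^\rho-1}{a-1}$, and starting with $\frac{a^\rho-1}{a-1}$ chips at the root, exactly one chip stops at each of the $a^{\rho-1}$ leaves and $\frac{a^{\rho-1}-1}{a-1}$ return to the sink, with rotors unchanged. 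In our setting, each child of $o$ is itself the root of a copy of $T_{\rho-1}$-type structure, and applying Lemma~\ref{exitmeasure} to each of the $d=a+1$ principal branches: feeding $\frac{a^\rho-1}{a-1}$ chips into each branch deposits exactly one chip at each distance-$\rho$ vertex in that branch (there are $a^{\rho-1}$ of them) and returns the rest toward $o$. Summing over the $a+1$ branches, and accounting for the bookkeeping of chips passing through $o$, the net effect of $(a+1)a^{\rho-1}$ chips emitted from $o$ is to occupy precisely the shell at distance $\rho$, one chip per site, with the rotors inside $B_{\rho-1}$ restored to their (acyclic) starting configuration. Thus $A_{b_\rho}=B_\rho$ and the acyclicity hypothesis is preserved.

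\textbf{Main obstacle.} The delicate point is the chip-counting bookkeeping at the origin and the precise reduction to Lemma~\ref{exitmeasure}. Lemma~\ref{exitmeasure} is stated for a single tree $T_n$ with one edge from root to sink, and chips \emph{started at the root}; here chips start at $o$, pass repeatedly in and out of $o$, and are routed into $d=a+1$ symmetric branches. One must argue carefully that the aggregation process (add one chip at a time, stop on exiting the current cluster) produces the same final rotor configuration and the same occupied set as the ``batch'' process of Lemma~\ref{exitmeasure} (release all chips, stop each at a leaf or sink), invoking the abelian property of rotor-router walk and the fact that a chip which would be ``absorbed'' at an already-occupied distance-$\rho$ site in the batch picture corresponds to a chip that exits the cluster and halts in the aggregation picture. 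Getting the count $b_\rho - b_{\rho-1} = (a+1)a^{\rho-1}$ to match exactly the number of chips that Lemma~\ref{exitmeasure} sends to the leaves of the $a+1$ branches, while verifying that the leftover chips all funnel back to $o$ and out (hence do not disturb anything), is the crux; everything else is a routine induction. I would also double-check the edge case at $o$ itself, where the extra root-to-sink edge in the $T_\rho$ model must be matched against the actual geometry of the infinite tree.
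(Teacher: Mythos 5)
Your proposal is correct and follows essentially the same route as the paper: induction on $\rho$, identification of the occupied ball with the finite wired tree, and an application of Lemma~\ref{exitmeasure} to each of the $d=a+1$ principal branches with $\frac{a^\rho-1}{a-1}$ chips apiece, which restores the rotors to their initial acyclic state. The bookkeeping at the origin that you flag as the crux is resolved in the paper exactly as you suggest, by introducing a modified process $A'_n$ in which a chip also stops upon returning to $o$ (so that stage $\rho$ consumes $c_\rho-c_{\rho-1}=(a+1)\frac{a^\rho-1}{a-1}$ chips rather than $(a+1)a^{\rho-1}$) and then observing that, by the abelian property, this modified process is just a relabeling/time change of the original aggregation.
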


\begin{proof}
Define a modified aggregation process $A'_n$ as follows.  Stop the $n$-th chip when it either exits the occupied cluster $A'_{n-1}$ or returns to $o$, and let
	\[ A'_n = A'_{n-1} \cup \{x'_n\} \]
where $x'_n$ is the point where the $n$-th chip stops.  By relabeling the chips, this yields a time change of the original process, i.e.\ $A'_n = A_{f(n)}$ for some sequence $f(1), f(2), \ldots$.  Thus it suffices to show $A'_{c_\rho}=B_\rho$ for some sequence $c_1, c_2, \ldots$.  We will show by induction on $\rho$ that this is the case for
	\[ c_\rho = 1 + (a+1) \sum_{t=1}^{\rho} \frac{a^t-1}{a-1}, \]
and that after $c_\rho$ chips have stopped the rotors are in their initial state.  For the base case $\rho=1$, we have $c_1 = a+2=d+1$.  The first chip stops at $o$, and the next $d$ stop at each of the neighbors of $o$, so $A'_{d+1}=B_1$.  Since the rotor at $o$ has performed one full turn, it is back in its initial state.

Assume now that $A'_{c_{\rho-1}} = B_{\rho-1}$ and that the rotors are in their initial acyclic state.  Starting with $c_\rho - c_{\rho-1}$ chips at $o$ and letting each take one step, there are $\frac{a^\rho-1}{a-1}$ chips at each neighbor of $o$.  Since the rotor at $o$ has performed an integer number of full turns, the rotors remain in their initial acyclic state.  By Lemma~\ref{exitmeasure}, exactly one chip will stop at each leaf $z \in B_{\rho}-B_{\rho-1}$, and the remainder will stop at $o$.  Thus $A'_{c_\rho} = B_\rho$.  Moreover, by Lemma~\ref{exitmeasure}, once all chips have stopped, the rotors are once again in their initial state, completing the inductive step.
\end{proof}

\subsection{Recurrence and Transience}
\label{recurrenceandtransience}

\begin{figure}
	\centering
		\includegraphics[scale=.8]{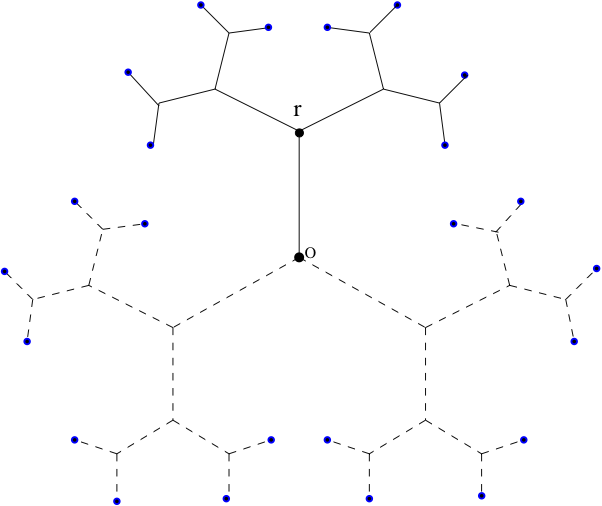}
		\\ ~ \\ ~ \\
		\includegraphics[scale=.75]{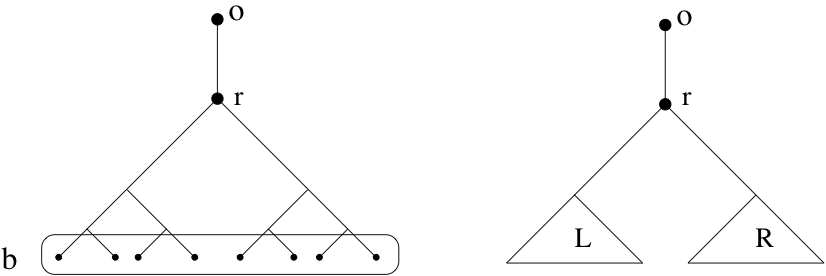}
	\caption{A branch of the regular ternary tree.}
	\label{fig:Tree1}
\end{figure}

In this section we explore questions of recurrence and transience for the rotor-router walk on regular trees. We aim to study to what extent the rotor-router walk behaves as a deterministic analogue of random walk.  We find that the behavior depends quite dramatically on the initial configuration of rotors.

Let $T_n$ be the $d$-regular tree of height $n$.  We collapse the leaves to a sink labeled $b$ for boundary, label the origin $o$ and treat it as both the source and a sink.  We will examine the hitting rates of the two sinks for various configurations of the model, and compare to the expected result for random walk.  



Since the origin is a sink, the recurrence and transience of chips on one principal branch of the tree are independent of the rotor configurations on the other branches. Therefore, as shown in Figure~\ref{fig:Tree1}, we focus on a single branch $Y_n$ of $T_n$.  By the root $r$ of $Y_n$ we will mean the unique node with $|r|=1$.  To each rotor direction we associate an index from $\{1,\ldots,d\}$, with direction $d$ corresponding to a rotor pointing to the parent vertex.  Rotors cycle through the indices in order.

\begin{lemma}
\label{ternarycase}
Let $Y_n$ be a principal branch of $T_n$, the regular ternary tree of height $n$.  If all rotors initially point in direction $1$, then the first $2^{n}-1$ chips started at $r$ alternate, the first stopping at $b$, the next stopping at $o$, the next at $b$, and so on until all the rotors again point in direction $1$.
\end{lemma}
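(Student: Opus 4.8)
The plan is to set up a recursion on the height $n$ of the branch, tracking both the escape sequence (whether each chip exits at $b$ or returns to $o$) and the rotor configuration after the chips have been processed. Write $Y_n$ for a principal branch of height $n$, with root $r$ at distance $1$ from $o$. I will prove by induction on $n$ the following strengthened statement: if all rotors in $Y_n$ initially point in direction $1$, then the first $2^n - 1$ chips released from $r$ alternate $b, o, b, o, \ldots, b$ (so $2^{n-1}$ chips reach $b$ and $2^{n-1}-1$ return to $o$), and after all $2^n-1$ chips have stopped, every rotor in $Y_n$ again points in direction $1$. The base case $n=1$: $Y_1$ is a single vertex $r$ whose children are leaves (i.e. copies of $b$) and whose direction-$3$ neighbor is $o$; the first chip at $r$ turns the rotor to direction $1$... wait, rotors cycle $1\to 2\to 3\to 1$ and the chip steps \emph{after} turning, so with the rotor initially at $1$ the first chip steps to child~$1$ ($=b$), the rotor now reads $2$; but there is only one chip for $n=1$ since $2^1-1=1$, and indeed the rotor does not return to direction $1$ — so I will instead take the base case $n=1$ to mean the single chip exits at $b$, and carry the rotor-state bookkeeping as "each rotor has advanced by the number of chips that passed through it," reconciling the full-cycle claim only at the level of the whole branch. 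The cleanest base case is actually $n=2$, computed directly as in the $b_1$ argument of Theorem~\ref{treecirc}.

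The heart of the induction is the following. Running $2^n-1$ chips through $r$: the rotor at $r$ cycles through directions $1,2,3$. By Lemma~\ref{exitmeasure} applied to the finite collapsed tree $T_n$ (or rather its branch), and more directly by the explicit harmonic-function computation (\ref{gamblersruin})--(\ref{hittingprobofleaf}) used there, I expect that over any block of $2^k-1$ chips the routing is governed by the order of $e_r$ in the rotor-router group, which by Theorem~\ref{groupisom} and Proposition~\ref{rootsubgroup} equals $\frac{a^k-1}{a-1} = 2^k-1$ for $a=2$. So after exactly $2^n-1$ chips the rotor configuration on the branch returns to its start. To get the \emph{alternation} pattern rather than just the count, I would argue recursively: when a chip from $r$ is sent into the child subtree $Y_{n-1}^{(i)}$ rooted at $r$'s $i$-th child, that child subtree behaves like an independent copy of $Y_{n-1}$ (with $r$ playing the role of the new origin/sink); by the inductive hypothesis its chips alternate $b,o,b,\ldots$, and a chip that "returns to $o$" of the subtree is a chip that returns to $r$ and then gets re-emitted. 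Interleaving the re-emissions from $r$'s three children according to $r$'s rotor cycle should produce exactly the global alternation. I would make this precise by tracking the quantity "number of chips currently held at $r$" as a function of time and showing it never exceeds $1$ and toggles, which is exactly what alternation at $o$ means.

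The main obstacle I anticipate is bookkeeping the \emph{interleaving}: the inductive hypothesis tells us the alternation pattern for a subtree run in isolation with all its chips released consecutively, but in the actual process the chips entering a given child subtree are interspersed in time with chips entering the other two subtrees and with returns to $r$. I will need a lemma of the form: the final rotor state and the multiset of exit locations produced by a subtree depend only on the \emph{number} of chips fed into it, not on the timing (this is the abelian property, already available via \cite{DF} and the commutativity of the $e_x$ noted after Lemma~\ref{transitivity}), plus a monotone "odometer" argument to pin down the \emph{order} in which the global exits occur. Concretely, I would show that after the first chip (which goes straight down one path to $b$, since all rotors point to child~$1$), the rotor at $r$ points to direction $2$; the pattern of subsequent exits is then forced by induction on each subtree together with the observation that $r$ emits to its children in round-robin order, and a chip returns to $o$ precisely when $r$'s rotor completes a cycle with all three child-subtrees momentarily "full" in the appropriate sense. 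Once the alternation and the return-to-initial-rotor-state are both established for $Y_n$, the statement of Lemma~\ref{ternarycase} for $T_n$ follows since, the origin being a sink, the three principal branches evolve independently and identically, and $2^n-1$ chips through the distinguished branch $Y_n$ give exactly the claimed behavior.
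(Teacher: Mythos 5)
Your overall skeleton --- induction on $n$ with the strengthened claim that the $2^n-1$ chips alternate \emph{and} that the rotors return to their initial configuration --- matches the paper's, and your observation that a child subtree responds to its $j$-th incoming chip exactly as in an isolated run is correct and is all that is needed to apply the inductive hypothesis: since a single chip walks to completion before the next is released, a subtree's rotors change only while a chip is inside it, so the interleaving is harmless. The abelian property and the ``monotone odometer argument'' you invoke to handle this are therefore unnecessary. The genuine gap is the step you leave as ``should produce exactly the global alternation.'' The group-theoretic input (Theorem~\ref{groupisom}, Proposition~\ref{rootsubgroup}, Lemma~\ref{exitmeasure}) yields only that after $2^n-1$ chips the rotors return to their initial state and that exactly $2^{n-1}$ chips reach $b$ and $2^{n-1}-1$ reach $o$; it gives no information about the \emph{order} in which these exits occur, and ``tracking the number of chips currently held at $r$'' is not a well-defined invariant in a model where chips are processed strictly one at a time. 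The order is exactly what the lemma asserts, so without pinning it down the proof is incomplete.

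What is needed, and what the paper supplies, is an explicit combinatorial analysis of the interleaving at $r$. Note first that $r$ has \emph{two} children, not three as you write: in the ternary tree each non-root vertex has $d-1=2$ children, with rotor direction $3$ pointing to the parent. Writing $L$ and $R$ for the two sub-branches, the first chip enters one sub-branch (say $R$) and exits at $b$; thereafter the chips fall into a period-four cycle --- one chip sent directly to $o$; one sent to $L$ and out to $b$; one sent to $R$, returned, then to $o$; one sent to $L$, returned, sent to $R$, and out to $b$ --- which advances each of $L$ and $R$ by exactly two chips per cycle while restoring the position of $r$'s rotor. Running this cycle $2^{n-2}-1$ times and treating the final two chips separately yields both the alternation and the recovery of the initial rotor state, with the inductive hypothesis invoked for $L$ and $R$ at chip count $2^{n-1}-1$ each. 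Some such explicit bookkeeping (or an equivalent substitute) is indispensable; as written, your proposal asserts the conclusion of the inductive step rather than proving it.
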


\begin{proof}
Induct on $n$. For $n=2$ the result is obvious as there is only one rotor, which sends the first chip in direction $2$ to $b$, the next chip up in direction $3$ to $o$, and the third chip in direction $1$ to $b$, at which point the rotor is again in its initial state.

Now suppose that the lemma holds for $Y_{n-1}$.  Let $L$ and $R$ be the two principal branches of $Y_n$.  We think of $L$ and $R$ as each having a rotor that points either to $b$ or back up to $r$. The initial state of these rotors is pointing to $r$. The first
chip is sent from the origin to $R$, which by induction sends it to $b$. Note that the root rotor is now pointing towards $R$, the $R$-rotor is pointing to $b$, and the $L$-rotor is pointing to $r$ (Figure~\ref{fig:FourSteps}a). We now observe that the next four chips form a
pattern that will be repeated. The second chip is sent directly to $o$ (Figure~\ref{fig:FourSteps}b) and
the third chip is sent to $L$ which sends it to $b$ (Figure~\ref{fig:FourSteps}c). The fourth chip
is sent to $R$, but by induction this chip is returned and then it is sent
to $o$ (Figure~\ref{fig:FourSteps}d). Finally, the fifth chip is sent to $L$, returned, sent to $R$, and
through to $b$ (Figure~\ref{fig:FourSteps}e). Note that the root rotor is now again pointing towards $R$,
the $R$-rotor is again pointing to $b$, and the $L$-rotor is again pointing to $r$. In this cycle of four chips, the two branches $R$ and $L$ see two chips apiece. This cycle repeats $2^{ n - 2 } - 1$
times, and each subtree sees $2^{ n - 1 } - 2$ chips.

\begin{figure}
	\centering
		\includegraphics[scale=1]{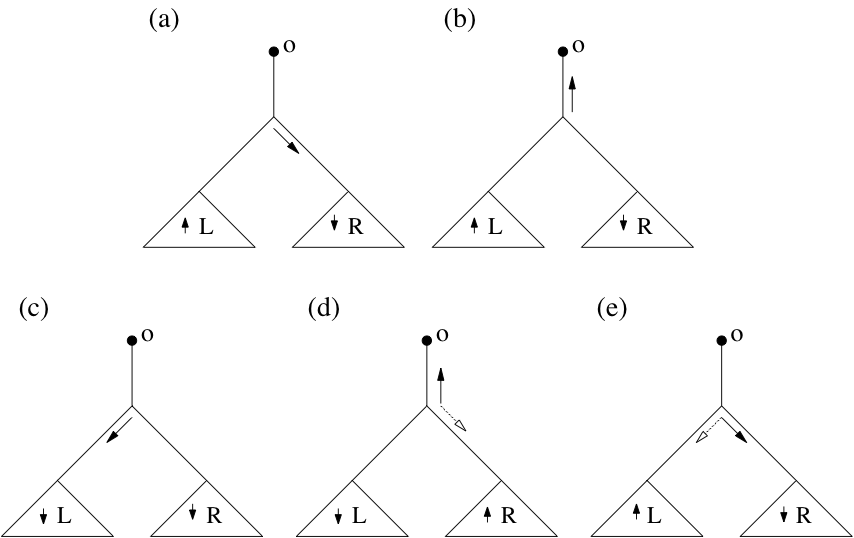}
	\caption{The four-chip cycle, which begins after the first chip has been routed to $b$.}
	\label{fig:FourSteps}
\end{figure}

Recall that we sent the first chip to $R$, so that it has seen a total of $2^{ n - 1 }
- 1$ chips. By induction, all the rotors in $R$ are in their initial
configuration. We have sent a total of $2^n - 3$ chips. The next chip
is sent to $o$, and the last to $L$ which sends it to $b$. Now $L$ has seen
$2^{ n - 1 } - 1$ chips so by induction all of its rotors are in their
initial configuration. The root rotor is pointing towards $L$, its initial
configuration. We have sent a total of $2^n - 1$ chips, alternating
between $b$ and $o$, and all of the rotors of $Y_n$ are in the initial
configuration, so the inductive step is complete.
\end{proof}

We remark that the obvious generalization of Lemma~\ref{ternarycase} to trees of degree $d > 3$ fails; indeed, we do not know of a starting rotor configuration on trees of higher degree which results in a single chip stopping at $o$ alternating with a string of $d-1$ chips stopping at $b$.

Consider now the case of the infinite ternary tree $T$.  A chip performing rotor-router walk on $T$ must either return to $o$ or escape to infinity visiting each vertex only finitely many times. 
Thus the state of the rotors after a chip has escaped to infinity is well-defined.  Therefore, we can properly define $R(m)$ as the number of chips that stop at $o$ after $m$ chips have executed the rotor-router walk beginning at $r$.  The following result shows that there is a configuration of the tree for which the rotor-router walk behaves as an exact quasirandom analogue to the random walk, in which chips exactly alternate returning to the origin with escaping to infinity.

\begin{prop}
\label{ternarythm}
Let $T$ be the infinite ternary tree, with principal branches labeled $Y^{(1)}$, $Y^{(2)}$, and $Y^{(3)}$ in correspondence with the direction indexing of the rotor at the origin. Set the rotors along the rightmost branch of $Y^{(3)}$ initially pointing in direction $2$, and all remaining rotors initially pointing in direction $1$. Let $E(m)$ be the expected number of chips that return to the origin if $m$ chips perform independent random walks on $T$. Let $R(m)$ be the number of chips that return to the origin if $m$ chips perform rotor-router walks on $T$. Then $|E(m) - R(m)| \leq \frac{1}{2}$ for all $m$.
\end{prop}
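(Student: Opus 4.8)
First I would establish the baseline for random walk. By the gambler's-ruin-type argument used to derive \eqref{gamblersruin}, a single random walk started at the origin $o$ (equivalently, sent to one of the three principal branches and then run in the $d$-regular setting with $a=2$) returns to $o$ with probability exactly $p_\infty = \lim_{n\to\infty}\frac{a^{n-1}-1}{a^n-1} = \frac{1}{a} = \frac12$. Hence $E(m) = m/2$ exactly, and the whole claim reduces to showing $|R(m) - m/2| \le \frac12$, i.e. that the rotor-router walk alternates perfectly: among the first $m$ chips, either $\lceil m/2\rceil$ or $\lfloor m/2\rfloor$ return to $o$. It therefore suffices to prove that chips \emph{exactly} alternate --- the first returns, the second escapes, the third returns, and so on --- which gives $R(2k) = k$ and $R(2k+1) = k+1$ and thus $|R(m)-m/2|\le\frac12$.

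The core combinatorial claim I would prove is: with the prescribed initial rotor configuration (rotors along the rightmost branch of $Y^{(3)}$ pointing in direction $2$, all others in direction $1$), the odd-numbered chips return to $o$ and the even-numbered chips escape to infinity. The plan is to reduce this to a statement about a single branch using the finite-tree result Lemma~\ref{ternarycase}. The key point is that, by Theorem~\ref{treecirc} and the acyclicity discussion, rotor-router walk on the infinite tree can be analyzed branch by branch since $o$ is a sink: a chip sent into branch $Y^{(j)}$ either returns to $o$ or escapes within that branch, and the rotor evolution in one branch is independent of the others. For a branch with all rotors pointing in direction $1$, Lemma~\ref{ternarycase} (in its $n\to\infty$ form, which one gets by taking $n$ large enough that the chip never reaches depth $n$ before deciding) says that the first $2^n - 1$ chips entering that branch alternate $b, o, b, o,\dots$ --- but ``reaching $b$'' in the infinite tree means escaping to infinity. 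So a fresh all-$1$'s branch, when fed chips, sends them out in the pattern escape, return, escape, return, \dots. The role of the special rightmost branch of $Y^{(3)}$ with rotors in direction $2$ is to absorb the very first chip correctly: the origin rotor cycles $Y^{(3)}, Y^{(1)}, Y^{(2)}, Y^{(3)},\dots$, and I would check by direct inspection of the first few steps that the modified initial state of $Y^{(3)}$'s rightmost branch makes chip $1$ return to $o$ (rather than escape), after which all subsequent chips see the clean alternating behavior of Lemma~\ref{ternarycase} applied to each branch in turn.

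The bookkeeping step is then to track, for $m = 3q + s$ chips ($0\le s\le 2$), how many chips have entered each branch and in what order, and to verify that the interleaving of the three alternating streams --- shifted appropriately because of the special first chip --- still produces a globally alternating return/escape pattern. Here I would use that each branch receives roughly $m/3$ chips and that Lemma~\ref{ternarycase} pins down the exact escape/return sequence for each; the parity structure $2^n - 1$ and the relabeling argument from the proof of Theorem~\ref{treecirc} handle the fact that a branch may be mid-cycle when the $m$-th chip is launched. The main obstacle I anticipate is precisely this interleaving argument: showing that three individually-alternating sequences, offset by the one-chip correction at the origin, combine into a single alternating sequence with discrepancy at most $\frac12$ from $m/2$, rather than accumulating error. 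This is a finite case analysis on $m \bmod 6$ (or $m \bmod 2$ together with which branch is currently being fed), and once the per-branch behavior from Lemma~\ref{ternarycase} is in hand it should close without difficulty.
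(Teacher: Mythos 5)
Your overall strategy is the paper's: establish $E(m)=m/2$ from the gambler's ruin computation, get per-branch alternation from Lemma~\ref{ternarycase}, and interleave the three branch streams via the cyclic rotor at the origin. But the proposal asserts a concrete fact that is false and on which the whole interleaving hinges. You claim the origin rotor cycles $Y^{(3)}, Y^{(1)}, Y^{(2)},\dots$ and that the special branch $Y^{(3)}$ ``absorbs the very first chip.'' Since the origin rotor starts in direction~$1$ and the rotor-router rule rotates \emph{before} stepping, the first chip is sent to $Y^{(2)}$, and the cyclic order is $Y^{(2)}, Y^{(3)}, Y^{(1)},\dots$. This is not a cosmetic labeling issue: the branches $Y^{(1)}$ and $Y^{(2)}$ emit the pattern escape, return, escape, $\dots$ while the modified $Y^{(3)}$ emits return, escape, return, $\dots$, and the global sequence alternates precisely because the ``return-first'' branch occupies the \emph{second} slot of the three-cycle (giving E\,R\,E\,$|$\,R\,E\,R\,$|$\,E\,R\,E\,$|\cdots$). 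With the order you state, the outcome sequence would be R\,E\,E\,$|$\,E\,R\,R\,$|\cdots$, which already violates $|E(m)-R(m)|\le\frac12$ at $m=4$. Correspondingly, your claimed parity (odd chips return, $R(2k+1)=k+1$) is reversed; the correct statement is that odd chips escape and $R(m)=\lfloor m/2\rfloor$. A case analysis on $m\bmod 6$ carried out from your stated premises would not close; it would refute the proposition.

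Two smaller points. First, your justification for the behavior of $Y^{(3)}$ --- ``check the first few steps, after which all subsequent chips see the clean alternating behavior of Lemma~\ref{ternarycase}'' --- does not work as written, because after one chip has returned through the modified branch its rotors are not in the all-ones state, so the lemma cannot be re-applied from scratch. The correct observation is that the prescribed configuration of $Y^{(3)}$ (direction~$2$ along the rightmost path, direction~$1$ elsewhere) is exactly the state of an all-ones branch after its first chip has cascaded down the rightmost path and escaped; hence its escape sequence is the all-ones sequence with the first term deleted, i.e.\ return, escape, return, $\dots$ indefinitely. Second, passing from the finite trees $Y_n$ of Lemma~\ref{ternarycase} to the infinite branch requires identifying ``stopping at $b$ in $Y_n$ for every $n$'' with ``escaping to infinity''; the paper glosses over this too, so I only note that your phrasing (``taking $n$ large enough that the chip never reaches depth $n$ before deciding'') is backwards for the escaping chips, which reach every depth.
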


\begin{proof}
Lemma~\ref{ternarycase} implies that for the branches $Y^{(1)}$ and $Y^{(2)}$, the chips sent to a given branch alternate indefinitely with the first escaping to infinity, the next returning to $o$, and so on.  Likewise, chips sent to $Y^{(3)}$ will alternate indefinitely with the first returning to $o$, the next escaping to infinity, and so on. Since chips on the full tree $T$ are routed cyclically through the branches beginning with $Y^{(2)}$, we see that the chips too will alternate indefinitely between escaping to infinity and returning to the origin, with the first escaping to infinity. Thus $R(m) = \left\lfloor \frac{m}{2} \right\rfloor$. Taking $n \rightarrow \infty$ in (\ref{gamblersruin}) we obtain $E(m) = \frac{m}{2}$, and the result follows.
\end{proof}


\begin{lemma}
\label{finitetreesreturn}
Let $Y_n$ be a principal branch of $T_n$, the $d$-regular tree of height $n$.  If all rotors initially point in direction $d-1$, then the first $n-1$ chips return to $o$ before hitting height $n$ of $Y_n$.
\end{lemma}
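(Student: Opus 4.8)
The plan is to trace the rotor-router walk level by level in $Y_n$ and to prove by induction on $k$ that the $k$-th chip penetrates to depth exactly $k$, returns to $o$, and leaves behind a clean rotor configuration. Fix the usual conventions: at every internal vertex of $Y_n$ (every vertex of height $1,\dots,n-1$) the $d$ rotor directions are indexed so that $1,\dots,d-1$ point to the children and $d$ points to the parent; a chip arriving at a vertex first advances that vertex's rotor cyclically by one ($d\mapsto 1$) and then steps in the new direction; and chips are released at $r$ (equivalently, at $o$, whose only neighbour in $Y_n$ is $r$) and are absorbed on reaching $o$ or a leaf. Write level $j$ for the set of vertices at distance $j$ from $o$, so $o$ is level $0$, $r$ is level $1$, and ``height $n$'' is level $n$ (the leaves).

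The first step is to isolate two local moves. \emph{(Bounce.)} If a chip enters a vertex $x$ whose rotor points in direction $d-1$, the rotor advances to $d$ and the chip steps straight back to the parent of $x$. \emph{(Full sweep.)} If a chip enters $x$ with rotor pointing in direction $d$, the rotor advances to $1$ and the chip is sent to the first child; whenever the chip later returns to $x$ its rotor advances again and the chip is sent to the next child, so that after all $d-1$ children have returned the chip it is sent up to the parent of $x$ with $x$'s rotor once more equal to $d$. A short induction on the height of the subtree rooted at $x$ shows that during a full sweep of $x$ the chip never leaves that subtree, visits $x$ exactly $d$ times, and in each child either full-sweeps it (if its rotor is $d$) or bounces off it (if its rotor is $d-1$), recursively.

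Next I would establish the invariant $(\ast_k)$ for $1\le k\le n-1$: if the $k$-th chip starts at $r$ with all rotors at levels $1,\dots,k-1$ pointing in direction $d$ and all rotors at levels $k,\dots,n-1$ pointing in direction $d-1$, then the walk stays within levels $0,\dots,k$, returns to $o$, and afterwards all rotors at levels $1,\dots,k$ point in direction $d$ while those at levels $k+1,\dots,n-1$ still point in direction $d-1$. For $k=1$ this is a single bounce at $r$. For $k\ge 2$, the chip enters $r$ with rotor $d$, triggering a full sweep of $r$; by the sweep analysis every vertex at levels $1,\dots,k-1$ is full-swept (so its rotor returns to $d$), and since levels $\ge k$ were untouched by chips $1,\dots,k-1$ every level-$k$ vertex is entered for the first time with rotor $d-1$ and therefore merely bounces the chip back up without descending further. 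Hence the walk is confined to levels $0,\dots,k$, the rotors at levels $1,\dots,k$ end at $d$, deeper rotors are unchanged, and the final step out of $r$ (its rotor advancing $d-1\mapsto d$) sends the chip to $o$. Since the all-$(d-1)$ initial configuration is the hypothesis of $(\ast_1)$ and the conclusion of $(\ast_{k-1})$ is exactly the hypothesis of $(\ast_k)$, induction yields $(\ast_k)$ for all $k\le n-1$.

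The lemma is then immediate: by $(\ast_k)$ each of chips $1,\dots,n-1$ remains in levels $0,\dots,k\le n-1$, so it never reaches height $n$ and it ends at $o$. (The same bookkeeping shows chip $n$ full-sweeps down to level $n$, i.e.\ it is the first chip to reach $b$, although that is not needed here.) I expect no genuine obstacle beyond writing the full-sweep analysis carefully --- in particular verifying that a sweep of $x$ returns the chip to the parent of $x$ with $x$'s rotor back at $d$, and that within the motion of chip $k$ no vertex below level $k$ is ever entered --- which is the routine induction on subtree height sketched above.
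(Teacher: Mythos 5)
Your proof is correct, but it is organized differently from the one in the paper. The paper inducts on the height $n$ of the tree: the first chip bounces off $r$ to $o$, and each of the next $n-2$ chips is passed in turn to the principal branches $Z_1,\dots,Z_{d-1}$ of $Y_n$, each of which (being a copy of $Y_{n-1}$ with $r$ playing the role of $o$) returns it by the inductive hypothesis before it can reach height $n$. You instead induct on the chip index $k$, carrying the explicit invariant that after chip $k$ the rotors at levels $1,\dots,k$ point to the parent while deeper rotors still point in direction $d-1$, so that chip $k+1$ performs a full sweep down to level $k+1$, bounces there, and returns. The two arguments exploit the same two local mechanisms (a rotor at $d-1$ bounces the chip upward; a rotor at $d$ initiates a sweep of all children), but your version buys strictly more information --- the exact penetration depth of each chip and the exact rotor state after each chip --- at the cost of having to verify the ``full sweep'' subclaim by a secondary induction on subtree height, which you correctly flag as the only point needing care (in particular that the sweep never reaches a leaf, which holds because the recursion bottoms out at level $k\le n-1$ where the chip is bounced). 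The paper's branch-decomposition induction is shorter but leaves the rotor-state bookkeeping implicit. Both are valid; no gap.
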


\begin{proof}
Induct on $n$. For $n=2$, there is a single rotor which sends the first chip from $r$ to $o$.
Now suppose the lemma holds for $Y_{n-1}$. Let $Z_1, \ldots, Z_{d-1}$ be the principal branches of $Y_n$.  The first chip placed at $r$ is sent directly to $o$. By the inductive hypothesis, the first $n-2$ chips that are sent to each branch $Z_i$ are returned to $r$ before hitting height $n$ of $Y_n$.  Thus each of the next $n-2$ chips is sent to $Z_1$, returned to $r$, sent to $Z_2$, and so on until it is sent to $Z_{d-1}$, returned to $r$ and then routed to $o$.
\end{proof}

Our next result shows that, perhaps surprisingly, the rotors can be set up so as to make rotor-router walk on the $d$-regular tree recurrent.

\begin{prop}
\label{infinitetreesreturn}
Let $T$ be the infinite $d$-regular tree.  If all rotors initially point in direction $d-1$, then every chip in an infinite succession of chips started at the origin eventually returns to the origin.
\end{prop}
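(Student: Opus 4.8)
The plan is to leverage Lemma~\ref{finitetreesreturn} directly, passing from finite truncations $Y_n$ of the tree to the infinite tree $T$. Fix the chip index $m$; I want to show the $m$-th chip returns to the origin. The key observation is that on $T$ with all rotors initially pointing in direction $d-1$, the rotor-router walk performed by the first $m$ chips involves only finitely many vertices up to the moment each chip either returns to $o$ or leaves a bounded region. More precisely, I would argue that for $n$ sufficiently large (depending on $m$), the evolution of the first $m$ chips on $T$ is identical to their evolution on $Y_n$ (with the leaves of $Y_n$ collapsed to a sink), up until the point where a chip would reach height $n$. The point of Lemma~\ref{finitetreesreturn} is precisely that, starting from the all-$(d-1)$ configuration, the first $n-1$ chips on $Y_n$ never reach height $n$ — so they return to $o$ before the truncation matters.

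The key steps, in order: First, I would make precise the coupling between rotor-router walk on $T$ and on $Y_n$: since the rotor rule is local and both start from the all-$(d-1)$ configuration, a chip on $T$ and the corresponding chip on $Y_n$ follow the same trajectory as long as neither has visited a vertex at height $n$; and the rotor configurations at vertices of height $<n$ stay synchronized. This is really just the statement that rotor-router walk is determined locally. Second, I would invoke Lemma~\ref{finitetreesreturn}: on each principal branch $Y_n$, the first $n-1$ chips routed into that branch return before hitting height $n$. Since on the full tree $T$ chips are routed cyclically through the $d$ branches, after $m$ chips have been launched from $o$, each branch has received at most $m$ chips (in fact at most $\lceil m/d \rceil + 1$ or so), so choosing $n > m+1$ guarantees that none of the first $m$ chips ever reaches height $n$ in any branch. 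Third, I conclude: by the coupling, the $m$-th chip on $T$ behaves exactly as on $Y_n$, where by Lemma~\ref{finitetreesreturn} it returns to $o$. Since $m$ was arbitrary, every chip returns.

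I should be slightly careful about one bookkeeping point that I expect to be the main (minor) obstacle: in Lemma~\ref{finitetreesreturn} the hypothesis concerns the \emph{first $n-1$ chips sent to $Y_n$} starting from the all-$(d-1)$ state, and I need to confirm that after those chips return, the branch's rotors have \emph{not} returned to their initial state in a way that would break the induction — but actually I don't need periodicity here, only the one-sided bound ``the first $n-1$ chips don't escape,'' so it suffices to pick $n$ large. The genuinely delicate part is verifying that the number of chips entering any single principal branch among the first $m$ launched from the origin is at most $n-1$ once $n$ is chosen large enough; this follows because each launched chip enters at most one branch \emph{per visit to the root}, and a chip that returns to $o$ is counted as a new launch, so with $m$ total launches the total number of branch-entries is at most $m$, hence at most $m$ per branch. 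Taking $n = m+2$ works. Everything else is routine, so I would keep the write-up short: state the coupling, cite Lemma~\ref{finitetreesreturn}, and finish. One could alternatively phrase the whole argument as an induction on $m$ mirroring the induction in Lemma~\ref{finitetreesreturn}, but the truncation argument is cleaner and avoids re-proving the finite statement.

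\begin{proof}
Fix $m \geq 1$; we show that the $m$-th chip launched from the origin returns to the origin. Choose $n = m+2$ and let $Y_n$ be a principal branch of the $d$-regular tree $T_n$ of height $n$, with its leaves collapsed to a sink, as in Lemma~\ref{finitetreesreturn}. Since rotor-router walk is determined by the local rotor configuration, and the all-$(d-1)$ configuration on $T$ restricts to the all-$(d-1)$ configuration on each finite truncation, a chip walking on $T$ and a chip walking on the corresponding $Y_n$ follow identical trajectories, and leave the rotors at vertices of height $<n$ in identical states, for as long as neither chip visits a vertex of height $n$. On $T$, chips launched from $o$ are routed cyclically through the $d$ principal branches, and each launched chip enters at most one branch per visit to $o$; hence among the first $m$ launched chips, each principal branch receives at most $m = n-2 < n-1$ chips. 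By Lemma~\ref{finitetreesreturn}, the first $n-1$ chips entering $Y_n$ return to $o$ before reaching height $n$. Therefore, when we run the first $m$ chips on $T$, no chip ever reaches height $n$ in any branch, so by the coupling each of these chips behaves exactly as the corresponding chip on $Y_n$, and in particular the $m$-th chip returns to $o$. Since $m$ was arbitrary, every chip in the succession returns to the origin.
\end{proof}
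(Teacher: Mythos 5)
Your proof is correct and takes essentially the same route as the paper: the paper's (very terse) proof also reduces to Lemma~\ref{finitetreesreturn} by observing that, for each $n$, the $n$-th chip sent into a principal branch behaves as in the height-$(n+1)$ truncation and so returns before reaching height $n+1$. Your write-up just makes the truncation/coupling step and the per-branch chip count explicit, which is fine.
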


\begin{proof}
By Lemma~\ref{finitetreesreturn}, for each $n$, the $n$-th chip sent to each principal branch $Y$ returns to the origin before hitting height $n+1$ of $T$.
\end{proof}

We continue our exploration of recurrence and transience on the infinite ternary tree $T$, but now we allow for arbitrary rotor configurations. We again focus on a single branch $Y$. For a given rotor configuration we define the {\it escape sequence} for the first $n$ chips to be the binary word $a = a_{1}\ldots a_{n}$, where for each $j$, 
\[a_{j} = \begin{cases} 0, & \text{if the $j^{th}$ chip returns to the origin;} \\
				    1, & \text{if the $j^{th}$ chip escapes to infinity.} \end{cases} \]
As noted above, a chip cannot stay within a finite height indefinitely without returning to the origin, so $a$ is well-defined.\\											

We define a map $\psi$ on an escape sequence $a = a_{1} \ldots a_{n}$. First we rewrite $a$ as the concatenation of subwords $b_{1} \cdots b_{m}$ where each $b_{j} \in \{0,10,110\}$. Since at least one of any three consecutive chips entering $Y$ is routed directly upwards by the root rotor, at most two of any three consecutive letters in an escape sequence $a$ can be ones.  Therefore, any escape sequence can be factored in this way up to the possible concatenation of an extra $0$. Now we define $\psi(a) = (c,d)$ by
\[(c_{j},d_{j}) = \begin{cases} (0,0), & \text{if} \; b_{j} = 0\\
																(1,1), & \text{if} \; b_{j} = 110\\
																(0,1), & \text{if} \; b_{j} = 10 \; \text{and} \; \#\{i<j|b_{i}=10\} \; \text{is odd}\\
																(1,0), & \text{if} \; b_{j} = 10 \; \text{and} \; \#\{i<j|b_{i}=10\} \; \text{is even} \end{cases} 
\]

Now we define the map $\phi$ on a pair of binary words $c$ and $d$ each of length $m$ by $\phi(c,d) = b_{1} \cdots b_{m}$, where
\[b_{j} = \begin{cases} 0,& \text{if}\; (c_{j},d_{j}) = (0,0)\\
												10, & \text{if}\; (c_{j},d_{j}) = (1,0) \;\text{or}\; (0,1)\\
												110, & \text{if} \;(c_{j},d_{j}) = (1,1) \end{cases} \]
Note that $\phi$ is a left inverse of $\psi$, i.e.\ $\phi \circ \psi(a) = a$, up to possible concatenation of an extra $0$.											
												
\begin{lemma}
\label{phi}
Let $Y$ be a principal branch of the infinite ternary tree.  Fix a rotor configuration on $Y$ with the root rotor pointing up.  Let $c$ and $d$ be the escape sequences for the configurations on the left and right sub-branches of $Y$, respectively.  Then $\phi(c,d)$ is the escape sequence for the full branch $Y$.
\end{lemma}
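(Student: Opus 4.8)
\textbf{Proof proposal for Lemma~\ref{phi}.}

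The plan is to track the routing of chips through the root $r$ of the branch $Y$ and argue that the escape sequence $\phi(c,d)$ is exactly what results. First I would set up notation: chips arrive at $r$ one at a time, and the root rotor at $r$ (pointing up initially) cycles through its three directions, which are: to the left sub-branch $L$, to the right sub-branch $R$, and up to the origin $o$. Since the root rotor starts pointing up, the first chip to reach $r$ is sent up to $o$ immediately (it returns), the second chip is sent to $L$, the third to $R$, the fourth up to $o$, and so on — except that a chip sent to a sub-branch may itself return to $r$ (if it does not escape within that sub-branch), in which case the rotor at $r$ advances again and the chip is re-routed. The key bookkeeping observation is that each ``block'' of chips at $r$ between successive direct-upward routings consists of: possibly some chips that go down into $L$ and/or $R$ and come back, followed by one chip routed directly up. I would make this precise by induction on the number of chips.

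Next I would identify how a single block corresponds to a single letter $b_j \in \{0,10,110\}$ of $\phi(c,d)$. When it is $L$'s turn, the chip is sent to $L$; whether it escapes there is governed by the next unused letter of $c$. Similarly for $R$ with $d$. So within one rotation cycle of the root rotor, starting from the state ``rotor points up,'' the behavior is: send next chip up ($a$-letter $0$, with no sub-branch chip consumed) — but wait, the cycle order matters. I would carefully order the three rotor positions so that in one full cycle the branch $Y$ sees: one chip to $L$, one chip to $R$, one chip up. Reading off the outcomes: if the $L$-chip escapes and the $R$-chip escapes, then before the ``up'' chip is routed, two chips have escaped and the third returns, giving the escape-word segment $110$; this should match $(c_j,d_j)=(1,1)$. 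If exactly one of the two sub-branch chips escapes, we get segment $10$ (one escape, then the returned chip plus eventually the up-chip — one return), matching $(1,0)$ or $(0,1)$; the parity condition in $\psi$/$\phi$ is exactly what records whether it was $L$ or $R$ that escaped, since $L$ and $R$ are visited in alternation across successive $10$-blocks. If neither escapes, the segment is just $0$, matching $(0,0)$. The point is that $\phi$ was \emph{defined} to invert this correspondence, so the lemma reduces to verifying that the block structure of the escape sequence of $Y$ is governed by the root rotor cycling and by consuming letters of $c$ and $d$ in alternation.

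The main obstacle I anticipate is handling the ordering and alternation carefully: I must confirm that consecutive chips sent to sub-branches are sent alternately to $L$ then $R$ (because the root rotor, between two upward routings, visits $L$ before $R$), and that a chip which returns from a sub-branch does not itself trigger an extra visit to that same sub-branch before the rotor cycles onward — i.e., that the root rotor genuinely advances by one position per chip departure from $r$. I would also need to address the ``up to possible concatenation of an extra $0$'' caveat: if the sequence of $n$ chips ends mid-cycle, the last (partial) block is a prefix of some $b_j$, and I would check this contributes at most one extra trailing $0$, consistent with the statement that $\phi\circ\psi(a)=a$ up to such a $0$. Finally, I would invoke that a chip cannot remain forever at bounded height without returning (noted in the text), so every escape sequence — on $Y$, on $L$, and on $R$ — is well-defined, which is needed for the induction to make sense. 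None of the individual steps is deep; the work is in the careful case analysis matching the four cases of the definition of $\phi$ to the four possible outcomes of one rotor cycle at $r$.
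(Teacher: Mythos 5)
Your proposal is correct and follows essentially the same route as the paper's own proof: decompose the incoming chips into blocks given by full rotations of the root rotor (each rotation sends exactly one chip into the left sub-branch and one into the right, consuming one letter of $c$ and one letter of $d$), and check that the four cases $(c_j,d_j)=(0,0),(1,0),(0,1),(1,1)$ produce exactly the segments $0$, $10$, $10$, $110$ prescribed by $\phi$, with the root rotor again pointing up at the end of each block. Two inessential slips to fix in a write-up: under the rotate-then-step convention a chip arriving at $r$ with the rotor pointing up goes first to a sub-branch rather than directly up (your corrected cycle $L$, then $R$, then up is the right one), and your aside that escapes in successive $10$-blocks alternate between $L$ and $R$ is false (take left escape sequence $11$ and right escape sequence $00$) --- neither matters here, since $\phi$ sends both $(1,0)$ and $(0,1)$ to $10$ and the parity clause belongs to $\psi$, not $\phi$.
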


\begin{proof}
We claim that each word $b_j$ is the escape sequence for the $j^{th}$ full rotation of the root rotor. 
Suppose first that $(c_j,d_j) = (0,0)$ so that $b_j = 0$. In this case, since each sub-branch will return the next chip it sees back to $r$, the next chip that enters the full tree will be sent up to the origin, and the root rotor will once again be pointing up to the origin. If $(c_j,d_j) = (1,0)$, the next chip entering the full tree will be routed to the left sub-branch where it escapes to infinity. The following chip will be routed to the right sub-branch and then back up to the origin, and the root rotor will once again be pointing up.  In this case we have $b_j = 10$.  If $(c_j,d_j) = (0,1)$, the next chip entering the full tree will be routed to the left sub-branch, back up to $r$, and then to the right sub-branch where it escapes to infinity. The following chip will be routed directly up to the origin leaving the root rotor pointed up once again.  Again, in this case $b_j = 10$.  Finally, if $(c_j,d_j) = (1,1)$, the next two chips entering the full branch will escape to infinity, one through each sub-branch. The following chip will be routed directly up to the origin, once again leaving the root rotor pointing up.  In this case we have $b_j = 110$. 
\end{proof}

We observe that if the root rotor is not pointing up and $c$ and $d$ are the escape sequences of the left and right sub-branches respectively, then $a = \phi(c,d)$ is \textit{not} the escape sequence of the full branch. In order to calculate the escape sequence of the full branch we define \textit{extended escape sequences} on the sub-branches, $c'$ and $d'$. Suppose the root rotor initially points to the left sub-branch. Then $c' = 0c$ and $d' = d$. Suppose the root rotor initially points to the right sub-branch. Then $c' = 0c$ and $d' = 0d$. Now $a = \phi(c',d')$ is the escape sequence of the full branch. 

We now introduce the condition that is central to characterizing which words can be escape sequences:
\renewcommand{\theequation}{$P_k$}
\begin{equation} 
 \text{any subword of length $2^{k}-1$ contains at most $2^{k-1}$ ones}
\end{equation}
\renewcommand{\theequation}{\arabic{equation}}
We next show that the map $\psi$ preserves this requirement. 

\begin{lemma}
\label{psi}
Let $a$ be a binary word satisfying $(P_{k})$ and let $\psi(a) = (c,d)$ as defined above. Then $c$ and $d$ each satisfy $(P_{k-1})$.
\end{lemma}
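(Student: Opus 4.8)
The plan is to deduce $(P_{k-1})$ for $c$ and $d$ from $(P_k)$ for $a$ by relating subwords of $c$ (or $d$) to subwords of $a$ under the factorization $a = b_1 \cdots b_m$ with $b_j \in \{0,10,110\}$. The key structural fact is that $\psi$ compresses $a$: each letter $c_j$ (and each $d_j$) comes from one block $b_j$, and the block has length $1$, $2$, or $3$ while contributing at most one $1$ to $c$ and at most one $1$ to $d$. More precisely, a block contributing a $1$ to $c$ is either $110$ (length $3$, contributes $1$ to both $c$ and $d$) or one of the alternating $10$ blocks (length $2$). So a run of $c_i = 1$'s spread over an interval of $c$-indices corresponds to a run of blocks in $a$ each of which has length $\geq 2$ and carries a $1$ into $a$ as well. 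The heart of the argument is a counting inequality: if a subword $c_{i+1}\cdots c_{i+\ell}$ of $c$ has length $\ell = 2^{k-1}-1$ and contains $t$ ones, I want to exhibit a subword of $a$ of length $\leq 2^k - 1$ containing $\geq t + \text{(something)}$ ones, so that $(P_k)$ forces $t \leq 2^{k-2}$... wait, that gives $(P_{k-1})$ only if the bound comes out to $2^{(k-1)-1} = 2^{k-2}$, which is exactly what $(P_{k-1})$ asserts. So I need: a length-$(2^{k-1}-1)$ window in $c$ with $t$ ones maps to a window in $a$ of length at most $2^k - 1$ containing at least $2t$ ones (roughly), and then $(P_k)$ gives $2t \leq 2^{k-1}$, i.e. $t \leq 2^{k-2}$.

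The calibration I expect to work: fix a window $W = c_{i+1}\cdots c_{i+\ell}$ in $c$, corresponding to blocks $b_{i+1}, \ldots, b_{i+\ell}$. The concatenation $b_{i+1}\cdots b_{i+\ell}$ is a subword $a'$ of $a$. Its length is $\sum |b_j|$, which is at most $3\ell$ in the worst case but I will bound it more carefully using the number of ones in $W$ and $d$. Let $t = \#\{j : c_{i+j}=1\}$ and $t' = \#\{j : d_{i+j}=1\}$ over this window. Blocks with $(c_j,d_j)=(0,0)$ have length $1$; blocks of length $2$ are exactly those with exactly one of $c_j,d_j$ equal to $1$; blocks of length $3$ are the $110$ blocks with $c_j=d_j=1$. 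So $|a'| = \ell + \#\{\text{length-}2\text{ blocks}\} + 2\cdot\#\{\text{length-}3\text{ blocks}\}$, and the number of ones in $a'$ is $\#\{\text{length-}2\text{ blocks}\} + 2\cdot\#\{\text{length-}3\text{ blocks}\}$, so in fact $|a'| = \ell + \#\{\text{ones in } a'\}$. Also $\#\{\text{ones in }a'\} \geq t$ because every $c_j = 1$ forces $b_j \in \{10, 110\}$, each contributing at least one $1$ to $a$; and in fact length-$3$ blocks contribute $2$. Similarly $\#\{\text{ones in }a'\} \geq t'$. I should think about whether I get the factor of $2$ — it seems I need to also account for the $d$-side, or use that the $10$-blocks alternate between the $(1,0)$ and $(0,1)$ types, so that among the $10$-blocks in $W$ at most half are "$c$-type" — that alternation is precisely the parity condition in the definition of $\psi$, and it's how the factor of $2$ enters.

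The main obstacle, and the step I'd spend the most care on, is getting the length bound $|a'| \leq 2^k - 1$ simultaneously with the ones-count lower bound, correctly tracking the alternation of the $10$-blocks. Concretely: with $\ell = 2^{k-1}-1$, I have $|a'| = \ell + (\#\text{ones in }a')$, and I want to conclude $\#\text{ones in }a' \leq 2^{k-1}$ from $(P_k)$; but $(P_k)$ applies to windows of length exactly $2^k-1$, so I'd extend $a'$ arbitrarily to a length-$(2^k-1)$ window $a''$ (possible as long as $|a'| \leq 2^k - 1$, i.e. $\#\text{ones in }a' \leq 2^{k-1}$ — circular!). The fix is to argue by contradiction or to bound the ones directly: if $a'$ itself already had more than $2^{k-1}$ ones then, since $|a'| = \ell + \#\text{ones} = 2^{k-1}-1 + \#\text{ones}$, we'd get a window slightly longer than $2^k-1$; so instead take the \emph{shortest} prefix/subinterval realizing enough ones, or note that a minimal counterexample window in $c$ can be assumed to start and end with a $1$, sharpening the block-length accounting. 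I'll likely phrase it as: suppose for contradiction $c$ has a length-$(2^{k-1}-1)$ subword with $\geq 2^{k-2}+1$ ones; choose the corresponding block-subword of $a$, observe it has length $\leq (2^{k-1}-1) + 2(2^{k-2}) = 2^k - 1$ wait I need to redo this — length $= \ell + \#\text{ones in }a'$ and $\#\text{ones in }a' \le 2(\#c\text{-ones}) \le 2 \cdot 2^{k-1}$ is too big, so the alternation bound $\#\{10\text{-}c\text{-type blocks}\} \le \#\{10\text{-}d\text{-type blocks}\}+1$ must be used to keep $|a'|$ down. Once the bookkeeping is set up so that $|a'| \le 2^k-1$ and $a'$ contains $\ge 2^{k-1}+\text{(slack)}$ ones (the doubling coming from $110$-blocks plus the $d$-type $10$-blocks that sit between consecutive $c$-type ones), $(P_k)$ is contradicted and we are done; the symmetric argument handles $d$. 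I'll present the final writeup by isolating this counting lemma about block factorizations and then applying it once to $c$ and once to $d$.
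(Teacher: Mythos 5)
Your proposal is correct and follows essentially the same route as the paper: identify the window $c_{i+1}\cdots c_{i+\ell}$ with the concatenation $a'$ of the corresponding blocks, use that each block contributes exactly one zero (so zeros in $a'$ equal $2^{k-1}-1$), use the alternation of the $(1,0)$ and $(0,1)$ blocks to get that the $d$-window has at least $m-1$ ones when the $c$-window has $m$, and then invoke $(P_k)$. The length-calibration issue you flag ("circular!") is resolved in the paper exactly along the lines you sketch: since $a'$ ends in a zero, truncate it and observe that if $a'$ had more than $2^{k-1}$ ones, a suffix of length $2^k-1$ of the truncated word would contain at most $2^{k-1}-2$ zeros, hence at least $2^{k-1}+1$ ones, contradicting $(P_k)$; this yields $2m-1\leq 2^{k-1}$ and so $m\leq 2^{k-2}$.
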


\begin{proof}
Let $c'$ be a subword of $c$ of length $2^{k-1}-1$ and let $d'$ be the corresponding subword of $d$. Let $a' = \phi(c',d')$, which is a subword of $a0$. The formula for $\phi$ guarantees that $a'$ has one zero for each letter of $c'$, so $a'$ has $2^{k-1}-1$ zeros.  Since the last letter of $a'$ is zero, and $a$ satisfies $(P_k)$, it follows that $a'$ has at most $2^{k-1}$ ones (else after truncating the final zero, the suffix of $a'$ of length $2^k-1$ has at most $2^{k-1}-2$ zeros, hence at least $2^{k-1}+1$ ones).

Let $m$ be the number of ones in $c'$.  Since the instances of $(0,1)$ and $(1,0)$ alternate in the formula for $\psi(a)=(c,d)$, it follows that $d'$ must have at least $m-1$ ones. Since the number of ones in $c'$ and $d'$ combined equals the number of ones in $a'$, we obtain $2m-1 \leq 2^{k-1}$, hence $m \leq 2^{k-2}$.  The same argument with the roles of $c$ and $d$ reversed shows that $d$ has at most $2^{k-2}$ ones.
\end{proof}

\begin{lemma}
\label{branch-escape}
Let $a=a_1\ldots a_n$ be a binary word of length $n$. Then $a$ is an escape sequence for some rotor configuration on the infinite branch $Y$ if and only if $a$ satisfies $(P_{k})$ for all $k$.
\end{lemma}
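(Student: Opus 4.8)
The plan is to prove Lemma~\ref{branch-escape} by induction on $n$, the length of the word, using the recursive structure relating a branch to its two sub-branches that was set up in Lemmas~\ref{phi} and~\ref{psi}. The base case $n\leq 1$ is trivial, since any word of length at most $1$ satisfies every $(P_k)$ and is easily realized (the empty word and the word $0$ by a rotor pointing up, the word $1$ by a rotor pointing down a transient sub-branch).

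For the inductive step I would argue the two directions separately. \textbf{Necessity:} suppose $a=a_1\ldots a_n$ is the escape sequence of some rotor configuration on $Y$. If the root rotor points up, then by Lemma~\ref{phi} we have $a=\phi(c,d)$ (up to a trailing $0$) where $c,d$ are the escape sequences of the left and right sub-branches; apply $\psi$ and note $\phi\circ\psi(a)=a$ up to a trailing zero, so $\psi(a)=(c',d')$ agrees with $(c,d)$ on the relevant prefixes. If the root rotor does not point up, use the extended escape sequences $c',d'$ so that $a=\phi(c',d')$. In either case $c',d'$ are escape sequences of sub-branch configurations, so by the inductive hypothesis they satisfy $(P_{k-1})$ for all $k$; then Lemma~\ref{psi} (applied to the word $a$, whose $\psi$-image has prefixes $c',d'$) shows... wait — actually the cleaner route is the contrapositive: if $a$ fails some $(P_k)$, then by Lemma~\ref{psi} read in reverse, at least one of $c',d'$ fails $(P_{k-1})$ (since a subword of $a$ of length $2^k-1$ with more than $2^{k-1}$ ones forces, via the structure of $\phi$, a subword of $c'$ or $d'$ of length $2^{k-1}-1$ with more than $2^{k-2}$ ones); this contradicts the inductive hypothesis. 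So $a$ must satisfy all $(P_k)$.

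\textbf{Sufficiency:} suppose $a$ satisfies $(P_k)$ for all $k$. Factor $a$ as $b_1\cdots b_m$ with $b_j\in\{0,10,110\}$ (up to a trailing $0$) and set $(c,d)=\psi(a)$. By Lemma~\ref{psi}, $c$ and $d$ satisfy $(P_{k-1})$ for all $k$, hence by the inductive hypothesis there are rotor configurations on the left and right sub-branches with escape sequences $c$ and $d$ respectively. Set the root rotor pointing up and combine these; by Lemma~\ref{phi} the resulting configuration on $Y$ has escape sequence $\phi(c,d)=a$ (up to the trailing-zero ambiguity, which is harmless since a trailing $0$ is always achievable by one more rotation of the root rotor, or simply absorbed because we only care about the first $n$ letters). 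This realizes $a$ and completes the induction.

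The main obstacle I anticipate is bookkeeping the "trailing $0$" discrepancy between $\phi\circ\psi$ and the identity, and making sure the length-$n$ truncations line up correctly on both sides of the recursion — in particular that a subword of $a$ of length $2^k-1$ really does pull back to a subword of $c$ or $d$ of length exactly $2^{k-1}-1$ under the block decomposition, including edge effects where a block $b_j$ is split by the endpoints of the subword. I would handle this by always padding with an extra $0$ when convenient (as the statements of Lemmas~\ref{psi} and~\ref{phi} already permit) and by choosing the subword to start and end at block boundaries when extracting the pullback, absorbing the at-most-two boundary letters into the slack between $2^{k-1}$ and the count of ones. The parity/alternation argument for the $(0,1)$ versus $(1,0)$ blocks in Lemma~\ref{psi} is the one genuinely delicate point, but that lemma is already proved, so here it is only a matter of invoking it correctly in both directions.
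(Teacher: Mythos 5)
Your proposal follows essentially the same route as the paper: sufficiency by induction on $n$ via $\psi$, Lemma~\ref{psi} and Lemma~\ref{phi}, and necessity via the block decomposition and a count of ones against zeros. Two remarks on the necessity direction. First, ``Lemma~\ref{psi} read in reverse'' is not a valid inference from that lemma (its contrapositive runs the other way), but the counting argument you supply in the parenthetical is correct and self-contained: a subword $a'$ of length $2^k-1$ with more than $2^{k-1}$ ones has at most $2^{k-1}-2$ zeros, hence meets at most $2^{k-1}-1$ blocks, so the corresponding subwords of the two sub-branch escape sequences have length at most $2^{k-1}-1$ and carry at least as many ones as $a'$; by pigeonhole one of them violates $(P_{k-1})$. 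This is the paper's own argument in contrapositive form; the paper instead inducts on $k$ (so the hypothesis applies to sub-branch escape sequences of arbitrary length) and argues directly with a two-case split. Your induction on $n$ also works here, because a word violating some $(P_k)$ contains a one, which forces the block factorization to have fewer than $n$ blocks and hence the sub-branch sequences to be strictly shorter than $n$.

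The one genuine gap is in the sufficiency step. The appeal to the inductive hypothesis for $c$ and $d$ requires $|c|,|d|\leq n-1$, i.e., that $a$ factors into fewer than $n$ blocks. This fails exactly when every block is a single $0$, that is when $a=0^n$, and then the induction gives nothing. The paper disposes of this case separately by invoking Proposition~\ref{infinitetreesreturn} (via Lemma~\ref{finitetreesreturn}): with all rotors initially in direction $d-1$, every chip returns to the origin, so $0^n$ is an escape sequence. Without this (or some substitute) your induction does not close. The remaining trailing-zero and truncated-first-block issues you flag are real but are handled exactly as you anticipate, and as the paper does with its $c''$, $d''$ device.
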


\begin{proof}
Suppose $a$ is an escape sequence. We prove that $a$ satisfies $(P_{k})$ for each $k$ by induction on $k$. That $a$ satisfies $(P_{1})$ is trivial. Now suppose that every escape sequence satisfies $(P_{k-1})$ and let $c$ and $d$ be the extended escape sequences of the left and right sub-branches respectively. As we saw above, $a = \phi(c,d)$ up to the possible concatenation of an extra zero.  Let $a'$ be a subword of $a$ of length $2^{k}-1$, and let $\psi(a')=(c',d')$.  Then there are words $c''$ and $d''$ each of which is a subword of $c$ or $d$, and which are equal to $c'$ and $d'$, respectively, except possibly in the first letter; moreover the first letters satisfy $c'_1 \leq c''_1$ and $d'_1 \leq d''_1$.

By the formula for $\psi$, the number of ones in $a'$ is the sum of the number of ones in $c'$ and $d'$. If $c'$ has length at most $2^{k-1}-1$, then since $c$ and $d$ satisfy $(P_{k-1})$, each of $c'$ and $d'$ has at most $2^{k-2}$ ones, and therefore $a'$ has at most $2^{k-1}$ ones.  On the other hand, if $c'$ has length at least $2^{k-1}$, then the number of zeros in $a'$ is at least $2^{k-1}-1$. Thus $a'$ has at most $2^{k-1}$ ones, so $a$ satisfies $(P_{k})$.

The proof of the converse is by induction on $n$. For $n=1$ the statement is trivial. Suppose that every binary word of length $n-1$ satisfying $(P_{k})$ for each $k$ is an escape sequence. Then by Lemma~\ref{psi}, $\psi(a) = (c,d)$ gives a pair of binary words each satisfying $(P_{k})$ for all $k$. If $c$ and $d$ have length $n-1$ or less, then they are escape sequences by induction, hence $a$ is an escape sequence by Lemma~\ref{phi}. If $c$ and $d$ are of length $n$ then the definition of $\psi$ implies that $a_j=0$ for all $j$, in which case $a$ is an escape sequence by Proposition~\ref{infinitetreesreturn}.
\end{proof}

We can now establish our main result characterizing all possible escape sequences on the infinite ternary tree.

\begin{theorem}
\label{escape}
Let $a=a_1\ldots a_n$ be a binary word. For $j \in \{1,2,3\}$ write $a^{(j)} = a_{j} a_{j+3} a_{j+6} \ldots$. Then $a$ is an escape sequence for some rotor configuration on the infinite ternary tree $T$ if and only if each $a^{(j)}$ satisfies $(P_{k})$ for all $k$.
\end{theorem}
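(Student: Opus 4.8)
The plan is to reduce the statement about the full ternary tree $T$ to the statement about a single principal branch, which is already handled by Lemma~\ref{branch-escape}, and then to combine the three branches. Recall that at the origin $o$ the rotor cycles through the three principal branches $Y^{(1)}, Y^{(2)}, Y^{(3)}$ in order. Thus if $m$ chips are launched from $o$, the chips are routed to the branches in a round-robin fashion: the chips sent to branch $Y^{(j)}$ are exactly the chips whose indices are congruent to a fixed residue mod $3$ (depending on the initial position of the rotor at $o$, which we may take to be the canonical one so that chip $i$ goes to $Y^{(j)}$ with $j \equiv i \pmod 3$). A chip sent to $Y^{(j)}$ escapes to infinity if and only if it escapes within that branch, since $o$ is effectively a sink from the branch's point of view (the rotor configurations on the other branches are irrelevant to what happens in $Y^{(j)}$, as the chip only re-enters $o$ when it returns). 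Therefore the escape sequence $a$ of the full tree decomposes precisely as: $a_i = 1$ iff the corresponding chip escapes in $Y^{(j)}$, where the chips arriving at $Y^{(j)}$, in order, form exactly the subword $a^{(j)} = a_j a_{j+3} a_{j+6}\ldots$. In other words, $a^{(j)}$ is the escape sequence of the branch $Y^{(j)}$ under its own rotor configuration.

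With this correspondence in hand, both directions follow. For the forward direction: given a rotor configuration on $T$ realizing escape sequence $a$, restrict it to each principal branch $Y^{(j)}$; by the round-robin routing just described, the escape sequence of that restricted configuration is $a^{(j)}$, so by Lemma~\ref{branch-escape} each $a^{(j)}$ satisfies $(P_k)$ for all $k$. For the converse: suppose each $a^{(j)}$ satisfies $(P_k)$ for all $k$. By Lemma~\ref{branch-escape}, for each $j$ there is a rotor configuration on the infinite branch $Y$ whose escape sequence is $a^{(j)}$; place this configuration on $Y^{(j)}$, and set the rotor at $o$ to its canonical initial direction (pointing so that chip $1$ is routed to $Y^{(1)}$, etc., matching the indexing convention used to define $a^{(j)}$). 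Then the chips launched from $o$ are routed round-robin to the three branches, chip $i$ going to $Y^{(i \bmod 3)}$, and chip $i$ escapes iff the corresponding chip in branch $Y^{(i \bmod 3)}$ escapes, which by construction happens iff $a_i = 1$. Hence $a$ is the escape sequence of this configuration on $T$.

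The one point that needs a little care — and the main (minor) obstacle — is making the ``round-robin routing'' claim fully rigorous: one must check that the rotor at $o$ genuinely advances by exactly one step each time a chip is launched and no more, i.e.\ that a chip returning to $o$ is absorbed there rather than being re-emitted, and that a chip sent into $Y^{(j)}$ either returns to $o$ in finite time or escapes to infinity without ever affecting the other branches. The first is immediate because $o$ is declared a sink for the aggregation/escape problem on $T$ (a chip that reaches $o$ stops). The second is the content of the observation, made earlier in this section, that a chip performing rotor-router walk on the infinite tree must either return to $o$ or escape visiting each vertex finitely often; combined with the fact that the subtree $Y^{(j)}$ is disconnected from the others except through $o$, this shows the branches evolve independently. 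Once this routing picture is pinned down, the alignment of the residue classes $a^{(j)}$ with the chips entering $Y^{(j)}$ is just bookkeeping, and the theorem follows directly from Lemma~\ref{branch-escape}.
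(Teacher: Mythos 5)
Your proposal is correct and is essentially the paper's own argument: the origin is a sink, so the rotor there routes successive chips round-robin to the three principal branches, whence $a$ is an escape sequence for $T$ if and only if each $a^{(j)}$ is an escape sequence for the branch $Y^{(j)}$, and Lemma~\ref{branch-escape} finishes the proof. The paper states this more tersely (fixing the origin rotor to point initially at $Y^{(3)}$ so that the first chip enters $Y^{(1)}$), but the substance is identical.
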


\begin{proof}
Let $Y^{(1)}$, $Y^{(2)}$, and $Y^{(3)}$ be the three principal branches of $T$ assigned so that the rotor at the origin initially points to $Y^{(3)}$. Then $a$ is the escape sequence for $T$ if and only if $a^{(j)} = a_{j} a_{j+3} a_{j+6} \ldots $ is the escape sequence for $Y^{(j)}$. The result now follows from Lemma~\ref{branch-escape}.
\end{proof}


\chapter{Conjectures and Open Problems}

\section{Rotor-Router Aggregation}

\begin{figure}
\centering
\includegraphics{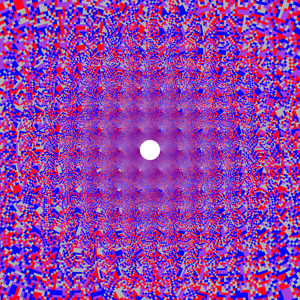}
\caption{Image of the rotor-router aggregate of one million particles under the map $z \mapsto 1/z^2$.  The colors represent the rotor directions.  The white disc in the center is the image of the complement of the occupied region.}
\label{invertedrotor}
\end{figure}

A number of intriguing questions remain unanswered about rotor-router aggregation in $\Z^d$.  While we have shown in Theorem~\ref{rotorcircintro} that the asymptotic shape of the rotor-router model with a single point source of particles is a ball, the near perfect circularity found in Figure~\ref{rotor1m} remains a mystery.  In particular, we do not know whether an analogue of Theorem~\ref{divsandcircintro} holds for the rotor-router model, with constant error in the radius as the number of particles grows.   

Equally mysterious are the patterns in the rotor directions evident in Figure~\ref{rotor1m}.  The rotor directions can be viewed as values of the odometer function mod $2d$, but our control of the odometer is not fine enough to provide useful information about the patterns.  If the rescaled occupied region $\sqrt{\pi/n} \,A_n$ is viewed as a subset of the complex plane, it appears that the monochromatic regions visible in Figure~\ref{rotor1m}, in which all rotors point in the same direction, occur near points of the form $(1+2z)^{-1/2}$, where $z = a+bi$ is a Gaussian integer (i.e.\ $a,b \in \Z$).  We do not even have a heuristic explanation for this phenomenon.  Figure~\ref{invertedrotor} shows the image of $A_{1,000,000}$ under the map $z \mapsto 1/z^2$; the monochromatic patches in the transformed region occur at lattice points.

L\'{a}szl\'{o} Lov\'{a}sz (personal communication) has asked whether the occupied region $A_n$ is simply connected, i.e.\ whether its complement is connected.  While Theorem~\ref{rotorcircintro} shows that $A_n$ cannot have any holes far from the boundary, we cannot answer his question at present.

Another question is whether our methods could be adapted to internal DLA to show that if $n=\omega_d r^d$, then with high probability $B_{r-c\log r} \subset I_n$, where $I_n$ is the internal DLA cluster of $n$ particles.  The current best bound is due to Lawler \cite{Lawler95}, who proves that with high probability $B_{r-r^{1/3}(\log r)^2} \subset I_n$.

\begin{figure}
\centering
\includegraphics[scale=.22]{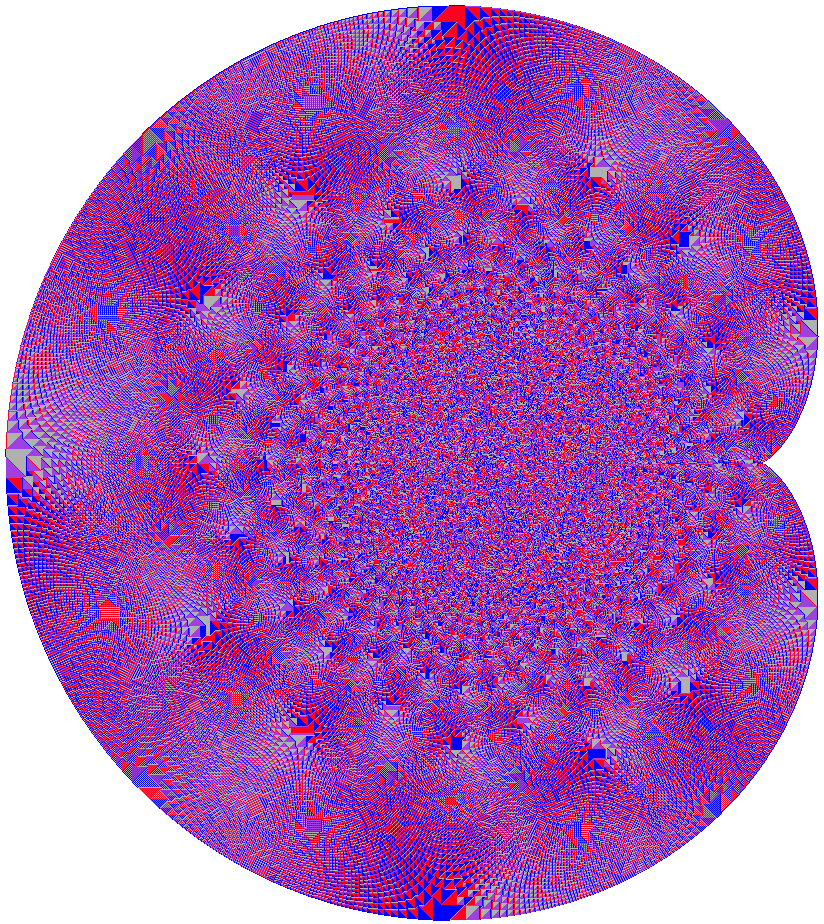} ~
\includegraphics[scale=.44]{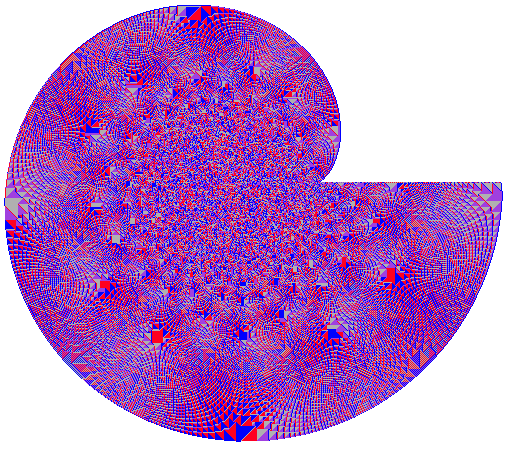}
\caption{Rotor-router aggregation started from a point source at the origin in $\Z^2$, modified so that sites on the positive $x$-axis (a) reflect particles back in the direction they came from; or, (b) send all particles in the downward direction.}
\label{cardioidandspiral}
\end{figure}

Finally, let us mention two simple variants of aggregation models which appear to have interesting limit shapes, but about which we cannot prove anything at present.  The following conjecture addresses a variant of the rotor-router model.  Analogous variants can also be defined for internal DLA and the divisible sandpile, and by analogy with Theorem~\ref{intromain} we expect all three models to yield the same limiting shape.

\begin{conjecture}
Let $A_n$ be the region formed from rotor-router aggregation starting with $n$ particles at the origin in $\Z^2$, if rotors on the positive $x$-axis reflect particles back in the direction they came from; all other rotors operate normally.  Then as $n \to \infty$, the asymptotic shape of the rescaled region $n^{-1/2} A_n$ is the cardioid in $\R^2$ whose boundary is given by the curve
	\[ \pi^2 |x|^4 - \pi |x|^2 + \frac{2\sqrt{2\pi}}{3\sqrt{3}} x_1 - \frac{1}{12} = 0. \]
\end{conjecture}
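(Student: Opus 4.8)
My plan is to mimic the three-step programme of Chapters~2 and~3 --- convergence of source densities $\Rightarrow$ convergence of obstacles $\Rightarrow$ convergence of domains --- with the flat lattice replaced by the lattice carrying a reflecting half-axis, and then to identify the limiting free boundary explicitly by a conformal change of variables. First I would set up the continuum model. Write $L=\{(x_1,0):x_1>0\}$ for the reflecting ray and $\Omega=\R^2\setminus\bar L$ for the slit plane. Because a rotor-router (or random) walk that is turned back whenever it meets $L$ should converge, after rescaling, to reflected Brownian motion in $\Omega$, the odometer is expected to solve an obstacle problem on $\Omega$: with $u$ the (rescaled) odometer and $D$ the occupied region, one expects $\Delta u=\mathbf 1_D-\delta_o$ in $\Omega$, $u\ge 0$, $u=0$ off $D$, together with the Neumann (no-flux) condition $\partial_\nu u=0$ along $L\cap\bar D$, normalized so that $\mathrm{Leb}(D)=1$. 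Equivalently, $D$ should be characterized by the quadrature identity $\int_D h\,dx=h(o)$ for every $h$ harmonic on $D$ with vanishing normal derivative along $L\cap\bar D$; since these $h$ are spanned by the real parts $\mathrm{Re}\,z^{k/2}$, $k\ge 0$, this amounts to $\mathrm{Leb}(D)=1$ and $\int_D\mathrm{Re}\,z^{k/2}\,dx=0$ for all $k\ge 1$.

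To make the scaling limit rigorous I would re-run the arguments of Sections~2.4 and~3.3--3.5 in the presence of the slit. The least-superharmonic-majorant machinery of Section~3.1 carries over once the right notion of superharmonicity relative to $\Omega$ (with the Neumann condition) is fixed; the real work is in the discrete-to-continuum estimates. One needs a version of Lemma~\ref{odomflow}/\ref{scaledodomflow} in which the reflecting sites on $L$ are treated separately --- there the net flux vanishes, so the odometer is discretely harmonic along $L$ --- and replacements for the Green's-function estimates of Section~2.1 valid uniformly up to $L$ and near the corner $o$; for the latter the conformal map $w=\sqrt z$ from $\Omega$ to the half-plane $\mathbb H$ (carrying reflecting-on-$L$ to reflecting-on-$\partial\mathbb H$) is the natural tool, reducing these estimates to standard ones in $\mathbb H$. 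With those in hand, the proofs that all three models (divisible sandpile, rotor-router, internal DLA) share the limit $D$ should follow the templates of Theorems~\ref{odomconvergence}--\ref{IDLAconvergence} and Theorem~\ref{rotorcircintro}.

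The explicit shape would then be extracted from the quadrature characterization via the same conformal map. Under $z=w^2$ the slit plane unfolds across $\partial\mathbb H$; the point source at the corner $o$ sits at the branch point $w=0$, and the flat area element becomes the weight $|dz/dw|^2=4|w|^2$, so $D$ pulls back to a domain $\hat D\subset\R^2$ (symmetric in both coordinate axes and containing the origin) satisfying the weighted quadrature identity $\int_{\hat D}4|w|^2 h(w)\,dw = 2\,h(0)$ for all $h$ harmonic on $\hat D$ --- a weighted partial-balayage problem for a point mass against a degenerate weight. Solving this problem and transporting the solution back through $w\mapsto w^2$ should produce a region whose boundary is the cardioid in the statement, after which the numerical constants follow by matching the area with the moment conditions $\int_D\mathrm{Re}\,z^{k/2}\,dx=0$.

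The principal obstacle, as I see it, is the scaling-limit step: the framework of Chapter~3 was built around the flat Laplacian on $\Z^d$, and the reflecting rule breaks the clean correspondence between the odometer and the discrete Laplacian along $L$, so the Green's-function comparison estimates and the ``at most quadratic growth'' lemmas must be re-established with the slit present --- in particular uniformly near the branch point, where the conformal weight $4|w|^2$ degenerates. The second, and subtler, difficulty is the explicit identification: because the weight vanishes precisely at the source, the off-the-shelf uniqueness and regularity theory for quadrature domains does not apply directly, so one must argue by hand --- via sub- and super-solutions adapted to the degeneracy, or a further change of variables --- both that the solution is the claimed cardioid and that no spurious extra components or holes appear.
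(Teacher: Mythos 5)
This statement is not a theorem of the paper: it appears in the final chapter ``Conjectures and Open Problems,'' where the author states explicitly that these are variants ``about which we cannot prove anything at present.'' There is therefore no proof in the paper to compare against, and your submission, read on its own terms, is a research programme rather than a proof. Every load-bearing step is deferred: you assert that the least-superharmonic-majorant machinery ``carries over,'' that the Green's function estimates ``reduce to standard ones in $\mathbb{H}$'' under $w=\sqrt z$, and that the convergence proofs ``should follow the templates'' of Theorems~\ref{odomconvergence}--\ref{IDLAconvergence} --- but none of this is carried out, and you yourself identify in the final paragraph exactly where the argument is incomplete.

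Two of the gaps deserve to be named more sharply than you name them. First, the reflecting rule breaks the foundation of the paper's method at the level of Lemma~\ref{odomflow}: the bound $|R(x,y)|\le 4d-2$ comes from each rotor distributing its exits within one unit of equally among all $2d$ neighbors, whereas a reflecting site sends every particle back whence it came, so the identity $\Delta u = \mathbf{1}_{R_n}-\sigma_n+\delta_n\,\mathrm{div}\,\rho$ with bounded $\rho$ fails on the slit. One must instead work with a modified graph Laplacian (the generator of the reflected walk) along $L$, and then all of the comparison estimates --- Lemmas~\ref{letsgetthispartystarted}, \ref{greengradbound}, \ref{atmostquadratic}, and the coupling arguments --- must be re-proved for that operator uniformly up to the slit and at the branch point, where the conformal weight $4|w|^2$ degenerates; this is not a routine transcription. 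Second, even granting the scaling limit, the identification of the free boundary is the heart of the conjecture: the quadrature identity you derive, $\int_{\hat D}4|w|^2h(w)\,dw=2h(0)$ with weight vanishing precisely at the source, lies outside the uniqueness and regularity theory invoked in Proposition~\ref{ballquadrature} and Theorem~\ref{fromsakai} (which require the density to exceed $1$ on its support), so producing the specific quartic $\pi^2|x|^4-\pi|x|^2+\tfrac{2\sqrt{2\pi}}{3\sqrt3}x_1-\tfrac1{12}=0$ and excluding spurious components is itself an open problem. Your outline is a reasonable and well-informed plan of attack, consistent with the paper's philosophy, but it does not constitute a proof, and the statement remains a conjecture.
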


This apparent cardioid is pictured in Figure~\ref{cardioidandspiral} on the left.  If instead we alter the rotors on the positive $x$-axis so that they always point downward, we obtain the spiral-shaped region on the right.  Our simulations indicate that a limiting shape exists for this variant as well, but we do not have a conjectured formula for the boundary curve.

\section{Sandpile Aggregation}

While Theorem~\ref{sandpilecircintro} improves on the best-known bounds of \cite{LBR} and \cite{FR} for the shape of sandpile aggregation in $\Z^d$, it does not settle the question of existence of a limiting shape.  There is also an intriguing pattern in the shapes of the sandpile aggregates $S_{n,H}$ of $n$ particles in which every site starts with a hole of depth $H$.  These shapes are pictured in Figure~\ref{polygons} for $n=250,000$ and three different values of $H$.

\begin{figure}
\begin{center}
\begin{tabular}{ccc}
\includegraphics[scale=.172]{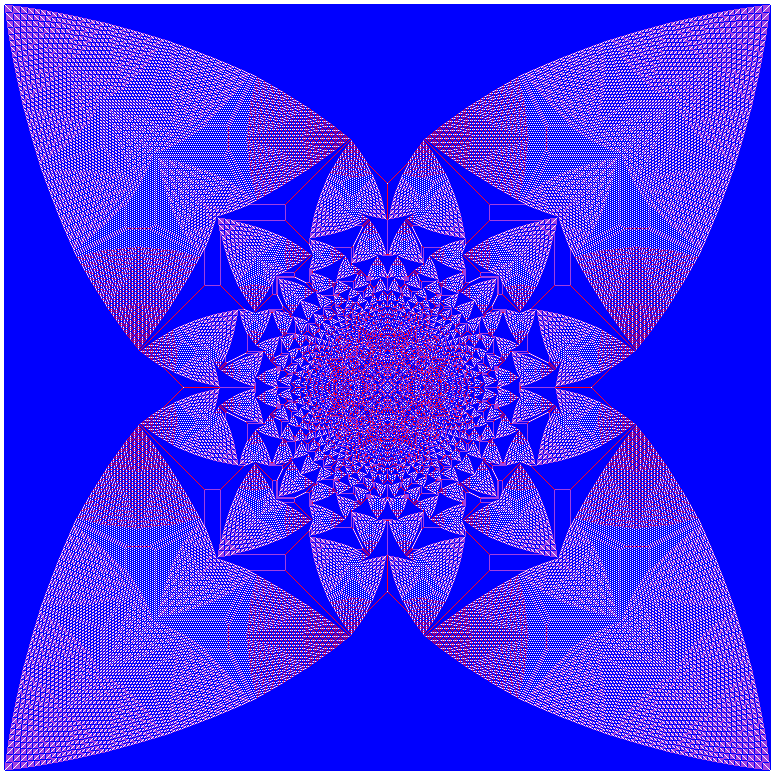}
&\includegraphics[scale=.28]{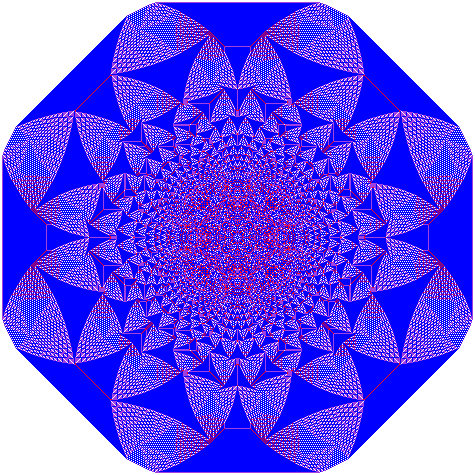}
&\includegraphics[scale=.359]{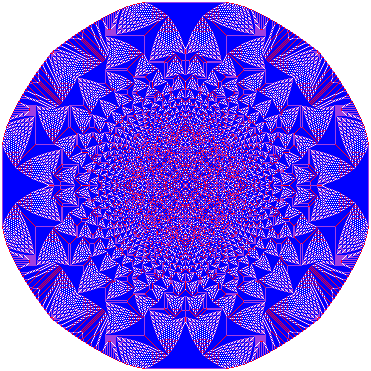} \\
$H=-2$ &$H=-1$ &$H=0$
\end{tabular}
\end{center}
\caption{Abelian sandpile shape $S_{n,H}$ started from a point source of $n=250,000$ particles in $\Z^2$, for three different values of the hole depth $H$.
}
\label{polygons}
\end{figure}

\begin{question}
\label{polygon}
In dimension two, is the limiting shape $n^{-1/2} S_{n,H}$ a regular polygon with $4H+12$ sides?
\end{question}

Simulations indicate a regular polygon with some rounding at the corners; it remains unclear if the rounded portions of the boundary become negligible in the limit.   If the limiting shape is not a polygon, it would still be very interesting to establish the weaker statement that it has the dihedral symmetry $D_{4H+12}$.

The only case that is even partly solved is $H=-2$: Fey and Redig \cite{FR} prove that $S_{n,-2}$ is a cube in $\Z^d$, but the growth rate of the cube is still not known.  It would be of interest to show that the volume of the cube $S_{n,-2}$ is of order $n$.










\end{document}